\newtheorem{theorem}{Theorem}[subsection]
\newtheorem{definition}[theorem]{Definition}
\newtheorem{definition-lemma}[theorem]{Definition/Lemma}
\newtheorem{definition-explanation}[theorem]{Definition/Explanation}
\newtheorem{explanation-definition}[theorem]{Explanation/Definition}
\newtheorem{definition-fact}[theorem]{Definition/Fact}
\newtheorem{definition-notation}[theorem]{Definition/Notation}
\newtheorem{definition-conjecture}[theorem]{Definition/Conjecture}
\newtheorem{lemma}[theorem]{Lemma}
\newtheorem{lemma-definition}[theorem]{Lemma/Definition}
\newtheorem{proposition}[theorem]{Proposition}
\newtheorem{corollary}[theorem]{Corollary}
\newtheorem{remark}[theorem]{\it Remark}
\newtheorem{remark-notation}[theorem]{\it Remark/Notation}
\newtheorem{application-lemma}[theorem]{Application/Lemma}
\newtheorem{convention}[theorem]{\it Convention}
\newtheorem{example}[theorem]{Example}
\newtheorem{example-definition}[theorem]{Example/Definition}
\newtheorem{explanation}[theorem]{Explanation}
\newtheorem{notation}[theorem]{Notation}
\newtheorem{definition-prototype}[theorem]{Definition-Prototype}
\numberwithin{equation}{subsection}
\newtheorem{stheorem}{Theorem}[section]
\newtheorem{sdefinition}[stheorem]{Definition}
\newtheorem{sdefinition-lemma}[stheorem]{Definition/Lemma}
\newtheorem{sdefinition-explanation}[stheorem]{Definition/Explanation}
\newtheorem{sexplanation-definition}[stheorem]{Explanation/Definition}
\newtheorem{sdefinition-fact}[stheorem]{Definition/Fact}
\newtheorem{sdefinition-notation}[stheorem]{Definition/Notation}
\newtheorem{sdefinition-conjecture}[stheorem]{Definition/Conjecture}
\newtheorem{slemma}[stheorem]{Lemma}
\newtheorem{slemma-definition}[stheorem]{Lemma/Definition}
\newtheorem{sremark}[stheorem]{\it Remark}
\newtheorem{sremark-notation}[stheorem]{\it Remark/Notation}
\newtheorem{sapplication-lemma}[stheorem]{Application/Lemma}
\newtheorem{sexample}[stheorem]{Example}
\newtheorem{sexample-definition}[stheorem]{Example/Definition}
\newtheorem{sdefinition-prototype}[stheorem]{Definition-Prototype}
\newtheorem{ssdefinition-lemma}[sstheorem]{Definition/Lemma}
\newtheorem{ssdefinition-explanation}[sstheorem]{Definition/Explanation}
\newtheorem{ssexplanation-definition}[sstheorem]{Explanation/Definition}
\newtheorem{ssdefinition-fact}[sstheorem]{Definition/Fact}
\newtheorem{ssdefinition-notation}[sstheorem]{Definition/Notation}
\newtheorem{ssdefinition-conjecture}[sstheorem]{Definition/Conjecture}
\newtheorem{sslemma-definition}[sstheorem]{Lemma/Definition}
\newtheorem{ssremark-notation}[sstheorem]{\it Remark/Notation}
\newtheorem{ssapplication-lemma}[sstheorem]{Application/Lemma}
\newtheorem{ssexample-definition}[sstheorem]{Example/Definition}
\newtheorem{ssdefinition-prototype}[sstheorem]{Definition-Prototype}
\newcommand{\Aut}{\mbox{\it Aut}\,}
\newcommand{\Autsheaf}{\mbox{\it ${\cal A}$ut}\,}
\newcommand{\Der}{\mbox{\it Der}\,}
\newcommand{\End}{\mbox{\it End}\,}
\newcommand{\Endsheaf}{\mbox{\it ${\cal E}\!$nd}\,}
 \newcommand{\Endsheaffootnotesize}{\mbox{\footnotesize\it ${\cal E}\!$nd}\,}
\newcommand{\Homsheaf}{\mbox{\it ${\cal H}$om}\,}
\newcommand{\Id}{\mbox{\it Id}\,}
\newcommand{\Left}{\mbox{\it\scriptsize Left-}\,}
\newcommand{\Real}{\mbox{\it Re}\,}
\newcommand{\Right}{\mbox{\it\scriptsize Right-}\,}
\newcommand{\SO}{\mbox{\it SO}\,}
\newcommand{\Sym}{\mbox{\it Sym}}
 \newcommand{\scriptsizeSym}{\mbox{\scriptsize\it Sym}}
 \newcommand{\tinySym}{\mbox{\tiny\it Sym}}
\newcommand{\Tr}{\mbox{\it Tr}\,}
\newcommand{\scriptsizeach}{\mbox{\scriptsize\it ach}} 
  \newcommand{\scriptsizedach}{\mbox{\scriptsize\it $d$-ach}} 
  \newcommand{\scriptsizewidehatDach}{\mbox{\scriptsize\it $\widehat{D}$-ach}}
\newcommand{\anticommuting}{\mbox{\scriptsize\it anti-c}}
\newcommand{\scriptsizecan}{\mbox{\scriptsize\it can}\,}
  \newcommand{\tinycan}{\mbox{\tiny\it can}\,}
\newcommand{\chd}{\mbox{\it c.h.d}\,}
  \newcommand{\footnotesizechd}{\mbox{\it\footnotesize c.h.d}\,}
\newcommand{\scriptsizech}{\mbox{\scriptsize\it ch}}
  \newcommand{\scriptsizedch}{\mbox{\scriptsize\it $d$-ch}}
  \newcommand{\scriptsizewidehatDch}{\mbox{\scriptsize\it $\widehat{D}$-ch}}
\newcommand{\even}{\mbox{\scriptsize\rm even}}
  \newcommand{\tinyeven}{\mbox{\tiny\rm even}}
\newcommand{\hol}{\mbox{\it\scriptsize hol}}
  \newcommand{\ahol}{\mbox{\it\scriptsize ahol}}
\newcommand{\leftscriptsize}{\mbox{\it\scriptsize left}}
\newcommand{\odd}{\mbox{\scriptsize\rm odd}}		
  \newcommand{\tinyodd}{\mbox{\tiny\rm odd}}
\newcommand{\bolda}{\mbox{\boldmath $a$}}
\newcommand{\boldd}{\mbox{\boldmath $d$}}
  \newcommand{\scriptsizeboldd}{\mbox{\scriptsize\boldmath $d$}}
\newcommand{\boldm}{\mbox{\boldmath $m$}}  
  \newcommand{\scriptsizeboldm}{\mbox{\scriptsize\boldmath $m$}}
\newcommand{\boldn}{\mbox{\boldmath $n$}}    
\newcommand{\boldsigma}{\mbox{\boldmath $\sigma$}}
\newcommand{\boldt}{\mbox{\boldmath $t$}}
  \newcommand{\scriptsizeboldt}{\mbox{\scriptsize\boldmath $t$}}
\newcommand{\boldu}{\mbox{\boldmath $u$}}
\newcommand{\boldy}{\mbox{\boldmath $y$}}
\newcommand{\boldz}{\mbox{\boldmath $z$}}
\newcommand{\longrightaarrow}{\longrightarrow\hspace{-3ex}\longrightarrow}
\newcommand{\rightaarrow}{\rightarrow\hspace{-2ex}\rightarrow}
\newcommand{\tinybullet}{\raisebox{.2ex}{\tiny $\bullet$}}							
\begin{document}

\enlargethispage{24cm}

\begin{titlepage}

$ $

\vspace{-1.5cm} 

\noindent\hspace{-1cm}
\parbox{6cm}{\small May 2018}\
   \hspace{7cm}\
   \parbox[t]{6cm}{\small
                arXiv:yymm.nnnnn [math.DG] \\
                D(14.1): $N=1$ sD3, foundations\\
				}

\vspace{2em}

\centerline{\large\bf
$N=1$  fermionic D3-branes in RNS formulation}
\vspace{1ex}
\centerline{\large\bf 
 I.  $C^\infty$-Algebrogeometric foundations of $d=4$, $N=1$ supersymmetry,} 
\vspace{1ex}
\centerline{\large\bf 
 SUSY-rep compatible hybrid connections, and} 
\vspace{1ex}
\centerline{\large\bf   
 $\widehat{D}$-chiral maps from a $d=4$ $N=1$ Azumaya/matrix superspace}

\bigskip

\bigskip\bigskip

\centerline{\large
  Chien-Hao Liu   
            \hspace{1ex} and \hspace{1ex}
  Shing-Tung Yau
}

\vspace{2em}

\begin{quotation}
\centerline{\bf Abstract}
\vspace{0.3cm}

\baselineskip 12pt  
{\small
 To construct a supersymmetric theory for a fermionic D-brane moving in a space-time $Y$, 
   modeled by smooth maps 
   $\widehat{\varphi}: (\widehat{X}^{\!A\!z}\!, \widehat{\cal E}; \widehat{\nabla})
       \rightarrow Y$
   from an Azumaya/matrix supermanifold $X^{\!A\!z}$
     with a fundamental module $\widehat{\cal E}$ with a connection $\widehat{\nabla}$ 
   to $Y$,    
   one needs to impose some SUSY-Rep Compatible Conditions 
   on $(\widehat{\nabla},\widehat{\varphi})$.
 In this work, we begin the task to address such issues.
 To begin, we focus on the theory for dynamical fermionic D3-branes, 
   in which the $d=4$, $N=1$ supersymmetry algebra is involved.

 As the necessary background to construct from the aspect of Grothendieck's Algebraic Geometry
    dynamical fermionic D3-branes along the line of Ramond-Neveu-Schwarz superstrings in string theory,
 three pieces of the building blocks are given in the current notes:
  \begin{itemize}
   \item[(1)]
    basic $C^\infty$-algebrogeometric foundations of
               $d=4$, $N=1$ supersymmetry and $d=4$, $N=1$ superspace in physics,
           with emphasis on the partial $C^\infty$-ring structure on the function ring of the superspace,
	
   \item[(2)]	
    the notion of SUSY-rep compatible hybrid connections on bundles over the superspace
    to address connections on the Chan-Paton bundle on the world-volume of a fermionic D3-brane,
	
  \item[(3)]	
   the notion of $\widehat{D}$-chiral maps $\widehat{\varphi}$
     from a $d=4$ $N=1$ Azumaya/matrix superspace with a fundamental module
	   with a SUSY-rep compatible hybrid connection $\widehat{\nabla}$
	 to a complex manifold $Y$
	as a model for a dynamical D3-branes moving in a target space(-time).
 \end{itemize}
 Some test computations related to the construction of a supersymmetric action functional for
   SUSY-rep compatible $(\widehat{\nabla}, \widehat{\varphi})$
  are given in the end, whose further study is the focus of a separate work.
 The current work is a sequel to
    D(11.4.1) (arXiv:1709.08927 [math.DG]) and
	D(11.2) (arXiv:1412.0771 [hep-th])
   and is the first step in the supersymmetric generalization, in the case of D3-branes,
    of the standard action functional for D-branes constructed in
    D(13.3) (arXiv:1704.03237 [hep-th]).
 } 
\end{quotation}

\smallskip

\baselineskip 12pt
{\footnotesize
\noindent
{\bf Key words:} \parbox[t]{14cm}{fermionic D3-brane, supersymmetry;
   $C^\infty$ hull, partial $C^\infty$-ring structure; SUSY-rep compatible\\ hybrid connection,
   Azumaya/matrix superspace, chiral/antichiral structure, $\widehat{D}$-chiral map
 }} 

\medskip

\noindent {\small MSC number 2010:  81T30, 46L87, 14A22; 81Q60, 58C25, 16S50
} 

\medskip

\baselineskip 10pt
{\scriptsize
\noindent{\bf Acknowledgements.}
We thank
 Andrew Strominger and Cumrun Vafa
   for influence to our understanding of strings, branes, and gravity.
C.-H.L.\ thanks in addition
  Rafael Nepomechie,
  Stephen Weinberg,
  John Terning,
  Shiraz Minwalla,
  Girma Hailu
    for topic courses (time-ordered) on supersymmetry at various institutes during the brewing years;
 Peng Gao, Shinobu Hosono
    for discussions on a technical issue;
 Cumrun Vafa
    for consultations;	
 Man-Wai Cheung, Karsten Gimre, Peter Kronheimer, Jacob Lurie, H\'{e}ctor Past\'{e}n V\'{a}squez
   for topic courses  and
 Ivan Losev, Artan Sheshmani
   for special lecture series,
   spring 2018;
 Xue Su and Rhett Lei for their performance of Carl Reinecke's `Undine' that accompanies the typing of the work;
 Francesca Pei-Jung Chen for the year-long monthly flute lessons (August 2017--August 2018)
   that provide an additional dimension to mind, a getaway from the project, and for her motivating life story;
 Ling-Miao Chou
   for comments that improve the illustrations and moral support.
S.-T.Y.\ thanks in addition
 Mathematical Science Center and Department of Mathematical Sciences, Tsinghua University, Beijing, China,
     for hospitality in fall 2017. 	
The project is supported by NSF grants DMS-9803347 and DMS-0074329.
} 

\end{titlepage}

\newpage

\enlargethispage{24cm}
\begin{titlepage}

$ $

\vspace{6em}

\centerline{\small\it
 Chien-Hao Liu dedicates this work to his mother,
  Mrs.~Pin-Fen Lin Liu,}
\centerline{\small\it
 and siblings $($resp.\ sibling-in-laws$)$}
\centerline{\small\it
 Hsiu-Chuan Liu $($Yi-Shou Chao$)$, Chien-Ying Liu $($Tracy Tse Chui$)$,}
\centerline{\small\it
 Alice Hsiu-Hsiang Liu $($Tom Cheng-Wei Chang$)$, Hsiu-Luan Liu,}
\centerline{\small\it
 and to the memory of his father, Mr.~Han-Tu Liu$^{\dag}$
                                  $[\,\!^{\dag}\!$deceased 1986\,$]$.}
 
\vspace{36em}

\baselineskip 10pt
 {\scriptsize
\noindent$^{\ast}${\it From C.-H.L.}$\,$:
The profound influence to me from my parents
  and what I have owed them are beyond words.
Without their so much patience, understanding,
  and quiet love and trust on me
  and my older sister Hsiu-Chuan's taking care of part of
  the financial responsibility for the family during my teenage,
I would be just a rebellious high school dropout without a future,
  let alone being accidentally exposed to
  the beauty and the mystery of superstring/M/F theory later.
This work means special in the D-project as it is the first time since the start a decade ago
  that the core of supersymmetry is brought~in in the way like physicists do supersymmetry,
  with a polish by $C^\infty$-Algebraic Geometry and some recast to serve the goal of constructing
  dynamical fermionic D-branes.	
It, indeed the entire project, is thus their making as well.
Special thanks to my brother-in-law, sister-in-law, little sister, and niece {\it Yen-Ching Chao}
  who filled the needs of my mother during the last three years when many things happened
  while I was brewing on subjects related to the current work and beyond.

\smallskip
\noindent
\includegraphics[width=1\textwidth]{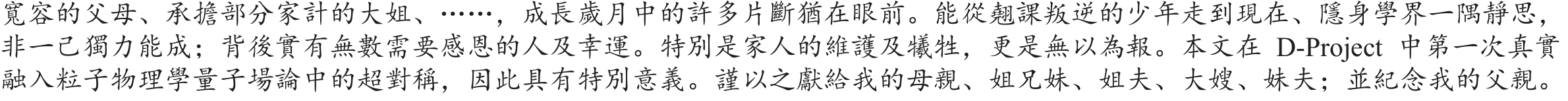}
 
 } 

\end{titlepage}


\newpage
$ $

\vspace{-3em}

\centerline{\sc
 $N=1$ Fermionic D3-Branes in RNS Formulation I. Foundations
 } %

\vspace{2em}

\baselineskip 14pt  

\begin{flushleft}
{\Large\bf 0. Introduction and outline}
\end{flushleft}
From the aspect of Grothendieck's Algebraic Geometry ([Har]; [Ei], [E-H]),
 a dynamical D-brane ([Po1], [Po2]), a notion developed first by Polchinski and his group,
 moving in a target space(-time) $Y$ can be described as
 a `{\it morphism $\varphi$}'  from
 an `{\it Azumaya locally-ringed space}'
 with a fundamental module with a connection to $Y$
 ([L-Y1] (D(1)), [Liu]; see also [H-W], [Wi5]).
Mathematically, depending on the context,
 this `Azumaya locally-ringed space' can
 an Azumaya scheme ([L-Y1] (D(1)), [L-L-S-Y] (D(2)), [L-Y2] (D(6))) or
 an Azumaya $C^\infty$-scheme ([L-Y3] (D(11.1)))\, $X^{\!A\!z}$,
   if one considers only bosonic D-branes,
 or an Azumaya super $C^\infty$-scheme $\widehat{X}^{\!A\!z}$ ([L-Y4] (D(11.2))),
   if one also takes into account fermionic D-branes,
 while the `morphism' turns out to have to be defined cotravariantly
 as an equivalence class of gluing systems of ring-homomorphisms
 $$
  \varphi^\sharp\;:\; {\cal O}_Y\;\longrightarrow\; {\cal O}_X^{A\!z}
  \hspace{2em}\mbox{(resp.\;
    $\widehat{\varphi}^\sharp\;:\; {\cal O}_Y\;\longrightarrow\;  \widehat{\cal O}_X^{A\!z})$}
 $$
 from the equivalence class of gluing systems of local function rings on $Y$ to that on $X^{\!A\!z}$
 (resp.\ $\widehat{X}^{\!A\!z}$) in each context
 in order to match basic Higgsing-un-Higgsing physical behaviors of D-branes,
 ([L-Y1] (D(1)), [L-Y5] (D(11.3.1)), [L-Y9] (D(11.4.1))).
There is something still missing in this picture in the case of dynamical fermionic D-branes:
 \begin{itemize}
  \item[\LARGE $\cdot$] {\it
   To construct a supersymmetric theory for a fermionic D-brane moving in a space-time
   $$
     \widehat{\varphi}\;:\; (\widehat{X}^{\!A\!z}\!, \widehat{\cal E}; \widehat{\nabla})\;
       \longrightarrow\; Y\,,
   $$
   one needs to impose some constraints on
     the connection $\widehat{\nabla}$ on the fundamental module $\widehat{\cal E}$
	    of $\widehat{X}^{\!A\!z}$ and
     $\widehat{\varphi}^\sharp$ that defines the morphism $\widehat{\varphi}$
    so that the true physical degrees of freedom (including both the dynamical and the non-dynamical ones)
	 in the problem match with those dictated
	by the related representations of the supersymmetry algebra involved,
    i.e.\ `{\sl SUSY-Rep Compatible Conditions}' on $(\widehat{\nabla}, \widehat{\varphi}^\sharp)$.}
 \end{itemize}
([L-Y9: Sec.\ 3.1] (D(11.4.1))).
In this work, we begin the task to address such issues.
As the supersymmetry algebras in different dimensions behave slightly differently, to begin,
 we focus on the theory for dynamical fermionic D3-branes,
 in which the $d=4$, $N=1$ supersymmetry algebra is involved.
 
As the necessary background to construct from the aspect of Grothendieck's Algebraic Geometry
    dynamical fermionic D3-branes along the line of Ramond-Neveu-Schwarz superstrings in string theory,
 three pieces of the building blocks are given in the current notes:
  \begin{itemize}
   \item[(1)]
    basic $C^\infty$-algebrogeometric foundations of
               $d=4$, $N=1$ supersymmetry and $d=4$, $N=1$ superspace in physics,
           with emphasis on the partial $C^\infty$-ring structure on the function ring of the superspace,
		   (Sec.\ 1);
	
   \item[(2)]	
    the notion of SUSY-rep compatible hybrid connections on bundles over the superspace
    to address connections on the Chan-Paton bundle on the world-volume of a fermionic D3-brane,
	(Sec.\ 2);
	
  \item[(3)]	
   the notion of $\widehat{D}$-chiral maps $\widehat{\varphi}$
     from a $d=4$ $N=1$ Azumaya/matrix superspace with a fundamental module
	   with a SUSY-rep compatible hybrid connection $\widehat{\nabla}$
	 to a complex manifold $Y$
	as a model for a dynamical D3-branes moving in a target space(-time),
 (Sec.\ 3 and  Sec.\ 4).	
 \end{itemize}
Some test computations related to the construction of a supersymmetric action functional for
  SUSY-rep compatible $(\widehat{\nabla}, \widehat{\varphi})$
  are given in the end (Sec.\ 5), whose further study is the focus of a separate work.
The current work is a sequel to [L-Y9] (D(11.4.1)) and [L-Y4] (D(11.2))
  and is the first step in the supersymmetric generalization, in the case of D3-branes,
    of the standard action functional for D-branes constructed in [L-Y8] (D(13.3)).

\bigskip

\noindent
{\it Remark on footnotes}\;\;
Various footnotes are added to illuminate further the main text without causing distractions.
They can be skipped at the first reading.
Footnotes that are familiar to physicists but unfamiliar to mathematicians are started with a
  `{\it Note for Mathematicians}'.
Similarly, footnotes that are familiar to mathematicians but unfamiliar to physicists are started with a
 `{\it Note for Physicists}'.

\bigskip

\noindent
{\it Remark on sign-related elementary details}\;\;
 Many computations in the work involve {\it sign-factors} $(-1)^{\mbox{\tiny $\bullet$}}$
   or {\it parity conjugations}
  that arise from passing (cohomological degree, parity) bi-graded objects and the conventions chosen.
 Their details are provided whenever appropriate not because they are difficult
    but, rather, because they are so simple yet tedious that they become a source of errors.
 More often than not, such sign-factors or parity-conjugation influence the result and its applications very much
   and hence deserve a special effort to record them accurately.

\bigskip
\bigskip

\noindent
{\bf Convention.}
 References for standard notations, terminology, operations and facts are\footnote{{\it Note
                                                                              for mathematicians}\;
																		     There are different sets of standard conventions and notations
																			   in the physicists' superworld.
																		     One first has to be familiar with all of them and
																			   then choose or make her/his own to proceed.																			
																			 All the grading-or-supersymmetry-related conventions or notations
																			   used in the current work are stated explicitly along our way
																			   to avoid confusions.
                                                                             The following six works influence our setup very strongly:
																			   [D-F2]; [Man], [S-W]; [We-B], [G-G-R-S], and [West1].
																			  }\\    
  (1) differential geometry: [Hi], [K-N];\;\; \\
  (2) algebraic geometry: [Har];\; $C^{\infty}$-algebraic geometry: [Joy];\;\; \\
  (3) graded bundles and supermanifolds: [Man], [S-W]; also [CB], [C-C-F], [DeW];\;\;  \\
  (4) superstring theory: [Gr-S-W], [Po2], [B-B-S];\;  D-branes: [Po1], [Po2]; also [Joh], [Sz2];\;\; \\
  (5) supersymmetry (mathematical aspect):  [D-F1], [D-F2], [D-M], [Freed];\;\;   \\
  (6) supersymmetry (physical aspect, especially $d=4$, $N=1$ case): [We-B], [G-G-R-S], [West1];\\
             \mbox{\hspace{1.6em}}also [Ar], [Bi], [Freund], [Gi], [St], [Te], [Wei].
 \begin{itemize}
  \item[$\cdot$]
   For clarity, the {\it real line} as a real $1$-dimensional manifold is denoted by ${\Bbb R}^1$,
    while the {\it field of real numbers} is denoted by ${\Bbb R}$.
   Similarly, the {\it complex line} as a complex $1$-dimensional manifold is denoted by ${\Bbb C}^1$,
    while the {\it field of complex numbers} is denoted by ${\Bbb C}$.
	
  \item[$\cdot$]	
  The inclusion `${\Bbb R}\subset{\Bbb C}$' is referred to the {\it field extension
   of ${\Bbb R}$ to ${\Bbb C}$} by adding $\sqrt{-1}$, unless otherwise noted.

   
  \item[$\cdot$]
   $\widehat{\Bbb R}$\,:\;
      the ${\Bbb Z}/2$-graded ${\Bbb R}$-algebra of real Grassmann numbers in question;\\	
   $\widehat{\Bbb C}$\,:\;
      the ${\Bbb Z}/2$-graded ${\Bbb C}$-algebra of complex Grassmann numbers in question.
  
  \item[$\cdot$]
  All manifolds are paracompact, Hausdorff, and admitting a (locally finite) partition of unity.
  We adopt the {\it index convention for tensors} from differential geometry.
   In particular, the tuple coordinate functions on an $n$-manifold is denoted by, for example,
   $(y^1,\,\cdots\,y^n)$.
  However, no up-low index summation convention is used.
   
  
  \item[$\cdot$]
  For this note, `{\it differentiable}', `{\it smooth}', and $C^{\infty}$ are taken as synonyms.
  
  %
  %
  %
  %
  %
  %
  %
  %
  %
  
  \item[$\cdot$]
   group {\it action} vs.\  {\it action} functional for D-branes.
   
  \item[$\cdot$]
   {\it dimensions} $d=4$ vs.\ the {\it exterior differential operator} $d$.

  %
  %
  %
 
  \item[$\cdot$]
   For a sheaf ${\cal F}$ on a topological space $X$,
   the notation `$s\in{\cal F}$' means a local section $s\in {\cal F}(U)$
      for some open set $U\subset X$.

  \item[$\cdot$]	
   For an ${\cal O}_X$-module ${\cal F}$,
    the {\it fiber} of ${\cal F}$ at $x\in X$	 is denoted ${\cal F}|_x$
	while the {\it stalk} of ${\cal F}$ at $x\in X$ is denoted ${\cal F}_x$.
  
  \item[$\cdot$]
   {\it coordinate-function index}, e.g.\ $(y^1,\,\cdots\,,\, y^n)$ for a real manifold
      vs.\  the {\it exponent of a power},
	  e.g.\  $a_0y^r+a_1y^{r-1}+\,\cdots\,+a_{r-1}y+a_r\in {\Bbb R}[y]$.
	
 


  
  \item[$\cdot$]
   the {\it collective fermionic coordinate-functions}
    $(\theta,\bar{\theta})
	 := (\theta^1, \theta^2, \bar{\theta}^{\dot{1}}, \bar{\theta}^{\dot{2}})$\\
   vs.\ the {\it ideal}
    $\widehat{\boldm}:= (\theta^1,\theta^2,\bar{\theta}^{\dot{1}},\bar{\theta}^{\dot{2}})$
	generated by $\theta^1, \theta^2, \bar{\theta}^{\dot{1}}, \bar{\theta}^{\dot{2}}$.
  
  \item[$\cdot$]
   {\it derivations} $\xi,\,\eta$
     vs.\ {\it sections} $(\xi,\bar{\eta})$ of the Dirac spinor bundle $S^\prime\oplus S^{\prime\prime}$.
  
  \item[$\cdot$]
   {\it principal bundle} $P$ vs.\ {\it operator} $P$.
  
  \item[$\cdot$]
   {\it Various brackets}\,:\;\;
     $[A,B]:= AB-BA$,\; $\{A,B\}:= AB+BA$,\;\\
	 $[A,B\}:= AB-(-1)^{p(A)p(B)}BA$,
	   where $p(\,\mbox{\tiny $\bullet$}\,)$ is the parity of $\mbox{\tiny $\,\bullet\,$}$\,.
 
  \item[$\cdot$]
   The switching between a {\it (locally free) sheaf of modules} and a {\it vector bundle}
     is taken freely, whichever is conceptually or notationwise more convenient.
  
  \item[$\cdot$]
   Convention~1.3.5 on passings of
                          ({\it cohomological degree}, {\it parity}) {\it bi-graded objects};\\
   Convention~2.1.5 on
        {\it $\Endsheaf_{\widehat{\cal O}_X}\!(\widehat{\cal E})$-valued $1$-forms}
		  on the $d=4$, $N=1$ superspace $\widehat{X}$;\\
   Convention~2.2.1 on {\it left operators, right operators, and their compositions}.		
  
  \item[$\cdot$]
   The current Note D(14.1) continues the study in
	  {\small
	  \begin{itemize}	
	   \item[]  \hspace{-2em} [L-Y4]\hspace{1em}\parbox[t]{37em}{{\it
	    D-branes and Azumaya/matrix noncommutative differential geometry,
         II: Azumaya/ matrix supermanifolds and differentiable maps therefrom
          - with a view toward dynamical fermionic D-branes in string theory},
          arXiv:1412.0771 [hep-th]. (D(11.2))	
		 }\\[1ex] 
		
       \item[]  \hspace{-2em} [L-Y9]\hspace{1em}\parbox[t]{37em}{{\it
	    Further studies of the notion of differentiable maps from Azumaya/matrix supermanifolds,
          I. The smooth case: Ramond-Neveu-Schwarz and Green-Schwarz meeting Grothendieck},
         arXiv:1709.08927 [math.DG]. (D(11.4.1))	
		 }\\[1ex] 

       \item[]  \hspace{-2em} [L-Y8]\hspace{1em}\parbox[t]{37em}{{\it
	    Dynamics of D-branes II. The standard action
         - an analogue of the Polyakov action for (fundamental, stacked) D-branes},
         arXiv:1704.03237 [hep-th]. (D(13.3))			  	
		 }
	  \end{itemize}}Other  	
   notations and conventions follow ibidem when applicable.
 \end{itemize}

\newpage
   
\begin{flushleft}
{\bf Outline}
\end{flushleft}
\nopagebreak
{\small
\baselineskip 12pt  
\begin{itemize}
 \item[0.]
  Introduction
  
 \item[1.]
  The $d=4$, $N=1$ superspace $\widehat{X}$ from the aspect of super $C^{\infty}$-Algebraic Geometry
   \vspace{-.6ex}
   \begin{itemize}
	 \item[1.1]
      A word on $C^{\infty}$-Algebraic Geometry and super $C^{\infty}$-Algebraic Geometry

     \item[1.2]
      The $d = 4$, $N = 1$ superspace as a super $C^{\infty}$-scheme with complexification
			
	 \item[1.3]	
	  Calculus on the $d=4$, $N=1$ superspace $\widehat{X}$
		
	 \item[1.4]
	  Supersymmetry transformations of and related objects on $\widehat{X}$			
   \end{itemize}
 		
 \item[2.]
  $d=4$, $N=1$ Azumaya/matrix superspaces $\widehat{X}^{\!A\!z}$
   with a fundamental module with a connection
   \vspace{-.6ex}
   \begin{itemize}     		
	 \item[2.1]
	  Lessons from left connections on the Chan-Paton bundle $\widehat{E}$ over $\widehat{X}$
		
	 \item[2.2]
	 Hybrid connections on the Chan-Paton bundle $\widehat{E}$ over $\widehat{X}$
	
	 \item[2.3]
	 Simple hybrid connections on $\widehat{\cal E}$
      that ring with the vector representation of the $d=4$, $N=1$\\ supersymmetry
	
	 \item[2.4]
	 $d=4$, $N=1$ Azumaya/matrix superspaces $\widehat{X}^{\!A\!z}$
   \end{itemize}
	
  \item[3.]
   The $\widehat{D}$-chiral and the $\widehat{D}$-antichiral structure sheaf
    of $\widehat{X}^{\!A\!z}$
   \vspace{-.6ex}
   \begin{itemize}     		
     \item[3.1]	
	 The $\widehat{D}$-chiral structure sheaf and the $\widehat{D}$-antichiral structure sheaf
     of $\widehat{X}^{\!A\!z}$
	
	 \item[3.2]
	 Normal form of $\widehat{D}$-chiral sections and $\widehat{D}$-antichiral sections
     of $\widehat{\cal O}_X^{A\!z}$
   \end{itemize}
   
 \item[4.]
  $\widehat{D}$-chiral map from $(\widehat{X}^{\!A\!z}, \widehat{\cal E}; \widehat{\nabla})$
  to a complex manifold
   \vspace{-.6ex}
   \begin{itemize}     		
     \item[\LARGE $\cdot$]	
	  Step 1: Smooth map from $(\widehat{X}^{\!A\!z}, \widehat{\cal E})$  to a real manifold
	
     \item[\LARGE $\cdot$]	
	  Step 2: Smooth map from $(\widehat{X}^{\!A\!z}, \widehat{\cal E})$  to a complex manifold
		
     \item[\Large $\cdot$]	
	  Step 3: $\widehat{D}$-chiral/$\widehat{D}$-antichiral map
	  from $(\widehat{X}^{\!A\!z}, \widehat{\cal E}; \widehat{\nabla})$ to a complex manifold
   \end{itemize} 	
  
 \item[5]
   $\widehat{D}$-chiral maps from
   $(\widehat{X}^{\!A\!z},\widehat{\cal E}; \widehat{\nabla})$ to a K\"{a}hler manifold
   and the $N=1$ Super D3-Brane Theory\\ in Ramond-Neveu-Schwarz formulation
   \vspace{-.6ex}
   \begin{itemize}     		
     \item[\LARGE $\cdot$]
	  Fermionic D3-branes and $\widehat{D}$-chiral maps $\widehat{\varphi}$ from
	   $(\widehat{X}^{\!A\!z},\widehat{\cal E}; \widehat{\nabla})$ to a K\"{a}hler manifold
	
	 \item[\LARGE $\cdot$]
	  The standard supersymmetry-invariant action functional for $\widehat{\nabla}$
	
     \item[\LARGE $\cdot$]	 	
	  Given $\widehat{\nabla}$, the standard supersymmetry-invariant action functional for $\widehat{\varphi}$:\\
	  Zumino meeting Polchinski \& Grothendieck
	
	 \item[\LARGE $\cdot$]
	  (Fundamental) $N=1$  Super (Stacked) D3-Brane Theory in the RNS formulation
   \end{itemize}
      
  \item[] \hspace{-1.3em}
   Appendix\;\; Basic moves for the multiplication of two superfields
   
 %
 %
 %
\end{itemize}
} 

\newpage
	
\section{The $d=4$, $N=1$ superspace $\widehat{X}$
         from the aspect of super $C^{\infty}$-Algebraic Geometry}

\subsection{A word
        on $C^{\infty}$-Algebraic Geometry and super $C^{\infty}$-Algebraic Geometry}
					
$C^{\infty}$-Algebraic Geometry (resp.\ super $C^{\infty}$-Algebraic Geometry)
  is the study of smooth manifolds (resp.\ superspaces, originated from physicists' study of supersymmetry),
  sheaves thereover, and morphisms in-between
  all from the aspect of Grothendieck's modern Algebraic Geometry.\footnote{These
                            mild words are meant only to reduce the psychological barrier to access
				            $C^{\infty}$-Algebraic Geometry and {\it Super $C^{\infty}$-Algebraic Geometry}.
						 There is unfortunately no short-cut here.	
						 Readers are referred to [Joy]
						    for a review, upgrade and references of $C^{\infty}$-Algebraic Geometry  and
						 [L-Y9: Sec.\ 1.3 \& Sec.\ 1.4] (D(11.4.1) for a natural super extension
						   of a minimal set of basic notions in $C^{\infty}$-Algebraic Geometry
						   that are used in the current notes.
						 They are all motivated by one belief (1) and two considerations (2) \& (3):
						    \begin{itemize}
							 \item[(1)]
							  There should be a {\it final mathematical language} on Super Geometry
							     that follows Grothendieck's construction of modern Algebraic Geometry.
                              Furthermore, since a super smooth manifold is meant to be a generalization a smooth manifold
							    and the function-ring of the latter is a $C^{\infty}$-ring,
							    a notion of super $C^{\infty}$-ring, related modules, and morphisms
								 should be the starting building blocks of this final language.
						
						     \item[(2)]
							  This final language should already take physicists' setting of {\it supersymmetry} into account.
							  In particular, most materials in the standard textbooks (e.g.\ [We-B]; also [G-G-R-S], [West1])
							    should be all rephrasable immediately in this final language.
								
                             \item[(3)]							
							  It should has a feedback to physics, where its origin lies. 							  
						    \end{itemize}
						 This ``final mathematical language" is not yet in existence in a completed format as in Algebraic Geometry
						   but several algebraic-geometry-based mathematical studies of superspace have been around
						    (e.g.\ [Man] (1988), [S-W] (1989),  [C-C-F] (2011)).
						Besides the algebrogeometric setting of
						   a {\it superspace/supermanifold as a ${\Bbb Z}/2$-graded-locally-ringed topological space,
						        i.e.\ superscheme} and
						   the {\it role of spinors to odd functions on the superspace/supermanifold},
						 the setting in [L-Y4] (D(11.2)),  [L-Y9] (D(11.4.1)), and continued in the current notes,
						   is guided further by the requirement that it needs to answer the following three questions:
						   \begin{itemize}
						     \item[\bf Q1]
							 {\it Can it be used to easily describe superspaces and supersymmetry in
							    [We-B], [G-G-R-S], [West1]?}
							
						     \item[\bf Q2]
							 {\it Can it be used to easily describe fermionic strings as in [Gr-S-W], [Po2]?}
							
							 \item[\bf Q3]
							   {\it Can it be used to easily describe fundamental fermionic D-branes in a way
							            that takes key physical features of D-branes into account? }						
						   \end{itemize}
                         For that, one has to be able to address in particular
						    the notion of {\it smooth map from a superspace/supermanifold to a smooth manifold}.
						 Since the function ring of a smooth manifold is a $C^\infty$-ring,
						   this naturally leads us to bring out the notion of
						  {\it partial $C^\infty$-ring structure} and the {\it $C^\infty$-hull}
						   of the ${\Bbb Z}/2$-graded function ring of a superspace.						   
                         Such notion/structure can then be generalized to Azumaya/matrix supermanifold and
						   becomes part of the structure on the function ring of
						    a fermionic D-brane or the world-volume of a fermionic D-brane.
						     }
                        %
A beginning central object/notion is the notion of a {\it $C^{\infty}$-ring}
  ([Joy] and more references therein).
This is a ring $R$ that admits operations on its elements more than just the two binary ones,
  addition $+$ and multiplication $\cdot$\,:
  \begin{itemize}
   \item[\LARGE $\cdot$]
   For any $f\in C^{\infty}({\Bbb R}^k)$,
     $f(r_1,\,\cdots\,,\, r_k)\in R$ is defined  for all $r_1,\,\,\cdots\,,\, r_k\in R$.
  \end{itemize}
And this collection of {\it $C^{\infty}$-operations} has to satisfy
  a few compatibility conditions and normalization conditions.
This is the additional structure that characterizes the function-ring $C^{\infty}(X)$ of a smooth manifold $X$
 than just a ring $(R,+,\cdot)$ in Algebra (e.g.\ [Ja]).
  
Since the binary operation $+$ is the operation associated to the $f\in C^{\infty}({\Bbb R^2})$,
  $f(x^1,x^2)= x^1+x^2$, which is the same as $x^2+x^1$,
 a $C^{\infty}$-ring is always commutative.
Thus,
 when one wants to generalize such a notion to the super case,
 one has to decide what to choose in a noncommutative ring to make a $C^{\infty}$-ring.
For a {\it superring}
  (i.e.\ a ring $R$ that is equipped with a ${\Bbb Z}/2$-grading $R=R_{\even} \oplus R_{\odd}$)
 that is ${\Bbb Z}$-commutative,
 the choice is immediate: Its {\it even part} $R_{\even}$ is a commutative ring,
  and one should require $R_{\even}$ be a $C^{\infty}$-ring.

\bigskip

\begin{definition} {\bf [super $C^{\infty}$-ring, $C^\infty$-hull, partial $C^\infty$-ring structure]}\;
{\rm
 Let\\
   $R=R_{\even}\oplus R_{\odd}$ be a ${\Bbb Z}/2$-graded, ${\Bbb Z}/2$-commutative ring.
 We call $R$ a {\it super $C^{\infty}$-ring}
  if its even part $R_{\even}$ is endowed with a $C^{\infty}$-ring structure.
 $R_{\even}$ with the $C^\infty$-ring structure  is then called the {\it $C^\infty$-hull} of $R$.
 We say also that $R$ is equipped with a {\it partial $C^\infty$-ring structure}.
}\end{definition}

\medskip

\begin{example} {\bf  [function-ring of super real line]}\;
{\rm
 (Cf.\ $d=1$, $N=2$ superspace.)\;
 The function-ring $C^{\infty}({\Bbb R}^{1|2})$
   of the $1$-dimensional {\it super real line} ${\Bbb R}^{1|2}$
   with two fermionic generators $\theta^1,\theta^2$
  is the ${\Bbb Z}/2$-graded, ${\Bbb Z}/2$-commutative {\it superpolynomial ring}
   $$
    C^{\infty}({\Bbb R}^1)[\theta^1, \theta^2]^{\anticommuting}\;
	 :=\;   \frac{C^{\infty}({\Bbb R}^1)\langle \theta^1,\,\theta^2 \rangle}
	                   {( f\theta^{\mu}-\theta^{\mu}f\,,\;
			                    \theta^{\mu}\theta^{\nu}+\theta^{\nu}\theta^{\mu}\,
				                |\; f \in C^{\infty}({\Bbb R}^1)\,;\;    \mu, \nu =1,\, 2)}
   $$
   with coefficients in $C^{\infty}({\Bbb R}^1)$.
 (Cf.\ [L-Y9: Example/Definition 1.3.2] (D(11.4.1)).)
 Its even part is given by
  the polynomial rings in $\theta^1\theta^2$ with coefficients in $C^{\infty}({\Bbb R}^1)$
   $$
     C^{\infty}({\Bbb R}^{1|2})_{\even}\;
	  =\;  C^{\infty}({\Bbb R}^1)[\theta^1\theta^2]\;
	         =\; \{f+g\theta^1\theta^2| f, g \in C^{\infty}({\Bbb R}^1)\}\,,
   $$
 whose $C^{\infty}$-ring structure is given by
   $$
     h(f_1+g_1\theta^1\theta^2, \,\cdots\,,\, f_k+g_k\theta^1\theta^2)\;
	  =\;  h(f_1, \,\cdots\,,\, f_k)\,
	         +\,  \sum_{i=1}^k   (\partial_i h)(f_1, \,\cdots\,,\, f_k)(g_i\theta^1\theta^2)\,,			 
   $$
  for any
    $f_1+g_1\theta^1\theta^2, \,\cdots\,,\, f_k+g_k\theta^1\theta^2
	   \in C^{\infty}({\Bbb R}^1)[\theta^1, \theta^2]^{\anticommuting}$,
    $h:{\Bbb R}^k\rightarrow {\Bbb R}$ smooth,    and
	$k\in {\Bbb Z}_{\ge 1}$.
 Here, $\partial_ih$ is the partial derivative of $h$ with respect to its $i$-th argument.	
}\end{example}

\bigskip
 
With these mild words and an example as an introduction,
readers are referred to the preparatory notes [L-Y9: Sec.\ 1.3 \& Sec.\ 1.4] (D(11.4.1))
 for the basic notions, objects, and terminologies in super $C^{\infty}$-Algebraic Geometry
 that will be freely used in the current notes.
Two  lemmas\footnote{For
                                 the case of superrings $\widehat{R}$
								  constructed from an extension of an ordinary ring $R$ by fermionic variables,
                                 it is also natural to just require that $R$ be a $C^{\infty}$-ring.
                                 Lemma~1.1.3 and Lemma ~1.1.4
								  say that actually the $C^{\infty}$-ring structure on $R$
                                   extends to a $C^{\infty}$-ring structure on $\widehat{R}_{\,\mbox{\tiny even}}$
								     for free
								   and that the extension is canonical.
                                          }
	from ibidem are quoted below;  they will play a key role in our construction later:

\bigskip

\begin{lemma} {\bf [$C^{\infty}$ evaluation after nilpotent perturbation]}$\;$
 {\rm [L-Y9: Lemma 1.2.1] (D(11.4.1).)}
 Given a $C^{\infty}$-ring $R$, let
   $r_1,\,\cdots\,,\, r_k \in R$   and
   $n_1,\,\cdots\,,\, n_k$ be nilpotent elements in $R$
      with $n_1^{l+1}=\,\cdots\,= n_k^{l+1}=0$.
 Then,
      for any $h\in C^{\infty}({\Bbb R}^k)$,
   the element $h(r_1+n_1,\,\cdots\,,\,r_k+n_k)\in R$
      from the $C^{\infty}$-ring structure of $R$ is given explicitly by
   $$
     h(r_1+n_1,\,\cdots\,,\,r_k+n_k)\;
	   =\;
	    \sum_{d=0}^{kl}\, \frac{1}{d_1!\cdots d_k!}\,
	       \sum_{d_1+\,\cdots\,+d_k=d}
	       (\partial_1^{\,d_1}\,\cdots\,\partial_k^{\,d_k}  h)(r_1,\,\cdots\,,\, r_k)
		     \cdot n_1^{d_1}\,\cdots\,n_k^{d_k}\,,	
   $$
   where
     $\partial_1^{\,d_1}\,\cdots\,\partial_n^{\,d_n}  h \in C^{\infty}({\Bbb R}^k)$
      is the partial derivative of $h$ with respect to the first variable $d_1$-times, the second variable $d_2$-times,
	  ..., and the $k$-th variable $d_k$-times.
\end{lemma}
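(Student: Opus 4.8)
The plan is to prove the explicit formula for $h(r_1+n_1,\,\cdots\,,\,r_k+n_k)$ by reducing it to the defining compatibility axioms of a $C^\infty$-ring, namely that $C^\infty$-operations compose correctly and that the operations associated to polynomial functions agree with the ring operations. First I would observe that, since each $n_i$ satisfies $n_i^{l+1}=0$, for the fixed bound $d=kl$ every monomial $n_1^{d_1}\cdots n_k^{d_k}$ with some $d_i>l$ vanishes, so the sum on the right-hand side is in fact a finite sum over all multi-indices with no constraint — it is just the truncated Taylor polynomial of $h$ evaluated at $(r_1,\dots,r_k)$ with increments $(n_1,\dots,n_k)$.

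The key device is Taylor's theorem with integral remainder applied to $h$. For a smooth $h\in C^\infty(\mathbb{R}^k)$ and any $N$, one can write
\[
 h(x+t) \;=\; \sum_{|\alpha|\le N} \frac{1}{\alpha!}(\partial^\alpha h)(x)\, t^\alpha
   \;+\; \sum_{|\alpha|=N+1} t^\alpha\, g_\alpha(x,t)
\]
for suitable smooth functions $g_\alpha\in C^\infty(\mathbb{R}^k\times\mathbb{R}^k)$, where $x,t$ denote the two groups of $k$ variables. This is an identity in $C^\infty(\mathbb{R}^k\times\mathbb{R}^k)$, hence — by the composition axiom for $C^\infty$-rings — it induces the corresponding identity in $R$ after substituting $x_i\mapsto r_i$, $t_i\mapsto n_i$. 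Here I would take $N=kl$, so that every remainder monomial $n^\alpha$ with $|\alpha|=kl+1$ necessarily has some exponent exceeding $l$ (since if all $\alpha_i\le l$ then $|\alpha|\le kl$), and therefore $n^\alpha=0$ in $R$; the entire remainder term drops out. The main (and only real) subtlety is justifying that the polynomial part of the Taylor identity, which involves the $C^\infty$-operations attached to the polynomial functions $x\mapsto \frac{1}{\alpha!}(\partial^\alpha h)(x)t^\alpha$, evaluates in $R$ to the expected combination $\frac{1}{\alpha!}(\partial^\alpha h)(r_1,\dots,r_k)\cdot n_1^{\alpha_1}\cdots n_k^{\alpha_k}$ of ring operations — this is exactly the normalization axiom that the $C^\infty$-operation associated to a polynomial coincides with the operation built from $+$ and $\cdot$, combined with the composition axiom to handle the substitution of $(\partial^\alpha h)$ composed with projections.

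Concretely the steps are: (1) fix the notation $\alpha=(d_1,\dots,d_k)$, $\alpha!=d_1!\cdots d_k!$, $\partial^\alpha=\partial_1^{d_1}\cdots\partial_k^{d_k}$, and rewrite the claimed right-hand side as $\sum_{|\alpha|\le kl}\frac{1}{\alpha!}(\partial^\alpha h)(r_1,\dots,r_k)\cdot n^\alpha$; (2) state Taylor's formula with integral remainder in $C^\infty(\mathbb{R}^{2k})$ to order $kl$, extracting smooth remainder coefficients $g_\alpha$; (3) apply the $C^\infty$-ring axioms to transport this identity along the substitution $(x,t)\mapsto(r,n)\in R^{2k}$; (4) invoke the nilpotency $n_i^{l+1}=0$ to kill the remainder; (5) use the polynomial-normalization and composition axioms to identify each surviving term. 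I expect step (3)–(5) bookkeeping to be entirely routine given the axioms; the one place deserving care is making sure the remainder in Taylor's theorem is genuinely a $C^\infty$ function of $(x,t)$ jointly (which it is, by the standard integral-remainder formula $g_\alpha(x,t)=\frac{|\alpha|}{\alpha!}\int_0^1(1-s)^{|\alpha|-1}(\partial^\alpha h)(x+st)\,ds$), so that it may legitimately be fed into a $C^\infty$-operation of $R$. Since this is quoted from [L-Y9] as Lemma~1.2.1, I would also simply remark that the argument is the standard one and refer there for the verification of the axiom-chasing details.
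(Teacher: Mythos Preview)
Your argument is correct and is the standard one: write Taylor's formula with smooth integral remainder as an identity in $C^{\infty}(\mathbb{R}^{2k})$, push it through the $C^{\infty}$-ring structure of $R$ via the composition axiom, and observe that every remainder monomial $n^{\alpha}$ with $|\alpha|=kl+1$ must have some $\alpha_i\ge l+1$ and hence vanish. The paper does not supply its own proof of this lemma; it simply quotes it from [L-Y9: Lemma~1.2.1] (D(11.4.1)), as you yourself note at the end of your proposal, so there is nothing further to compare against here.
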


\medskip

\begin{lemma} {\bf [extension of $C^{\infty}$-ring structure]}$\;$
 {\rm ([L-Y9: Lemma 1.2.2] (D(11.4.1).))}
 Let
  $R$ be a $C^{\infty}$-ring and
  $S= R\oplus N$ be a commutative ${\Bbb R}$-algebra
     with $N^{l+1}=0$ for some $l\in {\Bbb Z}_{\ge 1}$.
 Then, $S$ admits a unique $C^{\infty}$-ring structure
   such that
    both the built-in ring-monomorphism $R\hookrightarrow S$ and the built-in ring-epimorphism $S\rightaarrow R$
    are $C^{\infty}$-ring-homomorphisms.
\end{lemma}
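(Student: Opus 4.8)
The plan is to construct the $C^\infty$-ring structure on $S = R \oplus N$ by using the nilpotency of $N$ to give a closed-form Taylor-type expansion, exactly mirroring Lemma~1.1.3, and then to check uniqueness by a direct argument from compatibility with the two structure maps. Concretely, given $h \in C^\infty(\mathbb{R}^k)$ and elements $s_i = r_i + n_i \in S$ with $r_i \in R$, $n_i \in N$, I would \emph{define}
$$
  h(s_1, \ldots, s_k)\;
   :=\;
    \sum_{d=0}^{kl}\, \sum_{d_1+\,\cdots\,+d_k=d}
       \frac{1}{d_1!\cdots d_k!}\,
       (\partial_1^{\,d_1}\cdots\partial_k^{\,d_k}  h)(r_1,\ldots, r_k)
         \cdot n_1^{d_1}\cdots n_k^{d_k}\,,
$$
where the leading term $h(r_1,\ldots,r_k)$ is computed in $R$ via its given $C^\infty$-ring structure, the partial derivatives are likewise evaluated in $R$, and the sum is finite because $N^{l+1} = 0$ forces every monomial with $d_i \geq l+1$ for some $i$ to vanish (and more generally anything of total degree $> kl$). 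Note $h(s_1,\ldots,s_k) \in S$ with its $N$-component being the sum of all terms with $d \geq 1$.

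First I would verify that this formula actually defines a $C^\infty$-ring structure, i.e.\ that it satisfies the compatibility axioms: for projection functions $\pi_i(x_1,\ldots,x_k) = x_i$ one recovers $s_i$; for the addition and multiplication functions on $\mathbb{R}^2$ one recovers the ring operations of $S$; and the composition axiom $g(h_1(\bolds),\ldots,h_m(\bolds)) = (g\circ(h_1,\ldots,h_m))(\bolds)$ holds. The projection and arithmetic checks are immediate. The composition axiom is the routine-but-central computation: one expands both sides using the formula, and the identity reduces to the multivariate Fa\`a di Bruno / chain rule for ordinary smooth functions together with the fact that $R$ already satisfies the composition axiom. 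This is bookkeeping with multi-indices; the nilpotency guarantees all sums are finite so there are no convergence subtleties. Then I would check that $R \hookrightarrow S$ is a $C^\infty$-homomorphism (take all $n_i = 0$: the formula collapses to the $R$-structure) and that $S \twoheadrightarrow R$ is one (apply the projection $S \to S/N = R$ to the formula: every term with $d \geq 1$ dies, leaving $h(r_1,\ldots,r_k)$).

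For uniqueness, suppose $S$ carries some $C^\infty$-ring structure, denoted $h \mapsto h[s_1,\ldots,s_k]$, for which both structure maps are $C^\infty$-homomorphisms. I would fix $h$ and the $s_i = r_i + n_i$, and expand $h$ around the point $(r_1,\ldots,r_k) \in \mathbb{R}^k$: since $h \in C^\infty(\mathbb{R}^k)$, Hadamard's lemma (applied $l+1$ times, or a finite Taylor expansion with smooth remainder) lets us write
$$
  h(x_1,\ldots,x_k)\;
   =\;  \sum_{|\boldd|\le kl}
         \frac{1}{\boldd!}\,(\partial^{\,\boldd} h)(r_1,\ldots,r_k)\,
           \prod_{i=1}^k (x_i - r_i)^{d_i}\;
       +\;  \sum_{j}\, e_j(x_1,\ldots,x_k)\,\prod_{i}(x_i-r_i)^{c_{ij}}
$$
where each remainder term has $\sum_i c_{ij} = kl+1$ and $e_j$ is smooth (here $\boldd = (d_1,\ldots,d_k)$, $\boldd! = d_1!\cdots d_k!$). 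Now apply the putative $C^\infty$-operation: since it is a ring homomorphism in each "plugging-in" sense compatible with $+$, $\cdot$, and constants (these being particular $C^\infty$-operations), evaluating the right-hand side at $\bolds$ substitutes $x_i - r_i \mapsto n_i$; each remainder term contains a product of $kl+1$ factors from $N$, hence lies in $N^{kl+1} = 0$; and the polynomial part evaluates, by the ring-operation compatibility alone, to precisely the formula above. Hence $h[s_1,\ldots,s_k]$ is forced, proving uniqueness.

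The main obstacle I anticipate is not conceptual but notational: making the composition-axiom verification genuinely airtight requires a careful multi-index chain-rule expansion, and one must be scrupulous that the two structure maps being $C^\infty$-homomorphisms is used \emph{correctly} in the uniqueness argument — specifically, that $C^\infty$-compatibility with the basic operations $+$, $\times$, constants, and the projections is enough to pin down the value on polynomials, so that only the smooth-remainder-kills-in-$N^{kl+1}$ step needs the nilpotency hypothesis. Since this is quoted verbatim as [L-Y9: Lemma~1.2.2], I would in practice simply cite it; the sketch above is the proof one would reconstruct.
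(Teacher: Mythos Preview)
The paper does not prove this lemma here; it is quoted verbatim from [L-Y9: Lemma~1.2.2] and simply cited. Your reconstruction is essentially the correct and standard one: define the $C^{\infty}$-operations by the finite Taylor formula of Lemma~1.1.3, verify the $C^{\infty}$-ring axioms (the composition axiom via the multivariate chain rule), and prove uniqueness by Taylor-with-smooth-remainder plus nilpotency.

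One point in your uniqueness argument deserves tightening. You write ``expand $h$ around the point $(r_1,\ldots,r_k)\in{\Bbb R}^k$'', but $r_i\in R$, not ${\Bbb R}$, so this is not a Taylor expansion at a point of ${\Bbb R}^k$ and the displayed identity is not, as written, an identity of smooth functions. The clean fix is to use Hadamard's lemma in $2k$ variables: there is a genuine identity in $C^{\infty}({\Bbb R}^{2k})$ of the form
\[
  h(x)\;=\;\sum_{|\boldd|\le kl}\frac{1}{\boldd!}\,(\partial^{\boldd}h)(y)\,\prod_i(x_i-y_i)^{d_i}
           \;+\;\sum_j e_j(x,y)\,\prod_i(x_i-y_i)^{c_{ij}}
\]
with each $\sum_i c_{ij}>kl$ and $e_j\in C^{\infty}({\Bbb R}^{2k})$. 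Now apply the assumed $C^{\infty}$-structure on $S$ to this identity at $(s_1,\ldots,s_k,r_1,\ldots,r_k)\in S^{2k}$, using that $R\hookrightarrow S$ is a $C^{\infty}$-homomorphism to evaluate $(\partial^{\boldd}h)(y)$ at $y=r$ inside $R$. Then $x_i-y_i\mapsto n_i$, the remainder lands in $N^{kl+1}=0$, and the value of $h[s_1,\ldots,s_k]$ is forced. With this adjustment your argument is complete.
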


\bigskip

\subsection{The $d=4$, $N=1$ superspace as a super $C^{\infty}$-scheme with complexification}

Physicists' ``superspace" in the study of Supersymmetry \& Supergravity is meant to be
  a ``space" with not only the ordinary commuting coordinates as for the charts of an ordinary manifold
      but also anticommuting ``fermionic coordinates" so that a supersymmetry can act on this space.
Mathematically in terms of Algebraic Geometry,
 ``coordinates" are nothing but the generating elements of the function-ring of that space.
Before the polishment by further details,
 this gives one the first reason why a physicists' superspace ``should" be naturally described as
 a super $C^{\infty}$-scheme in Super $C^{\infty}$ Algebraic Geometry.
We explain in this subsubsection in four steps
  how this is realized and how a spinor bundle comes into play and influences the $C^{\infty}$-scheme structure
 for the case $d=4$, $N=1$ superspace.

\bigskip

\noindent
$(a)$\;{\it Basic setup: The $4$-dimensional Minkowski space-time $X$ and spinor bundles thereupon}

\medskip

\noindent
Let $X$ be the {\it Minkowski space-time} ${\Bbb R}^{3+1}$, with global coordinates $(x^0,x^1,x^2,x^3)$,
  the Minkowski metric $ds^2= -(dx^0)^2+(dx^1)^2+ (dx^2)^2 + (dx^3)^2$,  and
  the orientation given by $dx^0\wedge dx^1\wedge dx^2\wedge dx^3$.	
Let $C^{\infty}(X)$ be the ring of smooth functions on $X$, it is a $C^{\infty}$-ring.
The corresponding structure sheaf on $X$ is denoted by ${\cal O}_X$;	
 the locally-ringed space $(X,{\cal O}_X)$ is a $C^{\infty}$-scheme, denoted also in the short-hand $X$.

Proper orthochronous frames on $X$ gives rise to the principal $\SO^{\uparrow}(3,1)$-bundle $P$ over $X$,
 where $\SO^{\uparrow}(3,1)$ is the connected component of the Lorentz group $O(3,1)$
 that contains the identity element.
$P$ is canonically trivialized by the flat Levi-Civita connection from the Minkowski metric $ds^2$ on $X$.
Associated to the irreducible real spinor representation of $\SO^{\uparrow}(3,1)$
    and the two irreducible complex spinor representations
     $(\frac{1}{2},0)$ and $(0,\frac{1}{2})$ of the analytic complexification
	   $\SO^{\uparrow}(3,1)^{\Bbb C}$ of $\SO^{\uparrow}(3,1)$
	 are the real spinor bundle $S$ and two complex spinor bundles $S^{\prime}$ and $S^{\prime\prime}$ on $X$.
The former has real rank $4$
      while the latter have complex rank $2$ each and are complex conjugate to each other.	
The three are related by
     $$
	    S^{\,\Bbb C}\,:=\, S\otimes_{\Bbb R}{\Bbb C}\;
		 \simeq\; S^{\prime}\oplus S^{\prime\prime}\,.
	 $$
Sections of $S$ (resp.\ $S^{\prime}\oplus S^{\prime\prime}$, $S^{\prime}$, $S^{\prime\prime}$)
     are called {\it Majorana spinors}
	 (resp.\ {\it Dirac spinors}, {\it chiral Weyl spinors}, {\it antichiral Weyl spinors})  on $X$.
As associated bundles to $P$
       or its fiberwise-complexified $\SO^{\uparrow}(1,3)^{\Bbb C}$-bundle $P^{\,\Bbb C}$,
    the induced flat connection on $S$, $S^{\prime}$, $S^{\prime\prime}$
	  gives a built-in trivialization of these spinor bundles.
   The term `{\it constant sections}' of any of these spinor bundles are referred to
      `sections that are constant with respect to this trivialization'.
They are the covariantly constant sections with respect to the induced connection from that on $P$ or $P^{\,\Bbb C}$.
    
By the identification of the real representation underlying $(\frac{1}{2},0)$ with the real spinor representation,
     one can fix a quadruple of constant generating sections
	   $(\vartheta^1,\vartheta^2, \vartheta^3,\vartheta^4)$ of $S$
    and a pair of constant generating sections
      $\theta^1$, $\theta^2$
	  (resp.\ $\bar{\theta}^{\dot{1}}$, $\bar{\theta}^{\dot{2}}$)
	  of $S^{\prime}$ (resp.\ $S^{\prime\prime}$) such that
     $$
	   \theta^1\;=\; \vartheta^1+\sqrt{-1}\vartheta^2\,,\hspace{1em}
	   \theta^2\;=\; \vartheta^3+\sqrt{-1}\vartheta^4\,,\hspace{1em}
	   \bar{\theta}^{\dot{1}}\;=\; \vartheta^1-\sqrt{-1}\vartheta^2\,,\hspace{1em}
	   \bar{\theta}^{\dot{2}}\;=\; \vartheta^3-\sqrt{-1}\vartheta^4\,.
	 $$
	 under the isomorphism $S^{\,\Bbb C}\simeq S^{\prime}\oplus S^{\prime\prime}$.
Note that the above relation of constant generating sections of spinor bundles
  is a realization of the statement that, in four dimensions,
	a Majorana spinor is a Dirac spinor whose antichiral component is the complex conjugate
	of its chiral component;
		(e.g.\ [G-G-R-S: Sec.\ 3.1.a]).

\bigskip

\noindent
$(b)$\;{\it From a spinor bundle to its associated super $C^{\infty}$-scheme structure on $X$}

\medskip

\noindent
Each of the spinor bundles $S$, $S^{\prime}$, $S^{\prime\prime}$, $S^{\prime}\oplus S^{\prime}$
 defines a super $C^{\infty}$-scheme structure on the $4$-dimensional Minkowski space-time $X$
 as follows; cf.\ [L-Y9: Example 1.4.5] (D(11.4.1)).\footnote{Note
                                       that the convention here is slightly different from that of
                                         [L-Y9: Example 1.4.5] (D(11.4.1)),
										   where it is the dual bundle $S^{\vee}$ of a spinor bundle $S$
										    that is used in the construction.
                                         Indeed, algebrogeometrically it is more natural and functorially correct
										    to think of a fermionic coordinate
											as a map from the spinor bundle $S$ and hence a section of the dual bundle $S^{\vee}$,
                                            rather than the bundle $S$ itself.	
										 (Fortunately, as a module of the principal {\it SO}$^{\,\uparrow}(3,1)$-bundle $P$, 										
										      the two different choices of conventions $S$ vs.\ $S^{\vee}$ keep the same
										      chirality $(\frac{1}{2},0)$ or $(0,\frac{1}{2})$
										      when the spinor bundle involved is $S^{\prime}$ or $S^{\prime\prime}$.)
                                         However, for the purpose of the current notes it is the $C^{\infty}$-ring structure
     										that matters in the end.
										 We thus adopt the physicists' convention here,
										   which directly takes a section of a spinor bundle as giving a fermionic coordinate,
										 to avoid
										    the unnecessary distraction
										      from the burden of notations with $(\,\cdot\,)^{\vee}$ everywhere.
										 A setup, construction, or statement using one convention can always be converted into one
										  using the other convention.}
 
\bigskip

\noindent
$(b.1)$\; {\it The super $C^{\infty}$-scheme structure on $X$ associated to
           the Majorana spinor bundle $S$}

\medskip

\noindent
In this case, local sections of the {\it Grassmann-algebra bundle associated to $S$}
  $$
    \mbox{$\bigwedge$}_{\,\Bbb R}^{\bullet}S\;
	 :=\;  \mbox{$\bigoplus_{l=0}^4\,\bigwedge$}_{\,\Bbb R}^lS
	 \hspace{2em}
	 \mbox{(with $\bigwedge_{\,\Bbb R}^0S:=$
	                     the constant ${\Bbb R}$-line bundle $\underline{\Bbb R}$ over $X$)}
  $$
  over $X$
 defines a sheaf $^{\Bbb R}\widehat{\cal O}_X$ of
  ${\Bbb Z}/2$-graded, ${\Bbb Z}/2$-commutative ${\cal O}_X$-algebras on $X$,
  with
   $$
      ^{\Bbb R}\widehat{\cal O}_X(U)\;
	   =\;  C^{\infty}( \mbox{$\bigwedge$}_{\,\Bbb R}^{\bullet}S|_U)
   $$
    for an open set $U\subset X$   and
   the ${\Bbb Z}/2$-grading given by
   $$
     \begin{array}{cl}
	 & ^{\Bbb R}\widehat{\cal O}_{X,\,\even}\;\;
         :=\;\; \mbox{sheaf of sections of
				   $\;\;\bigwedge_{\,\Bbb R}^{\even}\!S\,
				           :=\,  \underline{\Bbb R}\oplus \bigwedge_{\,\Bbb R}^2 S
						           \oplus  \bigwedge_{\,\Bbb R}^4 S\,$}
           \hspace{3em}			   \\[1.2ex]
	 \mbox{and}\hspace{2em}	
	 & ^{\Bbb R}\widehat{\cal O}_{X,\,\odd}\;\;\;
            :=\;\; \mbox{sheaf of sections of
			         $\;\;\bigwedge_{\,\Bbb R}^{\odd}\!S\,
					        :=\,  S \oplus \bigwedge_{\,\Bbb R}^3 S$}\,.  	
	 \end{array}
    $$
 By Lemma~1.1.3 and Lemma~1.1.4,
  the $C^{\infty}$-ring structure on $C^{\infty}(U)$ extends uniquely and canonically
   to a $C^{\infty}$-ring structure on
	\begin{eqnarray*}
	  C^{\infty}(\mbox{$\bigwedge_{\,\Bbb R}^{\even}\!S|_U$})
	  & = &  C^\infty(U)[\vartheta^1|_U,\, \vartheta^2|_U,\,
	                                              \vartheta^3|_U, \vartheta^4|_U]^{\anticommuting}_{\even}\\[.6ex]
	  & = & C^\infty(U)[(\vartheta^1\vartheta^2)|_U, (\vartheta^1\vartheta^3)|_U,
	                                   (\vartheta^1\vartheta^4)|_U, (\vartheta^2\vartheta^3)|_U,
							           (\vartheta^2\vartheta^4)|_U, (\vartheta^3\vartheta^4)|_U]\,.
	\end{eqnarray*}
 This renders $^{\Bbb R}\widehat{\cal O}_X$ a sheaf of super $C^{\infty}$-rings on $X$.
 
 \bigskip

\begin{definition} {\bf [super $C^{\infty}$-scheme $^{\Bbb R}\!\widehat{X}$ associated to $S$]}\;
{\rm
 We call the locally-ringed space
   $^{\Bbb R}\!\widehat{X}:=(X,\,^{\Bbb R}\!\widehat{\cal O}_X)$
    the {\it super $C^{\infty}$-scheme associated to the Majorana spinor bundle $S$} on $X$.
}\end{definition}

\bigskip

\noindent
$(b.2)$\; {\it The super $C^{\infty}$-scheme structure on $X$ associated to
          the chiral Weyl spinor bundle $S^{\prime}$}

\medskip

\noindent
In this case, local sections of the {\it real-complex-mixed Grassmann-algebra bundle associated to $S^{\prime}$}
  $$
    \mbox{$\bigwedge$}_{\,\Bbb R, \Bbb C}^{\bullet}S^{\prime}\;
	 :=\;  \underline{\Bbb R}\oplus \mbox{$\bigoplus_{l=1}^2\,\bigwedge$}_{\,\Bbb C}^lS^{\prime}
  $$
  over $X$
 defines a sheaf $\widehat{\cal O}^{\,\prime}_X$ of
  ${\Bbb Z}/2$-graded, ${\Bbb Z}/2$-commutative ${\cal O}_X$-algebras on $X$,
  with
   $$
      \widehat{\cal O}^{\,\prime}_X(U)\;
	   =\;  C^{\infty}( \mbox{$\bigwedge$}_{\,\Bbb R,\Bbb C}^{\bullet}S^{\prime}|_U)
   $$
    for an open set $U\subset X$   and
   the ${\Bbb Z}/2$-grading given by
   $$
     \begin{array}{cl}
	 & \widehat{\cal O}^{\,\prime}_{X,\,\even}\;\;
         :=\;\; \mbox{sheaf of sections of
				   $\;\;\bigwedge_{\,\Bbb R,\Bbb C}^{\even}\!S^{\prime}\,
				           :=\,  \underline{\Bbb R}\oplus \bigwedge_{\,\Bbb C}^2 S^{\prime}\,$}
           \hspace{3em}			   \\[1.2ex]
	 \mbox{and}\hspace{2em}	
	 & \widehat{\cal O}^{\,\prime}_{X,\,\odd}\;\;\;
            :=\;\; \mbox{sheaf of sections of
			         $\;\;\bigwedge_{\,\Bbb C}^{\odd}\!S^{\prime}\,
					        :=\,  S^{\prime}$}\,.  	
	 \end{array}
    $$
 By Lemma~1.1.3 and Lemma~1.1.4,
  the $C^{\infty}$-ring structure on $C^{\infty}(U)$ extends uniquely and canonically
   to a $C^{\infty}$-ring structure on
	\begin{eqnarray*}
	  C^{\infty}(\mbox{$\bigwedge_{\,\Bbb R,\Bbb C}^{\even}\!S^{\prime}|_U$})
	   & \:= &  C^{\infty}(U)^{\Bbb R, \Bbb C}[\theta^1|_U,\, \theta^2|_U]^{\anticommuting}_{\even}\\
	   & :=   &  C^{\infty}(U)
						\oplus C^{\infty}(U)^{\Bbb C}\cdot (\theta^1\theta^2)|_U\,. 						 
	\end{eqnarray*}
 This renders $\widehat{\cal O}^{\,\prime}_X$ a sheaf of super $C^{\infty}$-rings on $X$.
 
 \bigskip

\begin{definition} {\bf [super $C^{\infty}$-scheme
               $\widehat{X}^{\prime}$ associated to $S^{\prime}$]}\; {\rm
 We call the locally-ringed space
    $\widehat{X}^{\prime}:=(X,\, \widehat{\cal O}^{\,\prime}_X)$
  the {\it super $C^{\infty}$-scheme associated to the chiral Weyl spinor bundle $S^{\prime}$} on $X$.
}\end{definition}

\bigskip

\noindent
$(b.3)$\; {\it The super $C^{\infty}$-scheme structure on $X$ associated to
           the antichiral Weyl spinor bundle $S^{\prime}$}

\medskip

\noindent
In this case,
local sections of the {\it real-complex-mixed Grassmann-algebra bundle associated to $S^{\prime\prime}$}
  $$
    \mbox{$\bigwedge$}_{\,\Bbb R, \Bbb C}^{\bullet}S^{\prime\prime}\;
	 :=\;  \underline{\Bbb R}\oplus
	          \mbox{$\bigoplus_{l=1}^2\,\bigwedge$}_{\,\Bbb C}^lS^{\prime\prime}
  $$
  over $X$
 defines a sheaf $\widehat{\cal O}^{\,\prime\prime}_X$ of
  ${\Bbb Z}/2$-graded, ${\Bbb Z}/2$-commutative ${\cal O}_X$-algebras on $X$,
  with
   $$
      \widehat{\cal O}^{\,\prime\prime}_X(U)\;
	   =\;  C^{\infty}( \mbox{$\bigwedge$}_{\,\Bbb R,\Bbb C}^{\bullet}S^{\prime\prime}|_U)
   $$
    for an open set $U\subset X$   and
   the ${\Bbb Z}/2$-grading given by
   $$
     \begin{array}{cl}
	 & \widehat{\cal O}^{\,\prime\prime}_{X,\,\even}\;\;
         :=\;\; \mbox{sheaf of sections of
				   $\;\;\bigwedge_{\,\Bbb R,\Bbb C}^{\even}\!S^{\prime\prime}\,
				           :=\,  \underline{\Bbb R}\oplus \bigwedge_{\,\Bbb C}^2 S^{\prime\prime}\,$}
           \hspace{3em}			   \\[1.2ex]
	 \mbox{and}\hspace{2em}	
	 & \widehat{\cal O}^{\,\prime\prime}_{X,\,\odd}\;\;\;
            :=\;\; \mbox{sheaf of sections of
			         $\;\;\bigwedge_{\,\Bbb C}^{\odd}\!S^{\prime\prime}\,
					        :=\,  S^{\prime\prime}$}\,.  	
	 \end{array}
    $$
 By Lemma~1.1.3 and Lemma~1.1.4,
  the $C^{\infty}$-ring structure on $C^{\infty}(U)$ extends uniquely and canonically
   to a $C^{\infty}$-ring structure on
	\begin{eqnarray*}
	  C^{\infty}(\mbox{$\bigwedge_{\,\Bbb R,\Bbb C}^{\even}\!S^{\prime\prime}|_U$})
	   & \:= &  C^{\infty}(U)^{\Bbb R, \Bbb C}
	                  [\bar{\theta}^{\dot{1}}|_U,\, \bar{\theta}^{\dot{2}}|_U]^{\anticommuting}_{\even}\\
	   & :=   &  C^{\infty}(U)
						\oplus C^{\infty}(U)^{\Bbb C}
						         \cdot (\bar{\theta}^{\dot{1}}\bar{\theta}^{\dot{2}})|_U\,. 						 
	\end{eqnarray*}
 This renders $\widehat{\cal O}^{\,\prime\prime}_X$ a sheaf of super $C^{\infty}$-rings on $X$.
 
 \bigskip

\begin{definition} {\bf [super $C^{\infty}$-scheme
               $\widehat{X}^{\prime\prime}$ associated to $S^{\prime\prime}$]}\; {\rm
 We call the locally-ringed space
     $\widehat{X}^{\prime\prime}:=(X,\, \widehat{\cal O}^{\,\prime\prime}_X)$
  the {\it super $C^{\infty}$-scheme
    associated to the antichiral Weyl spinor bundle $S^{\prime\prime}$} on $X$.
}\end{definition}

\bigskip	

\noindent
$(b.4)$\; {\it The super $C^{\infty}$-scheme structure on $X$ associated to
          the Dirac spinor bundle $S^{\prime}\oplus S^{\prime\prime}$}
		
\medskip

\noindent
In this case, local sections of the
{\it real-complex-mixed Grassmann-algebra bundle associated to $S^{\prime}\oplus S^{\prime\prime}$}
  $$
    \mbox{$\bigwedge$}_{\,\Bbb R, \Bbb C}^{\bullet}
	        (S^{\prime}\oplus S^{\prime\prime})\;
	 :=\;  \underline{\Bbb R}\oplus \mbox{$\bigoplus_{l=1}^4\,\bigwedge$}_{\,\Bbb C}^l
	          (S^{\prime}\oplus S^{\prime\prime})
  $$
  over $X$
 defines a sheaf $^{\Bbb C}\widehat{\cal O}_X$ of
  ${\Bbb Z}/2$-graded, ${\Bbb Z}/2$-commutative ${\cal O}_X$-algebras on $X$,
  with
   $$
      ^{\Bbb C}\widehat{\cal O}_X(U)\;
	   =\;  C^{\infty}( \mbox{$\bigwedge$}_{\,\Bbb R,\Bbb C}^{\bullet}
	           (S^{\prime}\oplus S^{\prime\prime})|_U)
   $$
    for an open set $U\subset X$   and
   the ${\Bbb Z}/2$-grading given by
   $$
     \begin{array}{cl}
	 & ^{\Bbb C}\widehat{\cal O}_{X,\,\even}\;\;
         :=\;\; \mbox{sheaf of sections of
				   $\;\;\bigwedge_{\,\Bbb R,\Bbb C}^{\even}\!(S^{\prime}\oplus S^{\prime\prime})\,
				           :=\,  \underline{\Bbb R}
						           \oplus \bigwedge_{\,\Bbb C}^2 (S^{\prime}\oplus S^{\prime\prime})
								   \oplus \bigwedge_{\,\Bbb C}^4 (S^{\prime}\oplus S^{\prime\prime})\,$}
         			   \\[1.2ex]
	 \mbox{and}
	 & ^{\Bbb C}\widehat{\cal O}_{X,\,\odd}\;\;\;
            :=\;\; \mbox{sheaf of sections of
			         $\;\;\bigwedge_{\,\Bbb C}^{\odd}\!(S^{\prime}\oplus S^{\prime\prime})\,
					        :=\,  (S^{\prime}\oplus S^{\prime\prime})
							          \oplus \bigwedge_{\,\Bbb C}^3 (S^{\prime}\oplus S^{\prime\prime})$}\,.  	
	 \end{array}
    $$
 By Lemma~1.1.3 and Lemma~1.1.4,
  the $C^{\infty}$-ring structure on $C^{\infty}(U)$ extends uniquely and canonically
   to a $C^{\infty}$-ring structure on
	\begin{eqnarray*}
	  \lefteqn{C^{\infty}(\mbox{$\bigwedge_{\,\Bbb R,\Bbb C}^{\even}\!
	                                                                (S^{\prime}\oplus S^{\prime\prime})|_U$})
	   \; =\;   C^{\infty}(U)^{\Bbb R, \Bbb C}[\theta^1|_U,\, \theta^2|_U,\,
	                          \bar{\theta}^{\dot{1}}|_U,\, \bar{\theta}^{\dot{2}}|_U]^{\anticommuting}
							                                                                                                                                 _{\even}} \\
	   &&  :=   \mbox{\small the even polynomial ring in anticommuting variables $\theta^1|_U,\, \theta^2|_U,\,
	                          \bar{\theta}^{\dot{1}}|_U,\, \bar{\theta}^{\dot{2}}|_U$ with}\\[-.6ex]
		  && \hspace{1.33em}
		      \mbox{\small the degree-$0$ coefficient in $C^{\infty}(U)$  and
							          all higher-degree coefficients in $C^{\infty}(U)^{\Bbb C}$}\,. 						
	\end{eqnarray*}
 This renders $^{\Bbb C}\widehat{\cal O}_X$ a sheaf of super $C^{\infty}$-rings on $X$.
 
\bigskip

\begin{definition} {\bf [super $C^{\infty}$-scheme $^{\Bbb C}\!\widehat{X}$
              associated to $S^{\prime}\oplus S^{\prime\prime}$]}\; {\rm
 The locally-ringed space
    $^{\Bbb C}\!\widehat{X}:= (X,\,^{\Bbb C}\widehat{\cal O}_X)$
    is called the {\it super $C^{\infty}$-scheme
    associated to the Dirac spinor bundle $S^{\prime}\oplus S^{\prime\prime}$} on $X$.
}\end{definition}

\bigskip

\noindent
$(c)$\;{\it Relation among different super $C^{\infty}$-scheme structures on $X$}

\medskip

\noindent
Since all the structure sheaves
     $^{\Bbb R}\widehat{\cal O}_X$,
     $\widehat{\cal O}^{\,\prime}_X$,
     $\widehat{\cal O}^{\,\prime\prime}_X$,
     $^{\Bbb C}\widehat{\cal O}_X$
   are extensions of ${\cal O}_X$ by nilpotent elements,
   \begin{itemize}
     \item[\LARGE $\cdot$]
     {\it the underlying topology of  the super $C^{\infty}$-schemes
         $\,^{\Bbb R}\!\widehat{X}$,
		 $\widehat{X}^{\prime}$,
         $\widehat{X}^{\prime\prime}$,
		 $^{\Bbb C}\!\widehat{X}$
       are all canonically identical to the topology of $X$, i.e.\ ${\Bbb R}^4$.}
   \end{itemize}
Note that though $\bigwedge_{\Bbb C}$ is used in the construction of
    $\widehat{\cal O}^{\,\prime}_X$,
    $\widehat{\cal O}^{\,\prime\prime}_X$,
    $^{\Bbb C}\widehat{\cal O}_X$
   that involves complex spinor bundles,
 all the four sheaves
    $^{\Bbb R}\widehat{\cal O}_X$,
    $\widehat{\cal O}^{\,\prime}_X$,
    $\widehat{\cal O}^{\,\prime\prime}_X$,
    $^{\Bbb C}\widehat{\cal O}_X$
 are by default sheaves of ${\cal O}_X$-algebras.
With $X$ taken also as a super $C^{\infty}$-scheme with the odd part of the structure sheaf identically zero,
then, by construction,
 all the super $C^{\infty}$ schemes $\widehat{(\,\cdot\,)}$ constructed in Item (b) fit into the
 sequence of morphisms of super $C^{\infty}$-schemes
 $$
  \xymatrix{
   X\;  \ar@{^{(}->}[r]
    & \;\;\mbox{any of $\,^{\Bbb R}\!\widehat{X}$, $\widehat{X}^{\prime}$,
	                      $\widehat{X}^{\prime\prime}$, $^{\Bbb C}\!\widehat{X}$}\;\;
	   \ar@{->>}[r]
	& \;X\,,
  }
 $$
 with the composition the identity map $\Id_X:X\rightarrow X$ of the $C^{\infty}$-scheme $X$.
Furthermore, one has
 the following commutative inclusion-quotient diagram of sheaves of super $C^{\infty}$ ${\cal O}_X$-algebras
 $$
  \xymatrix@R=4ex{
    &   ^{\Bbb C}\widehat{\cal O}_X
	        \ar@<.3ex> @{->>} [dl]  \ar@<.3ex>@{->>}[d]  \ar@<.3ex>@{->>}[dr]\\
   ^{\Bbb R}\widehat{\cal O}_X\;\rule{0ex}{1.2em}
              \ar @<.3ex>@{^{(}->}[ur]   \ar@<.3ex>@{->>}[dr]
       & \widehat{\cal O}^{\,\prime}_X\rule{0ex}{1.2em}
	           \ar@<.3ex> @{^{(}->}[u]   \ar@<.3ex>@{->>}[d]
       & \;\widehat{\cal O}^{\,\prime\prime}_X\,,\rule{0ex}{1.2em}
	           \ar@<.3ex> @{^{(}->}[ul]    \ar@<.3ex>@{->>}[dl] \\
	& \;{\cal O}_X\;\rule{0ex}{1.2em}
        	\ar@<.3ex> @{^{(}->}[ul] \ar@<.3ex> @{^{(}->}[u]  \ar@<.3ex>@{^{(}->}[ur]
   }
 $$
 which gives rise contravariantly
  to a commutative diagram of dominant morphisms and inclusions between super $C^{\infty}$-schemes
  $$
  \xymatrix@R=4ex{
    &   ^{\Bbb C}\!\widehat{X}
	       \ar@<-.3ex>@{->>}[dl]  \ar@<-.3ex>@{->>}[d]  \ar@<-.3ex>@{->>}[dr]\\
   ^{\Bbb R}\!\widehat{X}\;\rule{0ex}{1.2em}
           \ar@<-.3ex>@{_{(}->}[ur] \ar@<-.3ex>@{->>}[dr]
       & \widehat{X}^{\prime}\rule{0ex}{1.2em}
	        \ar@<-.3ex>@{_{(}->}[u] \ar@<-.3ex>@{->>}[d]
       & \;\widehat{X}^{\prime\prime}\,,\rule{0ex}{1.2em}
	         \ar@<-.3ex>@{_{(}->}[ul] \ar@<-.3ex>@{->>}[dl]  \\
	& \; X\;\rule{0ex}{1.2em}
	         \ar@<-.3ex>@{_{(}->}[ul]  \ar@<-.3ex>@{_{(}->}[u] \ar@<-.3ex>@{_{(}->}[ur]
   }
 $$
 that all restrict to the identity map $\Id_X$ on $X$.

\bigskip

\noindent
$(d)$\;{\it The $d=4$, $N=1$ superspace}

\medskip

\noindent
The relations in Item (c) suggest the super $C^{\infty}$-scheme
  $^{\Bbb C}\!\widehat{X} :=(X, \,\!^{\Bbb C}\widehat{\cal O}_X)$
 as the $d=4$, $N=1$ superspace.
However, one soon finds that in practice
  it is not very convenient to work on $C^{\infty}(^{\Bbb C}\!\widehat{X})$.
For example, from the presentation of $C^{\infty}(^{\Bbb C}\!\widehat{X})$
  as a polynomial ring in anti-commuting variables
  $\theta^1,\,\theta^2,\,\bar{\theta}^{\dot{1}},\,\bar{\theta}^{\dot{2}}$,
  one would expect derivations on $C^{\infty}(^{\Bbb C}\!\widehat{X})$ of the form
  $\partial/\partial\theta^1,\, \partial/\partial\theta^2,\,
    \partial/\bar{\theta}^{\dot{1}},\, \partial/\bar{\theta}^{\dot{2}}$.
Yet, the natural definition for these differential operators on $C^{\infty}(^{\Bbb C}\!\widehat{X})$
 does not give operations on $C^{\infty}(^{\Bbb C}\!\widehat{X})$ at all
 since new elements with degree-$0$ coefficients in $C^{\infty}(X)^{\Bbb C}$ can occur.
This consideration leads us to a final revision to the structure sheaf for the $d=4$, $N=1$ superspace.
 
\bigskip

\begin{definition}{\bf [$d=4$, $N=1$ superspace]}\; {\rm
 Continuing the notations in Item (b.4).
 Let $^{\Bbb C}\widehat{\cal O}_X\subset \widehat{\cal O}_X$
  be the extension of $^{\Bbb C}\widehat{\cal O}_X$ as ${\cal O}_X$-algebras with
  $$
     \widehat{\cal O}_X(U)\;
	   =\;  C^{\infty}( \mbox{$\bigwedge$}_{\,\Bbb C}^{\bullet}
	           (S^{\prime}\oplus S^{\prime\prime})|_U)
   $$
   for an open set $U\subset X$,
   where
    $$
     \mbox{$\bigwedge$}_{\,\Bbb C}^{\bullet}(S^{\prime}\oplus S^{\prime\prime})\;
	  :=\;  \mbox{$\bigoplus_{l=0}^4\,\bigwedge$}_{\,\Bbb C}^l
	          (S^{\prime}\oplus S^{\prime\prime})
    $$
	with $\bigwedge^0_{\,\Bbb C}(S^{\prime}\oplus S^{\prime\prime})
	           := \underline{\Bbb C}$ by convention.
 The super $C^{\infty}$-ring structure on $^{\Bbb C}\widehat{\cal O}_X$ is intact.  													
 The ${\Bbb Z}/2$-grading of $\widehat{\cal O}_X$ extends that of $^{\Bbb C}\widehat{\cal O}_X$:
   $$
     \begin{array}{cl}
	 & \widehat{\cal O}_{X,\,\even}\;\;
         :=\;\; \mbox{sheaf of sections of
				   $\;\;\bigwedge_{\,\Bbb C}^{\even}\!(S^{\prime}\oplus S^{\prime\prime})\,
				           :=\,  \underline{\Bbb C}
						           \oplus \bigwedge_{\,\Bbb C}^2 (S^{\prime}\oplus S^{\prime\prime})
								   \oplus \bigwedge_{\,\Bbb C}^4 (S^{\prime}\oplus S^{\prime\prime})\,$}
         			   \\[1.2ex]
	 \mbox{and}
	 & \widehat{\cal O}_{X,\,\odd}\;\;\;
            :=\;\; \mbox{sheaf of sections of
			         $\;\;\bigwedge_{\,\Bbb C}^{\odd}\!(S^{\prime}\oplus S^{\prime\prime})\,
					        :=\,  (S^{\prime}\oplus S^{\prime\prime})
							          \oplus \bigwedge_{\,\Bbb C}^3 (S^{\prime}\oplus S^{\prime\prime})$}\,.  	
	 \end{array}
   $$
 The locally-ringed space  $\widehat{X}:= (X,\widehat{\cal O}_X)$ is called
  the {\it $d=4$, $N=1$ superspace}.
 Explicitly,
	\begin{eqnarray*}	
	  \widehat{\cal O}_X (U)
	   & \:=  &  C^{\infty}(U)^{\Bbb C}[\theta^1|_U,\, \theta^2|_U,\,
	                  \bar{\theta}^{\dot{1}}|_U,\, \bar{\theta}^{\dot{2}}|_U]^{\anticommuting} \\
	   & :=    & \mbox{\small the polynomial ring in anticommuting variables $\theta^1|_U,\, \theta^2|_U,\,
	                          \bar{\theta}^{\dot{1}}|_U,\, \bar{\theta}^{\dot{2}}|_U$}\\[-.6ex]
			   && \mbox{\small with coefficients in $C^{\infty}(U)^{\Bbb C}$}						 
	\end{eqnarray*}
  and the {\it function-ring} of $\widehat{X }$ is given by
    $$
	  C^{\infty}(\widehat{X})\;
	    :=\;  \widehat{\cal O}_X (X)\;
		 =\;  C^{\infty}(X)^{\Bbb C}[\theta^1,\, \theta^2,\,
	                          \bar{\theta}^{\dot{1}},\, \bar{\theta}^{\dot{2}}]^{\anticommuting}\,,
	$$
 which contains the super $C^{\infty}$-ring $C^{\infty}(^{\Bbb C}\!\widehat{X})$
   as a super $C^{\infty}(X)$-subalgebra.
 By construction, for $U\subset X$ open,
   the {\it $C^{\infty}$-hull} of $\widehat{\cal O}_X(U)$
   is given by the $C^\infty$-hull of $^{\Bbb C}\widehat{\cal O}_X(U)$.
 Recall
   the coordinate functions $x^0,\,x^1,\, x^2,\, x^3$ on $X$,
    which generate the $C^{\infty}$-ring $C^{\infty}(X)$.
  We will call the collection
    $x^0,\,x^1,\, x^2,\, x^3,\, \theta^1,\,\theta^2,\,\bar{\theta}^{\dot{1}},\, \bar{\theta}^{\dot{2}}$
  the {\it standard generators} of $C^{\infty}(\widehat{X})$
  (under $C^{\infty}$-operations,
      anticommutativity of $\theta^1,\theta^2,\bar{\theta}^{\dot{1}}, \bar{\theta}^{\dot{2}}$,
	  and with $\sqrt{-1}$\,).
 An element in $C^{\infty}(\widehat{X})$ is called a {\it superfield}
     or a {\it scalar superfield}\footnote{{\it Note for Mathematicians}\;
	                                         There
	                                           are two naming systems or senses in physics literature for fields over a superspace
								               and they are usually used in a mixed way.
											 The first one follows the nature of $(\theta,\bar{\theta})$-degree-$0$ component.
											  If it is a section of the associated bundle of $P$
											     from the trivial (resp.\ spinor, vector, $\cdots$) representation
												 of {\it SO}$^\uparrow(3,1)$,
                                               then it is called {\it scalar superfield}
											     (resp.\ {\it spinor superfield}, {\it vector superfield},  $\cdots$).
                                             The second follows the representation of the supersymmetry algebra in question.
											 If it corresponds to the chiral multiplet (resp.\ vector multiplet, $\cdots$)
											  then it is called {\it chiral superfield}, (resp.\ {\it vector superfield}, $\cdots$).
											 In the second case, the superfields are usually defined along with (a system of)
                                               constraints (i.e.\ SUSY-Rep Compatible Conditions)
											   to remove the surplus degrees of freedom not included
											   in the corresponding representation of the supersymmetry algebra.
                                             In the current work, we call  a general $f\in \widehat{\cal O}_X$ a scalar superfield
											   according the first sense
											   and call a $f\in \widehat{\cal O}_X$ with SUSY-Rep Compatible Conditions
                                               a chiral superfield, a vector superfield, $\cdots$, according to the second sense.
											 See, e.g., [We-B: Chap.s.\ IV, V, VI] and
											         also [G-G-R-S: Sec.\ 3.3.b.3 \& 3.3.b.4], [West1: Sec.\ 11.1].
											  } 
     on the superspace in physics literature (e.g.\ [F-W-Z]; cf.\ [We-B: Chap.\ IV]).
 Recall the principal $\SO^\uparrow(3,1)$-bundle $P$ over $X$.
 Then, $\widehat{\cal O}_X$ can be decomposed into a direct sum of
  complex irreducible (left) $P$-modules
   {\small
   \begin{eqnarray*}
    \widehat{\cal O}_X
	  &  = &  {\cal O}_X^{\,\Bbb C}\,
	    \oplus\, {\cal S}^{\prime}\oplus {\cal S}^{\prime\prime}\,
		\oplus\, \mbox{$\bigwedge$}^2{\cal S}^{\prime}\,
		\oplus\, {\cal S}^{\prime}\otimes_{{\cal O}_X^{\,\Bbb C}}\!{\cal S}^{\prime\prime}\,
		\oplus\, \mbox{$\bigwedge$}^2{\cal S}^{\prime}\, \\
     && \hspace{6em}		
		\oplus\, {\cal S}^{\prime}
		                    \otimes_{{\cal O}_X^{\,\Bbb C}}
						    \!\mbox{$\bigwedge$}^2{\cal S}^{\prime\prime}
        \oplus\, \mbox{\Large $($}\mbox{$\bigwedge$}^2{\cal S}^\prime\mbox{\Large $)$}
		                  \otimes_{{\cal O}_X^{\,\Bbb C}}{\cal S}^{\prime\prime}\,
		\oplus \mbox{$\bigwedge$}^4({\cal S}^\prime\oplus {\cal S}^{\prime\prime})\,,
   \end{eqnarray*}}in 
   which
   $\bigwedge^2{\cal S}^\prime
	   \simeq \mbox{$\bigwedge$}^2{\cal S}^{\prime\prime}
	   \simeq \mbox{$\bigwedge$}^4({\cal S}^\prime\oplus {\cal S}^{\prime\prime})
	   \simeq {\cal O}_X^{\,\Bbb C}$,\;
   ${\cal S}^\prime\otimes_{}\!{\cal S}^{\prime\prime}
       \simeq {\cal T}_{\ast}X^{\Bbb C}$,\;
   ${\cal S}^\prime
	       \otimes_{{\cal O}_X^{\,\Bbb C}}\!
		   \mbox{\large $($}\mbox{$\bigwedge$}^2{\cal S}^\prime\mbox{\large $)$}
	   \simeq {\cal S}^\prime$,\;  and
	$\mbox{\large $($}\bigwedge^2{\cal S}^\prime\mbox{\large $)$}
	       \otimes_{{\cal O}_X^{\,\Bbb C}}\!{\cal S}^{\prime\prime}
	   \simeq {\cal S}^{\prime\prime}$\;
   as complex $P$-modules.
}\end{definition}

\medskip

\begin{notation}		   {\bf [collective standard coordinates, components]}\; {\rm
 For convenience, we denote collectively	
    $$
	  x\;=\;(x^0,x^1,x^2,x^3)=(x^{\mu})_{\mu}\,,\;\;\;
	  \theta\;=\;(\theta^1,\theta^2)=(\theta^{\alpha})_{\alpha}\,,\;\;\;
	 \bar{\theta}\;=\;(\bar{\theta}^{\dot{1}},\bar{\theta}^{\dot{2}})\;
	         =\;(\bar{\theta}^{\dot{\beta}})_{\dot{\beta}}
	$$
    when in need.
 In terms of this,
  we write $f \in C^\infty(\widehat{X}) $ as a polynomial in $(\theta,\bar{\theta})$
   with coefficients in $C^\infty(X)^{\Bbb C}$:
   \begin{eqnarray*}
     f & = & f(x,\theta,\bar{\theta})\; \\
	   & = &  f_{(0)}(x)\,
	        +\, \sum_{\alpha}f_{(\alpha)}(x)\theta^\alpha\,
			+\, \sum_{\dot{\beta}}f_{(\dot{\beta})}(x)\bar{\theta}^{\dot{\beta}}\,
			+\, f_{(12)}(x) \theta^1\theta^2\,
			+\, \sum_{\alpha,\dot{\beta}}f_{(\alpha\dot{\beta})}(x)
			       \theta^\alpha\bar{\theta}^{\dot{\beta}}\,  \\
        && 				
            +\, f_{(\dot{1}\dot{2})}(x)\bar{\theta}^{\dot{1}}\bar{\theta}^{\dot{2}}\,
			+\, \sum_{\dot{\beta}} f_{(12\dot{\beta})}(x)
			        \theta^1\theta^2\bar{\theta}^{\dot{\beta}}\,
			+\, \sum_\alpha f_{(\alpha\dot{1}\dot{2})}(x)
			        \theta^\alpha\theta^{\dot{1}}\theta^{\dot{2}}\,
		    +\, f_{(12\dot{1}\dot{2})}(x)
			        \theta^1\theta^2\theta^{\dot{1}}\theta^{\dot{2}}\,;			
   \end{eqnarray*}
   and call $f_{(\mbox{\tiny $\bullet$})}$ of the {\it components} of $f$.\footnote{{\it On
	                               the index-hidden notation for a superfield.}\;                      
                                 With the notations in [We-B: Appendix A] of Wess \& Bagger,
								  one may express a superfield $f\in C^\infty(\widehat{X})$ as
									\begin{eqnarray*}
									  f(x,\theta, \bar{\theta})
									    & =  &  f(x)\,
										     +\, \theta \phi(x)\, +\, \bar{\theta}\bar{\chi}(x)\,
                                             +\, \theta\theta m(x)\, +\, \bar{\theta}\bar{\theta}n(x)\, \\
									  && \hspace{6em}
                                             +\, \sum_{\mu}\theta\sigma^\mu\bar{\theta}v_\mu(x)\,
                                             +\, \theta\theta\bar{\theta}\bar{\lambda}(x)\,
                                             +\, \bar{\theta}\bar{\theta}\theta\psi(x)\,
                                             +\, \theta\theta\bar{\theta}\bar{\theta}d(x)											 
									\end{eqnarray*}
                                   ([We-B: Eq.\ (4.9)])
								   in accordance with the $P$-module decomposition of $\widehat{\cal O}_X$.
                                  Such notations are used prevailingly in [We-B] and other physicists' supersymmetry literatures.
                                 {\it However}, for the computations in the current work
  								     it is more convenient to keep the fermionic indices explicit and treat
									 a superfield as a polynomial of odd, anticommuting variables explicitly.
                                  For that reason, such by-now-standard index-hidden notations in physics literature
								   are not adopted here.}
}\end{notation}

\medskip

\begin{remark} {\it $[\widehat{\cal O}_X$
                vs.\ $^{\Bbb R}\widehat{\cal O}_X\otimes_{\Bbb R}{\Bbb C}]$}\; {\rm
 As a sheaf of ${\Bbb Z}/2$-graded rings,
  $\widehat{\cal O}_X$ and $^{\Bbb R}\widehat{\cal O}_X\otimes_{\Bbb R}{\Bbb C}$
   are canonically isomorphic.
 However, by standard convention,
    the $C^\infty$-hull of $^{\Bbb R}\widehat{\cal O}_X\otimes_{\Bbb R}{\Bbb C}$
    is defined to be the $C^\infty$-hull of $^{\Bbb R}\widehat{\cal O}_X$
  while the $C^\infty$-hull of $\widehat{\cal O}_X$ is much larger.
 Thus, they are not isomorphic as sheaves of rings with partial $C^\infty$-ring structure.
}\end{remark}

\bigskip

\subsection{Calculus on the $d=4$, $N=1$ superspace $\widehat{X}$}

Calculus on the $d=4$, $N=1$ superspace $\widehat{X}$
 comes from the extension of the calculus on $X$.
Essential basics of the extension we need are collected in this subsection
 for introducing terminology and notations and also for fixing the conventions we will adopt.
Cf.\ [We-B: Chap.'s\ IV  \& XII]; also [G-G-R-S: Sec.'s 3.3.b \& 3.7], [West1: Chap.\ 14].

\bigskip

\begin{definition} {\bf [parity-conjugation]}\; {\rm
 (1)
  For $a=a_{\even}+ a_{\odd}$ a ${\Bbb Z}/2$-graded object,
  define the {\it parity conjugation} of $a$ to be $^{\varsigma}\!a:= a_{\even}-a_{odd}$.
 
 (2)
  Let $\beta$ be another ${\Bbb Z}/2$-graded object (not necessarily of the same kind as $a$)
    of pure parity (i.e.\ $\beta$ is either even or odd).
  Define the {\it $\beta$-induced parity-conjugation} of $a$ to be
    $^{\varsigma\!_\beta}a = a$ if $\beta$ is even, or $^{\varsigma}\!a$ if $\beta$ is odd.
}\end{definition}

\bigskip

Such parity conjugation often occurs in a passing of a ${\Bbb Z}/2$-graded object of pure parity.
E.g.\ for $f_1, f_2\in C^{\infty}(\widehat{X})$ with $f_1$ of pure parity,
 $f_1f_2 =\, ^{\varsigma\!_{f_1}}\!(f_2)f_1$, in which $f_1$ passes over $f_2$.

\bigskip

\begin{flushleft}
{\bf Vector fields on $\widehat{X}\;:=\;$ Derivations of $C^{\infty}(\widehat{X})$}
\end{flushleft}
\begin{definition} {\bf [derivation of $C^{\infty}(\widehat{X})$]}\; {\rm
 A {\it derivation}\footnote{{\it Left vs.\ right derivation}\hspace{1em}
                                      As defined, this is indeed a {\it left derivation},
									   which acts on $C^{\infty}(\widehat{X})$ from the left
									   (of $C^\infty(\widehat{X})$).
									  One can also define the notion of {\it right derivations},
									   which are ${\Bbb C}$-linear and satisfy the ${\Bbb Z}/2$-graded right Leibniz rule
                                      $$
                                         (f_1f_2)\,\!^{\leftarrow}\!\!\!\xi\;
										    =\; (-1)^{p(f_2)p(\xi)}\, (f_1\,\!^{\leftarrow}\!\!\!\xi)f_2\,
											          +\,f_1(f_2\,\!^{\leftarrow}\!\!\!\xi)
                                      $$
									  for $f_2$ and $\xi$ parity-homogeneous.
									 In this work, all derivations of $C^{\infty}(\widehat{X})$ are by default left derivations.
									 }  
   of $C^{\infty}(\widehat{X})$ over ${\Bbb C}$
  is a ${\Bbb Z}/2$-graded ${\Bbb C}$-linear operation
  $\xi: C^{\infty}(\widehat{X})\rightarrow C^{\infty}(\widehat{X})$
  on $C^{\infty}(\widehat{X})$
  that satisfies the ${\Bbb Z}/2$-graded Leibniz rule
  $$
    \xi(fg)\;=\; (\xi f)g\,+\, (-1)^{p(\xi)p(f)}f(\xi g)
  $$
  when in parity-homogeneous situations.
 The set $\Der_{\Bbb C}(\widehat{X}):=\Der_{\Bbb C}(C^{\infty}(\widehat{X}))$
     of derivations of $C^{\infty}(\widehat{X})$
	 is a (left) $C^{\infty}(\widehat{X})$-module,
	with $(a\xi)(\,{\LARGE \cdot}\,):= a (\xi(\,\cdot\,))$ and $p(a\xi):= p(a)+p(\xi)$
	for $a\in C^{\infty}(\widehat{X})$ and $\xi\in \Der_{\Bbb C}(\widehat{X})$.
}\end{definition}

\bigskip

Geometrically, one should think of a derivation of $C^{\infty}(\widehat{X})$
 as a {\it vector field} on $\widehat{X}$.

\bigskip

\begin{example} {\bf [derivations associated to the standard coordinates $(x,\theta,\bar{\theta})$]}\;
{\rm
  Associated to the standard coordinates $(x,\theta,\bar{\theta})$ on $\widehat{X}$ are the following
   basic derivations on $C^{\infty}(\widehat{X})$:
  $$
   \mbox{\Large $\frac{\partial}{\partial x^\mu}$}\,,\;\;\;
   \mbox{\Large $\frac{\partial}{\partial \theta^\alpha}$}\,,\;\;\;
   \mbox{\Large $\frac{\partial}{\rule{0ex}{1.6ex}\partial \bar{\theta}^{\dot{\beta}}}$}\,,
  $$
  for $\mu=0,1,2,3$, $\alpha=1,2$, and $\dot{\beta}=\dot{1}, \dot{2}$.
 They are characterized by\footnote{This
                                            illustrates also that $C^{\infty}(\widehat{X})$
											is more a ${\Bbb Z}/2$-commutative ring than a noncommutative ring.
										 The Leibniz rule for a derivation $\xi$ of a general noncommutative ring $R$ is given by 	
										   $\xi(r_1r_2)=(\xi r_1)r_2 + r_1 (\xi r_2)$.
                                         If taking $C^{\infty}(\widehat{X})$ just as a noncommutative ring
										   with the ${\Bbb Z}/2$-grading suppressed,
										 then one would have, for example,
                                           $$										
                                             \frac{\partial}{\partial\theta^2}(\theta^1\theta^2)
											   = \left(\rule{0ex}{1em}\right.\!\!
											       \frac{\partial}{\partial\theta^2}\theta^1\!\!
												     \left.\rule{0ex}{1em}\right)\theta^2
											         + \theta^1 \left(\rule{0ex}{1em}\right.\!\!
													       \frac{\partial}{\partial\theta^2}\theta^2\!\!
														   \left.\rule{0ex}{1em}\right)
											   = \theta^1\,,
                                           $$
                                           which equals $- \frac{\partial}{\partial\theta^2}(\theta^2\theta^1)= -\theta_1$
										   since $\theta^1\theta^2=-\theta^2\theta^1$.
										 This implies $2\theta^1=0$, a contradiction.			
                                         Such contradictions are resolved exactly by correctly assigning odd-parity to
										   $\theta^{\alpha}$'s, $\bar{\theta}^{\dot{\beta}}$'s,
                                           $\partial/\partial\theta^{\alpha}$'s,
										   and $\partial/\partial\bar{\theta}^{\dot{\beta}}$'s										   
										   and imposing the ${\Bbb Z}/2$-graded Leizniz rule.
                                           }
  $$
    \begin{array}{c}
      \mbox{\Large $\frac{\partial}{\partial x^\mu}$}(x^\nu)
         = \delta_{\mu\nu}\,,\;\;\;
      \mbox{\Large $\frac{\partial}{\partial \theta^\alpha}$}(\theta^\beta)
         = \delta_{\alpha\beta}\,,\;\;\;
      \mbox{\Large $\frac{\partial}{\rule{0ex}{1.6ex}\partial \bar{\theta}^{\dot{\alpha}}}$}
	    (\bar{\theta}^{\dot{\beta}})
	     = \delta_{\dot{\alpha}\dot{\beta}}\,,          \\[2ex]
      \mbox{\Large $\frac{\partial}{\partial x^\mu}$}(\theta^\alpha)
	     =\mbox{\Large $\frac{\partial}{\partial x^\mu}$}(\bar{\theta}^{\dot{\beta}})
         = 0\,,\;\;\;
      \mbox{\Large $\frac{\partial}{\partial \theta^\alpha}$}(x^\mu)
	     =  \mbox{\Large $\frac{\partial}{\partial \theta^\alpha}$}
		       (\bar{\theta}^{\dot{\beta}})
         = 0 \,,\;\;\;
      \mbox{\Large $\frac{\partial}{\rule{0ex}{1.6ex}\partial \bar{\theta}^{\dot{\alpha}}}$}
	    (x^\mu)
		=  \mbox{\Large $\frac{\partial}{\rule{0ex}{1.6ex}\partial \bar{\theta}^{\dot{\alpha}}}$}
	    (\theta^\alpha)
	     =  0\,.
   \end{array}
  $$
  Here, $\delta_{\mu\nu}=1$ if $\mu=\nu$; and $0$ if $\mu\ne \nu$.
  Similarly for $\delta_{\alpha\beta}$ and $\delta_{\dot{\alpha}\dot{\beta}}$.
  
  Since
   $\delta_{\mu\mu}=\delta_{\alpha\alpha}=\delta_{\dot{\beta}\dot{\beta}}=1$ is even,
   we assign the parity
  $$
    p(\partial/\partial x^\mu) = p(x^{\mu}) = 0\,,\;\;\;
    p(\partial/\partial \theta^\alpha) = p(\theta^{\alpha})= 1\,,\;\;\;
    p(\partial/\partial \bar{\theta}^{\dot{\beta}}) = p(\bar{\theta}^{\dot{\beta}}) = 1\,.
 $$	
 
 The collection
   $\;\{\partial/\partial x^\mu,\, \partial/\partial\theta^\alpha,\,
          \partial/\partial\bar{\theta}^{\dot{\beta}} \}
		  _{\mu=0,1,2,3;\,\alpha=1,2;\,\dot{\beta}=\dot{1},\dot{2}}\;$
   forms a basis of the (left) $C^{\infty}(\widehat{X})$-module
   $\Der_{\Bbb C}(\widehat{X})$.
}\end{example}

\bigskip

\begin{lemma} {\bf [chain rule]}\;
 Let
   $\xi\in \Der_{\Bbb C}(\widehat{X})$,
   $h\in C^{\infty}({\Bbb R}^l)$,   and\\
   $f_1,\,\cdots,\, f_l
	  \in  C^{\infty}(\mbox{$\bigwedge_{\,\Bbb R,\Bbb C}^{\even}\!
	                                                                (S^{\prime}\oplus S^{\prime\prime})$})
	  \subset C^{\infty}(\widehat{X})$.
 Then
  $$
    \xi(h(f_1,\,\cdots\,, f_l))\;
		 =\;  \sum_{k=1}^l
		             \left(\rule{0ex}{.8em}\right.\!\!
							        (\partial_k h)(f_1,\,\cdots\,, f_l)
									\!\!\left.\rule{0ex}{.8em}\right)\cdot \xi f_k\;
			\in\; C^\infty(\widehat{X})\,,
  $$							
  where
   $\partial_k h \in C^{\infty}({\Bbb R}^l)$ is the partial derivative of $h$
     with respect to the $k$-th argument.
\end{lemma}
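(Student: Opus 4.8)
\noindent
The plan is to reduce the identity, by means of the explicit nilpotent-perturbation formula of Lemma~1.1.3, to a finite algebraic manipulation inside $C^{\infty}(\widehat{X})$, and then to settle that by the ${\Bbb Z}/2$-graded Leibniz rule together with the classical chain rule on $X$. First note that the hypothesis $f_k\in C^{\infty}(\bigwedge_{\,\Bbb R,\Bbb C}^{\even}(S^{\prime}\oplus S^{\prime\prime}))$ is exactly what makes $h(f_1,\,\cdots\,,f_l)$ defined: each $f_k$ lies in the $C^{\infty}$-hull of $C^{\infty}(\widehat{X})$, i.e.\ is an even superfield whose $(\theta,\bar{\theta})$-degree-$0$ component $r_k := f_{k,(0)}$ lies in the \emph{real} $C^{\infty}$-ring $C^{\infty}(X)$, not merely in $C^{\infty}(X)^{\Bbb C}$; and by Lemma~1.1.4 this $C^{\infty}$-hull is itself a $C^{\infty}$-ring, so Lemma~1.1.3 is applicable there. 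I would write $f_k = r_k + n_k$ with $n_k\in\widehat{\boldm}$; since $f_k$ is even, $n_k\in\widehat{\boldm}^{\,2}$, and $\widehat{\boldm}^{\,5}=0$ forces $n_k^{\,5}=0$ (in fact $n_k^{\,3}=0$).

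A first reduction: because the $f_k$ are even, so are $h(f_1,\,\cdots\,,f_l)$ and each $(\partial_k h)(f_1,\,\cdots\,,f_l)$, hence all of these are central in the ${\Bbb Z}/2$-commutative ring $C^{\infty}(\widehat{X})$. It follows that both sides of the asserted formula are left $C^{\infty}(\widehat{X})$-linear in $\xi$ (on the right, a scalar $a$ commutes freely past the central factor $(\partial_k h)(f_1,\,\cdots\,,f_l)$). Since $\partial/\partial x^\mu$, $\partial/\partial\theta^\alpha$, $\partial/\partial\bar{\theta}^{\dot{\beta}}$ form a basis of $\Der_{\Bbb C}(\widehat{X})$ over $C^{\infty}(\widehat{X})$ (Example~1.3.3), it then suffices to verify the identity for each of these. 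A fermionic basis derivation is the cleaner case, as it annihilates $C^{\infty}(X)$ so that $\xi r_k=0$; for $\xi=\partial/\partial x^\mu$ one additionally invokes the classical multivariable chain rule on $X$, which is legitimate because $\partial/\partial x^\mu$ restricted to $C^{\infty}(X)$ is literally the partial derivative and the $C^{\infty}$-ring structure of $C^{\infty}(X)$ is literal composition of smooth functions.

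The core computation would run as follows. By Lemma~1.1.3 (in the $C^{\infty}$-hull),
$$
  h(f_1,\,\cdots\,,f_l)\;=\;\sum_{\boldd}\frac{1}{d_1!\cdots d_l!}\,
      (\partial_1^{\,d_1}\cdots\partial_l^{\,d_l}h)(r_1,\,\cdots\,,r_l)\cdot n_1^{\,d_1}\cdots n_l^{\,d_l}\,,
$$
a finite sum over multi-indices $\boldd=(d_1,\,\cdots\,,d_l)$; abbreviate $\partial^{\boldd}h:=\partial_1^{\,d_1}\cdots\partial_l^{\,d_l}h$. Apply $\xi$ (${\Bbb C}$-linear) term by term via the ${\Bbb Z}/2$-graded Leibniz rule; since the coefficient $(\partial^{\boldd}h)(r_1,\,\cdots\,,r_l)$ and each $n_k$ are \emph{even}, no sign-factors arise even when $\xi$ is odd, and one obtains two families of terms: those carrying a factor $\xi\bigl((\partial^{\boldd}h)(r_1,\,\cdots\,,r_l)\bigr)$ and those carrying a factor $\xi n_j$ together with the combinatorial coefficient $d_j\,n_j^{\,d_j-1}$. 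The first family vanishes for a fermionic $\xi$, and for $\xi=\partial/\partial x^\mu$ it collapses by the classical chain rule to $\sum_j (\partial_j\partial^{\boldd}h)(r_1,\,\cdots\,,r_l)\,\partial r_j/\partial x^\mu$, i.e.\ to a $\xi r_j$-type contribution as well. On the other side, expand each $(\partial_k h)(f_1,\,\cdots\,,f_l)$ by Lemma~1.1.3 once more; the $\xi r_k$-contributions then match termwise at once, and the $\xi n_k$-contributions match after the reindexing $\boldd\mapsto\boldd^{\prime}$ given by $d_j'=d_j-1$, $d_k'=d_k$ for $k\ne j$, which identifies $\tfrac{d_j}{d_1!\cdots d_l!}(\partial^{\boldd}h)(\,\cdot\,)\,n_j^{\,d_j-1}\prod_{k\ne j}n_k^{\,d_k}$ with $\tfrac{1}{d_1'!\cdots d_l'!}(\partial_j\partial^{\boldd^{\prime}}h)(\,\cdot\,)\,n_1^{\,d_1'}\cdots n_l^{\,d_l'}$. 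Termwise agreement finishes the proof, and the closing assertion $\xi(h(f_1,\,\cdots\,,f_l))\in C^{\infty}(\widehat{X})$ is automatic since every term produced lies in $C^{\infty}(\widehat{X})$.

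The argument is routine; its only external ingredient is the classical chain rule on $X$, and the one place demanding genuine care is the multi-index bookkeeping in the last step — getting the factorials and the shift $\boldd\mapsto\boldd^{\prime}$ exactly right — together with the (easy but worth double-checking) observation that, since every object appearing inside the relevant products is even, the graded Leibniz rule introduces no stray sign-factors, so that the sign-free form of the statement is indeed the correct one.
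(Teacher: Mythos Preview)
Your proposal is correct and follows essentially the same approach as the paper: reduce by $C^{\infty}(\widehat{X})$-linearity in $\xi$ to the basic derivations, decompose $f_k=r_k+n_k$, expand via Lemma~1.1.3, apply the graded Leibniz rule term by term (noting everything in sight is even so no signs appear), and match the two sides. The only difference is cosmetic: the paper truncates the expansion explicitly at second order using $n_kn_{k'}n_{k''}=0$ and writes out both sides in full, whereas you keep the general multi-index form and argue by the reindexing $\boldd\mapsto\boldd'$; both are the same computation.
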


\medskip

\begin{proof}
 Since $f_i$'s are even,
    $h(f_1,\,\cdots\,, f_l)$ and $(\partial_k h)(f_1,\,\cdots\,, f_l)$'s are all even as well.
 This implies that both the left-hand side and the right-hand side of the chain-rule identity to be proved
    are $C^{\infty}(\widehat{X})$-linear.
 Thus, we only need to show that the identity holds for
   $\xi$ being one of basic derivations
     $\partial/\partial x^{\mu}$, $\partial/\partial \theta^{\alpha}$,
	    and  $\partial/\partial\bar{\theta}^{\dot{\beta}}$,
        for $\mu=0,1,2,3$, $\alpha=1, 2$, and $\dot{\beta}=\dot{1}, \dot{2}$.	

 Let $f_i=a_i+n_i$, where $a_i\in C^{\infty}(X)$ and $n_i$ is even and nilpotent, $i=1,\,\ldots\,, l$.
Then,
 \begin{eqnarray*}
  \lefteqn{
     h(f_1,\,\cdots\,, f_l)\; =\; h(a_1+ n_1,\,\cdots\,, a_l+n_l)   }\\
   && =\; h(a_1,\,\cdots\,, a_l)\,
                +\, \sum_{k=1}^l ((\partial_kh)(a_1,\,\cdots\,, a_l))\cdot n_k\,
				+\, \mbox{\Large $\frac{1}{2}$}\,
				       \sum_{k, k^{\prime}=1}^l
					     ((\partial_k\partial_{k^{\prime}}h)(a_1,\,\cdots\,, a_l))
						   \cdot n_k n_{k^{\prime}}\,.
 \end{eqnarray*}
 It follows that,  for $\xi$ one of the basic derivations,
 the left-hand side of the identity is
 {\small
  \begin{eqnarray*}
   \lefteqn{
    \xi(h(f_1,\,\cdots\,, f_l))        }\\
    && =\; \xi( h(a_1,\,\cdots\,, a_l))\,
                +\, \sum_{k=1}^l   \xi((\partial_kh)(a_1,\,\cdots\,, a_l))\cdot n_k\,
				+\, \sum_{k=1}^l ((\partial_kh)(a_1,\,\cdots\,, a_l))\cdot   \xi n_k  \\
    &&  \hspace{2em}				
		        +\, \mbox{\large $\frac{1}{2}$}\,
				       \sum_{k, k^{\prime}=1}^l
					     \xi((\partial_k\partial_{k^{\prime}}h)(a_1,\,\cdots\,, a_l))
						   \cdot n_k n_{k^{\prime}}\,
	            +\, \mbox{\large $\frac{1}{2}$}\,
				       \sum_{k, k^{\prime}=1}^l
					     ((\partial_k\partial_{k^{\prime}}h)(a_1,\,\cdots\,, a_l))
						   \cdot \xi(n_k n_{k^{\prime}})  \\
    &&  =\; \sum_{k=1}^l (\partial_kh)(a_1,\,\cdots\,, a_l))\cdot \xi a_k\,
	              +\, \sum_{k^{\prime}, k=1}^l
				        ((\partial_{k^{\prime}}\partial_k h)(a_1,\,\cdots\,, a_l))
						       \cdot  (\xi a_{k^{\prime}})n_k\,
                  +\, \sum_{k=1}^l ((\partial_kh)(a_1,\,\cdots\,, a_l))\cdot   \xi n_k  \\							   
    &&	\hspace{2em}
		        +\, \mbox{\large $\frac{1}{2}$}\,
				      \sum_{k^{\prime\prime}, k, k^{\prime}=1}^l
					     ((\partial_{k^{\prime\prime}}\partial_k\partial_{k^{\prime}}h)
						            (a_1,\,\cdots\,, a_l))
						   \cdot (\xi a_{k^{\prime\prime}}) n_k n_{k^{\prime}}\,
	            +\, \sum_{k, k^{\prime}=1}^l
					     ((\partial_k\partial_{k^{\prime}}h)(a_1,\,\cdots\,, a_l))
						   \cdot (\xi n_k) n_{k^{\prime}} 	
  \end{eqnarray*}}while	
 the right-hand side of the identity is
  {\small
   \begin{eqnarray*}
   \lefteqn{
	 \sum_{k=1}^l
		((\partial_k h)(f_1,\,\cdots\,, f_l))\cdot \xi f_k\;
     =\;  \sum_{k=1}^l
		     ((\partial_k h)(a_1+n_1,\,\cdots\,, a_l+n_l))\cdot \xi (a_k+n_k)		
    }\\
    &&  =\;
	   \sum_{k=1}^l
	         \left(\rule{0ex}{1em}\right.\!\!
			  (\partial_k h)(a_1,\,\cdots\,, a_l)\,
			    +\, \sum_{k^{\prime}=1}^l
				        ((\partial_{k^{\prime}}\partial_kh)(a_1,\,\cdots\,, a_l)) n_{k^{\prime}} \\
    && \hspace{8em}
				+\, \mbox{\large $\frac{1}{2}$}\,
                       \sum_{k^{\prime}, k^{\prime\prime}=1}^l
					    ((\partial_{k^{\prime}}\partial_{k^{\prime\prime}}\partial_kh)(a_1,\,\cdots\,, a_l))
					      \cdot n_{k^{\prime}}n_{k^{\prime\prime}}
			   \!\!\left.\rule{0ex}{1em}\right)\cdot
			 (\xi a_k +\xi n_k)\,,
  \end{eqnarray*}}which	
  equals the left-hand side of the identity
    after an expansion and some relabeling of $k$, $k^{\prime}$, $k^{\prime\prime}$.
 Here, we used (wherever applicable), in addition to the ${\Bbb Z}/2$-graded Leibniz rule,
    \begin{itemize}
    \item[(1)]
     $n_kn_{k^{\prime}}n_{k^{\prime\prime}}
       = n_k n_{k^{\prime}}(\xi n_{k^{\prime\prime}})=0$
      for $k,k^{\prime}, k^{\prime\prime}=1,\,\ldots\,, l$
	  since the minimal total-$(\theta,\bar{\theta})$-degree of terms in these expressions $>4$;

    \item[(2)]
     the existing chain rule for
	   $\frac{\partial}{\partial x^{\mu}} ( h(a_1,\,\cdots\,, a_l))$,
	   $\frac{\partial}{\partial x^{\mu}} ( (\partial_kh)(a_1,\,\cdots\,, a_l))$,
	      and\\
	   $\frac{\partial}{\partial x^{\mu}}
	       ( (\partial_k\partial_{k^{\prime}}h)(a_1,\,\cdots\,, a_l))$,
       following the classical Calculus for real variables;		

    \item[(3)]	
	  $n_k$ and  $\xi n_{k^{\prime}}$ commute, for all $k$, $k^{\prime}$, since $n_k$ is even;
	
    \item[(4)]
     $\xi( h(a_1,\,\cdots\,, a_l))
	     = \xi( (\partial_kh)(a_1,\,\cdots\,, a_l))
	     = \xi( (\partial_k\partial_{k^{\prime}}h)(a_1,\,\cdots\,, a_l))=0$
	  for $\xi=\partial/\partial\theta^{\alpha}$, $\partial/\partial\bar{\theta}^{\dot{\beta}}$,
	  since $ h(a_1,\,\cdots\,, a_l)$, $(\partial_kh)(a_1,\,\cdots\,, a_l)$,   and
	            $(\partial_k\partial_{k^{\prime}}h)(a_1,\,\cdots\,, a_l)$
				have no $\theta^{\alpha}$- nor $\bar{\theta}^{\dot{\beta}}$-dependence.
    \end{itemize}
	
  This proves the Lemma.
  
\end{proof}

\bigskip

The ${\Bbb Z}/2$-graded Lie bracket
 $[\,\mbox{\LARGE $\cdot$}\,,\,\mbox{\LARGE $\cdot$}\,\}$, defined by
   $(\xi_1,\xi_2)\mapsto
       [\xi_1,\xi_2\}:= \xi_1\xi_2-(-1)^{p(\xi_1)p(\xi_2)}\xi_2\xi_1$
  for $\xi_1$, $\xi_2$ parity-homogeneous,
 gives $\Der_{\Bbb C}(\widehat{X})$
 a super Lie algebra structure
 that satisfies the super Jacobi identity (in the form of ${\Bbb Z}/2$-graded Leibniz rule)
 $$
  [\xi_1, [\xi_2, \xi_3\} \}\;=\;
   [[\xi_1,\xi_2\}, \xi_3\} + (-1)^{p(\xi_1)p(\xi_2)}[\xi_2, [\xi_1,\xi_3\}\}
 $$
 for $\xi_1, \xi_2, \xi_3\in \Der_{\Bbb C}(\widehat{X})$ parity-homogeneous.

\bigskip

\begin{flushleft}
{\bf 1-forms on $\widehat{X}\;:=\;$ differentials of $C^{\infty}(\widehat{X})$}
\end{flushleft}
To begin, we adopt the following convention as in [D-F2: \S 6].									   
 
\bigskip

\begin{convention} $[$cohomological degree vs.\ parity\,$]$\; {\rm
 We treat elements $f$ of $C^{\infty}(\widehat{X})$ as of cohomological degree $0$
  and the exterior differential operator $d$ as of cohomological degree $1$ and {\it even}.
 In notation, $\chd(f)=0$ and $\chd(d)=1$, $p(d)=0$.
 Under such $({\Bbb Z}\times ({\Bbb Z}/2))$-bi-grading,
   $$
      ab = (-1)^{c.h.d(a)\,c.h.d(b)}(-1)^{p(a)p(b)}ba
   $$
   for objects $a, b$ homogeneous with respect to the bi-grading.\footnote{This
                                                         is the convention that matches with the sign rules in
                                                         [We-B: Chap.\ XII, Eq.'s (12.2), (12.3)]	of Wess \& Bagger.
														In many mathematical literatures
														   motivated by the study of supersymmetry in physics,
														 the operator $d$ is taken as odd.
														In that case, to make the parity rule right in many situations
														 one has to introduce the {\it parity-swap operator} $\Pi$:
														 $(\Pi a):= a$ with the parity odd, if $a$ is even, or parity even, if $a$ is odd.
                                                        In physics, even parity corresponds to bosons
														 while odd parity corresponds to fermions. 
                                                        Bosons and fermions are distinguished
														  by the Pauli Exclusion Principle,
														  which is a fundamental nature of a particle, not by assignment.
                                                        Thus, the operator $\Pi$ almost never occurred
														  in physics literature involving supersymmetry.
                                                         }													   
  Here, $a$ and $b$ are not necessarily of the same type.
}\end{convention}

\bigskip

\begin{definition} {\bf [differential of $C^{\infty}(\widehat{X})$]}\; {\rm
 The bi-$C^{\infty}(\widehat{X})$-module
   $\Omega_{\widehat{X}}:= \Omega_{C^{\infty}(\widehat{X})}$
   of {\it differentials} of $C^{\infty}(\widehat{X})$ over ${\Bbb C}$
  is the quotient of
    the free bi-$C^{\infty}(\widehat{X})$-module
      	          generated by $d(f)$, $f\in C^{\infty}(\widehat{X})$,
   by the bi-$C^{\infty}(\widehat{X})$-submodule of relators generated by
   \begin{itemize}
    \item[(1)]
	 [{\it ${\Bbb C}$-linearity}]\hspace{1em}
	 $d(c_1f_1 + c_2f_2)-c_1 d(f_1) - c_2 d(f_2)$\,,\;
	  for $c_1, c_2\in {\Bbb C}$, $f_1, f_2\in C^{\infty}(\widehat{X})$;
	
    \item[(2)]
	 [{\it Leibniz rule}]\hspace{1em}
	  $d(f_1f_2)- (d(f_1))f_2 -f_1 d(f_2)$\,,\; for $f_1, f_2\in C^{\infty}(\widehat{X})$;
	
	\item[(3)]
	 [{\it chain-rule identities from the $C^{\infty}$-hull structure}]\hspace{1em}
	 $$
	    d(h(f_1,\,\cdots\,, f_l))- \sum_{k=1}^l (\partial_kh)(f_1,\,\cdots\,, f_l)\,d(f_k)
	 $$
	 for
	   $h\in C^{\infty}({\Bbb R}^l)$,
	   $f_1,\,\cdots\,, f_l
	          \in   C^{\infty}(\mbox{$\bigwedge_{\,\Bbb R,\Bbb C}^{\even}\!
	                                                                (S^{\prime}\oplus S^{\prime\prime})$})
		      \subset C^{\infty}(\widehat{X})$;
	 here, $\partial_kh$ is the partial derivative of $h\in C^{\infty}({\Bbb R}^l)$
	           with respect to the $k$-th argument.
   \end{itemize}
 The element of $\Omega_{\widehat{X}}$  associated to $d(f)$,
    $f\in C^{\infty}(\widehat{X})$, is denoted by $df$.
 Using Relators (2), one can convert $\Omega_{\widehat{X}}$ to
   either solely a left $C^{\infty}(\widehat{X})$-module
         or solely a right $C^{\infty}(\widehat{X})$-module.\footnote{Indeed,
		                            the sign rule in Convention~1.3.5 already gives
					                $(df_1)f_2 = (-1)^{p(f_1)p(f_2)}f_2 df_1$. 		
		                           Compatibility of the two different ways to convert
								     the bi-$C^{\infty}(\widehat{X})$-module $\Omega_{\widehat{X}}$
									 to a left $C^{\infty}(\widehat{X})$-module
		                             enforces upon us a second form of the Leibniz rule for differentials:
		                            $$
		                               d(f_1f_2)\;=\; (-1)^{p(f_1)p(f_2)}f_2 df_1 + f_1 df_2\,.
		                            $$
								  This second form makes the effect of ${\Bbb Z}/2$-grading to the Leibniz rule manifest.
                                  One should compare this with the Leibniz rule for differentials of a commutative ring:
                                    $d(r_1r_2)= r_2 dr_1 + r_1 dr_2$, for $r_1, r_2$ in a commutative ring $R$.
		 }
    
 A differential of $C^{\infty}(\widehat{X})$  is also called synonymously
  a {\it $1$-form} on $\widehat{X}$.
  
 By construction, there is a built-in map
  $d:C^{\infty}(\widehat{X})\rightarrow \Omega_{\widehat{X}}$ defined by
  $f\mapsto df$.
}\end{definition}

\bigskip

Note that, by Convention~1.3.5, $d$ is even, i.e.\ $p(d)=0$.
Thus, the Leibniz-rule relators in the above definition are indeed ${\Bbb Z}/2$-graded-Leibniz-rule relators,\\
 i.e.\ $\;d(f_1f_2)-(df_1)f_2-(-1)^{p(d)p(f_1)}f_1 (df_2)\;$
 for $f_1, f_2\in C^{\infty}(\widehat{X})$ with $f_1$ parity-homogeneous.

\bigskip

The additivity rule for cohomological degree and parity gives
  $\chd(df)=1$ and $p(df)=p(f)$.
 
\bigskip

\begin{lemma} {\bf [evaluation of $\Omega_{\widehat{X}}$ on
       $\Der_{\Bbb C}(\widehat{X})$ from the right]}\;
 The specification\footnote{Using
                                                      the formula $(df)(\xi):= \xi f$ to define
													  the evaluation of a differential $df$ on a derivation $\xi$ forces us to regard this
													  as an evaluation of $df$ from the right of $\xi$, i.e.\
													  $(\xi)^{\leftarrow}\!\!\!(df)$.
													Admittedly, then one should leave the notation $df(\xi)$  for the evaluation of
													  $df$ on $\xi$ from its left.
													Unfortunately, one soon realizes the burden of notations.
                                                    Since we consider in this work
													   only evaluations of differentials from the right of derivations
													   except in some supplementary remarks,
                                                     we reserve $df(\xi)$ to mean $(\xi)^{\leftarrow}\!\!\!(df)$.
				                                     } 
   $$
  (df)(\xi) \; :=\;  (\xi)^{\leftarrow}\!\!\!(df)\;   :=\; \xi(f)
   $$
   for $f\in C^{\infty}(\widehat{X})$ and $\xi\in \Der_{\Bbb C}(\widehat{X})$,
  defines an {\it evaluation} of $\Omega_{\widehat{X}}$ on $\Der_{\Bbb C}(\widehat{X})$
  from the right: \\
  for
    $\varpi=\sum_{i=1}^k a_i\,df_i\in \Omega_{\widehat{X}}$, with $a_i$ parity-homogeneous, and
	$\xi\in \Der_{\Bbb C}(\widehat{X})$ parity-homogeneous,
  $$
    \varpi(\xi)\;
	:=\;  (\xi)^{\leftarrow}\!\!\!\varpi\;
	:=\;  \sum_{i=1}^k (-1)^{p(\xi)p(a_i)}  a_i\, \xi(f_i)\,.
  $$
 This evaluation is (left) $C^{\infty}(\widehat{X})$-linear:
  $\varpi(a\xi)= a \varpi(\xi)$, for $a\in C^{\infty}(\widehat{X})$.
\end{lemma}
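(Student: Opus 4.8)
The plan is to exploit the presentation of $\Omega_{\widehat{X}}$ in Definition~1.3.6 as a quotient of a free $C^{\infty}(\widehat{X})$-module by the submodule generated by the relators~(1), (2), (3): to produce the asserted evaluation it is enough to define, for each fixed parity-homogeneous $\xi\in\Der_{\Bbb C}(\widehat{X})$, a suitable map on the free module and then check that it annihilates each of the three families of relators. Using Relators~(2) to regard $\Omega_{\widehat{X}}$ as a left $C^{\infty}(\widehat{X})$-module (as recorded in and after Definition~1.3.6), I would take the free left $C^{\infty}(\widehat{X})$-module on the symbols $\{\,d(f):f\in C^{\infty}(\widehat{X})\,\}$ and define on it the unique additive map $\widetilde{\varpi}\mapsto\widetilde{\varpi}(\xi)$ that sends $d(f)$ to $\xi(f)$ and satisfies the $p(\xi)$-twisted left-linearity rule $(a\,\widetilde{\varpi})(\xi):=(-1)^{p(\xi)p(a)}\,a\,(\widetilde{\varpi}(\xi))$ for $a\in C^{\infty}(\widehat{X})$ parity-homogeneous. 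A one-line check using the sign rule of Convention~1.3.5 and $p(ab)=p(a)+p(b)$ shows this twisted action is associative, so the map is well defined on the free module; the formula displayed in the statement is then exactly its value on a representative $\varpi=\sum_i a_i\,df_i$.

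The substantive step is to verify that this map kills the three families of relators, so that it descends to $\Omega_{\widehat{X}}$. Relator~(1) (${\Bbb C}$-linearity) is handled immediately by the ${\Bbb C}$-linearity of $\xi$, which also lets me reduce the remaining checks to parity-homogeneous inputs. For Relator~(2), in the left-module presentation it takes the second form $d(f_1f_2)=(-1)^{p(f_1)p(f_2)}f_2\,df_1+f_1\,df_2$ noted in the footnote to Definition~1.3.6; applying the map sends its right-hand side to $(-1)^{p(f_1)p(f_2)}(-1)^{p(\xi)p(f_2)}f_2\,\xi(f_1)+(-1)^{p(\xi)p(f_1)}f_1\,\xi(f_2)$, and I would identify this with $\xi(f_1f_2)=(\xi f_1)f_2+(-1)^{p(\xi)p(f_1)}f_1(\xi f_2)$ by commuting $\xi f_1$ (of parity $p(\xi)+p(f_1)$) past $f_2$ using the ${\Bbb Z}/2$-commutativity of $C^{\infty}(\widehat{X})$. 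For Relator~(3) the coefficients $(\partial_kh)(f_1,\dots,f_l)$ are \emph{even} (the $f_i$ being even), so no twist sign is produced and the map sends the relator to $\xi(h(f_1,\dots,f_l))-\sum_{k=1}^{l}\bigl((\partial_kh)(f_1,\dots,f_l)\bigr)\cdot\xi f_k$, which vanishes by Lemma~1.3.4 (the chain rule). Extending ${\Bbb C}$-linearly in $\xi$ over its parity decomposition then gives a well-defined pairing $\Omega_{\widehat{X}}\times\Der_{\Bbb C}(\widehat{X})\to C^{\infty}(\widehat{X})$.

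Finally, for left $C^{\infty}(\widehat{X})$-linearity in $\xi$ I would substitute $a\xi$ into the displayed formula, using $p(a\xi)=p(a)+p(\xi)$ and $(a\xi)(f)=a\,\xi(f)$ from Definition~1.3.2: this yields $\varpi(a\xi)=\sum_i(-1)^{(p(a)+p(\xi))p(a_i)}a_i\,a\,\xi(f_i)$, and the identity $(-1)^{p(a)p(a_i)}a_i\,a=a\,a_i$ (again ${\Bbb Z}/2$-commutativity) collapses this to $a\sum_i(-1)^{p(\xi)p(a_i)}a_i\,\xi(f_i)=a\,\varpi(\xi)$, as asserted. I do not expect a genuine obstacle here: the whole content is bookkeeping of the parity sign-factors dictated by Convention~1.3.5, with the one point requiring care being that the $p(\xi)$-twisted left-module structure on the target be internally consistent with those conventions; the only nontrivial input, the vanishing on Relator~(3), is already supplied by Lemma~1.3.4.
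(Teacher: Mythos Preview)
Your proposal is correct and follows essentially the same approach as the paper: check that the evaluation kills each of the three relators of Definition~1.3.6 (using ${\Bbb C}$-linearity of $\xi$, the ${\Bbb Z}/2$-graded Leibniz rule for $\xi$, and Lemma~1.3.4 respectively), and then verify left $C^{\infty}(\widehat{X})$-linearity in $\xi$. The only cosmetic difference is that you first pass Relator~(2) to its left-module form $d(f_1f_2)=(-1)^{p(f_1)p(f_2)}f_2\,df_1+f_1\,df_2$ before evaluating, whereas the paper evaluates the bi-module form $(df_1)f_2+f_1\,df_2$ directly by reading ``from the right'' as $\xi$ entering from the left of the expression; the sign bookkeeping is identical.
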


\medskip

\begin{proof}
 To show that the evaluation of $\Omega_{\widehat{X}}$ on $\Der_{\Bbb C}(\widehat{X})$
    as defined is well-defined,
  we need to show that
    the evaluation of the relators in Definition~1.3.6 on any $\xi\in \Der_{\Bbb C}(\widehat{X})$ vanish.
 The vanishing of [{\it ${\Bbb C}$-linearity}]-{\it relator}\,$(\xi)$
    follows from the ${\Bbb C}$-linearity of $\xi$.
 The vanishing of [{\it Leibniz rule}]-{\it relator}\,$(\xi)$ follows from
     the ${\Bbb Z}/2$-graded Leibniz rule of $\xi$, that the evaluation is {\it from the right} of $\xi$,
    and the sign rule for passing ${\Bbb Z}/2$-graded objects:
   $$
    \begin{array}{rcl}
     (d(f_1f_2))(\xi)
	   & :=\;
	     & \xi(f_1f_2)\;\;
	         =\;\; (\xi f_1)f_2 +(-1)^{p(\xi)p(f_1)}f_1 (\xi f_2)\\[1.2ex]
	   & \;=:
	     & (\xi)^{\leftarrow}\!\!\!((df_1)f_2) +   (\xi)^{\leftarrow}\!\!\!(f_1 df_2)\;\;
		     =:\;\; ((df_1)f_2)(\xi) +  (f_1 df_2)(\xi)\,.
	\end{array}
   $$
 The vanishing of  [{\it chain-rule identities from the $C^{\infty}$-hull structure}]-{\it relator}\,$(\xi)$
   follows from Lemma 1.3.4
   on the chain rule for $\xi$ applied to the pre-composition of
    $h\in C^{\infty}({\Bbb R}^l)$ with $(f_1,\,\cdots\,, f_l)$,
	where
	  $f_1,\,\cdots,\, f_l
	            \in  C^{\infty}(\mbox{$\bigwedge_{\,\Bbb R,\Bbb C}^{\even}\!
	                                                                (S^{\prime}\oplus S^{\prime\prime})$})$:
   (Recall that since $f_i$'s are even,
       $h(f_1,\,\cdots\,, f_l)$ and $(\partial_k h)(f_1,\,\cdots\,, f_l)$'s are all even as well.)
  $$
    \begin{array}{l}
    (d(h(f_1,\,\cdots\,, f_l)))(\xi)\;\;
	    :=\;\;   \xi(h(f_1,\,\cdots\,, f_l))\;\;
		 =\;\;  \sum_{k=1}^l((\partial_k h)(f_1,\,\cdots\,, f_l))\,\xi f_k   \\[1.2ex]
		
      \hspace{6em}
	    =:\;\; \sum_{k=1}^l (\xi)^{\leftarrow}\!\!\!((\partial_k h)(f_1,\,\cdots\,, f_l)\,df_k)\;\;
	    =:\; \sum_{k=1}^l ((\partial_k h)(f_1,\,\cdots\,, f_l)\,df_k)(\xi)\,.	
   \end{array}		
  $$
 
 Finally, for $\varpi=df$, $(df)(a\xi):= (a \xi)(f)= a\, (\xi f)=: a ((df)(\xi))$.
 It follows that for general $\varpi\in \Omega_{\widehat{X}}$,
  $\varpi(a\xi) := (a\xi)^{\leftarrow}\!\!\!\varpi =  a\, ((\xi)^{\leftarrow}\!\!\!\varpi)
      =: a\,\varpi(\xi)$.
	
 This completes the proof.	
 
\end{proof}

\bigskip

\begin{remark} $[$evaluation of $\Omega_{\widehat{X}}$ on
       $\Der_{\Bbb C}(\widehat{X})$ from the right vs.\ from the left$\,]$\; {\rm
 The evaluation of $\Omega_{\widehat{X}}$ on $\Der_{\Bbb C}(\widehat{X})$
   {\it from the right} corresponds to the pairing
    $$
	  \begin{array}{ccc}
	   \Der_{\Bbb C}(\widehat{X})\times \Omega_{\widehat{X}}
	      & \longrightarrow & C^{\infty}(\widehat{X})  \\[1.2ex]
       (\xi, df)   & \longmapsto &  \xi f
	  \end{array}
	$$
  while
  the evaluation of $\Omega_{\widehat{X}}$ on $\Der_{\Bbb C}(\widehat{X})$
   {\it from the left} corresponds to the pairing
    $$
	 \begin{array}{ccc}
        \Omega_{\widehat{X}}\times\Der_{\Bbb C}(\widehat{X})
		    & \longrightarrow    & C^{\infty}(\widehat{X})             \\[1.2ex]
	  (df, \xi)    & \longmapsto     & \;(-1)^{p(\xi)p(f)}\,\xi f\:.
	\end{array}
	$$	
 Due to the sign-factor $(-1)^{\mbox{\tiny $\bullet$}}$ when passing ${\Bbb Z}/2$-graded objects,
   the former is
     left $C^{\infty}(\widehat{X})$-linear for the $\Der_{\Bbb C}(\widehat{X})$-component
       and
	 right $C^{\infty}(\widehat{X})$-linear for the $\Omega_{\widehat{X}}$-component
 while the latter is
     left $C^{\infty}(\widehat{X})$-linear for the $\Omega_{\widehat{X}}$-component
       and
	 right $C^{\infty}(\widehat{X})$-linear for the $\Der_{\Bbb C}(\widehat{X})$-component.
 Such a distinction looks conceptually minor but becomes important in exact computations.
 In this work we only use evaluations from the right.
}\end{remark}

\bigskip

\begin{definition} {\bf [tangent sheaf ${\cal T}_{\widehat{X}}$ \&
              cotangent sheaf ${\cal T}^{\ast}_{\widehat{X}}$  of $\widehat{X}$]}\;
{\rm
 Replacing $X$ by open sets $U\subset X$ in the above construction
  gives the {\it tangent sheaf} ${\cal T}_{\widehat{X}}$ of $\widehat{X}$,
    with ${\cal T}_{\widehat{X}}(U):= \Der_{\Bbb C}(\widehat{U}) $,     and
  the {\it cotangent sheaf} ${\cal T}^{\ast}_{\widehat{X}}$ of $\widehat{X}$,
     with ${\cal T}^{\ast}_{\widehat{X}}(U):= \Omega_{\widehat{U}}$,
	 for $U\subset X$ open.
}\end{definition}

\bigskip

\begin{flushleft}
{\bf Differential forms on $\widehat{X}$}
\end{flushleft}
\begin{definition} {\bf [sign to ${\Bbb Z}/2$-permutation]}\; {\rm
 Given
   a $k$-tuple $a=(a_1,\,\cdots\,, a_k)$  of ${\Bbb Z}/2$-graded objects
      with each $a_i$ parity-homogeneous   and
   a permutation $\tau\in \Sym_k$ on the set of $k$ letters $\{1,\,\cdots\,, k\}$.
 Let $\tau= (i_1 i_2)\circ \cdots \circ (i_{2l-1}i_{2l})$
    be a decomposition of $\tau$ into a composition of transitions,
	where a transition $(ij)$ exchanges the label $i$ and the label $j$ and leaves all other labels of the letters intact.
 Define the sign associated to
  $\tau(a):= (a_{\tau(1)},\,\cdots\,, a_{\tau(k)})$ 	
  to be
  $$
   (-1)\,\!^{^\varsigma\!\tau}_a\;
	:=\; \mbox{$\prod_{j=1}^l  \left(\rule{0ex}{.9em}\right.\!\!
	      -(-1)^{p(a_{i_{2j-1}})\, p(a_{i_{2j}})}
		  \!\!\left.\rule{0ex}{.9em}\right)$}\,.
  $$
}\end{definition}

\medskip

\begin{lemma} {\bf [well-definedness of $(-1)\,\!^{^{\varsigma}\!\tau}_a$]}\\
 The sign $(-1)\,\!^{^\varsigma\!\tau}_a$ in Definition~1.3.10 is well-defined.
\end{lemma}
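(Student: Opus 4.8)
The plan is to recognize the number $(-1)\,\!^{^\varsigma\!\tau}_a$ intrinsically --- without reference to any chosen factorization --- and then to check that the product over the transpositions of \emph{any} admissible decomposition reproduces it. First I would form the free $({\Bbb Z}\times({\Bbb Z}/2))$-bi-graded ${\Bbb C}$-algebra $R$ on symbols $e_1,\ldots,e_k$ with $\chd(e_i)=1$ and $p(e_i)=p(a_i)$, subject only to the bi-graded-commutativity relations of Convention~1.3.5, i.e.\ $e_ie_j=-(-1)^{p(a_i)p(a_j)}e_je_i$ (and, as a consequence, $e_i^2=0$ when $p(a_i)$ is even). One checks quickly that $R$ is a well-defined associative algebra: concretely $R\simeq A_1\otimes\cdots\otimes A_k$ as bi-graded vector spaces, with $A_i={\Bbb C}[e_i]/(e_i^2)$ when $p(a_i)$ is even and $A_i={\Bbb C}[e_i]$ when $p(a_i)$ is odd, so each monomial $e_{\sigma(1)}e_{\sigma(2)}\cdots e_{\sigma(k)}$, $\sigma\in\Sym_k$, is a nonzero element of $R$, and any two of them differ by a sign in $\{\pm1\}$.

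With this in hand I would \emph{define} $\chi(\tau)\in\{\pm1\}$ to be the unique sign with $e_1e_2\cdots e_k=\chi(\tau)\,e_{\tau(1)}e_{\tau(2)}\cdots e_{\tau(k)}$ in $R$; by construction $\chi(\tau)$ depends only on $\tau$ and on the parities $(p(a_1),\ldots,p(a_k))$. The crux is to show that the formula in Definition~1.3.10 computes exactly $\chi(\tau)$. Given a decomposition $\tau=(i_1i_2)\circ\cdots\circ(i_{2l-1}i_{2l})$ of the kind contemplated there --- each ``transition'' exchanging two \emph{neighbouring} entries of the string on which it acts --- I would run the corresponding sequence of moves on the monomial $e_1e_2\cdots e_k$: the $j$-th move transposes the two adjacent factors $e_{i_{2j-1}}$ and $e_{i_{2j}}$, and by the defining relation of $R$ this introduces precisely the factor $-(-1)^{p(a_{i_{2j-1}})p(a_{i_{2j}})}$. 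Multiplying these over $j=1,\ldots,l$ and comparing the initial monomial with the final monomial $e_{\tau(1)}\cdots e_{\tau(k)}$ gives $\prod_{j=1}^l\bigl(-(-1)^{p(a_{i_{2j-1}})p(a_{i_{2j}})}\bigr)=\chi(\tau)$; since the right-hand side is manifestly independent of the decomposition, so is the left-hand side, which is the assertion of the lemma.

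The one delicate point --- and the place I expect the bookkeeping to be most error-prone --- is pinning down precisely which factorizations Definition~1.3.10 admits and matching the parities in the formula with the moves in $R$. One must use that each factor transposes two \emph{adjacent} elements (a reflection with respect to the running order); a transposition of two non-adjacent labels is not a single Koszul move, and the naive product over an arbitrary factorization into transpositions is in fact \emph{not} decomposition-independent (already for $k=3$ with parities $(1,1,0)$ the factorizations $(1\,3)\circ(2\,3)$ and $(1\,2)\circ(1\,3)$ of the same $3$-cycle give products $+1$ and $-1$), so this restriction is essential. One also has to be careful that the two parities in the $j$-th factor are the intrinsic parities $p(a_{i_{2j-1}})$, $p(a_{i_{2j}})$ of the two elements actually swapped at that step --- which they are, since parity is a property of the element, not of its position --- so that each term of the product is exactly one application of the relation in $R$.

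As an alternative to the free-algebra route, one can argue entirely inside $\Sym_k$ via its Coxeter presentation: any two admissible (adjacent-transposition) decompositions of $\tau$ are linked, by the theorem of Matsumoto--Tits, through the braid moves $s_ms_{m+1}s_m=s_{m+1}s_ms_{m+1}$, the far-commutations $s_ms_{m'}=s_{m'}s_m$ for $|m-m'|\ge2$, and the cancellations $s_m^2=e$; a short computation shows the product formula is unchanged under each (for the braid move both words pick up the same factor $-(-1)^{p_\alpha p_\beta+p_\alpha p_\gamma+p_\beta p_\gamma}$, where $a_\alpha,a_\beta,a_\gamma$ are the three elements involved; a cancellation $s_m^2$ contributes a square $(-(-1)^{\bullet})^2=1$; far-commutations swap disjoint pairs), and then invoke the theorem to conclude. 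Either argument closes the proof; I would present the free-algebra version as the cleaner one, since it also makes transparent the intended meaning of $(-1)\,\!^{^\varsigma\!\tau}_a$ as the sign by which $\tau$ acts on the top monomial of $R$.
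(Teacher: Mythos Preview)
Your argument is correct, and the key observation you make --- that the product formula of Definition~1.3.10 is only invariant under decompositions into \emph{adjacent} transpositions --- is both essential and absent from the paper. Your counterexample with parities $(1,1,0)$ is decisive: the two factorizations $(1\,3)(2\,3)$ and $(1\,2)(1\,3)$ of the same $3$-cycle give products $+1$ and $-1$ respectively, so the sign as literally defined (with arbitrary transpositions $(ij)$) is \emph{not} well-defined. Your free-algebra argument then cleanly identifies the intended sign as the Koszul sign in the bi-graded-commutative setting of Convention~1.3.5 and verifies that any adjacent-transposition decomposition recovers it; the Matsumoto--Tits alternative is equally valid.

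The paper takes a genuinely different route: it invokes the disjoint-cycle decomposition of $\tau$ and asserts the closed formula $(-1)^{^\varsigma\tau}_a=(-1)^\tau\prod_{\gamma}(-1)^\gamma$, the product over those cycles $\gamma$ all of whose entries carry odd parity. This is an intrinsic-formula strategy rather than an invariance-under-moves strategy, but as written it is incomplete on two counts. First, it computes the sign for one particular decomposition (cycles, then cycles into transpositions) without showing that other decompositions agree --- precisely the point your counterexample exposes. Second, the closed formula itself fails for cycles of mixed parity with more than one odd entry: for the $3$-cycle on parities $(1,0,1)$, two adjacent swaps give Koszul sign $(-1)(+1)=-1$, while the paper's formula gives $(-1)^\tau=+1$ (the cycle is not all-odd, so contributes no extra factor). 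Your approach both diagnoses the needed restriction on the definition and supplies a complete proof of the corrected statement; what the paper's route would buy, if repaired, is a compact closed-form expression, but your intrinsic characterization via the free algebra already delivers the conceptual content without that detour.
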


\medskip

\begin{proof}
 If there is at most one $a_i$ that is odd, then the lemma is a classical result from Algebra,
  e.g.\ [Ja].
 Thus, up to a relabelling, one may assume that
  $k\ge 2$ and that $p(a_i)=1$ exactly when $1\le i \le k_1\le k$ for some $k_1\ge 2$.
 Let $\tau=\gamma_l\cdots \gamma_1$ be the decomposition into disjoint cycles.
 Such decomposition is unique up to cyclic permutation within each cycle and re-orderings of the cycles.
 Since in such decomposition $\gamma_1,\,\cdots\,, \gamma_l$ commute with each other,
  up to a relabelling one may assume that exactly
    $\gamma_1, \,\cdots\,, \gamma_{l^{\prime}}$ have the property that
	all its entries are odd.
 In terms of this
  $$
    (-1)\,\!^{^\varsigma\!\tau}_a\;=\;
	(-1)^{\tau}(-1)^{\gamma_1}\,\cdots\,(-1)^{\gamma_{l^{\prime}}}\,,
  $$	
  where
    $(-1)^{\tau}, (-1)^{\gamma_1}, \cdots, (-1)^{\gamma_{l^{\prime}}}$
    take value $1$ (resp.\ $-1$) if the respective permutation in the exponent is even (resp.\ odd).
 This proves the lemma.	

\end{proof}

\medskip

\begin{definition} {\bf [$k$-form on $\widehat{X}$]}\; {\rm
 The bi-$C^{\infty}(\widehat{X})$-module of {\it $k$-forms} on $\widehat{X}$ is
  the bi-sub-$C^{\infty}(\widehat{X})$-module
   $\Omega^k_{\widehat{X}}
      := \bigwedge^k_{C^{\infty}(\widehat{X})}\Omega_{\widehat{X}}$
   of  the $k$-th tensor product
    $$
	  \otimes_{C^{\infty}(\widehat{X})}\Omega_{\widehat{X}} \;
	   :=\; \underbrace{\Omega_{\widehat{X}}
	            \otimes_{C^{\infty}(\widehat{X})}\, \cdots\cdots\,
        	    \otimes_{C^{\infty}(\widehat{X})}
		            \Omega_{\widehat{X}}}_{\mbox{\scriptsize $k$ times}}
	$$
  of $\Omega_{\widehat{X}}$	
  generated by
  $$
   \omega_1\wedge \,\cdots\, \wedge \omega_k\;
   :=\;  \sum_{\tau\in \scriptsizeSym_k}(-1)\,\!^{^\varsigma\!\tau}_{\omega}\,
              \omega_{\tau(1)}\otimes\,\cdots\,\otimes \omega_{\tau(k)}\,.
  $$
 Here,
   $\omega_1, \cdots, \omega_k\in \Omega_{\widehat{X}}$ parity-homogeneous,
   $\omega := \omega_1\otimes \cdots \otimes \omega_k$,   and
   $(-1)\,\!^{^\varsigma\!\tau}_{\omega}$  is the sign associated to
      $\tau(\omega):= \omega_{\tau(1)}\otimes\cdots\otimes \omega_{\tau(k)}$.
 In situations where $\omega$ can be understood from the context
    and it becomes cumbersome to carry the subscript
	$\omega$ in $(-1)\,\!^{^\varsigma\!\tau}_{\omega}$,
  we will denote it by $(-1)\,\!^{^\varsigma\!\tau}_{\mbox{\tiny $\bullet$}}$.
	
 An element of $\Omega^k_{\widehat{X}}$  is also called a {\it $k$-form} on $\widehat{X}$.
 It has {\it cohomological degree} ($\chd$) $k$.
}\end{definition}

\medskip

\begin{definition} {\bf [wedge product]}\; {\rm
 Define the {\it wedge product}
   $\wedge: \Omega^k_{\widehat{X}}\times  \Omega^l_{\widehat{X}}
      \rightarrow \Omega^{k+l}_{\widehat{X}}$ by setting
   $$
    (f_0\, df_1\wedge \,\cdots\, df_k )  \wedge (g_0\,dg_1\wedge \,\cdots\, dg_l)\;
	  :=\;  f_0\, df_1\wedge\,\cdots\,\wedge df_k\wedge (g_0\,dg_1)\wedge\,\cdots\,\wedge dg_l\,.   
   $$
}\end{definition}

\medskip

\begin{lemma} {\bf [basic property of wedge product]}\;
 Let
   $\alpha\in \Omega^k_{\widehat{X}}$ and
   $\beta\in \Omega^l_{\widehat{X}}$, both parity-homogeneous.
 Then
   $$
     \alpha\wedge \beta \;
	   =\; (-1)^{kl}(-1)^{p(\alpha)\,p(\beta)}\,
	       \beta\wedge \alpha\,.
   $$
\end{lemma}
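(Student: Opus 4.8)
The plan is to reduce the general graded-commutation law for the wedge product to the case $k = l = 1$, where it is essentially the definition, and then bootstrap by induction on $k + l$. First I would verify the base case: for $\omega_1, \omega_2 \in \Omega_{\widehat{X}}$ parity-homogeneous, by Definition~1.3.12 one has $\omega_1 \wedge \omega_2 = \omega_1 \otimes \omega_2 + (-1)\,\!^{^\varsigma\!\tau}_{\omega}\,\omega_2 \otimes \omega_1$ where $\tau$ is the transposition, so $(-1)\,\!^{^\varsigma\!\tau}_{\omega} = -(-1)^{p(\omega_1)p(\omega_2)}$; comparing with the analogous expansion of $\omega_2 \wedge \omega_1$ gives $\omega_1 \wedge \omega_2 = -(-1)^{p(\omega_1)p(\omega_2)}\,\omega_2 \wedge \omega_1$, which is the claimed formula with $k = l = 1$ (note $(-1)^{1\cdot 1} = -1$).

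Next I would treat the case where $\alpha = \omega_1 \wedge \cdots \wedge \omega_k$ is a wedge of $1$-forms and $\beta = \eta$ is a single $1$-form, by induction on $k$: using associativity of $\wedge$ (which follows from Definition~1.3.13, since both sides are the image of the same tensor under the antisymmetrizer) and the base case, one moves $\eta$ past each $\omega_i$ in turn, picking up a factor $-(-1)^{p(\omega_i)p(\eta)}$ each time; the product of the $k$ sign factors $(-1)^k$ assembles into $(-1)^{k\cdot 1}$, and the product of the parity factors is $(-1)^{(\sum_i p(\omega_i))p(\eta)} = (-1)^{p(\alpha)p(\eta)}$ since $p(\alpha) \equiv \sum_i p(\omega_i) \pmod 2$. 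Then a second induction, on $l$, with $\beta = \eta_1 \wedge \cdots \wedge \eta_l$, moves the whole block $\alpha$ past each $\eta_j$ using the previous step, producing the factor $(-1)^{kl}(-1)^{p(\alpha)p(\beta)}$. Finally, since every element of $\Omega^k_{\widehat{X}}$ is a $C^\infty(\widehat{X})$-linear combination of terms $f\, df_1 \wedge \cdots \wedge df_k$ and both sides of the identity are additive in $\alpha$ and in $\beta$, the general case follows from the decomposable case — with attention to the sign rule of Convention~1.3.5 when the coefficient functions $f$ are odd and are passed across forms, which is exactly the bookkeeping already built into the definition of $(-1)\,\!^{^\varsigma\!\tau}_{\mbox{\tiny $\bullet$}}$.

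The main obstacle I expect is not conceptual but the sign bookkeeping: one must check carefully that the graded antisymmetrizer in Definition~1.3.12 is genuinely associative in the graded sense, i.e.\ that $(\omega_1 \wedge \omega_2)\wedge \omega_3$ and $\omega_1 \wedge (\omega_2 \wedge \omega_3)$ both equal $\sum_{\tau \in \scriptsizeSym_3}(-1)\,\!^{^\varsigma\!\tau}_{\omega}\,\omega_{\tau(1)}\otimes\omega_{\tau(2)}\otimes\omega_{\tau(3)}$, because the inductive argument leans on this. This in turn rests on the cocycle-type identity $(-1)\,\!^{^\varsigma(\sigma\tau)}_{\omega} = (-1)\,\!^{^\varsigma\sigma}_{\tau(\omega)}\,(-1)\,\!^{^\varsigma\tau}_{\omega}$, which is the natural multiplicativity of the sign under composition of permutations and which is implicitly what Lemma~1.3.11 is setting up; once that is in hand, the rest is a routine double induction.
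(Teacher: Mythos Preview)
Your proposal is correct and follows essentially the same approach as the paper: both reduce to the adjacent-transposition identity $\omega_1\wedge\cdots\wedge\omega_i\wedge\omega_{i+1}\wedge\cdots = -(-1)^{p(\omega_i)p(\omega_{i+1})}\,\omega_1\wedge\cdots\wedge\omega_{i+1}\wedge\omega_i\wedge\cdots$ for $1$-forms and then count the accumulated signs, with the $-1$'s giving $(-1)^{kl}$ and the parity factors giving $(-1)^{p(\alpha)p(\beta)}$. The paper's proof is terser and leaves the associativity and induction implicit, while you spell them out more carefully.
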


\medskip

\begin{proof}
 With the notation from Definition~1.3.12,
 this follows from the identity
  \begin{eqnarray*}
   \lefteqn{
    \omega_1\wedge\,\cdots\,
	   \wedge \omega_{i-1}\wedge \omega_i\wedge \omega_{i+1}\wedge  \omega_{i+2}
	   \wedge \,\cdots\,\wedge \omega_{k^{\prime}} }\\[.8ex]
    && 	
	 =\; -(-1)^{p(\omega_i)p(\omega_{i+1})}\,
	       \omega_1\wedge\,\cdots\,
		     \wedge \omega_{i-1}\wedge \omega_{i+1}\wedge \omega_i\wedge  \omega_{i+2}
	              \wedge \,\cdots\,\wedge \omega_{k^{\prime}} 	
  \end{eqnarray*}
     in $\Omega^{k^{\prime}}_{\widehat{X}}$ for any $k^{\prime}$   and
  that $p(\omega_1\wedge \,\cdots\,\wedge \omega_l )
                =p(\omega_1)+\,\cdots\,+p(\omega_{l^{\prime}})$ for any $l^{\prime}$.
 Here,
   $\omega_1,\,\cdots\,,\, \mbox{$\omega_{k^{\prime}}$ or $\omega_{l^{\prime}}$}$
       $\in \Omega_{\widehat{X}} = \Omega^1_{\widehat{X}}$.
 Collectively in the end,
   the ``$-$"-signs contribute to the $(-1)^{kl}$-factor
   while the $(-1)^{p(\omega_i)p(\omega_{i+1})}$-factors contribute to
      the $(-1)^{p(\alpha)p(\beta)}$-factor.
     
\end{proof}

\medskip

\begin{definition} {\bf [exterior differential $d$]}\; {\rm
  The operation
    $d: C^{\infty}(\widehat{X})
	       \rightarrow \Omega^1_{\widehat{X}}:= \Omega_{\widehat{X}}$
   extends to the {\it exterior differential operator}
    $d:\Omega^k\rightarrow \Omega^{k+1}$ defined by
	$$
	   f_0\, df_1\wedge \,\cdots\, df_k\; \longmapsto\; df_0\wedge df_1\wedge\,\cdots\,\wedge df_k\,.
	$$
}\end{definition}

\medskip

\begin{lemma} {\bf [basic property of $d$]}\;
 $(1)$ $d\circ d=0$.
 
 $(2)$
 Let
   $\alpha\in \Omega^k_{\widehat{X}}$ and $\beta\in \Omega^l_{\widehat{X}}$.
 Then
   $$
     d(\alpha\wedge \beta) \;
	   =\; d\alpha\wedge \beta\,+\,(-1)^k \alpha\wedge d\beta\,.
   $$
\end{lemma}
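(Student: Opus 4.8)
The plan is to verify both parts on the generating elements $f_0\,df_1\wedge\cdots\wedge df_k$ of $\Omega^k_{\widehat{X}}$ (with $f_0$ parity-homogeneous), since both sides of (1) and (2) are additive and ${\Bbb C}$-linear and the asserted identities are compatible with the bi-$C^{\infty}(\widehat{X})$-module structure. The tools needed are the Leibniz relator $d(gh)=(dg)h+g\,dh$ of Definition~1.3.6, the definition of $d$ on $k$-forms from Definition~1.3.15, the definition of $\wedge$ on generators from Definition~1.3.13, the graded commutativity $\alpha\wedge\beta=(-1)^{kl}(-1)^{p(\alpha)p(\beta)}\beta\wedge\alpha$ of Lemma~1.3.14, and the passing rule $\omega\,g=(-1)^{p(\omega)p(g)}g\,\omega$ of Convention~1.3.5 for a form $\omega$ and a function $g$ (the cohomological-degree sign being trivial since $\chd(g)=0$). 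As a preliminary I would record $d(1)=0$, from the Leibniz relator applied to $1=1\cdot 1$ (which forces $d(1)=2\,d(1)$). Then part (1) is immediate: on a generator Definition~1.3.15 gives $d(f_0\,df_1\wedge\cdots\wedge df_k)=df_0\wedge df_1\wedge\cdots\wedge df_k$, and viewing this $(k+1)$-form as the generator $1\cdot df_0\wedge df_1\wedge\cdots\wedge df_k$ and applying $d$ once more yields $d(1)\wedge df_0\wedge df_1\wedge\cdots\wedge df_k=0$, so $d\circ d=0$.

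For part (2), I would take $\alpha=f_0\,df_1\wedge\cdots\wedge df_k$ and $\beta=g_0\,dg_1\wedge\cdots\wedge dg_l$, write $q$ for the parity $p(f_1)+\cdots+p(f_k)$ of the block $df_1\wedge\cdots\wedge df_k$, and set $\epsilon:=q\,p(g_0)$. By Definition~1.3.13, followed by moving $g_0$ leftward past the block via Convention~1.3.5, one rewrites $\alpha\wedge\beta=(-1)^{\epsilon}(f_0g_0)\,df_1\wedge\cdots\wedge df_k\wedge dg_1\wedge\cdots\wedge dg_l$; then Definition~1.3.15 with the Leibniz relator $d(f_0g_0)=(df_0)g_0+f_0\,dg_0$ splits $d(\alpha\wedge\beta)$ into two terms. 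In the $(df_0)g_0$-term, moving $g_0$ back to the right past the block costs another $(-1)^{\epsilon}$, so the two factors cancel and one recovers $df_0\wedge df_1\wedge\cdots\wedge df_k\wedge\beta=d\alpha\wedge\beta$. In the $f_0\,dg_0$-term, carrying the $1$-form $dg_0$ rightward past the $k$ one-forms $df_1,\dots,df_k$ costs $(-1)^{k}(-1)^{\epsilon}$ by $k$ uses of Lemma~1.3.14, which cancels the leading $(-1)^{\epsilon}$ and leaves $(-1)^{k}f_0\,df_1\wedge\cdots\wedge df_k\wedge dg_0\wedge dg_1\wedge\cdots\wedge dg_l=(-1)^{k}\alpha\wedge d\beta$. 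Summing the two terms gives $d(\alpha\wedge\beta)=d\alpha\wedge\beta+(-1)^{k}\alpha\wedge d\beta$.

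The one genuinely delicate point is the sign bookkeeping in part (2): I must keep the ${\Bbb Z}/2$-parity of a form strictly separate from its cohomological degree while shuttling the degree-$0$ coefficient $g_0$ and the degree-$1$ differential $dg_0$ past the block $df_1\wedge\cdots\wedge df_k$, and then confirm that the two separately-incurred factors $(-1)^{\epsilon}$ cancel while the $(-1)^{k}$ from Lemma~1.3.14 survives. Everything else — the reduction to generators, $d(1)=0$, and part (1) — is routine; in particular I would take the well-definedness of $d$ on $\Omega^{k}_{\widehat{X}}$ as already supplied by Definition~1.3.15, which is precisely where the evenness of the $f_i$ entering the chain-rule relators is used, via $df_i\wedge df_j=-df_j\wedge df_i$.
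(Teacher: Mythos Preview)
Your proof is correct and follows essentially the same approach as the paper's: reduce to generators $f_0\,df_1\wedge\cdots\wedge df_k$ and $g_0\,dg_1\wedge\cdots\wedge dg_l$, move $g_0$ left past the block $df_1\wedge\cdots\wedge df_k$ to put $\alpha\wedge\beta$ in standard form (picking up the sign $(-1)^{\epsilon}$ with $\epsilon=(p(f_1)+\cdots+p(f_k))p(g_0)$), apply the Leibniz relator to $d(f_0g_0)$, and then undo the passing in each of the two terms. Your treatment of part~(1) via $d(1)=0$ is exactly what underlies the paper's terser statement ``$d(df)=0$,'' and your sign bookkeeping in part~(2) is more explicit than the paper's but matches it line for line.
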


\medskip

\begin{proof}
 Statement (1) follows from $d(df)=0$ for $f\in C^{\infty}(\widehat{X})$.
 
 Statement (2) follows from the special case
  $\alpha = f_0\,df_1\wedge \,\cdots\,\wedge df_k$  and
  $\beta=g_0\,dg_1\wedge \,\cdots\,\wedge dg_l$
    with $f_1, \,\cdots\,, f_k, g_0,\, \cdots,\, g_l$ parity-homogeneous,
  in which
  {\small
   \begin{eqnarray*}
    \lefteqn{d(\alpha\wedge \beta)\;
	 =\; d((f_0\, df_1\wedge \,\cdots\,\wedge df_k)
	   \wedge (g_0\, dg_1\wedge \,\cdots\,\wedge dg_l))   }\\[.8ex]
    &&
	 =\;  (-1)^{(p(f_1)+\,\cdots\,+p(f_k))\,p(g_0)}
  	           d(f_0g_0)
			    \wedge df_1\wedge\,\cdots\,\wedge df_k
			    \wedge dg_1\wedge \,\cdots\,\wedge dg_l 	      \\[.8ex]
    &&
	 =\; (-1)^{(p(f_1)+\,\cdots\,+p(f_k))\,p(g_0)}
  	          ((df_0)g_0 + f_0dg_0))
			    \wedge df_1\wedge\,\cdots\,\wedge df_k
			    \wedge dg_1\wedge \,\cdots\,\wedge dg_l 	  \\[.8ex]		
	&&
	 =\; (df_0\wedge df_1\wedge \,\cdots\,\wedge df_k)
	             \wedge (g_0\,dg_1\wedge \,\cdots\,\wedge dg_l)\,
			+\, (-1)^k\,(f_0\,df_1\wedge\,\cdots\,\wedge df_k )
			    \wedge (dg_0\wedge \,\cdots\,\wedge dg_l)   \\[.8ex]
    &&
     =\;  d\alpha\wedge \beta\,+\, (-1)^k\,\alpha\wedge d\beta\,.    				
   \end{eqnarray*}} 
\end{proof}

\bigskip

Note that the identity in Lemma~1.3.16, Statement (2),
  is nothing but the Leibnitz rule for $({\Bbb Z}\times ({\Bbb Z}/2))$-graded objects
  with the bi-grading of $d$ being $(1,0)$;
cf.\ Convention~1.3.5.

\bigskip

\begin{definition} {\bf [evaluation of $\Omega^k_{\widehat{X}}$
       on $\times_k \Der_{\Bbb C}(\widehat{X})$ from the right]}\; {\rm
 The {\it evaluation} of $\Omega^k_{\widehat{X}}$ on
       $\times_k \Der_{\Bbb C}(\widehat{X})$ {\it from the right}
	   is defined by setting\footnote{{\it Convention on the cohomological degree of derivation}\;\;	
	                                                          The defining equation $df(\xi):= \xi f$ leads to an ambiguity
															     as to how one should assign the cohomological degree to a derivation.
                                                               Since both $f$ and $\xi f \in C^{\infty}(\widehat{X})$
															       have cohomological degree $0$,
															     it is natural to set $\footnotesizechd(\xi)=0$.
	                                                          This convention is chosen throughout the work
															     so that the evaluation resumes to the standard form
	                                                            in the commutative case
											                  (i.e.\ when $\xi_1,\,\cdots\,,\xi_k$ and $f_0,\,\cdots\,, f_k$ are all even)
										                        without an additional $(-1)^{\mbox{\tiny $\bullet$}}$-factor.
                                                              On the other hand, since $\footnotesizechd(df)=1$ while
															     $\footnotesizechd(df(\xi))=0$, it is not-too-unnatural to set
																 $\chd(\xi)=-1$.
															  In which case
                                                                \begin{eqnarray*}
                                                                  \lefteqn{
                                                                  (f_0\, df_1\wedge \,\cdots\,\wedge df_k) (\xi_1,\,\cdots\,, \xi_k)\;
                                                                      :=\;   (\xi_1,\,\cdots\,, \xi_k)^{\leftarrow}\!\!\!
																					 (f_0\,df_1\wedge\,\cdots\,\wedge df_k)}\\[.2ex]
                                                                  &&	
	                                                               :=\;  (-1)^{k(k-1)/2}\,
																            \sum_{\tau\in \tinySym_k}
	                                                                       (-1)\,\!^{^{\varsigma}\!\tau}_{\mbox{\tiny $\bullet$}}\,
																           (-1)
																              ^{p(\xi_1)(p(f_0))
	                                                                                 +\,\cdots\,
																					 + p(\xi_k)
																					    (p(f_0)+ p(f_{\tau(1)})
																						           +\,\cdots\,+p(f_{\tau(k-1)}))}\,
	                                                                         f_0\, ((\xi_1)^{\leftarrow}\!\!\!df_{\tau(1)})\,
																			  \cdots\,
																			((\xi_k)^{\leftarrow}\!\!\!df_{\tau(k)})             \\[.2ex]
                                                                  &&								
	                                                               =:\; (-1)^{k(k-1)/2}\,
																            \sum_{\tau\in \scriptsizeSym_k}
	                                                                      (-1)\,\!^{^{\varsigma}\!\tau}_{\mbox{\tiny $\bullet$}}\,
																          (-1)
																            ^{p(\xi_1)(p(f_0))
	                                                                                 +\,\cdots\,
																					 + p(\xi_k)
																					     (p(f_0)+p(f_{\tau(1)})
																						   +\,\cdots\,+p(f_{\tau(k-1)}))}\,
	                                                                        f_0\,(df_{\tau(1)}(\xi_1))\,
																			\cdots\, (df_{\tau(k)}(\xi_k))\,.
                                                                \end{eqnarray*}  	   															  															  								  
                                                            }
  {\small
  \begin{eqnarray*}
   \lefteqn{
   (f_0\, df_1\wedge \,\cdots\,\wedge df_k) (\xi_1,\,\cdots\,, \xi_k)\;
     :=\;   (\xi_1,\,\cdots\,, \xi_k)^{\leftarrow}\!\!\!(f_0\,df_1\wedge\,\cdots\,\wedge df_k)}\\[.8ex]
    &&
	 :=\;   \sum_{\tau\in \scriptsizeSym_k}
	          (-1)\,\!^{^{\varsigma}\!\tau}_{\mbox{\tiny $\bullet$}}\,
   	          (-1)^{p(\xi_1)(p(f_0))
	                          +\,\cdots\,
							  + p(\xi_k)(p(f_0)+p(f_{\tau(1)})+\,\cdots\,+p(f_{\tau(k-1)}))}\,
	           f_0\, ((\xi_1)^{\leftarrow}\!\!\!df_{\tau(1)})\,
			                       \cdots\,  ((\xi_k)^{\leftarrow}\!\!\!df_{\tau(k)})
			                        \\[.8ex]
    &&								
	 =:\;  \sum_{\tau\in \scriptsizeSym_k}
	         (-1)\,\!^{^{\varsigma}\!\tau}_{\mbox{\tiny $\bullet$}}\,	
	         (-1)^{p(\xi_1)(p(f_0))
	                          +\,\cdots\,
							  + p(\xi_k)(p(f_0)+p(f_{\tau(1)})+\,\cdots\,+p(f_{\tau(k-1)}))}\,
	        f_0\,(df_{\tau(1)}(\xi_1))\,\cdots\, (df_{\tau(k)}(\xi_k))\,.
  \end{eqnarray*}}
}\end{definition}

\medskip

\begin{remark} $[$evaluation of $\Omega^{\,k}_{\widehat{X}}$ on
       $\times_k\Der_{\Bbb C}(\widehat{X})$ from the right vs.\ from the left$\,]$\; {\rm
 Similar to the evaluation of $\Omega_{\widehat{X}}$ on $\Der_{\Bbb C}(\widehat{X})$
  (cf.\ Remark~1.3.8),
 the evaluation of $\Omega^{\,k}_{\widehat{X}}$ on $\times_k\Der_{\Bbb C}(\widehat{X})$
   {\it from the right} corresponds to the pairing
    $$
	  \begin{array}{ccc}
	 (\times_k \Der_{\Bbb C}(\widehat{X}))\times  \Omega^{\,k}_{\widehat{X}}
	      & \longrightarrow & C^{\infty}(\widehat{X})  \\[1.2ex]
     (\xi_1,\,\cdots\,,\xi_k; f_0\,df_1\wedge \,\cdots\, \wedge df_k)   & \longmapsto
	      &   \sum_{\scriptsizeSym_k}
		        (-1)^{\mbox{\LARGE $\cdot$}_R}_{\mbox{\tiny $\bullet$},\tau}   \,
				  f_0\, (\xi_1 f_{\tau(1)})  \,\cdots\, (\xi_k f_{\tau(k)} )
	  \end{array}
	$$
  while
  the evaluation of $\Omega^{\,k}_{\widehat{X}}$
   on $\times_k\Der_{\Bbb C}(\widehat{X})$ {\it from the left} corresponds to the pairing
    $$
	 \begin{array}{ccc}
        \Omega^{\,k}_{\widehat{X}}\times (\times_k\Der_{\Bbb C}(\widehat{X}))
		    & \longrightarrow    & C^{\infty}(\widehat{X})             \\[1.2ex]
	  (f_0\,df_1\wedge\,\cdots\,\wedge df_k; \xi_1,\,\cdots\,,\, \xi_k)
	      & \longmapsto
		  & \;  \sum_{\scriptsizeSym_k}
		        (-1)^{\mbox{\LARGE $\cdot$}_L}_{\mbox{\tiny $\bullet$},\tau}   \,
				  f_0\, (\xi_1 f_{\tau(1)})  \,\cdots\, (\xi_k f_{\tau(k)} )   \:,
	\end{array}
	$$	
   for $\xi_1,\,\cdots\,, \xi_k$ and $f_0, f_1,\,\cdots\,, f_k$	parity-homogeneous.
 Here,
  $$
   \begin{array}{lcl}
    (-1)^{\mbox{\LARGE $\cdot$}_R}_{\mbox{\tiny $\bullet$},\tau}         & =
	   &  (-1)\,\!^{^{\varsigma}\!\tau}_{\mbox{\tiny $\bullet$}}\,
   	        (-1)^{p(\xi_1)(p(f_0))
	                          +\,\cdots\,
							  + p(\xi_k)(p(f_0)+p(f_{\tau(1)})+\,\cdots\,+p(f_{\tau(k-1)}))}
							  \\[1.2ex]
    (-1)^{\mbox{\LARGE $\cdot$}_L}_{\mbox{\tiny $\bullet$}, \tau} 	   & =
	   &(-1)\,\!^{^{\varsigma}\!\tau}_{\mbox{\tiny $\bullet$}}\,
	     (-1)^{p(\xi_1)(p(f_{\tau(k)})+ \,\cdots\,+ p(f_{\tau(1)}))
		                  +\,\cdots\,
						  + p(\xi_k)p(f_{\tau(k)})}\,.      	      	
   \end{array}
  $$
 In this work we only use evaluations from the right.
}\end{remark}

\medskip

\begin{definition}
{\bf [sheaf $\bigwedge^k{\cal T}^{\ast} _{\widehat{X}}$ of {\boldmath $k$}-forms]}\; {\rm
 Replacing $X$ by open sets $U\subset X$ in the construction of $\Omega^{\,k}_{\widehat{X}}$
  gives the {\it bi-$\widehat{\cal O}_X$-module
    $\bigwedge^k {\cal T}^{\ast}_{\widehat{X}}$  of  $k$-forms} on $\widehat{X}$,
     with $(\bigwedge^k{\cal T}^{\ast}_{\widehat{X}})(U):= \Omega^{\,k}_{\widehat{U}}$.
 The global construction of the exterior differential operator $d$ and the wedge product $\wedge$ pass to local constructions
  to give the chain complex
  $$
   \xymatrix{
    \widehat{\cal O}_X\; \ar[r]^-{d}
	  & {\cal T}^{\ast}_{\widehat{X}} \ar[r]^-{d}
	  & \; \bigwedge^2{\cal T}^{\ast}_{\widehat{X}}\; \ar[r]^-{d}
	  & \; \bigwedge^3{\cal T}^{\ast}_{\widehat{X}}\; \ar[r]^-{d}   & \;\cdots  	}
  $$
  and the wedge product of bi-$\widehat{\cal O}_X$-modules
  $$
    \wedge\; :\; \mbox{$\bigwedge$}^k{\cal T}^{\ast}_{\widehat{X}}
	                         \times \mbox{$\bigwedge$}^l{\cal T}^{\ast}_{\widehat{X}}\;
		\longrightarrow\; \mbox{$\bigwedge$}^{k+l}{\cal T}^{\ast}_{\widehat{X}}\,.
  $$
}\end{definition}

\bigskip

\begin{flushleft}
{\bf Fermionic integrations over $\widehat{X}$}
\end{flushleft}
For the purpose of the current work,
  a {\it fermionic integration} of a function $f$ on the superspace $\widehat{X}$
    is in effect the same as {\it taking some specified component of $f$
    with respect to the standard coordinate-functions $(x,\theta,\bar{\theta})$
     on $\widehat{X}$}.\footnote{In this work
                                                                    	 this is the only coordinate-functions for which we will consider
																		 fermionic integrations.
																	 In general, one needs to understand how the integration behaves
                                                                         when using different coordinate-functions.
																     } 
There are three kinds of integrations over fermionic variables we will take in this work:
Let
 \begin{eqnarray*}
   \lefteqn{
    f(x,\theta,\bar{\theta})\; =\;
      f_{(0)}(x)\,
	  +\, \sum_{\alpha=1}^2 f_{(\alpha)}(x) \theta^{\alpha}\,
      +\, \sum_{\dot{\alpha}=\dot{1}}^{\dot{2}}
	             f_{(\dot{\alpha})}(x)\bar{\theta}^{\dot{\alpha}} }\\[-.6ex]
   && \hspace{6em}				 				
      +\, f_{(12)}(x)\theta^1\theta^2\, 	
	  +\, f_{(\dot{1}\dot{2})}(x)\bar{\theta}^{\dot{1}}\bar{\theta}^{\dot{2}}\,
      +\, \sum_{\alpha=1}^2\sum_{\dot{\beta}=\dot{1}}^{\dot{2}}
                 f_{(\alpha\dot{\beta})}(x)\theta^{\alpha}\bar{\theta}^{\dot{\beta}}	
				                                                                                                                               \\[-.6ex]	
    && \hspace{8em}
      +\, \sum_{\dot{\alpha}=\dot{1}}^{\dot{2}}
	         f_{(12\dot{\alpha})}(x) \theta^1\theta^2\bar{\theta}^{\dot{\alpha}}\,
      +\, \sum_{\alpha=1}^2
	         f_{(\alpha \dot{1}\dot{2})}(x)
			     \theta^{\alpha}\bar{\theta}^{\dot{1}}\bar{\theta}^{\dot{2}}\,			       				
      +\, f_{(12\dot{1}\dot{2})}(x)
	             \theta^1\theta^2\bar{\theta}^{\dot{1}}\bar{\theta}^{\dot{2}}
 \end{eqnarray*}
 be a function on $\widehat{X}$.
Then
 \begin{eqnarray*}
   \int_{\widehat{X}} f(x,\theta,\bar{\theta})\,  d\theta^2d\theta^1 d^4\!x
        & := & \int_X f_{(12)}(x)\, d^4\!x \,, \\
   \int_{\widehat{X}} f(x,\theta,\bar{\theta})\,
               d\bar{\theta}^{\dot{2}}d\bar{\theta}^{\dot{1}} d^4\!x
      & := & \int_X f_{(\dot{1}\dot{2})}(x)\,d^4\!x \,, \\
   \int_{\widehat{X}} f(x,\theta,\bar{\theta})\,
               d\bar{\theta}^{\dot{2}}d\bar{\theta}^{\dot{1}}  d\theta^2d\theta^1 d^4\!x
	  & := & \int_X f_{(12\dot{1}\dot{2})}(x)\,d^4\!x \,. \\
 \end{eqnarray*}

We leave the general theory of integrations of fermionic variables
   from the aspect of super-$C^{\infty}$-algebraic geometry to future work.
Interested readers are referred to the more recent [Wi6] (2012) and key-word search for existing studies
 both in mathematics and physics.

\bigskip

\subsection{Supersymmetry transformations of and related objects on $\widehat{X}$}

The whole $N=1$ super Poincar\'{e} group acts on the $d=4$, $N=1$ superspace $\widehat{X}$
 with its Poicar\'{e} subgroup acts on the underlying Minkowski space-time $X$ as the group of isometries.
The supersymmetry-transformation part of the super Poincar\'{e} group is parameterized by
  constant sections\footnote{{\it Remark on Notations}:\;
                                                              This is denoted by $(\xi,\bar{\xi})$
															     in [We-B: Chap.\ IV, Eqs.\ (4.2), (4.3), (4.4)].
															  Here, we follow the convention of [We-B] that
															    sections $\xi$ of the chiral Weyl spinor bundle $S^{\prime}$
																  are denoted without a $\,\bar{}\,$
																while sections $\bar{\eta}$ of the antichiral Weyl spinor bundle
																    $S^{\prime\prime}$
																  are denoted with a $\,\bar{}\,$.
															  Since $S^{\prime\prime}$ is the complex conjugate of $S^{\prime}$,
															     this is consistent with `taking complex conjugation'.
															  In other words, $\bar{\eta}$ may be interpreted
															    as the complex conjugate of a section $\eta$ of $S^{\prime}$ as well.
															 {\it However},
															     we reserve the notation $(\xi, \bar{\xi})$ for a Majorana spinor,
																  i.e.\ a Dirac spinor whose antichiral component $\bar{\xi}$
																  is indeed the complex conjugate of its chiral component $\xi$.
															  }
  $(\xi,\bar{\eta})$ of $S^{\prime}\oplus S^{\prime\prime}$.
Regarded as elements in $\widehat{\cal O}_X$, these parameters are fermionic (i.e.\ odd).
While supersymmetry transformations act on the underlying topology $X$ of $\widehat{X}$ trivially,
 they act nontrivially on the super $C^{\infty}$-scheme $\widehat{X}$
   since they transform elements in the function-ring $C^{\infty}(\widehat{X})$ of $\widehat{X}$.
We take this action as our starting point that relates
  Supersymmetry on the physics side and super $C^{\infty}$-Algebraic Geometry on the mathematics side.

In this subsection, we recast
  a minimal subset of objects and notions related to the $d=4$, $N=1$ supersymmetry
  from the standard textbook\footnote{All
                                     the three by-now-classical physics textbooks
									  [G-G-R-S]:
  										 {\sl Superspace -- one thousand and one lessons in supersymmetry},
                                              by S.\ James Gates, Jr., Marcus T.\ Grisaru, Martin Roc\u{c}ek, and Warren Siegel,
									   [West1]:
										 {\sl Introduction to supersymmetry and supergravity},
									          by Peter West, and 									   
									   [We-B]:
										 {\sl Supersymmetry and supergravity},
										      by Julius Wess and Jonathan Bagger
									 on Supersymmetry \& Supergravity have influenced our understanding tremendously.	
                                     The setting and notations of [We-B] were later used in many string-theorists' work.
                                     As part of the preparations for sequels to the current notes,
									   we follow [We-B] as much as we can and as long as there are no severe notational conflicts
									     with the existing notation system in works in the the D-project.
									 }
   [We-B], {\sl Supersymmetry and Supergravity}, by Julius Wess and Jonathan Bagger
  that we need into the super $C^{\infty}$-Algebraic Geometry setting.
See also [G-G-R-S] and [West1].

\bigskip

\begin{flushleft}
{\bf The supersymmetry transformations of the $d=4$, $N=1$ superspace $\widehat{X}$}
\end{flushleft}
Recall the standard generators in collective notation
 $$
   x\;=\;(x^0,\,x^1,\, x^2,\, x^3)\,,\;\;\;
   \theta\; =\;(\theta^1,\,\theta^2)\,,\;\;\;
   \bar{\theta}\; =\; ( \bar{\theta}^{\dot{1}},\, \bar{\theta}^{\dot{2}})
 $$
 of the super $C^{\infty}$-ring $C^{\infty}(\widehat{X})$.
For a constant section
   $(\xi,\bar{\eta})= (\xi^1,\xi^2; \bar{\eta}^{\dot{1}}, \bar{\eta}^{\dot{2}})$
 of $S^{\prime}\oplus S^{\prime\prime}$,
 let $g(\xi,\bar{\eta})$ be the ${\Bbb Z}/2$-grading-preserving automorphism
  of $C^{\infty}(\widehat{X}) $
  defined by the correspondence
 $$
  \begin{array}{lll}
    x^{\mu}   & \longmapsto
	  & x^{\mu}
	     + \sqrt{-1}
		      (\theta \sigma^{\mu}\bar{\eta}^t  -  \xi \sigma^{\mu}\bar{\theta}^t)\\
	\theta^{\alpha}    & \longmapsto   &  \theta^{\alpha} + \xi^{\alpha} \\
	\bar{\theta}^{\dot{\beta}}
	& \longmapsto    & \bar{\theta}^{\dot{\beta}}+ \bar{\eta}^{\dot{\beta}}
	   \hspace{10em},
  \end{array}
 $$
    $\mu=0,1,2,3, \alpha=1,2, \dot{\beta}=\dot{1}, \dot{2}$,
 on the standard generators of $C^{\infty}(\widehat{X})$.
Here,
 $$ \small
   \sigma^0\;=\; \left(\!  \begin{array}{rr}-1 & 0  \\ 0 & -1 \end{array}\!\right)\,,\;\;\;
   \sigma^1\;=\; \left(\!  \begin{array}{rr} 0 & 1  \\ 1 &   0 \end{array}\!\right)\,,\;\;\;
   \sigma^2\;=\; \left(\!  \begin{array}{rr} 0 & -\sqrt{-1}  \\  \sqrt{-1} & 0 \end{array}\!\right)\,,\;\;\;
   \sigma^3\;=\; \left(\!  \begin{array}{rr} 1 & 0  \\ 0 & -1 \end{array}\!\right)
 $$
 are the {\it Pauli matrices} (whose entries are indexed as $\sigma^{\mu}_{\alpha\dot{\beta}}$)\,,
  denoted collectively as\\ $\mbox{\boldmath $\sigma$}=(\sigma^0,\sigma^1,\sigma^2,\sigma^3)$,  and
 $$ \small
   \bar{\eta}^t\;
     =\; \left(\!
	             \begin{array}{l} \bar{\eta}^{\dot{1}} \\ \bar{\eta}^{\dot{2}} \end{array}
		   \!\right)\,,\;\;\;
  \bar{\theta}^t\;
     =\; \left(\!
	             \begin{array}{l} \bar{\theta}^{\dot{1}} \\ \bar{\theta}^{\dot{2}} \end{array}
		   \!\right)\,.
 $$
This gives a ${\Bbb C}$-linear representation of
 the Abelian group of constant sections of $S^{\prime}\oplus S^{\prime\prime}$
 on the ${\Bbb Z}/2$-graded ${\Bbb C}$-algebra $C^{\infty}(\widehat{X})$:
 $$
  g_{(\xi_1+\xi_2 , \bar{\eta}_1+ \bar{\eta}_2)}\;
    =\; g_{(\xi_1, \bar{\eta}_1)}\circ g_{(\xi_2,\bar{\eta}_2)}\;
	=\; g_{(\xi_2, \bar{\eta}_2)}\circ g_{(\xi_1,\bar{\eta}_1)}\;
   \hspace{2em}\mbox{and}\hspace{2em}
  g_{(0,0)}\;=\; \Id_{C^{\infty}(\widehat{X})}\,.
 $$
Explicitly\footnote{See Second Proof of  Lemma~1.4.14,
                                   where a block-matrix form for superfields on $\widehat{X}$ is introduced
								   as a book-keeping tool for explicit computations:
								    $$
									 f\;=\;
     \left[
	  \begin{array} {c|cc|c}
	   f_{(0)}  &  f_{(\dot{1})} & f_{(\dot{2})}  & f_{(\dot{1}\dot{2})}
	                                                                 \\[.6ex] \hline  \rule{0ex}{1em}
	   f_{(1)}  &  f_{(1\dot{1})}& f_{(1\dot{2})}& f_{(1\dot{1}\dot{2})}
	                                                                 \\[.6ex]
	   f_{(2)}  & f_{(2\dot{1})} & f_{(2\dot{2})}& f_{(2\dot{1}\dot{2})}
	                                                                 \\[.6ex] \hline  \rule{0ex}{1em}
	   f_{(12)}& f_{(12\dot{1})} & f_{(12\dot{2})}	& f_{(12\dot{1}\dot{2})}
	  \end{array}
     \right]\,.	
									$$
                                  Such block-matrix form is also generalized to the case for elements
							       $\widehat{m}\in C^{\infty}(\widehat{X}^{\!A\!z})$ of the function ring
                                   of an Azumaya/matrix superspace (Sec.\ 2.3 \& 2.4).
                                  Tedious computations in the current work are computed in this format,
								   from which patterns of the results manifest themselves easily,
								   when there is no immediate conceptual shortcut to begin with.
								  These patterns arising from computations serve also a self-error-detecting tool
								    when there is a term, sign, or labelling index that is off-pattern.
									
                                  Cf.\ Appendix `Basic moves for the multiplication of two superfields'.									
                                   }, 
 for
 {\small
 \begin{eqnarray*}
    f &  =  & f(x,\theta,\bar{\theta})\;  \\
	  & =   &  f_{(0)}(x)\,
	    +\, \sum_{\alpha=1}^2 f_{(\alpha)}(x) \theta^{\alpha}\,
        +\, \sum_{\dot{\beta}=\dot{1}}^{\dot{2}}
	             f_{(\dot{\beta})}(x)\bar{\theta}^{\dot{\beta}}
      +\, f_{(12)}(x)\theta^1\theta^2\, 	
	  +\, f_{(\dot{1}\dot{2})}(x)\bar{\theta}^{\dot{1}}\bar{\theta}^{\dot{2}}\,\\
  &&
      +\, \sum_{\alpha=1}^2\sum_{\dot{\beta}=\dot{1}}^{\dot{2}}
                 f_{(\alpha\dot{\beta})}(x)\theta^{\alpha}\bar{\theta}^{\dot{\beta}}	
      +\, \sum_{\dot{\beta}=\dot{1}}^{\dot{2}}
	         f_{(12\dot{\beta})}(x) \theta^1\theta^2\bar{\theta}^{\dot{\beta}}\,
      +\, \sum_{\alpha=1}^2
	         f_{(\alpha \dot{1}\dot{2})}(x)
			     \theta^{\alpha}\bar{\theta}^{\dot{1}}\bar{\theta}^{\dot{2}}\,			       				
      +\, f_{(12\dot{1}\dot{2})}(x)
	             \theta^1\theta^2\bar{\theta}^{\dot{1}}\bar{\theta}^{\dot{2}}  \;\;\; \\
   &\in & C^{\infty}(\widehat{X})\,,  				
 \end{eqnarray*}}
 {\small
 \begin{eqnarray*}
  g_{(\xi,\bar{\eta})}(f)
  & =  &   f \mbox{\large $($}
                      x + \sqrt{-1}(\theta \mbox{\boldmath $\sigma$} \bar{\eta}^t
	                                         - \xi \mbox{\boldmath $\sigma$} \bar{\theta}^t),
	                       \theta+\xi , \bar{\theta}+\bar{\eta}
				  \mbox{\large $)$}\;     \\[-.6ex]
  & =   &  f_{(0)}\mbox{\large $($}
                                           x+ \sqrt{-1}(\theta \mbox{\boldmath $\sigma$} \bar{\eta}^t
	                                         - \xi \mbox{\boldmath $\sigma$} \bar{\theta}^t)
									   \mbox{\large $)$}\,
	    +\, \sum_{\alpha=1}^2
		        f_{(\alpha)} \mbox{\large $($}
				                          x+ \sqrt{-1}(\theta \mbox{\boldmath $\sigma$} \bar{\eta}^t
	                                               - \xi \mbox{\boldmath $\sigma$} \bar{\theta}^t)
												  \mbox{\large $)$}\, (\theta^{\alpha}+\xi^\alpha)\,  \\
  &&												
        +\, \sum_{\dot{\beta}=\dot{1}}^{\dot{2}}
	             f_{(\dot{\beta})}\mbox{\large $($}
				                    x+ \sqrt{-1}(\theta \mbox{\boldmath $\sigma$} \bar{\eta}^t
	                                         - \xi \mbox{\boldmath $\sigma$} \bar{\theta}^t)
											                     \mbox{\large $)$}\,
											   (\bar{\theta}^{\dot{\beta}}+\bar{\eta}^{\dot{\beta}})\,  \\
  &&											
      +\, f_{(12)}\mbox{\large $($}
	                                x+ \sqrt{-1}(\theta \mbox{\boldmath $\sigma$} \bar{\eta}^t
	                                         - \xi \mbox{\boldmath $\sigma$} \bar{\theta}^t)
									\mbox{\large $)$}\,
										(\theta^1+\xi^1)(\theta^2+\xi^2)\, 	 \\
  &&										
	  +\, f_{(\dot{1}\dot{2})}\mbox{\large $($}
	                                  x+ \sqrt{-1}(\theta \mbox{\boldmath $\sigma$} \bar{\eta}^t
	                                         - \xi \mbox{\boldmath $\sigma$} \bar{\theta}^t)
											                       \mbox{\large $)$}\,
											 (\bar{\theta}^{\dot{1}}+\bar{\eta}^{\dot{1}})
											 (\bar{\theta}^{\dot{2}}+\bar{\eta}^{\dot{2}})\,\\
  &&
      +\, \sum_{\alpha=1}^2\sum_{\dot{\beta}=\dot{1}}^{\dot{2}}
                 f_{(\alpha\dot{\beta})}\mbox{\large $($}
				                          x+ \sqrt{-1}(\theta \mbox{\boldmath $\sigma$} \bar{\eta}^t
	                                         - \xi \mbox{\boldmath $\sigma$} \bar{\theta}^t)
											                               \mbox{\large $)$}\,
											 (\theta^{\alpha}+\xi^\alpha)
											 (\bar{\theta}^{\dot{\beta}}+\bar{\eta}^{\dot{\beta}})\, \\
  &&											
      +\, \sum_{\dot{\beta}=\dot{1}}^{\dot{2}}
	         f_{(12\dot{\beta})}\mbox{\large $($}
			                            x+ \sqrt{-1}(\theta \mbox{\boldmath $\sigma$} \bar{\eta}^t
	                                         - \xi \mbox{\boldmath $\sigma$} \bar{\theta}^t)
											                      \mbox{\large $)$}\,
											 (\theta^1+\xi^1)(\theta^2+\xi^2)
											 (\bar{\theta}^{\dot{\beta}}\bar{\eta}^{\dot{\beta}})\, \\
  &&											
      +\, \sum_{\alpha=1}^2
	         f_{(\alpha \dot{1}\dot{2})}
			   \mbox{\large $($}
			     x+ \sqrt{-1}(\theta \mbox{\boldmath $\sigma$} \bar{\eta}^t
	                                         - \xi \mbox{\boldmath $\sigma$} \bar{\theta}^t)
			   \mbox{\large $)$}\,
			     (\theta^{\alpha}+\xi^\alpha)
				 (\bar{\theta}^{\dot{1}}+\bar{\eta}^{\dot{1}})
				 (\bar{\theta}^{\dot{2}}+\bar{\eta}^{\dot{2}})\, \\
  &&				
      +\, f_{(12\dot{1}\dot{2})}
	               \mbox{\large $($}
				     x+ \sqrt{-1}(\theta \mbox{\boldmath $\sigma$} \bar{\eta}^t
	                                         - \xi \mbox{\boldmath $\sigma$} \bar{\theta}^t)
				   \mbox{\large $)$}\,
	             (\theta^1+\xi^1)(\theta^2+\xi^2)
				 (\bar{\theta}^{\dot{1}}+\bar{\eta}^{\dot{1}})
				 (\bar{\theta}^{\dot{2}}+\bar{\eta}^{\dot{2}})\,,  \;\;\; \\
 \end{eqnarray*}}with
 {\small
 \begin{eqnarray*}
  \lefteqn{f_{(\mbox{\tiny $\bullet$})}
     (x+ \sqrt{-1}(\theta \mbox{\boldmath $\sigma$} \bar{\eta}^t
	                                         - \xi \mbox{\boldmath $\sigma$} \bar{\theta}^t))\; }\\
  &&											
   =\; f_{(\mbox{\tiny $\bullet$})}(x)\,
         +\, \sum_\mu \partial_\mu f_{(\mbox{\tiny $\bullet$})}(x)
		         \cdot \sqrt{-1}(\theta \sigma^\mu \bar{\eta}^t - \xi \sigma^\mu \bar{\theta}^t))\, \\[-1ex]
    && \hspace{1.6em}				
		 -\, \mbox{\Large $\frac{1}{2}$}\,
		       \sum_{\mu, \nu} \partial_\mu\partial_\nu f_{(\mbox{\tiny $\bullet$})}(x)
		         \cdot (\theta \sigma^\mu \bar{\eta}^t - \xi \sigma^\mu \bar{\theta}^t))\,
				            (\theta \sigma^\nu \bar{\eta}^t - \xi \sigma^\nu \bar{\theta}^t)) \\[1ex]
  && 							
   =\;  f_{(\mbox{\tiny $\bullet$})}(x)\,
         -\,\sqrt{-1}\, \sum_\alpha  \xi^\alpha
		       \cdot \mbox{\normalsize $\sum$}_{\mu,\dot{\delta}}
			          \sigma^\mu_{\alpha\dot{\delta}}\bar{\theta}^{\dot{\delta}}
					   \partial_\mu  f_{(\mbox{\tiny $\bullet$})}(x)\,		
         -\,\sqrt{-1}\, \sum_{\dot{\beta}}  \bar{\eta}^{\dot{\beta}}
		       \cdot \mbox{\normalsize $\sum$}_{\mu,\gamma}
			      \theta^\gamma \sigma^\mu_{\gamma\dot{\beta}}
					   \partial_\mu  f_{(\mbox{\tiny $\bullet$})}(x)\,  \\
   && \hspace{1.6em}					
      +\, \xi^1\xi^2\cdot
             \mbox{\normalsize $\sum$}_{\mu,\nu}	
			   (\sigma^\mu_{1\dot{2}}\sigma^\nu_{2\dot{1}}
			       - \sigma^\mu_{1\dot{1}}\sigma^\nu_{2\dot{2}})
				  \bar{\theta}^{\dot{1}}\bar{\theta}^{\dot{2}}
				  \partial_\mu\partial_\nu f_{(\mbox{\tiny $\bullet$})}(x)\,
      -\, \sum_{\alpha,\dot{\beta}} \xi^\alpha \bar{\eta}^{\dot{\beta}} \cdot
		     \mbox{\normalsize $\sum$}_{\mu,\nu, \gamma,\dot{\delta}}
			  \theta^\gamma   \sigma^\mu_{\alpha \dot{\delta}}
			  \sigma^\mu_{\gamma\dot{\beta}}  \bar{\theta}^{\dot{\delta}}
			  \partial_\mu\partial_\nu
			f_{(\mbox{\tiny $\bullet$})}(x)\,   \\
   && \hspace{1.6em}       		
      +\, \bar{\eta}^{\dot{1}}\bar{\eta}^{\dot{2}}\cdot
             \mbox{\normalsize $\sum$}_{\mu,\nu}	
			     \theta^1\theta^2
			   (\sigma^\mu_{1\dot{1}}\sigma^\nu_{2\dot{2}}
			       - \sigma^\mu_{2\dot{1}}\sigma^\nu_{1\dot{2}})
				  \partial_\mu\partial_\nu f_{(\mbox{\tiny $\bullet$})}(x)
 \end{eqnarray*}}by 
 the $C^{\infty}$-hull structure of $C^{\infty}(\widehat{X})$  and
 the fact that
  $\mbox{\large $($}\sqrt{-1}(\theta\sigma \bar{\eta}^t -\xi\sigma \bar{\theta}^t)
     \mbox{\large $)$}^3=0$
  since
    $\xi^{\alpha}$'s (resp.\ $\bar{\eta}^{\dot{\beta}}$'s)
	  are ${\Bbb C}$-linear combinations of $\theta^1$ and $\theta^2$
	  (resp.\ $\bar{\theta}^{\dot{1}}$ and $\bar{\theta}^{\dot{2}}$).
 After the expansion
  
 {\small
  \begin{eqnarray*}
   \lefteqn{
     f(\,\mbox{\tiny $\bullet$}\,, \theta+\xi, \bar{\theta}+\bar{\eta}) }\\	
	 &&
	  =\;  f_{(0)}(\mbox{\tiny $\bullet$})\,
	    +\, \sum_{\alpha=1}^2
		        f_{(\alpha)}(\mbox{\tiny $\bullet$}) (\theta^{\alpha}+\xi^\alpha)\,
        +\, \sum_{\dot{\beta}=\dot{1}}^{\dot{2}}
	             f_{(\dot{\beta})}(\mbox{\tiny $\bullet$})
				(\bar{\theta}^{\dot{\beta}}+\bar{\eta}^{\dot{\beta}})\,
        +\, f_{(12)}(\mbox{\tiny $\bullet$})	(\theta^1+\xi^1)(\theta^2+\xi^2)\, 	  \\
     && \hspace{1.6em}		
	  +\, f_{(\dot{1}\dot{2})}(\mbox{\tiny $\bullet$})
	                                         (\bar{\theta}^{\dot{1}}+\bar{\eta}^{\dot{1}})
											 (\bar{\theta}^{\dot{2}}+\bar{\eta}^{\dot{2}})\,
      +\, \sum_{\alpha=1}^2\sum_{\dot{\beta}=\dot{1}}^{\dot{2}}
                 f_{(\alpha\dot{\beta})}(\mbox{\tiny $\bullet$})																		   
											 (\theta^{\alpha}+\xi^\alpha)
											 (\bar{\theta}^{\dot{\beta}}+\bar{\eta}^{\dot{\beta}})\, \\
     && \hspace{1.6em}
      +\, \sum_{\dot{\beta}=\dot{1}}^{\dot{2}}
	         f_{(12\dot{\beta})}(\mbox{\tiny $\bullet$})																  
											 (\theta^1+\xi^1)(\theta^2+\xi^2)
											 (\bar{\theta}^{\dot{\beta}}\bar{\eta}^{\dot{\beta}})\,											 
      +\, \sum_{\alpha=1}^2
	         f_{(\alpha \dot{1}\dot{2})}(\mbox{\tiny $\bullet$})			
			     (\theta^{\alpha}+\xi^\alpha)
				 (\bar{\theta}^{\dot{1}}+\bar{\eta}^{\dot{1}})
				 (\bar{\theta}^{\dot{2}}+\bar{\eta}^{\dot{2}})\, \\
     && \hspace{1.6em}				
      +\, f_{(12\dot{1}\dot{2})}(\mbox{\tiny $\bullet$})				
	             (\theta^1+\xi^1)(\theta^2+\xi^2)
				 (\bar{\theta}^{\dot{1}}+\bar{\eta}^{\dot{1}})
				 (\bar{\theta}^{\dot{2}}+\bar{\eta}^{\dot{2}})	 	 \\[1ex]
    && =\;
      f(\,\mbox{\tiny $\bullet$}\,,\theta,\bar{\theta})\,
	  +\, \sum_\alpha \xi^\alpha
	          \cdot \frac{\partial}{\partial\theta^\alpha}
	                       f(\,\mbox{\tiny $\bullet$}\,,\theta,\bar{\theta})
	  +\, \sum_{\dot{\beta}} \bar{\eta}^{\dot{\beta}}
	       \cdot \frac{\partial}{\partial\bar{\theta}^{\dot{\beta}}\rule{0ex}{.8em}}
		             f(\,\mbox{\tiny $\bullet$}\,,\theta,\bar{\theta})\,
      +\, \xi^1\xi^2
            \cdot \frac{\partial^2}{\partial\theta^2  \partial\theta^1}	
                       f(\,\mbox{\tiny $\bullet$}\,,\theta,\bar{\theta})\,	\\
     && \hspace{1.6em}
      +\, \sum_{\alpha,\dot{\beta}}	 \xi^\alpha \bar{\eta}^{\dot{\beta}}
	         \cdot \frac{\partial^2}
			                    {\partial\bar{\theta}^{\dot{\beta}}\partial\theta^\alpha\rule{0ex}{.8em}}
								  f(\,\mbox{\tiny $\bullet$}\,,\theta,\bar{\theta})\,								
     +\, \bar{\eta}^{\dot{1}}\bar{\eta}^{\dot{2}}
           \cdot \frac{\partial^2}
			                  {\partial\bar{\theta}^{\dot{2}}\partial\bar{\theta}^{\dot{1}}
							      \rule{0ex}{.8em}}
								  f(\,\mbox{\tiny $\bullet$}\,,\theta,\bar{\theta})\,		
     +\, \sum_{\dot{\beta}}	 \xi^1\xi^2 \bar{\eta}^{\dot{\beta}}
	         \cdot \frac{\partial^3}
			                    {\partial\bar{\theta}^{\dot{\beta}}
								    \partial\theta^2\partial\theta^1\rule{0ex}{.8em}}
								  f(\,\mbox{\tiny $\bullet$}\,,\theta,\bar{\theta})\,  \\
    && \hspace{1.6em}
	 +\, \sum_\alpha	 \xi^\alpha \bar{\eta}^{\dot{1}}\bar{\eta}^{\dot{2}}
	         \cdot \frac{\partial^3}
			                    {\partial\bar{\theta}^{\dot{2}}\partial\bar{\theta}^{\dot{1}}
								    \partial\theta^\alpha\rule{0ex}{.8em}}
								  f(\,\mbox{\tiny $\bullet$}\,,\theta,\bar{\theta})\,			
     +\, \xi^1\xi^2 \bar{\eta}^{\dot{1}}\bar{\eta}^{\dot{2}}
	         \cdot \frac{\partial^4}
			                    {\partial\bar{\theta}^{\dot{2}}\partial\bar{\theta}^{\dot{1}}
								    \partial\theta^2\partial\theta^1\rule{0ex}{.8em}}
								  f(\,\mbox{\tiny $\bullet$}\,,\theta,\bar{\theta})
  \end{eqnarray*}}
   
  \noindent
  in $(\xi, \bar{\eta})$ and
  the substitution of
    $f_{(\mbox{\tiny $\bullet$})}
    (x+ \sqrt{-1}(\theta \mbox{\boldmath $\sigma$} \bar{\eta}^t
	                                         - \xi \mbox{\boldmath $\sigma$} \bar{\theta}^t))$,
  one obtains
 
 {\small
  \begin{eqnarray*}
    g_{(\xi,\bar{\eta})}(f)
	 & = & f\,
	   +\, \sum_{\alpha}\xi^{\alpha}
	         \cdot \mbox{\Large $($}
			   \mbox{\large $\frac{\partial}{\partial\theta^\alpha}$}
                  -\sqrt{-1}\,\mbox{\normalsize $\sum$}_{\mu,\dot{\beta}}
                         \sigma^\mu_{\alpha\dot{\beta}} \bar{\theta}^{\dot{\beta}}
						 \mbox{\large $\frac{\partial}{\partial x^\mu}$}
			           \mbox{\Large $)$}\,f\,
     +\, \sum_{\dot{\beta}}\bar{\eta}^{\dot{\beta}}
	         \cdot \mbox{\Large $($}
			   \mbox{\large $\frac{\partial}{\partial\theta^{\dot{\beta}}\rule{0ex}{.8em}}$}
                  -\sqrt{-1}\,\mbox{\normalsize $\sum$}_{\mu,\alpha}
                         \theta^\alpha \sigma^\mu_{\alpha\dot{\beta}}
						 \mbox{\large $\frac{\partial}{\partial x^\mu}$}
			           \mbox{\Large $)$}\,f\,  \\
	&& +\; \xi^1\xi^2
	            \cdot \mbox{\Large $($}
				        \mbox{\large $\frac{\partial^2}{\partial\theta^2\partial\theta^1}$}
						 - \sqrt{-1}\,\mbox{\normalsize $\sum$}_{\mu,\dot{\beta}}\,
						    \sigma^\mu_{1\dot{\beta}}\bar{\theta}^{\dot{\beta}}
							\mbox{\large $\frac{\partial^2}{\partial x^\mu\partial\theta^2}$}
				         +\sqrt{-1}\,\mbox{\normalsize $\sum$}_{\mu,\dot{\beta}}\,
						    \sigma^\mu_{2\dot{\beta}}\bar{\theta}^{\dot{\beta}}
							\mbox{\large $\frac{\partial^2}{\partial x^\mu\partial\theta^1}$}		 \\
        && \hspace{8em}							
						 + \mbox{\normalsize $\sum$}_{\mu,\nu}\,
						    (\sigma^\mu_{1\dot{2}}\sigma^\nu_{2\dot{1}}
							    - \sigma^\mu_{1\dot{1}}\sigma^\nu_{2\dot{2}})
						    \bar{\theta}^{\dot{1}}\bar{\theta}^{\dot{2}}
							\mbox{\large $\frac{\partial^2}{\partial x^\mu \partial x^\nu}$}
							     \mbox{\Large $)$}f    \\[1ex]
    && +\, \sum_{\alpha\dot{\beta}}	 \xi^\alpha \bar{\eta}^{\dot{\beta}}
	              \cdot \mbox{\Large $($}
				    \mbox{\large $\frac{\partial^2}
					                                        {\partial\bar{\theta}^{\dot{\beta}}
															    \partial\theta^\alpha\rule{0ex}{.8em}}$}
				   - \sqrt{-1}\, \mbox{\normalsize $\sum$}_{\mu, \dot{\delta}}\,
				          \sigma^\mu_{\alpha\dot{\delta}}\bar{\theta}^{\dot{\delta}}
						   \mbox{\large $\frac{\partial^2}
						                                          {\partial x^\mu\rule{0ex}{.8em}
				     											      \partial\bar{\theta}^{\dot{\beta}}}$}
                   	+\sqrt{-1}\,\mbox{\normalsize $\sum$}_{\mu, \gamma}\,
					        \theta^\gamma\sigma^\mu_{\gamma\dot{\beta}}
							\mbox{\large $\frac{\partial^2}
							                                       {\partial x^\mu \partial\theta^\alpha}$} \\[-1ex]
        && \hspace{8em} 																
					 - \mbox{\normalsize $\sum$}_{\mu,\nu,\gamma,\dot{\delta}}\,
					   \theta^\gamma \sigma^\mu_{\alpha\dot{\delta}}
					                                 \sigma^\nu_{\gamma\dot{\beta}}\bar{\theta}^{\dot{\delta}}
							\mbox{\large $\frac{\partial^2}{\partial x^\mu\partial x^\nu}$}
				             \mbox{\Large $)$} f \\[1ex]                	
	&& +\; \bar{\eta}^{\dot{1}}\bar{\eta}^{\dot{2}}
	            \cdot \mbox{\Large $($}
				        \mbox{\large $\frac{\partial^2}
						                                        {\partial\bar{\theta}^{\dot{2}}\rule{0ex}{.8em}
																    \partial\bar{\theta}^{\dot{1}}}$}
						 - \sqrt{-1}\,\mbox{\normalsize $\sum$}_{\mu,\alpha}\,
						    \theta^\alpha \sigma^\mu_{\alpha\dot{1}}
							\mbox{\large $\frac{\partial^2}
							                                        {\partial x^\mu \rule{0ex}{.8em}
																	    \partial\bar{\theta}^{\dot{2}}}$}
				         +\sqrt{-1}\,\mbox{\normalsize $\sum$}_{\mu,\alpha}\,
						    \theta^\alpha \sigma^\mu_{\alpha\dot{2}}
							\mbox{\large $\frac{\partial^2}{\partial x^\mu \rule{0ex}{.8em}
							                                                                     \partial\bar{\theta}^{\dot{1}}}$}		 \\
        && \hspace{8em}							
						 + \mbox{\normalsize $\sum$}_{\mu,\nu}\,
						      \theta^1\theta^2
						    (\sigma^\mu_{1\dot{1}}\sigma^\nu_{2\dot{2}}
							    - \sigma^\mu_{2\dot{1}}\sigma^\nu_{1\dot{2}})
							\mbox{\large $\frac{\partial^2}{\partial x^\mu \partial x^\nu}$}
							     \mbox{\Large $)$}f\,.     \\[1ex]						
    && +\, \sum_{\dot{\beta}}\xi^1\xi^2\bar{\eta}^{\dot{\beta}}	
	         \cdot \mbox{\Large $($}
			   \mbox{\large $\frac{\partial^3}
			                                          {\partial\theta^{\dot{\beta}}\rule{0ex}{.8em}
													      \partial\theta^2 \partial\theta^1}$}
                  -\sqrt{-1}\,\mbox{\normalsize $\sum$}_{\mu,\dot{\delta}}
                         \sigma^\mu_{1\dot{\delta}}\bar{\theta}^{\dot{\delta}}
						 \mbox{\large $\frac{\partial^3}
						                                        {\partial x^\mu
																    \partial\bar{\theta}^{\dot{\beta}}
																	\partial\theta^2}$}
                  +\sqrt{-1}\,\mbox{\normalsize $\sum$}_{\mu,\dot{\delta}}
                         \sigma^\mu_{2\dot{\delta}}\bar{\theta}^{\dot{\delta}}
						 \mbox{\large $\frac{\partial^3}
						                                        {\partial x^\mu
																    \partial\bar{\theta}^{\dot{\beta}}
																	\partial\theta^1}$}   \\[-1ex]
       && \hspace{8em}																	
                  - \sqrt{-1}\,\mbox{\normalsize $\sum$}_{\mu,\alpha}
                         \theta^\alpha\sigma^\mu_{\alpha\dot{\beta}}
						 \mbox{\large $\frac{\partial^3}
						                                        {\partial x^\mu
																    \partial\theta^2
																	\partial\theta^1}$}
			      + \mbox{\normalsize $\sum$}_{\mu,\nu}\,
						    (\sigma^\mu_{1\dot{2}}\sigma^\nu_{2\dot{1}}
							    - \sigma^\mu_{1\dot{1}}\sigma^\nu_{2\dot{2}})
						    \bar{\theta}^{\dot{1}}\bar{\theta}^{\dot{2}}
							\mbox{\large $\frac{\partial^3}
							                                       {\partial x^\mu \partial x^\nu
																       \partial\bar{\theta}^{\dot{\beta}}}$}\\
       && \hspace{8em}																
				  +\mbox{\normalsize $\sum$}_{\mu,\nu,\gamma,\dot{\delta}}\,
					   \theta^\gamma \sigma^\mu_{1\dot{\delta}}
					                                 \sigma^\nu_{\gamma\dot{\beta}}\bar{\theta}^{\dot{\delta}}
							\mbox{\large $\frac{\partial^3}{\partial x^\mu\partial x^\nu \partial\theta^2}$}
				  - \mbox{\normalsize $\sum$}_{\mu,\nu,\gamma,\dot{\delta}}\,
					   \theta^\gamma \sigma^\mu_{2\dot{\delta}}
					                                 \sigma^\nu_{\gamma\dot{\beta}}\bar{\theta}^{\dot{\delta}}
							\mbox{\large $\frac{\partial^3}{\partial x^\mu\partial x^\nu \partial\theta^1}$}
			           \mbox{\Large $)$}\,f\,  \\[1ex]	
   && +\, \sum_\alpha \xi^\alpha\bar{\eta}^{\dot{1}}\bar{\eta}^{\dot{2}}	
	         \cdot \mbox{\Large $($}
			   \mbox{\large $\frac{\partial^3}
			                                          {\partial\bar{\theta}^{\dot{2}}\rule{0ex}{.8em}
													      \partial\bar{\theta}^{\dot{1}} \partial\theta^\alpha}$}
                  +\sqrt{-1}\,\mbox{\normalsize $\sum$}_{\mu,\gamma}
                         \theta^\gamma \sigma^\mu_{\gamma\dot{1}}
						 \mbox{\large $\frac{\partial^3}
						                                        {\partial x^\mu
																    \partial\bar{\theta}^{\dot{2}}
																	\partial\theta^\alpha}$}
                  - \sqrt{-1}\,\mbox{\normalsize $\sum$}_{\mu,\gamma}
                         \theta^\gamma \sigma^\mu_{\gamma\dot{2}}
						 \mbox{\large $\frac{\partial^3}
						                                        {\partial x^\mu
																    \partial\bar{\theta}^{\dot{1}}
																	\partial\theta^\alpha}$}   \\[-1ex]
       && \hspace{8em}																	
                  - \sqrt{-1}\,\mbox{\normalsize $\sum$}_{\mu,\dot{\beta}}
                         \sigma^\mu_{\alpha\dot{\beta}}\bar{\theta}^{\dot{\beta}}
						 \mbox{\large $\frac{\partial^3}
						                                        {\partial x^\mu
																    \partial\bar{\theta}^{\dot{2}}
																	\partial\bar{\theta}^{\dot{1}}}$}
			      + \mbox{\normalsize $\sum$}_{\mu,\nu}\,
				            \theta^1\theta^2
						    (\sigma^\mu_{1\dot{1}}\sigma^\nu_{2\dot{2}}
							    - \sigma^\mu_{2\dot{1}}\sigma^\nu_{1\dot{2}})
							\mbox{\large $\frac{\partial^3}
							                                       {\partial x^\mu \partial x^\nu
																       \partial\theta^\alpha}$}\\
       && \hspace{8em}																
				  - \mbox{\normalsize $\sum$}_{\mu,\nu,\gamma,\dot{\beta}}\,
					   \theta^\gamma \sigma^\mu_{\alpha\dot{\beta}}
					                                 \sigma^\nu_{\gamma\dot{1}}\bar{\theta}^{\dot{\beta}}
							\mbox{\large $\frac{\partial^3}{\partial x^\mu\partial x^\nu
							                                            \partial\bar{\theta}^{\dot{2}}\rule{0ex}{.8em}}$}
				  + \mbox{\normalsize $\sum$}_{\mu,\nu,\gamma,\dot{\beta}}\,
					   \theta^\gamma \sigma^\mu_{\alpha\dot{\beta}}
					                                 \sigma^\nu_{\gamma\dot{2}}\bar{\theta}^{\dot{\beta}}
							\mbox{\large $\frac{\partial^3}{\partial x^\mu\partial x^\nu
							                                            \partial\bar{\theta}^{\dot{1}}\rule{0ex}{.8em}}$}
			           \mbox{\Large $)$}\,f\,  \\[1ex]	        							
   && +\; \xi^1\xi^2\bar{\eta}^{\dot{1}}\bar{\eta}^{\dot{2}}	
	         \cdot \mbox{\Large $($}
			   \mbox{\large $\frac{\partial^4}
			                                          {\partial\bar{\theta}^{\dot{2}}\rule{0ex}{.8em}
													      \partial\bar{\theta}^{\dot{1}}
														  \partial\theta^2 \partial\theta^1}$}
                  - \sqrt{-1}\,\mbox{\normalsize $\sum$}_{\mu,\alpha}
                         \theta^\alpha\sigma^\mu_{\alpha\dot{1}}
						 \mbox{\large $\frac{\partial^4}
						                                        {\partial x^\mu
																    \partial\bar{\theta}^{\dot{2}}
																	\partial\theta^2 \partial\theta^1}$}
                  +\sqrt{-1}\,\mbox{\normalsize $\sum$}_{\mu,\alpha}
                         \theta^\alpha \sigma^\mu_{\alpha\dot{2}}
						 \mbox{\large $\frac{\partial^4}
						                                        {\partial x^\mu
																    \partial\bar{\theta}^{\dot{1}}
																	\partial\theta^2 \partial\theta^1}$}   \\
       && \hspace{8em}																	
                  +\sqrt{-1}\,\mbox{\normalsize $\sum$}_{\mu,\dot{\beta}}
                         \sigma^\mu_{2\dot{\beta}}\bar{\theta}^{\dot{\beta}}
						 \mbox{\large $\frac{\partial^4}
						                                        {\partial x^\mu\rule{0ex}{.8em}
																    \partial\bar{\theta}^{\dot{2}}
																	\partial\bar{\theta}^{\dot{1}} \partial\theta^1}$}
                  - \sqrt{-1}\,\mbox{\normalsize $\sum$}_{\mu,\dot{\beta}}
                         \sigma^\mu_{1\dot{\beta}}\bar{\theta}^{\dot{\beta}}
						 \mbox{\large $\frac{\partial^4}
						                                        {\partial x^\mu\rule{0ex}{.8em}
																    \partial\bar{\theta}^{\dot{2}}
																	\partial\bar{\theta}^{\dot{1}} \partial\theta^2}$} \\
       && \hspace{2em}																	
			      + \mbox{\normalsize $\sum$}_{\mu,\nu}\,
						    (\sigma^\mu_{1\dot{2}}\sigma^\nu_{2\dot{1}}
							    - \sigma^\mu_{1\dot{1}}\sigma^\nu_{2\dot{2}})
								\bar{\theta}^{\dot{1}} \bar{\theta}^{\dot{2}}
							\mbox{\large $\frac{\partial^4}
							                                       {\partial x^\mu \partial x^\nu\rule{0ex}{.8em}
																       \partial\bar{\theta}^{\dot{2}}
																	   \partial\bar{\theta}^{\dot{1}}}$}
				  + \mbox{\normalsize $\sum$}_{\mu,\nu,\alpha,\dot{\beta}}\,
					   \theta^\alpha \sigma^\mu_{2\dot{\beta}}
					                                 \sigma^\nu_{\alpha\dot{2}}\bar{\theta}^{\dot{\beta}}
							\mbox{\large $\frac{\partial^4}{\partial x^\mu\partial x^\nu
							                                            \partial\bar{\theta}^{\dot{1}} \partial\theta^1}$} \\
       && \hspace{2em}																
				  - \mbox{\normalsize $\sum$}_{\mu,\nu,\alpha,\dot{\beta}}\,
					   \theta^\alpha \sigma^\mu_{2\dot{\beta}}
					                                 \sigma^\nu_{\alpha\dot{1}}\bar{\theta}^{\dot{\beta}}
							\mbox{\large $\frac{\partial^4}{\partial x^\mu\partial x^\nu
							                                            \partial\bar{\theta}^{\dot{2}}\partial\theta^1}$}
				  - \mbox{\normalsize $\sum$}_{\mu,\nu,\alpha,\dot{\beta}}\,
					   \theta^\alpha \sigma^\mu_{1\dot{\beta}}
					                                 \sigma^\nu_{\alpha\dot{2}}\bar{\theta}^{\dot{\beta}}
							\mbox{\large $\frac{\partial^4}{\partial x^\mu\partial x^\nu
							                                            \partial\bar{\theta}^{\dot{1}} \partial\theta^2}$}\\
       && \hspace{2em}																	
	               + \mbox{\normalsize $\sum$}_{\mu,\nu,\alpha,\dot{\beta}}\,
					   \theta^\alpha \sigma^\mu_{1\dot{\beta}}
					                                 \sigma^\nu_{\alpha\dot{1}}\bar{\theta}^{\dot{\beta}}
							\mbox{\large $\frac{\partial^4}{\partial x^\mu\partial x^\nu
							                                            \partial\bar{\theta}^{\dot{2}}\partial\theta^2}$}
			      + \mbox{\normalsize $\sum$}_{\mu,\nu}\,
				            \theta^1\theta^2
						    (\sigma^\mu_{1\dot{1}}\sigma^\nu_{2\dot{2}}
							    - \sigma^\mu_{2\dot{1}}\sigma^\nu_{1\dot{2}})
							\mbox{\large $\frac{\partial^4}
							                                       {\partial x^\mu \partial x^\nu
																       \partial\theta^2 \partial\theta^1}$}	
			           \mbox{\Large $)$}\,f\,.  	
  \end{eqnarray*}} 

\bigskip

\begin{lemma} {\bf [$g_{\mbox{\tiny $\bullet$}}$ as representation on super-$C^{\infty}$-ring]}\;
 For $(\xi,\bar{\eta})$ a constant sections of $S^{\prime}\oplus S^{\prime\prime}$,
  $g_{(\xi,\eta)}$ is an automorphism of $C^{\infty}(\widehat{X})$ as a super $C^{\infty}$-ring.
\end{lemma}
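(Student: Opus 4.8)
The plan is to verify that $g_{(\xi,\bar\eta)}$ respects the two pieces of structure that a super $C^{\infty}$-ring carries: the ordinary $\mathbb C$-algebra structure with its $\mathbb Z/2$-grading, and the $C^{\infty}$-ring structure on the even part $C^{\infty}(\widehat X)_{\even}$, i.e.\ the $C^{\infty}$-hull. The first part is essentially built into the construction: $g_{(\xi,\bar\eta)}$ is defined on the standard generators $x^\mu,\theta^\alpha,\bar\theta^{\dot\beta}$ by parity-preserving substitutions (a shift of $\theta,\bar\theta$ by odd constants and a shift of $x^\mu$ by an even nilpotent), and one checks that it extends to a well-defined, parity-preserving ring endomorphism of $C^{\infty}(\widehat X)$ by observing that the target elements satisfy the same relations as the generators (anticommutativity of the $\theta$'s and $\bar\theta$'s — here one uses that $\xi^\alpha,\bar\eta^{\dot\beta}$ are odd, so the shifted variables still anticommute and still square to zero). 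The group law $g_{(\xi_1+\xi_2,\bar\eta_1+\bar\eta_2)}=g_{(\xi_1,\bar\eta_1)}\circ g_{(\xi_2,\bar\eta_2)}$ together with $g_{(0,0)}=\Id$ then shows each $g_{(\xi,\bar\eta)}$ is invertible with inverse $g_{(-\xi,-\bar\eta)}$, hence an automorphism of the $\mathbb Z/2$-graded $\mathbb C$-algebra.

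The substantive content is that $g_{(\xi,\bar\eta)}$ restricts to a $C^{\infty}$-ring homomorphism on the $C^{\infty}$-hull. Recall from Definition~1.2.\,(the $d=4$, $N=1$ superspace) that the $C^{\infty}$-hull of $\widehat{\cal O}_X(U)$ is the $C^{\infty}$-hull of $^{\Bbb C}\widehat{\cal O}_X(U)$, namely $C^{\infty}(\bigwedge^{\even}_{\,\Bbb R,\Bbb C}(S^{\prime}\oplus S^{\prime\prime})|_U)$, which is $C^{\infty}(U)\oplus(\text{nilpotents with coefficients in }C^{\infty}(U)^{\,\Bbb C})$. The first thing to check is that $g_{(\xi,\bar\eta)}$ maps this subring into itself: indeed on the $(\theta,\bar\theta)$-degree-$0$ part of a superfield $f$ it produces $f_{(0)}\big(x+\sqrt{-1}(\theta\boldsigma\bar\eta^t-\xi\boldsigma\bar\theta^t)\big)$, which by the $C^{\infty}$-hull structure (Lemma~1.1.3, $C^{\infty}$ evaluation after nilpotent perturbation) is $f_{(0)}(x)$ plus strictly positive-degree nilpotent terms, and these have coefficients that are themselves built from the $C^\infty$-ring $C^{\infty}(U)$; so the degree-$0$ component of $g_{(\xi,\bar\eta)}(f)$ is again $f_{(0)}(x)\in C^{\infty}(U)$ when $f$ is even, confirming $g_{(\xi,\bar\eta)}$ preserves the $C^{\infty}$-hull. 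Then, to see it is a $C^{\infty}$-ring homomorphism there, one must verify
\[
 g_{(\xi,\bar\eta)}\big(h(f_1,\ldots,f_k)\big)\;=\;h\big(g_{(\xi,\bar\eta)}(f_1),\ldots,g_{(\xi,\bar\eta)}(f_k)\big)
\]
for all $h\in C^{\infty}(\mathbb R^k)$ and all even $f_1,\ldots,f_k$ in the $C^{\infty}$-hull. Write $f_i=a_i+n_i$ with $a_i\in C^{\infty}(U)$ and $n_i$ even nilpotent; by Lemma~1.1.3 both sides unfold into finite Taylor expansions in the $n_i$ (and, after applying $g_{(\xi,\bar\eta)}$, in the further nilpotent shift $\sqrt{-1}(\theta\boldsigma\bar\eta^t-\xi\boldsigma\bar\theta^t)$ inside the arguments of $a_i$), and the identity reduces to the ordinary chain rule for the smooth function $h$ together with the compatibility of iterated partial derivatives — exactly the bookkeeping already carried out in the proof of Lemma~1.3.4 (chain rule). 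I would invoke Lemma~1.1.3 twice (once for the nilpotents $n_i$, once for the spacetime shift) and the uniqueness clause of Lemma~1.1.4 to organize this cleanly, rather than expanding by hand.

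I expect the main obstacle to be purely organizational: keeping track of which nilpotent-shift enters where when $g_{(\xi,\bar\eta)}$ is applied both to the $x$-dependence (via the shift $x^\mu\mapsto x^\mu+\sqrt{-1}(\theta\boldsigma\bar\eta^t-\xi\boldsigma\bar\theta^t)$ inside $f_{(\bullet)}$) and to the explicit $\theta,\bar\theta$-dependence, and then matching the combined expansion against the expansion of $h\big(g_{(\xi,\bar\eta)}(f_1),\ldots,g_{(\xi,\bar\eta)}(f_k)\big)$. The conceptually cleanest route is to note that $g_{(\xi,\bar\eta)}$ is, by construction, the unique $\mathbb Z/2$-graded $\mathbb C$-algebra homomorphism sending the standard generators to the prescribed images, and then to observe that both $g_{(\xi,\bar\eta)}\circ(\text{substitution into }h)$ and $(\text{substitution into }h)\circ g_{(\xi,\bar\eta)}^{\times k}$ are $C^{\infty}$-operation-compatible maps agreeing on generators; the $C^{\infty}$-ring axiom then propagates from generators by Lemma~1.1.4. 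With that structural argument in hand the verification is a formality, and the lemma follows.
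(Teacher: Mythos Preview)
Your proposal is correct, but the paper takes a much shorter route for the key step. After checking (as you do) that $g_{(\xi,\bar\eta)}$ preserves the $C^{\infty}$-hull, the paper observes that $g_{(\xi,\bar\eta)}$ acts on any superfield $F(x,\theta,\bar\theta)$ by the coordinate substitution $(x,\theta,\bar\theta)\mapsto(g_{(\xi,\bar\eta)}(x),g_{(\xi,\bar\eta)}(\theta),g_{(\xi,\bar\eta)}(\bar\theta))$; applying this to $F=h(f_1,\ldots,f_l)$ gives immediately
\[
  h\bigl(f_1(g_{(\xi,\bar\eta)}(x),g_{(\xi,\bar\eta)}(\theta),g_{(\xi,\bar\eta)}(\bar\theta)),\ldots\bigr)
  \;=\; h\bigl(g_{(\xi,\bar\eta)}(f_1),\ldots,g_{(\xi,\bar\eta)}(f_l)\bigr),
\]
so the compatibility with $C^{\infty}$-operations is a tautology of substitution, with no Taylor expansions or appeals to Lemma~1.1.3, 1.1.4, or 1.3.4 needed. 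Your plan to unfold both sides via Lemma~1.1.3 and match terms would of course reach the same conclusion and has the virtue of making every step explicit, but it is considerably more labor than the paper's one-line substitution argument; the cleaner route you sketch at the end (propagation from generators via Lemma~1.1.4) is closer in spirit but still invokes more than is actually required.
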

 
\begin{proof}
 Since $\xi^{\alpha}$, $\alpha=1,2$,
    (resp.\ $\bar{\eta}^{\dot{\beta}}$, $\dot{\beta}=\dot{1},\dot{2}$)
 	is a ${\Bbb C}$-linear combination of $\theta^1$ and $\theta^2$
   (resp.\ $\bar{\theta}^{\dot{1}}$ and $\bar{\theta}^{\dot{2}}$),
   $g_{(\xi,\bar{\eta})}(x^{\mu})$
     lies in $C^{\infty}$-{\it hull}\,$(C^{\infty}(\widehat{X}))$, for $\mu=0,1,2,3$.
 This implies that $g_{(\xi,\bar{\eta})}$ leaves
   $C^{\infty}$-{\it hull}\,$(C^{\infty}(\widehat{X}))$ invariant.
 It remains to show that $g_{(\xi,\bar{\eta})}$ is compatible with the operations
  associated to $h\in \cup_{l=1}^{\infty} C^{\infty}({\Bbb R}^l)$
  on the $C^{\infty}$-hull of $C^{\infty}(\widehat{X})$.
  
 Let
   $h\in C^{\infty}({\Bbb R}^l) $ and
   $f_1=f_1(x,\theta,\bar{\theta}),\,\cdots\,,\, f_l=f_l(x,\theta,\bar{\theta})
      \in C^{\infty}$-{\it hull}\,$(C^{\infty}(\widehat{X}))$.
 Then,
   \begin{eqnarray*}
    \lefteqn{
	  \mbox{\Large $($}
	    g_{(\xi,\bar{\eta})}
		 \mbox{\large $($}  h(f_1,\,\cdots\,,\, f_l)\mbox{\large $)$}
	  \mbox{\Large $)$}(x,\theta,\bar{\theta})   }\\[.8ex]
	 && =\;
	    h \mbox{\Large $($}
		    f_1 \mbox{\large $($}
			             g_{(\xi,\bar{\eta})}(x),
		                 g_{(\xi,\bar{\eta})}(\theta), g_{(\xi,\bar{\eta})}(\bar{\theta})
				   \mbox{\large $)$}  \,,\,
               \cdots\,, \,
			  f_l \mbox{\large $($}
			            g_{(\xi,\bar{\eta})}(x),
		                g_{(\xi,\bar{\eta})}(\theta), g_{(\xi,\bar{\eta})}(\bar{\theta})
					\mbox{\large $)$}
		   \mbox{\Large $)$}  \\[.8ex]	
     && = h \mbox{\Large $($}
            	  \mbox{\large $($}g_{(\xi,\bar{\eta})}(f_1)\mbox{\large $)$}
				            (x,\theta,\bar{\theta})\,,\,
   	                    \cdots\,,\,
				  \mbox{\large $($}g_{(\xi,\bar{\eta})}(f_l)\mbox{\large $)$}
				            (x,\theta,\bar{\theta})
			    \mbox{\Large $)$}\,.
   \end{eqnarray*}
  Which says
   $$
    g_{(\xi,\bar{\eta})}(h(f_1,\,\cdots\,,\, f_l))\;
	 =\;   h(  g_{(\xi,\bar{\eta})}(f_1)\,,\,\cdots\,,\, g_{(\xi,\bar{\eta})}(f_l) )\,.
   $$
   
  This proves the lemma.
  
\end{proof}

\bigskip

\begin{flushleft}
{\bf Infinitesimal generators of the supersymmetry transformations on $\widehat{X}$}
\end{flushleft}
Observe that
 $g_{(\xi,\bar{\eta})}(f(x,\theta,\bar{\theta}))$
 has an expression as a polynomial in the anticommuting parameters
  $(\xi,\bar{\eta})=(\xi^1,\xi^2, \bar{\eta}^{\dot{1}},\bar{\eta}^{\dot{2}})$
   from the constant sections of $S^{\prime}\oplus S^{\prime\prime}$,
 with coefficients in $C^{\infty}(\widehat{X})$, of the form
 \begin{eqnarray*}
  \lefteqn{
    g_{(\xi,\bar{\eta})}(f(x,\theta,\bar{\theta}))\;
       =\;  f(x,\theta,\bar{\theta})\,
	     +\, \sum_{\alpha=1}^2\xi^{\alpha}\cdot [f]_{(\alpha)}(x,\theta,\bar{\theta})\,
		 +\, \sum_{\dot{\beta}=\dot{1}}^{\dot{2}}
		          \bar{\eta}^{\dot{\beta}}\cdot [f]_{(\dot{\beta})}(x,\theta,\bar{\theta})   } \\[-.8ex]
    && \hspace{7em}				
		 +\, \mbox{(terms of total-$(\xi,\bar{\eta})$-degree $\ge 2$)}\,. \hspace{9em}
 \end{eqnarray*}
It follows from the identity
  $g_{(\xi,\bar{\eta})}(f_1(x,\theta,\bar{\theta})f_2(x,\theta,\bar{\theta}))
     = g_{(\xi,\bar{\eta})}(f_1(x,\theta,\bar{\theta}))
	     \cdot g_{(\xi,\bar{\eta})}(f_1(x,\theta, \bar{\theta}))$
that
 $$
  \begin{array}{ll}
   [f_1f_2]_{(\alpha)}\;
     =\; [f_1]_{(\alpha)}\cdot f_2 \, +\, ^{\varsigma}\!(f_1)\cdot [f_2]_{(\alpha)}\,,
	  & \hspace{2em}\alpha=1,2\,; \\[1.2ex]
  [f_1f_2]_{(\dot{\beta})}\;
     =\; [f_1]_{(\dot{\beta})}\cdot f_2 \, +\, ^{\varsigma}\!(f_1)\cdot [f_2]_{(\dot{\beta})}\,,
	  & \hspace{2em}\dot{\beta}=\dot{1}, \dot{2}\,.
  \end{array}
 $$
Here, recall that $^{\varsigma}\!(f_1)= {f_1}_{(\even)}-{f_1}_{(\odd)}$
 is the parity-conjugation of $f_1$; cf. Definition~1.3.1.
This implies that
 all the correspondences
 $$
  f\; \longmapsto\; [f]_{(\alpha)}\,,\hspace{2em} f\; \longmapsto\; [f]_{\dot{\beta}}\,,
 $$
 $\alpha=1,2$, $\dot{\beta}=\dot{1}, \dot{2}$, are odd derivations on $C^{\infty}(\widehat{X})$
 since $g_{(\xi,\bar{\eta})}$ is ${\Bbb C}$-linear
    and hence these correspondences are ${\Bbb C}$-linear as well.

The explicit expansion of
  $g_{(\xi,\bar{\eta})}(f)\;
	 =\;  f(x + \sqrt{-1}(\theta\sigma \bar{\eta}^t - \xi\sigma \bar{\theta}^t),
	           \theta+\xi , \bar{\theta}+\bar{\eta})$
   from the $C^{\infty}$-hull structure of $C^{\infty}(\widehat{X})$	
  gives an explicit expression for these odd derivations:\,\footnote{The
                                                                      `$-$'-sign before the summand $\sum_{\dot{\beta}}$
	       															  is chosen so that the resulting expressions for $Q_{\alpha}$
																	  and $\bar{Q}_{\dot{\beta}}$ match exactly with
																	  [We-B: Eq.\ (4.4)] of Wess \& Bagger except the the labelling indices.
								                                      } 
 \begin{eqnarray*}
  \lefteqn{
    g_{(\xi,\bar{\eta})}(f(x,\theta,\bar{\theta}))\;
       =\;  f(x,\theta,\bar{\theta})\,
	     +\, \sum_{\alpha=1}^2\xi^{\alpha}\, Q_{\alpha} f(x,\theta,\bar{\theta})\,
		 -\, \sum_{\dot{\beta}=\dot{1}}^{\dot{2}}
		          \bar{\eta}^{\dot{\beta}}
				   \cdot \bar{Q}_{\dot{\beta}} f(x,\theta,\bar{\theta})   } \\[-.8ex]
    && \hspace{7em}				
		 +\, \mbox{(terms of total-$(\xi,\bar{\eta})$-degree $\ge 2$)}\,. \hspace{9em}
 \end{eqnarray*}
 where
 $$
   Q_{\alpha}\;
    =\;  \frac{\partial}{\partial \theta^{\alpha}}
            -\, \sqrt{-1}\,\sum_{\mu=0}^3 \sum_{\dot{\beta}=\dot{1}}^{\dot{2}}
			          \sigma^{\mu}_{\alpha\dot{\beta}}\bar{\theta}^{\dot{\beta}}
					   \frac{\partial}{\partial x^{\mu}}
    \hspace{2em}\mbox{and}\hspace{2em}
  \bar{Q}_{\dot{\beta}}	\;
    =\; -\, \frac{\partial}{\rule{0ex}{.8em}\partial \bar{\theta}^{\dot{\beta}}}\,
	      +\, \sqrt{-1}\sum_{\mu=0}^3 \sum_{\alpha=1}^2
		              \theta^{\alpha} \sigma^\mu_{\alpha\dot{\beta}}\frac{\partial}{\partial x^{\mu}}\,.
 $$
			
\bigskip

\begin{definition} {\bf [standard (infinitesimal) supersymmetry generator]}\; {\rm
 The odd derivations $Q_{\alpha}$, $\bar{Q}_{\dot{\beta}}$,
   $\alpha=1,2$, $\dot{\beta}=\dot{1},\dot{2}$, of $C^{\infty}(\widehat{X})$
   are called the {\it standard (infinitesimal) supersymmetry generators}
   for the supersymmetry transformations on the superspace $\widehat{X}$,
  with $Q_{\alpha}$ (resp.\ $-\,\bar{Q}_{\dot{\beta}}$)
     the infinitesimal generator of the action of the $1$-parameter subgroup
	 (of the Abelian group of constant sections of $S^{\prime}\oplus S^{\prime\prime}$)
	 parameterized by $\xi^{\alpha}$
	 (resp.\ $\bar{\eta}^{\dot{\beta}}$).
}\end{definition}

\bigskip

By construction, $Q_{\alpha}$'s and $\bar{Q}_{\dot{\beta}}$'s satisfy the following
 super Lie-bracket relations in $\Der_{\Bbb C}(\widehat{X})$:
$$
  \{Q_{\alpha}\,,\, \bar{Q}_{\dot{\beta}}\}\;
   =\; 2\sqrt{-1}\, \sum_{\mu=0}^3
                 \sigma^{\mu}_{\alpha\dot{\beta}}\, \frac{\partial}{\partial x^{\mu}}
   \hspace{1em}\mbox{and}\hspace{1em}				
   \{Q_{\alpha}\,,\, Q_{\beta}\}\;
   =\; \{\bar{Q}_{\dot{\alpha}}\,,\, \bar{Q}_{\dot{\beta}}\}\;=\; 0\,.				
$$
(Cf.\ E.g.\ [We-B: Eq.\ (4.5)].)

\bigskip

\begin{remark} $[$super-Poincar\'{e}-group action on $\widehat{X}\,]$\; {\rm
 In addition to the supersymmetry transformations on $\widehat{X}$,
  the built-in action of the Poincar\'{e} group on $X$ extends also to an action on $\widehat{X}$.
 First, it acts on the standard coordinate-functions $(x,\theta,\bar{\theta})$
  ${\Bbb R}$-affine-linearly on $x$ and ${\Bbb C}$-linearly on $(\theta,\bar{\theta})$.
 This then defines an action of the Poincar\'{e} group on $C^{\infty}(\widehat{X})$
  from the $C^{\infty}$-hull structure of $C^{\infty}(\widehat{X})$.
 The corresponding infinitesimal generators of the action, as derivations of $C^{\infty}(\widehat{X})$,
  can be worked out similarly to the case of supersymmetric transformations.
 All together, they form a super Lie algebra with the super Lie bracket from $\Der_{\Bbb C}(\widehat{X})$.
 %
 %
 %
 %
 This reproduces the standard results of symmetries of the $d=4$, $N=1$ superspace in, for example,
     [We-B: Eq.~(1.26)], [G-G-R-S: Sec.\ 3.2.c], [West1: Sec.\ 14.2]
  in the context of Super $C^{\infty}$-Algebraic Geometry.  	
 Details are left to readers as an exercise.
}\end{remark}

\bigskip

\begin{flushleft}
{\bf Supersymmetrically invariant flow and supersymmetrically invariant frame on $\widehat{X}$}
\end{flushleft}
Consider another action on $\widehat{X}$
  of the Abelian group of constant sections
  $(\xi,\bar{\eta})=(\xi^1,\xi^2,\bar{\eta}^{\dot{1}},\bar{\eta}^{\dot{2}})$
  of $S^{\prime}\oplus S^{\prime\prime}$,
 given by the following transformations of the basic coordinate-functions $(x,\theta,\bar{\theta})$
 $$
   g^{\prime}_{(\xi,\bar{\eta})}\;:\;
   \left\{
    \begin{array}{lll}
      x^{\mu}   & \longmapsto
	   & x^{\mu}
	        + \sqrt{-1}
		      (-\theta \sigma^{\mu}\bar{\eta}^t  +  \xi \sigma^{\mu}\bar{\theta}^t)\\
	  \theta^{\alpha}    & \longmapsto   &  \theta^{\alpha} + \xi^{\alpha} \\
	  \bar{\theta}^{\dot{\beta}}
	  & \longmapsto    & \bar{\theta}^{\dot{\beta}}+ \bar{\eta}^{\dot{\beta}}
	     \hspace{10em},
    \end{array}\right.
   $$
   $\mu=0,1,2,3, \alpha=1,2, \dot{\beta}=\dot{1}, \dot{2}$\,.
Since $(\xi,\eta)$ is odd, this is again a ${\Bbb Z}/2$-grading-preserving  transformation and
 $$
   g^{\prime}_{(\xi,\bar{\eta})}(f(x,\theta,\bar{\theta}))\;
    :=\;  f(x+\sqrt{-1}
	                 (- \theta \mbox{\boldmath $\sigma$} \bar{\eta}^t
					     + \xi \mbox{\boldmath $\sigma$} \bar{\theta}^t ),
                \theta + \xi, \bar{\theta}+ \bar{\eta}						        )
 $$
 is defined by the $C^{\infty}$-structure of $C^{\infty}(\widehat{X})$,
 as in the case of $g_{(\xi,\eta)}(f(x,\theta,\bar{\theta}))$.
Similar to the representation $g_{\mbox{\tiny $\bullet$}}$,
 one has
 
\bigskip

\begin{lemma}
{\bf [$g^{\prime}_{\mbox{\tiny $\bullet$}}$ as representation on super-$C^{\infty}$-ring]}\;
 For $(\xi,\bar{\eta})$ a constant sections of $S^{\prime}\oplus S^{\prime\prime}$,
  $g^{\prime}_{(\xi,\eta)}$
  is an automorphism of $C^{\infty}(\widehat{X})$ as a super $C^{\infty}$-ring.
\end{lemma}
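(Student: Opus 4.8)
The plan is to imitate exactly the proof of Lemma~1.4.7, since $g^{\prime}_{(\xi,\bar\eta)}$ differs from $g_{(\xi,\bar\eta)}$ only by flipping the sign of the $\sqrt{-1}(\theta\mbox{\boldmath $\sigma$}\bar\eta^t-\xi\mbox{\boldmath $\sigma$}\bar\theta^t)$ shift in the $x$-coordinate, a change that is irrelevant to the two properties that need checking. First I would observe that, since each $\xi^{\alpha}$ (resp.\ $\bar\eta^{\dot\beta}$) is a ${\Bbb C}$-linear combination of $\theta^1,\theta^2$ (resp.\ $\bar\theta^{\dot 1},\bar\theta^{\dot 2}$), the image $g^{\prime}_{(\xi,\bar\eta)}(x^\mu) = x^\mu + \sqrt{-1}(-\theta\mbox{\boldmath $\sigma$}\bar\eta^t + \xi\mbox{\boldmath $\sigma$}\bar\theta^t)$ is a nilpotent perturbation of $x^\mu$ lying in the $C^\infty$-hull of $C^\infty(\widehat X)$ (the perturbing term has total-$(\theta,\bar\theta)$-degree $2$, hence its cube vanishes). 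Consequently $g^{\prime}_{(\xi,\bar\eta)}$ maps the $C^\infty$-hull into itself; this is what makes the $C^\infty$-operation $h(f_1,\dots,f_l)$ meaningful after applying $g^{\prime}_{(\xi,\bar\eta)}$.

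Next I would verify that $g^{\prime}_{(\xi,\bar\eta)}$ is ${\Bbb Z}/2$-grading-preserving and ${\Bbb C}$-linear (both immediate from the substitution formula, exactly as already noted in the text just before Lemma~1.4.14), and that it is invertible — the inverse being $g^{\prime}_{(-\xi,-\bar\eta)}$, after checking that the composition law $g^{\prime}_{(\xi_1,\bar\eta_1)}\circ g^{\prime}_{(\xi_2,\bar\eta_2)} = g^{\prime}_{(\xi_1+\xi_2,\bar\eta_1+\bar\eta_2)}$ holds; this follows from direct substitution together with the fact that $\theta$ and $\bar\theta$ pass through $\xi$, $\bar\eta$ with the appropriate sign-factors and the cube of the $x$-shift vanishes. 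The key remaining step is compatibility with the $C^\infty$-hull structure, i.e.\ the identity
$$
 g^{\prime}_{(\xi,\bar\eta)}(h(f_1,\dots,f_l))\;=\;h\bigl(g^{\prime}_{(\xi,\bar\eta)}(f_1),\dots,g^{\prime}_{(\xi,\bar\eta)}(f_l)\bigr)
$$
for $h\in C^\infty({\Bbb R}^l)$ and $f_1,\dots,f_l$ in the $C^\infty$-hull. As in the proof of Lemma~1.4.7, this is obtained by reading $g^{\prime}_{(\xi,\bar\eta)}$ as the substitution operator $f\mapsto f\bigl(g^{\prime}_{(\xi,\bar\eta)}(x),g^{\prime}_{(\xi,\bar\eta)}(\theta),g^{\prime}_{(\xi,\bar\eta)}(\bar\theta)\bigr)$: evaluating $h(f_1,\dots,f_l)$ at the shifted arguments and then recognizing that $h(f_1,\dots,f_l)$ evaluated at shifted coordinates equals $h$ applied componentwise to the $f_i$ evaluated at shifted coordinates. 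The only substantive point is that the ``evaluation at shifted coordinates'' is literally the $C^\infty$-hull operation, which is legitimate precisely because $g^{\prime}_{(\xi,\bar\eta)}(x^\mu)$ is a nilpotent perturbation of $x^\mu$, so Lemma~1.1.3 applies.

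I expect no real obstacle here: the proof is essentially formal once one has set up the substitution-operator viewpoint, and the sign flip in the $x$-shift relative to $g_{(\xi,\bar\eta)}$ plays no role. If anything, the one place to be slightly careful is the composition law (needed for invertibility), where one must track that the $x$-shift of $g^{\prime}_{(\xi_1,\bar\eta_1)}$ gets further shifted by $g^{\prime}_{(\xi_2,\bar\eta_2)}$; but since that further shift contributes terms of total-$(\theta,\bar\theta)$-degree $\ge 4$ beyond the degree-$2$ shift, and the relevant products of $\xi,\bar\eta,\theta,\bar\theta$ either vanish or recombine as expected, the verification goes through routinely. So the write-up would simply say: ``The proof is identical to that of Lemma~1.4.7, \emph{mutatis mutandis}'', perhaps with a one-line remark that the sign of the $x$-translation is immaterial to the argument.
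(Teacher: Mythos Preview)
Your proposal is correct and matches the paper's approach exactly: the paper gives no explicit proof for this lemma, prefacing it only with ``Similar to the representation $g_{\mbox{\tiny $\bullet$}}$, one has'', which is precisely your \emph{mutatis mutandis} reduction to the earlier lemma. Note, however, that your cross-references are off --- the $g_{(\xi,\bar\eta)}$ lemma you mean to cite is Lemma~1.4.1, not 1.4.7 (which is a Definition in the paper's numbering).
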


\bigskip

Furthermore, one has the following commutativity property:

\bigskip

\begin{lemma} {\bf [$g^{\prime}$ and $g$ commute]}\;
 As super-$C^{\infty}$-ring-automorphisms of $C^{\infty}(\widehat{X})$,
 $$
    g^{\prime}_{(\xi^{\prime},\bar{\eta}^{\prime})} \circ g_{(\xi,\bar{\eta})}\;
	=\; g_{(\xi,\bar{\eta})}\circ g^{\prime}_{(\xi^{\prime}, \bar{\eta}^{\prime})}
 $$
 for all constant sections $(\xi^{\prime},\bar{\eta}^{\prime}),\, (\xi,\bar{\eta})$
      of $S^{\prime}\oplus S^{\prime\prime}$.
\end{lemma}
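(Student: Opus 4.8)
The plan is to reduce the identity to the standard generators of $C^{\infty}(\widehat{X})$ and then carry out one short bilinear computation. By the two preceding lemmas, $g_{(\xi,\bar{\eta})}$ and $g^{\prime}_{(\xi^{\prime},\bar{\eta}^{\prime})}$ are automorphisms of $C^{\infty}(\widehat{X})$ as super $C^{\infty}$-rings, hence so are the two composites $g^{\prime}_{(\xi^{\prime},\bar{\eta}^{\prime})}\circ g_{(\xi,\bar{\eta})}$ and $g_{(\xi,\bar{\eta})}\circ g^{\prime}_{(\xi^{\prime},\bar{\eta}^{\prime})}$. A super-$C^{\infty}$-ring endomorphism of $C^{\infty}(\widehat{X})$ is uniquely determined by the images of the standard generators $x^{0},\dots,x^{3},\theta^{1},\theta^{2},\bar{\theta}^{\dot{1}},\bar{\theta}^{\dot{2}}$, since these generate $C^{\infty}(\widehat{X})$ under $C^{\infty}$-operations, anticommutativity, and $\sqrt{-1}$; so it suffices to check that the two composites agree on these eight functions. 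As in the composition law established earlier for the representations $g_{\bullet}$ and $g^{\prime}_{\bullet}$, the parameters $\xi,\bar{\eta}$ and $\xi^{\prime},\bar{\eta}^{\prime}$ are taken to be mutually anticommuting odd constants — equivalently one extends $g_{\bullet}$ and $g^{\prime}_{\bullet}$ over the Grassmann parameter algebra — so that $g^{\prime}_{(\xi^{\prime},\bar{\eta}^{\prime})}$ fixes each $\xi^{\alpha},\bar{\eta}^{\dot{\beta}}$ and $g_{(\xi,\bar{\eta})}$ fixes each $\xi^{\prime\alpha},\bar{\eta}^{\prime\dot{\beta}}$.

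On the odd generators nothing happens: both $g$ and $g^{\prime}$ act on $\theta^{\alpha}$ and $\bar{\theta}^{\dot{\beta}}$ by constant shifts, so both composites send $\theta^{\alpha}\mapsto\theta^{\alpha}+\xi^{\alpha}+\xi^{\prime\alpha}$ and $\bar{\theta}^{\dot{\beta}}\mapsto\bar{\theta}^{\dot{\beta}}+\bar{\eta}^{\dot{\beta}}+\bar{\eta}^{\prime\dot{\beta}}$, regardless of the order. The entire content of the lemma therefore lies in the $x^{\mu}$-generators. Computing $(g^{\prime}_{(\xi^{\prime},\bar{\eta}^{\prime})}\circ g_{(\xi,\bar{\eta})})(x^{\mu})$ means applying $g^{\prime}$ to $x^{\mu}+\sqrt{-1}(\theta\sigma^{\mu}\bar{\eta}^{t}-\xi\sigma^{\mu}\bar{\theta}^{t})$: the $x^{\mu}$-summand picks up the $g^{\prime}$-shift $\sqrt{-1}(-\theta\sigma^{\mu}\bar{\eta}^{\prime t}+\xi^{\prime}\sigma^{\mu}\bar{\theta}^{t})$, while in the bilinear summand one substitutes $\theta\mapsto\theta+\xi^{\prime}$, $\bar{\theta}\mapsto\bar{\theta}+\bar{\eta}^{\prime}$ and leaves $\xi,\bar{\eta}$ untouched; the other order is treated symmetrically. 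Both expansions collapse to
$$
  x^{\mu}+\sqrt{-1}\bigl(\theta\sigma^{\mu}\bar{\eta}^{t}-\theta\sigma^{\mu}\bar{\eta}^{\prime t}
    -\xi\sigma^{\mu}\bar{\theta}^{t}+\xi^{\prime}\sigma^{\mu}\bar{\theta}^{t}
    +\xi^{\prime}\sigma^{\mu}\bar{\eta}^{t}-\xi\sigma^{\mu}\bar{\eta}^{\prime t}\bigr)\,,
$$
the decisive point being that the two quadratic-in-parameter cross terms $\xi^{\prime}\sigma^{\mu}\bar{\eta}^{t}$ and $-\xi\sigma^{\mu}\bar{\eta}^{\prime t}$ occur with the same sign in both orders; this is precisely where the opposite signs in the $x$-shifts defining $g$ versus $g^{\prime}$ enter. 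No passing sign-factors arise, since $\sqrt{-1}$ and the held-fixed parameters commute past the generators being substituted.

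The main (and essentially only) point requiring care is the sign-and-index bookkeeping in the $x^{\mu}$-step: keeping straight which of $\xi,\bar{\eta},\xi^{\prime},\bar{\eta}^{\prime}$ is inert at each stage and carrying the $\sigma^{\mu}$-index contractions unchanged through the substitutions. Conceptually there is no obstacle; the identity is the finite-group shadow of the fact that the supersymmetry generators $Q_{\alpha},\bar{Q}_{\dot{\beta}}$ (anti)commute with the infinitesimal generators of $g^{\prime}$. One could equally organize the verification through the block-matrix presentation of superfields used for the heavier computations later in this subsection, but since only the eight standard generators are involved the direct substitution above is the shortest route.
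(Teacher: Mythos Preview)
Your proposal is correct and follows essentially the same route as the paper: both reduce the identity to the standard generators and verify by direct substitution that the two composites send $x^{\mu}$, $\theta^{\alpha}$, $\bar{\theta}^{\dot{\beta}}$ to the same elements, with the paper recording exactly the six-term expression for the image of $x$ that you wrote out. Your added remarks on why generators suffice and on the parameters being held inert under the other family's action are a welcome clarification of what the paper leaves implicit.
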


\medskip

\begin{proof}
 This follows from the direct computation that
   $g^{\prime}_{(\xi^{\prime}, \bar{\eta}^{\prime})}\circ g_{(\xi,\bar{\eta})}$  and
   $g_{(\xi,\bar{\eta})}\circ g^{\prime}_{(\xi^{\prime},\bar{\eta}^{\prime})}$
   give the same transformation of the basic coordinate-functions $(x,\theta,\bar{\theta})$
   in $C^{\infty}(\widehat{X})$:
 $$
   \begin{array}{lll}
    x  & \longmapsto
	    &  x + \sqrt{-1}\,
		          ( \theta \mbox{\boldmath $\sigma$} (\bar{\eta}-\bar{\eta}^{\prime})^t
				      + (-\xi+\xi^{\prime})\mbox{\boldmath $\sigma$}\bar{\theta}^t
					  + \xi^{\prime} \mbox{\boldmath $\sigma$}\bar{\eta}^t
					  - \xi \mbox{\boldmath $\sigma$}{\bar{\eta}^{\prime}}\,\!^t )    \\[.8ex]
     \theta   & \longmapsto   & \theta + \xi + \xi^{\prime}\\[.8ex]
	 \bar{\theta} & \longmapsto
	    & \bar{\theta} +  \bar{\eta} + \bar{\eta}^{\prime}  \hspace{20em} \,.
   \end{array}
 $$
\end{proof}

\bigskip

Motivated by the notion of invariant flows on a manifold under a group action, one has

\bigskip

\begin{definition} {\bf [symmetrically invariant (even) flow (with odd parameters)]}\; {\rm
 We say that $g^{\prime}$
  defines a (${\Bbb Z}/2$-grading preserving)
  {\it supersymmetrically invariant flow} on the superspace $\widehat{X}$,
  parameterized by the odd parameters $(\xi,\bar{\eta})$
  in the Abelian group of constant sections of $S^{\prime}\oplus S^{\prime\prime}$.
}\end{definition}
 
\bigskip

As in the case of  $g_{(\xi,\bar{\eta})}(f(x,\theta,\bar{\theta}))$,
one can also express  $g^{\prime}_{(\xi,\bar{\eta})}(f(x,\theta,\bar{\theta}))$
 as a polynomial in $(\xi,\eta)$ to give the linearization of the action $g^{\prime}$ on $\widehat{X}$:
 \begin{eqnarray*}
  \lefteqn{
    g^{\prime}_{(\xi,\bar{\eta})}(f(x,\theta,\bar{\theta}))\;
       =\;  f(x,\theta,\bar{\theta})\,
	     +\, \sum_{\alpha=1}^2
		          \xi^{\alpha}\, e_{\alpha^{\prime}} f(x,\theta,\bar{\theta})\,
		 -\, \sum_{\dot{\beta}=\dot{1}}^{\dot{2}}
		          \bar{\eta}^{\dot{\beta}}
				   \cdot e_{\beta^{\prime\prime}} f(x,\theta,\bar{\theta})   } \\[-.8ex]
    && \hspace{7em}				
		 +\, \mbox{(terms of total-$(\xi,\bar{\eta})$-degree $\ge 2$)}\,. \hspace{9em}
 \end{eqnarray*}
 where\footnote{In
                                terms of [We-B] of Wess \& Bagger,
							   $e_{\alpha^{\prime}}$ and $e_{\beta^{\prime\prime}}$ here
							     are denoted respectively by $D_{\alpha}$ and $\bar{D}_{\dot{\beta}}$ ibidem.
                               While the latter notations were essentially already carved in stone in the supersymmetry literature,
							    in view that we need to reserve the notation $D$ for a connection,
								to avoid confusion from same letter $D$ representing various different objects, 								
								 we have no choice but to change the classical notation in the supersymmetry literature.
								Here, {\it the index $\alpha^{\prime}$ is to match the spinor index $\alpha$
								  while the index $\beta^{\prime\prime}$ is to match the spinor index $\dot{\beta}$}.
							   } 
 $$
   e_{\alpha^{\prime}}\;
    =\;  \frac{\partial}{\partial \theta^{\alpha}}
            +\, \sqrt{-1}\,\sum_{\mu=0}^3 \sum_{\dot{\beta}=\dot{1}}^{\dot{2}}
			          \sigma^{\mu}_{\alpha\dot{\beta}}\bar{\theta}^{\dot{\beta}}
					   \frac{\partial}{\partial x^{\mu}}
    \hspace{2em}\mbox{and}\hspace{2em}
  e_{\beta^{\prime\prime}}	\;
    =\; -\, \frac{\partial}{\rule{0ex}{.8em}\partial \bar{\theta}^{\dot{\beta}}}\,
	      -\, \sqrt{-1}\sum_{\mu=0}^3 \sum_{\alpha=1}^2
		              \theta^{\alpha} \sigma^\mu_{\alpha\dot{\beta}}\frac{\partial}{\partial x^{\mu}}\,.
 $$
They satisfy the following super Lie bracket relations
 $$
    \{e_{\alpha^{\prime}}\,,\, e_{\beta^{\prime\prime}} \}\;
	 =\; -2\sqrt{-1}\, \sum_{\mu=0}^3 \sigma^{\mu}_{\alpha\dot{\beta}}\,
	         \frac{\partial}{\partial x^{\mu}}\hspace{1em}\mbox{and}\hspace{1em}
	\{e_{\alpha^{\prime}}\,,\, e_{\beta^{\prime}}\}\;=\;
   	\{e_{\alpha^{\prime\prime}}\,,\, e_{\beta^{\prime\prime}}\}\;=\; 0
 $$
 while anti-commuting with $Q_{\alpha}$'s and $\bar{Q}_{\dot{\beta}}$'s:
 $$
   \{e_{\alpha^{\prime}}\,,\, Q_{\beta}\}\;
   =\;   \{e_{\alpha^{\prime}}\,,\, \bar{Q}_{\dot{\beta}}\}\;
   =\;   \{e_{\alpha^{\prime\prime}}\,,\, Q_{\beta}\}\;
   =\;   \{e_{\alpha^{\prime\prime}}\,,\, \bar{Q}_{\dot{\beta}}\}\; =0 \,,
 $$
 for $\alpha^\prime = 1^\prime, 2^\prime$,\,
      $\alpha^{\prime\prime}=1^{\prime\prime}, 2^{\prime\prime}$,\,
      $\beta=1,2$,\,
	  $\dot{\beta}=\dot{1}, \dot{2}$.
(Cf.\ [We-B: Eq.\ (4.6) and  Eq.'s\ (4.7) \& (4.8)].)
 
\bigskip

\begin{definition} {\bf [standard supersymmetrically invariant frame on $\widehat{X}$]}\; {\rm
  Let  $e_{\mu}:=\partial/\partial x^{\mu}$, $\mu=0,1,2,3$.
  Recall that they commute with $Q_{\alpha}$'s and $\bar{Q}_{\dot{\beta}}$'s.
  Then the $8$-tuple of derivations
    $$	
	 (e_I)_I\;
	   :=\; (e_{\mu}, e_{\alpha^{\prime}}, e_{\beta^{\prime\prime}})	
	             _{\mu, \alpha^{\prime}, \beta^{\prime\prime}}\;
	   :=\; (e_0,e_1,e_2,e_3\,;\, e_{1^{\prime}}, e_{2^{\prime}}\,;\,
	              e_{1^{\prime\prime}}, e_{2^{\prime\prime}})
    $$
   is called the {\it standard supersymmetrically invariant frame} on the superspace $\widehat{X}$,
  with $e_{\alpha^{\prime}}$ (resp.\ $-\,e_{\beta^{\prime\prime}}$)
     the infinitesimal generator of the supersymmetrically invariant flow
	 parameterized by $\xi^{\alpha}$
	 (resp.\ $\bar{\eta}^{\dot{\beta}}$).
}\end{definition}
   
\medskip

\begin{definition} {\bf [standard supersymmetrically invariant coframe on $\widehat{X}$]}\; {\rm
 The dual frame
   $$
    (e^I)_I\;
       :=\; (e^{\,\mu}, e^{\alpha^{\prime}}, e^{\beta^{\prime\prime}})	
	             _{\mu, \alpha^{\prime}, \beta^{\prime\prime}}\;
	   :=\; (e^0, e^1, e^2, e^3\,;\, e^{1^{\prime}}, e^{2^{\prime}}\,;\,
	              e^{1^{\prime\prime}}, e^{2^{\prime\prime}})
   $$
     to $  (e_{\mu}, e_{\alpha^{\prime}}, e_{\beta^{\prime\prime}})
                                                                                      _{\mu, \alpha^{\prime},\beta^{\prime\prime}}$
   is called the {\it standard supersymmetrically invariant coframe} on the superspace $\widehat{X}$.
 It is a collection of $1$-forms $e^I$ on $\widehat{X}$ characterized by
  $e^I(e_J):=  (e_J)\,\!^{\leftarrow}\!\!\!e^I=\delta_{IJ}$.
 In terms of $(x,\theta,\bar{\theta})$, they are given by
  $$
   \begin{array}{l}
   e^{\mu}\;
     =\; dx^{\mu}\,
            +\, \sqrt{-1}\, \sum_{\alpha,\dot{\beta}}
			       \sigma^\mu_{\alpha\dot{\beta}}\,\bar{\theta}^{\dot{\beta}}
 				   \cdot d\theta^{\alpha}\,
            +\, \sqrt{-1}\, \sum_{\alpha,\dot{\beta}}
			       \theta^{\alpha}\,\sigma^\mu_{\alpha\dot{\beta}}
				   \cdot d\bar{\theta}^{\dot{\beta}}\,,\\[2ex]
	e^{\alpha^{\prime}}\; =\; d\theta^{\alpha}\,, \hspace{4em}
    e^{\beta^{\prime\prime}}\; =\;  -\, d\bar{\theta}^{\dot{\beta}}\,.
  \end{array}
  $$
 One has
  $$
    de^{\mu}\;
	  =\;   2\sqrt{-1}\,\sum_{\alpha,\dot{\beta}} \sigma^{\mu}_{\alpha\dot{\beta}}\,
	            d\theta^{\alpha}\wedge d\bar{\theta}^{\dot{\beta}}\,, \hspace{2em}
	de^{\alpha^{\prime}}\;=\; 0\,, \hspace{2em}
	de^{\beta^{\prime\prime}}\;=\; 0\,.                                        	
  $$
 If writing $[e_I, e_J\}=\sum_K c_{IJ}^Ke_K$, then one has\,
  $de^K= \frac{1}{2} \sum_{I,J}c_{IJ}^K e^I\wedge e^J$.
}\end{definition}
      
\bigskip

\begin{flushleft}
{\bf The supersymmetrically invariant flat geometry with torsion on $\widehat{X}$}
\end{flushleft}
One may use the standard supersymmetrically invariant frame\,
   $(e_{\mu}, e_{\alpha^{\prime}}, e_{\beta^{\prime\prime}})	
	             _{\mu, \alpha^{\prime}, \beta^{\prime\prime}}$\, on $\widehat{X}$
 to define a left connection on the tangent sheaf ${\cal T}_{\widehat{X}}$ of $\widehat{X}$
 as follows.
(See Sec.\ 2.1 for a general study of left connections on a left $\widehat{\cal O}_X$-module
    and more explanations of the curvature tensor of a connection on a $\widehat{\cal O}_X$-module.)
  
\bigskip

\begin{definition} {\bf [canonical connection on ${\cal T}_{\widehat{X}}$]}\; {\rm
 The tangent sheaf ${\cal T}_{\widehat{X}}$ of the superspace $\widehat{X}$
  is a left free $\widehat{\cal O}_X$-module with basis
  $(e_I)_I:=(e_{\mu}, e_{\alpha^{\prime}}, e_{\beta^{\prime\prime}})	
	             _{\mu, \alpha^{\prime}, \beta^{\prime\prime}}$.				
 Thus, one can define the {\it canonical connection} $\nabla^{\scriptsizecan}$
    on ${\cal T}_{\widehat{X}}$ by setting
	 $$
	   \nabla^{\scriptsizecan}_{e_I} e_J\; =\;   [e_I, e_J\}\,,
	 $$
	 for $I, J=0,1,2,3, 1^{\prime}, 2^{\prime}, 1^{\prime\prime}, 2^{\prime\prime}$.
  Explicitly,
    $$
     \nabla^{\scriptsizecan}_{e_{\alpha^\prime}} e_{\beta^{\prime\prime}}\;
	 =\; \nabla^{\scriptsizecan}_{e_{\beta^{\prime\prime}}} e_{\alpha^{\prime}}\;
	 =\; -2\sqrt{-1}\,\sigma^{\nu}_{\alpha\dot{\beta}} e_{\nu}\;\;
	 \mbox{and all other $\nabla^{\scriptsizecan}_{e_I}e_J$'s are $0$.}	
    $$   	
  This then determines
     $\nabla^{\scriptsizecan}_u v$, for general $u, v \in {\cal T}_{\widehat{X}}$,
	 by the $\widehat{\cal O}_X$-linearity in the $u$-argument    and
	      the ${\Bbb C}$-linearity and the  ${\Bbb Z}/2$-graded Leibniz rule in the $v$-argument:
  (See Definition~2.1.2.)
	  %
	{\small
	  \begin{eqnarray*}
	    \nabla^{\tinycan}_u v\;
		 & :=\: &  \sum_{I,J} u^I \mbox{\Large $($}
		             (e_Iv^J)\cdot e_J\,
					     +\,\,\!^{\varsigma_{e_{\!I}}}\!v^J
                                  \cdot \nabla^{\tinycan}_{e_I}e_J						
		                                          \mbox{\Large $)$}\; \\
         & =  & \sum_\mu \mbox{\Large $($}
		                  uv^\mu\,
						  -\,2\sqrt{-1}\,\mbox{\large $\sum$}_{\alpha^{\prime},\beta^{\prime\prime}}
						        \,\!^\varsigma\!v^{\alpha^\prime}\sigma^\mu_{\alpha\dot{\beta}}\,
                          -\,2\sqrt{-1}\,\mbox{\large $\sum$}_{\alpha^{\prime},\beta^{\prime\prime}}
						        \,\!^\varsigma\!v^{\beta^{\prime\prime}}\sigma^\mu_{\alpha\dot{\beta}}\,						
                             	           \mbox{\Large $)$}\cdot e_\mu\,    \\[-.6ex]
        && +\, \sum_{\alpha^\prime}(uv^{\alpha^\prime})\cdot e_{\alpha^\prime}\,
			   +\, \sum_{\beta^{\prime\prime}}(uv^{\beta^{\prime\prime}})
					         \cdot e_{\beta^{\prime\prime}},,
	  \end{eqnarray*}}for 
	 $u= \sum_I u^Ie_I$ and $v= \sum_J v^J e_J$.
}\end{definition}

\bigskip

From the explicit expression of $\nabla^{\scriptsizecan}_{e_I}e_J$,
 one concludes that
   $\nabla^{\scriptsizecan}$ is even and of cohomological degree $1$, and is
  invariant under the (even) transformations on ${\cal T}_{\widehat{X}}$
   induced by the supersymmetrically invariant flow-with-odd-parameters on $\widehat{X}$.
%
%
This re-creates the {\it flat geometry with torsion} on $\widehat{X}$ described in, e.g.,
   [We-B: Chap.XIV], [G-G-R-S: Sec.\ 3.4.c], and [West1: Sec.\ 14.2].
See also
   [K-N: Sec.\ II.11 \& Sec.\ X.2] and [Gi: Sec.\ I.9]
  for related studies.

\bigskip
 
\begin{flushleft}
{\bf The chiral sector and the antichiral sector of $\widehat{X}$\\
         {\rm (Cf.\ [We-B: Chap.\ V] of Wess \& Bagger)}}
\end{flushleft}
(1) {\it The chiral sector of $\widehat{X}$}

\medskip

\noindent
Let
 $$
   x^\prime\,\!^\mu\; :=\;    x^\mu+ \sqrt{-1}\,\theta\sigma^\mu\bar{\theta}^t\,,\,  \hspace{1em}
   \theta^{\prime}\,\!^\alpha\;  :=\; \theta^\alpha\,, \hspace{1em}
   \bar{\theta}^\prime\,\!^{\dot{\beta}}   \;
         :=\; \bar{\theta}^{\dot{\beta}}\,, \hspace{2em}
   \mu=0,1,2,3\,,\;\; \alpha=1,2\,,\;\; \ \dot{\beta}=\dot{1}, \dot{2},
 $$
 denoted collectively as $(x^{\prime}, \theta^{\prime}, \ \bar{\theta}^{\prime})$,
 be a new  set of coordinate-functions in $C^{\infty}(\widehat{X})$.
They satisfy
 $$
   \begin{array}{lclcl}
    e_{\mu}x^{\prime}\,\!^{\nu}\;=\; \delta_{\mu\nu}\,,
	  && e_{\mu} \theta^{\prime}\,\!^{\alpha}\; =\; 0\,,
	  && e_{\mu}\bar{\theta}^{\prime}\,\!^{\dot{\beta}}\; =\; 0\,, \\[1.2ex]
	e_{\alpha^{\prime}}x^{\prime}\,\!^{\mu}\;
	      =\; 2\sqrt{-1}\,\sum_{\dot{\beta}}
		                              \sigma^{\nu}_{\alpha\dot{\beta}}\,\bar{\theta}^{\dot{\beta}}\,,
      && e_{\alpha^{\prime}}\theta^{\prime}\,\!^{\beta}\;=\; \delta_{\alpha\beta}\,,
      && e_{\alpha^{\prime}}\bar{\theta}^{\prime}\,\!^{\dot{\beta}}	  \;=\; 0\,, \\[1.2ex]
	e_{\beta^{\prime\prime}} x^{\prime}\,\!^{\mu}\;=\; 0\,,
	  && e_{\beta^{\prime\prime}}\theta^{\prime}\,\!^{\alpha}\;=\; 0\,,
	  && e_{\beta^{\prime\prime}}\bar{\theta}^{\prime}\,\!^{\dot{\alpha}}\;
	         =\; -\,\delta_{\dot{\beta}\dot{\alpha}}
   \end{array}
 $$
and, hence, in terms of the new coordinate functions
 $(x^{\prime}, \theta^{\prime}, \bar{\theta}^{\prime})$, one has
 $$
   e_{\mu}\;=\; \frac{\partial}{\partial x^{\prime}\,\!^{\mu}}\,,\hspace{2em}
   e_{\alpha^{\prime}}\;
      =\;  \frac{\partial}{\partial \theta^{\prime}\,\!^{\alpha}}\,
	           +\, 2\sqrt{-1}\, \sum_{\nu,\dot{\beta}}
			          \sigma^{\nu}_{\alpha\dot{\beta}}\bar{\theta}^{\prime}\,\!^{\dot{\beta}}
					  \frac{\partial}{\partial x^{\prime}\,\!^{\nu}}\,,\hspace{2em}
   e_{\beta^{\prime\prime}}\;
      =\; -\,\frac{\partial}{\rule{0ex}{.8em}\partial \bar{\theta}^{\prime}\,\!^{\dot{\beta}}}\,.
 $$
Here,
 $\partial/\partial x^{\prime}\,\!^{\mu} $, $\partial/\partial\theta^{\prime}\,\!^{\alpha}$,
   $\partial/\partial \bar{\theta}^{\prime}\,\!^{\dot{\theta}}$,
     $\mu=0,1,2,3$, $\alpha=1,2$, $\dot{\beta}=\dot{1}, \dot{2}$,
   are by definition the derivations of $C^{\infty}(\widehat{X})$
   associated to $(x^{\prime},\theta^{\prime},\bar{\theta}^{\prime})$,
 which are characterized by
  $\frac{\partial}{\partial x^{\prime}\,\!^{\mu}} x^{\prime}\,\!^{\nu}=\delta_{\mu\nu}$,\,
  $\frac{\partial}{\partial x^{\prime}\,\!^{\mu}} \theta^{\prime}\,\!^{\alpha}
    = \frac{\partial}{\partial x^{\prime}\,\!^{\mu}} \bar{\theta}^{\prime}\,\!^{\dot{\beta}} = 0$,\;
  $\frac{\partial}{\partial \theta^{\prime}\,\!^{\alpha}} \theta^{\prime}\,\!^{\beta}
      =\delta_{\alpha\beta}$,\,
  $\frac{\partial}{\partial \theta^{\prime}\,\!^{\alpha}} x^{\prime}\,\!^\mu
    = \frac{\partial}{\partial \theta^{\prime}\,\!^{\alpha}} \bar{\theta}^{\prime}\,\!^{\dot{\beta}}
	= 0$,\;    and\;
  $\frac{\partial}{\rule{0ex}{.8em}\partial \bar{\theta}^{\prime}\,\!^{\dot{\beta}}}
       \bar{\theta}^{\prime}\,\!^{\dot{\alpha}}  = \delta_{\dot{\beta}\dot{\alpha}}$,\,
  $\frac{\partial}{\rule{0ex}{.8em}\partial \bar{\theta}^{\prime}\,\!^{\dot{\beta}}}
       x^{\prime}\,\!^\mu
    = \frac{\partial}{\rule{0ex}{.8em}\partial \bar{\theta}^{\prime}\,\!^{\dot{\beta}}}
	      \theta^{\prime}\,\!^\alpha
	= 0$.
   
\bigskip

\begin{example}
{\bf [$e_{\alpha^\prime}$ in new coordinate-functions
             $(x^{\prime},\theta, \bar{\theta}^{\prime})$]}\footnote{As
			                                      illustrated by this example,
												   the process goes the same as in the case of representing a vector field on a smooth manifold
												    in different coordinate charts
												   because
												    we take {\it Der}$_{\Bbb C}(\widehat{X})$
													   as a left $C^{\infty}(\widehat{X})$-module    and
													all the derivations here are left  derivations.	
												  Cf.\ Footnote~7.
			                                              }\;
{\rm
 Let\\
   $e_{\alpha^{\prime}}
     = \sum_{\nu} c_{\alpha^{\prime}}^{\,\nu}\frac{\partial}{\partial x^{\prime}\,\!^{\nu}}
	    + \sum_{\beta} c_{\alpha^{\prime}}^{\,\beta^{\prime}}
		                                  \frac{\partial}{\partial \theta^{\prime}\,\!^{\beta}}
	    + \sum_{\dot{\beta}} c_{\alpha^{\prime}}^{\,\beta^{\prime\prime}}
		                                                 \frac{\partial}
														   {\rule{0ex}{.8em}
														       \partial \bar{\theta}^{\prime}\,\!^{\dot{\beta}} }$,
 where $c_{\alpha^{\prime}}^{\,\nu}$, $c_{\alpha^{\prime}}^{\,\beta^{\prime}}$,
            $c_{\alpha^{\prime}}^{\,\beta^{\prime\prime}}\in C^{\infty}(\widehat{X})$.
 Then,
   $$
    \begin{array}{c}
	  c_{\alpha^{\prime}}^{\,\nu}\;
	    =\; e_{\alpha^{\prime}}x^{\prime}\,\!^{\nu}\;
	    =\;  2\sqrt{-1}\,\sum_{\dot{\beta}}
			 \sigma^{\nu}_{\alpha\dot{\beta}}\,\bar{\theta}^{\dot{\beta}}\;
        =\;  2\sqrt{-1}\,\sum_{\dot{\beta}}
			 \sigma^{\nu}_{\alpha\dot{\beta}}\,\bar{\theta}^{\prime}\,\!^{\dot{\beta}}\,,\\[1.2ex]	
      c_{\alpha^{\prime}}^{\,\beta^{\prime}}\;
	    =\; e_{\alpha^{\prime}} \theta^{\prime}\,\!^{\beta}\; = \delta_{\alpha\beta}\,, \hspace{2em}
      c_{\alpha^{\prime}}^{\,\beta^{\prime\prime}}\;
	    =\; e_{\alpha^{\prime}} \bar{\theta}^{\prime}\,\!^{\dot{\beta}}\; =\; 0\,.
	\end{array}
   $$
 Thus,
   $e_{\alpha^{\prime}}
     = \frac{\partial}{\partial\theta^{\prime \alpha}}
	     + 2\sqrt{-1}\,
		     \sum_{\nu, \dot{\beta}}\sigma^{\nu}_{\alpha\dot{\beta}}
			   \bar{\theta}^{\dot{\beta}}\,\frac{\partial}{\partial x^{\prime \nu}}$,
   as is given above.			
}\end{example}

\medskip

\begin{definition} {\bf [standard chiral coordinate-functions on $\widehat{X}$]}\;\\ {\rm
 $\,(x^{\prime},\theta^{\prime}, \bar{\theta}^{\prime})$
              are called the {\it standard chiral coordinate-functions} on $\widehat{X}$.
}\end{definition}

\medskip

\begin{definition} {\bf [chiral function-ring \& chiral structure sheaf of $\widehat{X}$]}\; {\rm  
 (1)
 $\; f\in C^{\infty}(\widehat{X})$ is called {\it chiral}\,
       if $\;e_{1^{\prime\prime}}f= e_{2^{\prime\prime}}f =0\,$.
 The set of chiral functions on $\widehat{X}$ is a ${\Bbb C}$-subalgebra of
   $C^{\infty}(\widehat{X})$,
 called the {\it chiral function-ring } of $\widehat{X}$,
  denoted by $C^{\infty}(\widehat{X})^{\scriptsizech}$.
  
 (2)
 Replacing
   $\widehat{X}$ by $\widehat{U}$ for $U\subset X$ open   and
   $e_{1^{\prime\prime}}$, $e_{2^{\prime\prime}}$
     by   $e_{1^{\prime\prime}}|_{\widehat{U}}$, $e_{2^{\prime\prime}}|_{\widehat{U}}$
     in the above setup,
 one obtains the {\it sheaf  of chiral functions}
  $\widehat{\cal O}_X^{\scriptsizech}\subset \widehat{\cal O}_X$,
  also called the {\it chiral structure sheaf} of $\widehat{X}$.
}\end{definition}

\bigskip

Since $e_{\beta^{\prime\prime}}= \partial /\partial \bar{\theta}^{\prime}\,\!^{\dot{\beta}}$
  in terms of the coordinate-functions $(x^{\prime},\theta^{\prime},\bar{\theta}^{\prime})$,
$f$ is chiral if and only if
 $$
    f\; =\;  f^{\prime}_{(0)}(x^{\prime})\,
	           +\, \sum_{\alpha}f^{\prime}_{(\alpha)}(x^{\prime})
			                                                                             \theta^{\prime}\,\!^{\alpha}\,
			   +\, f^{\prime}_{(12)}(x^{\prime})\theta^{\prime}\,\!^1 \theta^{\prime}\,\!^2
 $$
 in chiral coordinate-functions $(x^{\prime},\theta^{\prime},\bar{\theta}^{\prime})$.
 
\bigskip

\begin{lemma}
{\bf [induced $C^{\infty}$-hull structure on $C^{\infty}(\widehat{X})^{\scriptsizech}$]}\;
 $C^{\infty}(\widehat{X})^{\scriptsizech}$ is closed under the $C^{\infty}$-hull structure
 of $C^{\infty}(\widehat{X})$   and,
 hence, has an induced $C^{\infty}$-hull structure from that of $C^{\infty}(\widehat{X})$.
\end{lemma}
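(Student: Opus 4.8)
The plan is to reduce the statement to the chain rule of Lemma~1.3.4, applied to the two odd derivations $e_{1^{\prime\prime}}, e_{2^{\prime\prime}}\in\Der_{\Bbb C}(\widehat{X})$ (and their restrictions to open $U\subset X$) which by definition cut out the chiral sector. The first thing I would record is that, by the construction of $\widehat{X}$ in Sec.~1.2, the $C^{\infty}$-hull of $\widehat{\cal O}_X(U)$ — in particular of $C^{\infty}(\widehat{X})$ when $U=X$ — is exactly $C^{\infty}(\bigwedge^{\even}_{\,\Bbb R,\Bbb C}(S^{\prime}\oplus S^{\prime\prime})|_U)$, which is precisely the class of even elements on which Lemma~1.3.4 operates. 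Hence any $f_1,\dots,f_l$ lying in this $C^{\infty}$-hull are admissible inputs for the chain rule, which for $\xi\in\Der_{\Bbb C}(\widehat{X})$ and $h\in C^{\infty}({\Bbb R}^l)$ reads $\xi(h(f_1,\dots,f_l))=\sum_{k=1}^l\bigl((\partial_kh)(f_1,\dots,f_l)\bigr)\cdot\xi f_k$.

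Next I would run the following short computation. Take $f_1,\dots,f_l$ in the intersection of $C^{\infty}(\widehat{X})^{\scriptsizech}$ with the $C^{\infty}$-hull of $C^{\infty}(\widehat{X})$, and let $h\in C^{\infty}({\Bbb R}^l)$. Since the $C^{\infty}$-hull of $C^{\infty}(\widehat{X})$ is itself a $C^{\infty}$-ring, $h(f_1,\dots,f_l)$ again lies in that $C^{\infty}$-hull, so the chain rule above applies to it. Choosing $\xi=e_{1^{\prime\prime}}$ and then $\xi=e_{2^{\prime\prime}}$, and using that $e_{1^{\prime\prime}}f_k=e_{2^{\prime\prime}}f_k=0$ for every $k$ (chirality of the $f_k$), the right-hand side vanishes in each case, so $h(f_1,\dots,f_l)$ is killed by $e_{1^{\prime\prime}}$ and $e_{2^{\prime\prime}}$, i.e., it is chiral. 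This shows the $C^{\infty}$-operations of $C^{\infty}(\widehat{X})$ restrict to $C^{\infty}(\widehat{X})^{\scriptsizech}$, endowing $C^{\infty}(\widehat{X})^{\scriptsizech}$ with an induced $C^{\infty}$-hull structure (its hull being $C^{\infty}(\widehat{X})^{\scriptsizech}$ intersected with that of $C^{\infty}(\widehat{X})$); replacing $X$ by open $U\subset X$ and $e_{1^{\prime\prime}}, e_{2^{\prime\prime}}$ by their restrictions, the same argument sheafifies to give the induced $C^{\infty}$-hull structure on $\widehat{\cal O}_X^{\scriptsizech}\subset\widehat{\cal O}_X$.

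The step I expect to be the ``main obstacle'' is not really an obstacle: the computation is immediate once the chain rule is in place, and what genuinely requires care — all of it already available in the notes — is (i) the identification of the $C^{\infty}$-hull of $C^{\infty}(\widehat{X})$ with $C^{\infty}(\bigwedge^{\even}_{\,\Bbb R,\Bbb C}(S^{\prime}\oplus S^{\prime\prime}))$, so that the $f_i$ are legitimate arguments for Lemma~1.3.4; (ii) that $e_{1^{\prime\prime}}, e_{2^{\prime\prime}}$ are bona fide ${\Bbb C}$-linear derivations of $C^{\infty}(\widehat{X})$, so the chain rule is available for them; and (iii) compatibility of the chirality condition with restriction to open sets, so the argument globalizes. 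A more computational alternative would be to pass to the standard chiral coordinate-functions $(x^{\prime},\theta^{\prime},\bar{\theta}^{\prime})$, in which a chiral element of the $C^{\infty}$-hull has the form $a(x^{\prime})+b(x^{\prime})\,\theta^{\prime 1}\theta^{\prime 2}$ and, since $(\theta^{\prime 1}\theta^{\prime 2})^2=0$, one finds $h(f_1,\dots,f_l)=h(a_1,\dots,a_l)+\sum_k(\partial_kh)(a_1,\dots,a_l)\,b_k\,\theta^{\prime 1}\theta^{\prime 2}$, which is visibly chiral — but this first requires transporting the $C^{\infty}$-hull structure into chiral coordinates, which the derivation-based argument sidesteps.
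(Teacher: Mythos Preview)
Your proof is correct, but it takes a different route from the paper. The paper argues via the standard chiral coordinate-functions $(x^{\prime},\theta^{\prime},\bar{\theta}^{\prime})$: it first observes that the $x^{\prime\,\mu}$ lie in the $C^{\infty}$-hull of $C^{\infty}(\widehat{X})$ while $\theta^{\prime},\bar{\theta}^{\prime}$ coincide with $\theta,\bar{\theta}$, so that a chiral element of the $C^{\infty}$-hull has the form $f^{\prime}_{(0)}(x^{\prime})+f^{\prime}_{(12)}(x^{\prime})\,\theta^{\prime 1}\theta^{\prime 2}$, and then expands $h(f_1,\dots,f_l)$ in these coordinates to see it is again of this form, hence chiral. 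This is precisely the ``more computational alternative'' you sketch at the end. Your main argument via the chain rule of Lemma~1.3.4 applied to $e_{1^{\prime\prime}},e_{2^{\prime\prime}}$ is cleaner: it sidesteps the need to verify that the $C^{\infty}$-hull structure transports correctly to chiral coordinates (which the paper does have to address, via the observation that $x^{\prime\,\mu}\in C^{\infty}$-hull), and it makes transparent that the result is a formal consequence of the chirality condition being cut out by derivations. The paper's approach, on the other hand, gives the explicit form of $h(f_1,\dots,f_l)$ in chiral coordinates, which is occasionally useful downstream.
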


\medskip

\begin{proof}
 This is a consequence of the fact that
  in the standard chiral coordinate-functions $(x^{\prime}, \theta^{\prime}, \bar{\theta}^{\prime})$
  for $\widehat{X}$,
 the commuting coordinate-functions
   $x^{\prime}\,\!^0, x^{\prime}\,\!^1, x^{\prime}\,\!^2, x^{\prime}\,\!^3$
   lie in the $C^{\infty}$-hull of $C^{\infty}(\widehat{X})$
 while the nilpotent anticommuting coordinate functions
   $\theta^{\prime}\,\!^1,  \theta^{\prime}\,\!^2,
      \bar{\theta}^{\prime}\,\!^{\dot{1}},  \bar{\theta}^{\prime}\,\!^{\dot{2}},   $
   are identical to $\theta^1,  \theta^2,  \bar{\theta}^{\dot{1}},  \bar{\theta}^{\dot{2}}$.

 Explicitly,  let $h\in C^{\infty}({\Bbb R}^l)$  and
   $f_1,\,\cdots\,, f_l
      \in C^{\infty}(\widehat{X})^{\scriptsizech}\cap C^{\infty}$-{\it hull}\,$(\widehat{X})$.
 Then\\
   $f_i=f^{\prime}_{i,(0)}(x^{\prime})
             + f^{\prime}_{i,(12)}(x^{\prime})\,\theta^{\prime}\,\!^1\theta^{\prime}\,\!^2$,
   for $i=1,\,\ldots\,,\, l$, and hence
   $h(f_1,\,\cdots\,,\, f_l)\in C^{\infty}$-{\it hull}\,$(C^{\infty}(\widehat{X}))$
   can be expressed as
   $\sum_{k=1}^l
      (\partial_kh) (f^{\prime}_{1,(0)}(x^{\prime}),\,\cdots\,, f^{\prime}_{i,(0)}   )
           f^{\prime}_{k,(12)}\theta^{\prime}\,\!^1 \theta^{\prime}\,\!^2$,
   which is chiral.		
   
\end{proof}

\medskip

\begin{lemma}
{\bf [chiral function on $\widehat{X}$ in terms of $(x, \theta, \bar{\theta})$]}\;
 In terms of the standard coordinate functions $(x,\theta, \theta^{\prime})$,
  a chiral function $f$ on $\widehat{X }$ is determined by the components
   $f_{(0)}$, $f_{(\alpha)}$, and $f_{(12)}$ of $f$
   via the following formula
  {\small
  \begin{eqnarray*}
   f & = &
     f_{(0)}(x)\,
      +\, \sqrt{-1}\sum_{\mu}(\partial_\mu f_{(0)})(x)
	                 \cdot \theta\sigma^\mu \bar{\theta}^t\,
      -\frac{1}{2}\sum_{\mu,\nu} (\partial_\mu\partial_\nu f_{(0)})(x)
	                 \cdot \theta\sigma^\mu\bar{\theta}^t \theta\sigma^\nu\bar{\theta}^t\\
   && \hspace{2em}
      +\, \sum_{\alpha}f_{(\alpha)}(x)\cdot \theta^\alpha \,
      +\, \sqrt{-1}\, \sum_{\mu, \alpha} (\partial_\mu f_{(\alpha)})(x)
                          \cdot \theta\sigma^\mu\bar{\theta}^t\theta^\alpha\,	
      +\, f_{(12)}(x)\cdot \theta^1\theta^2             \\ 					
  & = & f_{(0)}(x)\,
      +\, \sum_{\alpha}f_{(\alpha)}(x)\cdot \theta^\alpha\,
	  +\, f_{(12)}(x)\cdot \theta^1\theta^2\,
      +\, \sum_{\alpha, \dot{\beta}}	
	         \sqrt{-1}\,\mbox{\normalsize $\sum$}_\mu
			   \sigma^\mu_{\alpha\dot{\beta}}\cdot (\partial_\mu f_{(0)})(x)
			\cdot\theta^\alpha\bar{\theta}^{\dot{\beta}}\, \\
   && \hspace{2em}
       +\, \sum_{\dot{\beta}}\sqrt{-1}\,\mbox{\normalsize $\sum$}_\mu
	           \mbox{\large $($}
			     \sigma^\mu_{2\dot{\beta}}\cdot(\partial_\mu f_{(1)})(x)
				 - \sigma^\mu_{1\dot{\beta}}\cdot(\partial_\mu f_{(2)})(x)
			   \mbox{\large $)$}
	              \cdot \theta^1\theta^2\bar{\theta}^{\dot{\beta}}\,
       +\, (\square f_{(0)})(x)
             \cdot \theta^1\theta^2\bar{\theta}^{\dot{1}}\bar{\theta}^{\dot{2}}\,	   \\
  & = & f_{(0)}(x) \,
      +\, f_{(1)}(x)\cdot\theta^1\, + f_{(2)}(x)\cdot\theta^2\,
	  +\, f_{(12)}(x)\cdot\theta^1\theta^2\, \\
   && \hspace{2em}
     +\, \sqrt{-1}\, (-\,\partial_0 f_{(0)}+\partial_3f_{(0)})(x)
                      \cdot \theta^1\bar{\theta}^{\dot{1}}\,
     +\, \sqrt{-1}\, (\partial_1 f_{(0)}-\sqrt{-1}\partial_2 f_{(0)})(x)
                      \cdot \theta^1\bar{\theta}^{\dot{2}}   \\
   && \hspace{2em}					
     +\, \sqrt{-1}\, (\partial_1 f_{(0)}+\sqrt{-1}\partial_2f_{(0)})(x)
                      \cdot \theta^2\bar{\theta}^{\dot{1}} \,
     -\, \sqrt{-1}\, (\partial_0 f_{(0)}+\partial_3f_{(0)})(x)
                      \cdot \theta^2\bar{\theta}^{\dot{2}} 					     \\
   && \hspace{2em}
   +\, \sqrt{-1}\, ( \partial_1 f_{(1)}+\sqrt{-1}\partial_2f_{(1)}\,
                               + \partial_0 f_{(2)}- \partial_3 f_{(2)})(x)                             
                             \cdot \theta^1\theta^2\bar{\theta}^{\dot{1}}   \\
   && \hspace{2em}					
   +\, \sqrt{-1}\,(-\partial_0 f_{(1)}- \partial_3f_{(1)}
                               -\partial_1 f_{(2)}+\sqrt{-1}\partial_2f_{(2)})(x)
                             \cdot \theta^1\theta^2\bar{\theta}^{\dot{2}}  \\
   && \hspace{2em}							
   +\, ((\partial_0^2-\partial_1^2-\partial_2^2-\partial_3^2)f_{(0)})(x)
               \cdot\theta^1\theta^2\bar{\theta}^{\dot{1}}\bar{\theta}^{\dot{2}}\,.
  \end{eqnarray*}} 
\end{lemma}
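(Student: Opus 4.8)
The plan is to compute $f$ directly from the characterizing property ``$f$ is chiral,'' i.e.\ $e_{1^{\prime\prime}}f = e_{2^{\prime\prime}}f = 0$, translated into the standard coordinates $(x,\theta,\bar\theta)$. The cleanest route is the one foreshadowed by the introduction of the chiral coordinate-functions $(x^\prime,\theta^\prime,\bar\theta^\prime)$: in those coordinates $e_{\beta^{\prime\prime}} = \partial/\partial\bar\theta^{\prime\dot\beta}$, so the chirality condition says precisely that $f$ has no $\bar\theta^\prime$-dependence, hence
\[
 f = f^\prime_{(0)}(x^\prime) + \sum_\alpha f^\prime_{(\alpha)}(x^\prime)\,\theta^{\prime\alpha}
       + f^\prime_{(12)}(x^\prime)\,\theta^{\prime 1}\theta^{\prime 2}\,.
\]
Then I would substitute $x^{\prime\mu} = x^\mu + \sqrt{-1}\,\theta\sigma^\mu\bar\theta^t$, $\theta^{\prime\alpha}=\theta^\alpha$, $\bar\theta^{\prime\dot\beta}=\bar\theta^{\dot\beta}$, and expand each $f^\prime_{(\bullet)}(x^\prime)$ as an element of the $C^\infty$-hull of $C^\infty(\widehat X)$ by a nilpotent perturbation, invoking Lemma~1.1.3 ($C^\infty$ evaluation after nilpotent perturbation) — here the nilpotent piece is $n^\mu := \sqrt{-1}\,\theta\sigma^\mu\bar\theta^t$, which satisfies $n^\mu n^\nu n^\lambda = 0$ since its total-$(\theta,\bar\theta)$ degree is $2$. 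This gives a Taylor expansion of $f^\prime_{(0)}(x^\prime)$ to second order in $n$ and of $f^\prime_{(\alpha)}(x^\prime)$ to first order, with $f^\prime_{(12)}(x^\prime)$ contributing only its degree-zero term after multiplication by $\theta^1\theta^2$.

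The second step is bookkeeping: collect the result by $(\theta,\bar\theta)$-monomials and read off the components $f_{(0)},\,f_{(\alpha)},\,f_{(12)},\,f_{(\alpha\dot\beta)},\,f_{(12\dot\beta)},\,f_{(12\dot1\dot2)}$. Comparing the lowest-degree terms one identifies $f^\prime_{(0)}=f_{(0)}$, $f^\prime_{(\alpha)}=f_{(\alpha)}$, $f^\prime_{(12)}=f_{(12)}$, so the whole superfield is determined by $f_{(0)},f_{(1)},f_{(2)},f_{(12)}$. The $\theta^\alpha\bar\theta^{\dot\beta}$-terms come entirely from the first-order term $\sqrt{-1}\sum_\mu (\partial_\mu f_{(0)})\,\theta\sigma^\mu\bar\theta^t$; the $\theta^1\theta^2\bar\theta^{\dot\beta}$-terms from the cross of $\sqrt{-1}\sum_\mu(\partial_\mu f_{(\alpha)})\,\theta\sigma^\mu\bar\theta^t$ with $\theta^\alpha$ together with the Pauli-matrix contractions $\sigma^\mu_{\alpha\dot\beta}$; and the top $\theta^1\theta^2\bar\theta^{\dot1}\bar\theta^{\dot2}$-term from the second-order term $-\tfrac12\sum_{\mu,\nu}(\partial_\mu\partial_\nu f_{(0)})\,\theta\sigma^\mu\bar\theta^t\,\theta\sigma^\nu\bar\theta^t$, which after the spinor algebra collapses to $(\square f_{(0)})(x)$ with $\square = \partial_0^2-\partial_1^2-\partial_2^2-\partial_3^2$. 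The final ``fully componentwise'' form in the Lemma is then obtained by plugging in the explicit Pauli matrices $\sigma^0,\sigma^1,\sigma^2,\sigma^3$ and evaluating the contractions $\sum_{\dot\beta}\sigma^\mu_{\alpha\dot\beta}\bar\theta^{\dot\beta}$, $\sum_\mu\sigma^\mu_{2\dot\beta}\partial_\mu - \sum_\mu\sigma^\mu_{1\dot\beta}\partial_\mu$, etc., line by line.

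The main obstacle is purely the sign-and-index discipline: one must be careful that the $\theta$'s and $\bar\theta$'s are odd, so reorderings inside products like $\theta\sigma^\mu\bar\theta^t\,\theta\sigma^\nu\bar\theta^t$ produce sign factors, and that the Pauli-matrix index contractions $\sigma^\mu_{\alpha\dot\beta}$ are summed in the correct slots; the quadratic term in particular requires identifying $\sum_{\mu,\nu}\theta\sigma^\mu\bar\theta^t\,\theta\sigma^\nu\bar\theta^t\,\partial_\mu\partial_\nu$ with $-2\,\theta^1\theta^2\bar\theta^{\dot1}\bar\theta^{\dot2}\,\square$, which is where the Minkowski-signature Laplacian $\square$ enters. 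I expect no conceptual difficulty — the three ``=''-lines of the Lemma are just the same object displayed at three levels of explicitness — so the proof consists of carrying out this substitution-and-expansion carefully; as the paper's own Remark on sign-related details warns, the danger is entirely in the tedium, not in the mathematics.
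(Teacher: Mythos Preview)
Your proposal is correct and matches the paper's first proof essentially step for step: pass to chiral coordinates where $e_{\beta^{\prime\prime}}=-\partial/\partial\bar\theta^{\prime\dot\beta}$, write $f$ as a polynomial in $\theta^\prime$ with coefficients $f^\prime_{(\bullet)}(x^\prime)$, expand each coefficient via Lemma~1.1.3 in the nilpotent shift $x^{\prime\mu}=x^\mu+\sqrt{-1}\,\theta\sigma^\mu\bar\theta^t$, and collect terms. The paper also supplies a second proof that works entirely in $(x,\theta,\bar\theta)$ by writing out $e_{1^{\prime\prime}}f$ and $e_{2^{\prime\prime}}f$ in block-matrix form and solving the resulting linear system for the components; this is more tedious here but is the method that later generalizes to the bundle-valued $\widehat D$-chiral case in Proposition~3.2.2, where no analogue of the chiral-coordinate trick is available.
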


\medskip

\begin{proof}
 Though elementary, we give two methods to prove the statement.
 The first is slick but
    is unclear how it can be generalized to the situation when $f$ takes values on a bundle with a connection.
 The second looks unnecessarily tedious for the current situation but has a direct generalization to the case
   where $f$ takes values on a bundle with a connection on $\widehat{X}$.

 \medskip
 
 \noindent
 $(a)$ {\it First proof}
 
 \smallskip
 
 \noindent
 In terms of the chiral coordinate functions
   $(x^\prime, \theta^\prime, \bar{\theta}^\prime)$,
 $$
   f\;
   =\;  f^\prime(x^\prime, \theta^\prime, \bar{\theta}^\prime)\;
   =\;  f^\prime_{(0)}(x^\prime)\,
	           +\, \sum_\alpha f^\prime_{(\alpha)}(x^\prime)
			                                                                             \theta^\prime\,\!^\alpha\,
			   +\, f^\prime_{(12)}(x^\prime)\theta^\prime\,\!^1 \theta^\prime\,\!^2
 $$
 where
   $f^\prime_{(0)},\,  f^\prime_{(\alpha)},\,  f^\prime_{(12)}
       \in C^\infty({\Bbb R}^4)^{\Bbb C}$.
 By Lemma~1.1.3,  applied to the real and the imaginary component of
        $f^\prime_{(0)},\,  f^\prime_{(\alpha)},\,  f^\prime_{(12)}$,
  \begin{eqnarray*}
   f^\prime_{(\tinybullet)}(x^{\prime})
   & = &
    f^{\prime}_{(\tinybullet)}
     (x+\sqrt{-1}\theta\boldsigma\bar{\theta}^t)   \\
   & = & f^{\prime}_{(\tinybullet)}(x)\,
        +\,\sqrt{-1}\,  \sum_\mu (\partial_{\mu}f^{\prime}_{(\tinybullet)})(x)\,
		         \theta\sigma^\mu \bar{\theta}^t\,
		-\,\frac{1}{2}\,\sum_{\mu,\nu}
		     (\partial_\mu\partial_\nu f^{\prime}_{(\tinybullet)})(x)\,
			     (\theta\sigma^\mu\bar{\theta}^t)(\theta\sigma^\nu\bar{\theta}^t)\,.
  \end{eqnarray*}
 The claim follows from
  applying the expansion to
     $f^\prime_{(0)}(x^\prime)$, $f^\prime_{(\alpha)}(x^\prime)$,  and
     $f^\prime_{(12)}(x^\prime)$,
  collecting like terms in $(\theta,\bar{\theta})$,  and
  re-denoting $f^{\prime}_{(\tinybullet)}(x)$ by $f_{(\tinybullet)}(x)$.
  
 \bigskip

 \noindent
 $(b)$  {\it Second proof}
 
 \medskip
 
 \noindent
 To better organize the terms of polynomial in $(\theta,\bar{\theta})$
   and since only summations of terms are involved,
  we will write a general $f\in C^{\infty}(\widehat{X})$ in a bookkeeping block-matrix\footnote{More
                                                       precisely, one may write $f$ as
													   $(1,  \theta^1, \theta^2, \theta^1\theta^2 )
														     M (1, \bar{\theta}^{\dot{1}},
															         \bar{\theta}^{\dot{2}},
														 \bar{\theta}^{\dot{1}}\bar{\theta}^{\dot{2}})^t$,
                                                         where $M$ is a $4\times 4$ matrix with entries
														 in $C^{\infty}(X)^{\Bbb C}$, and then use $M$ to represent $f$.
																	 } 
  \begin{eqnarray*}
   f & = &  f_{(0)}
	  + \sum_\alpha f_{(\alpha)}\theta^\alpha
	  + \sum_{\dot{\beta}} f_{(\dot{\beta})} \bar{\theta}^{\dot{\beta}}
	  + f_{(12)}\theta^1\theta^2
	  + \sum_{\alpha, \dot{\beta}}f_{(\alpha\dot{\beta})}
	          \theta^\alpha\bar{\theta}^{\dot{\beta}}
	  + f_{(\dot{1}\dot{2})}\bar{\theta}^{\dot{1}}\bar{\theta}^{\dot{2}}\\
    && \hspace{2em} 	  	
	  + \sum_\alpha f_{(\alpha\dot{1}\dot{2})}
	                  \theta^\alpha\bar{\theta}^{\dot{1}}\bar{\theta}^{\dot{2}}
      + \sum_{\dot{\beta}} f_{(12\dot{\beta})}	
                      \theta^1\theta^2 \bar{\theta}^{\dot{\beta}}	
      + f_{(12\dot{1}\dot{2})}					
	                 \theta^1\theta^2\bar{\theta}^{\dot{1}}\bar{\theta}^{\dot{2}}\\
   &=&
     \left[
	  \begin{array} {c|cc|c}
	   f_{(0)}  &  f_{(\dot{1})} & f_{(\dot{2})}  & f_{(\dot{1}\dot{2})}
	                                                                 \\[.6ex] \hline  \rule{0ex}{1em}
	   f_{(1)}  &  f_{(1\dot{1})}& f_{(1\dot{2})}& f_{(1\dot{1}\dot{2})}
	                                                                 \\[.6ex]
	   f_{(2)}  & f_{(2\dot{1})} & f_{(2\dot{2})}& f_{(2\dot{1}\dot{2})}
	                                                                 \\[.6ex] \hline  \rule{0ex}{1em}
	   f_{(12)}& f_{(12\dot{1})} & f_{(12\dot{2})}	& f_{(12\dot{1}\dot{2})}
	  \end{array}
     \right]\,.	
  \end{eqnarray*}
  Here,
    all the coefficients/entries $f_{(\tinybullet)}$ are in $C^{\infty}(X)^{\Bbb C}$   and
	are functions of $x=(x^0, x^1,x^2,x^3)$.
  Recall that
    $e_{\beta^{\prime\prime}}
	  :=  -\frac{\partial}{\rule{0ex}{.8em}\partial \bar{\theta}^{\dot{\beta}}}
	       -\sqrt{-1}\sum_{\mu,\alpha}
		       \theta^\alpha\sigma^\mu_{\alpha\dot{\beta}}\partial_\mu$,
	 with $\partial_\mu := \frac{\partial}{\partial x^\mu}$.
  Thus,
   {\footnotesize
   \begin{eqnarray*}
    e_{1^{\prime\prime}}f     & =
	 &   -\frac{\partial}{\rule{0ex}{.8em}\partial \bar{\theta}^{\dot{1}}}f\,
	       -\, \sqrt{-1}\sum_{\mu,\alpha}
		        \theta^\alpha\sigma^\mu_{\alpha\dot{1}}\partial_\mu f   \\
   &=& -\,\left[
	       \begin{array}{c|cc|c}
		     f_{(\dot{1})}    & 0   &  f_{(\dot{1}\dot{2})} &  0 \\[.6ex]
			                                                             \hline \rule{0ex}{1em}
		    -f_{(1\dot{1})}    & 0   &  -f_{(1\dot{1}\dot{2})}&   0\\[.6ex]
            -f_{(2\dot{1})}	   & 0   &  -f_{(2\dot{1}\dot{2})}&  0\\[.6ex]	
			                                                             \hline	\rule{0ex}{1em}
            f_{(12\dot{1})} & 0   &  f_{(12\dot{1}\dot{2})}& 0
	       \end{array}
	      \right]\,      \\
   &&\hspace{-10em}
      -\,\sqrt{-1}\,
	    \left[
		 \begin{array}{c|cc|c}
		  0 & 0 & 0 & 0  \\ \hline \rule{0ex}{1.2em}
		  \sum_\mu \sigma^\mu_{1\dot{1}}\partial_\mu f_{(0)}
		     & \sum_\mu  \sigma^\mu_{1\dot{1}}\partial_\mu f_{(\dot{1})}
		     & \sum_\mu \sigma^\mu_{1\dot{1}}\partial_\mu f_{(\dot{2})}
			 & \sum_\mu \sigma^\mu_{1\dot{1}}\partial_\mu f_{(\dot{1}\dot{2})} \\[1ex]
		  \sum_\mu \sigma^\mu_{2\dot{1}}\partial_\mu f_{(0)}
		     & \sum_\mu \sigma^\mu _{2\dot{1}}\partial_\mu f_{(\dot{1})}
			 & \sum_\mu \sigma^\mu_{2\dot{1}}\partial_\mu f_{(\dot{2})}
             & \sum_\mu \sigma^\mu_{2\dot{1}}\partial_\mu f_{(\dot{1}\dot{2})}	
			                                                     \\[1ex] \hline \rule{0ex}{1.2em}
		  \sum_\mu(\sigma^\mu_{1\dot{1}}\partial_\mu f_{(2)}
                                - \sigma^\mu_{2\dot{1}}\partial_\mu f_{(1)}		  )
			 & \sum_\mu (\sigma^\mu_{1\dot{1}}\partial_\mu f_{(2\dot{1})}
			                          -\sigma^\mu_{2\dot{1}}\partial_\mu f_{(1\dot{1})})
			 & \sum_\mu (\sigma^\mu_{1\dot{1}}\partial_\mu f_{(2\dot{2})}
                                      - \sigma^\mu_{2\dot{1}}\partial_\mu f_{(1\dot{2})})
			 & \sum_\mu (\sigma^\mu_{1\dot{1}}\partial_\mu f_{(2\dot{1}\dot{2})}
			                         -\sigma^\mu_{2\dot{1}}\partial_\mu f_{(1\dot{1}\dot{2})})
		 \end{array}
		\right]   \\[1.2ex]
   \end{eqnarray*}}
   
 \noindent
 and
   {\footnotesize
   \begin{eqnarray*}
    e_{2^{\prime\prime}}f     & =
	 &   -\frac{\partial}{\rule{0ex}{.8em}\partial \bar{\theta}^{\dot{2}}}f\,
	       -\, \sqrt{-1}\sum_{\mu,\alpha}
		        \theta^\alpha\sigma^\mu_{\alpha\dot{2}}\partial_\mu f   \\
   &=& -\,\left[
	       \begin{array}{c|cc|c}
		     f_{(\dot{2})}    &   -f_{(\dot{1}\dot{2})} & 0 &  0 \\[.6ex]
			                                                             \hline \rule{0ex}{1em}
		    -f_{(1\dot{2})}    &   f_{(1\dot{1}\dot{2})}& 0 &  0\\[.6ex]
            -f_{(2\dot{2})}	   &   f_{(2\dot{1}\dot{2})}& 0 & 0\\[.6ex]	
			                                                             \hline	\rule{0ex}{1em}
            f_{(12\dot{2})} &   -f_{(12\dot{1}\dot{2})}& 0 & 0
	       \end{array}
	      \right]\,      \\
   &&\hspace{-10em}
      -\,\sqrt{-1}\,
	    \left[
		 \begin{array}{c|cc|c}
		  0 & 0 & 0 & 0  \\ \hline \rule{0ex}{1.2em}
		  \sum_\mu \sigma^\mu_{1\dot{2}}\partial_\mu f_{(0)}
		     & \sum_\mu \sigma^\mu_{1\dot{2}}\partial_\mu f_{(\dot{1})}
		     & \sum_\mu \sigma^\mu_{1\dot{2}}\partial_\mu f_{(\dot{2})}
			 & \sum_\mu \sigma^\mu_{1\dot{2}}\partial_\mu f_{(\dot{1}\dot{2})} \\[1ex]
		  \sum_\mu \sigma^\mu_{2\dot{2}}\partial_\mu f_{(0)}
		     & \sum_\mu \sigma^\mu _{2\dot{2}}\partial_\mu f_{(\dot{1})}
			 & \sum_\mu \sigma^\mu_{2\dot{2}}\partial_\mu f_{(\dot{2})}
             & \sum_\mu \sigma^\mu_{2\dot{2}}\partial_\mu f_{(\dot{1}\dot{2})}	
			                                                     \\[1ex] \hline \rule{0ex}{1.2em}
		  \sum_\mu(\sigma^\mu_{1\dot{2}}\partial_\mu f_{(2)}
                                - \sigma^\mu_{2\dot{2}}\partial_\mu f_{(1)}		  )
			 & \sum_\mu (\sigma^\mu_{1\dot{2}}\partial_\mu f_{(2\dot{1})}
			                          -\sigma^\mu_{2\dot{2}}\partial_\mu f_{(1\dot{1})})
			 & \sum_\mu (\sigma^\mu_{1\dot{2}}\partial_\mu f_{(2\dot{2})}
                                      - \sigma^\mu_{2\dot{2}}\partial_\mu f_{(1\dot{2})})
			 & \sum_\mu (\sigma^\mu_{1\dot{2}}\partial_\mu f_{(2\dot{1}\dot{2})}
			                         -\sigma^\mu_{2\dot{2}}\partial_\mu f_{(1\dot{1}\dot{2})})
		 \end{array}
		\right]\,.
   \end{eqnarray*}}
  
 \bigskip
 
 \noindent
 Setting $e_{1^{\prime\prime}}f=e_{2^{\prime\prime}}f=0$, one obtains
 $$
  \begin{array} {c}
   \begin{array}{ll}
    f_{(\dot{1})}\;=\;   f_{(\dot{2})}\;   =\; f_{(\dot{1}\dot{2})}\;=\; 0\,,
      &  f_{(\alpha\dot{\beta})}\;
           =\; \sqrt{-1}\sum_\mu\sigma^\mu_{\alpha\dot{\beta}}\partial_\mu f_{(0)}\,,\\[2ex]
    f_{(12\dot{1})}\;
       =\; 	 \sqrt{-1} \sum_\mu
	             (\sigma^\mu_{2\dot{1}}\partial_\mu f_{(1)}
                      - \sigma^\mu_{1\dot{1}}\partial_\mu f_{(2)}	)\,,
      & f_{(12\dot{2})}\;
         =\; 	 \sqrt{-1} \sum_\mu
	             (\sigma^\mu_{2\dot{2}}\partial_\mu f_{(1)}
                      - \sigma^\mu_{1\dot{2}}\partial_\mu f_{(2)}	)\,, \\[2ex]
    f_{(1\dot{1}\dot{2})}\;
      =\; \sqrt{-1}\sum_\mu\sigma^\mu_{1\dot{1}}\partial_\mu f_{(\dot{2})} \;
	  =\; 0\,,
       & f_{(2\dot{1}\dot{2})}\;
          =\; \sqrt{-1}\sum_\mu\sigma^\mu_{2\dot{1}}\partial_\mu f_{(\dot{1})} \;
	      =\; 0\,,
   \end{array}
        \\[7ex]
   \begin{array}{ccl}
      f_{(12\dot{1}\dot{2})}
       & = & 	 \sqrt{-1}\sum_\mu
	              (\sigma^\mu_{2\dot{1}}\partial_\mu f_{(1\dot{2})}
	                   -\sigma^\mu_{1\dot{1}}\partial_\mu f_{(2\dot{2})}) \\[1.2ex]
       & = &  	 \sqrt{-1}\sum_\mu
	              (\sigma^\mu_{2\dot{2}}\partial_\mu f_{(1\dot{1})}
	                   -\sigma^\mu_{1\dot{2}}\partial_\mu f_{(2\dot{1})})\;\;
  	           =\;\; (\partial_0^2-\partial_1^2-\partial_2^2-\partial_3^2)f_{(0)}
   \end{array}
  \end{array}
 $$
 and a redundant collection of equations that are automatically satisfied under the above system of equations:
 $$
  \begin{array}{ll}
   -\sqrt{-1}\sum_\mu\sigma^\mu_{1\dot{1}}\partial_\mu f_{(\dot{1})}\;=\; 0\,,
      & -\sqrt{-1}\sum_\mu\sigma^\mu_{1\dot{2}}\partial_\mu f_{(\dot{2})}\;=\; 0\,,  \\ [2ex]
  -\sqrt{-1}\sum_\mu\sigma^\mu_{2\dot{1}}\partial_\mu f_{(\dot{1})}\;=\; 0\,,	
      & -\sqrt{-1}\sum_\mu\sigma^\mu_{2\dot{2}}\partial_\mu f_{(\dot{2})}\;=\; 0\,,   \\[2ex]
	\sqrt{-1}\sum_\mu (\sigma^\mu_{2\dot{1}}\partial_\mu f_{(1\dot{1})}
	               - \sigma^\mu_{1\dot{1}}\partial_\mu f_{(2\dot{1})}) \;=\; 0\,,
	  & \sqrt{-1}\sum_\mu (\sigma^\mu_{2\dot{2}}\partial_\mu f_{(1\dot{2})}
	               - \sigma^\mu_{1\dot{2}}\partial_\mu f_{(2\dot{2})}) \;=\; 0\,,             \\[2ex]
   -\sqrt{-1}\sum_\mu\sigma^\mu_{1\dot{1}}\partial_\mu f_{(\dot{1}\dot{2})}\;=\; 0\,,
     & -\sqrt{-1}\sum_\mu\sigma^\mu_{1\dot{2}}\partial_\mu f_{(\dot{1}\dot{2})}\;=\; 0\,,
	                                                       \\[2ex]
   -\sqrt{-1}\sum_\mu\sigma^\mu_{2\dot{1}}\partial_\mu f_{(\dot{1}\dot{2})}\;=\; 0\,,
     & -\sqrt{-1}\sum_\mu\sigma^\mu_{2\dot{2}}\partial_\mu f_{(\dot{1}\dot{2})}\;=\; 0\,,
	                                                       \\[2ex]														   
    \sqrt{-1}\sum_\mu(\sigma^\mu_{2\dot{1}}\partial_\mu f_{(1\dot{1}\dot{2})}
                         -\sigma^\mu_{1\dot{1}}\partial_\mu f_{(2\dot{1}\dot{2})}	)\;=\; 0\,,
	& \sqrt{-1}\sum_\mu(\sigma^\mu_{2\dot{2}}\partial_\mu f_{(1\dot{1}\dot{2})}
                         -\sigma^\mu_{1\dot{2}}\partial_\mu f_{(2\dot{1}\dot{2})}	)\;=\; 0\,.
  \end{array}
 $$
 This proves the lemma.
 
\end{proof}

\bigskip

\noindent
(2) {\it The antichiral sector of $\widehat{X}$}

\medskip

\noindent
The above chiral sector of $\widehat{X}$  has an antichiral partner,
which follows a very similar setup and is summarized below.

Let
 $$
   x^{\prime\prime}\,\!^{\mu}\; :=\;    x- \sqrt{-1}\,\theta\sigma^{\mu}\bar{\theta}^t\,,\,  \hspace{1em}
   \theta^{\prime\prime}\,\!^{\alpha}\;  :=\; \theta\,, \hspace{1em}
   \bar{\theta}^{\prime\prime}\,\!^{\dot{\beta}}   \; :=\; \bar{\theta}^{\dot{\beta}}\,, \hspace{2em}
   \mu=0,1,2,3\,,\;\; \alpha=1,2\,,\;\; \ \dot{\beta}=\dot{1}, \dot{2},
 $$
 denoted collectively as $(x^{\prime\prime}, \theta^{\prime\prime}, \ \bar{\theta}^{\prime\prime})$,
 be another  set of coordinate-functions in $C^{\infty}(\widehat{X})$.
They satisfy
 $$
   \begin{array}{lclcl}
    e_{\mu}x^{\prime\prime}\,\!^{\nu}\;=\; \delta_{\mu\nu}\,,
	  && e_{\mu} \theta^{\prime\prime}\,\!^{\alpha}\; =\; 0\,,
	  && e_{\mu}\bar{\theta}^{\prime\prime}\,\!^{\dot{\beta}}\; =\; 0\,, \\[1.2ex]
	e_{\alpha^{\prime}}x^{\prime\prime}\,\!^{\mu}\;
	      =\; 0\,,
      && e_{\alpha^{\prime}}\theta^{\prime\prime}\,\!^{\beta}\;=\; \delta_{\alpha\beta}\,,
      && e_{\alpha^{\prime}}\bar{\theta}^{\prime\prime}\,\!^{\dot{\beta}}	  \;=\; 0\,, \\[1.2ex]
	e_{\beta^{\prime\prime}} x^{\prime\prime}\,\!^{\mu}\;
	     =\;  -\, 2\sqrt{-1}\,  \sum_{\alpha}\theta^{\alpha}\sigma^{\mu}_{\alpha\dot{\beta}}\,,
	  && e_{\beta^{\prime\prime}}\theta^{\prime\prime}\,\!^{\alpha}\;=\; 0\,,
	  && e_{\beta^{\prime\prime}}\bar{\theta}^{\prime\prime}\,\!^{\dot{\alpha}}\;
	         =\; -\,\delta_{\dot{\beta}\dot{\alpha}}
   \end{array}
 $$
and, hence, in terms of the new coordinate functions
 $(x^{\prime\prime}, \theta^{\prime\prime}, \bar{\theta}^{\prime\prime})$, one has
 $$
   e_{\mu}\;=\; \frac{\partial}{\partial x^{\prime\prime}\,\!^{\mu}}\,,\hspace{2em}
   e_{\alpha^{\prime}}\;
      =\;  \frac{\partial}{\partial \theta^{\prime\prime}\,\!^{\alpha}}\,,\hspace{2em}
   e_{\beta^{\prime\prime}}\;
      =\; -\,\frac{\partial}{\rule{0ex}{.8em}\partial \bar{\theta}^{\prime\prime}\,\!^{\dot{\beta}}}\,
	             -\,2\sqrt{-1}\,\sum_{\nu, \alpha} \theta^{\prime\prime}\,\!^{\alpha}
				                      \sigma^{\nu}_{\alpha\dot{\beta}}\,
									  \frac{\partial}{\partial x^{\prime\prime}\,\!^{\nu}}\,.
 $$
Here,
 $\partial/\partial x^{\prime\prime}\,\!^{\mu} $, $\partial/\partial\theta^{\prime\prime}\,\!^{\alpha}$,
   $\partial/\partial \bar{\theta}^{\prime\prime}\,\!^{\dot{\theta}}$,
     $\mu=0,1,2,3$, $\alpha=1,2$, $\dot{\beta}=\dot{1}, \dot{2}$,
   are the derivations of $C^{\infty}(\widehat{X})$
   associated to $(x^{\prime\prime},\theta^{\prime\prime},\bar{\theta}^{\prime\prime})$,
  characterized by
  $\frac{\partial}{\partial x^{\prime\prime}\,\!^{\mu}}
      x^{\prime\prime}\,\!^{\nu}=\delta_{\mu\nu}$,\,
  $\frac{\partial}{\partial x^{\prime\prime}\,\!^{\mu}} \theta^{\prime\prime}\,\!^{\alpha}
    = \frac{\partial}{\partial x^{\prime\prime}\,\!^{\mu}}
	              \bar{\theta}^{\prime\prime}\,\!^{\dot{\beta}} = 0$,\; \\
  $\frac{\partial}{\partial \theta^{\prime\prime}\,\!^{\alpha}} \theta^{\prime\prime}\,\!^{\beta}
      =\delta_{\alpha\beta}$,\,
  $\frac{\partial}{\partial \theta^{\prime\prime}\,\!^{\alpha}} x^{\prime\prime}\,\!^\mu
    = \frac{\partial}{\partial \theta^{\prime\prime}\,\!^{\alpha}}
	              \bar{\theta}^{\prime\prime}\,\!^{\dot{\beta}}
	= 0$,\;    and\;
  $\frac{\partial}{\rule{0ex}{.8em}\partial \bar{\theta}^{\prime\prime}\,\!^{\dot{\beta}}}
       \bar{\theta}^{\prime\prime}\,\!^{\dot{\alpha}}  = \delta_{\dot{\beta}\dot{\alpha}}$,\,
  $\frac{\partial}{\rule{0ex}{.8em}\partial \bar{\theta}^{\prime\prime}\,\!^{\dot{\beta}}}
       x^{\prime\prime}\,\!^\mu
    = \frac{\partial}{\rule{0ex}{.8em}\partial \bar{\theta}^{\prime\prime}\,\!^{\dot{\beta}}}
	      \theta^{\prime\prime}\,\!^\alpha
	= 0$.
    
\bigskip

\begin{definition} {\bf [standard antichiral coordinate-functions on $\widehat{X}$]}\\ {\rm
 $(x^{\prime\prime},\theta^{\prime\prime}, \bar{\theta}^{\prime\prime})$
              are called the {\it standard antichiral coordinate-functions} on $\widehat{X}$.
}\end{definition}

\medskip

\begin{definition} {\bf [antichiral function-ring \& antichiral structure sheaf of $\widehat{X}$]}\\
{\rm
 (1)
 $\; f\in C^{\infty}(\widehat{X})$ is called {\it antichiral}\,
       if $\;e_{1^\prime}f= e_{2^\prime}f =0\,$.
 The set of antichiral functions on $\widehat{X}$ is a ${\Bbb C}$-subalgebra of
   $C^{\infty}(\widehat{X})$,
 called the {\it antichiral function-ring } of $\widehat{X}$,
  denoted by $C^{\infty}(\widehat{X})^{\scriptsizeach}$.\\
 (2)
 Replacing
   $\widehat{X}$ by $\widehat{U}$ for $U\subset X$ open   and
   $e_{1^\prime}$, $e_{2^\prime}$
     by   $e_{1^\prime}|_{\widehat{U}}$, $e_{2^\prime}|_{\widehat{U}}$
     in the above setup,
 one obtains the {\it sheaf  of antichiral functions}
  $\widehat{\cal O}_X^{\scriptsizeach}\subset \widehat{\cal O}_X$,
  also called the {\it antichiral structure sheaf} of $\widehat{X}$.
}\end{definition}

\bigskip

Since $e_{\alpha^\prime}= \partial /\partial \theta^{\prime\prime}\,\!^{\alpha}$
  in terms of the coordinate-functions
  $(x^{\prime\prime},\theta^{\prime\prime},\bar{\theta}^{\prime\prime})$,
$f$ is antichiral if and only if
 $$
    f\; =\;  f^{\prime\prime}_{(0)}(x^{\prime\prime})\,
	           +\, \sum_{\dot{\beta}}f^{\prime\prime}_{(\dot{\beta})}(x^{\prime\prime})\,
         			   \bar{\theta}^{\prime\prime}\,\!^{\dot{\beta}}\,
			   +\, f^{\prime\prime}_{(12)}(x^{\prime\prime})\,
			           \bar{\theta}^{\prime\prime}\,\!^{\dot{1}}
					   \bar{\theta}^{\prime\prime}\,\!^{\dot{2}}
 $$
 in antichiral coordinate-functions
 $(x^{\prime\prime},\theta^{\prime\prime},\bar{\theta}^{\prime\prime})$.
 
\bigskip

\begin{lemma}
{\bf [induced $C^{\infty}$-hull structure on $C^{\infty}(\widehat{X})^{\scriptsizeach}$]}\;
 $C^{\infty}(\widehat{X})^{\scriptsizeach}$ is closed under the $C^{\infty}$-hull structure
 of $C^{\infty}(\widehat{X})$   and,
 hence, has an induced $C^{\infty}$-hull structure from that of $C^{\infty}(\widehat{X})$.
\end{lemma}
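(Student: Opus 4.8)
The plan is to mirror, \emph{mutatis mutandis}, the proof just given for $C^{\infty}(\widehat{X})^{\scriptsizech}$, interchanging the roles of $\theta$ and $\bar{\theta}$ and of the chiral and antichiral coordinate-functions; the antichiral sector has been set up precisely as the mirror of the chiral one, so no new idea is needed. First I would record the two structural facts on which the chiral argument rested, now in antichiral form. In the standard antichiral coordinate-functions $(x^{\prime\prime},\theta^{\prime\prime},\bar{\theta}^{\prime\prime})$ one has $e_{\alpha^{\prime}}=\partial/\partial\theta^{\prime\prime\alpha}$, so an antichiral $f$ is exactly one in normal form $f = f^{\prime\prime}_{(0)}(x^{\prime\prime}) + \sum_{\dot{\beta}}f^{\prime\prime}_{(\dot{\beta})}(x^{\prime\prime})\,\bar{\theta}^{\prime\prime\dot{\beta}} + f^{\prime\prime}_{(12)}(x^{\prime\prime})\,\bar{\theta}^{\prime\prime\dot{1}}\bar{\theta}^{\prime\prime\dot{2}}$; moreover the commuting coordinate-functions $x^{\prime\prime 0},\dots,x^{\prime\prime 3}$ lie in the $C^{\infty}$-hull of $C^{\infty}(\widehat{X})$, because $x^{\prime\prime\mu}=x^{\mu}-\sqrt{-1}\,\theta\sigma^{\mu}\bar{\theta}^{t}$ differs from $x^{\mu}$ by the even nilpotent element $-\sqrt{-1}\,\theta\sigma^{\mu}\bar{\theta}^{t}$ (a $\mathbb{C}$-linear combination of the products $\theta^{\alpha}\bar{\theta}^{\dot{\beta}}$), while $\theta^{\prime\prime 1},\theta^{\prime\prime 2},\bar{\theta}^{\prime\prime\dot{1}},\bar{\theta}^{\prime\prime\dot{2}}$ coincide with $\theta^{1},\theta^{2},\bar{\theta}^{\dot{1}},\bar{\theta}^{\dot{2}}$.

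Next, for $h\in C^{\infty}(\mathbb{R}^{l})$ and antichiral $f_{1},\dots,f_{l}$ that also lie in the $C^{\infty}$-hull of $C^{\infty}(\widehat{X})$ — hence even, so of the shorter normal form $f_{i}=f^{\prime\prime}_{i,(0)}(x^{\prime\prime})+n_{i}$ with $n_{i}:=f^{\prime\prime}_{i,(\dot{1}\dot{2})}(x^{\prime\prime})\,\bar{\theta}^{\prime\prime\dot{1}}\bar{\theta}^{\prime\prime\dot{2}}$ satisfying $n_{i}^{2}=0$ — I would apply Lemma~1.1.3 ($C^{\infty}$ evaluation after nilpotent perturbation), splitting the complex-valued coefficients into real and imaginary parts exactly as in the first proof of the chiral lemma, to obtain $h(f_{1},\dots,f_{l}) = h\!\left(f^{\prime\prime}_{1,(0)}(x^{\prime\prime}),\dots,f^{\prime\prime}_{l,(0)}(x^{\prime\prime})\right) + \sum_{k=1}^{l}(\partial_{k}h)\!\left(f^{\prime\prime}_{1,(0)}(x^{\prime\prime}),\dots,f^{\prime\prime}_{l,(0)}(x^{\prime\prime})\right) f^{\prime\prime}_{k,(\dot{1}\dot{2})}(x^{\prime\prime})\,\bar{\theta}^{\prime\prime\dot{1}}\bar{\theta}^{\prime\prime\dot{2}}$, every argument being a function of $x^{\prime\prime}$ alone. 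This expression is visibly of the antichiral normal form, hence lies in $C^{\infty}(\widehat{X})^{\scriptsizeach}$; therefore $C^{\infty}(\widehat{X})^{\scriptsizeach}$ is stable under every $C^{\infty}$-operation of $C^{\infty}(\widehat{X})$, and the $C^{\infty}$-hull structure of $C^{\infty}(\widehat{X})$ restricts to one on it.

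I do not expect a genuine obstacle: the whole content is that antichirality is preserved under $C^{\infty}$-operations, and this follows from Lemma~1.1.3 once the antichiral normal form is in hand. The one point asking for a little care is the same as in the chiral case — verifying that $x^{\prime\prime\mu}$ genuinely sits in the $C^{\infty}$-hull (so that Lemma~1.1.3 can be invoked with arguments drawn from that $C^{\infty}$-ring) and keeping the primed/double-primed indices and the $\theta\leftrightarrow\bar{\theta}$ swap straight throughout. Alternatively, one could simply observe that the map $(x,\theta,\bar\theta)\mapsto(x,\bar\theta,\theta)$ (composed with complex conjugation of the spinor coordinates) carries the chiral sector to the antichiral sector as super $C^\infty$-ringed spaces, and deduce the statement formally from the chiral lemma; I would present the direct computation above, however, since it is no longer and matches the style of the preceding proof.
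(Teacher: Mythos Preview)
Your proposal is correct and is exactly the approach the paper intends: the paper does not write out a separate proof for Lemma~1.4.17 at all, having prefaced the antichiral sector with the remark that it ``follows a very similar setup'' as the chiral one, so your mirror of the proof of Lemma~1.4.13 (with $x'\rightsquigarrow x''$, $\theta'^1\theta'^2\rightsquigarrow \bar{\theta}''^{\dot{1}}\bar{\theta}''^{\dot{2}}$) is precisely what is being left to the reader. The one cosmetic difference is that the paper's chiral proof is terser --- it states the two structural facts and the resulting expansion in one breath --- whereas you spell out the role of Lemma~1.1.3 and the even-nilpotent splitting more explicitly; both are fine.
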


\medskip

\begin{lemma}
{\bf [antichiral function on $\widehat{X}$ in terms of $(x, \theta, \bar{\theta})$]}\;
 In terms of the standard coordinate functions $(x,\theta, \theta^{\prime})$,
  an antichiral function $f$ on $\widehat{X }$ is determined by the components
   $f_{(0)}$, $f_{(\dot{\beta})}$, and $f_{(\dot{1}\dot{2})}$ of $f$
  via the following formula
  {\small
  \begin{eqnarray*}
   f & = &
     f_{(0)}(x)\,
      -\, \sqrt{-1}\sum_{\mu}(\partial_\mu f_{(0)})(x)
	                 \cdot \theta\sigma^\mu \bar{\theta}^t\,
      -\frac{1}{2}\sum_{\mu,\nu} (\partial_\mu\partial_\nu f_{(0)})(x)
	                 \cdot \theta\sigma^\mu\bar{\theta}^t \theta\sigma^\nu\bar{\theta}^t\\
   && \hspace{2em}
      +\, \sum_{\dot{\beta}}f_{(\dot{\beta})}(x)\cdot \bar{\theta}^{\dot{\beta}} \,
      -\, \sqrt{-1}\, \sum_{\mu, \dot{\beta}} (\partial_\mu f_{(\dot{\beta})})(x)
                          \cdot \theta\sigma^\mu\bar{\theta}^t \bar{\theta}^{\dot{\beta}}\,	
      +\, f_{(\dot{1}\dot{2})}(x)\cdot \bar{\theta}^{\dot{1}}\bar{\theta}^{\dot{2}}\\
  & = & f_{(0)}(x)\,
      +\, \sum_{\dot{\beta}}f_{(\dot{\beta})}(x)\cdot \bar{\theta}^{\dot{\beta}}\,
	  +\, f_{(\dot{1}\dot{2})}(x)\cdot \bar{\theta}^{\dot{1}}\bar{\theta}^{\dot{2}}\,
	  -\, \sum_{\alpha,\dot{\beta}}
	        \sqrt{-1}\,\mbox{\normalsize $\sum$}_\mu
			    \sigma^\mu_{\alpha\dot{\beta}}\cdot(\partial_\mu f_{(0)})(x)
             \cdot \theta^\alpha\bar{\theta}^{\dot{\beta}}\,			\\
   && \hspace{2em}			
      +\, \sum_{\alpha}
	          \sqrt{-1}\,\mbox{\normalsize $\sum$}_\mu	
                \mbox{\large $($}
				 \sigma^\mu_{\alpha\dot{2}}\cdot(\partial_\mu f_{(\dot{1})})(x)
                   - \sigma^\mu_{\alpha\dot{1}}\cdot(\partial_\mu f_{(\dot{2})})(x)
				 \mbox{\large $)$}
              \cdot \theta^\alpha\bar{\theta}^{\dot{1}}\bar{\theta}^{\dot{2}}
	 +\, (\square f_{(0)})(x)
			 \cdot \theta^1\theta^2\bar{\theta}^{\dot{1}}\bar{\theta}^{\dot{2}} \\           		
  & = & f_{(0)}(x) \,
      +\, f_{(\dot{1})}(x)\cdot\bar{\theta}^{\dot{1}}\,
	  +\, f_{(\dot{2})}(x)\cdot\bar{\theta}^{\dot{2}}\,
	  +\, f_{(\dot{1}\dot{2})}(x)\cdot\bar{\theta}^{\dot{1}}\bar{\theta}^{\dot{2}}\, \\
   && \hspace{2em}
     -\, \sqrt{-1}\, (-\,\partial_0 f_{(0)}+\partial_3f_{(0)})(x)
                      \cdot \theta^1\bar{\theta}^{\dot{1}}\,
     -\, \sqrt{-1}\, (\partial_1 f_{(0)}-\sqrt{-1}\partial_2 f_{(0)})(x)
                      \cdot \theta^1\bar{\theta}^{\dot{2}}   \\
   && \hspace{2em}					
     -\, \sqrt{-1}\, (\partial_1 f_{(0)}+\sqrt{-1}\partial_2f_{(0)})(x)
                      \cdot \theta^2\bar{\theta}^{\dot{1}} \,
     +\, \sqrt{-1}\, (\partial_0 f_{(0)}+\partial_3f_{(0)})(x)
                      \cdot \theta^2\bar{\theta}^{\dot{2}} 					     \\
   && \hspace{2em}
   +\, \sqrt{-1}\, ( \partial_1 f_{(\dot{1})}-\sqrt{-1}\partial_2f_{(\dot{1})}\,
                               + \partial_0 f_{(\dot{2})}- \partial_3 f_{(\dot{2})})(x)                             
                             \cdot \theta^1\bar{\theta}^{\dot{1}}\bar{\theta}^{\dot{2}}   \\
   && \hspace{2em}					
   -\, \sqrt{-1}\,(\partial_0 f_{(\dot{1})}+ \partial_3f_{(\dot{1})}
                               +\partial_1 f_{(\dot{2})}+\sqrt{-1}\partial_2f_{(\dot{2})})(x)
                             \cdot \theta^2\bar{\theta}^{\dot{1}}\bar{\theta}^{\dot{2}}  \\
   && \hspace{2em}							
   +\, ((\partial_0^2-\partial_1^2-\partial_2^2-\partial_3^2)f_{(0)})(x)
               \cdot\theta^1\theta^2\bar{\theta}^{\dot{1}}\bar{\theta}^{\dot{2}}\,.
  \end{eqnarray*}} 
\end{lemma}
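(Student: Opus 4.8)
The statement to prove, Lemma~1.4.37, is the antichiral analogue of Lemma~1.4.35, so the cleanest strategy is to mirror the two-method proof of that lemma. The plan is to give both the slick ``first proof'' via antichiral coordinate-functions and the computational ``second proof'' via the block-matrix bookkeeping device, since the latter is the one that generalizes to the bundle-valued situation needed in later sections.

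\medskip

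\noindent\textbf{First proof (via antichiral coordinates).}\; The plan is to recall that in the standard antichiral coordinate-functions $(x^{\prime\prime},\theta^{\prime\prime},\bar{\theta}^{\prime\prime})$ one has $e_{\alpha^\prime}=\partial/\partial\theta^{\prime\prime}\,\!^\alpha$, so an antichiral $f$ has the normal form $f=f^{\prime\prime}_{(0)}(x^{\prime\prime})+\sum_{\dot\beta}f^{\prime\prime}_{(\dot\beta)}(x^{\prime\prime})\bar\theta^{\prime\prime}\,\!^{\dot\beta}+f^{\prime\prime}_{(12)}(x^{\prime\prime})\bar\theta^{\prime\prime}\,\!^{\dot1}\bar\theta^{\prime\prime}\,\!^{\dot2}$ with coefficients in $C^\infty({\Bbb R}^4)^{\Bbb C}$. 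Then I would apply Lemma~1.1.3 ($C^\infty$ evaluation after nilpotent perturbation) to the real and imaginary parts of each coefficient, using $x^{\prime\prime}\,\!^\mu=x^\mu-\sqrt{-1}\,\theta\sigma^\mu\bar\theta^t$ and the fact that $(\sqrt{-1}\,\theta\sigma^\mu\bar\theta^t)^3=0$, to get
$$
 f^{\prime\prime}_{(\tinybullet)}(x^{\prime\prime})
 = f^{\prime\prime}_{(\tinybullet)}(x)
   - \sqrt{-1}\sum_\mu(\partial_\mu f^{\prime\prime}_{(\tinybullet)})(x)\,\theta\sigma^\mu\bar\theta^t
   - \tfrac12\sum_{\mu,\nu}(\partial_\mu\partial_\nu f^{\prime\prime}_{(\tinybullet)})(x)\,
        (\theta\sigma^\mu\bar\theta^t)(\theta\sigma^\nu\bar\theta^t)\,.
$$
Substituting this into the normal form, expanding, collecting like monomials in $(\theta,\bar\theta)$, and renaming $f^{\prime\prime}_{(\tinybullet)}(x)$ as $f_{(\tinybullet)}(x)$ yields the first two displayed lines of the claimed formula; the explicit Pauli-matrix entries (using $\sigma^0=-\mathrm{diag}(1,1)$, $\sigma^3=\mathrm{diag}(1,-1)$, etc.) then give the final fully-expanded form. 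The only subtlety here is the sign: since $x^{\prime\prime}$ uses $-\sqrt{-1}\,\theta\sigma^\mu\bar\theta^t$ rather than $+\sqrt{-1}\,\theta\sigma^\mu\bar\theta^t$, every single-derivative term picks up a sign relative to the chiral case, while the double-derivative term does not.

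\medskip

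\noindent\textbf{Second proof (via the block-matrix computation).}\; The plan is to write a general $f\in C^\infty(\widehat X)$ in the $4\times4$ block-matrix bookkeeping form of the second proof of Lemma~1.4.35, then compute $e_{1^\prime}f$ and $e_{2^\prime}f$ using $e_{\alpha^\prime}=\partial/\partial\theta^\alpha+\sqrt{-1}\sum_{\mu,\dot\beta}\sigma^\mu_{\alpha\dot\beta}\bar\theta^{\dot\beta}\partial_\mu$, again in block-matrix form. Setting $e_{1^\prime}f=e_{2^\prime}f=0$ produces the system of constraints: $f_{(1)}=f_{(2)}=f_{(12)}=0$, then $f_{(\alpha\dot\beta)}=-\sqrt{-1}\sum_\mu\sigma^\mu_{\alpha\dot\beta}\partial_\mu f_{(0)}$, then expressions for $f_{(\alpha\dot1\dot2)}$ in terms of $\partial_\mu f_{(\dot\beta)}$, then $f_{(12\dot\beta)}=0$, and finally $f_{(12\dot1\dot2)}=(\partial_0^2-\partial_1^2-\partial_2^2-\partial_3^2)f_{(0)}$, together with a redundant block of equations automatically satisfied; reading these off and translating indices gives the formula. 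This is essentially a transcription of the Lemma~1.4.35 argument with the roles of $\theta^\alpha$ and $\bar\theta^{\dot\beta}$ (equivalently, of $S^\prime$ and $S^{\prime\prime}$) interchanged and $\sqrt{-1}\mapsto-\sqrt{-1}$ in the relevant places.

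\medskip

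The main obstacle is not conceptual but bookkeeping: keeping track of the sign flips that distinguish the antichiral case from the chiral case. Concretely, the hazard is that the antichiral generators $e_{1^\prime},e_{2^\prime}$ carry $+\sqrt{-1}$ while the chiral generators $e_{1^{\prime\prime}},e_{2^{\prime\prime}}$ carry $-\sqrt{-1}$ in their $\theta\partial_x$ (resp.\ $\bar\theta\partial_x$) terms, and $x^{\prime\prime}$ differs from $x^\prime$ by the sign of $\sqrt{-1}\,\theta\sigma^\mu\bar\theta^t$; so one must be careful that the $\theta^\alpha\bar\theta^{\dot\beta}$-coefficients acquire a minus sign relative to Lemma~1.4.35 (hence $f_{(\alpha\dot\beta)}=-\sqrt{-1}\sum_\mu\sigma^\mu_{\alpha\dot\beta}\partial_\mu f_{(0)}$), the $\theta^\alpha\bar\theta^{\dot1}\bar\theta^{\dot2}$-coefficients get the structure $\sigma^\mu_{\alpha\dot2}\partial_\mu f_{(\dot1)}-\sigma^\mu_{\alpha\dot1}\partial_\mu f_{(\dot2)}$, while the top $\theta^1\theta^2\bar\theta^{\dot1}\bar\theta^{\dot2}$-coefficient remains the d'Alembertian $\square f_{(0)}=(\partial_0^2-\partial_1^2-\partial_2^2-\partial_3^2)f_{(0)}$ with no sign change (two derivatives). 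The self-consistency of the two proofs, plus the symmetry of the block-matrix pattern, serves as the error-check. Hence the lemma follows.
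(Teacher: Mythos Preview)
Your proposal is correct and is exactly the approach the paper implies: the paper does not spell out a proof for this lemma at all, remarking beforehand that the antichiral sector ``follows a very similar setup'' to the chiral one, so your two-method mirroring of the proof of the chiral lemma (with the sign flip from $x^{\prime\prime\mu}=x^\mu-\sqrt{-1}\,\theta\sigma^\mu\bar\theta^t$ tracked through) is precisely what is intended.
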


\bigskip

\section{$d=4$, $N=1$ Azumaya/matrix superspaces $\widehat{X}^{\!A\!z}$ with a fundamental module
                    with a connection}

In Sec.\ 1.3, we set the convention that a derivation $\xi\in \Der_{\Bbb C}(\widehat{X}) $
 apply to $f\in C^{\infty}(\widehat{X})$ from the left (of $f$).
For this reason, it is natural to take as convention that the covariant derivation
 $\widehat{\nabla}_{\xi}$ defined by a connection $\widehat{\nabla}$
 on a vector bundle $\widehat{E}$ on $\widehat{X}$
  apply to a section $\widehat{s}$ of $\widehat{E}$ also from the left.
This turns out fine if one only consider connections that are purely even.
While mathematically a theory of purely even, left connections gives a sound theory,
 the physicist's construction of a connection from a vector multiplet gives a connection that include also an odd part.
This suggests that one needs to consider a general left connections,
 which include not only the even part, such as $d$, but also the odd part.
Once making such a generalization, one finds new subtleties (Sec.\ 2.1).
The resolution of these subtleties leads us to the notion of {\it hybrid connections} (Sec.\ 2.2),
 which best fit physicists' construction of a connection from a vector superfield (Sec.\ 2.3).
Using this,
 together with the notion of {\it Azumaya/matrix supermanfolds with a fundamental module}
 developed in [L-Y4] (D(11.2)),
 one obtains the $d=4$, $N=1$ {\it Azumaya/matrix superspace with a fundamental module with a connection}
 (Sec.\ 2.4).
This describes the world-volume of fermionic D3-branes with $N=1$ world-volume supersymmetry.

\bigskip
   
\subsection{Lessons
   from left connections on the Chan-Paton bundle $\widehat{E}$ over $\widehat{X}$}
The notion of (left) derivations on $\widehat{X}$ can be generalized naturally to
  the notion of `{\it left connections}' on a bundle $\widehat{E}$ over $\widehat{X}$.
This is a good mathematical theory by itself and they behave well under parity-preserving gauge transformations.
But as we will learn the notion does not fit well under general gauge transformations that mix
 the even component of $\widehat{E}$ and the odd component of $\widehat{E}$.
The lesson will guide us how to ``correct"  this in Sec.~2.2.

\bigskip

\begin{flushleft}
{\bf The old setup}
\end{flushleft}
Recall first the basic setup, notations, and a fact: ([L-Y4] (D(11.2)) and [L-Y9] (D(11.4.1)))
 \begin{itemize}
  \item[\LARGE $\cdot$]
   Denote by $\widehat{\Bbb C}$
     the algebra ${\Bbb C}[\theta^1,\theta^2,\theta^{\dot{1}},\theta^{\dot{2}}]^{\anticommuting}$
     of complex Grassmann numbers.
  
  \item[\LARGE $\cdot$]
   Let $\widehat{\frak m}
             :=(\theta^1, \theta^2, \bar{\theta}^{\dot{1}}, \bar{\theta}^{\dot{2}})$
    be  the ideal sheaf of the $4$-dimensional Minkowski space-time $X$
    in the $d=4$, $N=1$ superspace $\widehat{X}$ as a super $C^{\infty}$-subscheme.
   
  \item[\LARGE $\cdot$]
   Let $E$ be a complex vector bundle of rank $r$ on $X$.
   The corresponding sheaf of smooth sections is denoted by ${\cal E}$.
   Denote by $\End_{\Bbb C}(E)$ (resp.\ $\Aut_{\Bbb C}(E)$)
     the bundle of endomorphisms (resp.\ the bundle of automorphisms) of $E$.
   The corresponding sheaves are denoted by
     $\Endsheaf_{{\cal O}_X^{\,\Bbb C}}({\cal E})$   and
	 $\Autsheaf_{{\cal O}_X^{\,\Bbb C}}({\cal E})$
	 respectively.
  In this old setup, by default,
     $\End_{\Bbb C}(E)$ and $\Endsheaf_{{\cal O}_X^{\,\Bbb C}}({\cal E})$
     act respectively on $E$ and ${\cal E}$ from the left.
          
  \item[\LARGE $\cdot$]	
   Let $\widehat{E}$ be the complex vector bundle of rank $r$ on $\widehat{X}$ that extends $E$;
     each fiber of $\widehat{E}$ over $X$ is a free bi-$\widehat{\Bbb C}$-module of rank $r$.
   The corresponding sheaf of smooth sections is a bi-$\widehat{\cal O}_X$-modules,
     denoted by $\widehat{\cal E}$.
   $\widehat{\cal E}= {\cal E}\otimes_{{\cal O}_X^{\,\Bbb C}}\widehat{\cal O}_X$
      is ${\Bbb Z}/2$-graded;
     and the left and the right locally free $\widehat{\cal O}_X$-module structure of $\widehat{\cal E}$
	 are related by $sa = (-1)^{p(s)p(a)}as$
	 for $a\in \widehat{\cal O}_X,\, s\in \widehat{\cal E} $ parity homogeneous.
  
  \item[\LARGE $\cdot$]
   Let $\End_{\Right\widehat{\Bbb C}}(\widehat{E})$
     be the bundle of endomorphisms of $\widehat{E}$ as a right $\widehat{\Bbb C}$-module over $X$.
   The corresponding sheaf of endomorphisms of $\widehat{\cal E}$ is denoted by
     $\Endsheaf_{\Right\widehat{\cal O}_X}(\widehat{\cal E})$.   		
   By default,
     $\End_{\Right\widehat{\Bbb C}}(\widehat{E})$ and
	    $\Endsheaf_{\widehat{\cal O}_X}(\widehat{\cal E})$
     act respectively on $\widehat{E}$ and $\widehat{\cal E}$ from the left.
 
  \item[\LARGE $\cdot$]
   Let $\Aut_{\Right\widehat{\Bbb C}}(\widehat{E})
             \subset \End_{\Right\widehat{\Bbb C}}(\widehat{E})$
     be the bundle of automorphisms of $\widehat{E}$ as a right $\widehat{\Bbb C}$-module over $X$.
   The corresponding sheaf of automorphisms of $\widehat{\cal E}$ is denoted by
     $\Autsheaf_{\Right\widehat{\cal O}_X}(\widehat{\cal E})$.   		
   Recall [L-Y9: Lemma 2.2.1.1 \& Corollary 2.2.1.2] (D(11.4.1)) that
      $$
         \Autsheaf_{\Right\widehat{\cal O}_X}(\widehat{\cal E})\;
          \simeq\; \Autsheaf_{{\cal O}_X^{\,\Bbb C}}({\cal E})\,
		                    \oplus\,  \Endsheaf_{{\cal O}_X^{\,\Bbb C}}({\cal E})		
							                                 \otimes_{{\cal O}_X^{\,\Bbb C}} \widehat{\frak m}\,.
      $$
  The set $C^{\infty}(\Aut_{\Right\widehat{\Bbb C}}(\widehat{E}))$	
     of smooth sections of $\Aut_{\Right\widehat{\Bbb C}}(\widehat{E})$
    forms the group of {\it gauge transformations} of $\widehat{E}$.
 \end{itemize}

\bigskip

\begin{flushleft}
{\bf General aspects of left connections on $\widehat{\cal E}$ and their curvature tensor}
\end{flushleft}
\begin{definition} {\bf [even part vs.\ odd part of ${\Bbb C}$-bilinear pairing]}\; {\rm
 Let $V_1$, $V_2$, and $W$ be ${\Bbb Z}/2$-graded ${\Bbb C}$-vector spaces and
   $$
    \begin{array}{ccccc}
	 A & : & V_1 \times V_2
	     & \longrightarrow   & W  \\[1.2ex]
    && (v_1, v_2)              &  \longmapsto    &   v_1Av_2		 	
	\end{array}
  $$
  be a ${\Bbb C}$-bilinear pairing that is applied to $V_1$ from the right
   and to $V_2$ from the left.
 The {\it even part} $A^{(\even)}$ of  $A$
    is defined to be the ${\Bbb C}$-bilinear pairing $V_1\times V_2\rightarrow W$ with
   $$
     v_1 A^{(\even)}v_2\; :=\;
      \left\{
	    \begin{array}{ll}
		(v_1 A v_2)^{(\even)}
		      & \hspace{2em}\mbox{if $v_1$ and $v_2$ are both even or both odd}\,,\\[1.2ex]
		(v_1 A v_2)^{(\odd)}
		      & \hspace{2em}\mbox{if either ($v_1$ even, $v_2$ odd) or  ($v_1$ odd, $v_2$ even)}
        \end{array}		
	  \right.	
   $$
  while the {\it odd part} $A^{(\odd)}$ of $A$
    is defined to be the ${\Bbb C}$-bilinear pairing $V_1\times V_2\rightarrow W$ with
   $$
     v_1 A^{(\odd)}v_2\; :=\;
      \left\{
	    \begin{array}{ll}
		(v_1A v_2)^{(\odd)}
		      & \hspace{2em}\mbox{if $v_1$ and $v_2$ are both even or both odd}\,,\\[1.2ex]
		(v_1 A v_2)^{(\even)}
		      & \hspace{2em}\mbox{if either ($v_1$ even, $v_2$ odd) or  ($v_1$ odd, $v_2$ even)}
        \end{array}		
	  \right.	
   $$
 Here,
    $(v_1Av_2)^{(\even)}$\, (resp.\ $(v_1Av_2)^{(\odd)}$) is the even component
  (resp.\ odd component) of $v_1Av_2 \in W$.
 By construction, $A=A^{(\even)}+ A^{(\odd)}$.	
}\end{definition}

\medskip

\begin{definition} {\bf [left connection on $\widehat{\cal E}$]}\; {\rm
 A {\it left connection} $\widehat{\nabla}$ on $\widehat{\cal E}$
   is a ${\Bbb C}$-bilinear pairing\footnote{The
                                                                       notion of {\it right connection} can be defined similarly
										                               with the right ${\Bbb Z}/2$-graded Leibniz rule.
										                              This is a generalization of the notion of right derivation.
                                                                                        }   
  $$
    \begin{array}{ccccc}
	 \widehat{\nabla} & : & {\cal T}_{\widehat{X}} \times \widehat{\cal E}
	     & \longrightarrow   & \widehat{\cal E}  \\[1.2ex]
    && (\xi, s)              &  \longmapsto    &  \widehat{\nabla}_{\xi}s		 	
	\end{array}
  $$
  such that
	\begin{itemize}
	 \item[(1)]  [{\it $\widehat{\cal O}_X$-linearity
	                                   in the ${\cal T}_{\widehat{X}}$-argument}]\\[.6ex]	
	  $\mbox{\hspace{1em}}$
	  $\widehat{\nabla}_{f_1\xi_1 + f_2\xi_2}s\;
	     =\; f_1 \widehat{\nabla}_{\xi_1}s + f_2 \widehat{\nabla}_{\xi_2}s$, \hspace{1em}
      for $f_1, f_2 \in \widehat{\cal O}_X$, $\xi_1, \xi_2 \in {\cal T}_{\widehat{X}}$,  and
	       $s\in \widehat{\cal E}$;

     \item[(2)]  [{\it ${\Bbb C}$-linearity in the $\widehat{\cal E}$-argument}]\\[.6ex]
	 $\mbox{\hspace{1em}}$
	 $\widehat{\nabla}_\xi(c_1s_1+c_2s_2)\;
	     =\;  c_1 \widehat{\nabla}_\xi s_1 + c_2 \widehat{\nabla}_\xi s_2$, \hspace{1em}
	  for $c_1, c_2\in {\Bbb C}$, $\xi\in {\cal T}_{\widehat{X}}$, and
	       $s_1, s_2\in \widehat{\cal E}$;
	
	 \item[(3)] [{\it generalized ${\Bbb Z}/2$-graded Leibniz rule in the $\widehat{\cal E}$-argument}]\\[.6ex]
	 $\mbox{\hspace{1em}}$
	 $\widehat{\nabla}_\xi(fs)\;
	   =\; (\xi f)s
	           + (-1)^{p(f)p(\xi)}\,f\cdot\,\!^{\varsigma_{\!f}}\!(\widehat{\nabla})_\xi s$,\\[.6ex]
      for $f\in \widehat{\cal O}_X$, $\xi\in {\cal T}_{\widehat{X}}$ parity homogeneous
	       and $s\in\widehat{\cal E}$.
     Here,
	   $^{\varsigma_{\!f}}\!(\widehat{\nabla})$
       is the parity-conjugation of $\widehat{\nabla}$ induced by $f$;
       i.e., 	
	   $^{\varsigma_{\!f}}\!(\widehat{\nabla})
	     = \widehat{\nabla}$,  if $f$ is even, or
		    $\,\!^{\varsigma}\widehat{\nabla}
			        := \mbox{(even part of $\widehat{\nabla}$)}\,
					      -\, \mbox{(odd part of $\widehat{\nabla}$)}$  if $f$ is odd;
     (cf.\ Definition~1.3.1).
	\end{itemize}
  As an operation on the pairs $(\xi, s)$,
   a connection $\nabla$ on $\widehat{\cal E}$ is applied to $\xi$ from the right
   while applied to $s$ from the left;\footnote{In the ${\Bbb Z}/2$-graded world,
                                                                     it is instructive to denote $\widehat{\nabla}_{\!\xi}s$ as
																	  $\xi \widehat{\nabla} s$ or $_{\xi}\!\widehat{\nabla} s$
																	 (though we do not adopt it as a regularly used notation in this work).
																	In particular,
  																	  from $_{f\xi}\!\widehat{\nabla} s$
																	  to $f (\,\!_{\xi}\!\widehat{\nabla} s)$,
                                                                     $f$ and $\widehat{\nabla}$ do {\it not} pass each other.
                                                                      }  
  cf.\ Lemma~1.3.7 and Remark~1.3.8.
}\end{definition}

\medskip

\begin{explanation} {\bf [generalized ${\Bbb Z}/2$-graded Leibniz rule in Definition~2.1.2]}\; {\rm
 For better illumination, we'll denote $\widehat{\nabla}_{\!\xi}s$ in this Explanation as
   $\,_{\xi}\!\widehat{\nabla}s$ (cf.\ Footnote~22).
 Notice that, unlike the exterior differential operator $d$,
  we do not assume that a connection $\widehat{\nabla}$,
   as a ${\Bbb C}$-bilinear pairing of the two sheaves of
     ${\Bbb C}$-vector spaces ${\cal T}_{\widehat{X}}$ and
     $\widehat{\cal E}$, is even.
 Thus, when imposing a ${\Bbb Z}/2$-graded Leibniz rule for $\,_{\xi}\!\widehat{\nabla}(fs)$,
   for $\xi\in {\cal T}_{\widehat{X}}$, $f\in \widehat{\cal O}_X$, and $s\in \widehat{\cal E}$,    
   one has to take into account not only how $f$ passes $\xi$ but also how $f$ passes $\widehat{\nabla}$
   so that one can reach a term of the form $f\cdot (\,\!_{\xi}(\cdots))s$.
 Applying the general rule for passing ${\Bbb Z}/2$-graded objects not necessarily of the same kind,
   one then has
   $$
      _{\xi}\!\widehat{\nabla}(fs)\;
      \rightsquigarrow\;    \,\!_{\xi} (f\cdot \,\!^{\varsigma_{\!f}}\!(\widehat{\nabla})s)\;
	  \rightsquigarrow\;   (-1)^{p(f)p(\xi)}\,f
	                                        \cdot\,\!_{\xi}\,\!^{\varsigma_{\!f}}\!(\widehat{\nabla})s\;
	   =:\; (-1)^{p(f)p(\xi)}\,f\cdot\,\!^{\varsigma_{\!f}}\!(\widehat{\nabla})_\xi s\,.
   $$
 This explains the generalized ${\Bbb Z}/2$-graded Leibniz rule
   for a connection $\widehat{\nabla}$ on $\widehat{\cal E}$.

 When $\widehat{\nabla}$ is even,
   $\,\!^{\varsigma_{\!f}}(\widehat{\nabla})=\widehat{\nabla}$
   for all $f\in \widehat{\cal O}_X$.
 The generalized ${\Bbb Z}/2$-graded Leibniz rule then resumes to the ordinary
  ${\Bbb Z}/2$-graded Leibniz rule:
  $\;\widehat{\nabla}_\xi(fs)\;
	   =\; (\xi f)s
	           + (-1)^{p(f)p(\xi)}\,f\cdot \widehat{\nabla}_{\!\xi} s$\,,
   for $f\in \widehat{\cal O}_X$, $\xi\in {\cal T}_{\widehat{X}}$ parity homogeneous
	       and $s\in\widehat{\cal E}$.
}\end{explanation}

\medskip

\begin{lemma} {\bf [even vs.\ odd part of connection $\widehat{\nabla}$]}\;
  Given a connection $\widehat{\nabla}$ on $\widehat{\cal E}$,
   the even part $\widehat{\nabla}^{(\even)}$  of $\widehat{\nabla}$
    is another connection on $\widehat{\cal E}$
  while the odd part of $\,\widehat{\nabla}$ is an odd
    $\,\Endsheaf_{\widehat{\cal O}_X^{\,\Bbb C}}(\widehat{\cal E})$-valued
  $1$-form $A^{(\odd)}$ on $\widehat{X}$.
  In notation, $\,\widehat{\nabla}= \widehat{\nabla}^{(\even)}+ A^{(\odd)}$.
\end{lemma}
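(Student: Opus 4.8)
The plan is to deduce the statement entirely from Definition~2.1.1 (even/odd part of a ${\Bbb C}$-bilinear pairing), Definition~2.1.2 (the three axioms of a left connection), and the bookkeeping of parity-conjugation already set up in Sec.~1.3 and Sec.~2.1. By Definition~2.1.1 applied to the pairing $\widehat{\nabla}:{\cal T}_{\widehat{X}}\times\widehat{\cal E}\to\widehat{\cal E}$ we have a canonical splitting $\widehat{\nabla}=\widehat{\nabla}^{(\even)}+\widehat{\nabla}^{(\odd)}$; concretely, for $\xi$ and $s$ parity-homogeneous, $\widehat{\nabla}^{(\even)}_{\xi}s$ (resp.\ $\widehat{\nabla}^{(\odd)}_{\xi}s$) is the component of $\widehat{\nabla}_{\xi}s$ of parity $p(\xi)+p(s)$ (resp.\ $p(\xi)+p(s)+1$), extended ${\Bbb C}$-bilinearly. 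Throughout I will reduce to parity-homogeneous $\xi,f,s$ via ${\Bbb C}$-bilinearity and the decompositions $f=f_{\even}+f_{\odd}$, etc.

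First I would check that $\widehat{\nabla}^{(\even)}$ is a left connection, i.e.\ verify Definition~2.1.2(1)--(3). Axiom~(2), ${\Bbb C}$-linearity in the $\widehat{\cal E}$-argument, is immediate since parity projection is ${\Bbb C}$-linear and $\widehat{\nabla}$ is ${\Bbb C}$-linear in $s$. For axiom~(1), $\widehat{\cal O}_X$-linearity in the ${\cal T}_{\widehat{X}}$-argument, I would use that multiplication by a homogeneous $f$ on $\widehat{\cal E}$ shifts parity by $p(f)$, so that the parity-$(p(f)+p(\xi)+p(s))$ component of $\widehat{\nabla}_{f\xi}s=f\,\widehat{\nabla}_{\xi}s$ is $f$ times the parity-$(p(\xi)+p(s))$ component of $\widehat{\nabla}_{\xi}s$; that is, $\widehat{\nabla}^{(\even)}_{f\xi}s=f\,\widehat{\nabla}^{(\even)}_{\xi}s$, and additivity handles $f_1\xi_1+f_2\xi_2$. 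The heart of the matter is axiom~(3): starting from the generalized ${\Bbb Z}/2$-graded Leibniz rule for $\widehat{\nabla}$, $\widehat{\nabla}_{\xi}(fs)=(\xi f)s+(-1)^{p(f)p(\xi)}f\cdot{}^{\varsigma_{\!f}}(\widehat{\nabla})_{\xi}s$, I take the parity-$(p(\xi)+p(f)+p(s))$ component of both sides. The term $(\xi f)s$ sits entirely in that parity slot, and since ${}^{\varsigma}\widehat{\nabla}=\widehat{\nabla}^{(\even)}-\widehat{\nabla}^{(\odd)}$ the even part of ${}^{\varsigma_{\!f}}(\widehat{\nabla})$ equals $\widehat{\nabla}^{(\even)}$ regardless of $p(f)$; hence the second term contributes $(-1)^{p(f)p(\xi)}f\cdot\widehat{\nabla}^{(\even)}_{\xi}s$. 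This gives $\widehat{\nabla}^{(\even)}_{\xi}(fs)=(\xi f)s+(-1)^{p(f)p(\xi)}f\cdot\widehat{\nabla}^{(\even)}_{\xi}s$, which, because $\widehat{\nabla}^{(\even)}$ is even and so ${}^{\varsigma_{\!f}}(\widehat{\nabla}^{(\even)})=\widehat{\nabla}^{(\even)}$ (cf.\ Explanation~2.1.3), is exactly axiom~(3) for $\widehat{\nabla}^{(\even)}$.

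Next I would show that $A^{(\odd)}:=\widehat{\nabla}^{(\odd)}=\widehat{\nabla}-\widehat{\nabla}^{(\even)}$ is an odd $\Endsheaf_{\widehat{\cal O}_X^{\,\Bbb C}}(\widehat{\cal E})$-valued $1$-form. Its $\widehat{\cal O}_X$-linearity in the ${\cal T}_{\widehat{X}}$-argument follows exactly as for $\widehat{\nabla}^{(\even)}$. For the graded $\widehat{\cal O}_X$-linearity in the $\widehat{\cal E}$-argument I would subtract the Leibniz identities for $\widehat{\nabla}$ and for $\widehat{\nabla}^{(\even)}$: the $(\xi f)s$ terms cancel and ${}^{\varsigma_{\!f}}(\widehat{\nabla})-\widehat{\nabla}^{(\even)}=(-1)^{p(f)}\widehat{\nabla}^{(\odd)}$, giving $\widehat{\nabla}^{(\odd)}_{\xi}(fs)=(-1)^{p(f)(p(\xi)+1)}\,f\cdot\widehat{\nabla}^{(\odd)}_{\xi}s$. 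Together with the ${\cal T}_{\widehat{X}}$-linearity, this exhibits $\widehat{\nabla}^{(\odd)}$ as a global section of ${\cal T}^{\ast}_{\widehat{X}}\otimes_{\widehat{\cal O}_X}\Endsheaf_{\widehat{\cal O}_X}(\widehat{\cal E})$ whose $\widehat{\cal E}$-tensoriality carries exactly the parity-conjugation sign dictated by Convention~2.1.5 for an \emph{odd} such form; oddness itself is built into Definition~2.1.1. Finally, $\widehat{\nabla}=\widehat{\nabla}^{(\even)}+A^{(\odd)}$ is the asserted decomposition.

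I expect the only real obstacle to be purely bookkeeping: keeping straight how ${}^{\varsigma_{\!f}}$ distributes over the even/odd parts of $\widehat{\nabla}$, and matching the sign $(-1)^{p(f)(p(\xi)+1)}$ that emerges in the $\widehat{\cal E}$-linearity of $\widehat{\nabla}^{(\odd)}$ against the sign convention for $\Endsheaf$-valued $1$-forms fixed in Convention~2.1.5 --- no analytic or geometric input beyond the definitions already in place is needed.
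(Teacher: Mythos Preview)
Your proposal is correct and follows essentially the same route as the paper: both verify axioms (1)--(3) for $\widehat{\nabla}^{(\even)}$ by projecting the generalized Leibniz rule onto the parity-$(p(\xi)+p(f)+p(s))$ component (using that the even part of ${}^{\varsigma_{\!f}}(\widehat{\nabla})$ is $\widehat{\nabla}^{(\even)}$ regardless of $p(f)$), and then subtract to obtain the tensoriality $A^{(\odd)}(\xi)(fs)=(-1)^{p(f)(p(\xi)+1)}f\cdot A^{(\odd)}(\xi)(s)$, interpreting $p(\xi)+1$ as $p(A^{(\odd)}(\xi))$ to identify $A^{(\odd)}$ as an odd $\Endsheaf$-valued $1$-form.
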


\medskip

\begin{proof}
 That $\widehat{\nabla}^{(\even)}$
   is $\widehat{\cal O}_X$-linear in the ${\cal T}_{\widehat{X}}$-argument
   and ${\Bbb C}$-linear in the $\widehat{\cal E}$ is immediate.
 For example, let $f\in \widehat{\cal O}_X$ odd and $p(f\xi)=p(s)$; then
  $f\xi$ and $s$ have the opposite parities and, hence,
  $$
    \widehat{\nabla}_{\!f\xi}^{(\even)}s\;
	 :=\;  (\widehat{\nabla}_{\!f\xi}s)^{(\odd)}\;
	  =\;  (f\cdot \widehat{\nabla}_{\,\xi}s)^{(\odd)}\;
	  =\; f\cdot (\widehat{\nabla}_{\!\xi}s)^{(\even)}\;
	  =:\; f\cdot \widehat{\nabla}^{(\even)}_{\xi}s\,.	
  $$
 That $\widehat{\nabla}^{(\even)}$ satisfies the ordinary
  ${\Bbb Z}/2$-graded Leibniz rule (cf.\ Explanation~2.1.3)
  follows from:													
  \begin{eqnarray*}
   \lefteqn{
     \widehat{\nabla}^{(\even)}_\xi (fs)
	   =\; (\widehat{\nabla}_{\!\xi}(fs))^{(p(\xi)+ p(fs))}   }\\
    && =\; \left(
	          (\xi f)s
	           + (-1)^{p(f)p(\xi)}\,f\cdot\,\!^{\varsigma_{\!f}}\!(\widehat{\nabla})_\xi s
			    \right)^{(p(\xi)+ p(f) + p(s))} \\[.8ex]
    && =\; (\xi f)s\, +\, (-1)^{p(f)p(\xi)}\,	
	                \left( f\cdot\,\!^{\varsigma_{\!f}}\!(\widehat{\nabla})_\xi s
			         \right)^{(p(\xi)+ p(f) + p(s))}                                      \\[.8ex]
	&& =\; (\xi f)s\,+\, (-1)^{p(f)p(\xi)}\, f\cdot \widehat{\nabla}^{(\even)}_{\xi}s\,.
  \end{eqnarray*}
 Here,
   $(\,\cdots\,)^{(k)}:= (\,\cdots\,)^{(\even)}$, if $k$ is even,
   or $(\,\cdots\,)^{(\odd)}$ if $k$ is odd.

 Finally,
  \begin{eqnarray*}
   \lefteqn{
    (A^{(\odd)}(\xi))(fs)\;
	  =\; \widehat{\nabla}_{\!\xi}(fs)- \widehat{\nabla}_{\xi}^{(\even)}(fs)  }\\[.6ex]
	 && =\;\; (-1)^{p(f)p(\xi)}\,f
	                \cdot \left(\,\!^{\varsigma_{\!f}}\!(\widehat{\nabla})_{\xi}s
                             - \widehat{\nabla}^{(\even)}_\xi s \right)\;\;
	  =\;\;   (-1)^{p(f)(p(\xi)+1) }\, f\cdot (A^{(\odd)}(\xi))(s)  \,,
  \end{eqnarray*}
 for $\xi\in{\cal T}_{\widehat{X}}$ and $f\in \widehat{\cal O}_X$ parity homogeneous
      and  $s\in\widehat{\cal E}$.
 Note that since $A^{(\odd)}$ is odd, $p(\xi)+1$ is nothing but $p(A^{(\odd)}(\xi))$.
 This says that
   $A^{(\odd)}(\xi)\in \Endsheaf_{\widehat{\cal O}_X}(\widehat{\cal E}) $,
   acting on $\widehat{\cal E}$ from the left.
  
 This completes the proof.
 
\end{proof}

\medskip

\begin{convention}
$[ \Endsheaf_{\widehat{\cal O}_X}\!(\widehat{\cal E})$-valued
     $1$-form$]$\; {\rm
 Normally we take an $\Endsheaf_{\widehat{\cal O}_X}\!(\widehat{\cal E})$-valued
   $1$-form $A$ as a section in
   $\Endsheaf_{\widehat{\cal O}_X}\!(\widehat{\cal E})
       \otimes_{\widehat{\cal O}_X}\!
	   {\cal T}^{\ast}_{\widehat{X}}$.\footnote{This is based on the convention that we write
	                                                                   a differential form on $\widehat{X}$ as a combination of\\
																	    $f_0\, df_1\wedge \,\cdots\,\wedge df_l$,
																		  where $f_0, f_1\,, \cdots\,, f_l \in C^{\infty}(\widehat{X})$.
	                                                                   One has the other choice:
																       ${\cal T}^{\ast}_{\widehat{X}}
																           \otimes_{\widehat{\cal O}_X}
																	       \Endsheaf_{\widehat{\cal O}_X}
																	        (\widehat{\cal E})$.																			
																	  Different choices of conventions may influence
																	     the $(-1)^{\mbox{\tiny $\bullet$}}$-factor
      																     in an explicit computation.
	                                                                   }  
 For example, in terms of the supersymmetrically invariant coframe
  $(e^{\mu}, e^{\alpha^{\prime}}, e^{\beta^{\prime\prime}})
      _{\mu, \alpha^{\prime},\beta^{\prime\prime}}$,
  $A= \sum_{\mu=0}^3 A_\mu e^\mu
          + \sum_{\alpha=1^{\prime}, 2^{\prime}} A_{\alpha^{\prime}} e^{\alpha^{\prime}}
		  + \sum_{\beta^{\prime\prime}=1^{\prime\prime}, 2^{\prime\prime}}
		            A_{\beta^{\prime\prime}} e^{\beta^{\prime\prime}}$,
 with $A_{\mu},  A_{\alpha^{\prime}}, A_{\beta^{\prime\prime}}
           \in \Endsheaf_{\widehat{\cal O}_X}\!(\widehat{\cal E}) $.
 However, one may also take an $\Endsheaf_{\widehat{\cal O}_X}\!(\widehat{\cal E})$-valued
   $1$-form $A$ as a section in
   ${\cal T}^{\ast}_{\widehat{X}}\otimes_{\widehat{\cal O}_X}\!
      \Endsheaf_{\widehat{\cal O}_X}\!(\widehat{\cal E})$
	 and write $A$ as $\sum_I e^I A^\prime_I$ with matching ${\Bbb Z}/2$-passing sign rule.
 Cf.~Footnote~23.	
}\end{convention}

\medskip

\begin{lemma-definition}
{\bf [even left connection associated to even trivialization of $\widehat{\cal E}$]}\;
 A trivialization of $\widehat{\cal E}$ on $\widehat{X}$ by a basis of even global sections
  $(s_1,\,\cdots\,, s_r)$
  defines an even left connection on $\widehat{\cal E}$
  by the assignment $(\xi, s)\mapsto  \sum_{i=1}^r(\xi f^i)s_i$,
  for $\xi\in {\cal T}_{\widehat{X}}$ and $s=\sum_{i=1}^r f^is_i\in \widehat{\cal E}$.

  {\rm We will call such a trivialization an {\it even trivialization} of $\widehat{\cal E}$,
      denote such a left connection by $d$,  and
      call it the {\it trivial left connection associated to an even trivialization of $\widehat{\cal E}$}.}
\end{lemma-definition}

\medskip

\begin{proof}
 Denote the ${\Bbb C}$-bilinear pairing $(\xi, s)\mapsto  \sum_{i=1}^r(\xi f^i)s_i$ in the Statement
  by $P$.
 Then since $s_i$'s are all even,
   $P$ sends $(\xi:\mbox{even},  s:\mbox{even})$ and $(\xi:\mbox{odd}, s:\mbox{odd})$
      (resp.\ $(\xi:\mbox{even}, s:\mbox{odd})$ and $(\xi:\mbox{odd}, s:\mbox{even})$)
   to even (resp.\ odd) sections of $\widehat{\cal E}$.
 This implies that $P$ is an even pairing,
  cf.~Definition~2.1.1.
 It is straightforward now to check that it satisfies all the defining properties of a left connection.
 
\end{proof}
 
\medskip

\begin{explanation} {\bf [even trivialization condition in Lemma/Definition~2.1.6]}\; {\rm
{\it If $s_i$'s in the basis $(s_1,\,\cdots\,, s_r)$ of $\widehat{\cal E}$ are not all even,
  then the pairing
     $P:(\xi, s)\mapsto  \sum_{i=1}^r(\xi f^i)s_i$,
      for $\xi\in {\cal T}_{\widehat{X}}$ and $s=\sum_{i=1}^r f^is_i\in \widehat{\cal E}$
	 in general does not satisfy the generalized ${\Bbb Z}/2$-graded Leibniz rule   and,
	hence, does not define a left connection on $\widehat{\cal E}$.}
 The following simplified counterexample serves to illustrate this.

 Let $Z={\Bbb R}^1$ with coordinate-function $x$ and
   $\widehat{Z}$ be the super line with
     $C^{\infty}(\widehat{Z})=C^{\infty}({\Bbb R})[\theta]$, $\theta^2=0$.
 Take $\widehat{\cal E}=\widehat{\cal O}_Z$ and $s_1= 1+\theta$ as a basis.
 A section $s=a(x)+b(x)\theta \in \widehat{\cal E}$ can be expressed as
   $(a(x)+(b(x)-a(x))\theta)s_1=: f^1s_1$.
 Thus $P(\xi, s)= (\xi f^1) s_1$.
 In particular,
   $P(\partial_{\theta}, b(x)\theta)
    = P(\partial_{\theta}, (b(x)\theta) s_1)
	=  b(x)\cdot(1+\theta)$.
 Should the generalized ${\Bbb Z}/2$-graded Leibniz rule for $P$ hold,
  this would equal
  $\partial_{\theta}(b(x)\theta)s_1
      + (-1)^{p(\partial_\theta)p(b(x)\theta) }  (b(x)\theta)
	       \cdot(P^{(\even)}(\partial_{\theta},  s_1)
		                 - P^{(\odd)}(\partial_{\theta}, s_1) )$,
 Which would imply that
  $I:= -\,(b(x)\theta)
	          \cdot(P^{(\even)}(\partial_{\theta},  s_1)
		                 - P^{(\odd)}(\partial_{\theta}, s_1) )$ vanishes.

 Now,
  $\;P^{(\even)}(\partial_{\theta}, s_1)
     = P^{(\even)}(\partial_{\theta}, 1+\theta)
	 = (P(\partial_{\theta}, 1))^{(\odd)}+ (P(\partial_{\theta}, \theta))^{(\even)}
     = 1-\theta\;$
 while\\
  $\;P^{(\odd)}(\partial_{\theta}, s_1)
     = P^{(\odd)}(\partial_{\theta}, 1+\theta)
	 = (P(\partial_{\theta}, 1))^{(\even)}+ (P(\partial_{\theta}, \theta))^{(\odd)}
     = -1+\theta\,$.
 It follows that\\
  $I= -\,(b(x)\theta)\cdot (2-2\theta)= -\,2b(x)\theta \ne 0$.
 This shows that $P$  is not a left connection on $\widehat{\cal E}$.\footnote{{\it Connection
                                                                          under gauge transformations}\hspace{1em}
                                                                        Behind
                                                                         this simple example is the more general statement/observation that
																	     {\it conjugation by a gauge symmetry $g$ of a vector bundle
																	        $\widehat{E}$ over the super space $\widehat{X}$
																			does not take a covariant derivative to a covariant derivative
																		    unless $g$ is even (i.e.\ ${\Bbb Z}/2$-grading preserving)}.
																		  On the other hand, in physics literature gauge transformations of
																		    the form {\it exp}\,$(\Phi)$,
																			where $\Phi$ is a superfield on $\widehat{X}$ frequently appear.
																		  Such transformations are in general not even.
																		  How to deal with such transformations coherently with connections
																		    for the case we need for D-branes
																		    motivates the notion of left-right hybrid connection in~Sec.~2.2.																		   
																			}
}\end{explanation}
 
\bigskip

The same argument as in the proof of  Lemma~2.1.4 says that:
 
\bigskip

\begin{lemma-definition} {\bf [connection $1$-form]}\;  {\rm
 {\it Let
   $d$ be a trivial left connection on $\widehat{\cal E}$
      (associated to the trivialization of $\widehat{\cal E}$ by some even basis)   and
   $\widehat{\nabla} = \widehat{\nabla}^{(\even)}+ A^{(odd)}$
     be a connection on $\widehat{\cal E}$.
  Then
    $$
      \widehat{\nabla}^{(\even)}\;  =\;  d\,+\, A^{(\even)}
    $$
	for an even
	  $\Endsheaf_{\widehat{\cal O}_X^{\,\Bbb C}}(\widehat{\cal E})$-valued
	  $1$-form.}
 Together,
  $\widehat{\nabla}= d+ A^{(\even)}+A^{(\odd)}=: d+ A$.
 $A$ is called the {\it connection $1$-form}   associated to $\widehat{\nabla}$
  (with respect to the underlying (even) trivialization of $\widehat{\cal E}$).
 By construction, $A^{(\even)}$ is the even part of $A$, which depends on the trivialization,
   while $A^{(\odd)}$ is the odd part of $A$, which is independent of the trivialization.
}\end{lemma-definition}

\bigskip
Having had the notion of a left connection $\widehat{\nabla}$, 
 one naturally wants to define the curvature tensor $F^{\widehat{\nabla}}$ of $\widehat{\nabla}$.
Unfortunately 
 the standard formula for $F^{\widehat{\nabla}}$ when $\widehat{\nabla}$ is purely even 
 does not work due to the odd part $A^{(\odd)}$ in $\widehat{\nabla}$.
Some modification to the standard formula is required.
See Lemma/Definition 2.1.9 in the next theme.

\bigskip

\begin{flushleft}
{\bf Subtleties behind the notion of `left connection'}
\end{flushleft}
$(a)$\;\;{\it Left connections under general gauge transformations}

\medskip

\noindent
Explanation~2.1.7 gives an example that
  %
  \begin{itemize}
   \item[\LARGE $\cdot$]
    {\it In general, a non-even gauge transformation $g$ on $\widehat{E}$
	        does not take a (left) covariant derivative $\widehat{\nabla}_{\!\xi}$ on $\widehat{E}$
			to another (left) covariant derivative on $\widehat{E}$ by conjugation.}
  \end{itemize}
%
%
Indeed, in general the operation
 $$
   (\xi,\widehat{s})\;\longmapsto\;
      g  \mbox{\large $($}\widehat{\nabla}_{\!\xi} (g^{-1}\widehat{s})\mbox{\large $)$}
 $$
 does not define a connection on $\widehat{E}$.
The correct rule is given by
  $$
     (\xi,\widehat{s})\;\longmapsto\;
   (\xi)\,\!^\leftarrow\!\!\!(g\circ\nabla\circ g^{-1})(\widehat{s})\;
       :=\; \,\!^{\varsigma_\xi}\!g\mbox{\large $($}
	                                \widehat{\nabla}_{\!\xi} (g^{-1}\widehat{s})
	                                                                \mbox{\large $)$}\,,
  $$
  for $\xi$ parity homogeneous.\footnote{Caution
                                                         that, by the convention of current notes,
													     $\widehat{\nabla}_{\!\xi}
											                := (\xi)\!\,\!^{\raisebox{.3ex}{$\leftarrow$}}\!\!\!\widehat{\nabla}$.
														 Thus, there is {\it no} passing between $\widehat{\nabla}$ and $\xi$.
														
														 Similarly, $A^{\tinyeven}(\xi):= (\xi)\!^{\leftarrow}\!\!\!A^{\tinyeven}$
														   and $A^{\tinyodd}(\xi):= (\xi)\!^{\leftarrow}\!\!\!A^{\tinyodd}$.
													    } 

\vspace{12em}														

\noindent
$(b)$\;\;
{\it The curvature tensor $F^{\widehat{\nabla}}$ associated to a connection $\widehat{\nabla}$}

\medskip

\noindent
Naively, one would define the
 {\it $\End_{\widehat{\Bbb C}}(\widehat{E})$-valued curvature $2$-tensor} on $\widehat{X}$
 associated to a connection $\widehat{\nabla}$ on $\widehat{E}$ by the assignment
 $$
   (\xi_1,\xi_2; \widehat{s})\;\longmapsto\;
     [\widehat{\nabla}_{\!\xi_1}, \widehat{\nabla}_{\!\xi_2}\}s\,
	 -\, \widehat{\nabla}_{[\xi_1,\xi_2\}}s
 $$
 for $\xi_1,\xi_2\in \Der_{\Bbb C}(\widehat{X})$ and $\widehat{s}\in C^{\infty}(\widehat{E})$.
Yet, as given above, this is not a $2$-tensor on $\widehat{X}$.

The correct definition is given by:

\begin{lemma-definition} {\bf [curvature $2$-tensor associated to left connection]}\;
 Let $\widehat{\nabla}$ be a (general) left connection on $\widehat{E}$.
 Then the correspondence
  $$
   F^{\widehat{\nabla}}\;:\;
   (\xi_1,\xi_2; \widehat{s})\;\longmapsto\;
        \mbox{\Large $($}
		  [\widehat{\nabla}_{\!\xi_1}, \widehat{\nabla}_{\!\xi_2}\}\,
	         -\,\widehat{\nabla}_{[\xi_1,\xi_2\}}\,
             -\,\mbox{\large $($}1-(-1)^{p(\xi_2)}\mbox{\large $)$}
			     \cdot [A^{(\odd)}(\xi_1), \xi_2\}
		\mbox{\Large $)$}\, s\,,
  $$
  for $\xi_1,\xi_2\in \Der_{\Bbb C}(\widehat{X})$  and
       $\widehat{s}\in C^{\infty}(\widehat{E})$,
  defines an $\End_{\widehat{\Bbb C}}(\widehat{E})$-valued $2$-tensor on $\widehat{X}$.
 {\rm We shall call $F^{\widehat{\nabla}}$ thus defined
     the {\it curvature tensor} on $\widehat{X}$
	   associated to the left connection $\widehat{\nabla}$ on $\widehat{E}$.}
\end{lemma-definition}

\bigskip

\noindent
Since we mean to use this only as a contrast to and motivation for the new setting in Sec.\ 2.2,
 we leave the proof to interested readers as an exercise.
 
%
%
%
%
%
%
%

\bigskip

\begin{flushleft}
{\bf Left connections on $\widehat{\cal E}$ that are adapted to vector multiplets of supersymmetry representations}
\end{flushleft}
(See Sec.\ 2.2 for more thorough explanations in the case of simple hybrid connections.
    The current theme is for comparison with Sec.\ 2.2 only.)
One can press on, following, e.g., [G-G-R-S: Sec.\ 4.2],
 to define the notion of
 {\it SUSY-rep compatible left connection associated to a vector superfield $V$}
 by first fixing an even trivialization of of $\widehat{E}$ and then setting\footnote{Caution that
                                                                    in general $V$ is not purely even and, hence,
										                            $e^V$ is not an even gauge transformation of $\widehat{E}$.
                                                                   Because of this,
																	 $\widehat{\nabla}_{e_{\alpha^\prime}}$ is defined as
		                                                             $e^{-V}\circ e_{\alpha^\prime}\circ\,\!^\varsigma\!(e^V)$,
		                                                              rather than $e^{-V}\circ e_{\alpha^\prime}\circ\,e^V$,
		                                                             so that it is a left covariant derivation on $\widehat{E}$.
																   One may define $\widehat{\nabla}_{e_{\alpha^\prime}}$
																	  instead as
																	    $\,\!^\varsigma\!(e^{-V})\circ e_{\alpha^\prime}\circ e^V$.
                                                                    Cf.\  Item (a) of the previous theme.																	 
		                                                             }  
 $$
   \widehat{\nabla}_{e_{\beta^{\prime\prime}}}\;=\; e_{\beta^{\prime\prime}}\,,\;\;
   \widehat{\nabla}_{e_{\alpha^\prime}}\;
     =\; e^{-V}\circ e_{\alpha^\prime}\circ\,\!^\varsigma\!(e^V)\,,\;\;
   \widehat{\nabla}_{e_\mu}\;
	    =\; 	\mbox{\small $\frac{\sqrt{-1}}{2}$}\,
		         \sum_{\alpha, \dot{\beta}} \breve{\sigma}_{\mu}^{\alpha\dot{\beta}}
				  \cdot \{\widehat{\nabla}_{e_{\alpha^{\prime}}},
				                       \widehat{\nabla}_{e_{\beta^{\prime\prime}}}\}\,.	
 $$
 Here,
	  $\breve{\sigma}_{\mu}
	     =(\breve{\sigma}_{\mu}^{\alpha\dot{\beta}})_{\alpha\dot{\beta}}$
	  with
	 {\footnotesize
     $$
      \breve{\sigma}_0\;:=\;
       \frac{1}{2}\left[\!\begin{array}{rr} -1 & 0 \\ 0 & -1\end{array}\!\right]\!,\;\;
	  \breve{\sigma}_1\;:=\;
       \frac{1}{2}\left[\!\begin{array}{rr} 0 & 1 \\ 1 & 0\end{array}\!\right]\!,\;\;
      \breve{\sigma}_2\;:=\;
       \frac{1}{2}
	    \left[\!\begin{array}{rr} 0 & \sqrt{-1} \\ -\sqrt{-1} & 0\end{array}\!\right]\!,\;\;	   
      \breve{\sigma}_3\;:=\;
       \frac{1}{2}\left[\!\begin{array}{rr} 1 & 0 \\ 0 & -1\end{array}\!\right]\,.
     $$} 
One can check that $\widehat{\nabla}$	satisfies some curvature vanishing properties.

The complexity of expressions propagated from the seemingly harmless subtleties
   of not-purely-even left connections mentioned in the previous theme, when one presses on further,
 suggests that considering only purely left connections may not give the best context to
  study connections on bundles on a superspace.
This leads us to a new notion: {\it left-right hybrid connections}, which we now turn to.
   	
%
%
%
%
%
%
%

\bigskip
   
\subsection{Hybrid connections on the Chan-Paton bundle $\widehat{E}$ over $\widehat{X}$}
Guided by the lesson learned from Sec.~2.1,
we introduce and study in this subsection the notion of  `{\it hybrid connections}':
Ordinary derivations still act from the left,
 but endomorphisms from the evaluation of endomorphisms-valued $1$-forms on derivations
 --- which need to be added to make a good notion of covariant derivations ---
 act from the right.
Our basic setup needs to be adjusted accordingly, which we take as our starting point.

\bigskip

\begin{flushleft}
{\bf The new setup}
\end{flushleft}
Up to now, we have taken the endomorphism sheaf
   $\Endsheaf_{{\cal O}_X^{\,\Bbb C}}({\cal E})$
 (resp.\ $\Endsheaf_{\widehat{\cal O}_X} (\widehat{\cal E})$) of ${\cal E}$
 (resp.\ $\widehat{\cal E}$) as acting on ${\cal E}$ (resp.\ $\widehat{\cal E}$) from the left.
In the commutative world, this is the most natural convention.
In the ${\Bbb Z}/2$-graded  world,
  as we have chosen the convention that derivations act on their subjects from the left,
 to vary the notion of connections from Sec.~2.1
we are forced to reset the convention to that
  the endomorphism sheaf $\Endsheaf_{{\cal O}_X^{\,\Bbb C}}({\cal E})$
 (resp.\ $\Endsheaf_{\widehat{\cal O}_X}(\widehat{\cal E})$) of ${\cal E}$
 (resp.\ $\widehat{\cal E}$) as acting on ${\cal E}$ (resp.\ $\widehat{\cal E}$) {\it from the right}
 so that one can encompass more situations in physics literature and obtain cleaner formula in various situations.
Similarly, for endomorphism bundles $\End_{\Bbb C}(E)$  and
 $\End_{\widehat{\Bbb C}}(\widehat{E})$.
 \begin{itemize}
  \item[\LARGE $\cdot$]
   Denote by $\widehat{\Bbb C}$
     the ${\Bbb C}$-algebra
	 ${\Bbb C}[\theta^1,\theta^2,\theta^{\dot{1}},\theta^{\dot{2}}]^{\anticommuting}$
     of complex Grassmann numbers with the standard ${\Bbb Z}/2$-grading.
   	
  \item[\LARGE $\cdot$]
   Let $\widehat{\frak m}
             :=(\theta^1, \theta^2, \bar{\theta}^{\dot{1}}, \bar{\theta}^{\dot{2}})$
    be  the ideal sheaf of the $4$-dimensional Minkowski space-time $X$
    in the $d=4$, $N=1$ superspace $\widehat{X}$ as a super $C^{\infty}$-subscheme.
   
  \item[\LARGE $\cdot$]
   Let $E$ be a complex vector bundle of rank $r$ on $X$.
   The corresponding sheaf of smooth sections is denoted by ${\cal E}$.
   Denote by $\End_{\Bbb C}(E)$ (resp.\ $\Aut_{\Bbb C}(E)$)
     the bundle of endomorphisms (resp.\ the bundle of automorphisms) of $E$.
   The corresponding sheaves are denoted by
     $\Endsheaf_{{\cal O}_X^{\,\Bbb C}}({\cal E})$   and
	 $\Autsheaf_{{\cal O}_X^{\,\Bbb C}}({\cal E})$
	 respectively.
   Here, we reset the convention to that
	\begin{itemize}
	 \item[\LARGE $\cdot$]
     {\it $\End_{\Bbb C}(E)$ and $\Aut_{\Bbb C}(E)$  act on $E$ from the right}  	
	\end{itemize}
	and, similarly,
	\begin{itemize}
	 \item[\LARGE $\cdot$]
	 {\it   $\Endsheaf_{{\cal O}_X^{\,\Bbb C}}({\cal E})$ and
	           $\Autsheaf_{{\cal O}_X^{\,\Bbb C}}({\cal E})$ act on ${\cal E}$ from the right}.
	\end{itemize}
                       
  \item[\LARGE $\cdot$]	
   Let $\widehat{E}$ be the ${\Bbb Z}/2$-graded complex vector bundle of rank $r$ on $\widehat{X}$
     that extends $E$;
   each fiber of $\widehat{E}$ over $X$ is a free bi-$\widehat{\Bbb C}$-module of rank $r$.
   By construction 	
   the corresponding sheaf of smooth sections is a bi-$\widehat{\cal O}_X$-modules,
     denoted by $\widehat{\cal E}$.
   $\widehat{\cal E}= {\cal E}\otimes_{{\cal O}_X^{\,\Bbb C}}\widehat{\cal O}_X$
      is naturally ${\Bbb Z}/2$-graded;
     and the left and the right locally free $\widehat{\cal O}_X$-module structure of $\widehat{\cal E}$
	 are related by $sa = (-1)^{p(s)p(a)}as$
	 for $a\in \widehat{\cal O}_X,\, s\in \widehat{\cal E} $ parity homogeneous.
  
  \item[\LARGE $\cdot$]
   Let $\End_{\widehat{\Bbb C}}(\widehat{E})$ be the bundle of endomorphisms
     of $\widehat{E}$ as a {\it left} $\widehat{\Bbb C}$-module over $X$.
   The corresponding sheaf of endomorphisms of $\widehat{\cal E}$ is denoted by
     $\Endsheaf_{\widehat{\cal O}_X}(\widehat{\cal E})$.
   Under the new setup
	\begin{itemize}
	 \item[\LARGE $\cdot$]
	  {\it  $\End_{\widehat{\Bbb C}}(\widehat{E})$\,
	            (resp.\ $\Endsheaf_{\widehat{\cal O}_X}(\widehat{\cal E})$)\,
			   acts on $\widehat{E}$ (resp.\ $\widehat{\cal E}$) from the right.}	
	\end{itemize}
   $\End_{\widehat{\Bbb C}}(\widehat{E})$ is naturally ${\Bbb Z}/2$-graded:
    an {\it even endomorphism} sends
	  even elements to even elements and  odd elements to odd elements in $\widehat{E}$
    while an {\it odd endomorphism} sends
 	  even elements to odd elements and  odd elements to even elements in $\widehat{E}$.
   Similarly, for $\Endsheaf_{\widehat{\cal O}_X}(\widehat{\cal E})$.
   
  \item[\LARGE $\cdot$]
   Let
      $\Aut_{\widehat{\Bbb C}}(\widehat{E})
	                                        \subset \End_{\widehat{\Bbb C}}(\widehat{E})$
     be the bundle of automorphisms of $\widehat{E}$ as a {\it left} $\widehat{\Bbb C}$-module over $X$.
   The corresponding sheaf of automorphisms of $\widehat{\cal E}$ is denoted by
     $\Autsheaf_{\widehat{\cal O}_X}(\widehat{\cal E})$.   		
   By convention,
	\begin{itemize}
	 \item[\LARGE $\cdot$]
	  {\it  $\Aut_{\widehat{\Bbb C}}(\widehat{E})$\,
	            (resp.\ $\Autsheaf_{\widehat{\cal O}_X}(\widehat{\cal E})$)\,
			   acts on $\widehat{E}$ (resp.\ $\widehat{\cal E}$) from the right.}	
	\end{itemize}
   Recall [L-Y9: Lemma 2.2.1.1 \& Corollary 2.2.1.2] (D(11.4.1)) that
      $$
         \Autsheaf_{\widehat{\cal O}_X}(\widehat{\cal E})\;
          \simeq\; \Autsheaf_{{\cal O}_X^{\,\Bbb C}}({\cal E})\,
		                    \oplus\,  \Endsheaf_{{\cal O}_X^{\,\Bbb C}}({\cal E})		
							                                 \otimes_{{\cal O}_X^{\,\Bbb C}} \widehat{\frak m}\,.
      $$
  The set $C^{\infty}(\Aut_{\widehat{\Bbb C}}(\widehat{E}))$	
     of smooth sections of $\Aut_{\widehat{\Bbb C}}(\widehat{E})$
    forms the group of {\it gauge transformations} of $\widehat{E}$.
 \end{itemize}

\medskip

\begin{convention} $[$left operators, right operators, and their compositions$]$\; {\rm
  Given sets $A$ and $C$ of operators (e.g.\ derivations, endomorphisms, ...)
   that act on a set-with-structure $B$ (e.g.\ rings, modules, ...)
   with $A$ acting from the left (of $B$) and $C$ acting from the right (of $B$),\footnote{Caution
                                                                        that a left operation and a right operation may not commute.
																		}      
  we set the following notational conventions:
  \begin{itemize}
   \item[\LARGE $\cdot$]
    For $a\in A$, $c\in C$, and $z\in B$,
	 we write
	  $az$ also as $a(z)$,  and
	  $zc$ also as
       $(z)\,\!^{\leftarrow}\!\!\!c$	 or $c^{\circ}(z)$ whichever is more convient.
    	
   \item[\LARGE $\cdot$]
    Let $z\in B$ and, for example,  $a_1, a_2, a_3\in A$ and $c_1, c_2\in C$.
	Then, denote the composition
      $$
	     a_3 \mbox{\Large $($}
	                \mbox{\large $($}
				      a_2 ( \mbox{\small $($}a_1z \mbox{\small $)$}c_1)
				    \mbox{\large $)$}  c_2
		          \mbox{\Large $)$}\;
	     =:\; (a_3 \circ  c_2^{\circ}  \circ  a_2 \circ c_1^{\circ} \circ a_1 )(z)\,.
	  $$
	
	\item[\Large $\cdot$]
     When an operation $P$ on $B$
	      is a sum of an operator-from-left $a\in A$ and an operator-from-right $c\in C$,
       we write $P$ as $a+c^{\circ}$.
	 By definition, $P(z)= (a+c^{\circ})(z)= az + zc$ for $z\in B$.
	 For convenience we say that $P$ is applied to $B$ {\it formally from the left}.
	
	\item[\Large $\cdot$]
	 Similar notations and conventions apply to operators on $B$ with values in another set-with-structure.
  \end{itemize}
}\end{convention}

\bigskip

\begin{flushleft}
{\bf  The notion of simple hybrid connections on $\widehat{\cal E}$}
\end{flushleft}
\begin{definition} {\bf [pre-connection on $\widehat{\cal E}$]}\; {\rm
 A {\it pre-connection} $\widehat{\nabla}$ on $\widehat{\cal E}$
   is a ${\Bbb C}$-bilinear pairing
  $$
    \begin{array}{ccccc}
	 \widehat{\nabla} & : & {\cal T}_{\widehat{X}} \times \widehat{\cal E}
	     & \longrightarrow   & \widehat{\cal E}  \\[1.2ex]
    && (\xi, s)              &  \longmapsto    &  \widehat{\nabla}_{\!\xi}s		 	
	\end{array}
  $$
  such that
	\begin{itemize}
	 \item[(1)]  [{\it $\widehat{\cal O}_X$-linearity
	                                   in the ${\cal T}_{\widehat{X}}$-argument}]\\[.6ex]	
	  $\mbox{\hspace{1em}}$
	  $\widehat{\nabla}_{\!f_1\xi_1 + f_2\xi_2}s\;
	     =\; f_1 \widehat{\nabla}_{\!\xi_1}s + f_2 \widehat{\nabla}_{\!\xi_2}s$, \hspace{1em}
      for $f_1, f_2 \in \widehat{\cal O}_X$, $\xi_1, \xi_2 \in {\cal T}_{\widehat{X}}$,  and
	       $s\in \widehat{\cal E}$;

     \item[(2)]  [{\it ${\Bbb C}$-linearity in the $\widehat{\cal E}$-argument}]\\[.6ex]
	 $\mbox{\hspace{1em}}$
	 $\widehat{\nabla}_{\!\xi}(c_1s_1+c_2s_2)\;
	     =\;  c_1 \widehat{\nabla}_{\!\xi} s_1 + c_2 \widehat{\nabla}_{\!\xi} s_2$, \hspace{1em}
	  for $c_1, c_2\in {\Bbb C}$, $\xi\in {\cal T}_{\widehat{X}}$, and
	       $s_1, s_2\in \widehat{\cal E}$;
	
	 \item[(3)] [{\it ${\Bbb Z}/2$-graded Leibniz rule in the $s$-argument}]\\[.6ex]
	 $\mbox{\hspace{1em}}$
	 $\widehat{\nabla}_{\!\xi}(fs)\;
	   =\; (\xi f)s
	           + (-1)^{p(f)p(\xi)}\,f\cdot \widehat{\nabla}_{\!\xi} s$,\\[.6ex]
      for $f\in \widehat{\cal O}_X$, $\xi\in {\cal T}_{\widehat{X}}$ parity homogeneous
	       and $s\in\widehat{\cal E}$.
	\end{itemize}
  As an operation on the pairs $(\xi, s)$,
   a pre-connection $\widehat{\nabla}$ on $\widehat{\cal E}$ is applied to $\xi$ from the right
   while applied to $s$ possibly only formally from the left;\footnote{Similarly
                                                                     to the situation for left connections,
                                                                     it is instructive to denote $\widehat{\nabla}_{\!\xi}s$ as
																	  $\xi \widehat{\nabla} s$ or $_{\xi}\!\widehat{\nabla} s$
																	 (though we do not adopt it here).
																	In particular,
  																	  from $_{f\xi}\!\widehat{\nabla} s$
																	  to $f (\,\!_{\xi}\!\widehat{\nabla} s)$,
                                                                     $f$ and $\widehat{\nabla}$ do {\it not} pass each other.
                                                                      }  
  cf.\ Lemma~1.3.7 and Remark~1.3.8.
 Note that, similar to $d$ on $\widehat{\cal O}_X$,
   the ${\Bbb Z}/2$-graded Leibniz rule for $\widehat{\nabla}$ on $\widehat{\cal E}$
   can written equivalently as
   $$
    \mbox{(3$^{\prime}$)}\hspace{13.2em}
    \widehat{\nabla}(fs)\;=\; (df)\cdot s\,+\, f\cdot  \widehat{\nabla}s    \hspace{13.6em}
   $$
   since $\widehat{\nabla}_{\!\xi}:=  (\xi)\,\!^\leftarrow\!\!\!\widehat{\nabla}$
     for $\xi\in {\cal T}_{\widehat{X}}$.
}\end{definition}

\medskip

\begin{explanation} {\bf [${\Bbb Z}/2$-graded Leibniz rule in Definition~2.2.2]}\; {\rm
 The $\,\!^{\varsigma_{\!f}}\!(\widehat{\nabla})$
    in the generalized ${\Bbb Z}/2$-graded Leibniz rule
    in Definition 2.1.2 of a left connection $\widehat{\nabla}$ on $\widehat{\cal E}$
	indicates that all ingredients of $\widehat{\nabla}$ apply to $s$ from the left.
 Here for a pre-connection $\widehat{\nabla}$, only the usual $\widehat{\nabla}$ appears	
    in the ${\Bbb Z}/2$-graded Leibniz rule.
 This is an indication that ingredients of $\widehat{\nabla}$
   are either even and applied to $s$ from the left,
   or even-odd-mixed and applied to $s$ from the right.
}\end{explanation}
																													 
\medskip

\begin{lemma-definition} {\bf [pre-connection associated to trivialization of $\widehat{\cal E}$]}\;
 A trivialization of $\widehat{\cal E}$ on $\widehat{X}$ by a basis of global sections
  $(s^1,\,\cdots\,, s^r)$ defines a pre-connection on $\widehat{\cal E}$
  by the assignment $(\xi, s)\mapsto  \sum_{i=1}^r(\xi f_i)s^i$,
  for $\xi\in {\cal T}_{\widehat{X}}$ and $s=\sum_{i=1}^r f_is^i\in \widehat{\cal E}$.
 {\rm
  We will call it the {\it trivial pre-connection} associated to the given trivialization of $\widehat{\cal E}$.}
\end{lemma-definition}

\bigskip

Note that the even component
  $(s^1_{(\even)},\,\cdots\,, s^r_{(\even)}   )$ of a basis
  $(s^1,\,\cdots\,, s^r)$ of $\widehat{\cal E}$ (as a left $\widehat{\cal O}_X$-module)
  gives another basis of $\widehat{\cal O}_X$, thus none of $s^i_{(\even)}$ can be zero.
Compared with Lemma/Definition~2.1.6,
 here, however,  $s^i$'s can have non-zero odd component $s^i_{(\odd)}$.

\bigskip

\noindent
{\it Proof of Lemma/Definition~2.2.4.}\;
 Denote the ${\Bbb C}$-linear pairing $(\xi,s)\mapsto \sum_{i=1}^r (\xi f_i)s^i$
   in the Statement by $P$.
 Then, its clear that $P$ satisfies the
   $\widehat{\cal O}_X$-Linearity-in-the ${\cal T}_{\widehat{X}}$-Component Condition
   and ${\Bbb C}$-Linearity-in-$\widehat{\cal E}$-Component  Condition.
 For the ${\Bbb Z}/2$-graded Leibniz Rule Condition,
  $P(\xi,  f_0s)= \sum_{i=1}^r\xi(f_0f_i)s^i
      =   (\xi_if_0)s + (-1)^{p(\xi)p(f_0)}f_0\cdot P(\xi, s)$
   from the ${\Bbb Z}/2$-graded Leibniz rule for $\xi$.
 This completes the proof.

\noindent\hspace{40.7em}$\square$

\bigskip

\begin{remark} $[$relook at {\bf Explanation~2.1.7}$]$\; {\rm
 (Continuing Explanation~2.1.7.)
 Let $s^1=1+\theta$ and repeat the computation in Explanation~2.1.7.
 This time $P(\partial_\theta, s_1)$ on one hand equals zero by definition,
  and one the other hand equals
   $P(\partial_\theta, 1)+ P(\partial_\theta, \theta)
      = -s^1 + s^1=0$ since $1=(1-\theta)s^1$ and $\theta=\theta s^1$.
 Thus, there is no contradiction now.	
}\end{remark}

\medskip

\begin{lemma-definition} {\bf [simple hybrid connection on $\widehat{\cal E}$]}\;
 Let
  $(s^1,\,\cdots\,, s^r)$ be a basis of $\widehat{\cal E}$ (as an $\widehat{\cal O}_X$-module) and
  $A\in  \Endsheaf_{\widehat{\cal O}_X}(\widehat{\cal E})
              \otimes_{\widehat{\cal O}_X} {\cal T}_{\widehat{X}}^{\ast}$
     be an $ \Endsheaf_{\widehat{\cal O}_X}(\widehat{\cal E})$-valued $1$-from
	 on $\widehat{X}$.
 Recall that
   $ \Endsheaf_{\widehat{\cal O}_X}(\widehat{\cal E})$
   now acts on $\widehat{\cal E}$ from the right.	
 Then the ${\Bbb C}$-bilinear pairing\footnote{There
                                                                       is a subtle point here that does not occur in the case of left connections:
		                       The ${\cal E}$\!{\it nd}$_{\widehat{\cal O}_X}(\widehat{\cal E})$-valued
							                                           $1$-form $A$ now applies to $s\in \widehat{\cal E}$  and
																	   evaluates on $\xi\in {\cal T}_{\widehat{X}}$ both from their right.
																	   {\it Which of the two performs first?}
																	   Here, we set the conventions
																	     that the pair is $(\xi,s)$, not $(s,\xi)$, and
																	     $(\xi, s)\,\!^{\leftarrow}\!\!\!A
																		      := (\xi)\,\!^{\leftarrow}\!\!\!(sA)$
                                                                         so that one can express
																		 $\widehat{\nabla}_{\!\xi}s$ neatly as
																		 $(\xi)\,\!^{\leftarrow}\!\!\!(ds + sA)$.
																	Details of the expression will be different
																	   if one chooses the other convention $(s,\xi)$,
																	   due to that different passings of ${\Bbb Z}/2$-graded objects
																	   are involved.
                                                                       }   
   $$
    \begin{array}{ccccl}
	 \widehat{\nabla} & : & {\cal T}_{\widehat{X}} \times \widehat{\cal E}
	     & \longrightarrow   & \hspace{2.4em}\widehat{\cal E}  \\[1.2ex]
    && (\xi, s)              &  \longmapsto
	   &  \widehat{\nabla}_{\!\xi}s	\;
	         :=\; \sum_{i=1}^r
			          \left(
					     (\xi f_i) s^i\,
						    +\, \,\!^{\varsigma_\xi}\!f_i\cdot(\,\!^{\varsigma_{\xi}}\!s^i) A(\xi)
					  \right)  \\[1.2ex]
    &&&& \hspace{2.4em}
	    =:\; \sum_{i=1}^r
	               \left( df_i(\xi)\cdot s^i
                               +\, \,\!^{\varsigma_\xi}\!f_i
					  		          \cdot  A(\xi)^{\circ}(\,\!^{\varsigma_{\xi}}\!s^i)
                   \right)\\[1.2ex]
    &&&& \hspace{2.4em}
        =:\; 	   (\xi)\,\!^{\leftarrow}\!\!\!(ds +   s A)\\[1.2ex]
	&&&& \hspace{2.4em}
	    =:\;   (ds)(\xi)\,+\,     (A(\xi)^\circ)(\,\!^{\varsigma_\xi}s)\,,
	\end{array}
   $$
  for $\xi\in{\cal T}_{\widehat{X}} $ parity homogeneous and $s=\sum_{i=1}^r f_is^i$,
   is a pre-connection on $\widehat{\cal E}$.
 Here, recall (Definition~1.3.1) that
   $\,\!^{\varsigma_\xi}\!s^i :=s^i$ for $\xi$ even, or $s^i_{(\even)}-s^i_{(\odd)}$ for  $\xi$ odd;
   and similarly for~$\,\!^{\varsigma_\xi}\!f_i$. \hspace{2em}
 In particular, for $(s^1,\,\cdots\,, s^r)$ even,
     $\widehat{\nabla}_{\!\xi}s
	    = \sum_{i=1}^r
	               \left( df_i(\xi)\cdot s^i
                               +\, (-1)^{p(\xi)p(f_i)}\,f_i \cdot  A(\xi)^{\circ}(s^i)
                   \right)$
	for \hspace{2em} $f_i\in\widehat{\cal O}_X$, $\xi\in{\cal T}_{\widehat{X}}$ parity homogeneous.
     
  {\rm
  A pre-connection on $\widehat{\cal E}$ of such particular type is called
             a {\it simple hybrid connection} on $\widehat{\cal E}$.
  The $ \Endsheaf_{\widehat{\cal O}_X}\!(\cal E) $-valued $1$-from $A$ in the setting
     is called the {\it connection $1$-form} of $\widehat{\nabla}$ with respect to the trivialization
       of $\widehat{\cal E}$.
   From the expression
    $\widehat{\nabla}_{\!\xi}s=(\xi)\,\!^{\leftarrow}\!\!\!(ds+ sA)$,
    we will write
	  $$
	     \widehat{\nabla}s \; =\;  ds +sA   \hspace{2em}\mbox{or}\hspace{2em}
		 \widehat{\nabla}\;=\; d+A^{\circ}
	  $$	
	in short hand.	
  In particular, when $A=0$, $\widehat{\nabla}= d$ resumes to
	 the trivial pre-connection associated to the given trivialization of $\widehat{\cal E}$.
  We will call such trivial pre-connection simply a {\it trivial connection}. 	
	  }
\end{lemma-definition}

\medskip

\begin{proof}
 (1) {\it $\widehat{\cal E}$-linearity in the ${\cal T}_{\widehat{X}}$-argument}
  $$
  \begin{array}{rcl}
   \widehat{\nabla}_{f_0\xi}\,s       & =
     & \sum_{i=1}^r
	      \left(
		    (f_0\xi)f_i\cdot s^i\,+\,
			  \,\!^{\varepsilon_{(f_0\xi)}}f_i \cdot\,\!^{\varsigma_{(f_0\xi)}}s^i A(f_0\xi)
          \right)  \\[1.2ex]
     & = &
        \sum_{i=1}^r	
         \left(
		  f_0\cdot \xi f_i \cdot s^i \,
		  +\, f_0\cdot\,\!^{\varsigma_\xi}f_i\cdot\,\!^{\varsigma_\xi}s^i A(\xi)
         \right) 		 \\[1.2ex]
	 & = &
	   f_0\,\widehat{\nabla}_{\!\xi}s\,.
  \end{array}
  $$
  Here, we've used
   the identity that $A(f_0\xi)= (f_0\xi)\,\!^{\leftarrow}\!\!\!A = f_0\,A(\xi)$    and
   the observations that
   \begin{itemize}
    \item[\LARGE $\cdot$] {\it For $f_0$ even}\,:\hspace{1em}
	  $p(f_0\xi)=p(\xi)$ and hence
	  $\,\!^{\varsigma_{(f_0\xi)}}(\,\mbox{\tiny $\bullet$}\,)
	     =\,\!^{\varsigma_\xi}(\,\mbox{\tiny $\bullet$}\,)$;\;
	  $\,\!^{\varsigma_\xi}f_i\cdot \,\!^{\varsigma_\xi}s^i \cdot f_0
	     = f_0\cdot \,\!^{\varsigma_\xi}f_i\cdot \,\!^{\varsigma_\xi}s^i $.

    \item[\LARGE $\cdot$] {\it For $f_0$ odd}\,:\hspace{1em}
	   \begin{itemize}
	    \item[\LARGE $\cdot$] {\it For $\xi$ even}\,:\hspace{1em}
		 $f_0\xi$ is odd;\;
		 $\,\!^{\varsigma_{(f_0\xi)}}(\,\mbox{\tiny $\bullet$}\,)
		    = \,\!^\varsigma  (\,\mbox{\tiny $\bullet$}\,)$;\;
         $\,\!^{\varsigma}f_i\cdot \,\!^{\varsigma}s^i\cdot f_0
            = f_0\cdot f_i\cdot s^i  = f_0 \cdot \,\!^{\varsigma_\xi}f_i \cdot \,\!^{\varsigma_\xi}s^i$		
 			
		\item[\LARGE $\cdot$] {\it For $\xi$ odd}\,:\hspace{1em}
         $f_0\xi$ is even;\;
	     $\,\!^{\varsigma_{(f_0\xi)}}(\,\mbox{\tiny $\bullet$}\,)
		    = (\,\mbox{\tiny $\bullet$}\,)$;\;
         $f_i\cdot s^i\cdot f_0
		    = f_0 \cdot \,\!^{\varsigma}\!f_i\cdot \,\!^{\varsigma}s^i
   		    = f_0 \cdot \,\!^{\varsigma_\xi}f_i \cdot \,\!^{\varsigma_\xi}s^i$.	
	   \end{itemize}
   \end{itemize}
  
 \medskip
 
 \noindent
 (2) {\it ${\Bbb Z}/2$-graded Leibniz rule in the $\widehat{\cal E}$-argument}
  $$
   \begin{array}{rcl}
     \widehat{\nabla}_{\,\xi}(f_0s)      & =
	   & \sum_{i=1}^r
	         \left (
		       \xi(f_0f_i)\cdot s^i  \,
			    +\,  \,\!^{\varsigma_\xi}\!(f_0f_i)\cdot \,\!^{\varsigma_\xi}s^i A(\xi)
		     \right) \\[1.2ex]
	 & = &
	    \sum_{i=1}^r
		 \left(
		    (\xi f_0)f_is^i\, +\, (-1)^{p(\xi)p(f_0)}f_0(\xi f_i)\cdot s^i\,
			  +\, (-1)^{p(\xi) p(f_0)}\,
			          f_0\cdot \,\!^{\varsigma_\xi}\!f_i
					  \cdot\,\!^{\varsigma_\xi}s^i A(\xi)
		 \right) \\[1.2ex]
     & = &
        (\xi f_0)s\,+\, (-1)^{p(\xi)p(f_0)}\, f_0\,\widehat{\nabla}_{\!\xi}s\,.	
   \end{array}
 $$
 Here, we've used the identities that, for $\xi$ parity homogeneous,
   $\,\!^{\varsigma_\xi}(f_0f_1)= \,\!^{\varsigma_\xi}\!f_0\cdot \,\!^{\varsigma_\xi}f_1$
      for general $f_0, f_1$,  and
   $\,\!^{\varsigma_\xi}f_0=(-1)^{p(\xi)p(f_0)}\,f_0$ for $f_0$ parity homogeneous.
	
 This completes the proof.

\end{proof}

\bigskip

\begin{lemma} {\bf [simple hybrid connection under gauge transformation]}\;
 Let $\widehat{\nabla}$ be a simple hybrid connection on $\widehat{\cal E}$ and
  $g\in \Autsheaf_{\widehat{\cal O}_X}(\widehat{\cal E})$
  be a gauge transformation, acting on $\widehat{\cal E}$ from the right.
 Then the conjugated ${\Bbb C}$-bilinear pairing
  $$
    \begin{array}{ccccc}
	 \widehat{\nabla}^g & : & {\cal T}_{\widehat{X}} \times \widehat{\cal E}
	     & \longrightarrow   & \widehat{\cal E}  \\[1.2ex]
    && (\xi, s)              &  \longmapsto
	   &  \widehat{\nabla}^g_\xi s\;
	         :=\; ( g^{\circ}  \circ  \widehat{\nabla}_{\xi} \circ  g^{-1\,\circ}) (s)\;
             :=\;      \left(     \widehat{\nabla}_{\!\xi}(sg^{-1})	   \right)g
	\end{array}
   $$
   defines a simple hybrid connection on $\widehat{\cal E}$.
 Suppose that $\widehat{\nabla}s=  ds +sA$ (with respect to a fixed trivialization of $\widehat{\cal E}$).
 Then
  $$
    \widehat{\nabla}^g s\;=\;   ds \,+\,  s( dg^{-1} g +  g^{-1} A g)\;
	            =:\; ds + s A^g
  $$
 (with respect to the fixed trivialization of $\widehat{\cal E}$).
 Here,
  $d$ in $dg^{-1}$ is the induced trivial connection on
    $\Autsheaf_{\widehat{\cal O}_X}(\widehat{\cal E})$
    from the fixed trivial connection $d$ on $\widehat{\cal E}$  and
  $dg^{-1}, dg^{-1}g
       \in  \Endsheaf_{\widehat{\cal O}_X}(\widehat{\cal E})
              \otimes_{\widehat{\cal O}_X} {\cal T}_{\widehat{X}}^{\ast}$.
 Note that $\, dg^{-1}g = - g^{-1}dg$.
\end{lemma}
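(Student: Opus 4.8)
The statement has three assertions: (i) $\widehat{\nabla}^g$ is a simple hybrid connection; (ii) its connection $1$-form is $A^g := dg^{-1}g + g^{-1}Ag$; and (iii) $dg^{-1}g = -g^{-1}dg$. The plan is to verify (ii) first by direct computation, since once the closed-form expression $\widehat{\nabla}^g s = ds + sA^g$ is established, (i) follows immediately from Lemma/Definition~2.2.7, which already asserts that any operation of the form $s \mapsto ds + sB$ with $B \in \Endsheaf_{\widehat{\cal O}_X}(\widehat{\cal E}) \otimes_{\widehat{\cal O}_X}{\cal T}^\ast_{\widehat{X}}$ is a simple hybrid connection. So the real content is the computation of $A^g$.

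First I would compute $\widehat{\nabla}^g_\xi s = \big(\widehat{\nabla}_\xi(sg^{-1})\big)g$ for $\xi$ parity-homogeneous, using the $\mathbb{Z}/2$-graded Leibniz rule for $\widehat{\nabla}$ applied to the product $sg^{-1}$ (where $g^{-1}$ is being applied to $s$ from the right, via $\Endsheaf_{\widehat{\cal O}_X}(\widehat{\cal E})$). Since $\widehat{\nabla}$ acts on $\widehat{\cal E}$ formally from the left with the right-acting endomorphism piece $A$, writing $\widehat{\nabla} = d + A^\circ$ I would expand
\[
  \widehat{\nabla}_\xi(sg^{-1}) \;=\; (ds)(\xi)\,g^{-1} \,+\, s\,(dg^{-1})(\xi) \,+\, \big(\,\!^{\varsigma_\xi}(sg^{-1})\big)\,A(\xi),
\]
keeping careful track of the parity-conjugation sign-factors $\,\!^{\varsigma_\xi}$ that arise when $\xi$ passes over $s$, $g$, and $A$ (using that $\,\!^{\varsigma_\xi}(s g^{-1}) = \,\!^{\varsigma_\xi}s \cdot \,\!^{\varsigma_\xi}g^{-1}$, and $\,\!^{\varsigma_\xi}A(\xi) = A(\xi)$ by the conventions set in Sec.~1.3 and Sec.~2.2). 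Then I would post-compose with $g$ on the right, pull $g$ through using the relations $g^{-1}g = \Id$ and the compatibility of $d$ with the trivial connection on $\Autsheaf_{\widehat{\cal O}_X}(\widehat{\cal E})$, and collect terms: the $(ds)(\xi)$-term survives unchanged, and the remaining terms assemble into $s \cdot \big((dg^{-1})(\xi)\,g + g^{-1}A(\xi)g\big) = s\,(dg^{-1}g + g^{-1}Ag)(\xi)$. This identifies $A^g = dg^{-1}g + g^{-1}Ag$.

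For (iii), I would differentiate the identity $g^{-1}g = \Id$ using the (trivial) Leibniz rule for $d$ on $\Endsheaf_{\widehat{\cal O}_X}(\widehat{\cal E})$: $d(g^{-1}g) = (dg^{-1})g + g^{-1}(dg) = d(\Id) = 0$, hence $dg^{-1}g = -g^{-1}dg$. One subtlety to flag: $d$ as applied to endomorphism-valued objects need not be purely even in general, but here $d$ is the \emph{trivial} connection induced from a fixed trivialization, which \emph{is} even, so $d(\Id) = 0$ and no extra sign-factor enters this step. The main obstacle I anticipate is bookkeeping of the $\mathbb{Z}/2$-graded sign-factors and the left-versus-right action conventions — in particular making sure that when $\xi$ is odd, the parity-conjugations applied to $s$, to $g^{-1}$, and to the coefficients all combine consistently so that the $g$'s cancel cleanly and $A^g$ comes out trivialization-independent in its odd part (as it must, by the analogue of Lemma/Definition~2.1.9). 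Once the conventions of Convention~2.2.1 and Definition~1.3.1 are applied mechanically, the computation is routine; the care is entirely in not dropping or mis-signing a term.
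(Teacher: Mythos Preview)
Your plan is correct and is essentially the paper's approach: the paper first checks the pre-connection axioms (the ${\Bbb Z}/2$-graded Leibniz rule) for $\widehat{\nabla}^g$ abstractly and then computes $A^g$ by exactly the expansion you outline, but your reversed order---compute $A^g$ first, then invoke the definition of a simple hybrid connection---is logically equivalent and slightly more economical. One small slip: the result you want to cite for ``any $s \mapsto ds + sB$ is a simple hybrid connection'' is Lemma/Definition~2.2.6, not 2.2.7 (which is the statement you are proving).
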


\medskip

\begin{proof}
 That $\widehat{\nabla}^g_{f\xi}s= f\widehat{\nabla}^g_\xi s$ is clear.
 The ${\Bbb Z}/2$-Graded Leibniz Rule Condition on $\widehat{\nabla}^g$ follows from
  $$
   \begin{array}{rcl}
    \widehat{\nabla}^g_\xi  s   & =    &
	   \left(   \widehat{\nabla}_{\!\xi} ( f\cdot sg^{-1}) \right)g \;\;
	      =\;\;      \left(    (\xi f)\cdot sg^{-1}\,
		                                +\, (-1)^{p(\xi)p(f)}\, f\cdot \widehat{\nabla}_{\!\xi}(sg^{-1})
				   	   \right) g    \\[1.2ex]
    & = & 					
      (\xi f)s\,+\, (-1)^{p(\xi)p(f)}\, f\cdot
                         \left(   \widehat{\nabla}_{\!\xi}(sg^{-1})   \right) g\;\;
       =\;\; (\xi f)s\,+\, (-1)^{p(\xi)p(f)}\, f
	                                       \cdot \widehat{\nabla}^g_{\xi}s\,.						 
   \end{array}
  $$
 This shows that $\widehat{\nabla}^g$ is a pre-connection.
 
 Now fix a trivialization of $\widehat{\cal E}$.
 This induces a trivialization of $\Autsheaf_{\widehat{\cal O}_X}(\widehat{\cal E})$.
 With respect to these trivializations,
  $$
   \begin{array}{rcl}
    \widehat{\nabla}^g_\xi s     & = &
	   \left(\widehat{\nabla}_{\!\xi}(sg^{-1})    \right)g\;\;
	   =\;\; \left(   \xi(sg^{-1})\,+\, \,\!^{\varsigma_\xi}\!(sg^{-1}\,A(\xi))
	           \right)g   \\[1.2ex]
    & = &
	 \left( (\xi s)g^{-1}\,+\,  \,\!^{\varsigma_\xi}s(\xi g^{-1}) \,
	                  + \, \,\!^{\varsigma_\xi}s\cdot\,\!^{\varsigma_\xi}g^{-1}\cdot A(\xi)
	 \right) g    \\[1.2ex]
	& = &   \xi s \,
	               +\, \,\!^{\varsigma_\xi}s(\xi g^{-1}) g\,
	               +\, \,\!^{\varsigma_\xi}s\cdot\,\!^{\varsigma_\xi}g^{-1}\cdot A(\xi)  g \\[1.2ex]
	& = &
	  \xi s \,+\, (\xi)\,\!^{\leftarrow}\!\!\!\left(
	                              s(dg^{-1}g)\,+\, s(g^{-1}A g)
								                                            \right)          \\[1.2ex]
    & = & 			(\xi)\,\!^{\leftarrow}\!\!\!\left(
                                  ds + s(dg^{-1}g+ g^{-1}A g)
	                                                                        \right)     \;\;
		=:\;\; (\xi)\,\!^{\leftarrow}\!\!\!\left( ds + s A^g \right)\,.
  \end{array}
  $$
 This shows that $\widehat{\nabla}^g$ is indeed a simple hybrid connection.
 
\end{proof}

\bigskip

Note that, as illustrated in Explanation~2.1.7,
similar statement holds for a left connection only when the gauge transformation is parity-preserving.

\vspace{8em}

\begin{flushleft}
{\bf  The curvature tensor of a simple hybrid connection on $\widehat{\cal E}$ and its components}
\end{flushleft}
%
%
%
\begin{lemma-definition} {\bf [curvature tensor of simple hybrid connection $\widehat{\nabla}$]}\;
 Let $\widehat{\nabla}$ be a simple hybrid connection on $\widehat{\cal E}$
   with $\widehat{\nabla}s = ds + sA$ with respect to a fixed trivialization of $\widehat{\cal E}$.
 Let
   $$
      F^{\widehat{\nabla}}\; :=\;  dA\, -\, A\wedge A\,,
   $$
  where
    \begin{itemize}
	 \item[\LARGE $\cdot$]
      the $d$ on
       $\Endsheaf_{\widehat{\cal O}_X}(\widehat{\cal E})
           \otimes_{\widehat{\cal O}_X}\!{\cal T}^{\ast}_{\widehat{X}}$
       comes from the usual exterior differential $d$ on ${\cal T}^{\ast}_{\widehat{X}}$
	   and the trivial left connection $d$ associated to the induced trivialization
	    of  $\Endsheaf_{\widehat{\cal O}_X}(\widehat{\cal E})$ from that of $\widehat{\cal E}$,  and

     \item[\LARGE $\cdot$]
      the wedge-product $A\wedge A$ comes
	    from the wedge-product of $1$-forms on $\widehat{X}$ and
		the ring-multiplication in $\Endsheaf_{\widehat{\cal O}_X}(\widehat{\cal E})$
		subject to the ${\Bbb Z}\times {\Bbb Z}/2$-bi-grading passing rule Convention~1.3.5.
   \end{itemize}
  Then,  under  a gauge transformation
    $g\in  \Autsheaf_{\widehat{\cal O}_X}(\widehat{\cal E})
	   \subset    \Endsheaf_{\widehat{\cal O}_X}(\widehat{\cal E})$,
	$$
	   F^{\widehat{\nabla}^g}\;=\; g^{-1}\,F^{\widehat{\nabla}}\, g\,.
	$$
  In particular,
    $F^{\widehat{\nabla}}
	  \in \Endsheaf_{\widehat{\cal O}_X}(\widehat{\cal E})
               \otimes_{\widehat{\cal O}_X}\! \bigwedge^2{\cal T}^{\ast}_{\widehat{X}}$
	 is independent of the trivialization of $\widehat{\cal E}$.
 {\rm  We shall call $F^{\widehat{\nabla}}$ the {\it curvature tensor}
   of the simple hybrid connection $\widehat{\nabla}$.}	
\end{lemma-definition}

\medskip

\begin{proof}
 Recall Lemma~2.2.7 that
  $\widehat{\nabla}s = ds + s A^g$, with $A^g=  dg^{-1}g + g^{-1}A g$
  with respect to the same fixed trivialization of $\widehat{\cal E}$.
 Thus,
  $$
	 dA^g \;
	   =\;  -\, dg^{-1}\wedge dg\,
	          + \, dg^{-1}\wedge Ag \,+\, g^{-1}dA g\,  -\, g^{-1}A \wedge dg
 $$
 while
 \begin{eqnarray*}
  \lefteqn{
     A^g\wedge A^g\;
	   =\; ( dg^{-1}g + g^{-1}A g) \wedge ( dg^{-1}g + g^{-1}A g) }\\[-.2ex]
  && =\; (dg^{-1}g)\wedge (dg^{-1}g)\, +\, (dg^{-1}g)\wedge (g^{-1}Ag)\,
                +\, (g^{-1}Ag)\wedge (dg^{-1}g)\,+\, (g^{-1}Ag)\wedge (g^{-1}Ag)\\[-.2ex]
  && =\; -\, dg^{-1}\wedge dg\, +\, dg^{-1}\wedge Ag \,
              -\, g^{-1}A\wedge dg\, +\, g^{-1}A\wedge A g\,.
 \end{eqnarray*}
 Here,  the identity $dg^{-1}g= - g^{-1}dg$ is used.
 It follows that
  $$
   dA^g-A^g\wedge A^g\;
     =\;  g^{-1}(dA - A\wedge A)g\,.
  $$
 This proves the lemma.
    
\end{proof}

\medskip

\begin{lemma} {\bf [components of curvature tensor]}\;
 In terms of the supersymmetrically invariant frame $\{e_I\}_I$
  and coframe $\{e^J\}_J$ on $\widehat{X}$
    (Definition~1.4.7 and Definition~1.4.8),
 let\\ $F^{\widehat{\nabla}}=\sum_{I,J}e^I\wedge e^J F_{IJ}$.
 Then,
    \begin{eqnarray*}
    (e_I, e_J, s)\,\!^{\leftarrow}\!\!\!F^{\widehat{\nabla}}
	 & = & (-1)^{p(s)(p(e_I)+p(e_j))}\,s\,
	        \mbox{\Large $($}(-1)^{p(e_I)p(e_J)}F_{IJ}\,-\, F_{JI}
			\mbox{\Large $)$}  \\
	& = &  \left(
	          \,\!^L\![\, \widehat{\nabla}_{e_I}\,,\, \widehat{\nabla}_{e_J}\}\,
			       -\, \widehat{\nabla}_{[e_I, e_J\}}
			\!\right) s\,.	
   \end{eqnarray*}
  Here
   for two left-right-mixed operators $O_1=a_1+ c_1^\circ $ and $O_2=a_2+c_2^{\circ}$
     with $a_1$ and $a_2$ parity homogeneous and $c_1$ and $c_2$ in arbitrary parity situation,
   $$
      \,\!^L\![O_1, O_2   \}\;  :=\;  O_1O_2- (-1)^{p(a_1)p(a_2)}\,O_2O_1\,.
   $$
\end{lemma}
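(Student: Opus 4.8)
The plan is to verify the two displayed identities separately and then combine them. The right-hand equality, expressing $(e_I, e_J, s)^{\leftarrow}F^{\widehat{\nabla}}$ in terms of $\,\!^L[\widehat{\nabla}_{e_I}, \widehat{\nabla}_{e_J}\} - \widehat{\nabla}_{[e_I,e_J\}}$, is purely computational: I would fix a trivialization of $\widehat{\cal E}$ so that $\widehat{\nabla} = d + A^{\circ}$, write $\widehat{\nabla}_{e_I} = e_I + A(e_I)^{\circ}$ (recalling from Convention~2.2.1 that $d$ acts from the left and the endomorphism $A(e_I)$ acts from the right), and expand the operator composition $\,\!^L[\widehat{\nabla}_{e_I}, \widehat{\nabla}_{e_J}\}$ on a section $s$. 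The cross-terms $e_I\!\circ\! A(e_J)^{\circ}$ and $A(e_I)^{\circ}\!\circ\! e_J$ must be handled with care: since $d$ acts from the left, $e_I(s\,A(e_J)) = (e_I s)A(e_J) + (-1)^{p(e_I)p(s)}\,\!^{\varsigma_{e_I}}\!s\cdot e_I(A(e_J))$, which is where the $dA$ contribution appears; the product $A(e_I)^{\circ}\!\circ\! A(e_J)^{\circ}$ produces, after the $\,\!^L[\,\cdot,\cdot\}$ antisymmetrization and subtracting $\widehat{\nabla}_{[e_I,e_J\}} = [e_I,e_J\} + A([e_I,e_J\})^{\circ}$, exactly $-A\wedge A$ evaluated on $(e_I,e_J)$, once the term $A([e_I,e_J\})$ is cancelled by the piece of $e_I\! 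A(e_J) - (-1)^{\bullet}e_J\! A(e_I)$ coming from the Lie bracket. This is bookkeeping with the $\mathbb{Z}\times(\mathbb{Z}/2)$-sign rule of Convention~1.3.5 and the evaluation-from-the-right convention of Definition~1.3.17 and Lemma/Definition~2.2.6; I would organize it so that the $(-1)^{p(s)(p(e_I)+p(e_J))}$ prefactor is pulled out at the very end (it simply records moving $s$ past the two derivation-slots of the $2$-form).

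For the left-hand equality, $(e_I,e_J,s)^{\leftarrow}F^{\widehat{\nabla}} = (-1)^{p(s)(p(e_I)+p(e_J))}\, s\big((-1)^{p(e_I)p(e_J)}F_{IJ} - F_{JI}\big)$, I would start from $F^{\widehat{\nabla}} = \sum_{I,J} e^I\wedge e^J\,F_{IJ}$ and apply Definition~1.3.17 to evaluate the $2$-form part $e^I\wedge e^J$ on the pair $(e_I, e_J)$ from the right. By Definition~1.3.12, $e^I\wedge e^J = e^I\otimes e^J - (-1)^{p(e^I)p(e^J)} e^J\otimes e^I$, and using $e^K(e_L) = \delta_{KL}$ together with the sign prescription $(-1)^{^{\varsigma}\tau}_{\bullet}$ and the parity-passing signs in the displayed formula of Definition~1.3.17, the evaluation of $\sum_{I,J} e^I\wedge e^J\,F_{IJ}$ on $(e_I,e_J;s)$ collapses to the two diagonal-in-labels terms $(-1)^{p(e_I)p(e_J)}F_{IJ} - F_{JI}$, with $s$ carried through from the right and thus picking up $(-1)^{p(s)(p(e_I)+p(e_J))}$ since $F_{IJ}$ has cohomological degree $0$ and parity $p(e_I)+p(e_J)$ (as $F^{\widehat{\nabla}}$ is even of cohomological degree $2$). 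Here I would invoke Lemma~1.3.13 / Lemma~1.3.14 only implicitly, through the already-established wedge and evaluation formalism.

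The main obstacle I anticipate is not conceptual but the reconciliation of the two sign conventions simultaneously in play: derivations and $d$ act from the \emph{left} while $\Endsheaf_{\widehat{\cal O}_X}(\widehat{\cal E})$ acts from the \emph{right}, and the parity-conjugation operators $\,\!^{\varsigma_{e_I}}(\cdot)$ appear whenever $e_I$ is passed across a section. Getting the term $-A\wedge A$ (rather than $+A\wedge A$ or a version with a stray $(-1)^{\bullet}$) requires tracking the sign in $e_I(s A(e_J)) $ versus the sign in the commutator $\,\!^L[\widehat{\nabla}_{e_I},\widehat{\nabla}_{e_J}\}$, and checking that the $A([e_I,e_J\})$-term genuinely cancels; the cleanest route is to first establish the identity with everything even (the classical case, where it reduces to the standard $F = dA + A\wedge A$ up to the sign convention fixed in Lemma/Definition~2.2.9), then argue that both sides are additive over parity-homogeneous pieces and that the ${\mathbb Z}/2$-graded corrections on the two sides match term by term. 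Once the sign is pinned down in one representative parity configuration (say $e_I$ odd, $e_J$ odd, $s$ odd, which exercises all the $\,\!^{\varsigma}$-factors), the general case follows by linearity, and gauge-independence of the statement is already guaranteed by Lemma/Definition~2.2.9.
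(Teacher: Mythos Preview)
Your proposal is correct and follows essentially the same route as the paper: expand $\widehat{\nabla}_{e_I}=e_I+A_I^{\circ}$, compute the three pieces $\widehat{\nabla}_{e_I}\widehat{\nabla}_{e_J}s$, $-(-1)^{p(e_I)p(e_J)}\widehat{\nabla}_{e_J}\widehat{\nabla}_{e_I}s$, $-\widehat{\nabla}_{[e_I,e_J\}}s$, and match against the explicit $F_{IJ}=\tfrac{1}{2}\sum_K c^K_{IJ}A_K-e_JA_I-\,^{\varsigma_{e_J}}\!A_IA_J$ extracted from $dA-A\wedge A$. One organizational difference: the paper does not reduce to the even case first but carries all parity signs through directly, and it obtains the structure-constant contribution $\sum_K c^K_{IJ}A_K$ from $de^K=\tfrac{1}{2}\sum c^K_{IJ}e^I\wedge e^J$ inside $dA$, rather than via a Cartan formula; your remark that ``$A([e_I,e_J\})$ is cancelled by the piece of $e_I A(e_J)-\cdots$'' is therefore slightly off in wording---that term does not cancel but \emph{matches} the $-\sum_K c^K_{IJ}A_K$ appearing on both sides, and the paper pins down its sign by noting $c^K_{IJ}\ne 0$ only when $p(e_I)p(e_J)=1$.
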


\medskip

\begin{proof}
 The first equality follows from the identity
    \begin{eqnarray*}
	 \lefteqn{
    (e_I, e_J)\,\!^\leftarrow\!\!\!(e^K\wedge e^L)\;
	    =\;     (e_I, e_J) \,\!^{\leftarrow}\!\!\!
                   \mbox{\Large $($}
				     e^K\otimes e^L - (-1)^{p(e_K)p(e_L)}\, e^L\otimes e^K
					\mbox{\Large $)$}		}\\[-.2ex]
    && =\; (-1)^{p(e_I)p(e_J)}\,\delta_{IK}\delta_{JL}\,-\, \delta_{IL}\delta_{JK}	
	               \hspace{12em}
   \end{eqnarray*}
   and the sign-rule for passings (Convention~1.3.5).
 We now turn to the second equality.

 To reduce some load of notations, write
    $A = \sum_I e^I A_I$   and
	$[e_I, e_K\}= \sum_K c_{IJ}^K e_K$.
  Recall that $c_{IJ}^K\in {\Bbb C}$ (and hence are even)  and
     that $de^K=\frac{1}{2}\sum_{I,J}c^K_{IJ}e^I\wedge e^J$; Definition~1.4.8.
 Then,
   \begin{eqnarray*}
    \lefteqn{
	 F^{\widehat{\nabla}}\;
	      =\; dA-A\wedge A\;
	      =\; \sum_I de^I A_I\,-\, \sum_I e^I\wedge dA_I\, -\, \sum_{I,J}(e^IA_I)\wedge (e^J A_J)
	                }\\
	 && =\; \sum_{I,J}e^I\wedge e^J
                 	 \left(\rule{0ex}{1em}\right.
	                    \frac{1}{2}\,\sum_K c^K_{IJ}A_K\,
						 -\, e_JA_I\, -\, \,\!^{\varsigma_{e_{\!J}}}\!A_I A_J
	                 \left.\rule{0ex}{1em}\right)\;
            =:\; \sum_{I,J}e^I\wedge e^J\,F_{IJ}\,.					
   \end{eqnarray*}
 Thus, for $s\in \widehat{\cal E}$ parity homogeneous,
  \begin{eqnarray*}
    \lefteqn{
	  (e_I, e_J, s)\,\!^\leftarrow \!\!\! F^{\widehat{\nabla}}\;
	     =\;  \sum_{K,L}
		            (-1)^{p(s)(p(e^K)+p(e_L))}
		           (e_I,e_L)\,\!^\leftarrow\!\!\!(e^K\wedge e^L) \cdot  s F_{KL}
	           }\\
    &&
	 =\; (-1)^{p(s)(p(e_I)+p(e_j))}\,s\,
	        \mbox{\Large $($}(-1)^{p(e_I)p(e_J)}F_{IJ}\,-\, F_{JI}
			\mbox{\Large $)$}  \\[.6ex]
	&&
     =\; 	(-1)^{p(s)(p(e_I)+p(e_J))}\, s\,
	          \left(\rule{0ex}{1em}\right.
			   -\,(-1)^{p(e_I)p(e_J)}\,e_JA_I + e_IA_J  \\
    &&  \hspace{12em}			
		       -\, (-1)^{p(e_I)p(e_J)}\cdot\,\!^{\varsigma_{e_{\!J}}}\!A_I A_J\,
               +\, \,\!^{\varsigma_{e_{\!I}}}\!A_J A_I\,			
	           -\, \sum_K c_{IJ}^K A_K
	          \left.\rule{0ex}{1em}\right)\,.
  \end{eqnarray*}
  Here, note that $p(e_I)=p(e^I)$ for all $I$  and
  we've used the observation that
   $$
     (-1)^{p(s)(p(e_I)+p(e_J))+ p(e_I)p(e_J)}s\,\sum_k c_{IJ}^K A_K\;
	   =\; -\, (-1)^{p(s)(p(e_I)+p(e_J))}\, s\,\sum_K c_{IJ}^K A_K
   $$
   since $c_{IJ}^K\ne 0$
      only when $(I,J)=(\alpha^{\prime}, \beta^{\prime\prime})$
	            or $(\beta^{\prime\prime},\alpha^{\prime})$,
		in which case $p(e_I)p(e_J)=1$.
 
 On the other hand, since the applying-from-the-left part of $\widehat{\nabla}$ is $d$, which is even,
  $$
   \begin{array}{rll}
      \mbox{\Large $($}
	          \,\!^L\![\, \widehat{\nabla}_{\!e_I}\,,\, \widehat{\nabla}_{\!e_J}\}\,
			       -\, \widehat{\nabla}_{[e_I, e_J\}}
				\mbox{\Large $)$}s
    & =  & \widehat{\nabla}_{\!e_I} \widehat{\nabla}_{\!e_J}s\,
                -\, (-1)^{p(e_I)p(e_J)}\,
 		               \widehat{\nabla}_{\!e_J} \widehat{\nabla}_{\!e_I} s\,
		        -\, \widehat{\nabla}_{[e_I, e_J\}}s  \\[1.2ex]
	& =: &  \mbox{(I)}\,+\, \mbox{(II)}\, +\, \mbox{(III)}\,.
   \end{array}	
  $$
  We now proceed to work out the three summands (I), (II), and (III).
  
  $$
  \begin{array}{rrl}
	  \mbox{(I)} &  :=
        &	  \widehat{\nabla}_{\!e_I} \widehat{\nabla}_{\!e_J}s\;\;
	         =\;\;  \widehat{\nabla}_{\!e_I}
		             \mbox{\Large $($}e_Js +(-1)^{p(s)p(e_J)}sA_J \mbox{\Large $)$}  \\[1.2ex]
     &  =
	   &   e_I  \mbox{\Large $($}e_Js +(-1)^{p(s)p(e_J)}sA_J \mbox{\Large $)$}\,
	          +\, \,\!^{\varsigma_{e_{\!I}}}\!
			     \mbox{\Large $($}e_Js +(-1)^{p(s)p(e_J)}sA_J \mbox{\Large $)$}A_I  \\[1.2ex]
	 & =
       &	e_Ie_J s\,
	        +\,  (-1)^{p(s)p(e_J)}\,(e_Is)A_J\,
			+\, (-1)^{p(s)(p(e_J)+p(e_I))}\,s\cdot e_IA   \\[.8ex]
	 && \hspace{4em}
			+\, (-1)^{p(e_I)p(e_J)+ p(e_I)p(s)}\, (e_Js)A_I\,
			+\, (-1)^{p(s)(p(e_J)+p(e_I))}\,
			      s\cdot \,\!^{\varsigma_{e_{\!I}}}\!A_J A_I\,.
  \end{array}
  $$
  With $I\leftrightarrow J$ and the sign-factor added,
  $$
  \begin{array}{rrl}
	  \mbox{(II)}    &  :=
         & -\, (-1)^{p(e_I)p(e_J)}\,
			   \widehat{\nabla}_{\!e_J} \widehat{\nabla}_{\!e_I} s      \\[1.2ex]
      &  =
         &	  -\,(-1)^{p(e_I)p(e_J)}\, e_Je_I s  \\[.8ex]
	  && \hspace{2em}
	        -\,  (-1)^{p(e_I)p(e_J)+p(s)p(e_I)}\,(e_Js)A_I\,
			-\, (-1)^{p(e_I)p(e_J)+p(s)(p(e_I)+p(e_J))}\,s\cdot e_JA   \\[.8ex]
	 && \hspace{4em}
			-\, (-1)^{p(e_J)p(s)}\, (e_Is)A_J\,
			-\, (-1)^{p(e_I)p(e_J)+p(s)(p(e_I)+p(e_J))}\,
			      s\cdot \,\!^{\varsigma_{e_{\!J}}}\!A_I A_J\,.
  \end{array}
  $$
  $$
   \begin{array}{rrl}
	  \mbox{(III)}  &  :=
        &    -\, \widehat{\nabla}_{[e_I, e_J\}}s\;\;
           =\;\; -\,[e_I, e_J \}s   \,
	           -\, (-1)^{p(s)(p(e_I)+p(e_J))}\, s \cdot ([e_I, e_J\})\,\!^\leftarrow\!\!\!A  \\[1.2ex]
	 & =
       & -\,[e_I, e_J \}s   \,
	              -\, (-1)^{p(s)(p(e_I)+p(e_J))}\, s\, \sum_K  c_{IJ}^K A_K\, .
   \end{array}
  $$		
 After summing (I), (II), and (III),
   one find that all terms that involve derivations on $s$ cancel:
  \begin{eqnarray*}
   \lefteqn{
     \mbox{\Large $($}
	          \,\!^L\![\, \widehat{\nabla}_{\!e_I}\,,\, \widehat{\nabla}_{\!e_J}\}\,
			       -\, \widehat{\nabla}_{[e_I, e_J\}}
				\mbox{\Large $)$}s\;\;
	      =\;\;   \mbox{(I)}\,+\, \mbox{(II)}\, +\, \mbox{(III)}     }\\[.6ex]
	 &&
	  =\;   (-1)^{p(s)(p(e_J)+p(e_I))}\,s\cdot e_IA\,
			    +\, (-1)^{p(s)(p(e_J)+p(e_I))}\,
			             s\cdot \,\!^{\varsigma_{e_{\!I}}}\!A_J A_I    \\
	  && \hspace{2em}
			-\, (-1)^{p(e_I)p(e_J)+p(s)(p(e_I)+p(e_J))}\,s\cdot e_JA
			-\, (-1)^{p(e_I)p(e_J)+p(s)(p(e_I)+p(e_J))}\,
			      s\cdot \,\!^{\varsigma_{e_{\!J}}}\!A_I A_J   \\
      && \hspace{4em}
	  	  -\, (-1)^{p(s)(p(e_I)+p(e_J))}\, s\, \mbox{$\sum$}_K  c_{IJ}^K A_K \\[.6ex]
	 && =\;  	(e_I, e_J, s)\,\!^\leftarrow\!\!\! F^{\widehat{\nabla}}\,.		
  \end{eqnarray*}
  
  This completes the proof.
		
\end{proof}

\bigskip

\begin{flushleft}
{\bf The induced connection on $\widehat{\cal E}^{\vee}$ and
         $\Endsheaf_{\widehat{\cal O}_X}\!(\widehat{\cal E})$}
\end{flushleft}
Let
 $\widehat{\cal E}^{\vee}
   := \Homsheaf_{\Left\widehat{\cal O}_X}(\widehat{\cal E}, \widehat{\cal O}_X)$
   be the right dual bi-$\widehat{\cal O}_X$-module of $\widehat{\cal E}$.
It applies to $\widehat{\cal E}$ from the right (of $\widehat{\cal E}$)  and
there is a built-in evaluation map
 $\widehat{\cal E}\otimes_{\widehat{\cal O}_X} \widehat{\cal E}^{\vee}
    \rightarrow \widehat{\cal O}_X$,
 with $s\otimes \tilde{t} \mapsto (s)\,\!^\leftarrow\!\!\! \tilde{t}$.
 %
For a right endomorphism
 $\widehat{m}
    \in\Endsheaf_{\widehat{\cal O}_X}\!(\widehat{\cal E}) $ of $\widehat{\cal E}$,
 denote by
  $\widehat{m}^{\!\vee}\in
    \Endsheaf_{\Right\widehat{\cal O}_X}\!(\widehat{\cal E}^{\vee})$
 its dual left endomorphism of $\widehat{\cal E}^{\vee}$,
 defined by
 $(s\widehat{m})\,\!^\leftarrow\!\!\! \tilde{t}
    = (s)\,\!^\leftarrow\!\!\! (\widehat{m}^{\!\vee} \tilde{t})$
 for all $s\in \widehat{\cal E}$ and $\tilde{t}\in \widehat{\cal E}^{\vee}$.
In terms of these notations, one has the following two Lemma/Definitions.

\bigskip

\begin{lemma-definition} {\bf [induced left connection on right dual $\widehat{\cal E}^{\vee}$]}\;
 Let $\widehat{\nabla}$ be a simple hybrid connection on $\widehat{\cal E}$
   with $\widehat{\nabla}s= ds+sA$ with respect to a fixed trivialization of $\widehat{\cal E}$.
 Then
  $$
    \widehat{\nabla}^{\vee}t\;   :=\;  dt\, -\, A^{\!\vee}t\,,
  $$
  with respect to the dual trivialization of $\widehat{\cal E}^{\vee}$,
  defines a left connection on $\widehat{\cal E}^{\vee}$.
 It satisfies the identity
  $$
    d((s)\,\!^\leftarrow\!\!\! t)\;
	 =\;  (\widehat{\nabla}s)\,\!^\leftarrow\!\!\! \tilde{t}\,
	        +\, (s)\,\!^\leftarrow\!\!\! (\widehat{\nabla}^{\vee}\tilde{t})\,.
  $$
  
 {\rm $\widehat{\nabla}^{\vee}$ is called the
     {\it induced (left) connection} on $\widehat{\cal E}^{\vee}$
    from $\widehat{\nabla}$ on $\widehat{\cal E}$.}
\end{lemma-definition}

\medskip

\begin{proof}
 That $\widehat{\nabla}^\vee$ is a left connection on $\widehat{\cal E}^{\vee}$
  follows from Sec.\ 2.1.
 The identity follows from a direct computation:
 \begin{eqnarray*}
  (\widehat{\nabla}s)\,\!^\leftarrow\!\!\! \tilde{t}\,
	        +\, (s)\,\!^\leftarrow\!\!\! (\widehat{\nabla}^{\vee}\tilde{t})
     & = &   (ds+sA)\,\!^\leftarrow\!\!\! \tilde{t}\,
	        +\, (s)\,\!^\leftarrow\!\!\! (d\tilde{t}-A^{\!\vee}\tilde{t})\\
    & = &   \mbox{\Large $($}
	             (ds)\,\!^\leftarrow\!\!\! \tilde{t}\,
	        +\, (s)\,\!^\leftarrow\!\!\! (d\tilde{t})
			    \mbox{\Large $)$}\,
            +\, \mbox{\Large $($}
			     (sA)\,\!^\leftarrow\!\!\! \tilde{t}\,
	                -\, (s)\,\!^\leftarrow\!\!\! (A^{\!\vee}\tilde{t})
			     \mbox{\Large $)$} \;\; =\;\;    d((s)\,\!^\leftarrow\!\!\! t)\,.
 \end{eqnarray*}
\end{proof}

\medskip

\begin{remark} $[\,A^{\vee}$ in form of matrix\,$]$\; {\rm
 If $s\in \widehat{\cal E}$ are taken as row vectors under the trivialization and
    $\tilde{t}\in \widehat{\cal E}^{\vee}$ are taken as column vectors under the dual trivialization,
 let $A$ be in the form of a matrix that applies to $\widehat{\cal E}$ from the right:
  $s\mapsto sA$ as matrix multiplications.
 Then in the form of a matrix that applies to $\widehat{\cal E}$ from the left,
   $A^{\vee}= A$. I.e.\ $A^{\vee}: \tilde{t}\mapsto A\tilde{t}$.
}\end{remark}

\medskip

\begin{lemma-definition} {\bf [induced hybrid connection $\widehat{D}$ on
            $\Endsheaf_{\widehat{\cal O}_X}\!(\widehat{\cal E}) $]}\;
 Let $\widehat{\nabla}$ be a simple hybrid connection on $\widehat{\cal E}$
   with $\widehat{\nabla}s= ds+sA$ with respect to a fixed even trivialization of $\widehat{\cal E}$.
 Then, (assuming $\widehat{m}$ parity homogeneous
              when using $\,\!^{\varsigma_{\widehat{m}}}\!(\mbox{\tiny $\bullet$})$)
  $$
    \widehat{D}\widehat{m}\;   :=\;  d\widehat{m}\, -\, [A, \widehat{m}\}\;
	 =\; (d\widehat{m} - A\widehat{m} )
	       + \widehat{m}\, \,\!^{\varsigma_{\widehat{m}}}\!\!A\;\;
	 =:\; \widehat{D}^{(\leftscriptsize)} \widehat{m}\,
	       +\widehat{m}\, \,\!^{\varsigma_{\widehat{m}}}\!\!A
  $$
  with respect to the induced trivialization of
   $\Endsheaf_{\widehat{\cal O}_X}\!(\widehat{\cal E}) $,
  defines a hybrid connection on
  $\Endsheaf_{\widehat{\cal O}_X}\!(\widehat{\cal E}) $.
 As a ${\Bbb C}$-bilinear pairing
  $\widehat{D} : {\cal T}_{\widehat{X}} \times
     \Endsheaf_{\widehat{\cal O}_X}\!(\widehat{\cal E})
	  \rightarrow   \Endsheaf_{\widehat{\cal O}_X}\!(\widehat{\cal E})$
	with $(\xi, \widehat{m}) \mapsto \widehat{D}_\xi \widehat{m}$,
  it satisfies the following three properties:\footnote{In
                                                    this work,
                                                    we've defined the notion of
													  {\sl left connections} and {\sl simple hybrid connectons}.
	                                               The induced connection $\widehat{D}$ on
												      $\Endsheaffootnotesize_{\widehat{\cal O}_X}\!
													   (\widehat{\cal E}) $
													 from the simple hybrid connection
													   $\widehat{\nabla}$ on $\widehat{\cal E}$
												     is of neither kind.
												   Since this is the only connection we will use
												     that has not been covered yet in this work,
													we list its specific characterizing properties here   and
                                                    leave the development of the notion of general {\it hybrid connections}
													 on an $\widehat{\cal O}_X$-module to the future.
   													}  
   \begin{itemize}
    \item[$(1)$]
     $[${\sl $\widehat{\cal O}_X$-linearity
	                                   in the ${\cal T}_{\widehat{X}}$-argument}$]$\\[.6ex]	
	  $\mbox{\hspace{.2em}}$
	  $\widehat{D}_{f_1\xi_1 + f_2\xi_2}\widehat{m}\;
	     =\; f_1 \widehat{D}_{\xi_1}\widehat{m}
		      + f_2 \widehat{D}_{\xi_2}\widehat{m}$, \hspace{.6em}
      for $f_1, f_2 \in \widehat{\cal O}_X$, $\xi_1, \xi_2 \in {\cal T}_{\widehat{X}}$,  and
	       $\widehat{m}\in \Endsheaf_{\widehat{\cal O}_X}\!(\widehat{\cal E})$;

    \item[$(2)$]
	 $[${\sl ${\Bbb C}$-linearity in the
	        $\Endsheaf_{\widehat{\cal O}_X}\!(\widehat{\cal E})$-argument}$]$\\[.6ex]
	 $\mbox{\hspace{.2em}}$
	 $\widehat{D}_\xi(c_1\widehat{m}_1+c_2 \widehat{m}_2)\;
	     =\;  c_1 \widehat{D}_\xi \widehat{m}_1
		    + c_2 \widehat{D}_\xi \widehat{m}_2$,\;\;
	  for $c_1, c_2\in {\Bbb C}$, $\xi\in {\cal T}_{\widehat{X}}$, and
	       $\widehat{m}_1, \widehat{m}_2
		     \in \Endsheaf_{\widehat{\cal O}_X}(\widehat{\cal E})$;
	
    \item[$(3)$]
	 $[${\sl general ${\Bbb Z}/2$-graded Leibniz rule
	    in the $\Endsheaf_{\widehat{\cal O}_X}\!(\widehat{\cal E})$-argument}$]$\\[.6ex]
	 $\mbox{\hspace{.2em}}$
	 $\widehat{D}_\xi(f\widehat{m})\;
	   =\; (\xi f)\widehat{m}
	           + (-1)^{p(f)p(\xi)}\,f
			       \cdot\,\!^{\varsigma_{\!f}}\!(\widehat{D})_\xi
				                  \widehat{m}$,\\[.6ex]
      for $f\in \widehat{\cal O}_X$, $\xi\in {\cal T}_{\widehat{X}}$ parity homogeneous
	       and $\widehat{m}\in\Endsheaf_{\widehat{\cal O}_X}\!(\widehat{\cal E})$.
   \end{itemize}
 Which justifies $\widehat{D}$ to be taken as a connection.
 Note that Property (3) can be written as
   $$
     \widehat{D}(f\widehat{m})\;
	 =\; df\cdot \widehat{m}\,
	       +\, f\cdot \,\!^{\varsigma_{\!f}}\!\!\widehat{D}\,\widehat{m}
   $$
  as well.
 Furthermore,
  \begin{itemize}
   \item[$(4)$]
   $[${\sl ${\Bbb Z}/2$-graded Leibniz rule with respect to product
	             in $\Endsheaf_{\widehat{\cal O}_X}\!(\widehat{\cal E}) $}$]$\\[.6ex]
    $\mbox{\hspace{.2em}}$
	$\widehat{D}(\widehat{m}_1\widehat{m}_2)\;
	   =\; (\widehat{D}\widehat{m}_1)\widehat{m}_2\,
	           +\, \widehat{m}_1
			         (\,\!^{\varsigma_{\widehat{m}_{\!1}}}\!\!\widehat{D}\,
					                                                                     \widehat{m}_2)$,\\[.6ex]
     for $\widehat{m}_1$ (parity homogeneous),
	      $\widehat{m}_2\in\Endsheaf_{\widehat{\cal O}_X}\!(\widehat{\cal E}) $.
  
   \item[$(5)$]
   $[${\sl relation with $\widehat{\nabla}^{\vee}$ and $\widehat{\nabla}$ under
                canonical isomorphism
		        $\Endsheaf_{\widehat{\cal O}_X}\!(\widehat{\cal E})
				   \simeq \widehat{\cal E}^{\vee}
				                \otimes_{\widehat{\cal O}_X}\!\widehat{\cal E}$}$]$\\[.6ex]
   $\mbox{\hspace{.2em}}$								
    for $\widehat{m}=\tilde{t}\otimes s$ parity homogeneous,
	 $\;\widehat{D}\widehat{m}
	      = (\widehat{\nabla}^{\vee}\tilde{t})\otimes s
		       + \tilde{t}\otimes\,\!^{\varsigma_{\widehat{m}}}\!\widehat{\nabla}\,s$\,;
   
   \item[$(6)$]
   $[${\sl relation with evaluation}$]$\\[.6ex]
   $\mbox{\hspace{.2em}}$
     $\widehat{\nabla}(s\widehat{m})\;
	    =\;   (\widehat{\nabla}s)\widehat{m}\,+\, s(\widehat{D}\widehat{m})\,
		         +\, s\widehat{m}(A-\,\!^{\varsigma_{\widehat{m}}}\!\!A)$,\;\;
	 for $s\in \widehat{\cal E}$ and
	      $\widehat{m}\in \Endsheaf_{\widehat{\cal O}_X}(\widehat{\cal E})$;\\[.6ex]	
   note that
     $A-\,\!^{\varsigma_{\widehat{m}}}\!\!A
	    =(1-(-1)^{p(\widehat{m})})\, A^{(\odd)}$
     vanishes if $\widehat{m}$ or $A$ is even;
	
   \item[$(7)$]
   $[${\sl $\widehat{D}$ as extension of $d$}$]$\\[.6ex]
   $\mbox{\hspace{.2em}}$
    $\widehat{D}$ restricts to $d$
      under the built-in ${\Bbb Z}/2$-graded-ring-inclusion
        $\widehat{\cal O}_X
	      \hookrightarrow  \Endsheaf_{\widehat{\cal O}_X}\!(\widehat{\cal E})$.
  \end{itemize}		
    
 {\rm $\widehat{D}$ is called the
     {\it induced hybrid connection} on
	   $\Endsheaf_{\widehat{\cal O}_X}\!(\widehat{\cal E})$ 	
    from the simple hybrid connection $\widehat{\nabla}$ on $\widehat{\cal E}$.}
\end{lemma-definition}

\medskip

\begin{proof}
 Properties (1), (2), and (7) are immediate.
 
 For Property (3), note that
   $[A, f\widehat{m}\}
     = [A,f\}\widehat{m}+ f[\,\!^{\varsigma_{\!f}}\!\!A, \widehat{m}\}
	 = f[\,\!^{\varsigma_{\!f}}\!\!A, \widehat{m}\}$,
	for $\widehat{m}$ parity homogeneous, since $[A, f\}=0$ for $f\in \widehat{\cal O}_X$.
 Thus,
  \begin{eqnarray*}
    \widehat{D}_\xi (f\widehat{m})
	& =  &  \xi(f\widehat{m})\, - \,(\xi)\,\!^\leftarrow\!\!\![A, f\widehat{m}\}    \\[.2ex]
	& =  &  (\xi f)\widehat{m}\,
	        +\, (-1)^{p(\xi)p(f)}\, f\cdot
			   (\xi\widehat{m}\,
			       -\, (\xi)\,\!^\leftarrow\!\!\! [\,\!^{\varsigma_{\!f}}\!\!A, \widehat{m} \})\\[.2ex]
	& = & (\xi f)\widehat{m}\,
	          +\, (-1)^{p(\xi)p(f)}\,f
			         \cdot  \,\!^{\varsigma_{\!f}}\!(\widehat{D})_{\xi} \widehat{m}\,.				   			
  \end{eqnarray*}

 Property (4) follows from the identity
  $[A, \widehat{m}_1\widehat{m}_2\}
     = [A, \widehat{m}_1\}\,\widehat{m}_2
	     + \widehat{m}_1\,[ \,\!^{\varsigma_{\widehat{m}_{\!1}}}\!\!A ,
		                                     \widehat{m}_2\}$.
    
 For Property (5), recall Lemma/Definition~2.2.10 and Remark~2.2.11.
 Then,						
  \begin{eqnarray*}
   \lefteqn{
   (\widehat{\nabla}^{\vee}\tilde{t})\otimes s
		       + \tilde{t}\otimes\,\!^{\varsigma_{\widehat{m}}}\!\widehat{\nabla}\,s\;
    =\; (d\tilde{t}-A\tilde{t})\otimes s\,
	       +\, \tilde{t}\otimes (ds+ s\, \,\!^{\varsigma_{\widehat{m}}}\!\!A )  }\\
    && =\;    (d\tilde{t})\otimes s\, +\, \tilde{t}\otimes ds\,
	                 -\,  A\tilde{t}\otimes s\,
					 +\, \tilde{t}\otimes s\, \,\!^{\varsigma_{\widehat{m}}}\!\!A\;
     =\; d\widehat{m}- [A, \widehat{m}\}\; =\; \widehat{D}\widehat{m}\,.   		
  \end{eqnarray*}
  
 For Property (6), recall also Lemma/Definition~2.2.10 and Remark~2.2.11
   and let $\widehat{m}=\tilde{t}\otimes s$ under the canonical isomorphism
   $\Endsheaf_{\widehat{\cal O}_X}\!(\widehat{\cal E})
      \simeq \widehat{\cal E}^{\vee}\otimes_{\widehat{\cal O}_X}\!\widehat{\cal E}$.
 Then,
  \begin{eqnarray*}
   \lefteqn{
  (\widehat{\nabla}s^{\prime})\widehat{m}\,
      +\, s^{\prime}(\widehat{D}\widehat{m}) \;
    =\; (ds^{\prime}+s^{\prime}A)\,\!^\leftarrow\!\!\!\tilde{t}\cdot s\,
	            +\, s^{\prime}\,\!^\leftarrow\!\!\!(d\widehat{m})\,
				-\,(s^{\prime}A\tilde{t})\cdot s\,
			   +\, (s^{\prime}\tilde{t})\otimes s\,
			              \,\!^{\varsigma_{\widehat{m}}}\!\!A) }\\
   && =\; \widehat{\nabla}(s^{\prime}\widehat{m})\,
                  +\, (s^{\prime}\tilde{t})\otimes s\,
			              \,\!^{\varsigma_{\widehat{m}}}\!\!A)\,
				  -\, (s^{\prime}\widehat{m})A\,. \hspace{16em}
  \end{eqnarray*}
    
 This completes the proof.	

\end{proof}

\bigskip

More explicitly, writing $A=\sum_I e^I A_I$, then
    $$
	 \begin{array}{c}
	  \,\!^{\varsigma_{\widehat{m}}}\!\!A \;
	  =\;  \sum_\mu e^\mu \cdot \,\!^{\varsigma_{\widehat{m}}}\!\!A_\mu
		 + (-1)^{p(\widehat{m})}\sum_{\alpha^\prime} e^{\alpha^\prime}
		                \cdot  \,\!^{\varsigma_{\widehat{m}}}\!\!A_{\alpha^\prime}
		 + (-1)^{p(\widehat{m})}\sum_{\beta^{\prime\prime}}
		              e^{\beta^{\prime\prime}}
		                \cdot  \,\!^{\varsigma_{\widehat{m}}}\!\!A_{\beta^{\prime\prime}}
    \end{array}						
   $$
  and
   $(e_I)
       \,\!^\leftarrow\!\!\!(\widehat{m}\,\,\!^{\varsigma_{\widehat{m}}}\!\!A)
	  = (-1)^{p(e_I)p(\widehat{m})}\widehat{m}\cdot
           (e_I)\,\!^\leftarrow\!\!\!\,\!^{\varsigma_{\widehat{m}}}\!\!A
	  = (-1)^{p(e_I)p(\widehat{m})}\, \,\!^{\varsigma_{\widehat{m}}}\!\!A_I$.
And one has
 $$
   \widehat{D}_{e_I}\widehat{m}\;
	 =\; e_I\widehat{m}\,-\, (e_I)\,\!^\leftarrow\!\!\![A, \widehat{m}\} \;
	 =\;  e_I\widehat{m}\,-\, A_I\widehat{m}\,
		          +\,\widehat{m} \,\,\!^{\varsigma_{\widehat{m}}}\!\!A_I\;
     =\;  e_I \widehat{m}\,-\, [A_I, \widehat{m}\}\,,
 $$
 for $\widehat{m}\in \Endsheaf_{\widehat{\cal O}_X}\!(\widehat{\cal E})$
      parity homogeneous.

\bigskip

\begin{remark} $[$\,$[\mbox{\LARGE $\cdot$} , \mbox{\LARGE $\cdot$}]$
   vs.\ $[\mbox{\LARGE $\cdot$} , \mbox{\LARGE $\cdot$}\}$ in Lemma/Definition~2.2.12\,$]$\;
{\rm							
  If one defines instead
  $$
    \widehat{D}^{\prime}\widehat{m}\;   :=\;  d\widehat{m}\, -\, [A, \widehat{m}]\;
	 =\; (d\widehat{m} - A\widehat{m} ) + \widehat{m}A\;\;
	 =:\; \widehat{D}^{(\leftscriptsize)} \widehat{m}\,+\widehat{m}A\,,
  $$
  then
  $\widehat{D}^{\prime}$ satisfies Properties (1), (2) and
   \begin{itemize}  	
    \item[$(3^\prime)$]
	 $[${\sl mixed general ${\Bbb Z}/2$-graded Leibniz rule
	    in the $\Endsheaf_{\widehat{\cal O}_X}\!(\widehat{\cal E})$-argument}$]$\\[.6ex]
	 $\mbox{\hspace{.2em}}$
	 $\widehat{D}^\prime_\xi(f\widehat{m})\;
	   =\; (\xi f)\widehat{m}
	           + (-1)^{p(f)p(\xi)}\,f
			       \cdot\,\!^{\varsigma_{\!f}}\!(\widehat{D}^{(\leftscriptsize)})_\xi
				                  \widehat{m}\,
			  + (-1)^{p(f)p(\xi)}\, f\cdot (\xi)\,\!^\leftarrow\!\!\!(\widehat{m}A)  $,\\[.6ex]
      for $f\in \widehat{\cal O}_X$, $\xi\in {\cal T}_{\widehat{X}}$ parity homogeneous
	       and $\widehat{m}\in\Endsheaf_{\widehat{\cal O}_X}\!(\widehat{\cal E})$.

   \item[$(4^\prime)$]
   $[${\sl Leibniz rule with respect to product
	             in $\Endsheaf_{\widehat{\cal O}_X}\!(\widehat{\cal E}) $}$]$\\[.6ex]
    $\mbox{\hspace{.2em}}$
	$\widehat{D}^\prime(\widehat{m}_1\widehat{m}_2)\;
	   =\; (\widehat{D}^\prime\widehat{m}_1)\widehat{m}_2\,
	           +\, \widehat{m}_1 \widehat{D}^\prime \widehat{m}_2$,
      for $\widehat{m}_1, \widehat{m}_2\in\Endsheaf_{\widehat{\cal O}_X}\!(\widehat{\cal E}) $.
		
   \item[$(5^\prime)$]		
   $[${\sl relation with $\widehat{\nabla}^{\vee}$ and $\widehat{\nabla}$ under
                canonical isomorphism
		        $\Endsheaf_{\widehat{\cal O}_X}\!(\widehat{\cal E})
				   \simeq \widehat{\cal E}^{\vee}
				                \otimes_{\widehat{\cal O}_X}\!\widehat{\cal E}$}$]$\\[.6ex]
   $\mbox{\hspace{.2em}}$								
    for $\widehat{m}=\tilde{t}\otimes s$ parity homogeneous,
	 $\;\widehat{D}^{\prime}\widehat{m}
	      = (\widehat{\nabla}^{\vee}\tilde{t})\otimes s
		       + \tilde{t}\otimes \widehat{\nabla}\,s$\,;	
		
   \item[$(6^\prime)$]
   $[${\sl compatibility with evaluation}$]$\\[.6ex]
   $\mbox{\hspace{.2em}}$
     $\widehat{\nabla}(s\widehat{m})\;
	    =\;   (\widehat{\nabla}s)\widehat{m}\,+\, s(\widehat{D}^{\prime}m)$,\;\;
	 for $s\in \widehat{\cal E}$ and
	      $\widehat{m}\in \Endsheaf_{\widehat{\cal O}_X}(\widehat{\cal E})$;	
	
   \item[$(7^\prime)$]
   $[${\sl $\widehat{D}^{\prime}$ restricted to $\widehat{\cal O}_X$}$]$\\[.6ex]
   $\mbox{\hspace{.2em}}$
    $\widehat{D}^{\prime}(f\cdot \Id)\;
	   =\; df\cdot \Id + f\cdot (A-\,\!^{\varsigma_{\!f}}\!\!A)\;
	   (=\; df\cdot \Id + (1-(-1)^{p(f)}) )\, fA^{(\odd)}$, \\[.6ex]
	for $f\in \widehat{\cal O}_X$ parity homogeneous.
  \end{itemize}
 Comparisons of $(3)$ vs.\ $(3^{\prime})$  and $(7)$ vs.\ $(7^{\prime})$
  indicate that $\widehat{D}$ is the more natural one to take for our problem.\footnote{{\it Induced
                                                connection and evaluation$\,:\,
												(5)$ vs.\ $(5^{\prime})$ and $(6)$ vs.\ $(6^{\prime})$}
												\hspace{1em}
											   The equality in Property $(5^{\prime})$	
											    is usually the identity used to define the induced connection
												on a tensor product.
											   Unfortunately, it gives $\widehat{D}^{\prime}$, not $\widehat{D}$. 	
											  Also, it is a little puzzling that
											   $\widehat{D}^{\prime}$ is compatible with the evaluation completely
											    while $\widehat{D}$ isn't
											    except for the case when $\widehat{D}$ is purely even.
											  These two observations seem to say that $\widehat{D}$,
											    despite satisfying Properties (1), (2), (3), (4), (7),
												may still not be the most functorially-natural induced connection on
												$\Endsheaffootnotesize_{\widehat{\cal O}_X}\!
												 (\widehat{\cal E})$
												from the connection $\widehat{\nabla}$ on $\widehat{\cal E}$.
											} 
}\end{remark}

\medskip

\begin{remark} $[$left-part vs.\ right-part in $\widehat{D}$$]$\, {\rm
 From Property (3) that $\widehat{D}$ satisfies,
   $\widehat{D}$ behaves like a left connection
   as long as the related ${\Bbb Z}/2$-graded Leibniz rule is concerned.
 On the other hand,
   the term $\widehat{m}\,\,\!^{\varsigma_{\widehat{m}}}\!\!A$
   in the expanded expression
   $\widehat{D}\widehat{m}
     = d\widehat{m}-A\widehat{m}
	    + \widehat{m}\,\,\!^{\varsigma_{\widehat{m}}}\!\!A$
  for $\widehat{D}= d+[A, \mbox{\LARGE $\cdot$}\}$		
  suggests that $\widehat{D}$ has non-vanishing right-part.
 Indeed, the identity $[A, \widehat{m}f\}= [A, \widehat{m}\}f$
  suggests that $\widehat{D}$ behaves like a simple hybrid connection as well.
}\end{remark}

\bigskip
   
\subsection{Simple hybrid connections on $\widehat{\cal E}$
    that ring with the vector representation of the $d=4$, $N=1$ supersymmetry}
	
Continuing the new setup at the beginning of Sec.\ 2.2.
The simple hybrid connection introduced in Sec.\ 2.2
  has too many components
  when compared to a ($\Endsheaf_{\widehat{\cal O}_X}(\widehat{\cal E})$-valued)
  vector multiplet of the $d=4$, $N=1$ supersymmetry algebra  and, hence,
 needs to be constrained appropriately.
In this subsection, we adopt the lesson learned from physicists
 to directly construct a simple hybrid connection on a bundle over the superspace from a vector multiplet.
This guarantees that it is SUSY-rep compatible.

\bigskip

\begin{flushleft}
{\bf SUSY-rep compatible simple hybrid connection on $\widehat{\cal E}$}
\end{flushleft}
%
%
%
%
%
%
The study in the textbook
  [G-G-R-S: Sec.4.2.a.3] of
  S.\ James Gates, Jr., Marcus Grisaru, Martin Ro\u{c}ek, and Warren Siegel
 motivate the following definitions.

\bigskip

\begin{definition} {\bf [endomorphism in Wess-Zumino gauge]}\; {\rm
 An $h\in\Endsheaf_{\widehat{\cal O}_X}\!(\widehat{\cal E})$
  is said to be {\it in Wess-Zumino gauge}
  if $h$ is of the following form in the coordinate-functions $(x,\theta,\bar{\theta})$ on $\widehat{X}$:
  {\small
  $$
    h(x,\theta,\bar{\theta})\;
	   =\; \sum_{\alpha,\dot{\beta}}
	            h_{(\alpha\dot{\beta})}(x)\theta^{\alpha}\bar{\theta}^{\dot{\beta}}\,
			 +\, \sum_{\alpha}
			        h_{(\alpha 12)}(x)
					  \theta^{\alpha}\bar{\theta}^{\dot{1}}\bar{\theta}^{\dot{2}}\,
	         +\, \sum_{\dot{\beta}}
			        h_{(12\dot{\beta})}(x)\theta^1\theta^2\bar{\theta}^{\dot{\beta}}\,
	         +\, \sum_{\alpha}
			        h_{(12\dot{1}\dot{2})}(x)
					  \theta^1\theta^2\bar{\theta}^{\dot{1}}\bar{\theta}^{\dot{2}}\,.
  $$}
}\end{definition}
 
\medskip

\begin{definition} {\bf [vector superfield]}\; {\rm
 Given $\widehat{\cal E}$,
 a {\it vector superfield} $V$ on $\widehat{X}$ is an element of
  $\Endsheaf_{\widehat{\cal O}_X}\!(\widehat{\cal E})$
  such that there exist
   a gauge transformation $g\in \Autsheaf_{\widehat{\cal O}_X}\!(\widehat{\cal E})$  and
   a $V^{\prime}\in \Endsheaf_{\widehat{\cal O}_X}\!(\widehat{\cal E})$ 		
     in Wess-Zumino gauge
   such that $g^{\circ} e^{V\circ } g^{-1\circ }= e^{V^{\prime}\circ}$
   (equivalently $g^{-1} e^V g= e^{V^{\prime}}$, cf.\ Convention~2.2.1).
 The sheaf of vector superfields on $\widehat{X}$ associated to $\widehat{\cal E}$	
   is denoted by ${\cal V}_{\widehat{X}\!,\widehat{\cal E}}$.
  (See footnote~32 for words on the {\it exponential} $e^V$ of $V$.)
}\end{definition}

\bigskip

Note that since
 $g_{(0)}^{-1}e^{V_{(0)}}g_{(0)}= e^{V^{\prime}_{(0)}}$
   if $g^{-1} e^V g= e^{V^{\prime}}$,
   a necessary condition for $h\in\Endsheaf_{\widehat{\cal O}_X}\!(\widehat{\cal E})$
   to be a vector superfield is that $h_{0}=0$.
Also note that $g^{-1}e^Vg= e^{\,g^{-1}V g}$; thus
 the condition $g^{-1} e^V g= e^{V^{\prime}}$ in Definition~2.3.1 above is equivalent to
 the condition  $g^{-1} V g= V^{\prime}$.
Finally, note that $(V^{\prime})^3=0$ for $V^{\prime}$ in Wess-Zumino gauge.
Thus for $V$ a vector superfield in the sense of Definition~2.3.2, $V^3= g (V^{\prime})^3g^{-1}=0$
 and, hence, $e^V= \Id+V+\frac{1}{2}V^2$.\footnote{{\it Exponential $e^V$}\hspace{1em}
                                                            Since
                                                             we do not introduce the notion of
															  {\it $C^{\infty}$-group-scheme} and
															  {\it super $C^{\infty}$-group-scheme} in this work,
															 rigorously speaking,
															 we haven't defined what the exponential $e^V$ of $V$ means
                                                             in the context of super $C^{\infty}$-Algebraic Geometry.
															 However, the only situation of $e^V$ that is relevant to the current work
  															  is for $V$ a vector super field in the sense of Definition~2.3.2.
                                                             Thus, as a hindsight, one may take
															  {\it Id}$\,+V+\frac{1}{2}V^2$ as the definition of
															  $e^V$ for $V$ a vector superfield.
															 From this, one also see that  the parity-conjugate
															  $\,\!^\varsigma\!e^V:= \,\!^\varsigma\!(e^V)= e\,\!^{^\varsigma\!V}$.
															 } 
 
\bigskip

\begin{definition} {\bf [SUSY-rep compatible simple hybrid connection on $\widehat{\cal E}$]}\;
{\rm
 Fix a trivialization of $\widehat{\cal E}$;
  the corresponding left connection on $\widehat{\cal E}$ is denoted by $d$.
 Let $\widehat{\nabla}$ be a simple hybrid connection on $\widehat{\cal E}$.
 $\widehat{\nabla}$ is said to be {\it SUSY-rep compatible}
  if $\widehat{\nabla}$ is of the following form up to a gauge transformation:
   \begin{itemize}
    \item[\LARGE $\cdot$]
	 There is a vector superfield
	 $V\in {\cal V}_{\widehat{X}\!,\widehat{\cal E}}
	     \subset \Endsheaf_{\widehat{\cal O}_X}\!(\widehat{\cal E})$
	 such that
     $$
      \widehat{\nabla}_{e_{\beta^{\prime\prime}}}\;
	    =\;  e_{\beta^{\prime\prime}}\,,\hspace{2em}	
      \widehat{\nabla}_{e_{\alpha^{\prime}}}\;
	    =\; 	 e^{V\circ}	\circ e_{\alpha^{\prime}} \circ e^{-V\circ}\,,\hspace{2em}
	  \widehat{\nabla}_{e_\mu}\;
	    =\; 	\mbox{\small $\frac{\sqrt{-1}}{2}$}\,
		         \sum_{\alpha, \dot{\beta}} \breve{\sigma}_{\mu}^{\alpha\dot{\beta}}
				  \cdot \,\!^L\![\widehat{\nabla}_{e_{\alpha^{\prime}}},
				                       \widehat{\nabla}_{e_{\beta^{\prime\prime}}}\}\,,
     $$
    for $\beta^{\prime\prime}=1^{\prime\prime}, 2^{\prime\prime}$,
         $\alpha^{\prime}=1^{\prime}, 2^{\prime}$, and $\mu=0,1,2,3$.
    Here,
	  $\breve{\sigma}_{\mu}
	     =(\breve{\sigma}_{\mu}^{\alpha\dot{\beta}})_{\alpha\dot{\beta}}$
	  with
	 {\footnotesize
     $$
      \breve{\sigma}_0\;:=\;
       \frac{1}{2}\left[\!\begin{array}{rr} -1 & 0 \\ 0 & -1\end{array}\!\right]\!,\;\;
	  \breve{\sigma}_1\;:=\;
       \frac{1}{2}\left[\!\begin{array}{rr} 0 & 1 \\ 1 & 0\end{array}\!\right]\!,\;\;
      \breve{\sigma}_2\;:=\;
       \frac{1}{2}
	    \left[\!\begin{array}{rr} 0 & \sqrt{-1} \\ -\sqrt{-1} & 0\end{array}\!\right]\!,\;\;	   
      \breve{\sigma}_3\;:=\;
       \frac{1}{2}\left[\!\begin{array}{rr} 1 & 0 \\ 0 & -1\end{array}\!\right]
     $$}and\;    
	 $\,\!^L\![\widehat{\nabla}_{e_{\alpha^{\prime}}},
				                       \widehat{\nabla}_{e_{\beta^{\prime\prime}}}\}
       = \{ \widehat{\nabla}_{e_{\alpha^{\prime}}},
				                       \widehat{\nabla}_{e_{\beta^{\prime\prime}}}\}$\;
    (cf.\ Lemma~2.2.9).
   \end{itemize}
 In particular, a SUSY-rep compatible simple hybrid connection on $\widehat{\cal E}$
   is determined by a vector superfield $V\in  {\cal V}_{\widehat{X}\!, \widehat{\cal E}}$,
   up to a gauge transformation.
}\end{definition}

\bigskip

More explicitly,
 let $\widehat{\nabla}s= ds+ sA$ with $A= \sum_Ie^IA_I$
   in terms of the supersymmetrically invariant coframe $(e^I)_I$ on $\widehat{X}$.
Then, straightforward computations give
  $$
  \begin{array}{lllll}
   \widehat{\nabla}_{e_{\beta^{\prime\prime}}}s
    & = &  e_{\beta^{\prime\prime}}s\,,                                        \\[1.2ex]
   \widehat{\nabla}_{e_{\alpha^{\prime}}}s
    & = &        \mbox{\Large $($} e_{\alpha^{\prime}}(se^{-V}) \mbox{\Large $)$}
                      e^V	
	& = &
  	   e_{\alpha^{\prime}}s\,
	         +\, (-1)^{p(s)}\, s\, (e_{\alpha^{\prime}}e^{-V})e^V\,,     \\[1.2ex]
   \widehat{\nabla}_{e_{\mu}}s
    & = & \frac{\sqrt{-1}}{2}\, \sum_{\alpha, \dot{\beta}}
               \breve{\sigma}_{\mu}^{\alpha\dot{\beta}}
			    \cdot \,\!^L\![\widehat{\nabla}_{e_{\alpha^{\prime}}},
				                      \widehat{\nabla}_{e_{\beta^{\prime\prime}}}  \}\,s	
	& = & e_{\mu}s\,
	          +\, s\cdot \mbox{\Large $($}
			     \frac{\sqrt{-1}}{2}\, \sum_{\alpha, \dot{\beta}}
                  \breve{\sigma}_{\mu}^{\alpha\dot{\beta}}
				  \Theta_{\alpha\dot{\beta}}       \mbox{\Large $)$}  \,,
  \end{array}
  $$
 where
  $$
   \Theta_{\alpha\dot{\beta}}\;
    =\; (e_{\beta^{\prime\prime}} e_{\alpha^{\prime}} e^{-V}  )\, e^V\,
	         -\,  (e_{\alpha^{\prime}} \,\!^\varsigma\!e^{-V})
			        \cdot (e_{\beta^{\prime\prime}} e^V)\;
	=\;  e_{\beta^{\prime\prime}}
	           \mbox{\Large $($}(e_{\alpha^\prime}e^{-V})\, e^V \mbox{\Large $)$}\,.
  $$
This justifies that $\widehat{\nabla}$ is indeed a simple hybrid connection on $\widehat{\cal E}$
 with the the connection $1$-form $A$, with respect to the underlying trivialization of $\widehat{\cal E}$,
 determined by $V$:
 $$
  \begin{array}{c}
   A\; =\; \sum_I e^I\cdot A_I\;
         =\; \sum_{\mu=0}^3
              e^{\mu}\cdot
			   \mbox{\Large $($}
			     \frac{\sqrt{-1}}{2}\, \sum_{\alpha, \dot{\beta}}
                  \breve{\sigma}_{\mu}^{\alpha\dot{\beta}}
				  \Theta_{\alpha\dot{\beta}}       \mbox{\Large $)$}\,
             +\, \sum_{\alpha^{\prime}=1^{\prime}}^{2^{\prime}}
			        e^{\alpha^{\prime}}
					 \cdot    (e_{\alpha^{\prime}}e^{-V})e^V\,.				
  \end{array}
 $$
For
 {\small
 $$
    V\ :=\;  \sum_{\gamma, \dot{\delta}}
	                V_{(\gamma\dot{\delta})}\theta^\gamma\bar{\theta}^{\dot{\delta}}\,
       	     +\, \sum_{\dot{\delta}}V_{(12\dot{\delta})}		
	                \theta^1\theta^2\bar{\theta}^{\dot{\gamma}}\,
			 +\, \sum_{\gamma} V_{(\gamma\dot{1}\dot{2})}
			        \theta^\gamma\bar{\theta}^{\dot{1}}\bar{\theta}^{\dot{2}}\,
			 +\, V_{(12\dot{1}\dot{2})}
			        \theta^1\theta^2\bar{\theta}^{\dot{1}}\bar{\theta}^{\dot{2}}\;\;
	 \in\; \widehat{\cal O}_X^{A\!z}\,,
 $$}this
 is given in terms of components of $V$ by
{\small
 \begin{eqnarray*}
  \Theta_{1\dot{1}}
    & = & V_{(1\dot{1})}\,
	          -\,  V_{(12\dot{1})} \theta^2\,
			  +\, V_{(1\dot{1}\dot{2})} \bar{\theta}^{\dot{2}}\,
			  +\, \sqrt{-1}\,\mbox{\normalsize $\sum$}_\mu \sigma^\mu_{1\dot{1}}
			         \partial_\mu V_{(1\dot{1})}\theta^1\bar{\theta}^{\dot{1}}\, \\
	&& +\, \mbox{\Large $($}
                   [V_{(1\dot{1})}, V_{(1\dot{2})}]
				-  \sqrt{-1}\, \mbox{\normalsize $\sum$}_\mu \sigma^\mu_{1\dot{2}}
				       \partial_\mu V_{(1\dot{1})}
				+2\,\sqrt{-1}\,\mbox{\normalsize $\sum$}_\mu \sigma^\mu_{1\dot{1}}
				       \partial_\mu V_{(1\dot{2})}
                \mbox{\Large $)$}\, \theta^1\bar{\theta}^{\dot{2}}   \\	
    && +\, \sqrt{-1}\,\mbox{\normalsize $\sum$}_\mu \sigma^\mu_{2\dot{1}}
	                   \partial_\mu V_{(1\dot{1})}  \theta^2\bar{\theta}^{\dot{1}}\, \\
    && +\, \mbox{\Large $($}
                  -  V_{(12\dot{1}\dot{2})}
				  + \frac{1}{2}\,[V_{(1\dot{1})}, V_{(2\dot{2})}]
				  + \frac{1}{2}\,[V_{(2\dot{1})}, V_{(1\dot{2})}]
				  + \sqrt{-1}\,\mbox{\normalsize $\sum$}_\mu \sigma^\mu_{2\dot{1}}
				        \partial_\mu V_{(1\dot{2})}    \\
       && \hspace{10em}						
                  -  \sqrt{-1}\,\mbox{\normalsize $\sum$}_\mu \sigma^\mu_{1\dot{2}}
				        \partial_\mu V_{(2\dot{1})}
                  + \sqrt{-1}\,\mbox{\normalsize $\sum$}_\mu \sigma^\mu_{1\dot{1}}
				        \partial_\mu V_{(2\dot{2})}						
                \mbox{\Large $)$}\,\theta^2\bar{\theta}^{\dot{2}}\,	\\
    && +\, \sqrt{-1}\,\mbox{\normalsize $\sum$}_\mu \sigma^\mu_{1\dot{1}}
	                  \partial_\mu V_{(12\dot{1})}\,\theta^1\theta^2\bar{\theta}^{\dot{1}}\,  \\
    && +\, \mbox{\Large $($}
                   [V_{(1\dot{1})}, V_{(12\dot{2})}]
				   -  [V_{(1\dot{2})}, V_{(12\dot{1})}]
				   - \sqrt{-1}\, \mbox{\normalsize $\sum$}_\mu \sigma^\mu_{1\dot{2}}
				              \partial_\mu V_{(12\dot{1})}
				   + 2\,\sqrt{-1}\,\mbox{\normalsize $\sum$}_\mu \sigma^\mu_{1\dot{1}}
				              \partial_\mu V_{(12\dot{2})}
                \mbox{\Large $)$}\,\theta^1\theta^2\bar{\theta}^{\dot{2}}	\,  \\
    && +\, \sqrt{-1}\,\mbox{\normalsize $\sum$}_\mu \sigma^\mu_{1\dot{1}}	
                              \partial_\mu V_{(1\dot{1}\dot{2})} 	
	               \theta^1\bar{\theta}^{\dot{1}}\bar{\theta}^{\dot{2}}\,
		   +\, \sqrt{-1}\, \mbox{\normalsize $\sum$}_\mu \sigma^\mu_{2\dot{1}}
		                      \partial_\mu V_{(1\dot{1}\dot{2})}
		            \theta^2\bar{\theta}^{\dot{1}}\bar{\theta}^{\dot{2}} \\
    && +\, \mbox{\Large $($}	
                  \sqrt{-1}\,\mbox{\normalsize $\sum$}_\mu \sigma^\mu_{1\dot{1}}
				        \partial_\mu V_{(12\dot{1}\dot{2})}
				 +\sqrt{-1}\,\mbox{\normalsize $\sum$}_\mu \sigma^\mu_{2\dot{1}}
				        \partial_\mu [V_{(1\dot{1})}, V_{(1\dot{2})}]
				 - \frac{1}{2}\,\sqrt{-1}\,\mbox{\normalsize $\sum$}_\mu \sigma^\mu_{1\dot{1}}
				        \partial_\mu [V_{(1\dot{1})}, V_{(2\dot{2})}]    \\
        && \hspace{2em}					
                 - \frac{1}{2}\,\sqrt{-1}\,\mbox{\normalsize $\sum$}_\mu \sigma^\mu_{1\dot{1}}
				        \partial_\mu [V_{(2\dot{1})}, V_{(1\dot{2})}]			
                 + \mbox{\normalsize $\sum$}_{\mu, \nu}
				      \sigma^\mu_{2\dot{1}}\sigma^\nu_{1\dot{2}}
					  \partial_\mu \partial_\nu V_{(1\dot{1})}
                 -  \mbox{\normalsize $\sum$}_{\mu, \nu}
				      \sigma^\mu_{2\dot{1}}\sigma^\nu_{1\dot{1}}
					  \partial_\mu \partial_\nu V_{(1\dot{2})}    \\
        && \hspace{10em}					
                 -  \mbox{\normalsize $\sum$}_{\mu, \nu}
				      \sigma^\mu_{1\dot{1}}\sigma^\nu_{1\dot{2}}
					  \partial_\mu \partial_\nu V_{(2\dot{1})}					
                 + \mbox{\normalsize $\sum$}_{\mu, \nu}
				      \sigma^\mu_{1\dot{1}}\sigma^\nu_{1\dot{1}}
					  \partial_\mu \partial_\nu V_{(2\dot{2})}
               	\mbox{\Large $)$}\,
				    \theta^1\theta^2\bar{\theta}^{\dot{1}}\bar{\theta}^{\dot{2}}\,, \\[2ex]
 \end{eqnarray*}
 \begin{eqnarray*}					
  \Theta_{1\dot{2}}
    & = & V_{(1\dot{2})}\,
	          -\,  V_{(12\dot{2})} \theta^2\,
			  -\,  V_{(1\dot{1}\dot{2})} \bar{\theta}^{\dot{1}}\,  \\
    && +\, \mbox{\Large $($}
                -  [V_{(1\dot{1})}, V_{(1\dot{2})}]
				+ 2\,\sqrt{-1}\, \mbox{\normalsize $\sum$}_\mu \sigma^\mu_{1\dot{2}}
				       \partial_\mu V_{(1\dot{1})}
			    -  \sqrt{-1}\,\mbox{\normalsize $\sum$}_\mu \sigma^\mu_{1\dot{1}}
				       \partial_\mu V_{(1\dot{2})}
                \mbox{\Large $)$}\, \theta^1\bar{\theta}^{\dot{1}}   \\
    && +\, \sqrt{-1}\,\mbox{\normalsize $\sum$}_\mu \sigma^\mu_{1\dot{2}}
	                   \partial_\mu V_{(1\dot{2})}  \theta^1\bar{\theta}^{\dot{2}}\, \\
    && +\, \mbox{\Large $($}
                     V_{(12\dot{1}\dot{2})}
				  - \frac{1}{2}\,[V_{(1\dot{1})}, V_{(2\dot{2})}]
				  - \frac{1}{2}\,[V_{(2\dot{1})}, V_{(1\dot{2})}]
				  + \sqrt{-1}\,\mbox{\normalsize $\sum$}_\mu \sigma^\mu_{2\dot{2}}
				        \partial_\mu V_{(1\dot{1})}    \\
       && \hspace{10em}						
                  +  \sqrt{-1}\,\mbox{\normalsize $\sum$}_\mu \sigma^\mu_{1\dot{2}}
				        \partial_\mu V_{(2\dot{1})}
                   -  \sqrt{-1}\,\mbox{\normalsize $\sum$}_\mu \sigma^\mu_{1\dot{1}}
				        \partial_\mu V_{(2\dot{2})}						
                \mbox{\Large $)$}\,\theta^2\bar{\theta}^{\dot{1}}\,	\\
    && +\, \sqrt{-1}\,\mbox{\normalsize $\sum$}_\mu \sigma^\mu_{2\dot{2}}
	                   \partial_\mu V_{(1\dot{2})}  \theta^2\bar{\theta}^{\dot{2}}\, \\
    && +\, \mbox{\Large $($}
                   -  [V_{(1\dot{1})}, V_{(12\dot{2})}]
				   + [V_{(1\dot{2})}, V_{(12\dot{1})}]
				   + 2\,\sqrt{-1}\, \mbox{\normalsize $\sum$}_\mu \sigma^\mu_{1\dot{2}}
				              \partial_\mu V_{(12\dot{1})}
				   -  \sqrt{-1}\,\mbox{\normalsize $\sum$}_\mu \sigma^\mu_{1\dot{1}}
				              \partial_\mu V_{(12\dot{2})}
                \mbox{\Large $)$}\,\theta^1\theta^2\bar{\theta}^{\dot{1}}	\,  \\
    && +\, \sqrt{-1}\,\mbox{\normalsize $\sum$}_\mu \sigma^\mu_{1\dot{2}}
	                  \partial_\mu V_{(12\dot{2})}\,\theta^1\theta^2\bar{\theta}^{\dot{2}}\,
           +\, \sqrt{-1}\,\mbox{\normalsize $\sum$}_\mu \sigma^\mu_{1\dot{2}}	
                              \partial_\mu V_{(1\dot{1}\dot{2})} 	
	               \theta^1\bar{\theta}^{\dot{1}}\bar{\theta}^{\dot{2}}\,
		   +\, \sqrt{-1}\, \mbox{\normalsize $\sum$}_\mu \sigma^\mu_{2\dot{2}}
		                      \partial_\mu V_{(1\dot{1}\dot{2})}
		            \theta^2\bar{\theta}^{\dot{1}}\bar{\theta}^{\dot{2}} \\ 		
    && +\, \mbox{\Large $($}	
                  \sqrt{-1}\,\mbox{\normalsize $\sum$}_\mu \sigma^\mu_{1\dot{2}}
				        \partial_\mu V_{(12\dot{1}\dot{2})}
				 +\sqrt{-1}\,\mbox{\normalsize $\sum$}_\mu \sigma^\mu_{2\dot{2}}
				        \partial_\mu [V_{(1\dot{1})}, V_{(1\dot{2})}]
				 - \frac{1}{2}\,\sqrt{-1}\,\mbox{\normalsize $\sum$}_\mu \sigma^\mu_{1\dot{2}}
				        \partial_\mu [V_{(1\dot{1})}, V_{(2\dot{2})}]    \\
        && \hspace{2em}					
                 - \frac{1}{2}\,\sqrt{-1}\,\mbox{\normalsize $\sum$}_\mu \sigma^\mu_{1\dot{2}}
				        \partial_\mu [V_{(2\dot{1})}, V_{(1\dot{2})}]			
                 + \mbox{\normalsize $\sum$}_{\mu, \nu}
				      \sigma^\mu_{2\dot{2}}\sigma^\nu_{1\dot{2}}
					  \partial_\mu \partial_\nu V_{(1\dot{1})}
                 -  \mbox{\normalsize $\sum$}_{\mu, \nu}
				      \sigma^\mu_{2\dot{2}}\sigma^\nu_{1\dot{1}}
					  \partial_\mu \partial_\nu V_{(1\dot{2})}    \\
        && \hspace{10em}					
                 -  \mbox{\normalsize $\sum$}_{\mu, \nu}
				      \sigma^\mu_{1\dot{2}}\sigma^\nu_{1\dot{2}}
					  \partial_\mu \partial_\nu V_{(2\dot{1})}					
                 + \mbox{\normalsize $\sum$}_{\mu, \nu}
				      \sigma^\mu_{1\dot{2}}\sigma^\nu_{1\dot{1}}
					  \partial_\mu \partial_\nu V_{(2\dot{2})}
               	\mbox{\Large $)$}\,
				    \theta^1\theta^2\bar{\theta}^{\dot{1}}\bar{\theta}^{\dot{2}}\,, \\[2ex]
 \end{eqnarray*}
 \begin{eqnarray*}
  \Theta_{2\dot{1}}
    & = & V_{(2\dot{1})}\,
	          +\,  V_{(12\dot{1})} \theta^1\,
			  +\, V_{(2\dot{1}\dot{2})} \bar{\theta}^{\dot{2}}\,
			  +\, \sqrt{-1}\,\mbox{\normalsize $\sum$}_\mu \sigma^\mu_{1\dot{1}}
			         \partial_\mu V_{(2\dot{1})}\theta^1\bar{\theta}^{\dot{1}}\, \\
    && +\, \mbox{\Large $($}
                   V_{(12\dot{1}\dot{2})}
				  + \frac{1}{2}\,[V_{(1\dot{1})}, V_{(2\dot{2})}]
				  + \frac{1}{2}\,[V_{(2\dot{1})}, V_{(1\dot{2})}]
				  -  \sqrt{-1}\,\mbox{\normalsize $\sum$}_\mu \sigma^\mu_{2\dot{2}}
				        \partial_\mu V_{(1\dot{1})}    \\
       && \hspace{10em}						
                  + \sqrt{-1}\,\mbox{\normalsize $\sum$}_\mu \sigma^\mu_{2\dot{1}}
				        \partial_\mu V_{(1\dot{2})}
                  + \sqrt{-1}\,\mbox{\normalsize $\sum$}_\mu \sigma^\mu_{1\dot{1}}
				        \partial_\mu V_{(2\dot{2})}						
                \mbox{\Large $)$}\,\theta^1\bar{\theta}^{\dot{2}}\,	\\
    && +\, \sqrt{-1}\,\mbox{\normalsize $\sum$}_\mu \sigma^\mu_{2\dot{1}}
			         \partial_\mu V_{(2\dot{1})}\theta^2\bar{\theta}^{\dot{1}}\, \\
	&& +\, \mbox{\Large $($}
                   [V_{(2\dot{1})}, V_{(2\dot{2})}]
				-  \sqrt{-1}\, \mbox{\normalsize $\sum$}_\mu \sigma^\mu_{2\dot{2}}
				       \partial_\mu V_{(2\dot{1})}
				+2\,\sqrt{-1}\,\mbox{\normalsize $\sum$}_\mu \sigma^\mu_{2\dot{1}}
				       \partial_\mu V_{(2\dot{2})}
                \mbox{\Large $)$}\, \theta^2\bar{\theta}^{\dot{2}}   \\
    && +\, \sqrt{-1}\,\mbox{\normalsize $\sum$}_\mu \sigma^\mu_{2\dot{1}}
	                  \partial_\mu V_{(12\dot{1})}\,\theta^1\theta^2\bar{\theta}^{\dot{1}}\,  \\
    && +\, \mbox{\Large $($}
                   [V_{(2\dot{1})}, V_{(12\dot{2})}]
				   -  [V_{(2\dot{2})}, V_{(12\dot{1})}]
				   - \sqrt{-1}\, \mbox{\normalsize $\sum$}_\mu \sigma^\mu_{2\dot{2}}
				              \partial_\mu V_{(12\dot{1})}
				   + 2\,\sqrt{-1}\,\mbox{\normalsize $\sum$}_\mu \sigma^\mu_{2\dot{1}}
				              \partial_\mu V_{(12\dot{2})}
                \mbox{\Large $)$}\,\theta^1\theta^2\bar{\theta}^{\dot{2}}	\,  \\
    && +\, \sqrt{-1}\,\mbox{\normalsize $\sum$}_\mu \sigma^\mu_{1\dot{1}}	
                              \partial_\mu V_{(2\dot{1}\dot{2})} 	
	               \theta^1\bar{\theta}^{\dot{1}}\bar{\theta}^{\dot{2}}\,
		   +\, \sqrt{-1}\, \mbox{\normalsize $\sum$}_\mu \sigma^\mu_{2\dot{1}}
		                      \partial_\mu V_{(2\dot{1}\dot{2})}
		            \theta^2\bar{\theta}^{\dot{1}}\bar{\theta}^{\dot{2}} \\
   && +\, \mbox{\Large $($}	
                  \sqrt{-1}\,\mbox{\normalsize $\sum$}_\mu \sigma^\mu_{2\dot{1}}
				        \partial_\mu V_{(12\dot{1}\dot{2})}
				 - \sqrt{-1}\,\mbox{\normalsize $\sum$}_\mu \sigma^\mu_{1\dot{1}}
				        \partial_\mu [V_{(2\dot{1})}, V_{(2\dot{2})}]
				 +\frac{1}{2}\,\sqrt{-1}\,\mbox{\normalsize $\sum$}_\mu \sigma^\mu_{2\dot{1}}
				        \partial_\mu [V_{(1\dot{1})}, V_{(2\dot{2})}]    \\
        && \hspace{2em}					
                 +\frac{1}{2}\,\sqrt{-1}\,\mbox{\normalsize $\sum$}_\mu \sigma^\mu_{2\dot{1}}
				        \partial_\mu [V_{(2\dot{1})}, V_{(1\dot{2})}]			
                 + \mbox{\normalsize $\sum$}_{\mu, \nu}
				      \sigma^\mu_{2\dot{1}}\sigma^\nu_{2\dot{2}}
					  \partial_\mu \partial_\nu V_{(1\dot{1})}
                 -  \mbox{\normalsize $\sum$}_{\mu, \nu}
				      \sigma^\mu_{2\dot{1}}\sigma^\nu_{2\dot{1}}
					  \partial_\mu \partial_\nu V_{(1\dot{2})}    \\
        && \hspace{10em}					
                 -  \mbox{\normalsize $\sum$}_{\mu, \nu}
				      \sigma^\mu_{1\dot{1}}\sigma^\nu_{2\dot{2}}
					  \partial_\mu \partial_\nu V_{(2\dot{1})}					
                 + \mbox{\normalsize $\sum$}_{\mu, \nu}
				      \sigma^\mu_{1\dot{1}}\sigma^\nu_{2\dot{1}}
					  \partial_\mu \partial_\nu V_{(2\dot{2})}
               	\mbox{\Large $)$}\,
				    \theta^1\theta^2\bar{\theta}^{\dot{1}}\bar{\theta}^{\dot{2}}\,,\\[2ex]
 \end{eqnarray*}
 \begin{eqnarray*}					
  \Theta_{2\dot{2}}
    & = & V_{(2\dot{2})}\,
	          +\,  V_{(12\dot{2})} \theta^1\,
			  -\, V_{(2\dot{1}\dot{2})} \bar{\theta}^{\dot{1}}\,\\
    && +\, \mbox{\Large $($}
                  -  V_{(12\dot{1}\dot{2})}
				  -  \frac{1}{2}\,[V_{(1\dot{1})}, V_{(2\dot{2})}]
				  -  \frac{1}{2}\,[V_{(2\dot{1})}, V_{(1\dot{2})}]
				  + \sqrt{-1}\,\mbox{\normalsize $\sum$}_\mu \sigma^\mu_{2\dot{2}}
				        \partial_\mu V_{(1\dot{1})}    \\
       && \hspace{10em}						
                  -  \sqrt{-1}\,\mbox{\normalsize $\sum$}_\mu \sigma^\mu_{2\dot{1}}
				        \partial_\mu V_{(1\dot{2})}
                  + \sqrt{-1}\,\mbox{\normalsize $\sum$}_\mu \sigma^\mu_{1\dot{2}}
				        \partial_\mu V_{(2\dot{1})}						
                \mbox{\Large $)$}\,\theta^1\bar{\theta}^{\dot{1}}\,	\\			
    && +\, \sqrt{-1}\,\mbox{\normalsize $\sum$}_\mu \sigma^\mu_{1\dot{2}}
	                   \partial_\mu V_{(2\dot{2})}  \theta^1\bar{\theta}^{\dot{2}}\, \\
	&& +\, \mbox{\Large $($}
                -  [V_{(2\dot{1})}, V_{(2\dot{2})}]
				+2\,\sqrt{-1}\, \mbox{\normalsize $\sum$}_\mu \sigma^\mu_{2\dot{2}}
				       \partial_\mu V_{(2\dot{1})}
				- \sqrt{-1}\,\mbox{\normalsize $\sum$}_\mu \sigma^\mu_{2\dot{1}}
				       \partial_\mu V_{(2\dot{2})}
                \mbox{\Large $)$}\, \theta^2\bar{\theta}^{\dot{1}}   \\
    && +\, \sqrt{-1}\,\mbox{\normalsize $\sum$}_\mu \sigma^\mu_{2\dot{2}}
	                   \partial_\mu V_{(2\dot{2})}  \theta^2\bar{\theta}^{\dot{2}}\, \\
    && +\, \mbox{\Large $($}
                   -  [V_{(2\dot{1})}, V_{(12\dot{2})}]
				   + [V_{(2\dot{2})}, V_{(12\dot{1})}]
				   + 2\,\sqrt{-1}\, \mbox{\normalsize $\sum$}_\mu \sigma^\mu_{2\dot{2}}
				              \partial_\mu V_{(12\dot{1})}
				   -  \sqrt{-1}\,\mbox{\normalsize $\sum$}_\mu \sigma^\mu_{2\dot{1}}
				              \partial_\mu V_{(12\dot{2})}
                \mbox{\Large $)$}\,\theta^1\theta^2\bar{\theta}^{\dot{1}}	\,  \\
    && +\, \sqrt{-1}\,\mbox{\normalsize $\sum$}_\mu \sigma^\mu_{2\dot{2}}
	                  \partial_\mu V_{(12\dot{2})}\,\theta^1\theta^2\bar{\theta}^{\dot{2}}\,
           +\, \sqrt{-1}\,\mbox{\normalsize $\sum$}_\mu \sigma^\mu_{1\dot{2}}	
                              \partial_\mu V_{(2\dot{1}\dot{2})} 	
	               \theta^1\bar{\theta}^{\dot{1}}\bar{\theta}^{\dot{2}}\,
		   +\, \sqrt{-1}\, \mbox{\normalsize $\sum$}_\mu \sigma^\mu_{2\dot{2}}
		                      \partial_\mu V_{(2\dot{1}\dot{2})}
		            \theta^2\bar{\theta}^{\dot{1}}\bar{\theta}^{\dot{2}} \\
    && +\, \mbox{\Large $($}	
                  \sqrt{-1}\,\mbox{\normalsize $\sum$}_\mu \sigma^\mu_{2\dot{2}}
				        \partial_\mu V_{(12\dot{1}\dot{2})}
				 - \sqrt{-1}\,\mbox{\normalsize $\sum$}_\mu \sigma^\mu_{1\dot{2}}
				        \partial_\mu [V_{(2\dot{1})}, V_{(2\dot{2})}]
				 +\frac{1}{2}\,\sqrt{-1}\,\mbox{\normalsize $\sum$}_\mu \sigma^\mu_{2\dot{2}}
				        \partial_\mu [V_{(1\dot{1})}, V_{(2\dot{2})}]    \\
        && \hspace{2em}					
                 +\frac{1}{2}\,\sqrt{-1}\,\mbox{\normalsize $\sum$}_\mu \sigma^\mu_{2\dot{2}}
				        \partial_\mu [V_{(2\dot{1})}, V_{(1\dot{2})}]			
                 + \mbox{\normalsize $\sum$}_{\mu, \nu}
				      \sigma^\mu_{2\dot{2}}\sigma^\nu_{2\dot{2}}
					  \partial_\mu \partial_\nu V_{(1\dot{1})}
                 -  \mbox{\normalsize $\sum$}_{\mu, \nu}
				      \sigma^\mu_{2\dot{2}}\sigma^\nu_{2\dot{1}}
					  \partial_\mu \partial_\nu V_{(1\dot{2})}    \\
        && \hspace{10em}					
                 -  \mbox{\normalsize $\sum$}_{\mu, \nu}
				      \sigma^\mu_{1\dot{2}}\sigma^\nu_{2\dot{2}}
					  \partial_\mu \partial_\nu V_{(2\dot{1})}					
                 + \mbox{\normalsize $\sum$}_{\mu, \nu}
				      \sigma^\mu_{1\dot{2}}\sigma^\nu_{2\dot{1}}
					  \partial_\mu \partial_\nu V_{(2\dot{2})}
               	\mbox{\Large $)$}\,
				    \theta^1\theta^2\bar{\theta}^{\dot{1}}\bar{\theta}^{\dot{2}}\,;  \\[2ex]	
 \end{eqnarray*}
 \begin{eqnarray*}					
  A_0 & = & -\,\sqrt{-1}\, (\Theta_{1\dot{1}}+\Theta_{2\dot{2}})/4\,,\\[1ex]
  A_1 & = &  \sqrt{-1}\,(\Theta_{1\dot{2}}+\Theta_{2\dot{1}})/4\,, \\[1ex]
  A_2 & = & -\,(\Theta_{1\dot{2}}-\Theta_{2\dot{1}})/4\,, \\[1ex]
  A_3 & = &  \sqrt{-1}\,(\Theta_{1\dot{1}}-\Theta_{2\dot{2}})/4\,;  \\[2ex]
 \end{eqnarray*}
 \begin{eqnarray*}
  A_{1^\prime}	
    & = & -\, V_{(1\dot{1})}\bar{\theta}^{\dot{1}}\,
               -\, V_{(1\dot{2})} \bar{\theta}^{\dot{2}}\,
               -\, V_{(12\dot{1})} \theta^2\bar{\theta}^{\dot{1}}\,
               -\, V_{(12\dot{2})} \theta^2\bar{\theta}^{\dot{2}}\,
               -\, V_{(1\dot{1}\dot{2})}\bar{\theta}^{\dot{1}} \bar{\theta}^{\dot{2}}\, \\
	&&   +\, \mbox{\Large $($}
	                [V_{(1\dot{1})}, V_{(1\dot{2})}]
					+\sqrt{-1}\,\mbox{\normalsize $\sum$}_\mu \sigma^\mu_{1\dot{1}}
					      \partial_\mu V_{(1\dot{2})}
					-\sqrt{-1}\,\mbox{\normalsize $\sum$}_\mu \sigma^\mu_{1\dot{2}}
					      \partial_\mu V_{(1\dot{1})}					
	              \mbox{\Large $)$}\, \theta^1\bar{\theta}^{\dot{1}}\bar{\theta}^{\dot{2}}\, \\
    && +\,   \mbox{\Large $($}
	               - V_{(12\dot{1}\dot{2})}
                    + \frac{1}{2}\,[V_{(1\dot{1})}, V_{(2\dot{2})}]
				    + \frac{1}{2}\,[V_{(2\dot{1})}, V_{(1\dot{2})}]	   \\
	  && \hspace{10em}
					+ \sqrt{-1}\,\mbox{\normalsize $\sum$}_\mu \sigma^\mu_{1\dot{1}}
					       \partial_\mu V_{(2\dot{2})}
                    -  \sqrt{-1}\,\mbox{\normalsize $\sum$}_\mu \sigma^\mu_{1\dot{2}}
					       \partial_\mu V_{(2\dot{1})}										
                  \mbox{\Large $)$}\, \theta^2\bar{\theta}^{\dot{1}}\bar{\theta}^{\dot{2}}\,	\\
    && +\, \mbox{\Large $($}
                  -  [V_{(1\dot{1})}, V_{(12\dot{2})}]
				  + [V_{(1\dot{2})}, V_{(12\dot{1})}] \\
	  && \hspace{10em}
				  + \sqrt{-1}\,\mbox{\normalsize $\sum$}_\mu \sigma^\mu_{1\dot{2}}
				       \partial_\mu V_{(12\dot{1})}
				  -  \sqrt{-1}\, \mbox{\normalsize $\sum$}_\mu \sigma^\mu_{1\dot{1}}
				       \partial_\mu V_{(12\dot{2})}
                \mbox{\Large $)$}\,
				   \theta^1\theta^2\bar{\theta}^{\dot{1}}\bar{\theta}^{\dot{2}}\,,   \\[2ex]	
  \end{eqnarray*}
  \begin{eqnarray*}
  A_{2^\prime}
   & = & -\, V_{(2\dot{1})}\bar{\theta}^{\dot{1}}\,
               -\, V_{(2\dot{2})} \bar{\theta}^{\dot{2}}\,
               +\, V_{(12\dot{1})} \theta^1\bar{\theta}^{\dot{1}}\,
               +\, V_{(12\dot{2})} \theta^1\bar{\theta}^{\dot{2}}\,
               -\, V_{(2\dot{1}\dot{2})}\bar{\theta}^{\dot{1}} \bar{\theta}^{\dot{2}}\, \\
   && +\,   \mbox{\Large $($}
	                V_{(12\dot{1}\dot{2})}
                    + \frac{1}{2}\,[V_{(1\dot{1})}, V_{(2\dot{2})}]
				    + \frac{1}{2}\,[V_{(2\dot{1})}, V_{(1\dot{2})}]	   \\
	  && \hspace{10em}
					+ \sqrt{-1}\,\mbox{\normalsize $\sum$}_\mu \sigma^\mu_{2\dot{1}}
					       \partial_\mu V_{(1\dot{2})}
                    -  \sqrt{-1}\,\mbox{\normalsize $\sum$}_\mu \sigma^\mu_{2\dot{2}}
					       \partial_\mu V_{(1\dot{1})}										
                  \mbox{\Large $)$}\, \theta^1\bar{\theta}^{\dot{1}}\bar{\theta}^{\dot{2}}\,	\\
   &&   +\, \mbox{\Large $($}
	                [V_{(2\dot{1})}, V_{(2\dot{2})}]
					+\sqrt{-1}\,\mbox{\normalsize $\sum$}_\mu \sigma^\mu_{2\dot{1}}
					      \partial_\mu V_{(2\dot{2})}
					-\sqrt{-1}\,\mbox{\normalsize $\sum$}_\mu \sigma^\mu_{2\dot{2}}
					      \partial_\mu V_{(2\dot{1})}					
	              \mbox{\Large $)$}\, \theta^2\bar{\theta}^{\dot{1}}\bar{\theta}^{\dot{2}}\, \\	
   && +\, \mbox{\Large $($}
                  -  [V_{(2\dot{1})}, V_{(12\dot{2})}]
				  + [V_{(2\dot{2})}, V_{(12\dot{1})}] \\
	  && \hspace{10em}
				  + \sqrt{-1}\,\mbox{\normalsize $\sum$}_\mu \sigma^\mu_{2\dot{2}}
				       \partial_\mu V_{(12\dot{1})}
				  -  \sqrt{-1}\, \mbox{\normalsize $\sum$}_\mu \sigma^\mu_{2\dot{1}}
				       \partial_\mu V_{(12\dot{2})}
                \mbox{\Large $)$}\,
				   \theta^1\theta^2\bar{\theta}^{\dot{1}}\bar{\theta}^{\dot{2}}\,,   \\[1ex]
 \end{eqnarray*}
 \begin{eqnarray*}				
  A_{1^{\prime\prime}}   & = & A_{2^{\prime\prime}}\;\; =\;\; 0\,.	
 \end{eqnarray*}}In  
terms of the coframe
  $(dx^\mu, d\theta^\alpha, d\bar{\theta}^{\dot{\beta}})_{\mu, \alpha, \dot{\beta}}$
  on $\widehat{X}$,
let
 $$
  A\;=\; \mbox{$\sum$}_{\mu=0}^3 dx^\mu \cdot a_\mu\,
             +\, \mbox{$\sum$}_{\alpha=1}^2 d\theta^\alpha \cdot b_\alpha\,
			 +\, \mbox{$\sum$}_{\dot{\beta}=\dot{1}}^{\dot{2}}
			        d\bar{\theta}^{\dot{\beta}} \cdot b_{\dot{\beta}}\,,
 $$
then, by Definition~1.4.8,
  %
{\small
 \begin{eqnarray*}
  a_{\mu} & = & A_\mu\,, \\[1ex]
  b_{\alpha}	
    & = & A_{\alpha^\prime} \,
	           -\,\sqrt{-1}\,\mbox{\normalsize $\sum$}_{\mu,\dot{\beta}}\,
                             \sigma^\mu_{\alpha\dot{\beta}}\bar{\theta}^{\dot{\beta}}A_\mu\,, \\[1ex]
  b_{\dot{\beta}}	
     & = & -\,\sqrt{-1}\,\mbox{\normalsize $\sum$}_{\mu,\alpha}
                             \sigma^\mu_{\alpha\dot{\beta}}\theta^\alpha A_\mu\,. \\
 \end{eqnarray*}}
 
Recall Lemma/Definition~2.2.8 and Lemma~2.2.9.
Then, the design of a SUSY-rep compatible simple hybrid connection on $\widehat{\cal E}$
 renders the following statement immediate:

\bigskip

\begin{corollary} {\bf [flat directions of SUSY-rep compatible simple hybrid connection]}\;
 Let $\widehat{\nabla}$ be a SUSY-rep compatible simple hybrid connection on $\widehat{\cal E}$.
 Then,
   with respect to the supersymmetrically invariant coframe $(e^I)_I$ on $\widehat{X}$,
  the components
    of the curvature tensor $F^{\widehat{\nabla}}$ of $\widehat{\nabla}$
  in purely fermionic directions all vanish:
 $$
   F_{\alpha^{\prime}\beta^{\prime}}\;
   =\; F_{\alpha^{\prime\prime}\beta^{\prime\prime}}\;
   =\; F_{\alpha^{\prime}\beta^{\prime\prime}}\;=\; 0\,.
 $$
\end{corollary}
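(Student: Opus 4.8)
\;
The plan is to read the three families of curvature components straight off the formula extracted in the proof of Lemma~2.2.9: with respect to the supersymmetrically invariant coframe $(e^I)_I$ one has
\[
 F^{\widehat{\nabla}}\;=\;\sum_{I,J}e^I\wedge e^J\,F_{IJ}\,,\qquad
 F_{IJ}\;=\;\tfrac12\sum_K c^K_{IJ}\,A_K\,-\,e_J A_I\,-\,{}^{\varsigma_{e_{\!J}}}\!A_I\,A_J\,,
\]
and to feed it the two inputs supplied by the construction of a SUSY-rep compatible simple hybrid connection (Definition~2.3.3). First, the explicit connection $1$-form computed just above the Corollary:
\[
 A_{1^{\prime\prime}}=A_{2^{\prime\prime}}=0\,,\qquad A_{\alpha^{\prime}}=(e_{\alpha^{\prime}}e^{-V})\,e^V\,,\qquad
 A_\mu=\tfrac{\sqrt{-1}}{2}\sum_{\gamma,\dot{\delta}}\breve{\sigma}_\mu^{\gamma\dot{\delta}}\,\Theta_{\gamma\dot{\delta}}\,,\quad
 \Theta_{\gamma\dot{\delta}}=e_{\delta^{\prime\prime}}\bigl((e_{\gamma^{\prime}}e^{-V})e^V\bigr)\,.
\]
Second, the structure constants of the frame $(e_I)_I$ from Definition~1.4.8 and the super Lie-bracket relations of Sec.~1.4: $c^K_{\alpha^{\prime}\beta^{\prime}}=c^K_{\alpha^{\prime\prime}\beta^{\prime\prime}}=0$ for all $K$, $c^\mu_{\alpha^{\prime}\beta^{\prime\prime}}=-2\sqrt{-1}\,\sigma^\mu_{\alpha\dot{\beta}}$, and all remaining $c^K$ with purely fermionic lower indices vanishing. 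As a cross-check I would use the operator form of Lemma~2.2.9, $(e_I,e_J,-)^{\leftarrow}F^{\widehat{\nabla}}={}^L[\widehat{\nabla}_{e_I},\widehat{\nabla}_{e_J}\}-\widehat{\nabla}_{[e_I,e_J\}}$.

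The case $(I,J)=(\alpha^{\prime\prime},\beta^{\prime\prime})$ is immediate: every $c^K_{\alpha^{\prime\prime}\beta^{\prime\prime}}$ vanishes and $A_{\alpha^{\prime\prime}}=A_{\beta^{\prime\prime}}=0$, so each of the three terms in $F_{\alpha^{\prime\prime}\beta^{\prime\prime}}$ is zero; consistently $\widehat{\nabla}_{e_{\alpha^{\prime\prime}}}=e_{\alpha^{\prime\prime}}$ and $\{e_{\alpha^{\prime\prime}},e_{\beta^{\prime\prime}}\}=0$, so the operator ${}^L[\widehat{\nabla}_{e_{\alpha^{\prime\prime}}},\widehat{\nabla}_{e_{\beta^{\prime\prime}}}\}-\widehat{\nabla}_{[e_{\alpha^{\prime\prime}},e_{\beta^{\prime\prime}}\}}$ vanishes outright. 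For $(I,J)=(\alpha^{\prime},\beta^{\prime})$, again $c^K_{\alpha^{\prime}\beta^{\prime}}=0$, so $F_{\alpha^{\prime}\beta^{\prime}}=-e_{\beta^{\prime}}A_{\alpha^{\prime}}-{}^{\varsigma}A_{\alpha^{\prime}}A_{\beta^{\prime}}$ with $A_{\gamma^{\prime}}=(e_{\gamma^{\prime}}e^{-V})e^V=-\,{}^{\varsigma}(e^{-V})\,(e_{\gamma^{\prime}}e^V)$. I would expand, substitute $e_{\beta^{\prime}}(e^{-V})=-\,{}^{\varsigma}(e^{-V})\,(e_{\beta^{\prime}}e^V)\,e^{-V}$, and invoke $\{e_{\alpha^{\prime}},e_{\beta^{\prime}}\}=0$: the identity that results is precisely the super Maurer--Cartan relation expressing that the ``$\theta^{\prime}$-sector'' form $\sum_{\gamma^{\prime}}e^{\gamma^{\prime}}A_{\gamma^{\prime}}$ --- pure gauge, and with its directions closed under bracket and producing no torsion into the other directions --- is flat, whence $F_{\alpha^{\prime}\beta^{\prime}}=0$. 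Equivalently: $\widehat{\nabla}_{e_{\alpha^{\prime}}}=e^{V\circ}\circ e_{\alpha^{\prime}}\circ e^{-V\circ}$, the inner $e^{-V\circ}\circ e^{V\circ}$ cancels in a composition of two such operators, so ${}^L[\widehat{\nabla}_{e_{\alpha^{\prime}}},\widehat{\nabla}_{e_{\beta^{\prime}}}\}=e^{V\circ}\circ\{e_{\alpha^{\prime}},e_{\beta^{\prime}}\}\circ e^{-V\circ}=0$ and $\widehat{\nabla}_{[e_{\alpha^{\prime}},e_{\beta^{\prime}}\}}=0$.

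The main case, and the main obstacle, is $(I,J)=(\alpha^{\prime},\beta^{\prime\prime})$, where the $e_\mu$-component of $\widehat{\nabla}$ enters. Here $A_{\beta^{\prime\prime}}=0$ annihilates the last term, so $F_{\alpha^{\prime}\beta^{\prime\prime}}=\tfrac12\sum_\mu c^\mu_{\alpha^{\prime}\beta^{\prime\prime}}A_\mu-e_{\beta^{\prime\prime}}A_{\alpha^{\prime}}=-\sqrt{-1}\sum_\mu\sigma^\mu_{\alpha\dot{\beta}}A_\mu-\Theta_{\alpha\dot{\beta}}$. Substituting $A_\mu=\tfrac{\sqrt{-1}}{2}\sum_{\gamma,\dot{\delta}}\breve{\sigma}_\mu^{\gamma\dot{\delta}}\Theta_{\gamma\dot{\delta}}$ and using the Pauli-matrix completeness identity $\sum_\mu\sigma^\mu_{\alpha\dot{\beta}}\breve{\sigma}_\mu^{\gamma\dot{\delta}}=\delta_{\alpha\gamma}\delta_{\dot{\beta}\dot{\delta}}$ --- for which the factor $\tfrac12$ in the normalization of $\breve{\sigma}$ is precisely designed --- turns $-\sqrt{-1}\sum_\mu\sigma^\mu_{\alpha\dot{\beta}}A_\mu$ into $\Theta_{\alpha\dot{\beta}}$, cancelling $-\Theta_{\alpha\dot{\beta}}=-e_{\beta^{\prime\prime}}A_{\alpha^{\prime}}$; hence $F_{\alpha^{\prime}\beta^{\prime\prime}}=0$, and $F_{\alpha^{\prime\prime}\beta^{\prime}}$ follows by interchanging the roles of primed and double-primed indices (or by the graded antisymmetry of $F^{\widehat{\nabla}}$). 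I expect the only real friction to be bookkeeping: keeping straight the sign factors $(-1)^{\bullet}$ and the parity-conjugations ${}^{\varsigma_{\bullet}}$ in the $F_{IJ}$ formula --- recall $V$, and hence $A_\mu$ and $A_{\alpha^{\prime}}$, need not be parity-homogeneous --- together with the precise index/sign form of the $\breve{\sigma}$-completeness relation and of $c^\mu_{\alpha^{\prime}\beta^{\prime\prime}}$ in the conventions fixed in Sec.~1.4. None of this affects the mechanism, which is simply that $\widehat{\nabla}_{e_\mu}$ was \emph{defined} to be the combination $\tfrac{\sqrt{-1}}{2}\sum_{\alpha,\dot{\beta}}\breve{\sigma}_\mu^{\alpha\dot{\beta}}\,{}^L[\widehat{\nabla}_{e_{\alpha^{\prime}}},\widehat{\nabla}_{e_{\beta^{\prime\prime}}}\}$ that forces $F_{\alpha^{\prime}\beta^{\prime\prime}}$ to vanish.
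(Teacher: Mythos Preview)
Your operator-form argument --- your ``cross-check'' and final remark --- is exactly the paper's proof: $\widehat\nabla_{e_{\beta^{\prime\prime}}}=e_{\beta^{\prime\prime}}$ so the $(\alpha^{\prime\prime},\beta^{\prime\prime})$-component is $\{e_{\alpha^{\prime\prime}},e_{\beta^{\prime\prime}}\}=0$; $\widehat\nabla_{e_{\alpha^{\prime}}}=e^{V\circ}\circ e_{\alpha^{\prime}}\circ e^{-V\circ}$ so the $(\alpha^{\prime},\beta^{\prime})$-component is $e^{V\circ}\circ\{e_{\alpha^{\prime}},e_{\beta^{\prime}}\}\circ e^{-V\circ}=0$; and the $(\alpha^{\prime},\beta^{\prime\prime})$-component vanishes tautologically from the very definition of $\widehat\nabla_{e_\mu}$. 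That argument is complete and matches the paper line for line.

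Your primary route via the individual coefficients $F_{IJ}$ in the redundant expansion $\sum_{I,J}e^I\wedge e^J F_{IJ}$ has a slip in the mixed case. Substituting $A_\mu=\tfrac{\sqrt{-1}}{2}\sum_{\gamma,\dot\delta}\breve\sigma_\mu^{\gamma\dot\delta}\Theta_{\gamma\dot\delta}$ and the completeness relation into $-\sqrt{-1}\sum_\mu\sigma^\mu_{\alpha\dot\beta}A_\mu$ gives $\tfrac12\Theta_{\alpha\dot\beta}$, not $\Theta_{\alpha\dot\beta}$; so the individual coefficient $F_{\alpha^{\prime}\beta^{\prime\prime}}=-\tfrac12\Theta_{\alpha\dot\beta}$ does not vanish. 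What vanishes is the actual curvature component in the sense of Lemma~2.2.9, namely $(-1)^{p(e_I)p(e_J)}F_{IJ}-F_{JI}$: since $A_{\beta^{\prime\prime}}=0$ one finds $F_{\beta^{\prime\prime}\alpha^{\prime}}=\tfrac12\Theta_{\alpha\dot\beta}$, and with both indices odd the combination is $-F_{\alpha^{\prime}\beta^{\prime\prime}}-F_{\beta^{\prime\prime}\alpha^{\prime}}=0$. Equivalently, $e^{\alpha^{\prime}}\wedge e^{\beta^{\prime\prime}}=e^{\beta^{\prime\prime}}\wedge e^{\alpha^{\prime}}$ for odd $1$-forms, so the two terms in the double sum pair up and cancel. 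The mechanism is right, but the bookkeeping you flagged as the likely friction did bite --- and the clean way around it is precisely the operator form you already wrote down.
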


\medskip

\begin{proof}
 $F_{\alpha^{\prime\prime}\beta^{\prime\prime}}
   = \{e_{\alpha^{\prime\prime}}, e_{\beta^{\prime\prime}}\}=0$.
 $F_{\alpha^\prime \beta^\prime}
   =    e^{-V \circ}\circ
           \{e_{\alpha^{\prime\prime}}, e_{\beta^{\prime\prime}}\}
		   \circ e^{V\circ }=0$.
 And\\
   $F_{\alpha^{\prime}\beta^{\prime\prime}}
      = \{\widehat{\nabla}_{\!e_{\alpha}^{\prime}},
		          \widehat{\nabla}_{\!e_{\beta^{\prime\prime}}}\}
	        - \widehat{\nabla}_{ \{e_{\alpha^{\prime}}, e_{\beta^{\prime\prime}}\}}
	  =0$
   by tautology
  since
   $\widehat{\nabla}_{ \{e_{\alpha^{\prime}}, e_{\beta^{\prime\prime}}\}}
      = -2\sqrt{-1}\,\sum_{\mu}\sigma^{\mu}_{\alpha\dot{\beta}}\,
	          \widehat{\nabla}_{\!e_{\mu}}$
    and the design of $\widehat{\nabla}_{\!e_{\mu}}$
	 as a ${\Bbb C}$-combination of
	 $\widehat{\nabla}_{ \{e_{\alpha^{\prime}}, e_{\beta^{\prime\prime}}\}}$'s	
	 means to make $F_{\alpha^{\prime}\beta^{\prime\prime}}$ vanish.
 
\end{proof}

\bigskip

\begin{remark} $[${\it parity-conjugate $\,\!^{\varsigma}\!\widehat{\nabla}$ of
     SUSY-rep compatible connection $\widehat{\nabla}$}$]$\; {\rm
 Recall that for a simple hybrid connection $\widehat{\nabla}$ on $\widehat{\cal E}$
  with $\widehat{\nabla}s= ds+ sA$,
 the {\it parity-conjugate} $\,\!^\varsigma\!\widehat{\nabla}$ of $\widehat{\nabla}$
  is given by $\,\!^\varsigma\!\widehat{\nabla}s= ds + \,\!^\varsigma\!\!A  s$.
 For a SUSY-rep compatible simple hybrid connection $\widehat{\nabla}$ on $\widehat{\cal E}$,
   say, associated to a vector superfield $V$,
  $\,\!^\varsigma\!\widehat{\nabla}$ is given by
  the SUSY-rep compatible simple hybrid connection $\widehat{\nabla}$ on $\widehat{\cal E}$
   associated to the parity conjugate $\,\!^\varsigma\!V$ of $V$.
 Let $A=\sum_Ie^IA_I$ be the connection $1$-form associated to $\widehat{\nabla}$.
 Then the connection $1$-from associated to $\,\!^\varsigma\!\widehat{\nabla}$ is given by
  $\,\!^\varsigma\!\!A = \sum_I(-1)^{p(e_I)}\,e^I \,\,\!^\varsigma\!(\!A_I\!)$.
}\end{remark}

\bigskip

\begin{flushleft}
{\bf Induced SUSY-rep compatible hybrid connection on
         $\Endsheaf_{\widehat{\cal O}_X}\!(\widehat{\cal E})$}
\end{flushleft}
Continuing the notations.
Let $\widehat{\nabla}$ be the SUSY-rep compatible simple hybrid connection on $\widehat{\cal E}$
  associated to a vector superfield $V$ on $\widehat{X}$,
 with $\widehat{\nabla}s=ds+sA$ for $s\in \widehat{\cal E}$.
Then, by Lemma/Definition~2.2.12,
 $\widehat{D}\widehat{m}\; :=\; d\widehat{m}- [A, \widehat{m}\}$,
  for $\widehat{m}\in\Endsheaf_{\widehat{\cal O}_X}\!(\widehat{\cal E})$,
 defines a hybrid connection on $\Endsheaf_{\widehat{\cal O}_X}\!(\widehat{\cal E})$
 that restricts to $d$ on $\widehat{\cal O}_X$.
  
\bigskip

\begin{lemma}
{\bf [$\widehat{D}_{e_{\beta^{\prime\prime}}}$ and
          $\widehat{D}_{e_{\alpha^\prime}}$]}\;
 $\widehat{D}_{e_{\beta^{\prime\prime}}}$ and
  $\widehat{D}_{e_{\alpha^\prime}}$
  have the following alternative expressions
 $$
   \widehat{D}_{e_{\beta^{\prime\prime}}}\widehat{m}\;
       =\;  e_{\beta^{\prime\prime}}\widehat{m}
     \hspace{2em}\mbox{and}\hspace{2em}
   \widehat{D}_{e_{\alpha^{\prime}}} \widehat{m}\;
       =\;   \,\!^{\varsigma}\!e^{-V} \mbox{\Large $($}
	      e_{\alpha^{\prime}}
		               (e^V \widehat{m}\, \,\!^{\varsigma_{\widehat{m}}}\!e^{-V})
	                                                    \mbox{\Large $)$}\,
							\,\!^{\varsigma_{\widehat{m}}}\!e^V\,,
 $$
 for $\beta^{\prime\prime}=1^{\prime\prime}, 2^{\prime\prime}$,
	  $\alpha^{\prime}= 1^{\prime}, 2^{\prime}$  and
	  $\widehat{m}$ parity homogeneous in the second equality.
 Here,
   $\,\!^{\varsigma}\!e^{-V}$
     (resp.\  $\,\!^{\varsigma_{\widehat{m}}}\!e^{-V}$,
	               $\,\!^{\varsigma_{\widehat{m}}}\!e^V$)
    is the shorthand for
	$\,\!^{\varsigma}\!(e^{-V})$
     (resp.\  $\,\!^{\varsigma_{\widehat{m}}}\!(e^{-V})$,
	               $\,\!^{\varsigma_{\widehat{m}}}\!(e^V)$).
\end{lemma}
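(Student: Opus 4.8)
The plan is to verify both identities directly from the definition $\widehat{D}\widehat{m} = d\widehat{m} - [A,\widehat{m}\}$, evaluated on the supersymmetrically invariant frame, using the explicit connection $1$-form $A$ associated to the vector superfield $V$. Recall from the discussion following Definition~2.2.12 that $\widehat{D}_{e_I}\widehat{m} = e_I\widehat{m} - [A_I, \widehat{m}\}$ where $A = \sum_I e^I A_I$.

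For the first identity: from the explicit computation of $A$ in Definition~2.3.3, one has $A_{1^{\prime\prime}} = A_{2^{\prime\prime}} = 0$. Therefore $\widehat{D}_{e_{\beta^{\prime\prime}}}\widehat{m} = e_{\beta^{\prime\prime}}\widehat{m} - [A_{\beta^{\prime\prime}}, \widehat{m}\} = e_{\beta^{\prime\prime}}\widehat{m}$. This is immediate; the only point to note is that this matches the design choice $\widehat{\nabla}_{e_{\beta^{\prime\prime}}} = e_{\beta^{\prime\prime}}$ of the SUSY-rep compatible connection on $\widehat{\cal E}$, so the induced connection on endomorphisms inherits the same flatness in the antichiral fermionic directions.

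For the second identity, the strategy is to expand the proposed right-hand side using the $\mathbb{Z}/2$-graded Leibniz rule for the ordinary derivation $e_{\alpha^\prime}$ on $\Endsheaf_{\widehat{\cal O}_X}\!(\widehat{\cal E})$, and show it collapses to $e_{\alpha^\prime}\widehat{m} - [A_{\alpha^\prime}, \widehat{m}\}$. The key algebraic inputs are: (i) $A_{\alpha^\prime} = (e_{\alpha^\prime}e^{-V})e^V$ from Definition~2.3.3, equivalently $-e^{-V}(e_{\alpha^\prime}e^V)$ by differentiating $e^{-V}e^V = \Id$; (ii) $\,\!^\varsigma\!(e^{\pm V}) = e^{\pm\,\!^\varsigma\!V}$ and the parity-conjugation bookkeeping $\,\!^{\varsigma_{\widehat{m}}}\!e^{-V}$, $\,\!^{\varsigma_{\widehat{m}}}\!e^V$ (which matters because $V$ is not purely even); (iii) that $V^3 = 0$ so all exponentials are polynomials, and that $\widehat{m}\,\!^{\varsigma_{\widehat{m}}}\!e^{-V}\,\!^{\varsigma_{\widehat{m}}}\!e^V = \widehat{m}$ since $e^{-V}e^V=\Id$ is even hence survives parity conjugation. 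Applying $e_{\alpha^\prime}$ (an odd derivation) to the triple product $e^V\widehat{m}\,\,\!^{\varsigma_{\widehat{m}}}\!e^{-V}$ via the graded Leibniz rule produces three terms; conjugating on the left by $\,\!^\varsigma\!e^{-V}$ and on the right by $\,\!^{\varsigma_{\widehat{m}}}\!e^V$, the middle term gives exactly $e_{\alpha^\prime}\widehat{m}$ (after the cancellations in (iii)), while the two edge terms reorganize, using (i), into $-A_{\alpha^\prime}\widehat{m} + \widehat{m}\,\!^{\varsigma_{\widehat{m}}}\!A_{\alpha^\prime} = -[A_{\alpha^\prime},\widehat{m}\}$.

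The main obstacle will be getting every parity-conjugation superscript and every $(-1)^{\bullet}$ sign-factor correct when passing $e_{\alpha^\prime}$ (odd, $\chd = 0$) past the pieces of the triple product: one must carefully track which factors $e_{\alpha^\prime}$ is differentiating versus merely commuting through, and ensure the $\,\!^{\varsigma_{\widehat{m}}}$ versus $\,\!^\varsigma$ distinctions are applied to the right factors — in particular that after the left-conjugation by $\,\!^\varsigma\!e^{-V}$ the naked $e_{\alpha^\prime}e^V$ recombines with $\,\!^\varsigma\!e^{-V}$ to reproduce $-A_{\alpha^\prime}$ rather than $-\,\!^\varsigma\!A_{\alpha^\prime}$. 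This is precisely the kind of "simple yet tedious" sign computation flagged in the Remark on sign-related elementary details in the introduction; the cleanest route is probably to first prove the $\widehat m$-even case (where all parity conjugations are trivial) to pin down the structure, then promote to parity-homogeneous $\widehat m$ by inserting the conjugation operators and checking consistency against Property~(4) of Lemma/Definition~2.2.12.
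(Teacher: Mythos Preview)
Your proposal is correct and follows essentially the same approach as the paper: both reduce the first identity to $A_{\beta^{\prime\prime}}=0$, and both establish the second by expanding $e_{\alpha^\prime}(e^V\widehat{m}\,\,\!^{\varsigma_{\widehat{m}}}\!e^{-V})$ via the graded Leibniz rule and matching the three resulting terms against $e_{\alpha^\prime}\widehat{m} - A_{\alpha^\prime}\widehat{m} + \widehat{m}\,\,\!^{\varsigma_{\widehat{m}}}\!A_{\alpha^\prime}$. One small slip to fix in your item~(i): differentiating $e^{-V}e^V=\Id$ with the odd derivation $e_{\alpha^\prime}$ gives $(e_{\alpha^\prime}e^{-V})e^V = -\,\!^\varsigma\!e^{-V}(e_{\alpha^\prime}e^V)$, not $-e^{-V}(e_{\alpha^\prime}e^V)$ --- but you already flag exactly this parity-conjugation subtlety in your final paragraph, so the correction is straightforward.
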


\medskip

\begin{proof}
 Recall that $\widehat{D}_{e_I}\widehat{m}= e_I\widehat{m}-[A_I, \widehat{m}\}$.
 Thus,
 $\widehat{D}_{e_{\beta^{\prime\prime}}}\widehat{m}
    = e_{\beta^{\prime\prime}}\widehat{m}
	    - [A_{\beta^{\prime\prime}}, \widehat{m}\}
    =  e_{\beta^{\prime\prime}}\widehat{m}$ since $A_{\beta^{\prime\prime}}=0$,
 for $\beta^{\prime\prime}=1^{\prime\prime}, 2^{\prime\prime}$   and,
 for $\widehat{m}$ parity homogeneous,
 \begin{eqnarray*}
  \widehat{D}_{e_{\alpha^\prime}}\widehat{m}
    & = &  e_{\alpha^{\prime}} \widehat{m}\,-\, [A_{\alpha^\prime}, \widehat{m}\}\;\;	
	   =\;\; e_I\widehat{m} \,
	           -\, A_{\alpha^\prime}\widehat{m}\,
	           +\, \widehat{m}\,\,\!^{\varsigma_{\widehat{m}}}\!\!A_{\alpha^\prime}\\
    & = & e_{\alpha^\prime}\widehat{m}\,
	          -\, (e_{\alpha^\prime}e^{-V})e^V  \cdot\widehat{m}\,
              +\,  \widehat{m}\cdot
			         \,\!^{\varsigma_{\widehat{m}}}\!((e_{\alpha^\prime}e^{-V}) e^V)\\
    & = &  \,\!^{\varsigma}\!e^{-V} \mbox{\Large $($}
	               e_{\alpha^{\prime}}
		               (e^V \widehat{m}\, \,\!^{\varsigma_{\widehat{m}}}\!e^{-V})
	                                                    \mbox{\Large $)$}\,
							\,\!^{\varsigma_{\widehat{m}}}\!e^V\,,
 \end{eqnarray*}
 for $\alpha^{\prime}= 1^{\prime}, 2^{\prime}$.
 Here, the following identities are used:
  $$
   \begin{array}{c}
    (e_{\alpha^\prime}e^{-V}) e^V\;
     =\;  -\, \,\!^\varsigma\!e^{-V}\,(e_{\alpha^\prime}e^V)\,,\hspace{2em}
     \,\!^{\varsigma_{\widehat{m}}}\!((e_{\alpha^\prime}e^{-V}) e^V)\;
      =\;   \,\!^{\varsigma_{\widehat{m}}}\!(e_{\alpha^\prime}e^{-V})\,
	          \,\!^{\varsigma_{\widehat{m}}}\!e^V\,,  \\[1.2ex]
   \,\!^{\varsigma_{\widehat{m}}}\!(e_{\alpha^\prime}e^{-V})\;
      =\;   (-1)^{p(\widehat{m})}\,  \,\!^{\varsigma_{\widehat{m}}}\!e^{-V}\,,
	                                                                                  \\[1.2ex]
	e_{\alpha^{\prime}}
		               (e^V \widehat{m}\, \,\!^{\varsigma_{\widehat{m}}}\!e^{-V})\;
	=\;  (e_{\alpha^{\prime}}e^V)
	       \cdot \widehat{m}\cdot \,\!^{\varsigma_{\widehat{m}}}\!e^{-V}\,
		 +\,  \,\!^\varsigma\!e^V
		            \cdot  (e_{\alpha^\prime}\widehat{m})
			      \cdot \,\!^{\varsigma_{\widehat{m}}}\!e^{-V}\,
	     +\, \,\!^\varsigma\! e^V
		          \cdot (-1)^{p(\widehat{m})}\,\widehat{m}
                  \cdot	(e_{\alpha^\prime}\,\!^{\varsigma_{\widehat{m}}}\!e^{-V})\,.
   \end{array}
   $$
  
\end{proof}
  
\bigskip

A similar statement for $\,\!^\varsigma\!\widehat{D}$ also holds:

\bigskip

\begin{lemma}
{\bf [$\,\!^\varsigma\!\widehat{D}_{e_{\beta^{\prime\prime}}}$ and
          $\,\!^\varsigma\!\widehat{D}_{e_{\alpha^\prime}}$]}\;
 $\,\!^\varsigma\!\widehat{D}_{e_{\beta^{\prime\prime}}}$ and
  $\,\!^\varsigma\!\widehat{D}_{e_{\alpha^\prime}}$
  have the following alternative expressions
 $$
   \,\!^\varsigma\!\widehat{D}_{e_{\beta^{\prime\prime}}}\widehat{m}\;
       =\;  e_{\beta^{\prime\prime}}\widehat{m}
     \hspace{2em}\mbox{and}\hspace{2em}
   \,\!^\varsigma\!\widehat{D}_{e_{\alpha^{\prime}}} \widehat{m}\;
       =\;   e^{-V} \mbox{\Large $($}
	          e_{\alpha^{\prime}}
		               (\,\!^\varsigma\!e^V \widehat{m}\,
					      \,\!^{\varsigma_{\widehat{m}}}\!\,\!^\varsigma\!e^{-V})
	                                                    \mbox{\Large $)$}\,
							\,\!^{\varsigma_{\widehat{m}}}\!\,\!^\varsigma\!e^V\,,
 $$
 for $\beta^{\prime\prime}=1^{\prime\prime}, 2^{\prime\prime}$,
	  $\alpha^{\prime}= 1^{\prime}, 2^{\prime}$  and
	  $\widehat{m}$ parity homogeneous in the second equality.
 Here,\\
   $\;\,\!^\varsigma\!\widehat{D}_{e_{\beta^{\prime\prime}}}
     :=  (\,\!^\varsigma\!\widehat{D})_{e_{\beta^{\prime\prime}}}\,$,
   $\;\,\!^\varsigma\!\widehat{D}_{e_{\alpha^\prime}}
     := (\,\!^\varsigma\!\widehat{D})_{e_{\alpha^\prime}}\,$,   and
   $\;\,\!^{\varsigma}\!e^V\;$
     (resp.\  $\;\,\!^{\varsigma_{\widehat{m}}}\!\,\!^\varsigma\!e^{-V}\,$,
	               $\;\,\!^{\varsigma_{\widehat{m}}}\!\,\!^\varsigma\!e^V$)
    is the shorthand for
	$\;\,\!^{\varsigma}\!(e^V)\,$
     (resp.\  $\;\,\!^{\varsigma_{\widehat{m}}}\!(\,\!^\varsigma\!(e^{-V}))\,$,
	               $\;\,\!^{\varsigma_{\widehat{m}}}\!(\,\!^\varsigma\!(e^V))$).
\end{lemma}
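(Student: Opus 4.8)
The plan is to obtain the formula for $\,\!^\varsigma\!\widehat{D}_{e_{\beta^{\prime\prime}}}$ and $\,\!^\varsigma\!\widehat{D}_{e_{\alpha^\prime}}$ by applying the preceding Lemma (the one giving the alternative expressions for $\widehat{D}_{e_{\beta^{\prime\prime}}}$ and $\widehat{D}_{e_{\alpha^\prime}}$) to the parity-conjugate connection $\,\!^\varsigma\!\widehat{\nabla}$ in place of $\widehat{\nabla}$, and then tracing through what parity-conjugation does to the input data. The key observation, already recorded in Remark~2.3.7, is that if $\widehat{\nabla}$ is the SUSY-rep compatible simple hybrid connection associated to the vector superfield $V$, then $\,\!^\varsigma\!\widehat{\nabla}$ is the SUSY-rep compatible simple hybrid connection associated to the parity conjugate $\,\!^\varsigma\!V$; and by Lemma/Definition~2.2.12 the induced hybrid connection on $\Endsheaf_{\widehat{\cal O}_X}\!(\widehat{\cal E})$ built from $\,\!^\varsigma\!\widehat{\nabla}$ is exactly $\,\!^\varsigma\!\widehat{D}$. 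So the statement to be proved is literally the previous Lemma with $V$ replaced by $\,\!^\varsigma\!V$.

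First I would verify the compatibility claim $\,\!^\varsigma\!(\,\text{``}\widehat{D}\text{ built from }\widehat{\nabla}\,\text{''}) = \text{``}\widehat{D}\text{ built from }\,\!^\varsigma\!\widehat{\nabla}\text{''}$. Writing $\widehat{\nabla}s = ds + sA$, one has $\,\!^\varsigma\!\widehat{\nabla}s = ds + \,\!^\varsigma\!A\, s$ (recall the parity-conjugation convention and the sign-twisted coframe expansion $\,\!^\varsigma\!A = \sum_I(-1)^{p(e_I)}e^I\,\,\!^\varsigma\!(A_I)$ from Remark~2.3.7), and $\widehat{D}\widehat{m} = d\widehat{m} - [A,\widehat{m}\}$, whence by definition $\,\!^\varsigma\!\widehat{D}\widehat{m} = d\widehat{m} - \,\!^\varsigma\!([A,\widehat{m}\})$. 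The point is that parity-conjugating the bracket $[A,\widehat m\}$ is the same as forming $[\,\!^\varsigma\!A, \cdot\}$-type data with the appropriate signs, so that $\,\!^\varsigma\!\widehat{D}$ is the induced hybrid connection attached to $\,\!^\varsigma\!\widehat{\nabla}$; this is a short sign-bookkeeping check using the $({\Bbb Z}\times {\Bbb Z}/2)$-bigrading passing rule of Convention~1.3.5.

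Next I would feed this into the previous Lemma. Since $\,\!^\varsigma\!\widehat{\nabla}$ is the SUSY-rep compatible connection associated to $\,\!^\varsigma\!V$, the previous Lemma immediately gives $\,\!^\varsigma\!\widehat{D}_{e_{\beta^{\prime\prime}}}\widehat{m} = e_{\beta^{\prime\prime}}\widehat{m}$ (because $A_{\beta^{\prime\prime}}=0$ is unaffected by parity-conjugation) and
\[
 \,\!^\varsigma\!\widehat{D}_{e_{\alpha^{\prime}}}\widehat{m}\;
 =\; \,\!^{\varsigma}\!e^{-\,\!^\varsigma\!V}\,
 \Bigl(e_{\alpha^{\prime}}\bigl(e^{\,\!^\varsigma\!V}\widehat{m}\,
 \,\!^{\varsigma_{\widehat{m}}}\!e^{-\,\!^\varsigma\!V}\bigr)\Bigr)\,
 \,\!^{\varsigma_{\widehat{m}}}\!e^{\,\!^\varsigma\!V}\,.
\]
Then I would simplify using the elementary identities $e^{\,\!^\varsigma\!V} = \,\!^\varsigma\!(e^V) =: \,\!^\varsigma\!e^V$ and $\,\!^\varsigma\!(e^{-\,\!^\varsigma\!V}) = \,\!^\varsigma\!\,\!^\varsigma\!(e^{-V}) = e^{-V}$ (double parity-conjugation is the identity), together with the hindsight observation from Footnote~32 that $\,\!^\varsigma\!(e^V) = e^{\,\!^\varsigma\!V}$ holds for a vector superfield because $e^V = \Id + V + \tfrac12 V^2$. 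Substituting these turns the displayed expression into exactly the claimed
\[
 \,\!^\varsigma\!\widehat{D}_{e_{\alpha^{\prime}}}\widehat{m}\;
 =\; e^{-V}\Bigl(e_{\alpha^{\prime}}\bigl(\,\!^\varsigma\!e^V\,\widehat{m}\,
 \,\!^{\varsigma_{\widehat{m}}}\!\,\!^\varsigma\!e^{-V}\bigr)\Bigr)\,
 \,\!^{\varsigma_{\widehat{m}}}\!\,\!^\varsigma\!e^V\,.
\]
The main obstacle is the first paragraph's sign check — confirming that $\,\!^\varsigma\!(\widehat{D})$ really is the induced hybrid connection of $\,\!^\varsigma\!\widehat{\nabla}$ rather than something with a stray $(-1)^{p(\widehat m)}$ — since the induced connection $\widehat{D}$ has both a left part and a right part (Remark~2.2.14) and parity-conjugation interacts with them asymmetrically; everything after that is a mechanical substitution. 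An alternative to avoid even this check is to re-run the proof of the previous Lemma verbatim with $A_{\alpha^\prime}$ replaced by $\,\!^\varsigma\!A_{\alpha^\prime} = (e_{\alpha^\prime}\,\!^\varsigma\!e^{-V})\,\!^\varsigma\!e^V$ and collect the resulting expression, which I would mention as a safety-net derivation.
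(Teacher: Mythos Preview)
Your proposal is correct and matches the paper's (implicit) approach: the paper gives no explicit proof, only the prefatory remark ``A similar statement for $\,\!^\varsigma\!\widehat{D}$ also holds,'' which signals exactly the adaptation of Lemma~2.3.6 you describe (your Option~B, or equivalently Option~A once the sign check is done). One small correction: your references to ``Remark~2.3.7'' should be to Remark~2.3.5 (the remark on the parity-conjugate of a SUSY-rep compatible connection); Lemma~2.3.7 is the statement you are proving.
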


%
%

\bigskip

\subsection{$d=4$, $N=1$ Azumaya/matrix superspaces $\widehat{X}^{\!A\!z}$}

Let
 \begin{itemize}
  \item[\LARGE $\cdot$]
   $X$ be the $4$-dimensional Minkowski space-time with structure sheaf ${\cal O}_X$
     as a $C^{\infty}$-scheme   and
   $E$ be a complex vector bundle (say, of rank $r$) on $X$ with the associated sheaf
	   of sections denoted by ${\cal E}$ as an ${\cal O}_X^{\,\Bbb C}$-module;
	
  \item[\LARGE $\cdot$]
     $\widehat{X}$ be the $4$-dimensional, $N=1$ superspace with the structure sheaf
       $\widehat{\cal O}_X$ as a super $C^{\infty}$-scheme   and
     $\widehat{\cal E}
	   := {\cal E}\otimes_{{\cal O}_X^{\,\Bbb C}}\!\widehat{\cal O}_X$
     be the extension of ${\cal E}$ to $\widehat{X}$;  and
	
  \item[\LARGE $\cdot$]
	$\Endsheaf_{\widehat{\cal O}_X}\!(\widehat{\cal E})$
      be the sheaf of ${\Bbb Z}/2$-graded rings of right endomorphisms of $\widehat{\cal E}$
	  (with $\widehat{\cal E}$ as a left $\widehat{\cal O}_X$-module);
    $\Endsheaf_{\widehat{\cal O}_X}\!(\widehat{\cal E})$
	   acts on $\widehat{\cal E}$ from the right by default  and
	   hence is canonically isomorphic to
       $\widehat{\cal E}^{\vee}\otimes_{\widehat{\cal O}_X}\! \widehat{\cal E}$.
 \end{itemize}		
In this subsection we
 introduce the $4$-dimensional $N=1$ Azumaya superspace
   as a super  Azumaya $C^{\infty}$-scheme with underlying topology $X$  and
 study its chiral function ring and antichiral function ring
   associated to a simple hybrid connection on its fundamental module.

\bigskip
  
\begin{flushleft}
{\bf The partial $C^{\infty}$-ring structure on
         $\Endsheaf_{\widehat{\cal O}_X}\!(\widehat{\cal E})$}
\end{flushleft}		
The following theorem is a substatement of [L-Y9: Theorem 2.1.8] (D(11.4.1)),
 rephrased in the form that fits the current situation directly:
 
\bigskip

\begin{theorem} {\bf [$C^{\infty}$ partial operations on
     $\Endsheaf_{\widehat{\cal O}_X}\!(\widehat{\cal E})$]}\;
 Let
  $h\in C^{\infty}({\Bbb R}^l)$ and
  $\widehat{m}_i = m_{i,(0)}+ \widehat{m}_{i, (\ge 1)}
    \in  \Endsheaf_{\widehat{\cal O}_X}\!(\widehat{\cal E})$,
	$i=1,\,\cdots\,, l$
  with
    $m_{i,(0)}\in  \Endsheaf_{{\cal O}_X^{\,\Bbb C}}\!({\cal E})$   and
	$m_{{i, (\ge 1)}}$ contains all the terms in $\widehat{m}_i$
	  that involve the fermionic generators in $\widehat{\cal O}_X$.
 Suppose that $\widehat{m}_1,\,\cdots\,,\, \widehat{m}_l$ satisfy the following two properties
  \begin{itemize}
   \item[$(1)$] $[${\sl commutativity}$]$\hspace{1em}
    $\widehat{m}_i\widehat{m}_j=\widehat{m}_j \widehat{m}_i $\,,\;\;
	for $i,j=1,\,\cdots\,,\, l$.
  
   \item[$(2)$] $[${\sl realness}$]$\hspace{1em}
     \parbox[t]{32.8em}{
    For every $p\in X$,
	 the eigenvalues of the restriction $m_{i, (0)}(p)$
	  of $m_{i, (0)}$ to the fiber
	  $\End_{\Bbb C}(E|_p)\simeq M_{r\times r}({\Bbb C})$
	  of $\Endsheaf_{{\cal O}_X^{\,\Bbb C}}\!({\cal E})$ over $p$	
	are all real.}
  \end{itemize}
  Then,
   $h(\widehat{m}_1,\,\cdots\,,\, \widehat{m}_l)$ is uniquely well-defined.
\end{theorem}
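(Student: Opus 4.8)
The plan is to reduce the theorem to the scalar case handled by Lemma~1.1.3 (and its extension Lemma~1.1.4) via simultaneous diagonalization, then promote the scalar formula back to $\Endsheaf_{\widehat{\cal O}_X}\!(\widehat{\cal E})$ by functoriality. First I would fix a point $p\in X$ and work on the fiber, writing each $\widehat{m}_i = m_{i,(0)} + \widehat{m}_{i,(\ge 1)}$ where $\widehat{m}_{i,(\ge 1)}$ is nilpotent (it lies in $\Endsheaf_{{\cal O}_X^{\,\Bbb C}}\!({\cal E})\otimes_{{\cal O}_X^{\,\Bbb C}}\widehat{\frak m}$, and $\widehat{\frak m}^5 = 0$). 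By Property~(1) the $\widehat{m}_i$ commute; taking the degree-$0$ part of the commutativity relations shows the $m_{i,(0)}$ commute as well. By Property~(2) each $m_{i,(0)}(p)$ has real eigenvalues, so a commuting family of such matrices is simultaneously conjugate to a direct sum of blocks, each of which is (real scalar)$\cdot\Id$ plus a nilpotent matrix. After such a conjugation one can further reduce, within each generalized eigenspace, so that the $m_{i,(0)}$ are simultaneously upper-triangular with real diagonal entries.

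Next I would \emph{define} $h(\widehat{m}_1,\,\cdots\,,\,\widehat{m}_l)$ by the explicit finite Taylor expansion: write $\widehat{m}_i = \lambda_i\Id + \widehat{n}_i$ where $\lambda_i\Id$ is the (real, semisimple) part extracted above and $\widehat{n}_i$ collects everything nilpotent, namely $\widehat{m}_{i,(\ge 1)}$ together with the nilpotent part of $m_{i,(0)}$. Since the $\widehat{m}_i$ commute, so do the $\widehat{n}_i$, and each $\widehat{n}_i$ is nilpotent; hence the multivariable analogue of Lemma~1.1.3,
\[
  h(\widehat{m}_1,\,\cdots\,,\,\widehat{m}_l)\;
   :=\; \sum_{d\ge 0}\,\sum_{d_1+\,\cdots\,+d_l=d}
     \frac{1}{d_1!\cdots d_l!}\,
	  (\partial_1^{\,d_1}\cdots\partial_l^{\,d_l}h)(\lambda_1,\,\cdots\,,\,\lambda_l)\,
	    \widehat{n}_1^{\,d_1}\cdots\widehat{n}_l^{\,d_l}\,,
\]
is a finite sum (the total $(\theta,\bar\theta)$-degree plus the nilpotency order of the matrix part is bounded), the derivatives $\partial_1^{\,d_1}\cdots\partial_l^{\,d_l}h$ are evaluated at the real numbers $\lambda_i$, so they make sense, and the monomials $\widehat{n}_i^{\,d_i}$ commute so no ordering ambiguity arises. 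I would then check this is independent of the choices made: independence of the simultaneous-triangularization conjugation follows because conjugation by a fixed $g$ is a ${\Bbb Z}/2$-graded ${\Bbb C}$-algebra automorphism commuting with the partial $C^\infty$-operations (this is essentially the conjugation-compatibility built into Lemma/Definition~2.2.12 and used in Lemma~2.2.8); independence of how one splits $m_{i,(0)}$ into semisimple plus nilpotent is forced because the semisimple part is canonically the unique commuting real-semisimple summand, and the formula above is exactly the one dictated by Lemma~1.1.3 applied to $R = \Endsheaf_{\widehat{\cal O}_X}\!(\widehat{\cal E})$'s relevant commutative subring.

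The point then is to verify that this definition glues: the local fiberwise recipe patches to a global section of $\Endsheaf_{\widehat{\cal O}_X}\!(\widehat{\cal E})$ because on the commutative ${\Bbb C}$-subalgebra $R_0 \subset \Endsheaf_{\widehat{\cal O}_X}\!(\widehat{\cal E})$ generated by $\widehat{m}_1,\,\cdots\,,\,\widehat{m}_l$ together with $\widehat{\cal O}_X$, one can invoke [L-Y9: Theorem~2.1.8] (D(11.4.1)) — whose substatement this theorem is — to see that $R_0$ carries a genuine partial $C^\infty$-ring structure for which $h(\widehat{m}_1,\,\cdots\,,\,\widehat{m}_l)$ is the image of $h$ under the evaluation; uniqueness of that structure gives uniqueness of the element. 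The main obstacle I anticipate is the ``realness'' bookkeeping: one must be careful that the eigenvalue condition in Property~(2) is exactly what is needed so that the $\lambda_i$ lie in ${\Bbb R}$ (not merely ${\Bbb C}$) — this is essential because a $C^\infty$-function $h$ on ${\Bbb R}^l$ can only be composed with real arguments, and without realness the semisimple parts would be complex and the whole construction would fail. The commuting-family simultaneous-diagonalization step over the generalized eigenspaces, and checking that the nilpotent ``remainders'' $\widehat{n}_i$ genuinely commute after the splitting, is the technical heart; everything else is formal manipulation already licensed by Lemmas~1.1.3, 1.1.4 and the cited theorem from D(11.4.1).
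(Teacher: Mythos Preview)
The paper does not give a proof here --- it simply states the theorem as a substatement of [L-Y9: Theorem~2.1.8] (D(11.4.1)) and cites that reference. But the technique actually used in [L-Y9], and repeated in the proof of Lemma~3.1.4 of this very paper, is the Malgrange Division Theorem applied to the characteristic polynomials of the $m_{i,(0)}$, not fiberwise triangularization.

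Your sketch has a genuine gap at the gluing step. The splitting $m_{i,(0)}(p) = (\text{semisimple}) + (\text{nilpotent})$ and the simultaneous-triangularizing conjugation $g(p)$ you perform at each $p\in X$ need not vary smoothly in $p$: when eigenvalues collide or Jordan type jumps, neither the eigenvalue functions $\lambda_i(p)$ nor $g(p)$ can in general be chosen smooth. So your Taylor formula produces a pointwise section, but you have not shown it lies in $C^{\infty}(\End_{\widehat{\Bbb C}}(\widehat{E}))$ rather than being merely a possibly non-smooth assignment. Your attempt to close this by invoking [L-Y9: Theorem~2.1.8] is circular: that theorem \emph{is} the result being restated here.

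The Malgrange route sidesteps the smoothness issue entirely. The characteristic polynomials of the $m_{i,(0)}$ have coefficients in $C^{\infty}(X)$ (no triangularization needed), and by the realness hypothesis their joint zero locus $\varSigma_{\widehat{\scriptsizeboldm}}\subset X\times{\Bbb R}^l$ is a genuine $C^{\infty}$-subscheme finite over $X$. One then divides $h$ (pulled back to $X\times{\Bbb R}^l$) by squares of these characteristic polynomials using Malgrange, obtaining locally $h = \widehat{f}_0 + \widehat{f}_1$ with $\widehat{f}_0 \in C^{\infty}(U)[t^1,\ldots,t^l]$ a polynomial with smooth coefficients and $\widehat{f}_1$ in the square of the ideal. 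Substituting $t^i\rightsquigarrow \widehat{m}_i$ kills $\widehat{f}_1$ and yields a manifestly smooth local element; uniqueness comes from the division being canonical modulo the ideal. This is the argument to reconstruct if you want an actual proof rather than a citation.
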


\medskip

\begin{definition} {\bf [partial $C^{\infty}$-ring structure on \& weak $C^{\infty}$-hull of
     $\Endsheaf_{\widehat{\cal O}_X}\!(\widehat{\cal E})$]}\; {\rm
 In view of Theorem~2.4.1,
  we say that $\Endsheaf_{\widehat{\cal O}_X}\!(\widehat{\cal E})$
    has a {\it partial $C^{\infty}$-ring structure}.
 For convenience, though with a slight abuse of terminology,
 a finite set of elements in  $\Endsheaf_{\widehat{\cal O}_X}\!(\widehat{\cal E})$
  that satisfy the two conditions ibidem are said to {\it lie in the weak $C^{\infty}$-hull}
  of $\Endsheaf_{\widehat{\cal O}_X}\!(\widehat{\cal E})$.
 In notation,
  $$
   \mbox{$\{\widehat{m}_1,\,\cdots\,,\, \widehat{m}_l\}
     \in$ {\it weak}-$C^{\infty}$-{\it hull}\,($\Endsheaf_{\widehat{\cal O}_X}
	                                                                         \!(\widehat{\cal E})$)}\,.
  $$
}\end{definition}
		
\medskip

\begin{notation}
{\bf [{\it weak-$C^{\infty}$-hull\,$(\Endsheaf_{\widehat{\cal O}_X}
                                             \!(\widehat{\cal E}) )\cap{\widehat{\cal F}} $}]}\;
{\rm
 For
  $\{\widehat{m}_1,\,\cdots\,,\, \widehat{m}_l\}\in$
  {\it weak}-$C^{\infty}$-{\it hull}\,$(\Endsheaf_{\widehat{\cal O}_X}
	                                                                         \!(\widehat{\cal E}))$																			
   with $\widehat{m}_1,\,\cdots\,,\, \widehat{m}_l$
   all in a subsheaf
    $\widehat{\cal F}$ of $\Endsheaf_{\widehat{\cal O}_X}\!(\widehat{\cal E})$,
 we will denote
  $$
	 \{\widehat{m}_1,\,\cdots\,,\, \widehat{m}_l\}\;\in\;
     \mbox{\it weak-$C^{\infty}$-hull\,$(\Endsheaf_{\widehat{\cal O}_X}\!
	                                                               (\widehat{\cal E}) )
		     \cap{\widehat{\cal F}}$}\,.
  $$										
}\end{notation}

\bigskip

\begin{flushleft}
{\bf $d=4$, $N=1$ Azumaya/matrix superspace as super D3-brane world-volume}
\end{flushleft}
We are finally ready to give the mathematical structure on the world-volume
 of a super (stacked) D3-brane along the line of
 [L-Y1] (D(1)), [L-Y3] (D(11.1)), and [L-Y4] (D(11.2)).		
 
\bigskip

\begin{definition} {\bf [Azumaya/matrix superspace as super D3-brane world-volume]}\; {\rm
 The ${\Bbb Z}/2$-graded-locally-ringed space
  $$
     \widehat{X}^{\!A\!z}\; :=\;
	 (X,  \widehat{\cal O}_X^{A\!z}
		       :=  \Endsheaf_{\widehat{\cal O}_X}\!(\widehat{\cal E}))\,,
  $$
 with $   \widehat{\cal O}_X^{A\!z}$ endowed with the partial $C^{\infty}$-ring structure
 from Theorem~2.4.1,
 is called a {\it $4$-dimensional $N=1$ Azumaya superspace} or interchangeably
  {\it $4$-dimensional $N=1$ matrix superspace}, or in short
   {\it $d=4$, $N=1$ Azumaya/matrix superspace}.
 The pair
  $(\widehat{X}^{\!A\!z}, \widehat{\cal E})
     = (X, \widehat{\cal O}_X^{A\!z}, \widehat{\cal E})$
  is called a {\it $d=4$, $N=1$ Azumaya/matrix superspace with a fundamental module}. 	

 String-theoretically,
  the Azumaya/matrix superspace $\widehat{X}^{\!A\!z}$
    is the {\it world-volume of a super D3-brane} and
 $\widehat{\cal E}$ is the {\it Chan-Paton sheaf} on the super D-brane world-volume.
 SUSY-rep compatible simple hybrid connections $\widehat{\nabla}$ on $\widehat{\cal E}$
   encode the {\it gauge fields}, their super partners {\it gauginos}, and some {\it auxiliary fields}
   on the super D-brane world-volume;
 they are the fields that correspond to the vector multiplet in the representations
  of the $d=4$, $N=1$ supersymmetry algebra.
}\end{definition}

\medskip

\begin{definition}
{\bf [standard/reference structures on $(\widehat{X}^{\!A\!z}, \widehat{\cal E})$]}\;
{\rm
 We collect from previous (sub)sections a few standard/reference structures related to
   $(\widehat{X}^{\!A\!z}, \widehat{\cal E})$ here:
  \begin{itemize}
   \item[(1)]
   [{\it special coordinate functions on $\widehat{X}$}]\hspace{1em}
    The {\it standard coordinate-functions} $(x,\theta, \bar{\theta})$,
	    cf. Notation~1.2.6;
	the {\it standard chiral coordinate functions}
	    $(x^\prime, \theta^\prime, \bar{\theta}^\prime)$,
		cf.\ Definition~1.4.11;
    the {\it standard antichiral coordinate functions}
	    $(x^{\prime\prime}, \theta^{\prime\prime}, \bar{\theta}^{\prime\prime})$,
		cf.\ Definition~1.4.15.

   \item[\LARGE $\cdot$]
    [{\it special derivations and $1$-forms on $\widehat{X}$}]\hspace{1em}
    The {\it standard supersymmetry generators} $Q_\alpha, \bar{Q}_{\dot{\beta}}$;
	the {\it standard supersymmetrically invariant derivations}
	   $e_{\alpha^{\prime}}, e_{\beta^{\prime\prime}}$;
	the {\it standard supersymmetrically invariant coframe} $(e_I)_I$; all on $\widehat{X}$.
  
   \item[(2)]
    [{\it reference trivialization of sheaves of modules}]\hspace{1em}
	Fix a reference trivialization of ${\cal E}$ on $X$.
	This induces then reference trivializations for ${\cal E}^{\vee}$ and
	   $\Endsheaf_{{\cal O}_X^{\,\Bbb C}}({\cal E})$ on $X$.
    These trivializations extend to reference trivializations for $\widehat{\cal E}$,
      $\widehat{\cal E}^{\vee}$, and
	  $\Endsheaf_{\widehat{\cal O}_X}\!(\widehat{\cal E})$ on $\widehat{X}$
	  via the extension of the structure sheaf
	    $(\,\mbox{\tiny $\bullet$}\,)
		       \otimes_{{\cal O}_X^{\,\Bbb C}}\!\widehat{\cal O}_X $.
			
  \item[\LARGE $\cdot$]	
   [{\it reference connection}]\hspace{1em}
   By construction, each such trivialization corresponds to a particular choice of an even basis.
	The connection associated to the reference trivialization is denoted by $d$.

   \item[(3)]
    [{\it $\widehat{\cal E}$, $\widehat{\cal E}^{\vee}$, and
      $\Endsheaf_{\widehat{\cal O}_X}\!(\widehat{\cal E})$
	  as equivariant sheaves under supersymetry}]\hspace{1em}	
    The reference trivialization of
	  $\widehat{\cal E}$, $\widehat{\cal E}^{\vee}$, and
      $\Endsheaf_{\widehat{\cal O}_X}\!(\widehat{\cal E})$ in Item (2)
   	  induces a lifting of the flows on $\widehat{X}$ to flows on
	  $\widehat{\cal E}$, $\widehat{\cal E}^{\vee}$, and
      $\Endsheaf_{\widehat{\cal O}_X}\!(\widehat{\cal E})$
      rendering them equivariant sheaves under supersymmetry.	
	  %
	
   \item[\LARGE $\cdot$]	
    [{\it reference lifting of derivations on $\widehat{X}$}]\hspace{1em}
	Via the reference connection $d$ on
    	$\widehat{\cal E}$, $\widehat{\cal E}^{\vee}$, and
        $\Endsheaf_{\widehat{\cal O}_X}\!(\widehat{\cal E})$,
	 a derivation $\xi$ on $\widehat{X}$ can now lift to apply on
	  $\widehat{\cal E}$, $\widehat{\cal E}^{\vee}$, and
      $\Endsheaf_{\widehat{\cal O}_X}\!(\widehat{\cal E})$ as well.
	In particular,
	  $Q_\alpha, \bar{Q}_{\dot{\beta}},
	   e_{\alpha^{\prime}}, e_{\beta^{\prime\prime}}$
	 all apply on $\widehat{\cal E}$, $\widehat{\cal E}^{\vee}$, and
       $\Endsheaf_{\widehat{\cal O}_X}\!(\widehat{\cal E})$
	 via this {\it reference lifting}.
  %
  \end{itemize}
}\end{definition}

\bigskip

\section{The $\widehat{D}$-chiral and the $\widehat{D}$-antichiral structure sheaf
     of $\widehat{X}^{\!A\!z}$}

In considering the notion of a `{\it chiral stricture sheaf}'  or `{\it antichiral structure sheaf}'
 on the $d=4$, $N=1$ Azumaya/matrix superspace $\widehat{X}^{\!A\!z}$,
we are guided by three wished-for properties:
   %
 %
 \begin{itemize}
  \item[(1)]
   [{\it SUSY-rep compatible}]\hspace{1em}
  It has to reflect the multiplets associated to
	the $\widehat{\cal O}_X^{A\!z}$-valued scalar representation of $d=4$, $N=1$ supersymmetry.
	
  \item[(2)]	
   [{\it ${\Bbb Z}/2$-graded with partial $C^{\infty}$-ring structure}]\hspace{1em}
   Being in the ${\Bbb Z}/2$-world,  it preferably should be a sheaf of ${\Bbb Z}/2$-graded rings.
   For $C^{\infty}$-Algebraic Geometry to apply,
    it better has a reasonably natural partial $C^{\infty}$-ring structure.
   
  \item[(3)]
   [{\it Useful}]\hspace{1em}
   It should lead us to
     a good notion of `{\it chiral maps}' or `{\it antichiral maps}' from $\widehat{X}^{\!A\!z}$
	 and be useful in the construction of a supersymmetric action functional for them
	so that one can use them to study the dynamics of fermionic D3-branes in a target space(-time).
 \end{itemize}
In this subsection, we introduce the notion of
 the $\widehat{D}$-chiral structure sheaf and the $\widehat{D}$-antichiral structure sheaf
     of $\widehat{X}^{\!A\!z}$ that meet the above requirements
 and give the normal form of their sections.

\bigskip

\subsection{The $\widehat{D}$-chiral structure sheaf and the $\widehat{D}$-antichiral structure sheaf
     of $\widehat{X}^{\!A\!z}$}
 
\bigskip

\begin{flushleft}
{\bf The reference chiral/antichiral structure sheaf on $\widehat{X}^{\!A\!z}$ associated to $d$}
\end{flushleft}
The notion of chiral functions and antichiral functions on $\widehat{X}$ extends to that
 on $\widehat{X}^{\!A\!z}$ very naively
 via the reference trivial connection $d$ on $\widehat{\cal O}_X^{A\!z}$.
The SUSY-Rep Compatibility and ${\Bbb Z}/2$-Graded-Ring Property become automatic.

\bigskip
		
\begin{definition-lemma}
{\bf [reference chiral/antichiral structure sheaf of $\widehat{X}^{\!A\!z}$]}\; {\rm
 Recall the fixed reference connection $d$ on $\widehat{\cal O}_X^{A\!z}$.
 An $\widehat{m}\in \widehat{\cal O}_X^{A\!z}$ is called {\it $d$-chiral}
  (resp.\ {\it $d$-antichiral})
  if $e_{1^{\prime\prime}}\widehat{m}=e_{2^{\prime\prime}}\widehat{m}=0$
  (resp.\ $e_{1^\prime}\widehat{m}= e_{2^\prime}\widehat{m}=0$).
 
 {\it The subsheaf of d-chiral sections in $\widehat{\cal O}_X^{A\!z}$
   is a sheaf of ${\Bbb Z}/2$-graded subrings of $\widehat{\cal O}_X^{A\!z}$},
  called the {\it reference chiral structure sheaf} or {\it $d$-chiral structure sheaf}
   of $\widehat{X}^{\!A\!z}$, denoted by $\widehat{\cal O}_X^{A\!z, \scriptsizedch}$.
 Similarly, {\it the subsheaf of $d$-antichiral sections in $\widehat{\cal O}_X^{A\!z}$
   is a sheaf of ${\Bbb Z}/2$-graded subrings of $\widehat{\cal O}_X^{A\!z}$},
  called the {\it reference antichiral structure sheaf} or {\it $d$-antichiral structure sheaf}
   of $\widehat{X}^{\!A\!z}$, denoted by $\widehat{\cal O}_X^{A\!z, \scriptsizedach}$.
}\end{definition-lemma}

\medskip

\begin{lemma} {\bf [partial $C^{\infty}$-ring structure on
    $\widehat{\cal O}_X^{A\!z, \scriptsizedch}$ and
    $\widehat{\cal O}_X^{A\!z, \scriptsizedach}$]}\;
 The partial $C^{\infty}$-ring structure on
  $\widehat{\cal O}_X^{A\!z}$ restricts to a partial $C^{\infty}$-ring structure on
   $\widehat{\cal O}_X^{A\!z, \scriptsizedch}$ and
   $\widehat{\cal O}_X^{A\!z, \scriptsizedach}$.
 I.e.\
 let
  $\{\widehat{m}_1, \,\cdots\,, \widehat{m}_l\}\in $
    weak-$C^{\infty}$-hull\,$(\widehat{\cal O}_X^{A\!z})
  \cap \widehat{\cal O}_X^{A\!z, \scriptsizedch}$
 (resp.\
    weak-$C^{\infty}$-hull\,$(\widehat{\cal O}_X^{A\!z})
    \cap \widehat{\cal O}_X^{A\!z, \scriptsizedach}$)    and
  $h\in C^{\infty}({\Bbb R}^l)$.
 Then
  $h(\widehat{m}_1, \,\cdots\,, \widehat{m}_l)
     \in  \widehat{\cal O}_X^{A\!z, \scriptsizedch}$
 (resp.\ $\widehat{\cal O}_X^{A\!z, \scriptsizedach}$).
\end{lemma}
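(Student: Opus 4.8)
The plan is to reduce this statement to two already-available facts: Theorem~2.4.1 on the partial $C^\infty$-ring structure of $\widehat{\cal O}_X^{A\!z} = \Endsheaf_{\widehat{\cal O}_X}\!(\widehat{\cal E})$, and Lemma~1.3.4 (the chain rule for derivations of $C^\infty(\widehat{X})$ applied to elements of the $C^\infty$-hull). The key observation is that the $d$-chirality condition is defined by the vanishing of the two derivations $e_{1^{\prime\prime}}, e_{2^{\prime\prime}}$, which are lifted to act on $\widehat{\cal O}_X^{A\!z}$ via the reference connection $d$; on $\widehat{\cal O}_X^{A\!z}$ these lifts are still $\mathbb{C}$-linear derivations satisfying the $\mathbb{Z}/2$-graded Leibniz rule with respect to the ring-multiplication of $\Endsheaf_{\widehat{\cal O}_X}\!(\widehat{\cal E})$. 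So the content is: if $\widehat m_1,\dots,\widehat m_l$ lie in the weak $C^\infty$-hull and are all $d$-chiral, and $h\in C^\infty(\mathbb{R}^l)$, then $e_{\beta^{\prime\prime}}\bigl(h(\widehat m_1,\dots,\widehat m_l)\bigr)=0$ for $\beta^{\prime\prime}=1^{\prime\prime},2^{\prime\prime}$. The antichiral case is identical with $e_{1^\prime}, e_{2^\prime}$ in place of $e_{1^{\prime\prime}}, e_{2^{\prime\prime}}$, so I would prove the chiral case and remark that the antichiral case follows mutatis mutandis.

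First I would recall that, by Theorem~2.4.1, the hypothesis $\{\widehat m_1,\dots,\widehat m_l\}\in$ \emph{weak-}$C^\infty$-\emph{hull}$(\widehat{\cal O}_X^{A\!z})$ means the $\widehat m_i$'s pairwise commute and the degree-$0$ parts $m_{i,(0)}$ have pointwise-real spectrum, so $h(\widehat m_1,\dots,\widehat m_l)$ is a well-defined element of $\widehat{\cal O}_X^{A\!z}$; moreover, from the construction underlying that theorem (cf.\ Lemma~1.1.3, the $C^\infty$-evaluation-after-nilpotent-perturbation formula), $h(\widehat m_1,\dots,\widehat m_l)$ is an explicit \emph{polynomial} (in fact a convergent power series truncated by nilpotency) in $\widehat m_1,\dots,\widehat m_l$ with coefficients built from partial derivatives $\partial^{\,\boldsymbol\alpha}h$ evaluated at $m_{1,(0)},\dots,m_{l,(0)}$. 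The second step is then to show that a chain rule of the shape
\[
  e_{\beta^{\prime\prime}}\bigl(h(\widehat m_1,\dots,\widehat m_l)\bigr)\;
   =\; \sum_{k=1}^l \bigl((\partial_k h)(\widehat m_1,\dots,\widehat m_l)\bigr)\cdot e_{\beta^{\prime\prime}}\widehat m_k
\]
holds in $\widehat{\cal O}_X^{A\!z}$. This is the natural analogue, at the level of the Azumaya structure sheaf, of Lemma~1.3.4; its proof follows the same template as the Second Proof therein --- expand $\widehat m_i = m_{i,(0)} + \widehat m_{i,(\ge 1)}$ with $\widehat m_{i,(\ge 1)}$ nilpotent, use the explicit nilpotent-perturbation expansion of $h$, apply $e_{\beta^{\prime\prime}}$ term by term using its $\mathbb{Z}/2$-graded Leibniz rule and $\mathbb{C}$-linearity together with the fact that $e_{\beta^{\prime\prime}}$ kills the purely-$x$-dependent coefficient matrices when viewed in the standard chiral coordinates, and collect like terms. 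The pairwise commutativity of the $\widehat m_i$'s is precisely what is needed to make the relabelling of multi-indices go through exactly as in the commutative case, so no genuinely new phenomenon arises. Once this chain rule is in hand, the conclusion is immediate: each $e_{\beta^{\prime\prime}}\widehat m_k=0$ by $d$-chirality of $\widehat m_k$, so $e_{\beta^{\prime\prime}}\bigl(h(\widehat m_1,\dots,\widehat m_l)\bigr)=0$, i.e.\ $h(\widehat m_1,\dots,\widehat m_l)\in\widehat{\cal O}_X^{A\!z,\scriptsizedch}$. The antichiral statement follows by replacing $(e_{1^{\prime\prime}},e_{2^{\prime\prime}})$ by $(e_{1^\prime},e_{2^\prime})$ throughout.

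The main obstacle I anticipate is not conceptual but bookkeeping: establishing the Azumaya-valued chain rule cleanly requires care with the order of matrix multiplication (since $\widehat{\cal O}_X^{A\!z}$ is noncommutative as a ring, even though the \emph{chosen} elements $\widehat m_i$ commute) and with the $(-1)^{\bullet}$ sign-factors that appear when the odd derivations $e_{\beta^{\prime\prime}}$ pass over odd components of the $\widehat m_i$'s --- exactly the kind of tedium the paper's introductory remarks warn about. One must check that the coefficients $(\partial^{\,\boldsymbol\alpha}h)(m_{1,(0)},\dots,m_{l,(0)})$ are themselves $d$-chiral (indeed they are functions of $x$ only when written in standard chiral coordinates, hence killed by $e_{\beta^{\prime\prime}}$), and that the commutativity hypothesis propagates so that all the cross-terms organize into the single sum above. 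I expect the result to hold exactly as stated, with the proof essentially a transcription of the Second Proof of Lemma~1.4.14 (the block-matrix computation) combined with Theorem~2.4.1, so I would present it at the level of: recall the explicit form of $h(\widehat m_1,\dots,\widehat m_l)$ from Theorem~2.4.1 and Lemma~1.1.3, verify the chain-rule identity by the nilpotent-expansion argument, and conclude. The parallel statement with $d$ replaced by a general (SUSY-rep compatible) simple hybrid connection $\widehat\nabla$ --- i.e.\ the $\widehat D$-chiral version announced in the section title --- would be treated subsequently and is presumably where the real work, and the role of Corollary~2.3.6 on the vanishing of the purely-fermionic curvature components, enters; but for the present $d$-based statement that input is not yet needed.
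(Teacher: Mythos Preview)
Your chain-rule strategy is reasonable in spirit but contains a concrete error, and the paper takes a different route. The paper does not prove this lemma separately: it is recorded as the special case $\widehat D=d$ of Lemma~3.1.4, whose proof runs through the Malgrange Division Theorem rather than a direct chain-rule expansion. Concretely, Malgrange produces locally over each $p\in X$ a \emph{finite polynomial} expression $h(\widehat{\boldsymbol m})=\sum_{\boldsymbol d}a_{\boldsymbol d}(x)\,\widehat{\boldsymbol m}^{\boldsymbol d}$ with \emph{scalar} coefficients $a_{\boldsymbol d}\in C^\infty(U)$ (not matrix-valued), plus a remainder in the square of the ideal of characteristic polynomials; one then uses that $h$, pulled back from $\mathbb{R}^l$, is annihilated by every derivation $\xi$ on $\widehat U$, together with the Leibniz rule and $e_{\beta^{\prime\prime}}(\widehat{\boldsymbol m}^{\boldsymbol d})=0$, to conclude.

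The specific gap in your argument: you claim the coefficients $(\partial^{\,\boldsymbol\alpha}h)(m_{1,(0)},\dots,m_{l,(0)})$ are ``functions of $x$ only\dots hence killed by $e_{\beta^{\prime\prime}}$''. That is false. A section $m(x)\in\mathcal{O}_X^{A\!z}\subset\widehat{\mathcal{O}}_X^{A\!z}$ with no $(\theta,\bar\theta)$-dependence is \emph{not} $d$-chiral: one has $e_{\beta^{\prime\prime}}m = -\sqrt{-1}\sum_{\mu,\alpha}\theta^\alpha\sigma^\mu_{\alpha\dot\beta}\,\partial_\mu m\neq 0$ in general (compare Lemma~1.4.14: a chiral section is \emph{determined by} its $(0),(\alpha),(12)$-components but is not equal to just the $(0)$-component). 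So the contributions from $e_{\beta^{\prime\prime}}$ hitting the coefficient factors do not vanish term-by-term; organizing their cancellation is precisely where the content lies, not a bookkeeping afterthought.

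More structurally, Lemma~1.1.3 is stated for genuine $C^\infty$-rings, whereas $\widehat{\mathcal{O}}_X^{A\!z}$ carries only the \emph{partial} $C^\infty$-ring structure of Theorem~2.4.1, whose very construction (from [L-Y9]) already rests on Malgrange. So even to justify your nilpotent-perturbation expansion in this setting --- and hence the Azumaya-valued chain rule --- you would be led back to the Malgrange machinery the paper invokes. Once you do that, the paper's argument is the cleaner packaging: it sidesteps the chain rule entirely and works directly with the local polynomial representation.
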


\bigskip

\noindent
These two lemmas are special cases of Definition/Lemma~3.1.3 and Lemma~3.1.4 in the next theme.

The reference trivialization in Definition~2.4.5
 specifies an isomorphism
 $\widehat{\cal O}_X^{A\!z}
   \simeq  \widehat{O}_X\otimes_{\Bbb C}M_{r\times r}({\Bbb C}) $.
Under this isomorphism,
a section of $\widehat{\cal O}_X^{A\!z}$ can be written as
 $\widehat{m}=(f_{ij})_{ij}$, $f_{ij}\in \widehat{\cal O}_X$, and
 $\xi \widehat{m}= (\xi f_{ij})_{ij}$ for a derivation $\xi$ on $\widehat{X}$.
It follows that under this isomorphism,
 $$
  \widehat{\cal O}_X^{A\!z, \scriptsizedch}\;
     \simeq\; \widehat{\cal O}_X^{\scriptsizech}
	                  \otimes_{\Bbb C}M_{r\times r}({\Bbb C})
    \hspace{2em}\mbox{and}\hspace{2em}
  \widehat{\cal O}_X^{A\!z, \scriptsizedach}\;
    \simeq\; \widehat{\cal O}_X^{\scriptsizeach}
	                  \otimes_{\Bbb C}M_{r\times r}({\Bbb C})\,.
 $$

\bigskip

\begin{flushleft}
{\bf The $\widehat{D}$-chiral and the $\widehat{D}$-antichiral structure sheaf
          on $\widehat{X}^{\!A\!z}$}
\end{flushleft}
Let
 $\widehat{\nabla}$ be the SUSY-rep compatible simple hybrid connection on $\widehat{\cal E}$
   associated to a vector superfield $V$ on $\widehat{X}^{\!A\!z}$    and
 $\widehat{D}$ be the induced hybrid connection on
   $\widehat{\cal O}_X^{A\!z}
     :=\Endsheaf_{\widehat{\cal O}_X}\!({\widehat{\cal E}})$.

\bigskip

\begin{definition-lemma}
{\bf [$\widehat{D}$-chiral/antichiral structure sheaf of $\widehat{X}^{\!A\!z}$]}\; {\rm
 An $\widehat{m}\in \widehat{\cal O}_X^{A\!z}$ is called {\it $\widehat{D}$-chiral}
  (resp.\ {\it $\widehat{D}$-antichiral}) if\,\footnote{For
                                                                               a connection like $\widehat{D}$ that is not purely even,
																			   the {\it naive $\widehat{D}$-chiral condition}
																			     $\widehat{D}_{e_{1^{\prime\prime}}}\widehat{m}
																					=\widehat{D}_{e_{2^{\prime\prime}}}\widehat{m}
																					=0$ alone
																				or the {\it naive $\widehat{D}$-antichiral condition}
																				    $\widehat{D}_{e_{1^\prime}}\widehat{m}
                                                                                      = \widehat{D}_{e_{2^\prime}}\widehat{m}																									 
																					  =0$ alone
																			     does not seem to give a good chiral or antichiral theory.
																				 Cf.\ Remark~3.1.5.
																				 }  
   $$
      \widehat{D}_{e_{\beta^{\prime\prime}}}\widehat{m}\;
	  =\; \,\!^{\varsigma}\!\widehat{D}_{e_{\beta^{\prime\prime}}}\widehat{m}\;
	  =\; 0\,,
   $$
   for $\beta^{\prime\prime}=1^{\prime\prime}, 2^{\prime\prime}$
  (resp.\
   $$
      \widehat{D}_{e_{\alpha^\prime}}\widehat{m}\;
	  =\; \,\!^{\varsigma}\!\widehat{D}_{e_{\alpha^\prime}}\widehat{m}\;
	  =\; 0\,, 	
   $$
   for $\alpha^\prime= 1^\prime, 2^\prime$).
 Note that the $\widehat{D}$-chiral conditions
  are equivalent to the conditions
   $$
     (\widehat{D}^{(\even)})_{e_{\beta^{\prime\prime}}}\widehat{m}\;
	  =\;  (\widehat{D}^{(\odd)})_{e_{\beta^{\prime\prime}}}\widehat{m}\;
	  =\; 0\,;
   $$
 and also to the conditions
  $$
   \widehat{D}_{e_{\beta^{\prime\prime}}}\widehat{m}_{(\even)}\;
   =\; \widehat{D}_{e_{\beta^{\prime\prime}}}\widehat{m}_{(\odd)}\;
   =\; 0\,,
  $$
  and
  to the conditions
  $$
   \,\!^\varsigma\!\widehat{D}_{e_{\beta^{\prime\prime}}}\widehat{m}_{(\even)}\;
   =\; \,\!^\varsigma\!\widehat{D}_{e_{\beta^{\prime\prime}}}\widehat{m}_{(\odd)}\;
   =\; 0
  $$
 while the $\widehat{D}$-antichiral conditions
  are equivalent to the conditions
   $$
     (\widehat{D}^{(\even)})_{e_{\alpha^\prime}}\widehat{m}\;
	  =\;  (\widehat{D}^{(\odd)})_{e_{\alpha^\prime}}\widehat{m}\;
	  =\; 0\,;
   $$
 and also to the conditions
  $$
   \widehat{D}_{e_{\alpha^\prime}}\widehat{m}_{(\even)}\;
   =\; \widehat{D}_{e_{\alpha^\prime}}\widehat{m}_{(\odd)}\;
   =\; 0\,,
  $$
  and
  to the conditions
  $$
   \,\!^\varsigma\!\widehat{D}_{e_{\alpha^\prime}}\widehat{m}_{(\even)}\;
   =\; \,\!^\varsigma\!\widehat{D}_{e_{\alpha^\prime}}\widehat{m}_{(\odd)}\;
   =\; 0\,.
  $$
 Here,
   $\widehat{D}=\widehat{D}^{(\even)}+\widehat{D}^{(\odd)}$
     is the decomposition of $\widehat{D}$ into the even part and the odd part, and
   $\widehat{m}=\widehat{m}_{(\even)}+\widehat{m}_{(\odd)}$
     is the decomposition of $\widehat{m}$ into the even part and the odd part.
    
 {\it The subsheaf of $\widehat{D}$-chiral sections in $\widehat{\cal O}_X^{A\!z}$
   is a sheaf of ${\Bbb Z}/2$-graded subrings of $\widehat{\cal O}_X^{A\!z}$},
  called the {\it $\widehat{D}$-chiral structure sheaf} of $\widehat{X}^{\!A\!z}$  and
  denoted by $\widehat{\cal O}_X^{A\!z, \scriptsizewidehatDch}$.
 Similarly, {\it the subsheaf of $\widehat{D}$-antichiral sections in $\widehat{\cal O}_X^{A\!z}$
   is a sheaf of ${\Bbb Z}/2$-graded subrings of $\widehat{\cal O}_X^{A\!z}$},
  called the {\it $\widehat{D}$-antichiral structure sheaf} of $\widehat{X}^{\!A\!z}$  and
  denoted by $\widehat{\cal O}_X^{A\!z, \scriptsizewidehatDach}$.
}\end{definition-lemma}

\medskip

\begin{proof}
 For $\widehat{D}$-chiral sections of $\widehat{\cal O}_X^{A\!z}$,
  since
     $\,\!^{\varsigma}\!\widehat{D}_{e_{\beta^{\prime\prime}}}
	 = \widehat{D}_{e_{\beta^{\prime\prime}}}= e_{\beta^{\prime\prime}}$,
  $$
    \widehat{\cal O}_X^{A\!z, \scriptsizewidehatDch}\;
	 =\;   \widehat{\cal O}_X^{A\!z, \scriptsizedch}\;
	 \simeq\;   \widehat{\cal O}_X^{\scriptsizech}
	                                                   \otimes_{\Bbb C}\!M_{r\times r}({\Bbb C})
  $$
  and the claim follows.
 
 For $\widehat{D}$-antichiral sections of $\widehat{\cal O}_X^{A\!z}$,
  let $\widehat{D}=\widehat{D}^{(\even)}\,+\, \widehat{D}^{(\odd)}$
  be the decomposition of $\widehat{D}$ into the even part and the odd part.
 Then, $\,\!^{\varsigma}\!\widehat{D}
                  = \widehat{D}^{(\even)}- \widehat{D}^{(\odd)}$.
 Note also that
  $\,\!^\varsigma\!(\,\!^\varsigma\!\widehat{D}_{e_{\beta^{\prime\prime}}}
       \widehat{m})
	  = - \widehat{D}_{e_{\beta^{\prime\prime}}}\,\,\!^\varsigma\widehat{m}$.
 Thus, the $\widehat{D}$-antichiral conditions
    $  \widehat{D}_{e_{\alpha^\prime}}\widehat{m}
	  = \,\!^{\varsigma}\!\widehat{D}_{e_{\alpha^\prime}}\widehat{m} = 0$
  are equivalent to the conditions
   $$
     (\widehat{D}^{(\even)})_{e_{\alpha^\prime}}\widehat{m}\;
	  =\;  (\widehat{D}^{(\odd)})_{e_{\alpha^\prime}}\widehat{m}\;
	  =\; 0\,;
   $$
 and also to the conditions
  $$
   \widehat{D}_{e_{\alpha^\prime}}\widehat{m}_{(\even)}\;
   =\; \widehat{D}_{e_{\alpha^\prime}}\widehat{m}_{(\odd)}\;
   =\; 0\,,
  $$
  and
  to the conditions
  $$
   \,\!^\varsigma\!\widehat{D}_{e_{\alpha^\prime}}\widehat{m}_{(\even)}\;
   =\; \,\!^\varsigma\!\widehat{D}_{e_{\alpha^\prime}}\widehat{m}_{(\odd)}\;
   =\; 0\,,
  $$
 where
   $\widehat{m}=\widehat{m}_{(\even)}+\widehat{m}_{(\odd)}$
   is the decomposition of $\widehat{m}$ into the even part and the odd part.
 Thus, if $\widehat{m}$ is $\widehat{D}$-antichiral,
    then both $\widehat{m}_{(\even)}$ and $\widehat{m}_{(\odd)}$
	are $\widehat{D}$-antichiral.
 Furthermore,
  \begin{eqnarray*}
    \widehat{D}_{e_{\alpha^\prime}}(\widehat{m}_1\widehat{m}_2)
	 & = &  (\widehat{D}_{e_{\alpha^\prime}} \widehat{m}_1)\, \widehat{m}_2\,
	         +\,  \widehat{m}_{1, (\even)}\,
			         \widehat{D}_{e_{\alpha^\prime}} \widehat{m}_2  \,
			 -\,  \widehat{m}_{1, (\odd)}\,
			       \,\!^\varsigma\!\widehat{D}_{e_{\alpha^\prime}}\widehat{m}_2  \\[-.2ex]
     & = & (\widehat{D}_{e_{\alpha^\prime}} \widehat{m}_1)\, \widehat{m}_2\,
	         +\, \,\!^\varsigma\!\widehat{m}_1\,
                     \widehat{D}^{(\even)}_{e_{\alpha^\prime}}\widehat{m}_2\,
			 +\, \widehat{m}_1\,
			       \widehat{D}^{(\odd)}_{e_{\alpha^\prime}}\widehat{m}_2\; ; \\[.6ex]
   \,\!^\varsigma\!\widehat{D}_{e_{\alpha^\prime}}(\widehat{m}_1\widehat{m}_2)
	 & = &  (\,\!^\varsigma\!\widehat{D}_{e_{\alpha^\prime}} \widehat{m}_1)\,
	                  \widehat{m}_2\,
	         +\,  \widehat{m}_{1, (\even)}\,
			         \,\!^\varsigma\!\widehat{D}_{e_{\alpha^\prime}} \widehat{m}_2  \,
			 -\,  \widehat{m}_{1, (\odd)}\,
			       \,\widehat{D}_{e_{\alpha^\prime}}\widehat{m_2}  \\[-.2ex]
     & = & (\,\!^\varsigma\!\widehat{D}_{e_{\alpha^\prime}} \widehat{m}_1)\,
	              \widehat{m}_2\,
	         +\, \,\!^\varsigma\!\widehat{m}_1\,
                     \widehat{D}^{(\even)}_{e_{\alpha^\prime}}\widehat{m}_2\,
	         -\,  \widehat{m}_1\,
			       \,\widehat{D}^{(\odd)}_{e_{\alpha^\prime}}\widehat{m}_2  \,.
  \end{eqnarray*}
 Thus,
   if $\widehat{m}_1$ and $\widehat{m}_2$ are $\widehat{D}$-antichiral, then
    so is $\widehat{m}_1\widehat{m}_2$.
 This proves that the sheaf of $\widehat{D}$-antichiral sections of $\widehat{\cal O}_X^{A\!z}$	
   is a sheaf of ${\Bbb Z}/2$-graded subrings of $\widehat{\cal O}_X^{A\!z}$.
   
 This completes the proof.
  
\end{proof}

\medskip

\begin{lemma} {\bf [partial $C^{\infty}$-ring structure on
    $\widehat{\cal O}_X^{A\!z, \scriptsizewidehatDch}$ and
    $\widehat{\cal O}_X^{A\!z, \scriptsizewidehatDach}$]}\;
 The partial $C^{\infty}$-ring structure on
  $\widehat{\cal O}_X^{A\!z}$ restricts to a partial $C^{\infty}$-ring structure on
   $\widehat{\cal O}_X^{A\!z, \scriptsizewidehatDch}$ and
   $\widehat{\cal O}_X^{A\!z, \scriptsizewidehatDach}$.
 I.e.\
 let
  $\{\widehat{m}_1, \,\cdots\,, \widehat{m}_l\}\in $
    weak-$C^{\infty}$-hull\,$(\widehat{\cal O}_X^{A\!z})
  \cap \widehat{\cal O}_X^{A\!z, \scriptsizewidehatDch}$
 (resp.\
    weak-$C^{\infty}$-hull\,$(\widehat{\cal O}_X^{A\!z})
    \cap \widehat{\cal O}_X^{A\!z, \scriptsizewidehatDach}$)    and
  $h\in C^{\infty}({\Bbb R}^l)$.
 Then
  $h(\widehat{m}_1, \,\cdots\,, \widehat{m}_l)
     \in  \widehat{\cal O}_X^{A\!z, \scriptsizewidehatDch}$
 (resp.\ $\widehat{\cal O}_X^{A\!z, \scriptsizewidehatDach}$).
\end{lemma}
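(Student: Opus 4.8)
The plan is to deduce Lemma~3.1.6 from the earlier Theorem~2.4.1 (which provides the partial $C^{\infty}$-ring structure on $\widehat{\cal O}_X^{A\!z}$) together with the explicit characterizations of $\widehat{D}$-chirality and $\widehat{D}$-antichirality obtained in Definition/Lemma~3.1.4. Concretely, given $\{\widehat{m}_1,\dots,\widehat{m}_l\}\in\text{\it weak-$C^{\infty}$-hull}(\widehat{\cal O}_X^{A\!z})\cap\widehat{\cal O}_X^{A\!z,\scriptsizewidehatDch}$ and $h\in C^{\infty}({\Bbb R}^l)$, Theorem~2.4.1 already guarantees that $h(\widehat{m}_1,\dots,\widehat{m}_l)\in\widehat{\cal O}_X^{A\!z}$ is well-defined. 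So the only thing to check is that the resulting element is again $\widehat{D}$-chiral (resp.\ $\widehat{D}$-antichiral), i.e.\ that it satisfies the characterizing differential conditions from Definition/Lemma~3.1.4.

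First I would handle the $\widehat{D}$-chiral case, which is the easy half. By the proof of Definition/Lemma~3.1.4, $\widehat{D}_{e_{\beta^{\prime\prime}}}=\,\!^{\varsigma}\!\widehat{D}_{e_{\beta^{\prime\prime}}}=e_{\beta^{\prime\prime}}$, so that $\widehat{\cal O}_X^{A\!z,\scriptsizewidehatDch}=\widehat{\cal O}_X^{A\!z,\scriptsizedch}\simeq\widehat{\cal O}_X^{\scriptsizech}\otimes_{\Bbb C}M_{r\times r}({\Bbb C})$ under the reference trivialization. Hence this reduces to Lemma~3.1.2 (equivalently the scalar statement Lemma~1.4.13 on the induced $C^{\infty}$-hull structure on $C^{\infty}(\widehat{X})^{\scriptsizech}$, applied entrywise to the matrix picture): since $\{\widehat{m}_i\}$ lie in the $d$-chiral subsheaf and satisfy the commutativity and realness hypotheses, the $C^{\infty}$-operation $h$ preserves $d$-chirality because $e_{\beta^{\prime\prime}}$ applied to $h(\widehat m_1,\dots,\widehat m_l)$ can be computed via the chain rule (Lemma~1.3.4, extended to $\widehat{\cal O}_X^{A\!z}$ as in the proof of Theorem~2.4.1) and every term carries a factor $e_{\beta^{\prime\prime}}\widehat m_k=0$.

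Next I would treat the $\widehat{D}$-antichiral case, which is where the real work lies. Here $\widehat{D}_{e_{\alpha^\prime}}$ is genuinely not purely even, so one cannot simply reduce to a reference-chirality statement. The key input is the equivalence, established in Definition/Lemma~3.1.4, that $\widehat m$ is $\widehat{D}$-antichiral iff both $\widehat{m}_{(\even)}$ and $\widehat{m}_{(\odd)}$ satisfy $\widehat{D}_{e_{\alpha^\prime}}(\cdot)=0$, equivalently iff $(\widehat D^{(\even)})_{e_{\alpha^\prime}}\widehat m=(\widehat D^{(\odd)})_{e_{\alpha^\prime}}\widehat m=0$. Since each $\widehat m_i$ is even or a sum of even/odd parity-homogeneous pieces, and since Theorem~2.4.1's realness condition forces $\widehat m_i = m_{i,(0)}+\widehat m_{i,(\ge 1)}$ with $m_{i,(0)}$ even, I expect that the $\widehat m_i$ appearing in a legitimate $C^{\infty}$-operation are even (their odd parts, if any, would violate commutativity-with-all-others in a way that can only be absorbed by the hypotheses), so $h(\widehat m_1,\dots,\widehat m_l)$ is even. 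Then one needs a chain rule for $\widehat D^{(\even)}_{e_{\alpha^\prime}}$ acting on $h(\widehat m_1,\dots,\widehat m_l)$. The plan is to use the alternative expression from Lemma~2.3.6, $\widehat D_{e_{\alpha^\prime}}\widehat m = \,\!^{\varsigma}\!e^{-V}\big(e_{\alpha^{\prime}}(e^V\widehat m\,\!^{\varsigma_{\widehat m}}\!e^{-V})\big)\,\!^{\varsigma_{\widehat m}}\!e^V$, which exhibits $\widehat D_{e_{\alpha^\prime}}$ as a conjugate of the ordinary derivation $e_{\alpha^\prime}$; conjugating the realness and commutativity hypotheses by $e^{V}$ (which is invertible with $e^V=\Id+V+\tfrac12 V^2$) transports the problem to the reference setting, where the chain rule of Lemma~1.3.4 / Theorem~2.4.1 applies verbatim and produces, for each $\alpha^\prime$, an expression of the form $\sum_k(\partial_k h)(\dots)\cdot\widehat D_{e_{\alpha^\prime}}\widehat m_k$ modulo the nilpotent-truncation corrections already handled in Theorem~2.4.1. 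Every summand then vanishes because $\widehat D_{e_{\alpha^\prime}}\widehat m_k=0$ by hypothesis.

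\textbf{Main obstacle.} The hard part will be making the conjugation-by-$e^V$ argument rigorous: one must verify that the commutativity and realness conditions of Theorem~2.4.1 are stable under the conjugation $\widehat m\mapsto e^V\widehat m\,\!^{\varsigma_{\widehat m}}\!e^{-V}$ (and its parity-conjugate), so that the $C^{\infty}$-operation $h$ intertwines with that conjugation in the needed way, and then to track the $(-1)^{\tinybullet}$ sign-factors and parity-conjugations $\,\!^{\varsigma_{\widehat m}}\!(\cdot)$ through the Leibniz rule for $\widehat D_{e_{\alpha^\prime}}$ on a product $h(\widehat m_1,\dots,\widehat m_l)$ expanded via Lemma~1.3.4. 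This is exactly the kind of ``simple yet tedious'' sign bookkeeping flagged in the Remark on sign-related elementary details; the conceptual content is minimal once one observes that $\widehat D_{e_{\alpha^\prime}}$ is a gauge-conjugate of $e_{\alpha^\prime}$, but writing it out cleanly for non-parity-homogeneous $h(\widehat m_1,\dots,\widehat m_l)$ (which is why the statement is phrased through the even/odd decomposition) requires care.
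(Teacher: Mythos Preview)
Your approach diverges from the paper's in a way that leaves a genuine gap. The paper does \emph{not} use a chain rule for $\widehat{D}_{e_{\alpha'}}$ acting on $h(\widehat m_1,\dots,\widehat m_l)$, nor does it conjugate by $e^V$. Instead, it invokes the Malgrange Division Theorem (as in the proofs of [L-Y5: Theorem~3.1.1], [L-Y8: Lemma~2.1.6], [L-Y9: Theorem~2.1.5]) to show that \emph{locally} one can write
\[
  h(\widehat m_1,\dots,\widehat m_l)\Big|_{\widehat U}
  \;=\; \sum_{\boldd} a_{\boldd}(x)\,\widehat{\boldm}^{\boldd}
\]
as a \emph{polynomial} in the $\widehat m_i$ with scalar coefficients $a_{\boldd}\in C^\infty(U)$, together with the vanishing property $\sum_{\boldd}(\xi a_{\boldd})\,\widehat{\boldm}^{\boldd}=0$ for every $\xi\in\Der_{\Bbb C}(C^\infty(\widehat U))$. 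With this in hand, Definition/Lemma~3.1.3 (closure of $\widehat{\cal O}_X^{A\!z,\scriptsizewidehatDach}$ under products) gives $\widehat D_{e_{\alpha'}}\widehat{\boldm}^{\boldd}=\,\!^\varsigma\!\widehat D_{e_{\alpha'}}\widehat{\boldm}^{\boldd}=0$, and Lemma/Definition~2.2.12\,(7) ($\widehat D|_{\widehat{\cal O}_X}=d$) handles the scalar coefficients.

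Your proposal's weak point is the ``chain rule'' step. Lemma~1.3.4 is a chain rule for derivations acting on scalars in the $C^\infty$-hull of $C^\infty(\widehat X)$; there is no analogue in the paper for $\widehat D_{e_{\alpha'}}$ (or even $e_{\alpha'}$) applied to $h(\widehat m_1,\dots,\widehat m_l)$ when the $\widehat m_i$ are \emph{matrix}-valued. Theorem~2.4.1 only asserts well-definedness of the $C^\infty$-operation, not a chain rule; Lemma/Definition~2.2.12\,(4) gives a Leibniz rule for products, not for $C^\infty$-operations. Your conjugation-by-$e^V$ idea runs into the same wall: to argue that $e^V h(\widehat m_1,\dots,\widehat m_l)e^{-V}=h(e^V\widehat m_1 e^{-V},\dots,e^V\widehat m_l e^{-V})$ (so as to transport to the $d$-antichiral case), you again need to know that the $C^\infty$-operation intertwines with conjugation, and the only available justification for that is precisely the Malgrange polynomial decomposition. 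So the ``main obstacle'' you flagged is not sign bookkeeping; it is that the chain rule you want simply isn't there without the Malgrange step, which is the actual engine of the paper's proof.
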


\medskip

\begin{proof}
 The proof is an application of the Malgrange Division Theorem ([Mal]; [Br]) and
 follows the same argument as in the proof for
    [L-Y5: Theorem 3.1.1] (D(11.3.1)),
	[L-Y6: Sec.\ 4] (D(13.1)),
    [L-Y8: Lemma~2.1.6] (D(13.3)), and
    [L-Y9: Theorem 2.1.5] (D(11.4.1)).
 We will only give a sketch here and refer readers to ibidem for missing details.

 Let ${\Bbb R}^l$ be equipped with the standard coordinate functions
  $\boldt:= (t^1,\,\cdots\,, t^l)$.
 Then, the commutativity of $\widehat{m}_1,\,\cdots\,,\, \widehat{m}_l$
  and realness of their eigenvalues imply that
  their characteristic polynomials are in
   $C^{\infty}(X)[t^1,\,\cdots\,t^l]\subset C^{\infty}(\widehat{X}\times {\Bbb R}^l)$
   and the ideal generalized by these characteristic polynomials describes
   a nonempty $C^{\infty}$-subscheme $\varSigma_{\widehat{\scriptsizeboldm}}$
   of $\widehat{X}\times {\Bbb R}^l$, finite over $\widehat{X}$.
 Here, $\widehat{\boldm}:= (\widehat{m}_1,\,\cdots\,,\, \widehat{m}_l )$.
 Via the pullback of the projection maps
    $\widehat{X}\times {\Bbb R}^l
	   \longrightaarrow X\times {\Bbb R}^l\longrightaarrow {\Bbb R}^l$,
  one can regard $h\in C^{\infty}({\Bbb R}^l)$ as in
   $C^{\infty}(X\times {\Bbb R}^l)
     \subset C^{\infty}(\widehat{X}\times {\Bbb R}^l)$.
  For $p\in X\subset \widehat{X}$, 	
   applying the Malgrange Division Theorem to the germ of $h$ over $p$
   with respect to the square of the characteristic polynomials of
   $\widehat{m}_1,\,\cdots\,,\, \widehat{m}_l$
   repeatedly, one concludes that:
   \begin{itemize}
    \item[\LARGE $\cdot$]
	 There exist open sets
	  $\widehat{U}\subset \widehat{X}$ that contains $p$ and
	  $V\subset {\Bbb R}^l$ that contains the projection of
	    $\varGamma_{\widehat{\scriptsizeboldm}}\cap \widehat{U}\times {\Bbb R}$
		in ${\Bbb R}^l$
	 such that
	 $$
	    h|_{\widehat{U}\times V}\; =\;  \widehat{f}_0[1]\,+\, \widehat{f}_1[1]\,,		
	 $$
	 with the following properties:
	 \begin{itemize}
	  \item[(1)]
	   $\widehat{f}_0[1]\;
	      =\; \sum_{\scriptsizeboldd}  \,
	                    a[1]_{\scriptsizeboldd}\,\boldt^{\scriptsizeboldd}\,
	       \in C^{\infty}(U)[t^1|_V,\,\cdots\,, t^l|_V]|$;
	
	  \item[(2)]
	   $\widehat{f}_1[1]\in  \widehat{I}_{\widehat{\scriptsizeboldm}}^{\,2}
	   \subset C^{\infty}(\widehat{U}\times V)$ satisfies
	   $$
	     \widehat{f}_1[1]|_{\scriptsizeboldt\rightsquigarrow \widehat{\scriptsizeboldm}}\;
		 =\;  (\widehat{\xi}\widehat{f}_1[1])|
		                 _{\scriptsizeboldt\rightsquigarrow \widehat{\scriptsizeboldm}}\;
         =\; 0\; \in C^{\infty}(\End_{\widehat{\Bbb C}}
		                (\widehat{E}|_{\widehat{U}}))
	   $$
	   for all $\widehat{\xi}\in \Der_{\Bbb C}(C^{\infty}(\widehat{U}\times V))$\,.
	 \end{itemize}
    Here,
	 \begin{itemize}
	  \item[\LARGE $\cdot$]
	   $\boldd :=(d_1,\,\cdots\,\, d_l)$\,,\;
       $a[1]_{\scriptsizeboldd}\in C^{\infty}(U)$\,,\; and 	
	   $\boldt^{\scriptsizeboldd}:= (t^1)^{d_1}\,\cdots\, (t^l)^{d_l}$\,;
	
	  \item[\LARGE $\cdot$]
	   $\widehat{I}_{\widehat{\scriptsizeboldm}}
	      \subset C^{\infty}(\widehat{U}\times V)$
         is the ideal associated to
		  $\varSigma_{\widehat{\scriptsizeboldm}}\cap (\widehat{U}\times V)
		    \subset \widehat{U}\times V$     and
	   $(\mbox{\tiny $\bullet$})|
	          _{\scriptsizeboldt\rightsquigarrow \widehat{\scriptsizeboldm}}$
        is the evaluation of $(\mbox{\tiny $\bullet$})$
		at $(t^1,\,\cdots\,,\,  t^l)=(\widehat{m}_1,\,\cdots\,,\, \widehat{m}_l)$.
   \end{itemize}
 \end{itemize}
 Property (1) and the first equality
	$\widehat{f}_1[1]|_{\scriptsizeboldt\rightsquigarrow \widehat{\scriptsizeboldm}}= 0$
    of Property (2)
 imply that
 $$
   h(\widehat{m}_1,\,\cdots\,,\, \widehat{m}_l)|_{\widehat{U}}\;
    =\;  \widehat{f}_0[1]|_{\scriptsizeboldt\rightsquigarrow \widehat{\scriptsizeboldm}}\;	
	=\;  \sum_{\scriptsizeboldd}  \,
	                    a[1]_{\scriptsizeboldd}\cdot \widehat{\boldm}^{\scriptsizeboldd}\,
	\in C^{\infty}(\End_{\widehat{\Bbb C}}(\widehat{E}|_{\widehat{U}}))\,,
 $$
 where
   $\widehat{\boldm}^{\scriptsizeboldd}
      := \widehat{m}_1^{d_1}\,\cdots\,\widehat{m}_l^{d_l}$.
 Since $\xi h=0$ for $\xi\in \Der_{\Bbb C}(C^{\infty}(\widehat{U}))$,
 Property (1) and the second equality
    $(\widehat{\xi}\widehat{f}_1[1])|
		                 _{\scriptsizeboldt\rightsquigarrow \widehat{\scriptsizeboldm}}= 0$
	for all $\widehat{\xi}\in \Der_{\Bbb C}(C^{\infty}(\widehat{U}\times V))$
  imply that 	
   $$
      \xi\widehat{f}_0[1]\;
      =\;  \sum_{\scriptsizeboldd}
	       (\xi a[1]_{\scriptsizeboldd})\cdot \boldt^{\scriptsizeboldd}\;
	  =\; 0
   $$
   for all $\xi\in \Der_{\Bbb C}(C^{\infty}(\widehat{U}))$.
   
 Now assume that $\widehat{m}_1,\,\cdots\,,\, \widehat{m}_l$ are $\widehat{D}$-antichiral.
 Then Definition/Lemma~3.1.3 implies that
   $\widehat{\boldm}^{\scriptsizeboldd}$ is also $\widehat{D}$-antichiral.
 Recall that $\widehat{D}$ restricts to $d$ on $\widehat{\cal O}_X$ (Lemma/Definition~2.2.12 (7)).
 It follows that, over $\widehat{U}$,
   $$
     \widehat{D}_{e_{\alpha^\prime}}
	   h(\widehat{m}_1, \,\cdots\,,\, \widehat{m}_l)\;
	 =\;   \sum_{\scriptsizeboldd}
	         (e_{\alpha^\prime} a[1]_{\scriptsizeboldd})\cdot \boldt^{\scriptsizeboldd}\,
			 +\,  \sum_{\scriptsizeboldd}
	                  a[1]_{\scriptsizeboldd}
					   \cdot  \widehat{D}_{e_{\alpha^\prime}}
					                      ( \widehat{\boldm}^{\scriptsizeboldd})\;
     =\; 0
   $$
   and
   $$
    \,\!^\varsigma\!\widehat{D}_{e_{\alpha^\prime}}
	   h(\widehat{m}_1, \,\cdots\,,\, \widehat{m}_l)\;
	 =\;   \sum_{\scriptsizeboldd}
	         (e_{\alpha^\prime} a[1]_{\scriptsizeboldd})\cdot \boldt^{\scriptsizeboldd}\,
			 +\,  \sum_{\scriptsizeboldd}
	                  a[1]_{\scriptsizeboldd}
					   \cdot  \,\!^\varsigma\!\widehat{D}_{e_{\alpha^\prime}}
					                      ( \widehat{\boldm}^{\scriptsizeboldd})\;
     =\; 0\,.
   $$
 Since $p\in X\subset \widehat{X}$ is arbitrary,
 this proves that $h(\widehat{m}_1, \,\cdots\,,\, \widehat{m}_l)$ is $\widehat{D}$-antichiral.

 This completes the proof.

\end{proof}

\medskip

\begin{remark} $[${weakly $\widehat{D}$-chiral/$\widehat{D}$-antichiral structure sheaf
       on $\widehat{X}^{\!A\!z}$}$]$\; {\rm
 There is a weaker notion of $\widehat{D}$-chirality/$\widehat{D}$-antichirality
   for sections of $\widehat{\cal O}_X^{A\!z}$:
  
  \begin{itemize}
   \item[\LARGE $\cdot$]
     [{\sl weakly $\widehat{D}$-chiral/$\widehat{D}$antichiral section
	            of $\widehat{\cal O}_X^{A\!z}$}]\;
    An $\widehat{m}\in \widehat{\cal O}_X^{A\!z}$ is called {\it weakly $\widehat{D}$-chiral}
     (resp.\ {\it weakly $\widehat{D}$-antichiral}) if
   $$
      \widehat{D}^{(\even)}\,\!_{e_{\beta^{\prime\prime}}}\widehat{m}\;
	  =\; 0\,,
   $$
   for $\beta^{\prime\prime}=1^{\prime\prime}, 2^{\prime\prime}$
  (resp.\
   $$
      \widehat{D}^{(\even)}\,\!_{e_{\alpha^\prime}}\widehat{m}\;
	  =\; 0\,, 	
   $$
   for $\alpha^\prime= 1^\prime, 2^\prime$).
  \end{itemize}
 
 \noindent
 Since
   $\widehat{D}^{(\even)}\,\!_{e_{\alpha^\prime}}$ is purely odd,
  if $\widehat{D}^{(\even)}\,\!_{e_{\alpha^\prime}} \widehat{m}=0$,
  then both $\widehat{D}^{(\even)}\,\!_{e_{\alpha^\prime}}\widehat{m}_{(\even)}$
      and $\widehat{D}^{(\even)}\,\!_{e_{\alpha^\prime}}\widehat{m}_{(\odd)}$
      must vanish.	
 Together with the fact
   $\widehat{D}^{(\even)}$ itself is a connection on $\widehat{\cal O}_X^{A\!z}$,
  the same argument as in Definition/Lemma~3.1.3
  proves the following statement:
  
  \begin{itemize}
   \item[\LARGE $\cdot$]
   {\it The subsheaf of weakly $\widehat{D}$-chiral sections in $\widehat{\cal O}_X^{A\!z}$
      is a sheaf of ${\Bbb Z}/2$-graded subrings of $\widehat{\cal O}_X^{A\!z}$;  and
	 so is the subsheaf of $\widehat{D}$-antichiral sections in $\widehat{\cal O}_X^{A\!z}$.}
   \end{itemize}
   
 \noindent
 Call the former the {\it weakly $\widehat{D}$-chiral structure sheaf} of $\widehat{X}^{\!A\!z}$
     and denote it by $\widehat{\cal O}_X^{A\!z, \scriptsizewidehatDch\,\!^w}$ and
 call the latter the {\it weakly $\widehat{D}$-antichiral structure sheaf} of $\widehat{X}^{\!A\!z}$
     and denote it by $\widehat{\cal O}_X^{A\!z, \scriptsizewidehatDach\,\!^w}$.
 Then similar arguments as in the proof of Lemma~3.1.4 give:
  									
   \begin{itemize}
    \item[\LARGE $\cdot$]
      [{\sl partial $C^{\infty}$-ring structure on
          $\widehat{\cal O}_X^{A\!z, \scriptsizewidehatDch\,\!^w}$ and
    $\widehat{\cal O}_X^{A\!z, \scriptsizewidehatDach\,\!^w}$}]\;
   {\it The partial $C^{\infty}$-ring structure on
      $\widehat{\cal O}_X^{A\!z}$ restricts to a partial $C^{\infty}$-ring structure on
      $\widehat{\cal O}_X^{A\!z, \scriptsizewidehatDch\,\!^w}$ and
      $\widehat{\cal O}_X^{A\!z, \scriptsizewidehatDach\,\!^w}$.
      I.e.\
      let
       $\{\widehat{m}_1, \,\cdots\,, \widehat{m}_l\}\in $
         weak-$C^{\infty}$-hull\,$(\widehat{\cal O}_X^{A\!z})
        \cap \widehat{\cal O}_X^{A\!z, \scriptsizewidehatDch\,\!^w}$
     (resp.\
          weak-$C^{\infty}$-hull\,$(\widehat{\cal O}_X^{A\!z})
          \cap \widehat{\cal O}_X^{A\!z, \scriptsizewidehatDach\,\!^w}$)    and
        $h\in C^{\infty}({\Bbb R}^l)$.
      Then
       $h(\widehat{m}_1, \,\cdots\,, \widehat{m}_l)
          \in  \widehat{\cal O}_X^{A\!z, \scriptsizewidehatDch\,\!^w}$
      (resp.\ $\widehat{\cal O}_X^{A\!z, \scriptsizewidehatDach\,\!^w}$).}
  \end{itemize}
}\end{remark}

\bigskip

\begin{flushleft}
{\bf An abstract characterization of
  $\widehat{\cal O}_X^{A\!z, \scriptsizewidehatDch}$ and
  $\widehat{\cal O}_X^{A\!z, \scriptsizewidehatDach}$}
\end{flushleft}
Recall from Lemma~2.3.6
 that
  $\widehat{D}_{e_{\alpha^{\prime}}} \widehat{m}
       =   \,\!^{\varsigma}\!e^{-V} \mbox{\Large $($}
	      e_{\alpha^{\prime}}
		               (e^V \widehat{m}\, \,\!^{\varsigma_{\widehat{m}}}\!e^{-V})
	                                                    \mbox{\Large $)$}\,
							\,\!^{\varsigma_{\widehat{m}}}\!e^V$
  and that
  $\widehat{D}_{e_{\beta^{\prime\prime}}}\widehat{m}
       = e_{\beta^{\prime\prime}}\widehat{m}$,
 for $\alpha^\prime=1^\prime, 2^\prime$	    and $\beta^{\prime\prime}=1^{\prime\prime}, 2^{\prime\prime}$.
In particular,
  $$
    \widehat{D}_{e_{\alpha^{\prime}}} \widehat{m}_{(\even)}
       =   \,\!^{\varsigma}\!e^{-V} \mbox{\Large $($}
	      e_{\alpha^{\prime}}
		               (e^V \widehat{m}_{(\even)}\,e^{-V})
	                                                    \mbox{\Large $)$}\, e^V
       \hspace{1em}\mbox{and}\hspace{1em} 							
    \widehat{D}_{e_{\alpha^{\prime}}} \widehat{m}_{(\odd)}
       =   \,\!^{\varsigma}\!e^{-V} \mbox{\Large $($}
	      e_{\alpha^{\prime}}
		               (e^V \widehat{m}_{(\odd)}\, \,\!^\varsigma\!e^{-V})
	                                                    \mbox{\Large $)$}\, \,\!^\varsigma\!e^V\,.
  $$
It follows that
 $\widehat{D}_{e_{1^\prime}} \widehat{m}_{(\even)}
  = \widehat{D}_{e_{2^\prime}} \widehat{m}_{(\even)}=0$
 if and only if
   $e^V\widehat{m}_{(\even)}\,e^{-V}\in \widehat{\cal O}_X^{A\!z,\scriptsizedach}$.
Similarly,
 $\widehat{D}_{e_{1^{\prime}}} \widehat{m}_{(\odd)}
   = \widehat{D}_{e_{2^{\prime}}} \widehat{m}_{(\odd)}=0$
 if and only if
   $e^V\widehat{m}_{(\odd)}\,\,\!^\varsigma\!e^{-V}
      \in \widehat{\cal O}_X^{A\!z,\scriptsizedach}$.
This proves the following lemma, which gives an abstract characterization of
 the $\widehat{D}$-chiral structure sheaf  $\widehat{\cal O}_X^{A\!z,\scriptsizewidehatDch}$ and
 the $\widehat{D}$-antichiral structure sheaf  $\widehat{\cal O}_X^{A\!z,\scriptsizewidehatDach}$
 of $\widehat{X}^{\!A\!z}$.
  
\bigskip

\begin{lemma} {\bf [characterization of $\widehat{\cal O}_X^{A\!z,\scriptsizewidehatDch}$ and
                                             $\widehat{\cal O}_X^{A\!z,\scriptsizewidehatDach}$]}\;
 As subsheaves of $\widehat{\cal O}_X^{A\!z}$,
  \begin{eqnarray*}
   \widehat{\cal O}_X^{A\!z,\scriptsizewidehatDch}
	  & = &   \widehat{\cal O}_X^{A\!z, \scriptsizedch}\,, \\	
   \widehat{\cal O}_X^{A\!z,\scriptsizewidehatDach}
	  & = &  \mbox{\Large $($}
	               (e^{-V}\widehat{\cal O}_X^{A\!z, \scriptsizedach}\, e^V)
                    \cap \widehat{\cal O}_X^{A\!z, (\even)}	
             \mbox{\Large $)$}\,
		    \oplus\,
		     \mbox{\Large $($}
	               (e^{-V}\widehat{\cal O}_X^{A\!z, \scriptsizedach}\, \,\!^\varsigma\!e^V)
                    \cap \widehat{\cal O}_X^{A\!z, (\odd)}	
             \mbox{\Large $)$}\,.
  \end{eqnarray*}
\end{lemma}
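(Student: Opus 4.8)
The plan is to carry out the assembly already indicated in the paragraph preceding the statement, which turns the lemma into a direct consequence of the explicit formulas of Lemma~2.3.6 and the parity-splitting equivalences of Definition/Lemma~3.1.3. \textbf{Chiral part.} First I would recall that for a SUSY-rep compatible simple hybrid connection $\widehat{\nabla}$ the connection $1$-form computed at the end of Sec.~2.3 has $A_{1^{\prime\prime}}=A_{2^{\prime\prime}}=0$; hence, by Lemma~2.3.6 and Lemma~2.3.7, $\widehat{D}_{e_{\beta^{\prime\prime}}}\widehat{m}={}^{\varsigma}\widehat{D}_{e_{\beta^{\prime\prime}}}\widehat{m}=e_{\beta^{\prime\prime}}\widehat{m}$. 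So the defining conditions $\widehat{D}_{e_{\beta^{\prime\prime}}}\widehat{m}={}^{\varsigma}\widehat{D}_{e_{\beta^{\prime\prime}}}\widehat{m}=0$ of a $\widehat{D}$-chiral section coincide verbatim with the conditions $e_{1^{\prime\prime}}\widehat{m}=e_{2^{\prime\prime}}\widehat{m}=0$ defining a $d$-chiral section (Definition/Lemma~3.1.1), which gives $\widehat{\cal O}_X^{A\!z,\scriptsizewidehatDch}=\widehat{\cal O}_X^{A\!z,\scriptsizedch}$ immediately.

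\textbf{Antichiral part.} By Definition/Lemma~3.1.3, $\widehat{m}$ is $\widehat{D}$-antichiral iff both $\widehat{m}_{(\even)}$ and $\widehat{m}_{(\odd)}$ are, and moreover the $\widehat{D}$-antichiral conditions on a pure-parity section are equivalent to the single-sided vanishing $\widehat{D}_{e_{\alpha^{\prime}}}(\cdot)=0$ (the companion conditions with ${}^{\varsigma}\widehat{D}$, or with $\widehat{D}^{(\even)}$ and $\widehat{D}^{(\odd)}$ separately, being equivalent to it). So it is enough to apply the explicit formulas of Lemma~2.3.6 on pure-parity sections: $\widehat{D}_{e_{\alpha^{\prime}}}\widehat{m}_{(\even)}={}^{\varsigma}e^{-V}\,(e_{\alpha^{\prime}}(e^{V}\widehat{m}_{(\even)}e^{-V}))\,e^{V}$ and $\widehat{D}_{e_{\alpha^{\prime}}}\widehat{m}_{(\odd)}={}^{\varsigma}e^{-V}\,(e_{\alpha^{\prime}}(e^{V}\widehat{m}_{(\odd)}\,{}^{\varsigma}e^{-V}))\,{}^{\varsigma}e^{V}$. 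Since $V$ is a vector superfield, $V_{(0)}=0$, so $e^{\pm V}$ and ${}^{\varsigma}e^{\pm V}$ have degree-$0$ part $\Id$ and are therefore units in $\widehat{\cal O}_X^{A\!z}$; left and right multiplication by them is invertible, so $\widehat{D}_{e_{1^{\prime}}}\widehat{m}_{(\even)}=\widehat{D}_{e_{2^{\prime}}}\widehat{m}_{(\even)}=0$ iff $e^{V}\widehat{m}_{(\even)}e^{-V}\in\widehat{\cal O}_X^{A\!z,\scriptsizedach}$, i.e.\ iff $\widehat{m}_{(\even)}\in e^{-V}\widehat{\cal O}_X^{A\!z,\scriptsizedach}e^{V}$, and likewise $\widehat{D}_{e_{\alpha^{\prime}}}\widehat{m}_{(\odd)}=0$ ($\alpha^{\prime}=1^{\prime},2^{\prime}$) iff $\widehat{m}_{(\odd)}\in e^{-V}\widehat{\cal O}_X^{A\!z,\scriptsizedach}\,{}^{\varsigma}e^{V}$, using that ${}^{\varsigma}$ is a ring automorphism so $({}^{\varsigma}e^{-V})^{-1}={}^{\varsigma}e^{V}$. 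Intersecting with the even, resp.\ odd, part of $\widehat{\cal O}_X^{A\!z}$ and taking the direct sum over the $\even/\odd$ decomposition yields the asserted formula for $\widehat{\cal O}_X^{A\!z,\scriptsizewidehatDach}$.

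The one place requiring care is the bookkeeping of the parity conjugations: one must check that conjugating the odd component uses the twisted pair $(e^{V},{}^{\varsigma}e^{-V})$ rather than $(e^{V},e^{-V})$, that ${}^{\varsigma}$ fixes $\widehat{\cal O}_X^{A\!z,\scriptsizedach}$ as a subring and commutes with the $e_{\alpha^{\prime}}$'s (so that the membership statements make sense and can be inverted), and that the auxiliary factors such as $e^{V}\cdot{}^{\varsigma}e^{-V}$ that appear implicitly are still units. None of this needs a new computation; it is all packaged in Lemma~2.3.6, Lemma~2.3.7, and Definition/Lemma~3.1.3, so the proof is an assembly of those results rather than a fresh argument.
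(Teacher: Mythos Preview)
Your proposal is correct and follows essentially the same route as the paper: the argument is precisely the assembly you describe, with the paper giving it in the paragraph immediately preceding the lemma rather than after it. The paper likewise invokes Lemma~2.3.6 to reduce $\widehat{D}_{e_{\beta^{\prime\prime}}}$ to $e_{\beta^{\prime\prime}}$ for the chiral part, and for the antichiral part splits into even and odd pieces, applies the explicit formulas $\widehat{D}_{e_{\alpha^{\prime}}}\widehat{m}_{(\even)}={}^{\varsigma}e^{-V}(e_{\alpha^{\prime}}(e^{V}\widehat{m}_{(\even)}e^{-V}))e^{V}$ and $\widehat{D}_{e_{\alpha^{\prime}}}\widehat{m}_{(\odd)}={}^{\varsigma}e^{-V}(e_{\alpha^{\prime}}(e^{V}\widehat{m}_{(\odd)}\,{}^{\varsigma}e^{-V}))\,{}^{\varsigma}e^{V}$, and reads off the membership conditions by invertibility of the conjugating factors.
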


\bigskip

As a consequence of this characterization, one has the following proposition:

\bigskip

\begin{proposition} {\bf [invariance of $\widehat{\cal O}_X^{A\!z,\scriptsizewidehatDch}$ and
       $\widehat{\cal O}_X^{A\!z,\scriptsizewidehatDach}$ under supersymmetry]}\;
  Recall that the reference trivial connection $d$ on $\widehat{\cal O}_X^{A\!z}$ gives a lifting of
    the supersymmetry transformations on $\widehat{X}$ to
	transformations on  $\widehat{\cal O}_X^{A\!z}$.
  Then, with respect to this lifting of supersymmetry transformations,
    both $\widehat{\cal O}_X^{A\!z,\scriptsizewidehatDch}$ and
	$\widehat{\cal O}_X^{A\!z,\scriptsizewidehatDach}$
   are supersymmetrically invariant subsheaves of $\widehat{\cal O}_X^{A\!z}$.
\end{proposition}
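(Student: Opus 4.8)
The plan is to deduce the proposition directly from the characterization of $\widehat{\mathcal O}_X^{A\!z,\scriptsizewidehatDch}$ and $\widehat{\mathcal O}_X^{A\!z,\scriptsizewidehatDach}$ as subsheaves of $\widehat{\mathcal O}_X^{A\!z}$ obtained just above (Lemma~3.1.6), together with two elementary facts. (i)~Under the reference trivialization, the reference-$d$-lifted supersymmetry transformation $g_{(\xi,\bar\eta)}$ acts on $\widehat{\mathcal O}_X^{A\!z}\simeq\widehat{\mathcal O}_X\otimes_{\mathbb C}M_{r\times r}(\mathbb C)$ entrywise, i.e.\ as $g_{(\xi,\bar\eta)}\otimes\Id$, and in particular is a $\mathbb Z/2$-grading-preserving ring automorphism. (ii)~$g_{(\xi,\bar\eta)}$ commutes with each of $e_{1^\prime},e_{2^\prime},e_{1^{\prime\prime}},e_{2^{\prime\prime}}$ on $\widehat{\mathcal O}_X$; this follows from writing $g_{(\xi,\bar\eta)}$ as the exponential of its infinitesimal generator $\sum_\alpha\xi^\alpha Q_\alpha-\sum_{\dot\beta}\bar\eta^{\dot\beta}\bar{Q}_{\dot\beta}$ and using the anticommutation relations $\{e_{\alpha^\prime},Q_\beta\}=\{e_{\alpha^\prime},\bar{Q}_{\dot\beta}\}=\{e_{\alpha^{\prime\prime}},Q_\beta\}=\{e_{\alpha^{\prime\prime}},\bar{Q}_{\dot\beta}\}=0$ from Sec.~1.4: with the supersymmetry parameters $\xi^\alpha,\bar\eta^{\dot\beta}$ treated as constants annihilated by the superspace derivations, the sign contributed by passing an odd $\xi^\alpha$ past $e_{\alpha^\prime}$ cancels the sign of the $e$--$Q$ anticommutator, and likewise entrywise on $\widehat{\mathcal O}_X^{A\!z}$. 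An immediate consequence of (i)+(ii) is that $g_{(\xi,\bar\eta)}$ carries $d$-chiral sections to $d$-chiral sections (since $e_{\beta^{\prime\prime}}(g_{(\xi,\bar\eta)}\widehat m)=g_{(\xi,\bar\eta)}(e_{\beta^{\prime\prime}}\widehat m)$) and $d$-antichiral sections to $d$-antichiral sections; hence $\widehat{\mathcal O}_X^{A\!z,\scriptsizedch}\simeq\widehat{\mathcal O}_X^{\scriptsizech}\otimes_{\mathbb C}M_{r\times r}(\mathbb C)$ and $\widehat{\mathcal O}_X^{A\!z,\scriptsizedach}\simeq\widehat{\mathcal O}_X^{\scriptsizeach}\otimes_{\mathbb C}M_{r\times r}(\mathbb C)$ are $g_{(\xi,\bar\eta)}$-invariant subsheaves of $\widehat{\mathcal O}_X^{A\!z}$.

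For the $\widehat D$-chiral case this already finishes the argument, since Lemma~3.1.6 gives $\widehat{\mathcal O}_X^{A\!z,\scriptsizewidehatDch}=\widehat{\mathcal O}_X^{A\!z,\scriptsizedch}$, which is manifestly independent of the vector superfield $V$. For the $\widehat D$-antichiral case I would use the second clause of Lemma~3.1.6, which presents $\widehat{\mathcal O}_X^{A\!z,\scriptsizewidehatDach}$ as the direct sum of $(e^{-V}\widehat{\mathcal O}_X^{A\!z,\scriptsizedach}e^V)\cap\widehat{\mathcal O}_X^{A\!z,(\even)}$ and $(e^{-V}\widehat{\mathcal O}_X^{A\!z,\scriptsizedach}\,{}^{\varsigma}e^V)\cap\widehat{\mathcal O}_X^{A\!z,(\odd)}$. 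Since $g_{(\xi,\bar\eta)}$ is a grading-preserving ring automorphism and $e^V=\Id+V+\frac12V^2$ with $V^3=0$, one has $g_{(\xi,\bar\eta)}(e^{\pm V})=e^{\pm g_{(\xi,\bar\eta)}(V)}$ and $g_{(\xi,\bar\eta)}({}^{\varsigma}e^V)={}^{\varsigma}e^{g_{(\xi,\bar\eta)}(V)}$; combined with the $g_{(\xi,\bar\eta)}$-invariance of $\widehat{\mathcal O}_X^{A\!z,\scriptsizedach}$ just established, applying $g_{(\xi,\bar\eta)}$ to this decomposition produces the same decomposition with $V$ replaced by $g_{(\xi,\bar\eta)}(V)$. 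By Lemma~3.1.6 read in reverse, the transformed sheaf is the $\widehat D$-antichiral structure sheaf of $\widehat X^{A\!z}$ attached to the hybrid connection induced from the SUSY-rep compatible simple hybrid connection associated to $g_{(\xi,\bar\eta)}(V)$; here one also records that $g_{(\xi,\bar\eta)}(V)$ is again a vector superfield, since its $(\theta,\bar\theta)$-degree-$0$ component still vanishes and one may invoke the existence of a Wess--Zumino gauge (Definition~2.3.1, Definition~2.3.2).

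The step I expect to be the main obstacle is the last one: reconciling the $V$-dependence of $\widehat{\mathcal O}_X^{A\!z,\scriptsizewidehatDach}$ with the claimed invariance. The clean reading is that a supersymmetry transformation of the D3-brane world-volume data carries the vector superfield $V$ — equivalently the hybrid connection $\widehat D$ — along with it, so that the preceding paragraph already gives the assertion. If instead one keeps $V$ fixed, the proposition requires showing that conjugation by $e^{V}e^{-g_{(\xi,\bar\eta)}(V)}$ maps $\widehat{\mathcal O}_X^{A\!z,\scriptsizedach}$ into itself; this is the super-$C^\infty$-algebrogeometric counterpart of the familiar statement that a supersymmetry transformation followed by a suitable field-dependent gauge transformation preserves Wess--Zumino gauge, i.e.\ that $g_{(\xi,\bar\eta)}(V)$ and $V$ differ by a gauge transformation of antichiral type. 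I would settle this either by a direct computation from the Wess--Zumino-gauge normal form of $V$ and the explicit $(\xi,\bar\eta)$-expansion of $g_{(\xi,\bar\eta)}$, or — more economically — by adopting the covariant reading above. The remaining bookkeeping (sign-factors in the Leibniz rules, the interplay of the $(\even)$ and $(\odd)$ projections with ${}^{\varsigma}(\cdot)$, and closure of $d$-antichiral units under inverse via the partial $C^\infty$-ring structure of Lemma~3.1.2) is routine.
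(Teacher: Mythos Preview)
Your first two paragraphs match the paper's proof: establish that the reference $d$-chiral, $d$-antichiral, even, and odd subsheaves are supersymmetrically invariant (the paper does this by tensoring the corresponding invariant subsheaves of $\widehat{\cal O}_X$ with $M_{r\times r}(\mathbb C)$; your argument via the commutation of $g_{(\xi,\bar\eta)}$ with the $e_I$ is the same thing written out), then invoke Lemma~3.1.6. The paper stops exactly there --- its entire conclusion is the sentence ``The proposition now follows from Lemma~3.1.6.''

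Where you go beyond the paper is your third paragraph, and the issue you raise is real. The characterization in Lemma~3.1.6 for the antichiral case involves conjugation by $e^{\pm V}$ and ${}^{\varsigma}e^V$, and since $g_{(\xi,\bar\eta)}$ is a ring automorphism it carries these to $e^{\pm g_{(\xi,\bar\eta)}(V)}$ and ${}^{\varsigma}e^{g_{(\xi,\bar\eta)}(V)}$; so the image is, on the face of it, the $\widehat D$-antichiral sheaf attached to $g_{(\xi,\bar\eta)}(V)$ rather than to $V$. The paper's proof does not address this point at all --- it simply records invariance of the four building-block sheaves and cites Lemma~3.1.6, without mentioning $V$. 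Your ``covariant reading'' (that the supersymmetry transformation is understood to carry $V$, and hence $\widehat\nabla$ and $\widehat D$, along with the sections) is the standard physics interpretation and is almost certainly what is intended; the literal reading of the proposition as invariance of a fixed subsheaf would require the compensating-gauge-transformation argument you sketch, which neither you nor the paper carries out. So your proof is at least as complete as the paper's, and your identification of the ambiguity is a genuine observation that the paper leaves implicit.
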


\medskip

\begin{proof}
 The reference trivialization that gives the reference connection $d$
   on $\widehat{\cal O}_X^{A\!z}$ specifies the following isomorphisms
  $$
    \begin{array}{c}
     \widehat{\cal O}_X^{A\!z}
      = \widehat{\cal O}_X\otimes_{\Bbb C}M_{r\times r}({\Bbb C}), \;\;
     \widehat{\cal O}_X^{A\!z, (\even)}
      = \widehat{\cal O}_X^{(\even)}\otimes_{\Bbb C}M_{r\times r}({\Bbb C})\,, \;\;
     \widehat{\cal O}_X^{A\!z, (\odd)}
      = \widehat{\cal O}_X^{(\odd)}\otimes_{\Bbb C}M_{r\times r}({\Bbb C})\,,  \\[1.2ex]
     \widehat{\cal O}_X^{A\!z, \scriptsizedch}
      = \widehat{\cal O}_X^{\scriptsizech}\otimes_{\Bbb C}M_{r\times r}({\Bbb C})\,, \;\;
     \widehat{\cal O}_X^{A\!z, \scriptsizedach}
      = \widehat{\cal O}_X^{\scriptsizeach}\otimes_{\Bbb C}M_{r\times r}({\Bbb C})\,,
   \end{array}	
 $$
 From these isomorphisms, one concludes that all
    $\widehat{\cal O}_X^{A\!z, \scriptsizedch}$,
	$\widehat{\cal O}_X^{A\!z,\scriptsizedach }$,
	$\widehat{\cal O}_X^{A\!z, (\even)}$, and
	$\widehat{\cal O}_X^{A\!z, (\odd)}$
	are invariant subsheaves of $\widehat{\cal O}_X^{A\!z}$
	 under the lifted-via-$d$ supersymmetry transformations
  since
    $\widehat{\cal O}_X^{\scriptsizech}$,
	$\widehat{\cal O}_X^{\scriptsizeach }$,
	$\widehat{\cal O}_X^{(\even)}$, and
	$\widehat{\cal O}_X^{(\odd)}$
	are invariant subsheaves of $\widehat{\cal O}_X$ under supersymmetry transformations
	on $\widehat{X}$.
  The proposition now follows from Lemma~3.1.6.
 
\end{proof}

\bigskip

\subsection{Normal form of $\widehat{D}$-chiral sections and $\widehat{D}$-antichiral sections
    of $\widehat{\cal O}_X^{A\!z}$}

A normal form of $\widehat{D}$-chiral sections and $\widehat{D}$-antichiral sections of
 $\widehat{\cal O}_X^{A\!z}$ that generalizes Lemma~1.4.14 and Lemma~1.4.18 is given in this subsection.
We begin with the following four basic formulas from straightforward computations:

\bigskip
	
\begin{lemma} {\bf [basic formula]}\;
 For $s\in \widehat{\cal E}$, let
   $\widehat{\nabla}_{\frac{\partial}{\partial x^\mu}}s
       = \frac{\partial}{\partial x^\mu}s + s a_\mu$,
   $\widehat{\nabla}_{\frac{\partial}{\partial\theta^\alpha}}s
       = \frac{\partial}{\partial\theta^\alpha}s + \,\!^\varsigma\!s b_{\alpha}$, 	   	
   $\widehat{\nabla}_{\frac{\partial}
                                                  {\rule{0ex}{.7em}\partial\bar{\theta}^{\dot{\beta}}}}s 
       = \frac{\partial}{\rule{0ex}{.8em}\partial \bar{\theta}^{\dot{\beta}}}s
	        + \,\!^\varsigma\!s b_{\dot{\beta}}$,
   where $a_{\mu}, b_{\alpha}, b_{\dot{\beta}}\in \widehat{\cal O}_X^{A\!z}$.			
 Then, ($\partial_\mu:= \frac{\partial}{\partial x^\mu}$, $(-1)^{\dot{1}}:= -1, (-1)^{\dot{2}}:=1$)
  
  {\small
  \begin{eqnarray*}
   \lefteqn{ \widehat{D}_{e_{\alpha^\prime}} \widehat{m}_{(\tinyeven)}\;
     =\; \widehat{D}_{\frac{\partial}{\partial \theta^{\alpha}}}
	             \widehat{m}_{(\tinyeven)}\,
	            +\, \sqrt{-1}\,\sum_{\mu, \dot{\beta}}
				       \sigma^\mu_{\alpha\dot{\beta}}\bar{\theta}^{\dot{\beta}}
				       \widehat{D}_{\frac{\partial}{\partial x^\mu}} \widehat{m}_{(\tinyeven)}   }\\
   && =\;
      -\, [b_{\alpha(0)}, m_{(0)}]		
	  -\, \sum_{\gamma}
	        \mbox{\Large $($}
              (-1)^{\alpha}(1-\delta_{\alpha\gamma})m_{(12)}
			   + [b_{\alpha(\gamma)}, m_{(0)}  ]
            \mbox{\Large $)$}\,\theta^{\gamma}\,   	\\					
    && \hspace{1.3em}
	  -\, \sum_{\dot{\delta}}
				     \mbox{\Large $($}
				       m_{(\alpha\dot{\delta})} +  [b_{\alpha(\dot{\gamma})}, m_{(0)}]
                      + \sqrt{-1}\,\mbox{\normalsize $\sum$}_\mu
                            \sigma^\mu_{\alpha\dot{\delta}}
							 \mbox{\large $($}
							  \partial_\mu  m_{(0)} -[a_{\mu(0)}, m_{(0)}]
							 \mbox{\large $)$} 					
				     \mbox{\Large $)$}\, \bar{\theta}^{\dot{\delta}}\,  \\			
    && \hspace{1.3em}		
      -\, \mbox{\Large $($}
             [b_{\alpha(12)}, m_{(0)}] + [b_{\alpha(0)}, m_{(12)}]
           \mbox{\Large $)$}\, \theta^1\theta^2\, \\[.8ex]		
    && \hspace{1.3em}
      +\, \sum_{\gamma,\dot{\delta}}	
	        \mbox{\Large $($}
			 -  [b_{\alpha(\gamma\dot{\delta})}, m_{(0)}]
			 -  [b_{\alpha(0)}, m_{(\gamma\dot{\delta})}]\,
	         + \sqrt{-1}\,\mbox{\normalsize $\sum$}_\mu
				   \sigma^\mu_{\alpha\dot{\delta}}[a_{\mu(\gamma)}, m_{(0)}]
			\mbox{\Large $)$}\, \theta^\gamma \bar{\theta}^{\dot{\delta}} \,    \\			
    && \hspace{1.3em}
     +\, \mbox{\Large $($}
             -  [b_{\alpha(\dot{1}\dot{2})}, m_{(0)}]
			 -  [b_{\alpha(0)}, m_{(\dot{1}\dot{2})}]
		     + \sqrt{-1}\,\mbox{\normalsize $\sum$}_\mu
		        \sigma^\mu_{\alpha\dot{2}} [a_{\mu(\dot{1})}, m_{(0)}]	
            - \sqrt{-1}\,\mbox{\normalsize $\sum$}_\mu
			     \sigma^\mu_{\alpha\dot{1}} [a_{\mu(\dot{2})}, m_{(0)}]
           \mbox{\Large $)$}\, \bar{\theta}^{\dot{1}}\bar{\theta}^{\dot{2}}\,			   \\[.6ex]		
    && \hspace{1.3em}
      +\, \sum_{\dot{\delta}}
             \mbox{\Large $($}
			 -  [b_{\alpha(12\dot{\delta})}, m_{(0)}]
			 + [b_{\alpha(2)}, m_{(1\dot{\delta})}]
			 -  [b_{\alpha(1)}, m_{(2\dot{\delta})}]
			 -  [b_{\alpha(\dot{\delta})}, m_{(12)}]          \\[-1ex]
      && \hspace{5em}			
			 +\sqrt{-1}\,\mbox{\normalsize $\sum$}_\mu
			      \sigma^\mu_{\alpha\dot{\delta}}
				   \mbox{\large $($}
				     \partial_\mu m_{(12)} - [a_{\mu(0)}, m_{(12)}]					 					 
				   \mbox{\large $)$}
			  - \sqrt{-1}\,\mbox{\normalsize $\sum$}_\mu	
			      \sigma^\mu_{\alpha\dot{\delta}}[a_{\mu (12)}, m_{(0)}]
             \mbox{\Large $)$}\, \theta^1\theta^2\bar{\theta}^{\dot{\delta}}\, \\		
    && \hspace{1.3em}	
      +\, \sum_{\gamma}
             \mbox{\Large $($}
			-(-1)^\alpha\,(1-\delta_{\alpha\gamma})\,
			    m_{(12\dot{1}\dot{2})}\\[-1.2ex]
         && \hspace{5em} 			
		      - [b_{\alpha(\gamma\dot{1}\dot{2})}, m_{(0)}]			
		      - [b_{\alpha(\dot{2})}, m_{(\gamma\dot{1})}]
		     + [b_{\alpha(\dot{1})}, m_{(\gamma\dot{2})}]
              - [b_{\alpha(\gamma)}, m_{(\dot{1}\dot{2})}]			 \\[.6ex]
         && \hspace{5em}			
			 + \sqrt{-1}\,\mbox{\normalsize $\sum$}_\mu \sigma^\mu_{\alpha\dot{2}}
			     \mbox{\large $($}
				  \partial_\mu m_{(\gamma\dot{1})}-[a_{\mu(0)}, m_{(\gamma\dot{1})}]
				 \mbox{\large $)$}
			  - \sqrt{-1}\, \mbox{\normalsize $\sum$}_\mu \sigma^\mu_{\alpha\dot{1}}
			       \mbox{\large $($}
				    \partial_\mu m_{(\gamma\dot{2})}- [a_{\mu(0)}, m_{(\gamma\dot{2})}]
				   \mbox{\large $)$}   \\[.6ex]
         && \hspace{14em}				
		      - \sqrt{-1}\,\mbox{\normalsize $\sum$}_\mu
			         \sigma^\mu_{\alpha\dot{2}}[a_{\mu (\gamma\dot{1})}, m_{(0)}] 	
			 + \sqrt{-1}\,\mbox{\normalsize $\sum$}_\mu
				     \sigma^\mu_{\alpha\dot{1}}[a_{\mu (\gamma\dot{2})}, m_{(0)}]
			 \mbox{\Large $)$}\,\theta^\gamma\bar{\theta}^{\dot{1}}\bar{\theta}^{\dot{2}}\, \\[.6ex]	
    && \hspace{1.3em}
	    +\, \mbox{\Large $($}
		      - [b_{\alpha(12\dot{1}\dot{2})}, m_{(0)}]
			  - [b_{\alpha(\dot{1}\dot{2})} , m_{(12)}]
			 + [b_{\alpha(2\dot{2})} , m_{(1\dot{1})}]
			  - [b_{\alpha(2\dot{1})}, m_{(1\dot{2})}]   \\
       && \hspace{5em}			
			  - [b_{\alpha(1\dot{2})}, m_{(2\dot{1})}]
			 + [b_{\alpha(1\dot{1})}, m_{(2\dot{2})}]
			  - [b_{\alpha(12)} , m_{(\dot{1}\dot{2})}]
			  - [b_{\alpha(0)} , m_{(12\dot{1}\dot{2})}]  \\
       && \hspace{5em}
             + \sqrt{-1}\,\mbox{\normalsize $\sum$}_\mu
                 \sigma^\mu_{\alpha\dot{1}}
                   \mbox{\large $($}
                      - [a_{\mu (12\dot{2})}, m_{(0)}]
                      - [a_{\mu(\dot{2})}, m_{(12)}]					
					 + [a_{\mu (2)} , m_{(1\dot{2})}]
					  - [a_{\mu(1)}  , m_{(2\dot{2})}]					
                   \mbox{\large $)$} \\	
       && \hspace{5em}
	         + \sqrt{-1}\,\mbox{\normalsize $\sum$}_\mu
                 \sigma^\mu_{\alpha\dot{2}}
                   \mbox{\large $($}
                        [a_{\mu (12\dot{1})}  , m_{(0)}]
                     + [a_{\mu(\dot{1})},  m_{(12)}]
					  - [a_{\mu (2)} , m_{(1\dot{1})}]
					 + [a_{\mu(1)}  , m_{(2\dot{1})}]					
                   \mbox{\large $)$}
			  \mbox{\Large $)$}\, \theta^1\theta^2\bar{\theta}^{\dot{1}}\bar{\theta}^{\dot{2}}\,,
  \end{eqnarray*}} 

  {\small
  \begin{eqnarray*}
   \lefteqn{ \widehat{D}_{e_{\alpha^\prime}} \widehat{m}_{(\tinyodd)}\;
     =\; \widehat{D}_{\frac{\partial}{\partial \theta^{\alpha}}}
	             \widehat{m}_{(\tinyodd)}\,
	            +\, \sqrt{-1}\,\sum_{\mu, \dot{\beta}}
				       \sigma^\mu_{\alpha\dot{\beta}}\bar{\theta}^{\dot{\beta}}
				       \widehat{D}_{\frac{\partial}{\partial x^\mu}} \widehat{m}_{(\tinyodd)}   }\\
   &&  =\; m_{(\alpha)}\,
      -\, \sum_\gamma [b_{\alpha(0)}, m_{(\gamma)}]\, \theta^{\gamma}\,
	  -\, \sum_{\dot{\delta}}
		         [b_{\alpha(0)} , m_{(\dot{\delta})}]\,    \bar{\theta}^{\dot{\delta}}\\
   && \hspace{1.3em}
     +\, \mbox{\Large $($}
	           [b_{\alpha(2)}, m_{(1)}]
			 - [b_{\alpha(1)} , m_{(2)}]
           \mbox{\Large $)$}\, \theta^1\theta^2\,  	  \\[.6ex]				
   && \hspace{1.3em}
     +\, \sum_{\gamma, \dot{\delta}}
            \mbox{\Large $($}
		     -(-1)^{\alpha}(1-\delta_{\alpha\gamma})\,
			        m_{12\dot{\delta}}
			 + [b_{\alpha(\dot{\delta})} , m_{(\gamma)}]
			  - [b_{\alpha(\gamma)}  , m_{(\dot{\delta})}]        \\[-2ex]
       && \hspace{16em}
             - \sqrt{-1}\,\mbox{\normalsize $\sum$}_\mu
			      \sigma^\mu_{\alpha\dot{\delta}}
				    \mbox{\large $($}
                   	 \partial_\mu m_{(\gamma)} - [a_{\mu(0)}, m_{(\gamma)}]
					\mbox{\large $)$}
		    \mbox{\Large $)$}\, \theta^{\gamma}\bar{\theta}^{\dot{\delta}}\, \\ 	
   && \hspace{1.3em}		
	 +\, \mbox{\Large $($}
                 m_{(\alpha\dot{1}\dot{2})}
                 + [b_{\alpha(\dot{2})} , m_{(\dot{1})}]
                  - [b_{\alpha(\dot{1})} , m_{(\dot{2})}] 				   \\
       && \hspace{5em}				
				 - \sqrt{-1}\, \mbox{\normalsize $\sum$}_\mu
				      \sigma^\mu_{\alpha\dot{2}}
					   \mbox{\large $($}
					     \partial_\mu m_{(\dot{1})}- [a_{\mu(0)} , m_{(\dot{1})}]
					   \mbox{\large $)$}
                 + \sqrt{-1}\, \mbox{\normalsize $\sum$}_\mu
				      \sigma^\mu_{\alpha\dot{1}}
					   \mbox{\large $($}
					     \partial_\mu m_{(\dot{2})}- [a_{\mu(0)} , m_{(\dot{2})}]
					   \mbox{\large $)$}	
		        \mbox{\Large $)$}\,\bar{\theta}^{\dot{1}}\bar{\theta}^{\dot{2}}\, \\		
   && \hspace{1.3em}
     +\,\sum_{\dot{\delta}}
	       \mbox{\Large $($}
		     -  [b_{\alpha(2\dot{1})}, m_{(1)}]
			 + [b_{\alpha(\dot{1}\dot{1})}  , m_{(2)}]
			 -  [b_{\alpha(12)}  , m_{(\dot{\delta})}]
			 -  [b_{\alpha(0)}, m_{(12\dot{\delta})}]     \\[-2ex]
       && \hspace{16em}
          + \sqrt{-1}\,\mbox{\normalsize $\sum$}_\mu	
		       \sigma^\mu_{\alpha\dot{\delta}}
			    \mbox{\large $($}
				   [a_{\mu(2)}, m_{(1)}]
				 - [a_{\mu(1)}, m_{(2)}]
			    \mbox{\large $)$}
           \mbox{\Large $)$}\, \theta^1\theta^2\bar{\theta}^{\dot{\delta}}\\			
   && \hspace{1.3em}
     +\, \sum_{\gamma}
	       \mbox{\Large $($}
		      - [b_{\alpha(\dot{1}\dot{2})}, m_{(\gamma)}]
			 + [b_{\alpha(\gamma\dot{2})}, m_{(\dot{1})}]
			  - [b_{\alpha(\gamma\dot{1})} , m_{(\dot{2})}]
			  - [b_{\alpha(0)} , m_{(\gamma\dot{1}\dot{2})}]                 \\[-1.2ex]
       && \hspace{5em}			
	         +\sqrt{-1}\,\mbox{\normalsize $\sum$}_\mu
			     \sigma^\mu_{\alpha\dot{1}}
				  \mbox{\large $($}
				  - [a_{\mu(\dot{2})} , m_{(\gamma)}]
				  +[a_{\mu(\gamma)} , m_{(\dot{2})}]
				  \mbox{\large $)$}                                                  \\
       && \hspace{16em}				
             +\sqrt{-1}\,\mbox{\normalsize $\sum$}_\mu
			     \sigma^\mu_{\alpha\dot{2}}
				  \mbox{\large $($}
				     [a_{\mu(\dot{1})} , m_{(\gamma)}]
				   - [a_{\mu(\gamma)} , m_{(\dot{1})}]
				  \mbox{\large $)$}				
           \mbox{\Large $)$}\, \theta^\gamma\bar{\theta}^{\dot{1}}\bar{\theta}^{\dot{2}}\, \\
   && \hspace{1.3em}
     +\, \mbox{\Large $($}
	            [b_{\alpha(2\dot{1}\dot{2})}, m_{(1)}]
			 -  [b_{\alpha(1\dot{1}\dot{2})}, m_{(2)}]
			 + [b_{\alpha(12\dot{2})}, m_{(\dot{1})}]
			 -  [b_{\alpha(12\dot{1})}, m_{(\dot{2})}]    \\
       && \hspace{3em}
             + [b_{\alpha(\dot{2})}, m_{(12\dot{1})}]	
			 -  [b_{\alpha(\dot{1})}, m_{(12\dot{2})}]
			 + [b_{\alpha(2)}, m_{(1\dot{1}\dot{2})}]
			 -  [b_{\alpha(1)}, m_{(2\dot{1}\dot{2})}]    \\
       && \hspace{3em}
             -  \sqrt{-1}\, \mbox{\normalsize $\sum$}_\mu
			      \sigma^\mu_{\alpha\dot{2}}
				   \mbox{\large $($}
				    \partial_\mu m_{(12\dot{1})}
					   - [a_{\mu(0)} , m_{(12\dot{1})}]
				   \mbox{\large $)$}
             +  \sqrt{-1}\, \mbox{\normalsize $\sum$}_\mu
			      \sigma^\mu_{\alpha\dot{1}}
				   \mbox{\large $($}
				    \partial_\mu m_{(12\dot{2})}
					   - [a_{\mu(0)} , m_{(12\dot{2})}]
				   \mbox{\large $)$}	    \\
        && \hspace{3em}
          + \sqrt{-1}\, \mbox{\normalsize $\sum$}_\mu		
		      \sigma^\mu_{\alpha\dot{2}}
			   \mbox{\large $($}
			       [a_{\mu(2\dot{1})}, m_{(1)}]
				 - [a_{\mu(1\dot{1})}, m_{(\dot{2})}]
				 +[a_{\mu(12)}, m_{(\dot{\dot{1}})}]
			   \mbox{\large $)$}     \\
       && \hspace{10em}
          + \sqrt{-1}\, \mbox{\normalsize $\sum$}_\mu		
		      \sigma^\mu_{\alpha\dot{1}}
			   \mbox{\large $($}
			      - [a_{\mu(2\dot{2})}, m_{(1)}]
				  +[a_{\mu(1\dot{2})}, m_{(2)}]
				  - [a_{\mu(12)}, m_{(\dot{2})}]
			   \mbox{\large $)$}
	       \mbox{\Large $)$}\, \theta^1\theta^2\bar{\theta}^{\dot{1}}\bar{\theta}^{\dot{2}}\,,
  \end{eqnarray*}} 
 
  {\small
  \begin{eqnarray*}
   \lefteqn{ -\,\widehat{D}_{e_{\beta^{\prime\prime}}} \widehat{m}_{(\tinyeven)}\;
     =\; \widehat{D}_{\frac{\partial}{\rule{0ex}{.7em}\partial\bar{\theta}^{\dot{\beta}}}}
	             \widehat{m}_{(\tinyeven)}\,
	            +\, \sqrt{-1}\,\sum_{\mu, \alpha}\theta^\alpha  \sigma^\mu_{\alpha\dot{\beta}}
				       \widehat{D}_{\frac{\partial}{\partial x^\mu}} \widehat{m}_{(\tinyeven)}   }\\
    && =\;
      -\, [b_{\dot{\beta}(0)}, m_{(0)}]		
                - 	\sum_\gamma
				     \mbox{\Large $($}
				       m_{(\gamma\dot{\beta})} +  [b_{\dot{\beta}(\gamma)}, m_{(0)}]
                      + \sqrt{-1}\,\mbox{\normalsize $\sum$}_\mu
                            \sigma^\mu_{\gamma\dot{\beta}}
							 \mbox{\large $($}
							  \partial_\mu  m_{(0)} -[a_{\mu(0)}, m_{(0)}]
							 \mbox{\large $)$} 					
				     \mbox{\Large $)$}\, \theta^\gamma\,  \\
    && \hspace{1.3em}
      -\, \sum_{\dot{\delta}}
	        \mbox{\Large $($}
              (-1)^{\dot{\beta}}(1-\delta_{\dot{\beta}\dot{\delta}})m_{(\dot{1}\dot{2})}
			   + [b_{\dot{\beta}(\dot{\delta})}, m_{(0)}  ]
            \mbox{\Large $)$}\,\bar{\theta}^{\dot{\delta}}\,   	\\
    && \hspace{1.3em}			
      +\, \mbox{\Large $($}
            - [b_{\dot{\beta}(12)}, m_{(0)}] - [b_{\dot{\beta}(0)}, m_{(12)}]
		   + \sqrt{-1}\,\mbox{\normalsize $\sum$}_\mu
		        \sigma^\mu_{2\dot{\beta}} [a_{\mu(1)}, m_{(0)}]	
            - \sqrt{-1}\,\mbox{\normalsize $\sum$}_\mu
			     \sigma^\mu_{1\dot{\beta}} [a_{\mu(2)}, m_{(0)}]
           \mbox{\Large $)$}\, \theta^1\theta^2\,			   \\[.6ex]
    && \hspace{1.3em}
      -\, \sum_{\gamma,\dot{\delta}}	
	        \mbox{\Large $($}
			 [b_{\dot{\beta}(\gamma\dot{\delta})}, m_{(0)}]
			 + [b_{\dot{\beta}(0)}, m_{(\gamma\dot{\delta})}]\,
	         + \sqrt{-1}\,\mbox{\normalsize $\sum$}_\mu
				   \sigma^\mu_{\gamma\dot{\beta}}[a_{\mu(\dot{\delta})}, m_{(0)}]
			\mbox{\Large $)$}\, \theta^\gamma \bar{\theta}^{\dot{\delta}} \,    \\
    && \hspace{1.3em}
      -\, \mbox{\Large $($}
             [b_{\dot{\beta}(\dot{1}\dot{2})}, m_{(0)}]
			 + [b_{\dot{\beta}(0)}, m_{(\dot{1}\dot{2})}]
           \mbox{\Large $)$}\, \bar{\theta}^{\dot{1}}	  \bar{\theta}^{\dot{2}}\, \\[.8ex]
    && \hspace{1.3em}
      +\, \sum_{\dot{\delta}}
             \mbox{\Large $($}
			-(-1)^{\dot{\beta}}\,(1-\delta_{\dot{\beta}\dot{\delta}})\,
			    m_{(12\dot{1}\dot{2})}\\[-1.2ex]
         && \hspace{5em} 			
		    - [b_{\dot{\beta}(12\dot{\delta})}, m_{(0)}]
			- [b_{\dot{\beta}(\dot{\delta})}, m_{(12)}]
		   + [b_{\dot{\beta}(2)}, m_{(1\dot{\delta})}]
		    - [b_{\dot{\beta}(1)}, m_{(2\dot{\delta})}]      \\[.6ex]
         && \hspace{5em}			
			  - \sqrt{-1}\,\mbox{\normalsize $\sum$}_\mu \sigma^\mu_{2\dot{\beta}}
			     \mbox{\large $($}
				  \partial_\mu m_{(1\dot{\delta})}-[a_{\mu(0)}, m_{(1\dot{\delta})}]
				 \mbox{\large $)$}
			  + \sqrt{-1}\, \mbox{\normalsize $\sum$}_\mu \sigma^\mu_{1\dot{\beta}}
			       \mbox{\large $($}
				    \partial_\mu m_{(2\dot{\delta})}- [a_{\mu(0)}, m_{(2\dot{\delta})}]
				   \mbox{\large $)$}   \\[.6ex]
         && \hspace{14em}				
		       + \sqrt{-1}\,\mbox{\normalsize $\sum$}_\mu
			         \sigma^\mu_{2\dot{\beta}}[a_{\mu (1\dot{\delta})}, m_{(0)}] 			
			    - \sqrt{-1}\,\mbox{\normalsize $\sum$}_\mu
				     \sigma^\mu_{1\dot{\beta}}[a_{\mu (2\dot{\delta})}, m_{(0)}]
			 \mbox{\Large $)$}\,\theta^1\theta^2\bar{\theta}^{\dot{\delta}}\,          \\[.6ex]	
    && \hspace{1.3em}
      +\, \sum_{\gamma}
             \mbox{\Large $($}
			  - [b_{\dot{\beta}(\gamma\dot{1}\dot{2})}, m_{(0)}]
			  - [b_{\dot{\beta}(\dot{2})}, m_{(\gamma\dot{1})}]
			 + [b_{\dot{\beta}(\dot{1})}, m_{(\gamma\dot{2})}]
			  - [b_{\dot{\beta}(\gamma)}, m_{(\dot{1}\dot{2})}]          \\[-1ex]
      && \hspace{5em}			
			 +\sqrt{-1}\,\mbox{\normalsize $\sum$}_\mu
			      \sigma^\mu_{\gamma\dot{\beta}}
				   \mbox{\large $($}
				     \partial_\mu m_{(\dot{1}\dot{2})} - [a_{\mu(0)}, m_{(\dot{1}\dot{2})}]
				   \mbox{\large $)$}
			  - \sqrt{-1}\,\mbox{\normalsize $\sum$}_\mu	
			      \sigma^\mu_{\gamma\dot{\beta}}[a_{\mu (\dot{1}\dot{2})}, m_{(0)}]
             \mbox{\Large $)$}\, \theta^\gamma\bar{\theta}^{\dot{1}}\bar{\theta}^{\dot{2}}\, \\
    && \hspace{1.3em}
	    +\, \mbox{\Large $($}
		      - [b_{\dot{\beta}(12\dot{1}\dot{2})}, m_{(0)}]
			  - [b_{\dot{\beta}(\dot{1}\dot{2})} , m_{(12)}]
			 + [b_{\dot{\beta}(2\dot{2})} , m_{(1\dot{1})}]
			  - [b_{\dot{\beta}(2\dot{1})}, m_{(1\dot{2})}]   \\
       && \hspace{5em}			
			  - [b_{\dot{\beta}(1\dot{2})}, m_{(2\dot{1})}]
			 + [b_{\dot{\beta}(1\dot{1})}, m_{(2\dot{2})}]
			  - [b_{\dot{\beta}(12)} , m_{(\dot{1}\dot{2})}]
			  - [b_{\dot{\beta}(0)} , m_{(12\dot{1}\dot{2})}]  \\
       && \hspace{5em}
             + \sqrt{-1}\,\mbox{\normalsize $\sum$}_\mu
                 \sigma^\mu_{1\dot{\beta}}
                   \mbox{\large $($}
                     - [a_{\mu (2\dot{1}\dot{2})}  , m_{(0)}]
					 - [a_{\mu (\dot{2})} , m_{(2\dot{1})}]
					+ [a_{\mu(\dot{1})}  , m_{(2\dot{2})}]
					 - [a_{\mu(2)},  m_{(\dot{1}\dot{2})}]
                   \mbox{\large $)$} \\	
       && \hspace{5em}
	         + \sqrt{-1}\,\mbox{\normalsize $\sum$}_\mu
                 \sigma^\mu_{2\dot{\beta}}
                   \mbox{\large $($}
                       [a_{\mu (1\dot{1}\dot{2})}  , m_{(0)}]
					+ [a_{\mu (\dot{2})} , m_{(1\dot{1})}]
					 - [a_{\mu(\dot{1})}  , m_{(1\dot{2})}]
					 + [a_{\mu(1)},  m_{(\dot{1}\dot{2})}]
                   \mbox{\large $)$}
			  \mbox{\Large $)$}\, \theta^1\theta^2\bar{\theta}^{\dot{1}}\bar{\theta}^{\dot{2}}\,,
  \end{eqnarray*}} 
  
 {\small
  \begin{eqnarray*}
   \lefteqn{ -\,\widehat{D}_{e_{\beta^{\prime\prime}}} \widehat{m}_{(\tinyodd)}\;
     =\; \widehat{D}_{\frac{\partial}{\rule{0ex}{.7em}\partial\bar{\theta}^{\dot{\beta}}}}
	             \widehat{m}_{(\tinyodd)}\,
	            +\, \sqrt{-1}\,\sum_{\mu, \alpha}\theta^\alpha  \sigma^\mu_{\alpha\dot{\beta}}
				       \widehat{D}_{\frac{\partial}{\partial x^\mu}} \widehat{m}_{(\tinyodd)}   }\\
   &&  =\; m_{(\dot{\beta})}\,
      -\, \sum_\gamma [b_{\dot{\beta}(0)}, m_{(\gamma)}]\, \theta^{\gamma}\,
	  -\, \sum_{\dot{\delta}}
		         [b_{\dot{\beta}(0)} , m_{(\dot{\delta})}]\,    \bar{\theta}^{\dot{\delta}}\\
   && \hspace{1.3em}		
	 +\, \mbox{\Large $($}
                 m_{(12\dot{\beta})}
                 + [b_{\dot{\beta}(2)} , m_{(1)}]
                  - [b_{\dot{\beta}(1)} , m_{(2)}] 				   \\
       && \hspace{5em}				
				 - \sqrt{-1}\, \mbox{\normalsize $\sum$}_\mu
				      \sigma^\mu_{2\dot{\beta}}
					   \mbox{\large $($}
					     \partial_\mu m_{(1)}- [a_{\mu(0)} , m_{(1)}]
					   \mbox{\large $)$}
                 + \sqrt{-1}\, \mbox{\normalsize $\sum$}_\mu
				      \sigma^\mu_{1\dot{\beta}}
					   \mbox{\large $($}
					     \partial_\mu m_{(2)}- [a_{\mu(0)} , m_{(2)}]
					   \mbox{\large $)$}	
		        \mbox{\Large $)$}\,\theta^1\theta^2\,   \\
   && \hspace{1.3em}
     +\, \sum_{\gamma, \dot{\delta}}
            \mbox{\Large $($}
		     (-1)^{\dot{\beta}}(1-\delta_{\dot{\beta}\dot{\delta}})\,
			        m_{\gamma\dot{1}\dot{2}}
			 + [b_{\dot{\beta}(\dot{\delta})} , m_{(\gamma)}]
			  - [b_{\dot{\beta}(\gamma)}  , m_{(\dot{\delta})}]        \\[-2ex]
       && \hspace{16em}
             + \sqrt{-1}\,\mbox{\normalsize $\sum$}_\mu
			      \sigma^\mu_{\gamma\dot{\beta}}
				    \mbox{\large $($}
                   	 \partial_\mu m_{(\dot{\delta})} - [a_{\mu(0)}, m_{(\dot{\delta})}]
					\mbox{\large $)$}
		    \mbox{\Large $)$}\, \theta^{\gamma}\bar{\theta}^{\dot{\delta}}\, \\ 	
   && \hspace{1.3em}
     +\, \mbox{\Large $($}
	           [b_{\dot{\beta}(\dot{2})}, m_{(\dot{1})}]
			 - [b_{\dot{\beta}(\dot{1})} , m_{(\dot{2})}]
           \mbox{\Large $)$}\, \bar{\theta}^{\dot{1}}\bar{\theta}^{\dot{2}}\,  	  \\
   && \hspace{1.3em}
     +\, \sum_{\dot{\delta}}
	       \mbox{\Large $($}
		     -  [b_{\dot{\beta}(2\dot{\delta})} , m_{(1)}]
			 + [b_{\dot{\beta}(1\dot{\delta})}  , m_{(2)}]
			  - [b_{\dot{\beta}(12)} , m_{(\dot{\delta})}]
			  - [b_{\dot{\beta}(0)} , m_{(12\dot{\delta})}]                 \\
       && \hspace{5em}			
	         +\sqrt{-1}\,\mbox{\normalsize $\sum$}_\mu
			     \sigma^\mu_{1\dot{\beta}}
				  \mbox{\large $($}
				    [a_{\mu(\dot{\delta})} , m_{(2)}]
				  - [a_{\mu(2)} , m_{(\dot{\delta})}]
				  \mbox{\large $)$}                                                  \\
       && \hspace{16em}				
             +\sqrt{-1}\,\mbox{\normalsize $\sum$}_\mu
			     \sigma^\mu_{2\dot{\beta}}
				  \mbox{\large $($}
				  -  [a_{\mu(\dot{\delta})} , m_{(1)}]
				  + [a_{\mu(1)} , m_{(\dot{\delta})}]
				  \mbox{\large $)$}				
           \mbox{\Large $)$}\, \theta^1\theta^2\bar{\theta}^{\dot{\delta}}\,		        \\
   && \hspace{1.3em}
     +\,\sum_{\gamma}
	       \mbox{\Large $($}
		     -  [b_{\dot{\beta}(\dot{1}\dot{2})}, m_{(\gamma)}]
			 + [b_{\dot{\beta}(\gamma\dot{2})}  , m_{(\dot{1})}]
			 -  [b_{\dot{\beta}(\gamma\dot{1})}  , m_{(\dot{2})}]
			 -  [b_{\dot{\beta}(0)}, m_{(\gamma\dot{1}\dot{2})}]     \\[-2ex]
       && \hspace{16em}
          + \sqrt{-1}\,\mbox{\normalsize $\sum$}_\mu	
		       \sigma^\mu_{\gamma\dot{\beta}}
			    \mbox{\large $($}
				   [a_{\mu(\dot{2})}, m_{(\dot{1})}]
				 - [a_{\mu(\dot{1})}, m_{(\dot{2})}]
			    \mbox{\large $)$}
           \mbox{\Large $)$}\, \theta^\gamma\bar{\theta}^{\dot{1}}\bar{\theta}^{\dot{2}}\\		
   && \hspace{1.3em}
     +\, \mbox{\Large $($}
	            [b_{\dot{\beta}(2\dot{1}\dot{2})}, m_{(1)}]
			 -  [b_{\dot{\beta}(1\dot{1}\dot{2})}, m_{(2)}]
			 + [b_{\dot{\beta}(12\dot{2})}, m_{(\dot{1})}]
			 -  [b_{\dot{\beta}(12\dot{1})}, m_{(\dot{2})}]    \\
       && \hspace{3em}
             + [b_{\dot{\beta}(\dot{2})}, m_{(12\dot{1})}]	
			 -  [b_{\dot{\beta}(\dot{1})}, m_{(12\dot{2})}]
			 + [b_{\dot{\beta}(2)}, m_{(1\dot{1}\dot{2})}]
			 -  [b_{\dot{\beta}(1)}, m_{(2\dot{1}\dot{2})}]    \\
       && \hspace{3em}
             -  \sqrt{-1}\, \mbox{\normalsize $\sum$}_\mu
			      \sigma^\mu_{2\dot{\beta}}
				   \mbox{\large $($}
				    \partial_\mu m_{(1\dot{1}\dot{2})}
					   - [a_{\mu(0)} , m_{(1\dot{1}\dot{2})}]
				   \mbox{\large $)$}
             +  \sqrt{-1}\, \mbox{\normalsize $\sum$}_\mu
			      \sigma^\mu_{1\dot{\beta}}
				   \mbox{\large $($}
				    \partial_\mu m_{(2\dot{1}\dot{2})}
					   - [a_{\mu(0)} , m_{(2\dot{1}\dot{2})}]
				   \mbox{\large $)$}	    \\
        && \hspace{3em}
		 + \sqrt{-1}\, \mbox{\normalsize $\sum$}_\mu		
		      \sigma^\mu_{2\dot{\beta}}
			   \mbox{\large $($}
			        [a_{\mu(\dot{1}\dot{2})}, m_{(1)}]
				 -  [a_{\mu(1\dot{2})}, m_{(\dot{1})}]
				 + [a_{\mu(1\dot{1})}, m_{(\dot{2})}]
			   \mbox{\large $)$}         \\
       && \hspace{10em}	
           + \sqrt{-1}\, \mbox{\normalsize $\sum$}_\mu		
		      \sigma^\mu_{1\dot{\beta}}
			   \mbox{\large $($}
			     -  [a_{\mu(\dot{1}\dot{2})}, m_{(2)}]
				 + [a_{\mu(2\dot{2})}, m_{(\dot{1})}]
				 -  [a_{\mu(2\dot{1})}, m_{(\dot{2})}]
			   \mbox{\large $)$}      			   			
	       \mbox{\Large $)$}\, \theta^1\theta^2\bar{\theta}^{\dot{1}}\bar{\theta}^{\dot{2}}\,,					   
   \end{eqnarray*}} 
\end{lemma}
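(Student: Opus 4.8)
The plan is to reduce the four identities to a direct componentwise computation, carried out exactly in the style of the second proof of Lemma~1.4.14. First I would invoke Lemma/Definition~2.2.12, which gives $\widehat{D}_{e_I}\widehat{m} = e_I\widehat{m} - [A_I,\widehat{m}\}$ for each frame vector $e_I$, together with the expressions $e_{\alpha^\prime} = \frac{\partial}{\partial\theta^\alpha} + \sqrt{-1}\sum_{\mu,\dot{\beta}}\sigma^\mu_{\alpha\dot{\beta}}\bar{\theta}^{\dot{\beta}}\frac{\partial}{\partial x^\mu}$ and $e_{\beta^{\prime\prime}} = -\frac{\partial}{\partial\bar{\theta}^{\dot{\beta}}} - \sqrt{-1}\sum_{\mu,\alpha}\theta^\alpha\sigma^\mu_{\alpha\dot{\beta}}\frac{\partial}{\partial x^\mu}$ from Section~1.4. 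Because $\widehat{D}$ is $\widehat{\cal O}_X$-linear in the ${\cal T}_{\widehat{X}}$-argument, this yields $\widehat{D}_{e_{\alpha^\prime}} = \widehat{D}_{\partial/\partial\theta^\alpha} + \sqrt{-1}\sum_{\mu,\dot{\beta}}\sigma^\mu_{\alpha\dot{\beta}}\bar{\theta}^{\dot{\beta}}\widehat{D}_{\partial/\partial x^\mu}$ and the analogous identity for $e_{\beta^{\prime\prime}}$ --- which is precisely why the lemma is phrased in those terms --- so it suffices to evaluate $\widehat{D}_{\partial/\partial\theta^\alpha}$, $\widehat{D}_{\partial/\partial\bar{\theta}^{\dot{\beta}}}$, and $\widehat{D}_{\partial/\partial x^\mu}$ on each graded piece of $\widehat{m}$ and reassemble.

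Next I would put $\widehat{m}$, and separately its even and odd parts $\widehat{m}_{(\tinyeven)}$, $\widehat{m}_{(\tinyodd)}$, into the $(\theta,\bar{\theta})$-polynomial normal form of Notation~1.2.6 with coefficient blocks $m_{(\bullet)}\in\Endsheaf_{{\cal O}_X^{\,\Bbb C}}\!({\cal E})$, using the bookkeeping block-matrix format of the second proof of Lemma~1.4.14 (carried over to $\widehat{\cal O}_X^{A\!z}$ in Sections~2.3--2.4). Then, with $a_\mu,b_\alpha,b_{\dot{\beta}}$ the coefficient blocks of the connection $1$-form fixed in the hypothesis, I would apply $\widehat{D}_{\partial/\partial\theta^\alpha}\widehat{m} = \frac{\partial}{\partial\theta^\alpha}\widehat{m} - [b_\alpha,\widehat{m}\}$, $\widehat{D}_{\partial/\partial\bar{\theta}^{\dot{\beta}}}\widehat{m} = \frac{\partial}{\partial\bar{\theta}^{\dot{\beta}}}\widehat{m} - [b_{\dot{\beta}},\widehat{m}\}$, and $\widehat{D}_{\partial/\partial x^\mu}\widehat{m} = \partial_\mu\widehat{m} - [a_\mu,\widehat{m}\}$ term by term. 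The $\partial/\partial\theta^\alpha$- and $\partial/\partial\bar{\theta}^{\dot{\beta}}$-pieces lower the $(\theta,\bar{\theta})$-degree and produce the Kronecker contractions that give the $(-1)^\alpha(1-\delta_{\alpha\gamma})$- and $(-1)^{\dot{\beta}}(1-\delta_{\dot{\beta}\dot{\delta}})$-type factors; the $\partial_\mu$-pieces produce the $\partial_\mu m_{(\bullet)}$-terms; the $[\,\cdot\,,\,\cdot\,\}$-pieces produce all the bracket terms; and the prefactor $\sqrt{-1}\sum\sigma^\mu_{\alpha\dot{\beta}}\bar{\theta}^{\dot{\beta}}$ (resp.\ $\sqrt{-1}\sum\theta^\alpha\sigma^\mu_{\alpha\dot{\beta}}$) raises the output of $\widehat{D}_{\partial/\partial x^\mu}$ by one degree, accounting for the $\sigma^\mu$-weighted summands. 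Throughout, the sign rule of Convention~1.3.5 governs the passage of the odd coordinates $\theta^\alpha,\bar{\theta}^{\dot{\beta}}$ past the parity-homogeneous blocks $m_{(\bullet)}$ and past $a_\mu,b_\alpha,b_{\dot{\beta}}$.

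Finally I would collect like terms against the ordered monomial basis $1,\ \theta^\gamma,\ \bar{\theta}^{\dot{\delta}},\ \theta^1\theta^2,\ \theta^\gamma\bar{\theta}^{\dot{\delta}},\ \bar{\theta}^{\dot{1}}\bar{\theta}^{\dot{2}},\ \theta^1\theta^2\bar{\theta}^{\dot{\delta}},\ \theta^\gamma\bar{\theta}^{\dot{1}}\bar{\theta}^{\dot{2}},\ \theta^1\theta^2\bar{\theta}^{\dot{1}}\bar{\theta}^{\dot{2}}$ of $\widehat{\cal O}_X$ and match each coefficient against the displayed formula. The four cases run in parallel; the odd-part cases differ from the even-part cases only through the extra parity-conjugation that $e_{\alpha^\prime}$ and $e_{\beta^{\prime\prime}}$ feel on odd sections, which is why $m_{(\alpha)}$ and $m_{(\dot{\beta})}$, rather than a commutator, appear as the degree-zero terms there. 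The main obstacle is purely one of bookkeeping --- keeping the $(-1)^{\bullet}$ sign-factors and parity-conjugations mutually consistent across the many dozens of monomial coefficients --- and, as the paper's remark on sign-related elementary details anticipates, the right safeguard is exactly the block-matrix format, in which any off-pattern term, sign, or labelling index stands out and provides a built-in error check. I would not expect the displayed expressions to collapse further.
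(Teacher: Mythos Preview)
Your proposal is correct and is precisely the ``straightforward computations'' the paper invokes in lieu of a written-out proof: decompose $\widehat{D}_{e_{\alpha^\prime}}$ and $\widehat{D}_{e_{\beta^{\prime\prime}}}$ via $\widehat{\cal O}_X$-linearity into the coordinate-frame covariant derivatives $\widehat{D}_{\partial/\partial\theta^\alpha}$, $\widehat{D}_{\partial/\partial\bar{\theta}^{\dot{\beta}}}$, $\widehat{D}_{\partial/\partial x^\mu}$, apply $\widehat{D}_{\partial_I}\widehat{m}=\partial_I\widehat{m}-[A_I,\widehat{m}\}$ from Lemma/Definition~2.2.12 with the given $a_\mu,b_\alpha,b_{\dot{\beta}}$, and collect monomials in the block-matrix bookkeeping of the Appendix and the second proof of Lemma~1.4.14. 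There is nothing to add.
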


\noindent
for $\alpha^\prime=1^\prime, 2^\prime$,\, $\beta^{\prime\prime}=1^{\prime\prime}, 2^{\prime\prime}$.

\bigskip
					
The main goal of this subsection is to prove the following proposition
 that generalizes, e.g., [We-B: Chap.\ V, Eqs. (5.3) \& (5.5)]  of Wess and Bagger
 to the current situation:

\bigskip

\begin{proposition} {\bf [normal form of $\widehat{D}$-chiral/$\widehat{D}$-antichiral sections
                of $\widehat{\cal O}_X^{A\!z}$]}\;
 Continuing the notations from Lemma~3.2.1.
 Recall that
  $\widehat{D}$ is the induced connection on
    $\widehat{\cal O}_X^{A\!z}:= \Endsheaf_{\widehat{\cal O}_X}(\widehat{\cal E})$
	from a SUSY-rep compatible simple hybrid connection  $\widehat{\nabla}$ on $\widehat{\cal E}$
	that is associated to a vector superfield in Wess-Zumino gauge on $\widehat{X}$.
 Then,
  in terms of the standard coordinate functions $(x,\theta,\bar{\theta})$ on $\widehat{X}$,
  a $\widehat{D}$-chiral section $\widehat{m}$ of $\widehat{\cal O}_X^{A\!z}$
  is determined by four of its components, $m_{(0)}, m_{(1)}, m_{(2)}$, and $m_{(12)}$,
  in the following form
  {\small
   \begin{eqnarray*}
    \widehat{m} & =
	  & m_{(0)}\,+\, \sum_{\gamma}m_{(\gamma)}\theta^\gamma\,
	      +\, m_{(12)}\theta^1\theta^2 \\
     && +\, \sum_{\gamma, \dot{\delta}}
	            \mbox{\Large $($}
				 - [b_{\dot{\delta}(\gamma)}, m_{(0)}]
				 +\sqrt{-1}\,\mbox{\normalsize $\sum$}_\mu
				     \sigma^\mu_{\gamma\dot{\delta}} D_{\partial_\mu} m_{(0)}
				\mbox{\Large $)$}\, \theta^\gamma\bar{\theta}^{\dot{\delta}}\,
           +\, [b_{\dot{1}(\dot{2})}, m_{(0)}]\,
	                \bar{\theta}^{\dot{1}}\bar{\theta}^{\dot{2}}\, \\
     && +\, \sum_{\dot{\delta}}		
                \mbox{\Large $($}
                 -  [b_{\dot{\delta}(2)}, m_{(1)}]
				 + [b_{\dot{\delta}(1)}, m_{(2)}]
				 + \sqrt{-1}\,\mbox{\normalsize $\sum$}_\mu
				     \sigma^\mu_{2\dot{\delta}}D_{\partial_\mu}m_{(1)}
				 - \sqrt{-1}\,\mbox{\normalsize $\sum$}_\mu	
				     \sigma^\mu_{1\dot{\delta}}D_{\partial_\mu}m_{(2)}
                \mbox{\Large $)$}\, \theta^1\theta^2\bar{\theta}^{\dot{\delta}}\, \\				
     && +\, \sum_{\gamma} [b_{\dot{1}(\dot{2})}, m_{(\gamma)}]\,
	               \theta^\gamma\bar{\theta}^{\dot{1}}\bar{\theta}^{\dot{2}}\\	
     &&	+\, \mbox{\Large $($}
	 	      -  [b_{\dot{2}(\dot{1})}, m_{(12)}]
			  -  [b_{\dot{2}(12\dot{1})} , m_{(0)}]
			  -  [b_{\dot{2}(2)}, [b_{\dot{1}(1)}, m_{(0)}]]
			  + [b_{\dot{2}(1)}  , [b_{\dot{1}(2)}, m_{(0)}]]  \\[.6ex]
       && \hspace{1.7em}
	          + \sqrt{-1}\,\mbox{\normalsize $\sum$}_\mu			
	               \sigma^\mu_{2\dot{2}}[D_{\partial_\mu}b_{\dot{1}(1)}, m_{(0)}]
			  - \sqrt{-1}\,\mbox{\normalsize $\sum$}_\mu			
	               \sigma^\mu_{1\dot{2}}[D_{\partial_\mu}b_{\dot{1}(2)}, m_{(0)}]
			  +\sqrt{-1}\,\mbox{\normalsize $\sum$}_{\mu}
                   \sigma^\mu_{1\dot{1}}[b_{\dot{2}(2)} ,  D_{\partial_\mu}m_{(0)}]  \\[.6ex]
       && \hspace{1.7em}
              -  \sqrt{-1}\,\mbox{\normalsize $\sum$}_{\mu}
                   \sigma^\mu_{1\dot{2}}[b_{\dot{1}(2)} ,  D_{\partial_\mu}m_{(0)}]
              -  \sqrt{-1}\,\mbox{\normalsize $\sum$}_{\mu}
                   \sigma^\mu_{2\dot{1}}[b_{\dot{2}(1)} ,  D_{\partial_\mu}m_{(0)}]
             + \sqrt{-1}\,\mbox{\normalsize $\sum$}_{\mu}
                   \sigma^\mu_{2\dot{2}}[b_{\dot{1}(1)} ,  D_{\partial_\mu}m_{(0)}] 	\\[.6ex]				   
       && \hspace{1.7em}
	          + \sqrt{-1}\,\mbox{\normalsize $\sum$}_{\mu}
                   \sigma^\mu_{2\dot{2}}[a_{\mu(1\dot{1})} ,  m_{(0)}]
              -  \sqrt{-1}\,\mbox{\normalsize $\sum$}_{\mu}
                   \sigma^\mu_{1\dot{2}}[a_{\mu(2\dot{1})} ,  m_{(0)}] \\[.6ex] 				   
        &&	\hspace{1.7em}
       	      -  [F^{\nabla}_{03}+\sqrt{-1}F^{\nabla}_{12}, m_{(0)}]
			  + \square^D m_{(0)}
                  \mbox{\Large $)$}\,
				  \theta^1\theta^2\bar{\theta}^{\dot{1}}\bar{\theta}^{\dot{2}}\,.	
   \end{eqnarray*}}Here, 
   \begin{itemize}
    \item[\LARGE $\cdot$]  	
     $D$ is the restriction of the connection $\widehat{D}$ to ${\cal O}_X^{A\!z}$,
	  defined by $D_{\partial_\mu}m= \partial_\mu m - [a_{\mu(0)}, m]$\\
	  for $m= m_{(\cdot)},  b_{\cdot(\cdot)}  \in {\cal O}_X^{A\!z}$,
	
	\item[\LARGE $\cdot$]
	 $\square^D
	     := D_{\partial_0}D_{\partial_0}-D_{\partial_1}D_{\partial_1}
              - D_{\partial_2}D_{\partial_2}-D_{\partial_3}D_{\partial_3}$,
			
    \item[\LARGE $\cdot$]	
     $\nabla$ is the restriction of the connection $\widehat{\nabla}$  to  ${\cal E}$,
     defined by $\nabla_{\partial_\mu}s= \partial_\mu s + sa_{\mu(0)}$, 	
	 $F^\nabla$ is the curvature $2$-tensor of $\nabla$ and
	 $F^\nabla_{\mu\nu}
	   := F^\nabla(\partial_\mu, \partial_\nu)
	    = [\nabla_{\partial_\mu}, \nabla_{\partial_\nu}]\in {\cal O}_X^{A\!z}$.
   \end{itemize}
  The four components
   $m_{(0)}$, $m_{(1)}$, $m_{(2)}$, $m_{(12)}\in {\cal O}_X^{A\!z} $
   can be arbitrary (i.e.\ not subject to any constraints),
   with $m_{(0)}$ and $m_{(12)}$ determining $\widehat{m}_{(\even)}$ and
           $m_{(1)}$ and $m_{(2)}$ determining $\widehat{m}_{(\odd)}$.
      
 Similarly,  a $\widehat{D}$-antichiral section $\widehat{m}$ of $\widehat{\cal O}_X^{A\!z}$
  is determined by four of its components,
    $m_{(0)}$, $m_{(\dot{1})}$, $m_{(\dot{2})}$, and $m_{(\dot{1}\dot{2})}$,
  in the following form
  {\small
   \begin{eqnarray*}
    \widehat{m} & =
	  & m_{(0)}\,
	      +\, \sum_{\dot{\delta}}m_{(\dot{\delta})}\bar{\theta}^{\dot{\delta}}\,
	      +\, m_{(\dot{1}\dot{2})}\bar{\theta}^{\dot{1}}\bar{\theta}^{\dot{2}} \\
     &&
     +\, [b_{1(2)}, m_{(0)}]\,\theta^1\theta^2\,
	 +\, \sum_{\gamma, \dot{\delta}}
	            \mbox{\Large $($}
				  [b_{\gamma(\dot{\delta})}, m_{(0)}]
				 - \sqrt{-1}\,\mbox{\normalsize $\sum$}_\mu
				     \sigma^\mu_{\gamma\dot{\delta}} D_{\partial_\mu} m_{(0)}
				\mbox{\Large $)$}\, \theta^\gamma\bar{\theta}^{\dot{\delta}}\,       \\
     && +\, \sum_{\dot{\delta}} [b_{1(2)}, m_{(\dot{\delta})}]\,
	               \theta^1\theta^2\bar{\theta}^{\dot{\delta}}\\					
     && +\, \sum_{\gamma}		
                \mbox{\Large $($}
                 -  [b_{\gamma(\dot{2})}, m_{(\dot{1})}]
				 + [b_{\gamma(\dot{1})}, m_{(\dot{2})}]
				 + \sqrt{-1}\,\mbox{\normalsize $\sum$}_\mu
				     \sigma^\mu_{\gamma\dot{2}}D_{\partial_\mu}m_{(\dot{1})}
				 - \sqrt{-1}\,\mbox{\normalsize $\sum$}_\mu	
				     \sigma^\mu_{\gamma\dot{1}}D_{\partial_\mu}m_{(\dot{2})}
                \mbox{\Large $)$}\,
				\theta^\gamma\bar{\theta}^{\dot{1}}\bar{\theta}^{\dot{2}}\, \\
     &&	+\, \mbox{\Large $($}
	 	      -  [b_{2(1)}, m_{(\dot{1}\dot{2})}]
			  -  [b_{2(1\dot{1}\dot{2})} , m_{(0)}]
			  -  [b_{2(\dot{2})}, [b_{1(\dot{1})}, m_{(0)}]]
			  + [b_{2(\dot{1})}  , [b_{1(\dot{2})}, m_{(0)}]]  \\[.6ex]
       && \hspace{1.7em}
	          + \sqrt{-1}\,\mbox{\normalsize $\sum$}_\mu			
	               \sigma^\mu_{2\dot{2}}[D_{\partial_\mu}b_{1(\dot{1})}, m_{(0)}]
			  - \sqrt{-1}\,\mbox{\normalsize $\sum$}_\mu			
	               \sigma^\mu_{2\dot{1}}[D_{\partial_\mu}b_{1(\dot{2})}, m_{(0)}]
			  +\sqrt{-1}\,\mbox{\normalsize $\sum$}_{\mu}
                   \sigma^\mu_{1\dot{1}}[b_{2(\dot{2})} ,  D_{\partial_\mu}m_{(0)}]  \\[.6ex]
       && \hspace{1.7em}
              -  \sqrt{-1}\,\mbox{\normalsize $\sum$}_{\mu}
                   \sigma^\mu_{2\dot{1}}[b_{1(\dot{2})} ,  D_{\partial_\mu}m_{(0)}]
              -  \sqrt{-1}\,\mbox{\normalsize $\sum$}_{\mu}
                   \sigma^\mu_{1\dot{2}}[b_{2(\dot{1})} ,  D_{\partial_\mu}m_{(0)}]
             + \sqrt{-1}\,\mbox{\normalsize $\sum$}_{\mu}
                   \sigma^\mu_{2\dot{2}}[b_{1(\dot{1})} ,  D_{\partial_\mu}m_{(0)}] 	\\[.6ex]
       && \hspace{1.7em}
	          -  \sqrt{-1}\,\mbox{\normalsize $\sum$}_{\mu}
                   \sigma^\mu_{2\dot{2}}[a_{\mu(1\dot{1})} ,  m_{(0)}]
              + \sqrt{-1}\,\mbox{\normalsize $\sum$}_{\mu}
                   \sigma^\mu_{2\dot{1}}[a_{\mu(1\dot{2})} ,  m_{(0)}] \\[.6ex] 				   
        &&	\hspace{1.7em}
       	      -  [F^{\nabla}_{03}- \sqrt{-1}F^{\nabla}_{12}, m_{(0)}]
			  + \square^D m_{(0)}
                  \mbox{\Large $)$}\,
				  \theta^1\theta^2\bar{\theta}^{\dot{1}}\bar{\theta}^{\dot{2}}\,.	
   \end{eqnarray*}}Here, 
 the four components
   $m_{(0)}, m_{(\dot{1})}, m_{(\dot{2})}, m_{(\dot{1}\dot{2})}
      \in \widehat{\cal O}_X^{A\!z} $
   can be arbitrary,
   with $m_{(0)}$ and $m_{(\dot{1}\dot{2})}$ determining $\widehat{m}_{(\even)}$
      and $m_{(\dot{1})}$ and $m_{(\dot{2})}$ determining $\widehat{m}_{(\odd)}$.
\end{proposition}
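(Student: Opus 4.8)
The plan is to impose the defining $\widehat{D}$-chirality (resp.\ $\widehat{D}$-antichirality) equations directly on the ansatz $\widehat{m}=\sum_{\bullet}m_{(\bullet)}\cdot(\theta,\bar{\theta})^{\bullet}$ written in the reference trivialization of $\widehat{\cal E}$, and to solve recursively in $(\theta,\bar{\theta})$-degree for every component $m_{(\bullet)}$ in terms of a minimal subset, reading the brackets off the explicit expansions of Lemma~3.2.1. First I would reduce to the parity-homogeneous case: by Definition/Lemma~3.1.3, $\widehat{m}$ is $\widehat{D}$-chiral iff $\widehat{D}_{e_{\beta^{\prime\prime}}}\widehat{m}_{(\even)}=\widehat{D}_{e_{\beta^{\prime\prime}}}\widehat{m}_{(\odd)}=0$ for $\beta^{\prime\prime}=1^{\prime\prime},2^{\prime\prime}$, and $\widehat{D}$-antichiral iff $\widehat{D}_{e_{\alpha^\prime}}\widehat{m}_{(\even)}=\widehat{D}_{e_{\alpha^\prime}}\widehat{m}_{(\odd)}=0$ for $\alpha^\prime=1^\prime,2^\prime$, so it suffices to treat $\widehat{m}_{(\even)}$ and $\widehat{m}_{(\odd)}$ separately; this is what will split the four free components into the two determining $\widehat{m}_{(\even)}$ and the two determining $\widehat{m}_{(\odd)}$. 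In the chiral case I would note that SUSY-rep compatibility gives $A_{1^{\prime\prime}}=A_{2^{\prime\prime}}=0$, hence $\widehat{D}_{e_{\beta^{\prime\prime}}}={}^{\varsigma}\widehat{D}_{e_{\beta^{\prime\prime}}}=e_{\beta^{\prime\prime}}$, so the $\widehat{D}$-chirality equations coincide with the $d$-chirality equations $e_{1^{\prime\prime}}\widehat{m}=e_{2^{\prime\prime}}\widehat{m}=0$; one may then apply Lemma~1.4.14 entrywise and covariantize, or proceed uniformly as below. For the antichiral case no such collapse occurs and $\widehat{D}_{e_{\alpha^\prime}}$ must be kept in the form of Lemma~2.3.6; alternatively the characterization in Lemma~3.1.6, $\widehat{\cal O}_X^{A\!z,\scriptsizewidehatDach}=\bigl(e^{-V}\widehat{\cal O}_X^{A\!z,\scriptsizedach}e^V\cap\widehat{\cal O}_X^{A\!z,(\even)}\bigr)\oplus\bigl(e^{-V}\widehat{\cal O}_X^{A\!z,\scriptsizedach}\,{}^{\varsigma}e^V\cap\widehat{\cal O}_X^{A\!z,(\odd)}\bigr)$, could be used to transport the matrix version of Lemma~1.4.18 through conjugation by $e^{\pm V}$.

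The core computation is then to substitute the ansatz into the four expansions of Lemma~3.2.1 and set the coefficient of every monomial in $(\theta,\bar{\theta})$ to zero. The constant and $\theta$-/$\bar{\theta}$-linear equations express $m_{(\dot{\delta})},m_{(\gamma\dot{\delta})}$ (resp.\ $m_{(\gamma)},m_{(\gamma\dot{\delta})}$) through commutators and covariant derivatives of $m_{(0)},m_{(\gamma)},m_{(12)}$ (resp.\ $m_{(0)},m_{(\dot{\delta})},m_{(\dot{1}\dot{2})}$) with the connection-$1$-form components $a_\mu,b_\alpha,b_{\dot{\beta}}$; substituting these upward into the higher-degree coefficient equations pins down $m_{(12\dot{\delta})},m_{(\gamma\dot{1}\dot{2})},m_{(12\dot{1}\dot{2})}$. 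Along the way one uses the identities from Section~2.3, namely $a_\mu=A_\mu$ and $b_{\dot{\beta}}=-\sqrt{-1}\sum_{\mu,\alpha}\sigma^\mu_{\alpha\dot{\beta}}\theta^\alpha A_\mu$ together with the Wess--Zumino-gauge form of the $A_I$, to recognize the particular entries $b_{\dot{\delta}(\gamma)},a_{\mu(\bullet)}$ occurring, to rewrite $\partial_\mu m-[a_{\mu(0)},m]=D_{\partial_\mu}m$ and $[\nabla_{\partial_\mu},\nabla_{\partial_\nu}]=F^\nabla_{\mu\nu}$, and to see that the second-derivative terms in the $\theta^1\theta^2\bar{\theta}^{\dot{1}}\bar{\theta}^{\dot{2}}$ coefficient assemble into $\square^D m_{(0)}$ plus the curvature term $-[F^\nabla_{03}\pm\sqrt{-1}F^\nabla_{12},m_{(0)}]$. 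Finally, to prove that $m_{(0)},m_{(1)},m_{(2)},m_{(12)}$ (resp.\ $m_{(0)},m_{(\dot{1})},m_{(\dot{2})},m_{(\dot{1}\dot{2})}$) are arbitrary and that every such choice yields a $\widehat{D}$-chiral (resp.\ $\widehat{D}$-antichiral) section, I would check, exactly as in the Second Proof of Lemma~1.4.14, that the coefficient equations not used to define the remaining components are redundant, i.e.\ automatically satisfied once the normal form holds; Definition/Lemma~3.1.3 then guarantees the general (non-homogeneous) case.

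The main obstacle will be the bookkeeping in this core step: faithfully reorganizing the raw coefficients of Lemma~3.2.1 — which are nested bracket expressions in $a_\mu,b_\alpha,b_{\dot{\beta}},m_{(\bullet)}$ — into the stated covariant form, tracking all sign factors and parity conjugations, and in particular verifying that the double-derivative part of the top component collapses precisely to $\square^D m_{(0)}-[F^\nabla_{03}\pm\sqrt{-1}F^\nabla_{12},m_{(0)}]$ rather than some other combination of $F^\nabla_{\mu\nu}$'s. The remaining ingredients (the parity reduction, the $A_{1^{\prime\prime}}=A_{2^{\prime\prime}}=0$ observation, and the redundancy check) are routine given Lemma~2.3.6, Lemma~3.1.6, and the argument already used for Lemma~1.4.14.
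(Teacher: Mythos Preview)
Your proposal is correct and follows essentially the same approach as the paper: reduce to parity-homogeneous parts, substitute into the explicit expansions of Lemma~3.2.1, and solve the resulting coefficient equations to express all components in terms of the four free ones. The one place where the paper is sharper than your outline is the redundancy check. You propose to verify directly, ``exactly as in the Second Proof of Lemma~1.4.14,'' that the leftover equations are automatically satisfied; but in the antichiral case there are 20 such equations, and the paper explicitly flags this as the step where the Wess--Zumino gauge hypothesis becomes essential. Rather than verifying them one by one, the paper invokes the characterization of Lemma~3.1.6 to write any $\widehat{D}$-antichiral $\widehat{m}$ as $(e^{-V}\widehat{n}_1e^V)_{(\even)}+(e^{-V}\widehat{n}_2\,{}^{\varsigma}e^V)_{(\odd)}$ with $\widehat{n}_1,\widehat{n}_2$ $d$-antichiral, observes from the vanishing of $V_{(0)},V_{(\dot{1})},V_{(\dot{2})},V_{(\dot{1}\dot{2})}$ that $\widehat{m}_{(0)}=\widehat{n}_{1,(0)}$, $\widehat{m}_{(\dot{1}\dot{2})}=\widehat{n}_{1,(\dot{1}\dot{2})}$, $\widehat{m}_{(\dot{\beta})}=\widehat{n}_{2,(\dot{\beta})}$, and concludes that since these $\widehat{n}$-components are unconstrained, so are the $\widehat{m}$-components --- hence the 20 constraint equations must be vacuous. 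You do list Lemma~3.1.6 among your ingredients, so you have the right tool at hand; just be aware that the paper uses it in this specific way to \emph{replace} the direct redundancy verification rather than merely to supplement it.
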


\medskip

\begin{proof}
 To be specific, consider  $\widehat{D}$-antichiral sections.
 For
  {\small
  \begin{eqnarray*}
     \widehat{m}
	  & = &  m_{(0)}\,
	               +\, \sum_{\gamma}m_{(\gamma)}\theta^\gamma\,
		    +\, \sum_{\dot{\delta}} m_{(\dot{\delta})}\bar{\theta}^{\dot{\delta}}\,
			+\, m_{(12)}\theta^1\theta^2\,
			+\, \sum_{\gamma,\dot{\delta}}
			      m_{(\gamma\dot{\delta})}\theta^\gamma\bar{\theta}^{\dot{\delta}}\,
		    +\, m_{(\dot{1}\dot{2})}\bar{\theta}^{\dot{1}}\bar{\theta}^{\dot{2}}\,\\
     &&	\hspace{2.4em}
	      +\, \sum_{\dot{\delta}}
			       m_{(12\dot{\delta})}\theta^1\theta^2\bar{\theta}^{\dot{\delta}}\,
			+\, \sum_\gamma
			       m_{(\gamma\dot{1}\dot{2})}
				   \theta^\gamma\bar{\theta}^{\dot{1}}\bar{\theta}^{\dot{2}}\,
			+\, m_{(12\dot{1}\dot{2})}
			      \theta^1\theta^2\bar{\theta}^{\dot{1}}\bar{\theta}^{\dot{2}}\;\;\;	
	 \in\; \widehat{\cal O}_X^{A\!z}\,,
  \end{eqnarray*}}the 
   $\widehat{D}$-antichiral conditions
   $$
     \widehat{D}_{e_{1^{\prime}}}\widehat{m}_{(\even)}\;
     =\; \widehat{D}_{e_{1^{\prime}}}\widehat{m}_{(\odd)}\;
     =\; \widehat{D}_{e_{2^{\prime}}}\widehat{m}_{(\even)}\;
     =\; \widehat{D}_{e_{2^{\prime}}}\widehat{m}_{(\odd)}\; =\; 0
   $$
   together  give $8\times 4= 32$ equations from Lemma~3.2.1 on the $16$ components
   $m_{(0)}$, $m_{(1)}$, $m_{(2)}$, $m_{(\dot{1})}$, $m_{(\dot{2})}$,
   $m_{(12)}$,
   $m_{(1\dot{1})}$, $m_{(1\dot{2})}$, $m_{(2\dot{1})}$, $m_{(2\dot{2})}$,
   $m_{(\dot{1}\dot{2})}$,
   $m_{(12\dot{1})}$, $m_{(12\dot{2})}$, $m_{(1\dot{1}\dot{2})}$,
   $m_{(2\dot{1}\dot{2})}$, $m_{(12\dot{1}\dot{2})}$
   of $\widehat{m}$.\;
  $12$ of them
  {\small
  $$
   \begin{array}{c}
    m_{(1)}\;=\; 0\,,\hspace{3em}
    m_{(2)}\;=\; 0\,,\hspace{3em}
    m_{(12)} - [b_{1(2)}, m_{(0)}]\; =\; 0\,, \\[1.6ex]
    m_{(\gamma\dot{\delta})}
	   -  [b_{\gamma(\dot{\delta})}, m_{(0)}]
	   + \sqrt{-1}\,\mbox{\normalsize $\sum$}_\mu
				     \sigma^\mu_{\gamma\dot{\delta}} D_{\partial_\mu} m_{(0)}\; =\; 0\,, \\[2ex]
    m_{(12\dot{\delta})}
	   -  [b_{1(2)}, m_{(\dot{\delta})}] + [b_{1(\dot{\delta})}, m_{(2)}]
	   - \sqrt{-1}\,\sum_\mu\sigma^\mu_{1\dot{\delta}}D_{\partial_\mu}m_{(2)}\;
	   =\; 0\,,  \\[2ex]
    m_{(\gamma\dot{1}\dot{2})}\; 	 	
	   + [b_{\gamma(\dot{2})}, m_{(\dot{1})}]
	    - [b_{\gamma(\dot{1})}, m_{(\dot{2})}]
		- \sqrt{-1}\,\mbox{\normalsize $\sum$}_\mu
				     \sigma^\mu_{\gamma\dot{2}}D_{\partial_\mu}m_{(\dot{1})}
	   + \sqrt{-1}\,\mbox{\normalsize $\sum$}_\mu	
				     \sigma^\mu_{\gamma\dot{1}}D_{\partial_\mu}m_{(\dot{2})}\;
		=\; 0\,,   \\[2ex]
    - m_{(12\dot{1}\dot{2})}\;
	   -  [b_{2(1\dot{1}\dot{2})}, m_{(0)}]
	   -  [b_{2(\dot{2})}, m_{(1\dot{1})}]
	   + [b_{2(\dot{1})}, m_{(1\dot{2})}]
	   -  [b_{2(1)}, m_{(\dot{1}\dot{2})}]
	   + \sqrt{-1}\,\sum_\mu \sigma^\mu_{2\dot{2}}D_{\partial_\mu}m_{(1\dot{1})}
	                 \hspace{2em}\\[1.2ex]
        \hspace{6em}					
	   -  \sqrt{-1}\,\sum_\mu \sigma^\mu_{2\dot{1}}D_{\partial_\mu}m_{(1\dot{2})}
	   -  \sqrt{-1}\,\sum_\mu\sigma^\mu_{2\dot{2}}[a_{\mu(1\dot{1})} , m_{(0)}]
	   + \sqrt{-1}\,\sum_\mu\sigma^\mu_{2\dot{1}}[a_{\mu(1\dot{2})} , m_{(0)}] \;
     	=\; 0
   \end{array}
  $$}are 
  used to solve
    $m_{(1)}$, $m_{(2)}$,
   $m_{(12)}$,
   $m_{(1\dot{1})}$, $m_{(1\dot{2})}$, $m_{(2\dot{1})}$, $m_{(2\dot{2})}$,
   $m_{(12\dot{1})}$, $m_{(12\dot{2})}$, $m_{(1\dot{1}\dot{2})}$,
   $m_{(2\dot{1}\dot{2})}$, $m_{(12\dot{1}\dot{2})}$
   in terms of $m_{(0)}$, $m_{(\dot{1})}$, $m_{\dot{2}}$, $m_{(\dot{1}\dot{2})}$.
 The computation is straightforward and the result is given in the statement of the proposition.
 The remaining $20$ equations give a system of formal constraints on
   $m_{(1)}$, $m_{(\dot{1})}$, $m_{(\dot{2})}$, $m_{(\dot{1}\dot{2})}$.
 We need to show that this system of constraints are in fact redundant.
  
 This is the place we have to use the fact that $\widehat{D}$ is the induced connection on
    $\widehat{\cal O}_X^{A\!z}:= \Endsheaf_{\widehat{\cal O}_X}(\widehat{\cal E})$
	from a SUSY-rep compatible simple hybrid connection  $\widehat{\nabla}$ on $\widehat{\cal E}$
	that is associated to a vector superfield in Wess-Zumino gauge
	 $$
	   V\;=\; \sum_{\alpha, \dot{\beta}}V_{(\alpha\dot{\beta})}
	                 \theta^\alpha\bar{\theta}^{\dot{\beta}}\,
				+\, \sum_{\dot{\beta}}	 V_{(12\dot{\beta})}
				          \theta^1\theta^2\bar{\theta}^{\dot{\beta}}\,
			    +\, \sum_{\alpha}V_{(\alpha\dot{1}\dot{2})}
				          \theta^\alpha\bar{\theta}^{\dot{1}}\bar{\theta}^{\dot{2}}\,
			    +\, V_{(12\dot{1}\dot{2})}
				          \theta^1\theta^2\bar{\theta}^{\dot{1}}\bar{\theta}^{\dot{2}}
					 \;\; \in\;\widehat{\cal O}_X^{A\!z}
	 $$
	 on $\widehat{X}$.	
 Being so, Lemma~3.1.6 says that
  $$									
    \widehat{\cal O}_X^{A\!z,\scriptsizewidehatDach}\;
	   =\;  \mbox{\Large $($}
	               (e^{-V}\widehat{\cal O}_X^{A\!z, \scriptsizedach}\, e^V)
                    \cap \widehat{\cal O}_X^{A\!z, (\even)}	
             \mbox{\Large $)$}\,
		    \oplus\,
		     \mbox{\Large $($}
	               (e^{-V}\widehat{\cal O}_X^{A\!z, \scriptsizedach}\, \,\!^\varsigma\!e^V)
                    \cap \widehat{\cal O}_X^{A\!z, (\odd)}	
             \mbox{\Large $)$}\,.
  $$			
  I.e.\ $\widehat{m}$ is $\widehat{D}$-antichiral if and only if
  there exist $d$-antichiral sections
   $\widehat{n}_1,\, \widehat{n}_2 \in\; \widehat{\cal O}_X^{A\!z, \scriptsizedach}$
   such that
   $$
     \widehat{m}\;
	   =\; (e^{-V}\widehat{n}_1 e^V)_{(\even)}
	            + (e^{-V}\widehat{n}_2 \,\,\!^\varsigma\!e^V)_{(\odd)}\,.
   $$
  From the vanishing of the components
    $V_{(0)}$, $V_{(\dot{1})}$, $V_{(\dot{2})}$,
	$V_{(\dot{1}\dot{2})}$ of $V$,
   a direct computation shows that
   $$
    \widehat{m}_{(0)}\;=\; \widehat{n}_{1,(0)}\,, \hspace{2em}
	\widehat{m}_{(\dot{1}\dot{2})}\;
	    =\; \widehat{n}_{1,(\dot{1}\dot{2})}\,, \hspace{2em}
	\widehat{m}_{(\dot{1})}\;=\; \widehat{n}_{2,(\dot{1})}\,, \hspace{2em}
	\widehat{m}_{(\dot{2})}\;=\; \widehat{n}_{2,(\dot{2})}\,.
   $$
 Since
   $\widehat{n}_{1, (0)}$, $\widehat{n}_{1, (\dot{1}\dot{2})}$,
   $\widehat{n}_{2, (\dot{1})}$,
   $\widehat{n}_{2, (\dot{\dot{2}})}\in {\cal O}_X^{A\!z}$
   can be arbitrary,
  one concludes that
   $\widehat{m}_{(0)}$,
   $\widehat{m}_{(\dot{1}\dot{2})}$,
   $\widehat{m}_{(\dot{1})}$,
   $\widehat{m}_{(\dot{2})}$ must be allowed to be arbitrary as well.
  It follows that
   the system of 20 constraint equations on
     $\widehat{m}_{(0)}$, $\widehat{m}_{(\dot{1}\dot{2})}$,
     $\widehat{m}_{(\dot{1})}$, and $\widehat{m}_{(\dot{2})}$
	 must be redundantly satisfied.
       
  Similar arguments justify the statement for $\widehat{D}$-chiral sections in the proposition.
  
  This completes the proof.
  
\end{proof}

%
%

\bigskip

\section{$\widehat{D}$-chiral map from
         $(\widehat{X}^{\!A\!z},\widehat{\cal E};\widehat{\nabla})$ to a complex manifold}

With all the preparations in the previous sections, we are finally ready
  to introduce the notion of\; `{\it $\widehat{D}$-chiral maps}'\;
  from $(\widehat{X}^{\!A\!z},\widehat{\cal E};\widehat{\nabla})$ to a complex manifold.
This is what we will take to model fermionic D3-branes moving in a target space(-time), cf.\ Sec.~5.
We proceed in three steps.

\bigskip
 
\begin{flushleft}
{\bf Step 1: Smooth map from $(\widehat{X}^{\!A\!z},\widehat{\cal E})$ to a real manifold}
\end{flushleft}
The notion of
  smooth maps from Azumaya/matrix supermanifolds with a fundamental module
  to  a real super $C^{\infty}$-manifold and
 basic facts concerning such maps
 are given in [L-Y9] (D(11.4.1)).
They apply to smooth maps
  from a $d=4$, $N=1$ Azumaya/matrix superspace $(\widehat{X}, \widehat{\cal E})$	
  to a real manifold $Y$
 as a special case.
Some re-statements are given below.
 
\bigskip

\begin{sdefinition} {\bf [smooth map to real manifold]}\; {\rm
 Let $Y$ be a smooth manifold, whose structure sheaf of smooth functions is denoted by ${\cal O}_Y$.
 A {\it smooth map} (or synonymously {\it $C^\infty$-map})
  $\widehat{\varphi}: (\widehat{X}^{\!A\!z}, \widehat{\cal E})\rightarrow Y$
  is defined contravariantly by a ring-homomorphism
  $$
    \widehat{\varphi}^\sharp\; :\; C^{\infty}(Y)\; \longrightarrow\;
    C^{\infty}(\widehat{X}^{\!A\!z})
	  := C^{\infty}(\End_{\widehat{\Bbb C}}(\widehat{E}))
  $$
  over ${\Bbb R}\hookrightarrow {\Bbb C}$.
 Equivalently, in terms of structure sheaves, $\widehat{\varphi}$ is defined contravariantly by
  an equivalence class of gluing systems of ring-homomorhisms (cf.\ [L-Y1] (D(1)))
  $$
     {\cal O}_Y\;  \longrightarrow\;
	    \widehat{\cal O}_X^{A\!z}:= \Endsheaf_{\widehat{\cal O}_X}\!(\widehat{\cal E})
  $$
  over ${\Bbb R}\hookrightarrow {\Bbb C}$.
  Such equivalence class is also denoted by $\widehat{\varphi}^\sharp$.
}\end{sdefinition}

\bigskip

As a consequence of
  the Malgrange Division Theorem ([Mal]; see also [Br] and more references in [L-Y9] (D(11.4.1))
  for germs of smooth functions,
 $\widehat{\varphi}^\sharp$  is compatible with the $C^\infty$-ring structure on ${\cal O}_Y$
  and the partial $C^\infty$-ring structure on $\widehat{\cal O}_X^{A\!z}$
  (cf.\ [L-Y9: Theorem 2.1.5] (D(11.4.1))):

\bigskip

\begin{stheorem} {\bf [compatibility with partial $C^{\infty}$-ring structure]}
 For any $f_1,\,\cdots\,, f_l\in {\cal O}_Y$ and $h\in C^{\infty}({\Bbb R}^l)$,
  the collection $\{\widehat{\varphi}^\sharp(f_1),\,\cdots\,,\, \widehat{\varphi}^\sharp(f_l)\}$
   lies in the weak $C^\infty$-hull of $\widehat{\cal O}_X^{A\!z}$ with
 $$
   \widehat{\varphi}^\sharp (h(f_1,\,\cdots\,,\, h_l))\;
    =\;  h(\widehat{\varphi}^\sharp(f_1),\,\cdots\,,\, \widehat{\varphi}^\sharp(f_l))\,.
 $$
\end{stheorem}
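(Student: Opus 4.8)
The plan is to deduce both assertions from the Malgrange Division Theorem along the lines of the proof of Lemma~3.1.4, combined with the structural description of a ring-homomorphism out of $C^\infty(Y)$ that underlies [L-Y9: Theorem 2.1.5] (D(11.4.1)). First I would check that $\{\widehat{\varphi}^\sharp(f_1),\,\cdots\,,\widehat{\varphi}^\sharp(f_l)\}$ lies in the weak $C^\infty$-hull of $\widehat{\cal O}_X^{A\!z}$ in the sense of Theorem~2.4.1, i.e.\ satisfies the Commutativity and Realness conditions. Commutativity is immediate since $\widehat{\varphi}^\sharp$ is a ring-homomorphism and $C^\infty(Y)$ is commutative: $\widehat{\varphi}^\sharp(f_i)\widehat{\varphi}^\sharp(f_j)=\widehat{\varphi}^\sharp(f_if_j)=\widehat{\varphi}^\sharp(f_j)\widehat{\varphi}^\sharp(f_i)$. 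For Realness, fix $p\in X$ and let $m_{i,(0)}(p)\in\End_{\Bbb C}(E|_p)\simeq M_{r\times r}({\Bbb C})$ be the image of $\widehat{\varphi}^\sharp(f_i)$ under the ring-epimorphism $\widehat{\cal O}_X^{A\!z}\rightaarrow\Endsheaf_{{\cal O}_X^{\,\Bbb C}}({\cal E})$ (killing $\widehat{\frak m}$) followed by restriction to the fibre over $p$. If $\lambda=a+b\sqrt{-1}$ with $a,b\in{\Bbb R}$ and $b\ne 0$, then $(f_i-a)^2+b^2\ge b^2>0$ on $Y$, hence is a unit of $C^\infty(Y)$, so its image $(m_{i,(0)}(p)-aI)(m_{i,(0)}(p)-\bar{\lambda}I)=(m_{i,(0)}(p)-aI)^2+b^2I$ is invertible in $M_{r\times r}({\Bbb C})$; thus $\lambda$ is not an eigenvalue of $m_{i,(0)}(p)$. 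Hence every eigenvalue of $m_{i,(0)}(p)$ is real, and Theorem~2.4.1 applies, so $h(\widehat{\varphi}^\sharp(f_1),\,\cdots\,,\widehat{\varphi}^\sharp(f_l))$ is well-defined.

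For the identity I would work stalkwise over $X$ (both sides are sections of a sheaf on $X$), so fix $p\in X$ and set $\widehat{\boldm}=(\widehat{\varphi}^\sharp(f_1),\,\cdots\,,\widehat{\varphi}^\sharp(f_l))$. The decisive input is that, over a neighbourhood of $p$, $\widehat{\varphi}^\sharp$ factors through the function ring of a $C^\infty$-subscheme $\widehat{\Gamma}\hookrightarrow\widehat{X}\times Y$ finite over $\widehat{X}$ --- the ``$\widehat{\varphi}$-twisted graph'' of [L-Y1] (D(1)) --- which under $\mathbf f=(f_1,\,\cdots\,,f_l)$ maps into a subscheme of $\widehat{X}\times{\Bbb R}^l$ that, to the order relevant for $\widehat{\cal O}_X^{A\!z}$, is cut out by the characteristic polynomials $\chi_i(t^i)\in C^\infty(X)[t^i]$ of the $m_{i,(0)}$; Realness and Commutativity guarantee that the $\chi_i$ have coefficients in $C^\infty(X)$ and that $\chi_i(\widehat{\varphi}^\sharp(f_i))\in\widehat{\frak m}\cdot\widehat{\cal O}_X^{A\!z}$, hence is nilpotent with $\chi_i(\widehat{\varphi}^\sharp(f_i))^5=0$ (as $\widehat{\frak m}^{\,5}=0$). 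Pulling $h$ back to $\widehat{X}\times{\Bbb R}^l$ and applying the Malgrange Division Theorem repeatedly, exactly as in the proof of Lemma~3.1.4 --- now dividing by sufficiently high powers of the $\chi_i(t^i)$ --- I would write, over $\widehat{U}\times V$ with $\widehat{U}\ni p$,
\[
  h|_{\widehat{U}\times V}\;=\;\widehat{f}_0\,+\,\widehat{f}_1\,,
\]
with $\widehat{f}_0=\sum_{\scriptsizeboldd}a_{\scriptsizeboldd}\,\boldt^{\scriptsizeboldd}\in C^\infty(U)[t^1|_V,\,\cdots\,,t^l|_V]$, and $\widehat{f}_1$ in the square of the defining ideal, arranged so that $\widehat{f}_1$ and all its derivations vanish under $\scriptsizeboldt\rightsquigarrow\widehat{\scriptsizeboldm}$.

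Then two evaluations of $\widehat{f}_0$ must be reconciled. On the one hand, by the definition of $h(\widehat{\boldm})$ via Theorem~2.4.1 (together with the independence-of-division statement there), $h(\widehat{\boldm})|_{\widehat{U}}=\widehat{f}_0|_{\scriptsizeboldt\rightsquigarrow\widehat{\scriptsizeboldm}}=\sum_{\scriptsizeboldd}a_{\scriptsizeboldd}\,\widehat{\boldm}^{\scriptsizeboldd}$. On the other hand, since $\widehat{f}_0$ and $h$ agree modulo the ideal defining $\widehat{\Gamma}$ and $\widehat{\varphi}^\sharp$ annihilates that ideal (this is where $\chi_i(\widehat{\varphi}^\sharp(f_i))^5=0$ is used), applying $\widehat{\varphi}^\sharp$ to $h(f_1,\,\cdots\,,f_l)=h\circ\mathbf f$ yields the same sum $\sum_{\scriptsizeboldd}a_{\scriptsizeboldd}\,\widehat{\boldm}^{\scriptsizeboldd}$; equivalently, $\widehat{f}_0$ simultaneously encodes the Taylor expansion of $h$ at the (real) points $\mathbf f(q)$, $q$ ranging over the support of $\widehat{\Gamma}$ above $p$, which are precisely the joint eigenvalues of $m_{1,(0)}(p),\,\cdots\,,m_{l,(0)}(p)$. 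Hence $\widehat{\varphi}^\sharp(h(f_1,\,\cdots\,,f_l))=h(\widehat{\varphi}^\sharp(f_1),\,\cdots\,,\widehat{\varphi}^\sharp(f_l))$ over each $\widehat{U}$, and these glue to the asserted global identity.

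The main obstacle I expect is exactly this last reconciliation: making the ``$\widehat{\varphi}$-twisted graph'' factorization of $\widehat{\varphi}^\sharp$ rigorous in the super $C^\infty$-setting, and verifying that the Malgrange remainder $\widehat{f}_1$ together with the $C^\infty(X)$-valued coefficients $a_{\scriptsizeboldd}$ are compatible with both the target side (functions on $Y$) and the source side (sections over $X$). This is the substance of [L-Y9: Theorem 2.1.5] (and, for the purely even precursors, [L-Y5: Theorem 3.1.1] (D(11.3.1)) and [L-Y8: Lemma 2.1.6] (D(13.3))); by contrast the Realness argument and the bookkeeping of nilpotent remainders are routine and parallel the proof of Lemma~3.1.4 verbatim.
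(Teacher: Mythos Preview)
Your proposal is correct and follows essentially the same approach as the paper, which does not give a self-contained proof here but presents Theorem~4.2 as a consequence of the Malgrange Division Theorem, citing [L-Y9: Theorem~2.1.5] (D(11.4.1)); your sketch supplies precisely the details behind that citation, including the realness-via-units argument and the Malgrange division scheme paralleling the proof of Lemma~3.1.4. (Minor slip: in your realness step the factorization should read $(m_{i,(0)}(p)-\lambda I)(m_{i,(0)}(p)-\bar{\lambda}I)=(m_{i,(0)}(p)-aI)^2+b^2I$, not with $a$ in the first factor, but the conclusion is unaffected.)
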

  
\bigskip

A special case of [L-Y9: Theorem 2.1.8] (D(11.4.1)) says that

\bigskip

\begin{stheorem} {\bf [smooth map to ${\Bbb R}^n$]}\;
 Let $Y={\Bbb R}^n$ as a smooth manifold, with coordinate functions $(y^1,\,\cdots\,,\, y^n)$.
 Then
  any specification
   $$
    \widehat{\eta}\;:\;
      y^i\; \longmapsto\;
	  \widehat{m}_i  \in C^{\infty}(\End_{\widehat{\Bbb C}}(\widehat{E}))\,,
	   \hspace{1em}i=1,\,\ldots\,,\, n\,,
   $$
  such that the collection $\{\widehat{m}_1,\,\cdots\,,\, \widehat{m}_n\}$
   lies in the weak $C^\infty$-hull of $C^{\infty}(\End_{\widehat{\Bbb C}}(\widehat{E}))$
  extends to a ring-homomorphism
   $$
     \widehat{\varphi}^\sharp_{\widehat{\eta}}\;  :\; C^\infty	(Y)\; \longrightarrow\;
	 C^\infty(\End_{\widehat{\Bbb C}}(\widehat{E}))
   $$
   over ${\Bbb R}\hookrightarrow {\Bbb C}$ and, hence,
  defines a smooth map
   $$
     \widehat{\varphi}_{\widehat{\eta}}\;:\;
	  (\widehat{X}^{\!A\!z}, \widehat{\cal E})\;\longrightarrow\; Y\,.
   $$
\end{stheorem}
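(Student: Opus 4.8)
The plan is to define $\widehat{\varphi}^\sharp_{\widehat{\eta}}$ by ``functional calculus at $\widehat{\boldm}=(\widehat{m}_1,\,\cdots\,,\,\widehat{m}_n)$'', i.e.\ $\widehat{\varphi}^\sharp_{\widehat{\eta}}(f):=f(\widehat{m}_1,\,\cdots\,,\,\widehat{m}_n)$, and then to verify in turn that this is well-defined, that it is a ring-homomorphism over ${\Bbb R}\hookrightarrow{\Bbb C}$, and that it restricts to the given assignment on coordinate functions. Well-definedness is immediate from the hypothesis: since $\{\widehat{m}_1,\,\cdots\,,\,\widehat{m}_n\}$ lies in the weak $C^{\infty}$-hull of $\widehat{\cal O}_X^{A\!z}=\Endsheaf_{\widehat{\cal O}_X}\!(\widehat{\cal E})$ (Definition~2.4.2), Theorem~2.4.1 tells us that $f(\widehat{m}_1,\,\cdots\,,\,\widehat{m}_n)\in C^{\infty}(\End_{\widehat{\Bbb C}}(\widehat{E}))$ is uniquely defined for every $f\in C^{\infty}({\Bbb R}^n)=C^{\infty}(Y)$. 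The normalization axioms built into the $C^{\infty}$-operations give $\widehat{\varphi}^\sharp_{\widehat{\eta}}(y^i)=\widehat{m}_i$ for the $i$-th coordinate function $y^i$, so $\widehat{\varphi}^\sharp_{\widehat{\eta}}$ extends $\widehat{\eta}$; the constant function $1$ goes to $\Id$ and a real constant $c$ goes to $c\cdot\Id$, so $\widehat{\varphi}^\sharp_{\widehat{\eta}}$ is ${\Bbb R}$-linear with values in the ${\Bbb C}$-algebra $C^{\infty}(\End_{\widehat{\Bbb C}}(\widehat{E}))$, i.e.\ a map over ${\Bbb R}\hookrightarrow{\Bbb C}$.

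The ring-homomorphism property rests on the \emph{composition law} for this functional calculus: if $g_1,\,\cdots\,,\,g_l\in C^{\infty}({\Bbb R}^n)$ are such that $\{\widehat{\varphi}^\sharp_{\widehat{\eta}}(g_1),\,\cdots\,,\,\widehat{\varphi}^\sharp_{\widehat{\eta}}(g_l)\}$ again lies in the weak $C^{\infty}$-hull of $\widehat{\cal O}_X^{A\!z}$, and $h\in C^{\infty}({\Bbb R}^l)$, then
\[
  \widehat{\varphi}^\sharp_{\widehat{\eta}}\bigl(h(g_1,\,\cdots\,,\,g_l)\bigr)\;
   =\; h\bigl(\widehat{\varphi}^\sharp_{\widehat{\eta}}(g_1),\,\cdots\,,\,\widehat{\varphi}^\sharp_{\widehat{\eta}}(g_l)\bigr)\,.
\]
Granting this, additivity and multiplicativity of $\widehat{\varphi}^\sharp_{\widehat{\eta}}$ follow by specializing to $l=2$ with $h(s_1,s_2)=s_1+s_2$ and with $h(s_1,s_2)=s_1s_2$, since addition and multiplication as $C^{\infty}$-operations coincide with the ring operations of $\widehat{\cal O}_X^{A\!z}$. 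To invoke the composition law legitimately one must check its hypothesis, namely that the images $\widehat{\varphi}^\sharp_{\widehat{\eta}}(g_j)$ again satisfy the commutativity condition~(1) and the realness condition~(2) of Theorem~2.4.1: fiberwise over $X$ and modulo the nilpotent fermionic ideal $\widehat{\frak m}$, each $g_j(\widehat{m}_1,\,\cdots\,,\,\widehat{m}_n)$ is obtained by ordinary functional calculus applied to a commuting family of matrices with real spectra, so the images mutually commute and have real eigenvalues --- the same argument as in the proof of Lemma~3.1.4.

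The composition law itself is the main point, and it is established by the Malgrange-Division argument already used for Theorem~2.4.1 (and Lemma~3.1.4); it is the content of [L-Y9: Theorems 2.1.5 and 2.1.8]. One localizes $X$ to a small coordinate chart; the characteristic polynomials of $\widehat{m}_1,\,\cdots\,,\,\widehat{m}_n$ cut out a $C^{\infty}$-subscheme $\varSigma$ of $\widehat{X}\times{\Bbb R}^n$ finite over $\widehat{X}$; and repeated application of the Malgrange Division Theorem, first to $g_1,\,\cdots\,,\,g_l$ with respect to (the squares of) these characteristic polynomials and then to $h$ composed with the resulting polynomial remainders, writes the germ of $h(g_1,\,\cdots\,,\,g_l)$ as a polynomial remainder plus an element of the square of the ideal of $\varSigma$; the latter evaluates to $0$ at $(t^1,\,\cdots\,,\,t^n)=(\widehat{m}_1,\,\cdots\,,\,\widehat{m}_n)$, and the uniqueness clause of Theorem~2.4.1 forces the two sides to agree. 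I expect this transport of the division argument through a \emph{nested} composition of smooth functions to be the only genuinely technical step, but it is entirely parallel to arguments already in the paper and in [L-Y9] and so may be cited rather than redone. With the composition law in hand, $\widehat{\varphi}^\sharp_{\widehat{\eta}}:C^{\infty}(Y)\rightarrow C^{\infty}(\End_{\widehat{\Bbb C}}(\widehat{E}))$ is a ring-homomorphism over ${\Bbb R}\hookrightarrow{\Bbb C}$, and the definition of a smooth map to a real manifold then yields the smooth map $\widehat{\varphi}_{\widehat{\eta}}:(\widehat{X}^{\!A\!z},\widehat{\cal E})\rightarrow Y$ with defining pullback $\widehat{\varphi}^\sharp_{\widehat{\eta}}$, completing the proof.
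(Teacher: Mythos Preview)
The paper does not give an independent proof of this statement; it simply cites it as ``a special case of [L-Y9: Theorem 2.1.8] (D(11.4.1))'' and moves on. Your proposal correctly identifies the content of that cited result and sketches its proof via the Malgrange Division argument, which is exactly the technique used in [L-Y9] and recapitulated in the proof of Lemma~3.1.4 of the present paper; you even flag this explicitly. So your approach is consistent with the paper's, only more detailed than what the paper itself records.
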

 
\medskip

\begin{sremark} $[${\it components of smooth map}$]$\; {\rm
 The built-in isomorphism\\
  $\Endsheaf_{\widehat{\cal O}_X}(\widehat{\cal E})
     \simeq \Endsheaf_{{\cal O}_X^{\,\Bbb C}}({\cal E})
	                  \otimes_{{\cal O}_X^{\,\Bbb C}}\!\widehat{\cal O}_X$
  induces an expression of
  $\widehat{\varphi}^\sharp:
      C^\infty(Y)\rightarrow C^\infty(\End_{\widehat{\Bbb C}}(\widehat{E}))$
    in terms of  {\it components} in the expansion in $(\theta,\bar{\theta})$:
 \begin{eqnarray*}
  \lefteqn{
   \widehat{\varphi}^\sharp\;
     =\;  \widehat{\varphi}_{(0)}\,
	        +\, \sum_\alpha \widehat{\varphi}^\sharp_{(\alpha)}\theta^\alpha\,
			+\, \sum_{\dot{\beta}}
			         \widehat{\varphi}^\sharp_{(\dot{\beta})}\bar{\theta}^{\dot{\beta}}\,
			+\, \widehat{\varphi}^\sharp_{(12)}\theta^1\theta^2\,
			+\, \sum_{\alpha,\dot{\beta}} \widehat{\varphi}^\sharp_{(\alpha\dot{\beta})}
					       \theta^\alpha \bar{\theta}^{\dot{\beta}}\,
            +\, \widehat{\varphi}^\sharp_{(\dot{1}\dot{2})}
                     \bar{\theta}^{\dot{1}}\bar{\theta}^{\dot{2}}\, }\\
     && \hspace{4em}					
            +\, \sum_{\dot{\beta}} \widehat{\varphi}^\sharp_{(12\dot{\beta})}
					   \theta^1\theta^2\bar{\theta}^{\dot{\beta}}\,
            +\, \sum_\alpha \widehat{\varphi}^\sharp_{(\alpha\dot{1}\dot{2})}
			           \theta^\alpha\bar{\theta}^{\dot{1}}\bar{\theta}^{\dot{2}}\,
			+\,\widehat{\varphi}^\sharp_{(12\dot{1}\dot{2})}	
			           \theta^1\theta^2\bar{\theta}^{\dot{1}}\bar{\theta}^{\dot{2}}\,,
 \end{eqnarray*}
 where the coefficients $\widehat{\varphi}^\sharp_{(\mbox{\tiny $\bullet$})}$
   are all $C^\infty(\End_{\Bbb C}(E))$-valued.
 That $\widehat{\varphi}^\sharp$ is a ring-homomorphism implies in particular the following:
   \begin{itemize}
    \item[(i)]
     {\it $\varphi^\sharp := \widehat{\varphi}^\sharp_{(0)}:
            C^\infty(Y)\rightarrow C^\infty(\End_{\Bbb C}(E))$
     is a ring-homomorphism over ${\Bbb R}\hookrightarrow{\Bbb C}$} and, hence,
	 defines a smooth map $\varphi: (X^{\!A\!z}, {\cal E})\rightarrow Y$
	 that makes the following diagram commute
	 $$
	   \xymatrix{
	      (\widehat{X}^{\!A\!z}\!, \widehat{\cal E})  \ar[rr]^-{\widehat{\varphi}}  && Y \\
		    (X^{\!A\!z}\!, {\cal E})\rule{0ex}{1.2em} \ar@{^{(}->}[u] \ar[rru]_-{\varphi}
	   }\,\raisebox{-2em}{.}	
	 $$
	  	
    \item[(ii)]
     {\it $\widehat{\varphi}_{(\alpha)}$, $\widehat{\varphi}_{(\dot{\beta})}:
            C^\infty(Y)\rightarrow C^\infty(\End_{\Bbb C}(E))$
	  satisfy the identities
	  $$
	   \begin{array}{rcl}
	    \widehat{\varphi}^\sharp_{(\alpha)}(f_1f_2)   & =
		  &  \widehat{\varphi}^\sharp_{(\alpha)}(f_1)\cdot \varphi^\sharp(f_2)\,
		       +\, \varphi^\sharp(f_1)\cdot \widehat{\varphi}^\sharp_{(\alpha)}(f_2)\,, \\[1.2ex]
	    \widehat{\varphi}^\sharp_{(\dot{\beta})}(f_1f_2)   & =
          &  \widehat{\varphi}^\sharp_{(\dot{\beta})}(f_1)\cdot \varphi^\sharp(f_2)\,
		       +\, \varphi^\sharp(f_1)\cdot \widehat{\varphi}^\sharp_{(\dot{\beta})}(f_2)\,,
       \end{array}			
	  $$
	  for all $f_1$, $f_2\in C^\infty(Y)$ and, hence,
	 are ${\cal O}_X^{A\!z}$-valued derivations of $C^\infty(Y)$.}
    They are thus sections of
	  $\varphi^\ast {\cal T}_\ast Y
	     := {\cal O}_X^{A\!z}\otimes_{\varphi^\sharp, {\cal O}_Y}\!{\cal T}_\ast Y$
	 (cf.\ [L-Y8: Sec.\ 3.2] (D(13.3))).
	 The tuple
	    $(\widehat{\varphi}^\sharp_{(1)}, \widehat{\varphi}^\sharp_{(2)},
		      \widehat{\varphi}^\sharp_{\dot{1}},\widehat{\varphi}^\sharp_{\dot{2}})$
		can be thought of as defining the {\it mappino} --- the supersymmetry partner --- of the map $\varphi$.
	
	\item[(iii)]
	 {\it $\widehat{\varphi}^\sharp_{(12)}$,
	   $\widehat{\varphi}^\sharp_{(\alpha\dot{\beta})}$,
       $\widehat{\varphi}^\sharp_{(\dot{1}\dot{2})}$,
       $\widehat{\varphi}^\sharp_{(12\dot{\beta})}$
	   $\widehat{\varphi}^\sharp_{(\alpha\dot{1}\dot{2})}$,
	   $\widehat{\varphi}^\sharp_{(12\dot{1}\dot{2})}:
           C^\infty(Y)\rightarrow C^\infty(\End_{\Bbb C}(E))$
	  are ${\cal O}_X^{A\!z}$-valued higher derivations of $C^\infty(Y)$.}
	 After imposing the $\widehat{D}$-Chirality/$\widehat{D}$-Antichiral Condition
	  on $\widehat{\varphi}$ they become either nondynamical or secondary in the end;
	 cf.\ Proposition~3.2.2,
            theme
			  `{\sl Step 3: $\widehat{D}$-chiral/$\widehat{D}$-antichiral map from
                   $(\widehat{X}^{\!A\!z}, \widehat{\cal E}; \widehat{\nabla})$ to a complex manifold}'
              of the current section, 	 and
		    Example~5.2.
   \end{itemize}

  %
  %
  %
  %
  %
  %
}\end{sremark}

\bigskip

\begin{flushleft}
{\bf Step 2: Smooth map from $(\widehat{X}^{\!A\!z},\widehat{\cal E})$ to a complex manifold}
\end{flushleft}
For a complex manifold $Y$,
denote by ${\cal O}_Y^{C^\infty}$
 (resp.\ ${\cal O}_Y^{C^\infty\!, {\Bbb C}}$,
                ${\cal O}_Y^{\hol}$,
				${\cal O}_Y^{\ahol}$)
 the structure sheaf of smooth functions
  (resp.\
       the complexification
          ${\cal O}_Y^{C^\infty}\!\otimes_{\Bbb R}{\Bbb C}$ of ${\cal O}_Y^{C^\infty}$\!,
      the structure sheaf of holomorphic functions,
	  the structure sheaf of antiholomorphic functions)
 on $Y$.

\bigskip

\begin{sdefinition}  {\bf [smooth map to complex manifold]}\; {\rm
 Let $Y$ be a complex manifold.
 A {\it smooth map} (or synonymously $C^{\infty}$-map)
   from $(\widehat{X}^{\!A\!z}, \widehat{\cal E})$ to $Y$ is by definition
  a smooth map $\widehat{\varphi}$ from $(\widehat{X}^{\!A\!z}, \widehat{\cal E})$
  to the smooth manifold underlying $Y$, defined contravariantly by
  an equivalence class of gluing system of ring-homomorphisms
  $\widehat{\varphi}^\sharp: {\cal O}_Y^{C^{\infty}}\rightarrow
     \widehat{\cal O}_X^{A\!z}$ over ${\Bbb R}\hookrightarrow {\Bbb C}$
  or, equivalently, by a ring-homomorphism
   $\widehat{\varphi}^{\sharp}: C^{\infty}(Y)
      \rightarrow C^{\infty}(\End_{\widehat{\Bbb C}}(\widehat{E}))$
 	 over ${\Bbb R}\hookrightarrow {\Bbb C}$.
 In this case, $\widehat{\varphi}^\sharp$ extends canonically to an equivalence class of gluing systems
   of ${\Bbb C}$-algebra-homomorphisms
   $\widehat{\cal O}_Y^{C^\infty\!, {\Bbb C}}\rightarrow \widehat{\cal O}_X^{A\!z}$
   by the assignment
   $$
     f+\sqrt{-1}\, g\;
	  \longmapsto\;  \widehat{\varphi}^\sharp (f) +\sqrt{-1}\,\widehat{\varphi}^\sharp(g)\,.
   $$
 Denote this extension still by $\widehat{\varphi}^\sharp$. 	
}\end{sdefinition}

\bigskip

Before proceeding further, we digress to discuss the complex form of the $C^{\infty}$-ring structure
 of the function ring of a complex coordinate chart of the complex manifold $Y$.
Let $V\subset Y$ be a local chart, with complex coordiantes
 $(z^1,\,\cdots\,,\, z^n)=(y^1+\sqrt{-1}y^2,\,\cdots\,,\, y^{2n-1}+\sqrt{-1}y^{2n})$.
Then, as generators of  the $C^{\infty}$-ring $C^\infty(V)$,
 an $f\in C^{\infty}(V)$ can be expressed as $f(y^1, \,\cdots\,,\, y^{2n})$.
In the complex form, one denotes $f$ also as
 $$
  f\; =\; f(z^1,\,\cdots\,, z^n, \bar{z}^1,\,\cdots\,, \bar{z}^n) \,.
 $$
In other words, we set the convention that
 {\small
 $$
    h(z^1,\,\cdots\,, z^n, \bar{z}^1,\,\cdots\,, \bar{z}^n) \;
	:=\; h \mbox{\Large $($}
	          \frac{1}{2}\,(z^1+\bar{z}^1),  \frac{1}{2\sqrt{-1}}\,(z^1-\bar{z}^1),\,
			 \cdots\,,\,
			 \frac{1}{2}(z^n+\bar{z}^n),  \frac{1}{2\sqrt{-1}}(z^n-\bar{z}^n)
	          \mbox{\Large $)$}
 $$}for
 $h\in C^{\infty}({\Bbb C}^n)= C^{\infty}({\Bbb R}^{2n})$
 applying to complex-conjugate-paired complex-valued functions
 $(z^1,\,\cdots\,,\, z^n, \bar{z}^1,\,\cdots\,,\, \bar{z}^n)$.
Define the complex-valued derivations on $C^\infty(V)$
 {\small
 $$
   \frac{\partial}{\partial z^i}\;
    :=\; \frac{1}{2}\, \mbox{\Large $($}
	                      \frac{\partial}{\partial y^{2i-1}}\,-\, \sqrt{-1}\, \frac{\partial}{\partial y^{2i}}
                                            \mbox{\Large $)$}\,,\;\;\;
   \frac{\partial}{\partial \bar{z}^i}\;
    :=\; \frac{1}{2}\, \mbox{\Large $($}
	                      \frac{\partial}{\partial y^{2i-1}}\,+\, \sqrt{-1}\, \frac{\partial}{\partial y^{2i}}
                                            \mbox{\Large $)$}\,.									
 $$}Then, one has the following result from basic analysis:
 
\bigskip

\begin{slemma} {\bf [complex form of Taylor's formula]}\;
 Denote coordinates on $V$ collectively by
  $\boldy:= (y^1,\,\cdots\,,\, y^{2n})$,
  $\boldz:= (z^1,\,\cdots\,, z^n) $, and $\bar{\boldz}:= (\bar{z}^1,\,\cdots\,,\, \bar{z}^n)$.
 Then,
   for
    $f\in C^\infty(V)$ and
    $q\in V$ of coordinates $\boldy$,  and
	$\bolda := (a^1,\,\cdots\,,\, a^{2n})\in {\Bbb R}^{2n}$ such that
        points $q_t$ of real coordinates $\boldy_t:=\boldy+t\cdot \bolda$ are contained in $V$ for all $t\in [0,1]$,
   the Taylor's formula
   $$
     f(\boldy+\bolda)\;
	   =\;  \sum_{d=0}^l
              \sum_{|\scriptsizeboldd|=d}	
			  \frac{1}{\boldd !}\,
	          \frac{\partial^{\,d} f}{\partial \boldy^{\scriptsizeboldd}}(\boldy)\,
			  \bolda^{\scriptsizeboldd}\;
			 +\;  \sum_{|\scriptsizeboldd|=l+1}	
			  \frac{1}{\boldd !}\,
	          \frac{\partial^{\,l+1} f}{\partial \boldy^{\scriptsizeboldd}}(\boldy_{t_0})\,
			  \bolda^{\scriptsizeboldd}\;
   $$
  for some $t_0\in [0,1]$  depending on $\bolda$
  has the following complex form
   \begin{eqnarray*}
    \lefteqn{
      f(\boldz+\boldu, \bar{\boldz}+\bar{\boldu})\;
	   =\;  \sum_{d=0}^l\,
              \sum_{|\scriptsizeboldd_1|+|\scriptsizeboldd_2|=d}	
			  \frac{1}{\boldd_1 ! \boldd_2 !}\,
	          \frac{\partial^{\,d} f}{\partial \boldz^{\scriptsizeboldd_1}
			                                          \partial \bar{\boldz}^{\scriptsizeboldd_2}}(\boldz,\bar{\boldz})\,
			  \boldu^{\scriptsizeboldd_1}\bar{\boldu}^{\scriptsizeboldd_2}\;   }\\
     &&\hspace{7em}			
			 +\;  \sum_{|\scriptsizeboldd|=l+1}	
			  \frac{1}{\boldd_1 !\boldd_2 !}\,
	          \frac{\partial^{\,l+1} f}
			      {\partial \boldz^{\scriptsizeboldd_1}\partial \bar{\boldz}^{\scriptsizeboldd_2}}
				  (\boldz_{t_0}, \bar{\boldz}_{t_0})\,
			  \boldu^{\scriptsizeboldd_1}\bar{\boldu}^{\scriptsizeboldd_2}
   \end{eqnarray*}
   for some $t_0\in [0,1]$  depending on $\boldu$.
  Here,
    $\boldd:= (d_1,\,\cdots\,,\, d_{2n})$ with $d_i\in {\Bbb Z}_{\ge 0}$,\\
	$|\boldd|:= d_1+\,\cdots\,+d_{2n}$,
	$\boldd !:= d_1!\,\cdots\,d_{2n}!$ with $0!:=1$,
	$\partial^{\,d}/{\partial\boldy}^{\scriptsizeboldd}
	   :=  (\partial/\partial y^1)^{d_1}
	           \cdots  (\partial/\partial y^{2n})^{d_{2n}} $ for $|\boldd|=d$,
    $\bolda^{\scriptsizeboldd}:= (a^1)^{d_1}\,\cdots\,(a^{2n})^{d_{2n}}$			   			   
  and similarly
    $\boldu:= (u^1,\,\cdots\,,\, u^n)\in {\Bbb C}^n$	
	such that
        points $q_t$ of complex coordinates $\boldz_t:=\boldz+t\cdot \boldu$
		are contained in $V$ for all $t\in [0,1]$, 	
	$\boldd_i=(d_{i,1},\,\cdots\,,\, d_{i, n})$, $i=1,2$, with $d_{i,j}\in {\Bbb Z}_{\ge 0}$,
    $|d_i|:= d_{i,1}+\,\cdots\,+ d_{i,n}$,	
    $\boldd_i !:= d_{i,1}!\,\cdots\,d_{i,n}!$, \\	
    $\partial^{\,d}/(\partial\boldz^{\scriptsizeboldd_1}\partial\bar{\boldz}^{\scriptsizeboldd_2})
	   :=  (\partial/\partial z^1)^{d_{1,1}} \cdots  (\partial/\partial z^n)^{d_{1,n}}
	        (\partial/\partial \bar{z}^1)^{d_{2,1}} \cdots  (\partial/\partial \bar{z}^n)^{d_{2,n}}
	 $ for $|\boldd_1|+|\boldd_2|=d$, \\
    $\boldu^{\scriptsizeboldd}:= (u^1)^{d_{1,1}}\,\cdots\,(u^{1,n})^{d_{1,n}}$,
	$\bar{\boldu}^{\scriptsizeboldd}
	    := (\bar{u}^1)^{d_{2,1}}\,\cdots\,(\bar{u}^{2,n})^{d_{2,n}}$.
\end{slemma}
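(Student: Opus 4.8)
The plan is to obtain the complex form purely by a change of variables in the classical real Taylor formula with Lagrange remainder, which is already being quoted as known. First I would fix $q\in V$ with real coordinates $\boldy$ and a real displacement $\bolda\in{\Bbb R}^{2n}$ with $\boldy_t:=\boldy+t\bolda\in V$ for all $t\in[0,1]$, and write down the classical Taylor expansion of order $l$ with the Lagrange remainder term as stated. The only thing to do is to re-encode the real multi-index derivatives $\partial^{\,d}f/\partial\boldy^{\scriptsizeboldd}$ and the monomials $\bolda^{\scriptsizeboldd}$ in terms of the Wirtinger operators $\partial/\partial z^i$, $\partial/\partial\bar z^i$ and the complex displacement $\boldu\in{\Bbb C}^n$ determined by $u^i=a^{2i-1}+\sqrt{-1}\,a^{2i}$, so that $\boldz_t:=\boldz+t\boldu$ corresponds exactly to $\boldy_t$.

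The key algebraic input, which I would state and prove (or simply invoke as elementary linear algebra), is that the change of basis $y^{2i-1}=\tfrac12(z^i+\bar z^i)$, $y^{2i}=\tfrac1{2\sqrt{-1}}(z^i-\bar z^i)$ induces on the symmetric algebra of first-order operators precisely the dual change of basis $\partial/\partial y^{2i-1}=\partial/\partial z^i+\partial/\partial\bar z^i$, $\partial/\partial y^{2i}=\sqrt{-1}(\partial/\partial z^i-\partial/\partial\bar z^i)$, and conversely the definitions of $\partial/\partial z^i$, $\partial/\partial\bar z^i$ given just above the lemma. Since both the real differential operator $\sum_i(a^{2i-1}\partial/\partial y^{2i-1}+a^{2i}\partial/\partial y^{2i})$ and the complex operator $\sum_i(u^i\partial/\partial z^i+\bar u^i\partial/\partial\bar z^i)$ are the \emph{same} first-order operator (this is the one-line computation $a^{2i-1}\partial_{y^{2i-1}}+a^{2i}\partial_{y^{2i}}=u^i\partial_{z^i}+\bar u^i\partial_{\bar z^i}$), all of their powers agree as well. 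Applying the $d$-th power of this directional operator to $f$ and expanding by the multinomial theorem in the two sets of variables gives the two sides of the claimed identity term by term: the sum $\sum_{|\scriptsizeboldd|=d}\frac1{\boldd!}\frac{\partial^{\,d}f}{\partial\boldy^{\scriptsizeboldd}}\bolda^{\scriptsizeboldd}$ equals $\sum_{|\scriptsizeboldd_1|+|\scriptsizeboldd_2|=d}\frac1{\boldd_1!\boldd_2!}\frac{\partial^{\,d}f}{\partial\boldz^{\scriptsizeboldd_1}\partial\bar\boldz^{\scriptsizeboldd_2}}\boldu^{\scriptsizeboldd_1}\bar\boldu^{\scriptsizeboldd_2}$. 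The remainder term transforms the same way, with the same intermediate point $q_{t_0}$ (whose complex coordinates are $\boldz_{t_0}=\boldz+t_0\boldu$), since $t_0$ is determined by the segment, not by the coordinate system.

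Finally I would record the bookkeeping identification of notations: $f(\boldz+\boldu,\bar\boldz+\bar\boldu)$ is by the convention introduced just above the lemma literally $f(\boldy+\bolda)$ once one substitutes $u^i=a^{2i-1}+\sqrt{-1}a^{2i}$, so the left-hand sides match, and the displayed complex expansion is exactly the rewriting of the classical one. There is essentially no obstacle here: the statement is a pure coordinate-change restatement of Taylor's theorem, and the only point requiring a trace of care is confirming that the directional-derivative operator is genuinely coordinate-independent and that the multinomial re-expansion commutes with this identification — a routine verification that I would carry out for $d=1$ and then note propagates to all orders by taking powers. The Lagrange remainder requires no new argument because the mean-value point lives on the real (equivalently complex) segment, which is intrinsic. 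Hence the lemma follows immediately from the real Taylor formula and the Wirtinger calculus, and I would present it with the one-line directional-operator identity as the crux and the multinomial expansion as the mechanical step.
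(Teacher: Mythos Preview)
Your proposal is correct and is exactly the standard argument one would give: the directional-derivative identity $\sum_i(a^{2i-1}\partial_{y^{2i-1}}+a^{2i}\partial_{y^{2i}})=\sum_i(u^i\partial_{z^i}+\bar u^i\partial_{\bar z^i})$ plus the multinomial expansion of its $d$-th power is precisely the mechanism behind the rewriting. The paper itself does not supply a proof --- it introduces the lemma with the phrase ``one has the following result from basic analysis'' and leaves it at that --- so your write-up would in fact be more detailed than what appears there.
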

 
\bigskip

The above complex form to express real-valued functions on a complex manifold and their Taylor's expansions
  motivates a complex form of the partial $C^{\infty}$-ring structure on
  $\widehat{\cal O}_X^{A\!z}$ as follows.
  
\bigskip

\begin{sdefinition} {\bf [$C^\infty$-operable in complex form]}\; {\rm
 A tuple $(\widehat{m}_1,\,\cdots\,,\widehat{m}_{2k})$ of elements in
  $\widehat{\cal O}_X^{A\!z}$ is said to be {\it $C^\infty$-operable in the complex form}
  if its entries can be paired up to
   $\widehat{m}_1$, $\widehat{m}_{k+1}$; $\cdots$;
   $\widehat{m}_{k}$, $\widehat{m}_{2k}$
  so that the collection
   $$
      \mbox{\Large $\{$}\,
	    \frac{1}{2}\,(\widehat{m}_1+\widehat{m}_{k+1}),
        \frac{1}{2\sqrt{-1}}\,(\widehat{m}_1-\widehat{m}_{k+1}),\,
        \cdots,\, 	
        \frac{1}{2}\,(\widehat{m}_k+\widehat{m}_{2k}),
        \frac{1}{2\sqrt{-1}}\,(\widehat{m}_k-\widehat{m}_{2k})
	  \,\mbox{\Large $\}$}
   $$
   lies in the weak $C^\infty$-hull of $\widehat{\cal O}_X^{A\!z}$,
   i.e.\
   for any $p\in X$ so that $\widehat{m}_1(p),\,\cdots\,,\, \widehat{m}_{2k}(p)$
    are defined,
   \begin{itemize}
    \item[(1)]
	 $\widehat{m}_1(p),\,\cdots\,,\, \widehat{m}_{2k}(p)$ commute with each other,
	  	
    \item[(2)]
	 the eigenvalues of
	 $\frac{1}{2}(\widehat{m}_1(p)+\widehat{m}_{k+1}(p))$,
     $\frac{1}{2\sqrt{-1}}\,(\widehat{m}_1(p)-\widehat{m}_{k+1}(p))$,
     $\cdots$, 	
     $\frac{1}{2}(\widehat{m}_k(p)+\widehat{m}_{2k}(p))$,
     $\frac{1}{2\sqrt{-1}}\,(\widehat{m}_k(p)-\widehat{m}_{2k}(p))$
	 are all real.     	
   \end{itemize}
}\end{sdefinition}

\medskip

\begin{sdefinition} {\bf [$C^\infty$-operation in complex form]}\; {\rm
 Let
  $h\in C^\infty({\Bbb C}^k)$  and
  $(\widehat{m}_1,\,\cdots\,,\widehat{m}_{2k}  )$
    be a tuple of elements in $\widehat{\cal O}_X^{A\!z}$ that is $C^\infty$-operable in the complex form.
 Then define
   \begin{eqnarray*}
    \lefteqn{
     h(\widehat{m}_1,\,\cdots,\, \widehat{m}_k,
           \widehat{m}_{k+1}, \,\cdots\,,\widehat{m}_{2k}  )\;  }\\
    &&
	 :=\;  h
     \mbox{\Large $($}\,
	    \frac{1}{2}\,(\widehat{m}_1+\widehat{m}_{k+1}),
        \frac{1}{2\sqrt{-1}}\,(\widehat{m}_1-\widehat{m}_{k+1}),\,
        \cdots,\, 	
        \frac{1}{2}\,(\widehat{m}_k+\widehat{m}_{2k}),
        \frac{1}{2\sqrt{-1}}\,(\widehat{m}_k-\widehat{m}_{2k})
	  \,\mbox{\Large $)$}\,.
   \end{eqnarray*}
}\end{sdefinition}

\bigskip

The following lemma is immediate:

\bigskip

\begin{slemma} {\bf [shift by commuting nilpotent element]}\;
 If the tuple $(\widehat{m}_1,\,\cdots\,,\widehat{m}_{2k}  )$
  of elements in $\widehat{\cal O}_X^{A\!z}$
  is $C^\infty$-operable in the complex form,
 then so is the tuple
  $(\widehat{m}_1+\widehat{n}_1,\,\cdots\,,\widehat{m}_{2k}+\widehat{n}_{2k})$,
 where $\widehat{n}_1,\,\cdots\,,\widehat{n}_{2k}  \in \widehat{\cal O}_X^{A\!z}$
  are nilpotent and commute with each other and with $\widehat{m}_1,\,\cdots\,,\widehat{m}_{2k} $.
 Furthermore, assuming that $\widehat{n}_1^l=\,\cdots\,= \widehat{n}_{2k}^l=0$
   for some $l\in {\Bbb Z}_{\ge 0}$,
 then for $h\in C^\infty({\Bbb C}^k)$, one has
  $$
    h(\widehat{\boldm}+ \widehat{\boldn}, \widehat{\boldm}^\prime+\widehat{\boldn}^\prime)\;
	   =\;  \sum_{d=0}^{2kl}\,
              \sum_{|\scriptsizeboldd_1|+|\scriptsizeboldd_2|=d}	
			  \frac{1}{\boldd_1 ! \boldd_2 !}\,
	          \frac{\partial^{\,d} h}{\partial \boldz^{\scriptsizeboldd_1}
			                                          \partial \bar{\boldz}^{\scriptsizeboldd_2}}
					(\widehat{\boldm},\widehat{\boldm}^\prime)\,
			  \cdot \widehat{\boldn}^{\scriptsizeboldd_1}
			             {\widehat{\boldn}^\prime}\,\!^{\scriptsizeboldd_2}\,.
  $$
 Here,
  $\boldz:=(z^1,\,\cdots\,,\, z^k)$ the complex coordiantes of ${\Bbb C}^k$,
  $\widehat{\boldm}:= (\widehat{m}_1,\,\cdots\,,\, \widehat{m}_k)$,\\
  $\widehat{\boldm}^\prime:= (\widehat{m}_{k+1},\,\cdots\,,\, \widehat{m}_{2k})$,
  $\widehat{\boldn}:= (\widehat{n}_1,\,\cdots\,,\, \widehat{n}_k)$,
  $\widehat{\boldn}^\prime:= (\widehat{n}_{k+1},\,\cdots\,,\, \widehat{n}_{2k})$,
  $\boldd_1:=(d_{1,1},\,\cdots\,,\, d_{1,k})$,
  $\boldd_2:=(d_{2,1},\,\cdots\,,\, d_{2,k})$,
  $\widehat{\boldn}^{\scriptsizeboldd_1}
     :=  (\widehat{n}_1)^{d_{1,1}}\cdots (\widehat{n}_k)^{d_{1,k}}$,
   and
  ${\widehat{\boldn}^\prime}\,\!^{\scriptsizeboldd_2}
     :=  (\widehat{n}_{k+1})^{d_{2,1}}\cdots (\widehat{n}_{2k})^{d_{2,k}}$.
\end{slemma}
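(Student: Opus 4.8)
The plan is to reduce both assertions to facts already in hand: the commutativity/realness bookkeeping underlying Theorem~2.4.1 (equivalently [L-Y9: Theorem 2.1.8]), the analogue of Lemma~1.1.3 for the partial $C^\infty$-ring $\widehat{\cal O}_X^{A\!z}$ established in the course of proving that theorem, and the purely linear change of variables $\boldy\leftrightarrow(\boldz,\bar{\boldz})$ recorded in the complex form of Taylor's formula just above. First I would show that being $C^\infty$-operable in the complex form is preserved by the shift, i.e.\ that the two defining conditions continue to hold for $(\widehat{m}_1+\widehat{n}_1,\,\cdots\,,\,\widehat{m}_{2k}+\widehat{n}_{2k})$. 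Pairwise commutativity at each $p\in X$ is immediate, since the $\widehat{m}_i$ commute among themselves, the $\widehat{n}_i$ commute among themselves, and the two families commute with one another. For the realness condition I would note that passing to degree-$0$ parts turns $\widehat{n}_i^l=0$ into $(n_{i,(0)})^l=0$, so each $n_{i,(0)}(p)$ is a nilpotent matrix commuting with the relevant $m_{j,(0)}(p)$'s; hence a combination such as $\frac{1}{2}\bigl((m_{i,(0)}+n_{i,(0)})(p)+(m_{k+i,(0)}+n_{k+i,(0)})(p)\bigr)$ differs from the unshifted $\frac{1}{2}\bigl(m_{i,(0)}(p)+m_{k+i,(0)}(p)\bigr)$ by a nilpotent matrix commuting with it, and a commuting nilpotent perturbation does not change the spectrum (simultaneous triangularization over ${\Bbb C}$, the nilpotent part having zero diagonal). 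Thus the required eigenvalues stay real, and $h(\widehat{\boldm}+\widehat{\boldn},\widehat{\boldm}^\prime+\widehat{\boldn}^\prime)$ is well-defined by Theorem~2.4.1 read through the definition of $C^\infty$-operation in the complex form.

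Then I would establish the explicit formula. Unwinding the definition of $C^\infty$-operation in the complex form writes $h(\widehat{\boldm}+\widehat{\boldn},\widehat{\boldm}^\prime+\widehat{\boldn}^\prime)$ as the real-form $C^\infty$-operation of $h$, viewed as a smooth function of the $2k$ real variables $\boldy$, evaluated on the $2k$ elements $\frac{1}{2}\bigl((\widehat{m}_i+\widehat{n}_i)+(\widehat{m}_{k+i}+\widehat{n}_{k+i})\bigr)$ and $\frac{1}{2\sqrt{-1}}\bigl((\widehat{m}_i+\widehat{n}_i)-(\widehat{m}_{k+i}+\widehat{n}_{k+i})\bigr)$. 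Each of these is the corresponding unshifted real/imaginary combination shifted by a $\pm\frac{1}{2}$- or $\pm\frac{1}{2\sqrt{-1}}$-combination of the $\widehat{n}$'s, which is nilpotent (of order at most $2l-1$ by binomial expansion) and commutes with everything else in sight. Applying the $\widehat{\cal O}_X^{A\!z}$-valued version of Lemma~1.1.3 (the finite Taylor-with-vanishing-remainder statement proved inside [L-Y9: Theorem 2.1.8]) then yields a finite expansion of $h$ in the ordinary real partial derivatives $\partial^{\,d}h/\partial\boldy^{\scriptsizeboldd}$ evaluated at the unshifted combinations, times monomials in the nilpotent shifts, with factor ordering immaterial because everything commutes. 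Finally, using $\partial/\partial z^i=\frac{1}{2}\bigl(\partial/\partial y^{2i-1}-\sqrt{-1}\,\partial/\partial y^{2i}\bigr)$ and $\partial/\partial\bar{z}^i=\frac{1}{2}\bigl(\partial/\partial y^{2i-1}+\sqrt{-1}\,\partial/\partial y^{2i}\bigr)$ — exactly the change of variables used to pass from the real to the complex form of Taylor's formula — I would repackage the real derivatives into the mixed derivatives $\partial^{\,d}h/(\partial\boldz^{\scriptsizeboldd_1}\partial\bar{\boldz}^{\scriptsizeboldd_2})$ evaluated at $(\widehat{\boldm},\widehat{\boldm}^\prime)$, the matching binomial recombination of the nilpotent shifts collapsing precisely to $\widehat{\boldn}^{\scriptsizeboldd_1}{\widehat{\boldn}^\prime}\,\!^{\scriptsizeboldd_2}$; the resulting sum is finite, within the (non-sharp) range $0\le d\le 2kl$ asserted.

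The binomial bookkeeping, the linear algebra of $\boldy\leftrightarrow(\boldz,\bar{\boldz})$, and tracking that the nilpotency orders combine to a finite truncation are all mechanical. The one point needing genuine care is that the noncommutativity of $\widehat{\cal O}_X^{A\!z}$ never interferes, and this is exactly guaranteed by the hypothesis that the $\widehat{n}_i$ commute among themselves and with the $\widehat{m}_j$: that is what makes the commuting-perturbation spectral argument valid in the first step, and what lets the one-variable — hence, by iteration, the multivariable — Taylor expansion with vanishing remainder hold verbatim with the factors in any order in the second. Granting the real-variable statements, the lemma is therefore immediate, as claimed.
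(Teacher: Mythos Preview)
Your argument is correct and supplies exactly the kind of detail the paper omits: the paper states this lemma with nothing beyond ``The following lemma is immediate'' and gives no proof. Your two-step reduction --- first verifying that commuting nilpotent shifts preserve both the commutativity and the realness conditions of Definition~4.7 (via the spectral invariance of commuting nilpotent perturbations), then unwinding to the real-variable setting, invoking the $\widehat{\cal O}_X^{A\!z}$-version of Lemma~1.1.3, and repackaging via the $\boldy\leftrightarrow(\boldz,\bar{\boldz})$ change of variables from Lemma~4.6 --- is the natural way to make ``immediate'' precise, and every step is sound.
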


\bigskip

In terms of $C^\infty$-operable tuples in the complex form,
Theorem~4.2 has a partial rephrasing for $Y$ a complex manifold:

\bigskip

\begin{stheorem} {\bf [smooth map to complex manifold]}
 Let
   $Y$ be a complex manifold and
   $\widehat{\varphi}:(\widehat{X}^{\!A\!z}, \widehat{\cal E})\rightarrow Y$
     be a smooth map defined contravariantly by an equivalence class
	$\widehat{\varphi}^\sharp: {\cal O}_Y^{C^\infty}\rightarrow \widehat{\cal O}_X^{A\!z}$
	of gluing systems of ring-homomorphisms over ${\Bbb R}\hookrightarrow {\Bbb C}$.
 Let $f_1,\,\cdots\,,\, f_k\in {\cal O}_Y^{\hol}$ be local holomorphic functions on $Y$ and
        $\bar{f_1},\,\cdots\,,\, \bar{f_k}\in {\cal O}_Y^{\ahol}$ their complex conjugates.
Then\\
  $(\widehat{\varphi}^\sharp(f_1),\,\cdots\,,\, \widehat{\varphi}^\sharp(f_k),\,
        \widehat{\varphi}^\sharp(\bar{f_1}),\,\cdots\,,\,
				                                                                               \widehat{\varphi}^\sharp(\bar{f_k}))$
  is a $C^\infty$-operable tuple.
 Furthermore, for $h\in C^\infty({\Bbb C}^k)$,
  $$
   h \mbox{\large $($}
        \widehat{\varphi}^\sharp(f_1),\,\cdots\,,\, \widehat{\varphi}^\sharp(f_k),\,
        \widehat{\varphi}^\sharp(\bar{f_1}),\,\cdots\,,\,
				                                                                               \widehat{\varphi}^\sharp(\bar{f_k})
	   \mbox{\large $)$}\;
    =\; \widehat{\varphi}^\sharp
	       \mbox{\large $($}
	           h(f_1,\,\cdots\,,\, f_k,\, \bar{f_1},\,\cdots\,,\, \bar{f_k})
           \mbox{\large $)$}\,.
  $$
\end{stheorem}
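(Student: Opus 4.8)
<br>

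The plan is to reduce Theorem~4.2$'$ (smooth map to a complex manifold) to Theorem~4.1 (compatibility of $\widehat{\varphi}^\sharp$ with the partial $C^\infty$-ring structure) and Slemma~4.2$'$ (complex form of Taylor's formula), by unwinding the complex-form conventions set up in Sdefinition~4.1$'$ and Sdefinition~4.2$'$. The point is that everything in sight is defined so that the complex-form $C^\infty$-operations are literally real-form $C^\infty$-operations composed with the linear change of variables $z\mapsto \tfrac12(z+\bar z),\ \tfrac1{2\sqrt{-1}}(z-\bar z)$; once this is made explicit, the theorem is a formality.

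First I would fix a local holomorphic chart $V\subset Y$ with coordinates $(z^1,\dots,z^n)$, $z^j = y^{2j-1}+\sqrt{-1}\,y^{2j}$, and write the given local holomorphic functions as $f_i = f_i(z^1,\dots,z^n)$ and their conjugates as $\bar f_i = \bar f_i(\bar z^1,\dots,\bar z^n)$. Then $\widehat{\varphi}^\sharp(f_i)$ and $\widehat{\varphi}^\sharp(\bar f_i)$ are, up to the canonical isomorphism $\Endsheaf_{\widehat{\cal O}_X}(\widehat{\cal E})\simeq\Endsheaf_{{\cal O}_X^{\,\Bbb C}}({\cal E})\otimes_{{\cal O}_X^{\,\Bbb C}}\widehat{\cal O}_X$, complex conjugates of each other; in particular the pairs $\tfrac12(\widehat{\varphi}^\sharp(f_i)+\widehat{\varphi}^\sharp(\bar f_i))$ and $\tfrac1{2\sqrt{-1}}(\widehat{\varphi}^\sharp(f_i)-\widehat{\varphi}^\sharp(\bar f_i))$ equal $\widehat{\varphi}^\sharp(\Real f_i)$ and $\widehat{\varphi}^\sharp(\Imaginary f_i)$ respectively, i.e.\ $\widehat{\varphi}^\sharp$ applied to the real-valued coordinate functions $y^{2i-1},\ y^{2i}$ read off from $f_i$ (more precisely, to $2n$ real-valued smooth functions $\Real f_i,\Imaginary f_i\in {\cal O}_Y^{C^\infty}$). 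Applying Theorem~4.1 to these $2n$ real-valued functions shows that the collection $\{\widehat{\varphi}^\sharp(\Real f_i),\widehat{\varphi}^\sharp(\Imaginary f_i)\}_i$ lies in the weak $C^\infty$-hull of $\widehat{\cal O}_X^{A\!z}$; by Sdefinition~4.1$'$ this is exactly the assertion that the tuple $(\widehat{\varphi}^\sharp(f_1),\dots,\widehat{\varphi}^\sharp(f_k),\widehat{\varphi}^\sharp(\bar f_1),\dots,\widehat{\varphi}^\sharp(\bar f_k))$ is $C^\infty$-operable in the complex form. (One should note here that a function $h$ of only $k$ of the $n$ holomorphic coordinates, together with conjugates, still fits into this framework after padding the remaining real coordinates, or more directly since the two conditions of Sdefinition~4.1$'$ only involve the finitely many $\widehat{m}$'s actually occurring.)

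For the displayed identity, I would expand $h\in C^\infty({\Bbb C}^k)=C^\infty({\Bbb R}^{2k})$ in its real form $h = h(\Real z^1,\Imaginary z^1,\dots,\Real z^k,\Imaginary z^k)$, so that by Sdefinition~4.2$'$ the left-hand side $h(\widehat{\varphi}^\sharp(f_1),\dots,\widehat{\varphi}^\sharp(\bar f_k))$ is by definition $h$ (in the real-form sense, i.e.\ the real-variable $C^\infty$-operation of Theorem~2.4.1) applied to the $2k$-tuple $\widehat{\varphi}^\sharp(\Real f_1),\widehat{\varphi}^\sharp(\Imaginary f_1),\dots,\widehat{\varphi}^\sharp(\Real f_k),\widehat{\varphi}^\sharp(\Imaginary f_k)$. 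By Theorem~4.1 this equals $\widehat{\varphi}^\sharp\big(h(\Real f_1,\Imaginary f_1,\dots,\Real f_k,\Imaginary f_k)\big)$, and again by the real-vs-complex form conventions (this time for functions on $Y$, which is the content of Slemma~4.1$'$ applied formally, i.e.\ to the identity of functions on $V$) the argument $h(\Real f_1,\dots,\Imaginary f_k)$ is precisely $h(f_1,\dots,f_k,\bar f_1,\dots,\bar f_k)$ as an element of ${\cal O}_Y^{C^\infty}$. Composing these equalities gives the claimed formula, and since $V$ was an arbitrary holomorphic chart and $\widehat{\varphi}^\sharp$ is defined by an equivalence class of gluing systems, the local statement patches to the global one.

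The only genuinely substantive input is Theorem~4.1, which already rests on the Malgrange Division Theorem; everything beyond that is bookkeeping of the $z/\bar z$ versus $y$ dictionary. Accordingly, the step I expect to require the most care is verifying that the complexification of $\widehat{\varphi}^\sharp$ to ${\cal O}_Y^{C^\infty,\Bbb C}\to\widehat{\cal O}_X^{A\!z}$ (fixed in Sdefinition~4.1$'$) is genuinely a ${\Bbb C}$-algebra homomorphism that is moreover compatible with the complex-form partial $C^\infty$-operations, so that applying a complex $h$ really does reduce to applying the underlying real $h$ to real-valued arguments --- i.e.\ making sure no factor-of-$2$ or $\sqrt{-1}$ discrepancy is hidden in the chain-rule manipulation relating $\partial/\partial z^i,\partial/\partial\bar z^i$ to $\partial/\partial y^{2i-1},\partial/\partial y^{2i}$. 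This is exactly the content of Slemma~4.1$'$ and the conventions around it, so once those are invoked the argument closes.
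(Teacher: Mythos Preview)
Your proposal is correct and matches the paper's own treatment: the paper does not spell out a proof but introduces this theorem as ``a partial rephrasing [of Theorem~4.2] for $Y$ a complex manifold'', and your argument is precisely that rephrasing unwound via Definitions~4.7 and~4.8. One minor imprecision worth cleaning up: $\widehat{\varphi}^\sharp(f_i)$ and $\widehat{\varphi}^\sharp(\bar f_i)$ need not literally be complex conjugates (the target $\widehat{\cal O}_X^{A\!z}$ is already over ${\Bbb C}$), but all you actually use is that their sum and difference recover $2\,\widehat{\varphi}^\sharp(\Real f_i)$ and $2\sqrt{-1}\,\widehat{\varphi}^\sharp(\Imaginary f_i)$, which follows directly from the ${\Bbb C}$-linear extension in Definition~4.5.
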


\bigskip
	
\begin{flushleft}
{\bf Step 3: $\widehat{D}$-chiral/$\widehat{D}$-antichiral map from
  $(\widehat{X}^{\!A\!z}, \widehat{\cal E}; \widehat{\nabla})$ to a complex manifold} 	
\end{flushleft}
\begin{sdefinition}
{\bf [$\widehat{D}$-chiral map \& $\widehat{D}$-antichiral map to complex manifold]}\; {\rm
 Let $Y$ be a complex manifold.
 A smooth map
   $\widehat{\varphi}:(\widehat{X}^{\!A\!z}, \widehat{\cal E}; \widehat{\nabla})
     \rightarrow Y$, defined contravariantly by
   $\widehat{\varphi}^\sharp: {\cal O}_Y^{C^\infty}\rightarrow \widehat{\cal O}_X^{A\!z}$,  	
  is called {\it $\widehat{D}$-chiral} (resp.\ {\it $\widehat{D}$-antichiral})
  if the induced equivalence class of gluing systems of ${\Bbb C}$-algebra-homomorphisms
   $\widehat{\varphi}^\sharp: {\cal O}_Y^{C^\infty\!, {\Bbb C}}
      \rightarrow \widehat{\cal O}_X^{A\!z}$
   sends
      ${\cal O}_Y^{\hol}$ to $\widehat{\cal O}_X^{A\!z, \scriptsizewidehatDch}$  and
	  ${\cal O}_Y^{\ahol}$ to $\widehat{\cal O}_X^{A\!z, \scriptsizewidehatDach}$
   (resp.\
       ${\cal O}_Y^{\hol}$ to $\widehat{\cal O}_X^{A\!z, \scriptsizewidehatDach}$  and
	   ${\cal O}_Y^{\ahol}$ to $\widehat{\cal O}_X^{A\!z, \scriptsizewidehatDch}$).
}\end{sdefinition}

%
%

\bigskip

\section{$\widehat{D}$-chiral maps from
         $(\widehat{X}^{\!A\!z},\widehat{\cal E}; \widehat{\nabla})$ to a K\"{a}hler manifold
		 and the $N=1$ Super D3-Brane Theory in Ramond-Neveu-Schwarz formulation}

We conclude the current work with some highlights, test computations, and open ends on
 how $\widehat{D}$-chiral maps from a $d=4$, $N=1$ Azumaya/matrix superspace
   with a fundamental module with a connection
 gives a construction of
 {\it Super D3-Brane Theory in Ramond-Neveu-Schwarz formulation}.
Details and beyond are the focus of separate works.\footnote{This
                                                           section means to give readers a taste of {\it Super D3-Brane Theory
														    in Ramond-Neveu-Schwarz formulation}.
								                           Some physicists' well-accepted facts/rules are taken for granted here
														    without further explanations or justifications.}

As a preliminary, readers are referred to [L-Y4: Sec.\ 5.1] (D(11.2)) for a detailed explanation
 of the following statement:
 \begin{itemize}
  \item[\Large $\cdot$]  [{\sl Ramond-Neveu-Schwarz (RNS) fermionic string }]\hspace{1.6em}
 {\it A Ramond-Neveu-Schwarz (RNS) fermionic string moving in a Minkowski space-time
       ${\Bbb M}^{(d-1)+1}$ as studied in {\rm [Gr-S-W: Chap.\ 4]}
      can be described by a map $\widehat{f}: \widehat{\Sigma}\rightarrow {\Bbb M}^{(d-1)+1}$
	  from a $2$-dimensional superspace to ${\Bbb M}^{(d-1)+1}$
	  in the sense of Grothendieck's Algebraic Geometry.}
 \end{itemize}
Replacing the world-sheet $\widehat{\Sigma}$ of the fermionic string
 by the world-volume
     $\widehat{X}$ (a superspace in the case of simple D-branes)   or
     $\widehat{X}^{\!A\!z}$  (an Azumaya/matrix superspace in the case of stacked D-branes),
 by the same sense of the above statement
 we call a dynamical fermionic D-brane moving in a space(-time) $Y$
  that is modelled,
    with the additional fermionic-string-induced D-brany structure of Chan-Paton bundle
	       with a connection on $\widehat{X}$ or $\widehat{X}^{\!A\!z}$,
  by a map from $\widehat{X}$ or $\widehat{X}^{\!A\!z}$ to a general target-space(-time) $Y$
  a {\it D-brane in the Ramond-Neveu-Schwarz formulation}.

\bigskip

\begin{flushleft}
{\bf Fermionic D3-branes and $\widehat{D}$-chiral maps $\widehat{\varphi}$ from
	   $(\widehat{X}^{\!A\!z},\widehat{\cal E}; \widehat{\nabla})$ to a K\"{a}hler manifold}
\end{flushleft}
Beginning from the string-theory side,
excitations of RNS fermionic (oriented) open strings with end-points on a D-brane create
 the spetrum of fields on the D-brane world-volume.
 
\medskip

\centerline{\includegraphics[width=0.80\textwidth]{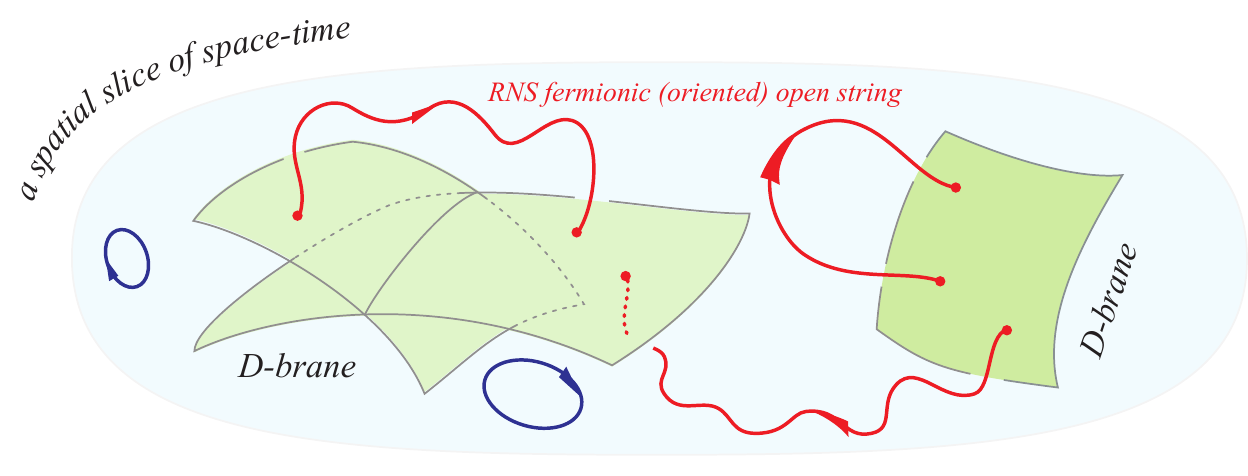}}

\medskip
 
\noindent
After a choice of the Gliozzi-Scherk-Olive (GSO) projection on the spectrum,
 what remains forms a collection of multiplets under the world-volume supersymmetry.
In particular, there are now fermionic fields on the D-brane world-volume.
The massless spectrum consists of massless scalar multiplets and a  massless vector multiplet.
The former collectively describe how the fermionic D-branes fluctuates in the target-space-time
 while the latter gives a (super) connection on the Chan-Paton bundle on the fermionic D-brane world-volume.
For the RNS open string to govern also the dynamics of these massless multiplets
  in a way that is supersymmetric with respect to the D-brane world-volume supersymmetry,
 some appropriate constraint on the geometry of the target space-time is required.
 
%
%
%
%

When a collection of fermionic D-branes coincide,
the rank of the Chan-Paton bundle on the common D-brane world-volume
 increases to the multiplicity of the coincidence and
 the multiplets in the spectrum on the common fermionic D-brane world-volume are enhanced
 to endomorphism/matrix-valued.

\medskip
 
\centerline{\includegraphics[width=0.80\textwidth]{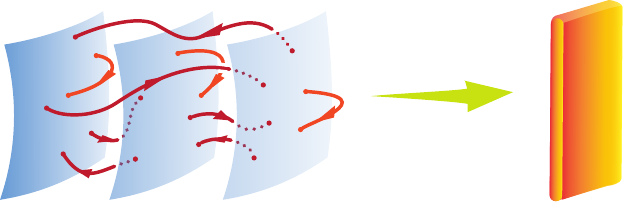}}

\bigskip
  
%
%
%
%

A careful re-examination of the above stringy picture from the aspect of Grothendieck's Algebraic Geometry,
   combined with mathematical naturality,
 gives rise to the following mathematical objects
 in the general case of fermionic coincident/stacked D-branes:\footnote{Readers are referred to
                                                                         [L-Y1] (D(1)) for the very careful explanation
 																		  in the case of bosonic D-branes in the realm of Algebraic Geometry.
																		For the current case, Super $C^\infty$-Algebraic Geometry is involved
																		  but the reasoning is completely the same.
																		See also [Liu].}
 \begin{itemize}
  \item[\LARGE $\cdot$] [{\it meaning of the mass scalar multiplets collectively}]
   \begin{itemize}
    \item[(i)]
    The function-ring of the world-volume of the fermionic D-brane is enhanced to\\
      $C^\infty(\End_{\widehat{\Bbb C}}(\widehat{E}))$,
	  where $\widehat{E}$ is the Chan-Patan bundle on the fermionic D-brane world-volume.
	
    \item[(ii)]	
	The map $\widehat{\varphi}$ that describes how the fermionic stacked D-branes move
 	 around in the target space(-time) $Y$
	  is defined contravariantly via a ring-homomorphism\\
	  $\widehat{\varphi}: C^\infty(Y)
	      \rightarrow C^\infty(\End_{\widehat{\Bbb C}(\widehat{E})})$
	 that satisfies some SUSY-Rep Compatible Condition.
   \end{itemize}	
   
  \item[\LARGE $\cdot$]  [{\it meaning of the vector multiplet}]
   \begin{itemize}
    \item[]
    There is a connection on $\widehat{E}$ that is defined via the vector multiplet.
   \end{itemize}
 \end{itemize}
In this way a supersymmetric D-brane moving in a target space(-time) $Y$ is modelled by
 a SUSY-rep compatible map $\widehat{\varphi}$
 from an Azumaya/matrix superspace $\widehat{X}^{\!A\!z}$
 with a fundamental module with a SUSY-rep compatible connection $(\widehat{\cal E},\widehat{\nabla})$
 to $Y$ with compatible geometry:

\medskip

\centerline{\includegraphics[width=0.80\textwidth]{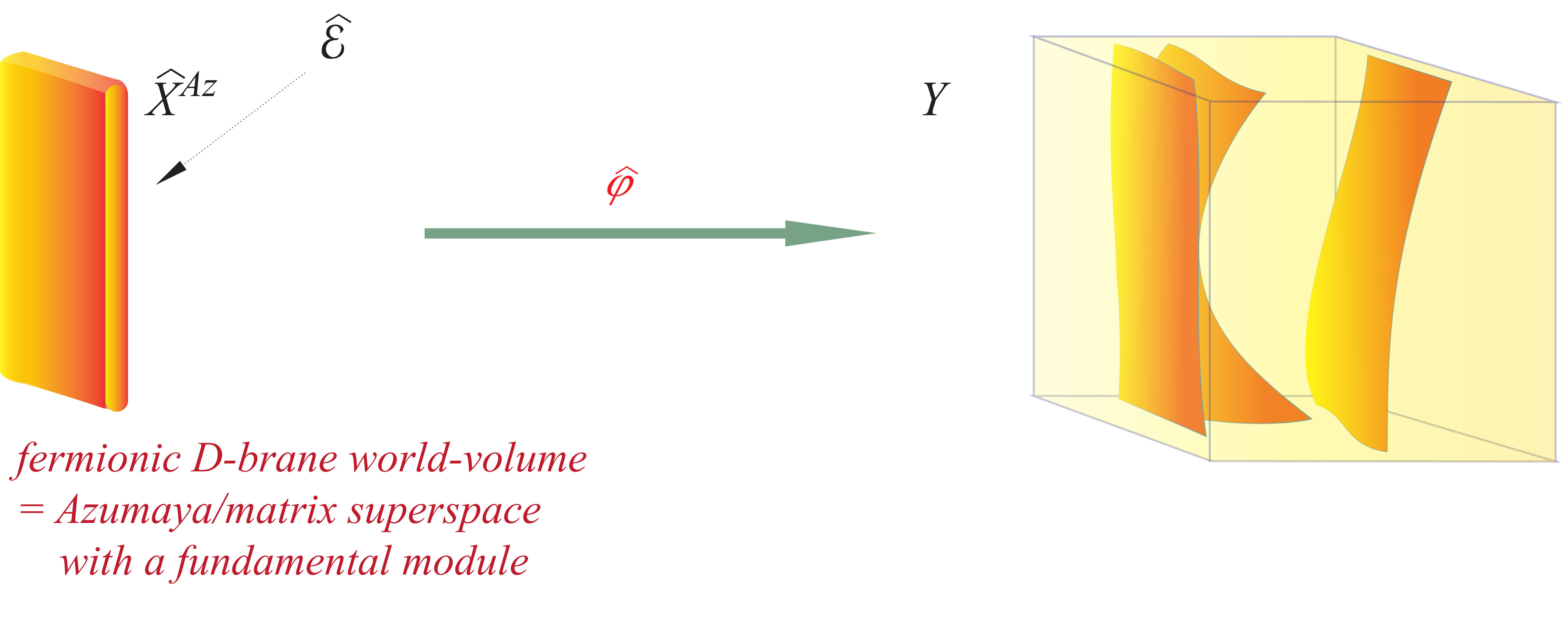}}

\bigskip

In particular, fermionic D3-branes moving in $Y$ with $d=4$, $N=1$ world-volume supersymmetry
 are modelled by
 $\widehat{D}$-chiral maps $\widehat{\varphi}$ from a $d=4$, $N=1$ Azumaya/matrix superspace
 with a fundamental module with a SUSY-rep compatible connection
 $(\widehat{X}^{\!A\!z}, \widehat{\cal E};\widehat{\nabla})$ to $Y$ with $Y$ K\"{a}hler.

\bigskip

\begin{flushleft}
{\bf The standard supersymmetry-invariant action functional for $\widehat{\nabla}$}
\end{flushleft}
(Cf.\ e.g., [G-G-R-S: Sec.\ 4.2] and [We-B: Chap.\ VII].)
%
%
%
%
%
%
%
Let $\widehat{\nabla}$ be the simple hybrid connection on $\widehat{\cal E}$
 associated to a vector superfield  in the Wess-Zumino gauge
{\small
 $$
    V\ :=\;  \sum_{\gamma, \dot{\delta}}
	                V_{(\gamma\dot{\delta})}\theta^\gamma\bar{\dot{\delta}}\,
       	     +\, \sum_{\dot{\delta}}V_{(12\dot{\delta})}		
	                \theta^1\theta^2\bar{\theta}^{\dot{\gamma}}\,
			 +\, \sum_{\gamma} V_{(\gamma\dot{1}\dot{2})}
			        \theta^\gamma\bar{\theta}^{\dot{1}}\bar{\theta}^{\dot{2}}\,
			 +\, V_{(12\dot{1}\dot{2})}
			        \theta^1\theta^2\bar{\theta}^{\dot{1}}\bar{\theta}^{\dot{2}}\;\;
	 \in\; \widehat{\cal O}_X^{A\!z}\,.
 $$}Then
one needs to construct a $d$-chiral section $W_{\alpha}$, $\alpha=1, 2$,
  of $\widehat{\cal O}_X^{A\!z}$ from $V$ that encodes all the curvature information of
  $\widehat{\nabla}$.
Presumably, $W_1$ and $W_2$  should be obtained from the computation of the curvature tensor
 of $\widehat{\nabla}$.
Once having such $W_1$ and $W_2$,
 a general fact on the construction of supersymmetry-invariant action functionals (e.g.\ [Bi: Sec.\ 4.3])
 says that, up to boundary terms,
 the following action functional for $\widehat{\nabla}$
  $$
   S_{\mbox{\tiny SYM}}(\widehat{\nabla})\;
     :=\;  \frac{1}{g_{\mbox{\tiny\it gauge}}^{\,2}}\,
	           \Real \int_{\widehat{X}} \Tr (W_1W_2) \, d^{\,4}x d\theta^2 d\theta^1\;
	  =\;  \frac{1}{g_{\mbox{\tiny\it gauge}}^{\,2}}\,
	           \Real \int_X \Tr(W_1W_2)_{(12)}\,d^{\,4}x
  $$
 is invariant under the $d=4$, $N=1$ supersymmetry and
 gives the analogue of the {\it super-Yang-Mills action functional} for the connection $\widehat{\nabla}$.
Here, $g_{\mbox{\tiny\it gauge}}$ is the gauge coupling constant.

\bigskip

\begin{sexample} {\bf  [test computation of $S_{\mbox{\tiny\rm SYM}}(\widehat{\nabla})$]}\;
{\rm
 As suggested by, e.g., [G-G-R-S: Sec.\ 4.2] and [We-B: Chap.\ VII] with some necessary adaptation,
  one may consider the following sections of $\widehat{\cal O}_X^{A\!z}$:
 $$
   W_{\alpha}\;=\;
    e_{1^{\prime\prime}}   e_{2^{\prime\prime}}
	  \mbox{\large $($}(e_{\alpha^\prime}e^{-V})e^V \mbox{\large $)$}\,,
	  \hspace{2em}
	  \;\; \alpha = 1, 2.
 $$
 Since
  $\{e_{1^{\prime\prime}}, e_{2^{\prime\prime}}\}
     =\{e_{1^{\prime\prime}}, e_{1^{\prime\prime}}\}
	 =\{e_{2^{\prime\prime}}, e_{2^{\prime\prime}}\}
     =0$,
 $$
  e_{\beta^{\prime\prime}}W_\alpha\;=\; 0\,,
     \hspace{2em}
     \mbox{for $\alpha=1, 2$, $\beta^{\prime\prime}=1^{\prime\prime}, 2^{\prime\prime}$}\,.
 $$
 I.e.\ both $W_1$ and $W_2$ are $d$-chiral sections of $\widehat{\cal O}_X^{A\!z}$.
 The very format of $W_1$ and $W_2$ implies that they do contain some curvature information
  of $\widehat{\nabla}$.
 A general fact on the construction of supersymmetry-invariant action functionals (e.g.\ [Bi: Sec.\ 4.3])
 now says that the following action functional
  $$
   S(\widehat{\nabla})\;
     :=\;  \frac{-1}{16\, g_{\mbox{\tiny\it gauge}}^{\,2}}\,
	           \Real \int_{\widehat{X}} \Tr (W_1W_2) \, d^{\,4}x d\theta^2 d\theta^1\;
	  =\;  \frac{-1}{16\, g_{\mbox{\tiny\it gauge}}^{\,2}}\,
	           \Real \int_X \Tr(W_1W_2)_{(12)}\,d^{\,4}x
  $$
  gives a supersymmetry-invariant action functional for $\widehat{\nabla}$.
 Here, $g_{\mbox{\tiny\it gauge}}$ is the gauge coupling constant and
  the factor $-1/16$ is added as a normalization factor from a hindsight from the explicit computation below.

 Written out  explicitly,
 {\small
 \begin{eqnarray*}
   W_\alpha & = &
     V_{(\alpha\dot{1}\dot{2})}\,
	 +\, \sum_{\gamma}
	       \mbox{\Large $($}
		       (1-\delta_{\alpha\gamma})(-1)^\gamma\, V_{(12\dot{1}\dot{2})}
			 + \delta_{\alpha\gamma}\cdot [V_{(\gamma\dot{2})}, V_{(\alpha\dot{1})}]
			                                                                               \\[-2ex]
    && \hspace{7em}			
             + (1-\delta_{\alpha\gamma})
			    ( \mbox{\large $\frac{1}{2}$}\, [V_{(2\dot{2})}, V_{(1\dot{1})}] 				
                     + \mbox{\large $\frac{1}{2}$}\, [V_{(1\dot{2})}, V_{(2\dot{1})}])
			 + \sqrt{-1}\, \mbox{\normalsize $\sum$}_\mu
			      \sigma^\mu_{\gamma\dot{2}} \partial_\mu V_{(\alpha\dot{1})}\\[-.2ex]
   && \hspace{8em}				
			 -  \sqrt{-1}\,\mbox{\normalsize $\sum$}_\mu
			      \sigma^\mu_{\gamma\dot{1}} \partial_\mu V_{(\alpha\dot{2})}
			 + \sqrt{-1}\,\mbox{\normalsize $\sum$}_\mu
			      \sigma^\mu_{\alpha\dot{2}} \partial_\mu V_{(\gamma\dot{1})}
			 - \sqrt{-1}\,\mbox{\normalsize $\sum$}_\mu
			      \sigma^\mu_{\alpha\dot{1}} \partial_\mu V_{(\gamma\dot{2})}
		   \mbox{\Large $)$}\, \theta^\gamma\, \\[.6ex]
   && +\, \mbox{\Large $($}
                 [V_{(\alpha\dot{1})}, V_{(12\dot{2})}]
			 + [V_{(12\dot{1})}, V_{(\alpha\dot{2})}]
			 + 2\sqrt{-1}\, \mbox{\normalsize $\sum$}_\mu
			     \sigma^\mu_{\alpha\dot{1}}\partial_\mu V_{(12\dot{2})}
			 - 2\sqrt{-1}\, \mbox{\normalsize $\sum$}_\mu
			     \sigma^\mu_{\alpha\dot{2}}\partial_\mu V_{(12\dot{1})}
               \mbox{\Large $)$}\,\theta^1\theta^2   \\[.6ex]
  &&  +\, (\,\mbox{terms of $\bar{\theta}$-degree $\ge 1$}\,)
 \end{eqnarray*}
  }

 \noindent
 and

 {\small
 \begin{eqnarray*}
    S(\widehat{\nabla})
	 & = &   \frac{-1}{16\, g_{\mbox{\tiny\it gauge}}^{\,2}}\,
	           \Real \int_X \Tr
			    \mbox{\LARGE $($}
                 V_{(1\dot{1}\dot{2})}
				 \cdot \mbox{\Large $($}
				     [V_{(2\dot{1})}, V_{(12\dot{2})}]
				  + [V_{(12\dot{1})}, V_{(2\dot{2})}]   \\[-1ex]
     && \hspace{13em}				
				  + 2\sqrt{-1}\,\mbox{\normalsize $\sum$}_\mu
				        \sigma^\mu_{2\dot{1}} \partial_\mu V_{(12\dot{2})}
				  -  2\sqrt{-1}\,\mbox{\normalsize $\sum$}_\mu
				        \sigma^\mu_{2\dot{2}} \partial_\mu V_{(12\dot{1})}
				           \mbox{\Large $)$}\,				   \\[1ex]
     && +\, \mbox{\Large $($}
                     [V_{(1\dot{2})}, V_{(1\dot{1})}]
				  + 2\sqrt{-1}\,\mbox{\normalsize $\sum$}_\mu	
				       \sigma^\mu_{1\dot{2}} \partial_\mu V_{(1\dot{1})}
				  -  2\sqrt{-1}\,\mbox{\normalsize $\sum$}_\mu
				       \sigma^\mu_{1\dot{1}} \partial_\mu V_{(1\dot{2})}
                  \mbox{\Large $)$} \\
     && \hspace{2em}				
                  \cdot
                  \mbox{\Large $($}
				     [V_{(2\dot{2})}, V_{(2\dot{1})}]
				  + 2\sqrt{-1}\,\mbox{\normalsize $\sum$}_\mu
				      \sigma^\mu_{2\dot{2}} \partial_\mu V_{(2\dot{1})}
				  -  2\sqrt{-1}\, \mbox{\normalsize $\sum$}_\mu
				      \sigma^\mu_{2\dot{1}} \partial_\mu V_{(2\dot{2})}
                  \mbox{\Large $)$}			\\	
     && -\, \mbox{\Large $($}
	               \mbox{\large $\frac{1}{2}$}\, [V_{(2\dot{2})}, V_{(1\dot{1})}] 				
                 + \mbox{\large $\frac{1}{2}$}\, [V_{(1\dot{2})}, V_{(2\dot{1})}]
				 + V_{(12\dot{1}\dot{2})}
				 + \sqrt{-1}\,\mbox{\normalsize $\sum$}_\mu
				      \sigma^\mu_{2\dot{2}} \partial_\mu V_{(1\dot{1})} \\
       && \hspace{6em}					
				 -  \sqrt{-1}\,\mbox{\normalsize $\sum$}_\mu
				      \sigma^\mu_{2\dot{1}} \partial_\mu V_{(1\dot{2})}
				 + \sqrt{-1}\, \mbox{\normalsize $\sum$}_\mu
				      \sigma^\mu_{1\dot{2}} \partial_\mu V_{(2\dot{1})}
				 -  \sqrt{-1}\, \mbox{\normalsize $\sum$}_\mu
				      \sigma^\mu_{1\dot{1}} \partial_\mu V_{(2\dot{2})}
                 \mbox{\Large $)$}	 \\
    &&  \hspace{2em}
            \cdot 	\mbox{\Large $($}
			      \mbox{\large $\frac{1}{2}$}\, [V_{(2\dot{2})}, V_{(1\dot{1})}] 				
                 + \mbox{\large $\frac{1}{2}$}\, [V_{(1\dot{2})}, V_{(2\dot{1})}]				
				 -  V_{(12\dot{1}\dot{2})}
				 + \sqrt{-1}\,\mbox{\normalsize $\sum$}_\mu
				      \sigma^\mu_{2\dot{2}} \partial_\mu V_{(1\dot{1})} \\
       && \hspace{6em}					
				 -  \sqrt{-1}\,\mbox{\normalsize $\sum$}_\mu
				      \sigma^\mu_{2\dot{1}} \partial_\mu V_{(1\dot{2})}
				 + \sqrt{-1}\, \mbox{\normalsize $\sum$}_\mu
				      \sigma^\mu_{1\dot{2}} \partial_\mu V_{(2\dot{1})}
				 -  \sqrt{-1}\, \mbox{\normalsize $\sum$}_\mu
				      \sigma^\mu_{1\dot{1}} \partial_\mu V_{(2\dot{2})}
                 \mbox{\Large $)$}	 \\		
    && +\, \mbox{\Large $($}
				     [V_{(1\dot{1})}, V_{(12\dot{2})}]
				  + [V_{(12\dot{1})}, V_{(1\dot{2})}]   \\
       && \hspace{6em}				
				  + 2\sqrt{-1}\,\mbox{\normalsize $\sum$}_\mu
				        \sigma^\mu_{1\dot{1}} \partial_\mu V_{(12\dot{2})}
				  -  2\sqrt{-1}\,\mbox{\normalsize $\sum$}_\mu
				        \sigma^\mu_{1\dot{2}} \partial_\mu V_{(12\dot{1})}
				           \mbox{\Large $)$}
				\cdot V_{(2\dot{1}\dot{2})}
				\mbox{\LARGE $)$}\, d^{\,4}x\,.
 \end{eqnarray*}}

 An expression of such explicitness allows one to examine some further detail
 to realize that $S(\widehat{\nabla})$ as defined is not an extension of the usual Yang-Mills action functional.
 One needs to derive the appropriate $W_1$ and $W_2$ from the foundations in Sec.\ 2.2 and Sec.\ 2.3.
}\end{sexample}

\bigskip

\begin{flushleft}
{\bf Given $\widehat{\nabla}$, the standard supersymmetry-invariant action functional for $\widehat{\varphi}$\,:\\
         Zumino meeting Polchinski \& Grothendieck}
\end{flushleft}
Let
 $$
  \widehat{\varphi}\;:\; (\widehat{X}^{\!A\!z}, \widehat{\cal E}; \widehat{\nabla})\;
    \longrightarrow\; Y
 $$
 be a $\widehat{D}$-chiral map
 from a $d=4$, $N=1$ Azumaya/matrix superspace with a fundamental module with a connection
 to a K\"{a}hler manifold $Y$, defined contravariantly by an equivalence class of gluing systems
 of ring-homomorphisms
 $$
  \widehat{\varphi}^\sharp\;:\; {\cal O}_Y^{\,C^\infty}\; \longrightarrow\; \widehat{\cal O}_X^{A\!z}\,.
 $$
Assume that $\widehat{\varphi}(\widehat{X}^{\!A\!z})$ is contained in an open set of $Y$
 on which the K\"{a}hler metric admits a K\"{a}hler potential $h$.
Then,
   guided by the construction [Zu] of Bruno Zumino and
   as a consequence of a general fact in supersymmetry (e.g.\ [Bi: Sec.\ 4.3]),
 the following action functional for $\widehat{\varphi}$
 $$
   S^{\widehat{\nabla}}(\widehat{\varphi})\;
	 :=\;  \Real  T_3\, \int_{\widehat{X}}\Tr \widehat{\varphi}^{\sharp}(h)\,
	          d^{\,4}x\, d\bar{\theta}^{\dot{2}}d\bar{\theta}^{\dot{1}}
			                      d\theta^2 d\theta^1\;
	 =\;  \Real  T_3\, \int_{\widehat{X}}
	         \Tr (\widehat{\varphi}^{\sharp}(h))_{(12\dot{1}\dot{2})}\,
	          d^{\,4}x
 $$
 is invariant under the $d=4$, $N=1$ supersymmetry on $\widehat{X}$,
 up to boundary terms.
Here $T_3$ is a constant for fermionic D3-brane tension.
Since $\widehat{\varphi}$ is $\widehat{D}$-chiral,
 a shift of $h$ by a holomorphic or an antiholomorphic function gives rise only to boundary terms
 of $S^{\nabla}(\widehat{\varphi})$.
Thus, for $\widehat{\varphi}$ $\widehat{D}$-chiral and up to boundary terms,
 $S^{\nabla}(\widehat{\varphi})$
 depends only on the K\"{a}hler metric on $Y$.
As a lesson learned from
        [L-Y6] (D(13.1)), [L-Y7] (D(13.2.1)) and [L-Y8] (D(13.3)),
  to extract information from $\widehat{\varphi}^{\sharp}(h)$,
    one needs to impose an appropriate {\it admissible condition}
	on the pair $(\widehat{\nabla}, \widehat{\varphi})$ in addition to
	the requirement that
	  $\widehat{\varphi}$ be $\widehat{D}$-chiral and $\widehat{\nabla}$ be SUSY-rep compatible.
Such admissible condition reflects the physics requirement that
 the gauge field $\widehat{\nabla}$ on $\widehat{\cal E}$ be massless from the aspect of
 super open strings.
 
\bigskip
 
\begin{sexample}  {\bf [test computation of $S^{\widehat{\nabla}}(\widehat{\varphi})$]}\;
{\rm
 Let
   $Y={\Bbb C}^n$ as a K\"{a}hler manifold, with complex coordinate functions
	  $(z^1, \,\cdots \,, z^n)= (y^1+\sqrt{-1}y^1,\,\cdots \,, y^{2n-1}+\sqrt{-1}y^{2n})$,   and   
   $$
     \widehat{\varphi}\; :\; (\widehat{X}^{\!A\!z}, \widehat{\cal E}; \widehat{\nabla})\;
      \longrightarrow\; 	 Y
   $$
    be a $\widehat{D}$-chiral map defined contravariantly by a ring-homomorphism
   $$
     \widehat{\varphi}^{\sharp}\;:\;   C^{\infty}(Y)\;
	    \longrightarrow \; C^{\infty}(\End_{\widehat{\Bbb C}}(\widehat{E}))
   $$
    over ${\Bbb R}\hookrightarrow {\Bbb C}$ that is specified by
   $$
     y^i \;\longmapsto\;  \widehat{m}^i, \hspace{1em}i=1,\,\ldots\,,\, 2n\,,
   $$
   such that
	 \begin{itemize}
	  \item[\LARGE $\cdot$]
       $\{\widehat{m}^1,\,\cdots \,, \, \widehat{m}^{2n}\}$ lies in the weak $C^{\infty}$-hull
	   of $C^{\infty}(\End_{\widehat{\Bbb C}}(\widehat{E}))$,
	
	  \item[\LARGE $\cdot$]
	   $\widehat{\varphi}^{\sharp}(z^i)
	      := \widehat{\varphi}^\sharp(y^{2i-1})+\sqrt{-1}\,\widehat{\varphi}^\sharp(y^{2i})$
		 are $\widehat{D}$-chiral and\\
       $\widehat{\varphi}^{\sharp}(\bar{z}^i)
	      := \widehat{\varphi}^\sharp(y^{2i-1})- \sqrt{-1}\,\widehat{\varphi}^\sharp(y^{2i})$
		 are $\widehat{D}$-antichiral, for $i=1,\,\cdots\,, n$.
	 \end{itemize}
  (Cf.\ Theorem~4.3.)
         %
		 %
  Recall the normal form of $\widehat{D}$-chiral sections and $\widehat{D}$-antichiral sections of
	   $\widehat{\cal O}_X^{A\!z}$  in Proposition~3.2.2,
    with a conversion of notations\footnote{Some
	                                                                           spinor notation convention in this example follows
																		       [We-B: Appendices A \& B] of Wess and Bagger. 	
	                                                                         } 
	for an easy comparison with [We-B: Chap.\ XXII] of Wess and Bagger:\;
	($\partial_\mu := \partial/\partial x^\mu$, $\mu=0,1,2,3$,\, for formulae below)
	{\small
	\begin{eqnarray*}
	 \Phi^i & := & \widehat{\varphi}^{\sharp}(z^i)\\
	 & \:= &  A^i\, +\, \sqrt{2}\,\theta\chi^i\,
	                +\, \sqrt{-1}\,\sum_\mu\theta\sigma^\mu\bar{\theta} D_{\partial_\mu}A^i\,
				    +\, \theta\theta F^i\,
					-\, \mbox{\Large $\frac{\sqrt{-1}}{\sqrt{2}}$}\,
					      \sum_\mu \theta\theta D_{\partial_\mu}\chi^i \sigma^\mu\bar{\theta}\,
	                +\, \mbox{\Large $\frac{1}{4}$}\,
					     \theta\theta\bar{\theta}\bar{\theta}\,\square^D\!A^i \\
	 && +\, \mbox{(unlike terms depending on $(A^i, \chi^i_\alpha, F^i)$ and $\widehat{D}$)}	
	\end{eqnarray*}}
	and
	{\small
	\begin{eqnarray*}
	 \Phi^{+ i} & := & \widehat{\varphi}^{\sharp}(\bar{z}^i)\\
	 & \:= &  A^{\ast i}\, +\, \sqrt{2}\,\bar{\theta}\bar{\chi}^i\,
	                -\, \sqrt{-1}\,\sum_\mu\theta\sigma^\mu\bar{\theta} D_{\partial_\mu}A^{\ast i}\,
				    +\, \bar{\theta}\bar{\theta} F^{\ast i}\,
					+\, \mbox{\Large $\frac{\sqrt{-1}}{\sqrt{2}}$}\,
					      \sum_\mu \bar{\theta}\bar{\theta}\theta\sigma^\mu D_{\partial_\mu}\bar{\chi}^i
	                +\, \mbox{\Large $\frac{1}{4}$}\,
					     \theta\theta\bar{\theta}\bar{\theta}\,\square^D\!A^{\ast i} \\
	 && +\, \mbox{(unlike terms depending on
	               $(A^{\ast i}, \bar{\chi}^i_{\dot{\beta}}, F^{\ast i})$ and $\widehat{D}$)}\,,		 
	\end{eqnarray*}}
	 for $i=1,\,\ldots\,,\, n$.
	Here,
	 \begin{itemize}
	  \item[\LARGE $\cdot$]
        terms in the normal form that have no counterparts in [We-B: Chap.\ XXII] are omitted;
		
	  \item[\LARGE $\cdot$]	
	   $\chi^i=(\chi^i_1, \chi^i_2)$,
	   $\bar{\chi}^i=(\bar{\chi}^i_{\dot{1}}, \bar{\chi}^i_{\dot{2}})$;
	   $A^i$,  $\chi^i_1$, $\chi^i_2$ , $F^i$,
	   $A^{\ast i}$, $\bar{\chi}^i_{\dot{1}}$, $\bar{\chi}^i_{\dot{2}}$, $F^{\ast i}
	    \in  {\cal O}_X^{A\!z}$;
	
	  \item[\LARGE $\cdot$]
	   $D$ is the connection on ${\cal O}_X^{A\!z}$ from the restriction of
	   $\widehat{D}$ on $\widehat{\cal O}_X^{A\!z}$.
     \end{itemize}	
  Note that, by construction,
    the $2n$ sections $\Phi^1,\,\cdots\,,\, \Phi^n,\Phi^{\ast 1},\,\cdots\,,\, \Phi^{\ast n}$
      of $\widehat{\cal O}_X^{A\!z}$ commute with each other and
    the $2n$ sections $A^1,\,\cdots\,,\, A^n, A^{\ast 1},\,\cdots\,,\, A^{\ast n}$
      of ${\cal O}_X^{A\!z}$ commute with each other.
	
  Let $h\in C^{\infty}(Y)$ be a K\"{a}hler potential for the K\"{a}hler metric on $Y$.
  Assume in addition an {\it admissible condition} that
    \begin{itemize}
	 \item[] \hspace{-1.7em}($\ast\,\!_{\mbox{\rm \scriptsize extreme}}$)\hspace{1em}
	  \begin{itemize}
	   \item[(i)]
	    $\Phi^1-A^1,\,,\cdots\,,\, \Phi^n-A^n,\,
		  \Phi^{\ast 1}-A^{\ast 1},\,\cdots\,,\, \Phi^{\ast n}-A^{\ast n}$
         commute with each other.		
		
	   \item[(ii)]
	    The total collection
	    $$
		 \begin{array}{l}
		   A^i\,,\;  \chi^i_1\,,\;   \chi^i_2\,,\;  F^i\,,\;
	       A^{\ast i}\,,\;   \bar{\chi}^i_{\dot{1}}\,,\;  \bar{\chi}^i_{\dot{2}}\,,\;  F^{\ast i}\,,\;
		   D_\mu A^i\,,\;  D_\mu \chi^i_1\,,\;   D_\mu\chi^i_2\,,\;
	       D_\mu A^{\ast i}\,,\;
		   D_\mu\bar{\chi}^i_{\dot{1}}\,,\;  D_\mu\bar{\chi}^i_{\dot{2}}\,, \\[1.2ex]
		   i=1,\,\ldots\,, n\,,\; \mu=0,1,2,3\,,		
         \end{array}		
		$$
	    of sections of ${\cal O}_X^{A\!z}$	commute with each other.	
	   (Here, $D_{\mu}:= D_{\partial_\mu}$.)
	  \end{itemize}
	\end{itemize}
 Then the expansion in Lemma~4.9 applies to give:\;  (cf.\ [We-B: Eq.\ (22.9)])
   %
  {\small
   \begin{eqnarray*}
    4\cdot(\widehat{\varphi}^\sharp(h))_{(12\dot{1}\dot{2})}
	 & = &
	     -\, \sum_{i,\bar{j}; \mu} g_{i\bar{j}}(D_\mu A^i)(D^\mu\!A^{\ast j})\,
		 -\, \sqrt{-1}\,\sum_{i\bar{j}; \mu}
		       g_{i\bar{j}}\bar{\chi}^j \bar{\sigma}^\mu D_\mu \chi^i\,   \\
     &&
        -\, \sqrt{-1}\,\sum_{i,j,k; l, \mu}	
		    g_{l\bar{k}}\Gamma_{ij}^l \bar{\chi}^k \bar{\sigma}^\mu \chi^i D_\mu A^j		
		+\, \mbox{\Large $\frac{1}{4}$}\,
		       \sum_{i\bar{j}, k, \bar{l}}g_{i\bar{j}, k\bar{l}}
			   \chi^i\chi^k\bar{\chi}^j\bar{\chi}^l			   \\ 	
	 &&
	    +\, \sum_{i,\bar{j}}g_{i\bar{j}}F^i F^{\ast j}\,
	    -\,  \mbox{\Large $\frac{1}{2}$}\,
		      \sum_{i,\bar{j}, \bar{k}; \bar{l}}
		        g_{i\bar{l}}\Gamma^{\bar{l}}_{\bar{j}\bar{k}}
				F^i\bar{\chi}^j\bar{\chi}^k\,
        -\,  \mbox{\Large $\frac{1}{2}$}\,
              \sum_{\bar{i}, j, k; l}		
			    g_{l\bar{i}}\Gamma^l_{jk} F^{\ast i}\chi^j\chi^k\,   \\[.6ex]	
    && +\, (\mbox{other terms unlike the previous seven})\,.
   \end{eqnarray*}}
   where
     $D^\mu$ is the raising of $D_\mu$ by the Minkowski metric on $X$;
     $(g_{i\bar{j}})$ is the K\"{a}hler metric on $Y$,
      $g_{i\bar{j}, \mbox{\tiny $\bullet$}}$	its derivatives and
	 $\Gamma^{\mbox{\tiny $\bullet^\prime$}}_{\mbox{\tiny$\bullet\,\bullet^{\prime\prime}$}}$
	   its Christoffel symbols,
   all in terms of the complex coordinate functions
     $(z^1,\,\cdots\,, z^n, \bar{z}^1,\,\cdots\,, \bar{z}^n)$ on $Y$  and
     evaluated at $(A^1,\,\cdots\,,\, A^n, A^{\ast 1},\,\cdots\,,\, A^{\ast n})$.

 \noindent\hspace{40.8em}$\square$	
}\end{sexample}

\bigskip

The above test-example shows that
   the action functional\\
     $\;S^{\widehat{\nabla}}(\widehat{\varphi})\;
	       :=\;  \Real  \int_{\widehat{X}}\Tr \widehat{\varphi}^{\sharp}(h)\,
	                d^{\,4}x\, d\bar{\theta}^{\dot{2}}d\bar{\theta}^{\dot{1}}
			                      d\theta^2 d\theta^1\;$
   is indeed a generalization of the construction [Zu] of Bruno Zumino.
The first term  \;$-\,\sum_{i,\bar{j}; \mu} g_{i\bar{j}}(D_\mu A^i)(D^\mu\!A^{\ast j})$\;
 of \;$4\cdot(\widehat{\varphi}^\sharp(h))_{(12\dot{1}\dot{2})}$\;
 in the example justifies $S^{\widehat{\nabla}}(\widehat{\varphi})$ thus defined
 as a supersymmetric extension of the term\\
 $\;\frac{1}{2}\,T_3\,\Real \int_X \Tr \langle D\varphi, D\varphi \rangle d^{\,4}x\;$
 in the standard action functional for bosonic, metrically flat D3-branes, cf.\ [L-Y8: Sec.\ 4] (D(13.3)).

\bigskip

\begin{flushleft}
{\bf (Fundamental) $N=1$  Super (Stacked) D3-Brane Theory in the RNS formulation}
\end{flushleft}
Having the SUSY-rep compatible pairs $(\widehat{\nabla}, \widehat{\varphi})$  from Sec.\ 2 -- Sec.\ 4
 that describe fermionic D3-branes moving in a K\"{a}hler target space-time $Y$,
once a supersymmetric action functional $S(\widehat{\nabla}, \widehat{\varphi})$
 is also properly constructed,
one then has the same starting point as that for Superstring Theory in the Ramond-Neveu-Schwarz formulation:
	
 \bigskip

 \centerline{
{\footnotesize\it
 \begin{tabular}{|l||l|}\hline
   \hspace{3em}{\bf RNS-superstring theory}\rule{0ex}{1.2em}
       & \hspace{3em}{\bf super D3-brane theory in RNS formulation}\\[.6ex] \hline\hline
     $\begin{array}{l}
	    \mbox{fundamental objects}: \\
 		 \hspace{3em}
		 \mbox{open or closed fermionic string}\\
		 \hspace{6em}\includegraphics[width=0.18\textwidth]{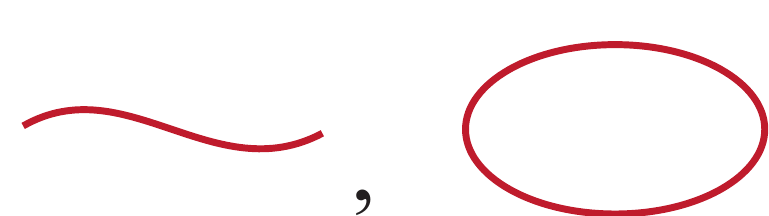}
	   \end{array}$\hspace{.6em}
       &  $\begin{array}{l}
	           \mbox{fundamental objects}:\rule{0ex}{1.2em}\\
			      \hspace{1em}\mbox{\scriptsize Azumaya/matrix}\\
				  \hspace{3em}\mbox{\scriptsize $3$-dimensional superspace}\\
 				  \hspace{1em}\mbox{\scriptsize with a fundamental module}\\
				  \hspace{3em}\mbox{\scriptsize with a connection}\\[1ex]
			   \end{array}$
	           \raisebox{-6ex}{\includegraphics[width=0.25\textwidth]{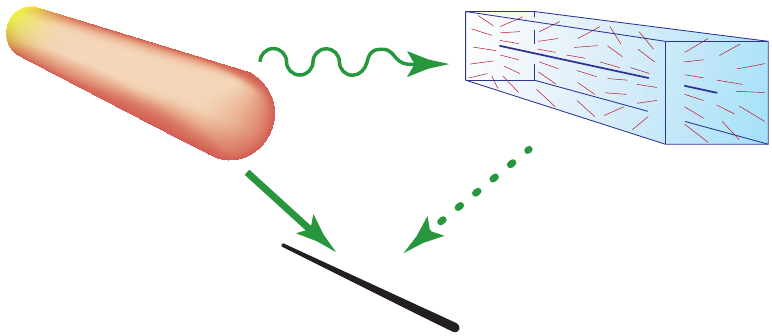}}
			   \hspace{1ex}
			   \\[3.8ex] \hline	   	
     $\begin{array}{l}
	    \mbox{fermionic string world-sheet}: \\
 		 \hspace{3em}
		 \mbox{$2$-dimensional superspace $\,\widehat{\Sigma}$}
	   \end{array}$
       &  $\begin{array}{l}
	           \mbox{fermionic D3-brane world-volume}:\rule{0ex}{1.2em}\\
			      \hspace{3em}\mbox{Azumaya/matrix $4$-dimensional superspace}\\
 				  \hspace{3em}\mbox{with a fundamental module with a connection}\\
				  \hspace{4em}
				    \mbox{$(\widehat{X}^{\!A\!z}, \widehat{\cal E}; \widehat{\nabla})$
					              with $\widehat{\nabla}$ SUSY-rep compatible}\\[1ex]
			   \end{array}$	  				                                                                      \\[3.8ex] \hline
	 $\begin{array}{l}
	    \mbox{fermionic string moving in space-time $Y$}:\rule{0ex}{1.2em} \\
 		 \hspace{3em}
		 \mbox{differentiable map $\widehat{f}: \widehat{\Sigma} \rightarrow Y$}
	   \end{array}$
       &  $\begin{array}{l}\rule{0ex}{1.2em}
	           \mbox{fermionic D3-brane moving in (K\"{a}hler) space-time $Y$}:\\
			      \hspace{3em}
	              \mbox{$($admissible$)$ $\widehat{D}$-chiral map
				       $\widehat{\varphi}:
					     (\widehat{X}^{\!A\!z}, \widehat{\cal E}; \widehat{\nabla})\rightarrow Y$}
			   \end{array}$	  				                                                                             \\[2.4ex] \hline		
       $\begin{array}{l}\rule{0ex}{1.4em}
	        \mbox{action functional $S(\widehat{f})$ for maps $\widehat{f}$ that is}\\
			\mbox{invariant under world-sheet supersymmetry} \\[1ex]
	      \end{array}$
	       & 
		      $\begin{array}{l}\rule{0ex}{1.2em}
			       \mbox{action functional
				                  $S(\widehat{\nabla}, \widehat{\varphi})
				                    = S_{\mbox{\tiny\rm SYM}}(\widehat{\nabla})
									   + S^{\widehat{\nabla}}(\widehat{\varphi})$
								 for pairs $(\widehat{\nabla}, \widehat{\varphi})$}\\
				   \mbox{that is invariant under the world-volume supersymmetry} \\[1ex]
				\end{array}$                                             \\[1ex] \hline		
         \rule{0ex}{1.5em}\hspace{8em}$\cdots\cdots$   &   \hspace {14em}  {\rm ???}
		                             \\[1ex] \hline				
 \end{tabular}
  }
 }
 
\bigskip
\bigskip

\noindent
Challenges remain ahead to understand Super D3-Brane Theory in such a format
 and its generalization to one with extended supersymmetries, central charges, and BPS states.

\bigskip

Finally we remark that
 while the current work focuses on super D3-branes to make all the statements specific,
similar $C^{\infty}$-algebrogeometric foundations to supersymmetry apply to other dimensions  and
 the corresponding notion of SUSY-rep compatible smooth maps
  from the related Azumaya/matrix superspaces in other dimensions to a target space(-time)
  gives then a description of super D$p$-branes for other $p$'s in the Ramond-Neveu-Schwarz formulation.
There are many pieces yet to be understood in this study/subject in the making.

\bigskip
\bigskip
\bigskip
\noindent
\hspace{6em} \includegraphics[width=0.84\textwidth]{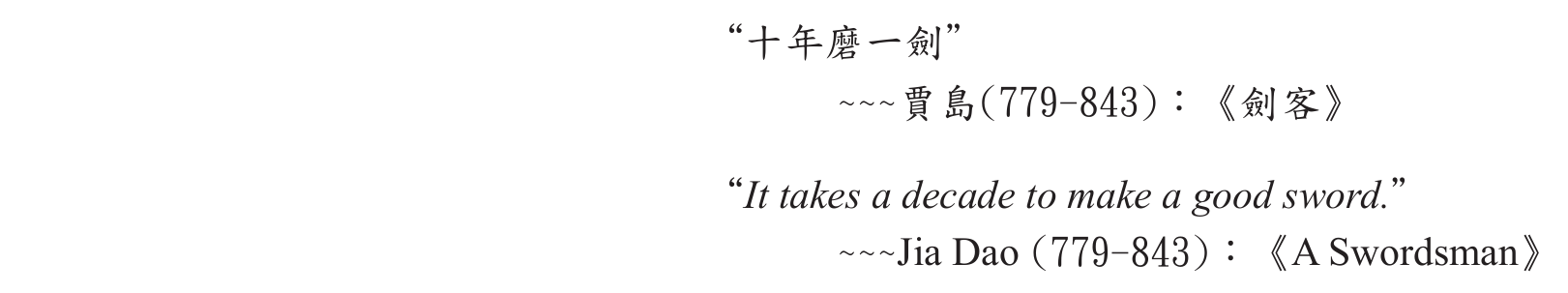}

%
%
%

\newpage

\begin{flushleft}
{\large\bf Appendix\;\; Basic moves for the multiplication of two superfields}
\end{flushleft}
In the block-matrix form of a superfield $f\in C^\infty(\widehat{X})$ on the superspace $\widehat{X}$
 
 {\footnotesize
  \begin{eqnarray*}
   f & = &  f_{(0)}
	  + \sum_\alpha f_{(\alpha)}\theta^\alpha
	  + \sum_{\dot{\beta}} f_{(\dot{\beta})} \bar{\theta}^{\dot{\beta}}
	  + f_{(12)}\theta^1\theta^2
	  + \sum_{\alpha, \dot{\beta}}f_{(\alpha\dot{\beta})}
	        \theta^\alpha \bar{\theta}^{\dot{\beta}}
	  + f_{(\dot{1}\dot{2})}\bar{\theta}^{\dot{1}}\bar{\theta}^{\dot{2}}\\
    && \hspace{2em} 	  	
	  + \sum_{\dot{\beta}} f_{(12\dot{\beta})}	
                      \theta^1\theta^2 \bar{\theta}^{\dot{\beta}}	
	  + \sum_\alpha f_{(\alpha\dot{1}\dot{2})}
	                  \theta^\alpha\bar{\theta}^{\dot{1}}\bar{\theta}^{\dot{2}}
      + f_{(12\dot{1}\dot{2})}					
	                 \theta^1\theta^2\bar{\theta}^{\dot{1}}\bar{\theta}^{\dot{2}}\\
   & = &  f_{(0)}
	  + \sum_\alpha  \theta^\alpha f_{(\alpha)}
	  + \sum_{\dot{\beta}}  \bar{\theta}^{\dot{\beta}} f_{(\dot{\beta})}
	  + \theta^1\theta^2 f_{(12)}
	  + \sum_{\alpha, \dot{\beta}} \theta^\alpha \bar{\theta}^{\dot{\beta}}
	        f_{(\alpha\dot{\beta})}
	  + f_{(\dot{1}\dot{2})}\bar{\theta}^{\dot{1}}\bar{\theta}^{\dot{2}}\\
    && \hspace{2em} 	  	
	  + \sum_{\dot{\beta}} f_{(12\dot{\beta})}	
                      \theta^1\theta^2 \bar{\theta}^{\dot{\beta}}	
	  + \sum_\alpha f_{(\alpha\dot{1}\dot{2})}
	                  \theta^\alpha\bar{\theta}^{\dot{1}}\bar{\theta}^{\dot{2}}
      + f_{(12\dot{1}\dot{2})}					
	                 \theta^1\theta^2\bar{\theta}^{\dot{1}}\bar{\theta}^{\dot{2}}\\					 
   &\:=:&
     \left[
	  \begin{array} {c|cc|c}
	   f_{(0)}  &  f_{(\dot{1})} & f_{(\dot{2})}  & f_{(\dot{1}\dot{2})}
	                                                                 \\[.6ex] \hline  \rule{0ex}{1em}
	   f_{(1)}  &  f_{(1\dot{1})}& f_{(1\dot{2})}& f_{(1\dot{1}\dot{2})}
	                                                                 \\[.6ex]
	   f_{(2)}  & f_{(2\dot{1})} & f_{(2\dot{2})}& f_{(2\dot{1}\dot{2})}
	                                                                 \\[.6ex] \hline  \rule{0ex}{1em}
	   f_{(12)}& f_{(12\dot{1})} & f_{(12\dot{2})}	& f_{(12\dot{1}\dot{2})}
	  \end{array}
     \right], 	
  \end{eqnarray*}} 
  
 \noindent
 the application of a derivation on a superfield or
 the multiplication of two superfields can be decomposed into a combination of {\it basic moves}:
 
 {\footnotesize
  \begin{eqnarray*}
    \frac{\partial}{\partial\theta^1}f\; =\;
     \left[
	  \begin{array} {c|cc|c}
	   f_{(1)}  &  f_{(1\dot{1})} & f_{(1\dot{2})}
	                            & f_{(1\dot{1}\dot{2})}   \\[.6ex] \hline  \rule{0ex}{1em}
	   0   &  0  &  0  &  0   \\[.6ex]
	   f_{(12)}  & f_{(12\dot{1})} & f_{(12\dot{2})}& f_{(12\dot{1}\dot{2})}
	                                                                 \\[.6ex] \hline  \rule{0ex}{1em}
	   0  &  0  &  0  &  0
	  \end{array}
     \right]\,,
   &&
    \frac{\partial}{\partial\theta^2}f\;=\;
	 \left[
	  \begin{array} {c|cc|c}
	   f_{(2)}  &  f_{(2\dot{1})} & f_{(2\dot{2})}  & f_{(2\dot{1}\dot{2})}
	                                                                 \\[.6ex] \hline  \rule{0ex}{1em}
	   -\,f_{(12)}  &  -\,f_{(12\dot{1})}& -\, f_{(12\dot{2})}
	                                  &  -\,f_{(12\dot{1}\dot{2})}   \\[.6ex]
	   0  &  0  &  0  &  0     \\[.6ex] \hline  \rule{0ex}{1em}
	   0  &  0  &  0  &  0
	  \end{array}
     \right]\,,	
   \end{eqnarray*}
 
 \begin{eqnarray*}
  \frac{\partial}{\partial\bar{\theta}^{\dot{1}}} f \; =\;
     \left[
	  \begin{array} {c|cc|c}
	   f_{(\dot{1})}         &  0  &  f_{(\dot{1}\dot{2})}        &  0
	                                                                 \\[.6ex] \hline  \rule{0ex}{1em}
	   -\, f_{(1\dot{1})}  &  0  & -\, f_{(1\dot{1}\dot{2})}  &  0
	                                                                 \\[.6ex]
	   -\, f_{(2\dot{1})}  &  0  & -\, f_{(2\dot{1}\dot{2})}  &  0
	                                                                 \\[.6ex] \hline  \rule{0ex}{1em}
	   f_{(12\dot{1})}    &  0   & f_{(12\dot{1}\dot{2})}    & 0
	  \end{array}
     \right]\,,
   &&
    \frac{\partial}{\partial\bar{\theta}^{\dot{2}}} f \; =\;
     \left[
	  \begin{array} {c|cc|c}
	   f_{(\dot{2})}         &  -\, f_{(\dot{1}\dot{2})}        &  0  &  0
	                                                                 \\[.6ex] \hline  \rule{0ex}{1em}
	   -\, f_{(1\dot{2})}  &   f_{(1\dot{1}\dot{2})}          &  0  &  0
	                                                                 \\[.6ex]
	   -\, f_{(2\dot{2})}  &   f_{(2\dot{1}\dot{2})}          &  0  &  0
	                                                                 \\[.6ex] \hline  \rule{0ex}{1em}
	   f_{(12\dot{2})}    &  -\, f_{(12\dot{1}\dot{2})}    &  0  &  0
	  \end{array}
     \right]\,;
 \end{eqnarray*}

 \begin{eqnarray*}
  \theta^1 f\;=\;
    \left[
	  \begin{array} {c|cc|c}
	   0  &  0  &  0  &  0    \\[.6ex] \hline  \rule{0ex}{1em}
	   f_{(0)}  &  f_{(\dot{1})} & f_{(\dot{2})}  & f_{(\dot{1}\dot{2})}
	                                                                 \\[.6ex]
	   0  &  0  &  0  &  0  \\[.6ex] \hline  \rule{0ex}{1em}
	   f_{(2)}& f_{(2\dot{1})} & f_{(2\dot{2})}	& f_{(2\dot{1}\dot{2})}
	  \end{array}
     \right]\,,
  &&
   \theta^2 f\;=\;
    \left[
	  \begin{array} {c|cc|c}
	   0  &  0  &  0  &  0    \\[.6ex] \hline  \rule{0ex}{1em}
	   0  &  0  &  0  &  0  \\[.6ex]
	   f_{(0)}  &  f_{(\dot{1})} & f_{(\dot{2})}  & f_{(\dot{1}\dot{2})}
	                                                                 \\[.6ex] \hline  \rule{0ex}{1em}
	   -\,f_{(1)}& -\,f_{(1\dot{1})} & -\,f_{(1\dot{2})}	& -\,f_{(1\dot{1}\dot{2})}
	  \end{array}
     \right]\,,
 \end{eqnarray*}

 \begin{eqnarray*}
  \bar{\theta}^{\dot{1}} f\; =\;
    \left[
	  \begin{array} {c|cc|c}
	   0  &  f_{(0)}   & 0 & f_{(\dot{2})}
	                                                                 \\[.6ex] \hline  \rule{0ex}{1em}
	   0  & -\,f_{(1)}& 0 & -\,f_{(1\dot{2})}
	                                                                 \\[.6ex]
	   0  & -\,f_{(2)}& 0 & -\,f_{(2\dot{2})}
	                                                                 \\[.6ex] \hline  \rule{0ex}{1em}
	   0  & f_{(12)}  & 0 & f_{(12\dot{2})}
	  \end{array}
    \right]\,,
  &&
   \bar{\theta}^{\dot{2}} f\; =\;
    \left[
	  \begin{array} {c|cc|c}
	   0  & 0 &  f_{(0)}    & -\,f_{(\dot{1})}
	                                                                 \\[.6ex] \hline  \rule{0ex}{1em}
	   0  & 0 & -\,f_{(1)} &  f_{(1\dot{1})}
	                                                                 \\[.6ex]
	   0  & 0 & -\,f_{(2)} &  f_{(2\dot{1})}
	                                                                 \\[.6ex] \hline  \rule{0ex}{1em}
	   0  & 0 & f_{(12)}   & -\,f_{(12\dot{1})}
	  \end{array}
    \right]\,,
 \end{eqnarray*}

 \begin{eqnarray*}
  \theta^1\theta^2 f\; =\;
    \left[
	  \begin{array} {c|cc|c}
	   0 &  0 & 0 & 0  \\[.6ex] \hline  \rule{0ex}{1em}
	   0 &  0 & 0 & 0  \\[.6ex]
	   0 &  0 & 0 & 0  \\[.6ex] \hline  \rule{0ex}{1em}
	   f_{(0)}& f_{(\dot{1})} & f_{(\dot{2})}	& f_{(\dot{1}\dot{2})}
	  \end{array}
     \right]\,,
  &&
   \bar{\theta}^{\dot{1}}\bar{\theta}^{\dot{2}} f\; =\;
    \left[
	  \begin{array} {c|cc|c}
	   0 &  0 & 0 & f_{(0)}  \\[.6ex] \hline  \rule{0ex}{1em}
	   0 &  0 & 0 & f_{(1)}  \\[.6ex]
	   0 &  0 & 0 & f_{(2)}  \\[.6ex] \hline  \rule{0ex}{1em}
	   0 &  0 & 0 & f_{(12)}
	  \end{array}
    \right]\,,
 \end{eqnarray*}

 \begin{eqnarray*}
  \theta^1\bar{\theta}^{\dot{1}} f\; =\;
    \left[
	  \begin{array} {c|cc|c}
	   0 &  0 & 0 & 0  \\[.6ex] \hline  \rule{0ex}{1em}
	   0 &  f_{(0)}     & 0  & f_{(\dot{2})}  \\[.6ex]
	   0 &  0 & 0 & 0  \\[.6ex] \hline  \rule{0ex}{1em}
	   0 & -\,f_{(2)}  & 0  & -\,f_{(\dot{2}\dot{2})}
	  \end{array}
    \right]\,,
  &&
   \theta^1\bar{\theta}^{\dot{2}} f\; =\;
    \left[
	  \begin{array} {c|cc|c}
	   0 &  0 & 0 & 0  \\[.6ex] \hline  \rule{0ex}{1em}
	   0 &  0 & f_{(0)}      & -\,f_{(\dot{1})}  \\[.6ex]
	   0 &  0 & 0 & 0  \\[.6ex] \hline  \rule{0ex}{1em}
	   0 &  0 & -\,f_{(2)}  & f_{(\dot{2}\dot{1})}
	  \end{array}
    \right]\,,
 \end{eqnarray*}
 
 \begin{eqnarray*}
  \theta^2\bar{\theta}^{\dot{1}} f\; =\;
    \left[
	  \begin{array} {c|cc|c}
	   0 &  0 & 0 & 0  \\[.6ex] \hline  \rule{0ex}{1em}
	   0 &  0 & 0 & 0  \\[.6ex]
	   0 &  f_{(0)} & 0 & f_{(\dot{2})}  \\[.6ex] \hline  \rule{0ex}{1em}
	   0 & f_{(1)}      & 0 & f_{(1\dot{2})}
	  \end{array}
     \right]\,,
  &&
   \theta^2\bar{\theta}^{\dot{2}} f\; =\;
    \left[
	  \begin{array} {c|cc|c}
	   0 &  0 & 0 & 0  \\[.6ex] \hline  \rule{0ex}{1em}
	   0 &  0 & 0 & 0  \\[.6ex]
	   0 &  0 & f_{(0)} & -\,f_{(\dot{1})}  \\[.6ex] \hline  \rule{0ex}{1em}
	   0 &  0 & f_{(1)} & -\,f_{(1\dot{1})}
	  \end{array}
     \right]\,,
 \end{eqnarray*}
  
 \begin{eqnarray*}
  \theta^1\theta^2\bar{\theta}^{\dot{1}} f\; =\;
    \left[
	  \begin{array} {c|cc|c}
	   0 &  0 & 0 & 0  \\[.6ex] \hline  \rule{0ex}{1em}
	   0 &  0 & 0 & 0  \\[.6ex]
	   0 &  0 & 0 & 0  \\[.6ex] \hline  \rule{0ex}{1em}
	   0 & f_{(0)} & 0 & f_{(\dot{2})}
	  \end{array}
    \right]\,,
  &&
   \theta^1\theta^2\bar{\theta}^{\dot{2}} f\; =\;
    \left[
	  \begin{array} {c|cc|c}
	   0 &  0 & 0 & 0  \\[.6ex] \hline  \rule{0ex}{1em}
	   0 &  0 & 0 & 0  \\[.6ex]
	   0 &  0 & 0 & 0  \\[.6ex] \hline  \rule{0ex}{1em}
	   0 &  0 & f_{(0)}  & -\,f_{(\dot{1})}
	  \end{array}
    \right]\,,
 \end{eqnarray*}
 
 \begin{eqnarray*}
  \theta^1\bar{\theta}^{\dot{1}}\bar{\theta}^{\dot{2}} f\; =\;
    \left[
	  \begin{array} {c|cc|c}
	   0 &  0 & 0 & 0  \\[.6ex] \hline  \rule{0ex}{1em}
	   0 &  0 & 0 & f_{(0)} \\[.6ex]
	   0 &  0 & 0 & 0 \\[.6ex] \hline  \rule{0ex}{1em}
	   0 &  0 & 0 & f_{(2)}
	  \end{array}
     \right]\,,
  &&
   \theta^2\bar{\theta}^{\dot{1}}\bar{\theta}^{\dot{2}} f\; =\;
    \left[
	  \begin{array} {c|cc|c}
	   0 &  0 & 0 & 0  \\[.6ex] \hline  \rule{0ex}{1em}
	   0 &  0 & 0 & 0  \\[.6ex]
	   0 &  0 & 0 & f_{(0)} \\[.6ex] \hline  \rule{0ex}{1em}
	   0 &  0 & 0 & -\,f_{(1)}
	  \end{array}
    \right]\,,
 \end{eqnarray*}
   
 $$
  \theta^1\theta^2\bar{\theta}^{\dot{1}}\bar{\theta}^{\dot{2}} f\; =\;
    \left[
	  \begin{array} {c|cc|c}
	   0 &  0  & 0  & 0  \\[.6ex] \hline  \rule{0ex}{1em}
	   0 &  0  & 0  & 0  \\[.6ex]
	   0 &  0  & 0  & 0  \\[.6ex] \hline  \rule{0ex}{1em}
	   0 &  0  & 0  & f_{(0)}
	  \end{array}
     \right]\,;
 $$
 
 \begin{eqnarray*}
  f \theta^1 \;=\;
    \left[
	  \begin{array} {c|cc|c}
	   0  &  0  &  0  &  0    \\[.6ex] \hline  \rule{0ex}{1em}
	   f_{(0)}  &  -\,f_{(\dot{1})} & -\,f_{(\dot{2})}  & f_{(\dot{1}\dot{2})}
	                                                                 \\[.6ex]
	   0  &  0  &  0  &  0  \\[.6ex] \hline  \rule{0ex}{1em}
	   -\,f_{(2)}& f_{(2\dot{1})} & f_{(2\dot{2})}	& -\,f_{(2\dot{1}\dot{2})}
	  \end{array}
     \right]\,,
  &&
   f \theta^2\; =\;
    \left[
	  \begin{array} {c|cc|c}
	   0  &  0  &  0  &  0    \\[.6ex] \hline  \rule{0ex}{1em}
	   0  &  0  &  0  &  0  \\[.6ex]
	   f_{(0)}  &  -\,f_{(\dot{1})} & -\,f_{(\dot{2})}  & f_{(\dot{1}\dot{2})}
	                                                                 \\[.6ex] \hline  \rule{0ex}{1em}
	   f_{(1)}& -\,f_{(1\dot{1})} & -\,f_{(1\dot{2})}	& f_{(1\dot{1}\dot{2})}
	  \end{array}
     \right]\,,
 \end{eqnarray*}

 \begin{eqnarray*}
  f \bar{\theta}^{\dot{1}}\; =\;
    \left[
	  \begin{array} {c|cc|c}
	   0 & f_{(0)}  &  0  & -\,f_{(\dot{2})}
	                                                                 \\[.6ex] \hline  \rule{0ex}{1em}
	   0 & f_{(1)}  &  0  & -\,f_{(1\dot{2})}
	                                                                 \\[.6ex]
	   0 & f_{(2)}  &  0  & -\,f_{(2\dot{2})}
	                                                                 \\[.6ex] \hline  \rule{0ex}{1em}
	   0 & f_{(12)}&  0  & -\,f_{(12\dot{2})}	
	  \end{array}
     \right]\,,
  &&
   f \bar{\theta}^{\dot{2}}\; =\;
    \left[
	  \begin{array} {c|cc|c}
	   0 & 0 & f_{(0)}  & f_{(\dot{1})}
	                                                                 \\[.6ex] \hline  \rule{0ex}{1em}
	   0 & 0 & f_{(1)}  & f_{(1\dot{1})}
	                                                                 \\[.6ex]
	   0 & 0 & f_{(2)}  & f_{(2\dot{1})}
	                                                                 \\[.6ex] \hline  \rule{0ex}{1em}
	   0 & 0 & f_{(12)}& f_{(12\dot{1})}	
	  \end{array}
    \right]\,,
 \end{eqnarray*}

 \begin{eqnarray*}
  f \theta^1\theta^2 \; =\;  \theta^1\theta^2 f\,,
    && f \bar{\theta}^{\dot{1}}\bar{\theta}^{\dot{2}}\;
          =\;  \bar{\theta}^{\dot{1}}\bar{\theta}^{\dot{2}} f\,,\\[1ex]
  f \theta^1\bar{\theta}^{\dot{1}} \; =\;   \theta^1\bar{\theta}^{\dot{1}} f\,,
    && f \theta^1\bar{\theta}^{\dot{2}} \; =\; \theta^1\bar{\theta}^{\dot{2}} f\,, \\[1ex]
  f \theta^2\bar{\theta}^{\dot{1}} \; =\;    \theta^2\bar{\theta}^{\dot{1}} f\,,
    &&  f \theta^2\bar{\theta}^{\dot{2}}\; =\; \theta^2\bar{\theta}^{\dot{2}} f\,,
 \end{eqnarray*}

 \begin{eqnarray*}
  f \theta^1\theta^2\bar{\theta}^{\dot{1}} \; =\;
    \left[
	  \begin{array} {c|cc|c}
	   0 &  0 & 0 & 0  \\[.6ex] \hline  \rule{0ex}{1em}
	   0 &  0 & 0 & 0  \\[.6ex]
	   0 &  0 & 0 & 0  \\[.6ex] \hline  \rule{0ex}{1em}
	   0 & f_{(0)} & 0 & -\,f_{(\dot{2})}
	  \end{array}
    \right]\,,
  &&
   f \theta^1\theta^2\bar{\theta}^{\dot{2}} \; =\;
    \left[
	  \begin{array} {c|cc|c}
	   0 &  0 & 0 & 0  \\[.6ex] \hline  \rule{0ex}{1em}
	   0 &  0 & 0 & 0  \\[.6ex]
	   0 &  0 & 0 & 0  \\[.6ex] \hline  \rule{0ex}{1em}
	   0 &  0 & f_{(0)}  & f_{(\dot{1})}
	  \end{array}
    \right]\,,
 \end{eqnarray*}
 
 \begin{eqnarray*}
  f \theta^1\bar{\theta}^{\dot{1}}\bar{\theta}^{\dot{2}} \; =\;
    \left[
	  \begin{array} {c|cc|c}
	   0 &  0 & 0 & 0  \\[.6ex] \hline  \rule{0ex}{1em}
	   0 &  0 & 0 & f_{(0)} \\[.6ex]
	   0 &  0 & 0 & 0 \\[.6ex] \hline  \rule{0ex}{1em}
	   0 &  0 & 0 & -\,f_{(2)}
	  \end{array}
     \right]\,,
  &&
   f \theta^2\bar{\theta}^{\dot{1}}\bar{\theta}^{\dot{2}} \; =\;
    \left[
	  \begin{array} {c|cc|c}
	   0 &  0 & 0 & 0  \\[.6ex] \hline  \rule{0ex}{1em}
	   0 &  0 & 0 & 0  \\[.6ex]
	   0 &  0 & 0 & f_{(0)} \\[.6ex] \hline  \rule{0ex}{1em}
	   0 &  0 & 0 & f_{(1)}
	  \end{array}
    \right]\,,
 \end{eqnarray*}
   
 $$
  f \theta^1\theta^2\bar{\theta}^{\dot{1}}\bar{\theta}^{\dot{2}}\;
   =\;   \theta^1\theta^2\bar{\theta}^{\dot{1}}\bar{\theta}^{\dot{2}} f\,.
 $$

} 

\newpage
\baselineskip 13pt
{\footnotesize

\vspace{1em}

\noindent
chienhao.liu@gmail.com, 
chienliu@cmsa.fas.harvard.edu; \\
yau@math.harvard.edu

}


\begin{thebibliography}{AAaaaa}
%
\marginpar{\raggedright\tiny $\bullet$
        References\\
		$\hspace{1.6ex}$ for D(14.1).
		}		
%
\bibitem[Ar]{} P.\ Argyres,
 {\sl Introduction to supersymmetry},
  Physics 661 lecture notes, Cornell University, fall, 2000.

\bibitem[AG-F1]{} L.\ Alvarez-Gaum\'{e} and D.Z.\ Freedman, 	
 {\it Geometrical structure and ultraviolet finiteness in the supersymmetric sigma model},
 {\sl Commun.\ Math.\ Phys.}\ {\bf 80} (1981), 443--451.

\bibitem[AG-F2]{} --------, 	
 {\it Potentials for the supersymmetric nonlinear sigma model},
 {\sl Commun.\ Math.\ Phys.}\  {\bf 91} (1983), 87--101.

\bibitem[A-P]{} R.L.\ Arnowitt and P.\ Nath,
 {\it Superconnections in extended supergravity},
 {\sl Phys.\ Rev.\ Lett.}\ {\bf 44} (1980), 223--226.

\bibitem[A-V]{} I. Ya Aref'eva and I.V.\ Volovich,
 {\it Reconstruction of superconnection from physical fields in the $N=4$ supersymmetric Yang-Mills theory},
 {\sl Class.\ Quantum Grav.}\ {\bf 3} (1986), 617--623.

\bibitem[A-V-S]{} V.P.\ Akulov, D.V.\ Volkov, and V.A.\ Soroka,
 {\it Gauge fields on superspaces with different holonomy groups},
 {\sl JETP Lett.}\ {\bf 22} (1975), 187--188.
 
\bibitem[Ba]{} J.A.\ Bagger,
 {\it Supersymmetric sigma models},
 in {\sl Supersymmetry} (Bonn, 1984), 45--87,
 NATO Adv.\ Sci.\ Inst.\ Ser.\ B Phys.\ 125, Plenum 1985.

\bibitem[Bi]{}  A.\ Bilal,
 {\it Introduction to supersymmetry},
  arXiv:hep-th/0101055.
 
\bibitem[Bl]{} D.\ Bleecker,
 {\sl Gauge theory and variational principles},
 Addison-Wesley, 1981.

\bibitem[Br]{} Th.$\:$Br\"{o}cker,
 {\sl Differentiable germs and catastrophes},
 translated from the German, last chapter and bibliography by L. Lander.
 London Math.\ Soc.\ Lect.\ Note Ser.\ 17.
 Cambridge Univ.\ Press, 1975.

\bibitem[B-B-S]{} K.\ Becker, M.\ Becker, and J.H.\ Schwarz,
 {\sl String theory and M-theory -- a modern introduction},
  Cambridge Univ.\ Press, 2007.
 
\bibitem[B-DV-H]{} L.\ Brink, P.\ Di Vecchia, P.S.\ Howe,
 {\it A locally supersymmetric and reparametrization invariant action for the spinning string},
 {\sl Phys.\ Lett.}\ {\bf  65B} (1976), 471--474.
 
\bibitem[B-M-V]{} J.V.\ Beltr\'{a}n, J.\ Monterde, and J.A.\ Vallejo,
 {\it Quillen superconnections and connections on supermanifolds},
 {\sl J.\ Geom.\ Phys.}\ {\bf 86} (2014), 180--193.
 (arXiv.1305.3677 [math.DG])

\bibitem[B-S-S]{} L.\ Brink, J.H.\ Schwarz, J.\ Scherk,
 {\it Supersymmetric Yang-Mills theories},
 {\sl Nucl.\ Phys.}\ {\bf B121} (1977), 77--92.

\bibitem[B-W]{} J.\ Bagger and E.\ Witten,
 {\it The gauge invariant supersymmetric nonlinear sigma model},
 {\sl Phys.\ Lett.}\ {\bf 118B} (1982), 103--106.
  
\bibitem[Ch]{} S.\ Chandrasekhar,
 {\sl The mathematical theory of black holes},
 Oxford Univ.\ Press, 1983.

\bibitem[CB]{} Y.\ Choquet-Bruhat,
 {\sl Graded bundles and supermanifolds},
  Mono.\ Text.\ Phys.\ Sci.\ Lect.\ Notes 12,
Bibliopolis, 1989.
 
\bibitem[C-C-F]{} C.\ Carmeli, L.\ Caston, and R.\ Fioreci,
 {\sl Mathematical foundations of supersymmetry},
 European Math.\ Soc., 2011.

\bibitem[C-G-P]{} T.\ Covolo, J.\ Grabowski, and N.\ Poncin,
 {\it ${\Bbb Z}_2^n$-supergeometry I: manifolds and morphisms},
 arXiv:1408.2755 [math.DG].

\bibitem[C-J-S-F-G-vN]{} E.\ Cremmer, B.\ Julia, J.\ Scherk, S.\ Ferrara, L.\ Girardello, P.\ van Nieuwenhuizen,
 {\it Spontaneous symmetry breaking and Higgs effect in supergravity without cosmological constant},
 {\sl Nucl.\ Phys.}\ {\bf B147} (1979), 105--131.	
 
\bibitem[C-K-P1]{}T.\ Covolo, S.\ Kwok, and N.\ Poncin,
 {\it Differential calculus on ${\Bbb Z}_2^n$-supermanifolds},
  arXiv:1608.00949 [math.DG].

\bibitem[C-K-P2]{} --------,
 {\it The Frobenius theorem for ${\Bbb Z}_2^n$-supermanifolds},
  arXiv:1608.00961 [math.DG].

\bibitem[C-T]{} C.G.\ Callan, Jr.\ and L.\ Thorlacius,
 {\it Sigma models and string theory},
 in {\sl  Particles, strings and supernovae (TASI 88)},
  A.\ Jevicki and C.-I.\ Tan eds.,  795--878,
  World Scientific, 1989.

\bibitem[DeW]{} B.S.$\:$DeWitt,
 {\sl Supermanifolds}, 2nd ed.,
 Cambridge Mono.\ Math.\ Phys., Cambridge Univ.\ Press, 1992.
    
\bibitem[Dr]{} N.\ Dragon,
 {\it Torsion and curvature in extended supergravity},
 {\sl Z.\ Phys.}\ {\bf C2} (1979), 29--32.
 
\bibitem[Du]{} F.\ Dumitrescu,
 {\it Superconnections and parallel transport},
 Ph.D.\ thesis, University of Notre Dame, 2006.

\bibitem[D-F1]{} P.\ Deligne and D.S.\ Freed, ,
 {\it Supersolutions},
  in {\sl Quantum fields and strings: a course for mathematicians}, vol.\ 1,
  P.\ Deligne, P.\ Etingof, D.S.\ Freed, L.C.\ Jeffrey, D.\ Kazhdan, J.W.\ Morgan, and E.\ Witten eds., 227--355,
  American Math.\ Soc., 1999.

\bibitem[D-F2]{} --------,
 {\it Sign manifesto},
  in {\sl Quantum fields and strings: a course for mathematicians}, vol.\ 1,
  P.\ Deligne, P.\ Etingof, D.S.\ Freed, L.C.\ Jeffrey, D.\ Kazhdan, J.W.\ Morgan, and E.\ Witten eds., 357--363,
  American Math.\ Soc., 1999.

\bibitem[DV-F]{} P.\ Di Vecchia and S.\ Ferrara,
 {\it Classical solutions in two-dimensional supersymmetric field theories},
 {\sl Nucl.\ Phys.}\ {\bf B130} (1977), 93--104.

\bibitem[D-K]{} S.K.\ Donaldson and P.B.\ Kronheimer,
 {\sl The geometry of four manifolds},
 Oxford Univ.\ Press, 1990.
 
\bibitem[D-M]{} P.\ Deligne and J.W.\ Morgan,
 {\it Notes on supersymmetry (following Joseph Bernstein)},
  in {\sl Quantum fields and strings: a course for mathematicians}, vol.\ 1,
  P.\ Deligne, P.\ Etingof, D.S.\ Freed, L.C.\ Jeffrey, D.~Kazhdan, J.W.\ Morgan, and E.\ Witten eds., 41--97,
  American Math.\ Soc., 1999.
 
\bibitem[DV-M-K]{} M.\ Dubois-Violette, J.\ Madore, and R.\ Kerner,
 {\it Super matrix geometry},
 {\sl Class.\ Quantum Grav.}\ {\bf 8} (1991), 1077--1089.

\bibitem[Ei]{} D.\ Eisenbud,
 {\sl Commutative algebra -- with a view toward algebraic geometry},
 GTM 150, Springer, 1994.
 
\bibitem[Ev]{} J.M.\ Evans,
 {\it Supersymmetric Yang-Mills theory and division algebras},
 {\sl Nucl.\ Phys.}\ {\bf B298} (1988), 92--108.
 
\bibitem[E-H]{} D.~Eisenbud and J.~Harris,
 {\sl The geometry of schemes},
 GTM~197, Springer, 2000.

\bibitem[Freed]{} D.S.\ Freed,
 {\sl Five lectures on supersymmetry},
 Amer.\ Math.\ Soc., 1999.
 
\bibitem[Freund] {} P.G.O.\ Freund,
 {\sl Introduction to supersymmetry},
  Cambridge Univ.\ Press, 1986.

\bibitem[F-F-vN-B-G-S]{} S.\ Ferrara, D.Z.\ Freedman, P.\ van Nieuwenhuizen,
       P.\ Breitenlohner, F.\ Gliozzi, and J.\ Scherk,
 {\it Scalar multiplet coupled to supergravity},
{\sl Phys.\ Rev.}\ {\bf D15} (1977), 1013--1018.
  
\bibitem [F-vN]{} S.\ Ferrara  and P.\ van Nieuwenhuizen,
 {\it Tensor calculus for supergravity},
 {\sl Phys.\ Lett.}\ {\bf 76B} (1978), 404--408.
  
\bibitem[F-T]{} D.Z.\ Freedman and P.K.\ Townsend,
 {\it Antisymmetric tensor gauge theories and nonlinear sigma models},
 {\sl Nucl.\ Phys.}\ {\bf B177} (1981), 282--296.

\bibitem[F-W-Z]{} S.\ Ferrara, J.\ Wess, and B.\ Zumino,
 {\it Supergauge multiplets and superfields},,
 {\sl Phys.\ Lett.}\ {\bf 51B} (1974), 239--241.
 
\bibitem[F-Z]{} S.\ Ferrara and B.\ Zumino,
 {\it Supergauge invariant Yang-Mills theories},
 {\sl Nucl.\ Phys.}\ {\bf B79} (1974), 413--421.

\bibitem[Gi]{} F.$\:$Gieres,
 {\sl Geometry of supersymmetric gauge theories -- including an introduction to BRS differential algebras and anomalies},
 Lect.\ Notes Phys.\ 302, Springer, 1988.
 
\bibitem[G-G-R-S]{} S.J.\ Gates, Jr., M.T.\ Grisaru, M.\ Roc\u{c}ek, and W.\ Siegel,
 {\sl Superspace -- one thousand and one lessons in supersymmetry}, Frontiers Phys.\ Lect.\ Notes Ser.\ 58,
  Benjamin/Cummings Publ.\ Co., Inc., 1983.

\bibitem[G-H]{}  P.\ Griffiths and J.\ Harris,
 {\sl Principles of algebraic geometry},
 John Wiley \& Sons, 1978.
     
\bibitem[G-S]{} S.J.\ Gates, Jr., and W.\ Siegel,
 {\it Superfield supergravity},
 {\sl Nucl.\ Phys.}\ {\bf B147} (1979), 77--104.

\bibitem[Ga-S-W]{} S.J.\ Gates, Jr., K.\ Stelle, and P.\ West,
 {\it Algebraic origins of superspace constraints in supergravity},
 {\sl Nucl.\ Phys.}\ {\bf B169} (1980), 347--364.

\bibitem[Gr-S-W]{} M.B.\ Green, J.H.\ Schwarz, and E.\ Witten,
 {\sl Superstring theory}, {\sl vol.\ 1}$\,$: {\sl Introduction};
 {\sl vol.\ 2}$\,$: {\sl Loop amplitudes, anomalies, and phenomenology},
 Cambridge Univ.\ Press, 1987.

\bibitem[Gri-S-W]{} R.\ Grimm, M.\ Sohnius and J.\ Wess,
 {\it Extended supersymmetry and gauge theories},
 {\sl Nucl.\ Phys.}\ {\bf B133} (1978), 275--284.

\bibitem[G-W-Z]{} R.\ Grimm, J.\ Wess, and B.\ Zumino,
 {\it A complete solution of the Bianchi identities in superspace},
 {\sl Nucl.\ Phys.}\ {\bf B152} (1979), 255--265.

\bibitem[Han]{} F.\ Hanisch,
 {\sl Variational problems on supermanifolds},
 Universit\"{a}t Potsdam thesis, 2011.
 
\bibitem[Har]{} R.\ Hartshorne,
 {\sl Algebraic geometry},
 GTM 52, Springer, 1977.

\bibitem[Hi]{} N.J.\ Hicks,
 {\sl Notes on differential geometry},
  Van Nostrand Math. Studies 3,  D. Van Nostrand Co., Inc., 1965.
   
\bibitem[H-K]{} K.\ Hamada and M.\ Takao,
 {\it Supersymmetric Yang-Mills theory in two-dimensional superspcace and super Kac-Moody algebra},
 {\sl Phys.\ Lett.} {\bf B210} (1988), 120--124.

\bibitem[H-P]{} P.S.\ Howe and G.\ Papadopoulos,
 {\it Further remarks on the geometry of two-dimensional non-linear $\sigma$-models},
 {\sl Class.\ Quantum Grav.}\ {\bf 5} (1988), 1647--1661.
 
\bibitem[Ha-S]{} J.\ Harnard and S.\ Shnider,
 {\it Constraints and field equations for ten-dimensional super Yang-Mills theory},
 {\sl Commun.\ Math.\ Phys.}\ {\bf 106} (1986), 183--199.

\bibitem[Hi-S]{} C.D.\ Hill and S.R.\ Simanca,
 {\it The super complex Frobenius theorem},
 {\sl Ann.\ Polonici Math.}\ {\bf 55} (1991), 139--155.
 
\bibitem[H-V]{}  K.\ Hori and C.\ Vafa,
 {\it Mirror symmetry},
 arXiv:hep-th/0002222.

\bibitem[H-W]{} P.-M.~Ho and Y.-S.~Wu,
 {\it Noncommutative geometry and D-branes},
 {\sl Phys.\ Lett.}\ {\bf B398}  (1997), 52--60.
 (arXiv:hep-th/9611233)
 
\bibitem[Ja]{} N.\ Jacobson,
 {\sl Basic algebra I}, W.H.\ Freeman \& Co., 1974.

\bibitem[Joh]{} C.V.~Johnson,
 {\sl D-branes},
 Cambridge Univ.\ Press, 2003.
  
\bibitem[Joy]{} D.\ Joyce,
 {\it Algebraic geometry over $C^{\infty}$-rings},
 arXiv:1001.0023 [math.AG].

\bibitem[Ko]{} S.$\:$Kobayashi,
 {\sl Differential geometry of complex vector bundles},
 Publ.\ Math.\ Soc.\ Japan 15, Princeton Univ.\ Press, 1987.
 
\bibitem[K-N]{} S.$\:$Kobayashi and K.$\:$Nomizu,
 {\sl Foundations of differential geometry}, vol.\:I \& vol.$\:$II,
 Interscience Publ., John Wiley \& Sons, 1963 and 1969.

\bibitem[Lin]{} U.\ Lindstr\"{o}m,
 {\it Supersymmetric nonlinear sigma model geometry},
  arXiv:1207.1241 [hep-th].
  
\bibitem[Liu]{} C.-H.\ Liu,
 {\it Azumaya noncommutative geometry and D-branes
      - an origin of the master nature of D-branes},
 lecture given at the workshop
  {\sl Noncommutative algebraic geometry and D-branes},
  December 12 -- 16, 2011,
  organized by Charlie Beil, Michael Douglas, and Peng Gao,
  at Simons Center for Geometry and Physics,
  Stony Brook University, Stony Brook, NY;
 arXiv:1112.4317 [math.AG].
  
\bibitem[L-M-S]{} F.\ Lizzi, N.E.\ Mavromatos, and R.J.\ Szabo,
 {\it Matrix sigma models for multi-D-brane dynamics},
 {\sl Mod.\ Phys.\ Lett.}\ {\bf A13} (1998), 829--842.
  (arXiv:hep-th/9711012)
   
\bibitem[L-Y1]{} C.-H.\ Liu and S.-T.\ Yau,
  {\it Azumaya-type noncommutative spaces and morphism therefrom:
       Polchinski's D-branes in string theory from Grothendieck's
       viewpoint},
  arXiv:0709.1515 [math.AG]. (D(1))

\bibitem[L-L-S-Y]{} S.~Li, C.-H.~Liu, R.~Song, S.-T.~Yau,
 {\it Morphisms from Azumaya prestable curves with a fundamental module
       to a projective variety:
      Topological D-strings as a master object for curves},
  arXiv:0809.2121 [math.AG]. (D(2))
  
\bibitem[L-Y2] {} C.-H.\ Liu and S.-T.\ Yau,
 {\it D-branes and Azumaya noncommutative geometry},
 arXiv:1003.1178 [math.SG]. (D(6))
 
\bibitem[L-Y3]{} --------,
  {\it D-branes and Azumaya/matrix noncommutative differential geometry,
 I: D-branes as fundamental objects in string theory  and differentiable maps
    from Azumaya/matrix manifolds with a fundamental module to real manifolds},
 arXiv:1406.0929 [math.DG]. (D(11.1))
 
\bibitem[L-Y4]{} --------,
 {\it D-branes and Azumaya/matrix noncommutative differential geometry,
 II: Azumaya/matrix supermanifolds and differentiable maps therefrom
      - with a view toward dynamical fermionic D-branes in string theory},
  arXiv:1412.0771 [hep-th]. (D(11.2))
 
\bibitem[L-Y5]{} --------,
 {\it Further studies on the notion of differentiable maps from Azumaya/matrix manifolds, I. The smooth case},
  arXiv:1508.02347 [math.DG]. (D(11.3.1))
    
\bibitem[L-Y6]{} --------,
 {\it Dynamics of D-branes I.
          The non-Abelian Dirac-Born-Infeld action, its first variation, and the equations of motion for D-branes
		  --- with remarks on the non-Abelian Chern-Simons/Wess-Zumino term},
  arXiv:1606.08529 [hep-th]. (D(13.1))

\bibitem[L-Y7]{} --------,
 {\it More on the admissible condition on differentiable maps $\varphi: (X^{\!A\!z},E;\nabla)\rightarrow Y$
         in  the construction of the non-Abelian Dirac-Born-Infeld action $S_{DBI}(\varphi,\nabla)$},
 arXiv:1611.09439 [hep-th]. (D(13.2.1))		
 
\bibitem[L-Y8]{} --------, 
 {\it Dynamics of D-branes II. The standard action
         - an analogue of the Polyakov action for (fundamental, stacked) D-branes}, 
 arXiv:1704.03237 [hep-th]. (D(13.3))		

\bibitem[L-Y9]{} --------,
 {\it Further studies of the notion of differentiable maps from Azumaya/matrix supermanifolds, 
          I. The smooth case: Ramond-Neveu-Schwarz and Green-Schwarz meeting Grothendieck},
  arXiv:1709.08927 [math.DG]. (D(11.4.1))
 
\bibitem[L-Y10]{} --------,
 manuscript in preparation.

\bibitem[Mal]{} B.\ Malgrange,
 {\sl Ideals of differentiable functions},
  Oxford Univ.\ Press, 1966.
 
\bibitem[Man]{} Y.I.\ Manin,
 {\sl Gauge field theory and complex geometry},
  Springer, 1988.

  
\bibitem[M-Q]{} V.\ Mathai and D.\ Quillen, 
 {\it Superconnections, Thom classes, and equivariant differential forms},
 {\sl Topology}, {\bf 25} (1986), 85--110.

\bibitem[Ni]{} H.\ Nishimura, 
 {\it Synthetic theory of superconnections}, 
 {\sl Int.\ J.\ Theo.\ Phys.}\ {\bf 39} (2000), 297--320. 

\bibitem[O'R]{} L.\ O'Raifeartaigh, 
 {\sl Lecture notes on supersymmetry}, 
  Dublin Inst.\ Adv.\ Studies, 1975.
  
\bibitem[Po1]{} J.\ Polchinski,
 {\it Lectures on D-branes},
 in ``{\sl Fields, strings, and duality}", TASI 1996 Summer School,
 Boulder, Colorado, C.\ Efthimiou and B.\ Greene eds.,
 World Scientific, 1997.
 (arXiv:hep-th/9611050)

\bibitem[Po2]{} --------,
 {\sl String theory},
 vol.\ I$\,$: {\sl An introduction to the bosonic string};
 vol.\ II$\,$: {\sl Superstring theory and beyond},
 Cambridge Univ.\ Press, 1998.

\bibitem[Qu]{} D.\ Quillen, 
 {\it Superconnections and the Chern character}, 
 {\sl Topology}, {\bf 24} (1985), 89--95. 

\bibitem[Ro\v{c}]{} M.\ Ro\v{c}ek, 
 {\it Introduction to supersymmetry}, TASI lectures, preprint, 1993.
 
\bibitem[Roe]{} G.\ Roepstorff, 
 {\it Superconnections and the Higgs field}, 
 {\sl J.\ Math.\ Phys.}\ {\bf 40} (1999), 2698--2715.
  (arXiv:hep-th/9801040)

\bibitem[R-L1]{} M.\ Ro\v{c}ek and U.\ Lindstr\"{o}m, 
 {\it Components of superspace}, 
 {\sl Phys.\ Lett.}\ {\bf 79B} (1978), 217--218. 
 
\bibitem[R-L2] {} --------,
 {\it More components of superspace},
 {\sl Phys.\ Lett.}\ {\bf 83B} (1979), 179--184.
  
\bibitem[R-V]{} G.\ Roepstorff and Ch.\ Vehns,
 {\it Generalized Dirac operators and superconnections}, 
  arXiv:math-ph/9911006.

\bibitem[St]{} M.J.\ Strassler, 
 {\it An unorthodox introduction to supersymmetric gauge theory}, 
   lectures given at TASI 2001, arXiv:hep-th/0309149.
  
\bibitem[Sz1]{} R.J.\ Szabo, 
 {\it Superconnections, anomalies, and non-BPS brane charges}, 
  {\sl J.\ Geom.\ Phys.}\ {\bf 43} (2002), 241--292.
  (arXiv:hep-th/0108043)

\bibitem[Sz2]{} R.J.\ Szabo, 
 {\sl An introduction to string theory and D-brane dynamics}, 
  Imperial College Press, 2004. 
    
\bibitem[S-S1]{} A.\ Salam and J.A.\ Strathdee, 
 {\it Super-gauge transformations}, 
 {\sl Nucl.\ Phys.}\ {\bf B76} (1974), 477--482.  

\bibitem[S-S2]{} --------, 
 {\it On superfields and fermi-bose symmetry}, 
 {\sl Phys.\ Rev.}\ {\bf D11} (1975), 1521--1535.

\bibitem[S-S3]{} --------,
 {\it Supersymmetry and superfields}, 
 {\sl Fort.\ Phys.}\ {\bf 26} (1978), 57--142.
 
\bibitem[S-W]{} S.\ Shnider and R.O.\ Wells, Jr.,
 {\sl Supermanifolds, super twister spaces and super Yang-Mills fields},
   S\'{e}minaire Math.\ Sup\'{e}r.\ 106. Press.\ Univ.\ Montr\'{e}al, 1989.

\bibitem[Te]{} J.\ Terning, 
 {\sl Modern supersymmetry -- Dynamics and duality}, 
  Oxford Univ.\ Press, 2006.
   
\bibitem[Wa]{} F.W.\ Warner,
 {\sl Foundations of differentiable manifolds and Lie groups}, 
 Scott Foresmann \& Company, 1971.  

\bibitem[Wei]{} S.\ Weinberg,
 {\sl The quantum theory of fields}, vol.\ III, {\sl Supersymmetry},
  Cambridge Univ.\ Press, 2000.

\bibitem[Wess]{} J.\ Wess, 
 {\it Supersymmetry -- supergravity},  
  in {\sl Topics in quantum field theory and gauge theories}, 
  Proceedings, Salamanca (1977), 
  J.A.\ de Azc\'{a}rraga ed., 81--125,  Lect.\ Notes Phys.\ 77, Springer, 1978.
  
\bibitem[West1]{} P.\ West, 
 {\sl Introduction to supersymmetry and supergravity},
  extended 2nd ed., 
 World Scientific, 1990.
  
\bibitem[West2]{} --------, 
 {\sl Introduction to strings and branes}, 
  Cambridge Univ.\ Press, 2012.

\bibitem[Westra]{} D.B.\ Westra, 
 {\sl Superrings and supergroups}, 
  Ph.D.\ thesis, Universit\"{a}t Wien, October 2009.
  
\bibitem[Wi1]{} E.\ Witten, 
 {\it Introduction to supersymmetry},
  in {\sl The unity of the fundamental interactions}, 
  A.\ Zichichi ed., 305--371,
 Plenum Press, 1983.
    
\bibitem[Wi2]{} --------,
 {\it Topological sigma models},
 {\sl Commun.\ Math.\ Phys.}\ {\bf 118} (1988), 411--449.  

\bibitem[Wi3]{} --------,
 {\it Mirror manifolds and topological field theory},
  arXiv:hep-th/9112056.  

\bibitem[Wi4]{} --------,
 {\it Phases of $N=2$ theories in two dimensions}, 
 {\sl Nucl.\ Phys.}\ {\bf B403} (1993), 159--222.   
 (arXiv:hep-th/9301042)   

\bibitem[Wi5]{} --------,
 {\it Bound states of strings and $p$-branes},
 {\sl Nucl.\ Phys.}\ {\bf B460} (1996), 335--350.
 (arXiv:hep-th/9510135)

\bibitem[Wi6]{} --------,
 {\it Notes on supermanifolds and integration},
 arXiv:1209.2199 [hep-th].  
 
\bibitem[Wi7]{} --------,
 {\it Notes on super Riemann surfaces and their moduli},
 arXiv:1209.2459 [hep-th]. 

\bibitem[Wu]{} S.\ Wu, 
 {\it Mathai-Quillen formalism},
  arXiv:hep-th/0505003.
   
\bibitem[We-B]{} J.\ Wess and J.\ Bagger,
 {\sl Supersymmetry and supergravity}, 2nd ed.,
  Princeton Univ.\ Press, 1992.

\bibitem[Wi-B]{} E.\ Witten and J.\ Bagger,
 {\it Quantization of Newton's constant in certain supergravity theories},
 {\sl Phys.\ Lett.}\ {\bf 115B} (1982), 202--206.
  
\bibitem[W-Z1]{} J.\ Wess and B.\ Zumino, 
 {\it Supergauge transformations in four-dimensions}, 
 {\sl Nucl.\ Phys.}\ {\bf B70} (1974), 39--50.  
  
\bibitem[W-Z2]{} --------,
 {\it A Lagrangian model invariant under supergauge transformations},
 {\sl Phys.\ Lett.}\ {\bf 49B} (1974), 52--54.

\bibitem[W-Z3]{} --------,
 {\it Supergauge invariant extension of quantum electrodynamics},
 {\sl Nucl.\ Phys.}\ {\bf B78} (1974), 1--13. 
  
\bibitem[W-Z4]{} --------,
 {\it Superspace formulation of supergravity},
 {\sl Phys.\ Lett.}\ {\bf 66B} (1977), 361--364.

\bibitem[W-Z5]{} --------,
 {\it The component formalism follows from the superspace formulation of supergravity},
 {\sl Phys.\ Lett.}\ {\bf 79B} (1978), 394--398.
 
\bibitem[Zu]{} B.\ Zumino, 
 {\it Supersymmetry and K\"{a}hler manifolds},
 {\sl Phys.\ Lett.}\ {\bf 87B} (1979), 203--206. 
\end{thebibliography}
\end{document}